\documentclass{amsart} 

\pdfoutput=1

\usepackage{graphicx}

\usepackage{epsfig}

\usepackage{epstopdf}

\usepackage{amsmath}
\usepackage{amsthm}
\usepackage{amssymb}
\usepackage{amsfonts}
\usepackage{amscd}
\usepackage{mathrsfs}
\usepackage{mathdots}
\usepackage{tikz}
\usepackage[all]{xy}
\usepackage{ifpdf}

\setlength{\topmargin}{0pt}
\setlength{\oddsidemargin}{10pt}
\setlength{\evensidemargin}{0pt}
\setlength{\voffset}{-15pt}
\setlength{\hoffset}{-10pt}
\setlength{\textheight}{675pt}
\setlength{\textwidth}{465pt}

\newcommand\rank{\mathop{\rm rank}\nolimits}
\newcommand\im{\mathop{\rm im}\nolimits}
\newcommand\coker{\mathop{\rm coker}\nolimits}

\newcommand{\Tr}{\mathop{\rm Tr}\nolimits}

\newcommand\Hom{\mathop{\rm Hom}\nolimits}

\newcommand\End{\mathop{\rm End}\nolimits}

\newcommand\Spec{\mathop{\rm Spec}\nolimits}
\newcommand\Hilb{\mathop{\rm Hilb}\nolimits}

\newcommand{\length}{\mathop{\rm length}\nolimits}
\newcommand{\res}{\mathop{\sf res}\nolimits}

\newcommand{\balpha}{\boldsymbol \alpha}
\newcommand{\bbeta}{\boldsymbol \beta}
\newcommand{\bmu}{\boldsymbol \mu}
\newcommand{\bnu}{\boldsymbol \nu}
\newcommand{\blambda}{\boldsymbol \lambda}

\newcommand{\bvarphi}{\boldsymbol \varphi}
\newcommand{\sch}{\mathrm{Sch}}
\newcommand{\sets}{\mathrm{Sets}}
\newcommand{\Diag}{\mathrm{Diag}}

\newtheorem{lemma}{Lemma}[section]
\newtheorem{proposition}[lemma]{Proposition}
\newtheorem{assumption}[lemma]{Assumption}
\newtheorem{theorem}[lemma]{Theorem}
\newtheorem{remark}[lemma]{Remark}
\newtheorem{corollary}[lemma]{Corollary}
\newtheorem{example}[lemma]{Example}
\newtheorem{definition}[lemma]{Definition}
\newtheorem{claim}[lemma]{Claim}
\newtheorem{notation}[lemma]{Notation}

\begin{document}

\title{Unfolding of the unramified irregular singular generalized isomonodromic deformation}

\author{Michi-aki Inaba}
\address{Department of Mathematics, Kyoto University, Kyoto 606-8502, Japan}
\email{inaba@math.kyoto-u.ac.jp}

\subjclass[2010]{14D20, 14D15,  53D30, 34M56, 34M35, 34M40}

\maketitle

\begin{abstract}
We introduce an unfolded moduli space of connections,
which is an algebraic relative moduli space of connections on 
complex smooth projective curves, whose generic fiber is a moduli space
of regular singular connections and whose special fiber is a moduli space
of unramified irregular singular connections.
On the moduli space of unramified irregular singular connections,
there is a subbundle of the tangent bundle
defining the generalized isomonodromic deformation
produced by the Jimbo-Miwa-Ueno theory.
On an analytic open subset of the unfolded moduli space of connections,
we construct a non-canonical lift of this subbundle,
which we call an unfolding of the unramified irregular singular
generalized isomonodromic deformation. 
Our construction of an unfolding of the unramified irregular singular
generalized isomonodromic deformation
is not compatible with the asymptotic property in the unfolding
theory established by Hurtubise, Lambert and Rousseau
which gives unfolded Stokes matrices for an unfolded
linear differential equation in a general framework.
\end{abstract}


\section*{Introduction}

The intention of this paper is to produce a tool toward understanding
the confluence phenomena connecting the regular singular isomonodromic deformation
and the irregular singular generalized isomonodromic deformation.
In the case of connections on $\mathbb{P}^1$,
the regular singular isomonodromic deformation is the Schlesinger  equation
and the unramified irregular singular generalized isomonodromic deformation is
the Jimbo-Miwa-Ueno  equation which is completely given in
\cite{Jimbo-Miwa-Ueno}, \cite{Jimbo-Miwa-2}, \cite{Jimbo-Miwa-3}.
The most fundamental example of the confluence phenomena
will be the confluence of the classical hypergeometric functions,
though their isomonodromic deformations may not be mentioned
because of the rigidity.
There are extended results in 
\cite{Kimura-Haraoka-Takano} and
\cite{Kimura-Takano}.
The next important example of the confluence phenomena
will be the degeneration of Painlev\'e equations,
where the irregular singular generalized isomonodromic deformation
arises when we take a limit of the regular singular isomonodromic deformation.
Observation of confluence of Painlev\'e equations
via $\tau$ function is given in \cite{Jimbo-1}
and further study via confluent conformal blocks are given in
\cite{Lisovyy-Nagoya-Roussillon}.
There is an approach via monodromy manifolds in
\cite{Mazzocco} to the confluence of
Painlev\'e equations.
In \cite{Kimura-Tseveennamjil},
a generalization of the confluence phenomena to a general Schlesinger equation
is given.
An origin of confluence problems is given
by Ramis in \cite{Ramis}
and unfolding of Stokes data is one of  the important
problems.
Studies of confluence problem from this viewpoint are done in
\cite{Schafke}, \cite{Sternin-Shatalov}
and \cite{Glutsuk}.
A general framework of unfolded Stokes data
of an unfolded linear differential equation is established by Hurtubise, Lambert and Rousseau
in \cite{Hurtubise-Lambert-Rousseau} and \cite{Hurtubise-Rousseau}.
In \cite{Klimes}, confluence of unfolded Stokes data in rank two case is given
explicitly. 
One of the key ideas in 
the unfolding theory by Hurtubise, Lambert and Rousseau
in \cite{Hurtubise-Lambert-Rousseau} and \cite{Hurtubise-Rousseau}
is to adopt fundamental solutions with an asymptotic property,
which is estimated by
a flow of the vector field $v_{\epsilon}=p_{\epsilon}(x)\frac{\partial}{\partial x}$,
where $p_{\epsilon}(x)=0$ is a local unfolding equation.
They construct unfolded Stokes matrices of a linear differential equation
on $\mathbb{P}^1$ via connecting
fundamental solutions with an asymptotic property around points in the unfolding divisor
and that around  $\infty$.
In order to reconstruct an unfolded linear differential equation,
they consider another regular singular point,
whose monodromy reflects the analytic continuation along the `inner side'
of the unfolded divisor.
In \cite{Hurtubise-Rousseau},
they introduce a delicate condition called the `compatibility condition' in order that
the corresponding linear differential equation is a well-defined analytic family.

The author's early hope was to understand the unfolding theory by
Hurtubise, Lambert and Rousseau in a moduli theoretic way.
So we introduce in this paper an unfolded moduli space of connections,
whose generic fiber is a moduli space of regular singular connections
and whose special fiber is a moduli space of unramified irregular singular connections.

The Schlesinger type equation, or the regular singular isomonodromic deformation
is defined on a family of moduli spaces of regular singular connections
on smooth projective curves.
In order to get a good moduli space, we consider a  parabolic structure to the given connection 
and the moduli space is constructed in
\cite{Nakajima}, \cite{Arinkin-Lysenko},
\cite{IIS-1} and \cite{Inaba-1},
which is a smooth and quasi-projective moduli space.
The algebraic moduli construction is basically given by modifying the standard method
by Simpson in \cite{Simpson-1}, \cite{Simpson-2} or by Nitsure
in \cite{Nitsure}.
In \cite{IIS-1} and \cite{Inaba-1}, we formulate the regular singular isomonodromic
deformation and prove the geometric Painlev\'e property of the isomonodromic deformation
using the properness of the Riemann-Hilbert morphism.
In \cite{Yamakawa}, the moduli space of filtered local systems is introduced
by Yamakawa
and the Riemann-Hilbert isomorphism via the idea by Simpson
in \cite{Simpson-3} is given,
from which we can also prove the geometric Painlev\'e property
of the isomonodromic deformation.
Moduli theoretic descriptions of the regular singular isomonodromic deformation are
also given in \cite{Hurtubise-1}, \cite{Heu-1}, \cite{Heu-2},
\cite{Biswas-Heu-Hurtubise-1}, \cite{Biswas-Heu-Hurtubise-2} and \cite{Wong}.
We notice that we cannot forget the parabolic structure for the precise
formulation of the isomonodromic deformation given in \cite[Proposition 8.1]{Inaba-1}
on the locus where the parabolic structure is not completely determined by
the given connection.
Let us recall that 
the essential number of independent variables
of the regular singular isomonodromic deformation is
$3g-3+\deg D$,
where $D$ is the divisor consisting of all the regular singular points
and $g$ is the genus of base curves.

Moduli space of unramified irregular singular connections
is constructed in \cite{Biquard-Boalch} analytically
and in \cite{Inaba-Saito} algebraically.
The irregular singular generalized isomonodromic
deformation from the moduli theoretic viewpoint is given in
\cite{Boalch-1}, \cite{Boalch-2}, \cite{Fedorov},
\cite{Hurtubise-1}, 
\cite{Put-Saito}, \cite{Wong}, \cite{Bremer-Sage-2} and \cite{Inaba-Saito}
from various viewpoints, respectively.
In spite of the importance of parabolic structure in the regular singular case,
unfolding problem of the moduli space of irregular singular connections
does not seem to work well with parabolic structure,
especially for the deformation argument of ramified connections 
in \cite[Theorem 4.1]{Inaba-2}.
So we adopt another method of parameterizing the local exponents
in this paper.
If we fix distinct complex numbers
$\mu_1,\ldots,\mu_r$
and if we take generic unramified
local exponents
$\nu_1\dfrac{dz}{z^m},\ldots,\nu_r\dfrac{dz}{z^m}$
at a singular point $p$,
then we can observe that
there is a polynomial $\nu(T)\in\mathbb{C}[z]/(z^m)[T]$
satisfying
$\nu_k=\nu(\mu_k)$ for any $k$.
So we can regard $(\nu(T),\mu_1,\ldots,\mu_r)$ as a data of local exponents.
We can see that a connection $\nabla$ on a vector bundle $E$
has the local exponents
$\nu_1\dfrac{dz}{z^m},\ldots,\nu_r\dfrac{dz}{z^m}$ at $p$
if and only if there is an endomorphism
$N\in\End(E|_{mp})$ whose eigenvalues are $\mu_1,\ldots,\mu_r$
and $\nu(N)\dfrac{dz}{z^m}=\nabla|_{mp}$.

For the construction of the unfolded moduli space of connections,
we introduce a notion of $(\bnu,\bmu)$-connection.
Let $C$ be a complex smooth projective curve of genus $g$
and $D=D^{(1)}\sqcup\cdots\sqcup D^{(n)}$ be a divisor on $C$
locally given by the equation $D^{(i)}=\{ z^{m_i}-\epsilon^{m_i}=0 \}$.
The local exponents
$\bnu=(\nu^{(i)}(T))$ and $\bmu=(\mu^{(i)}_k)$
are given by
$\nu^{(i)}(T)\in {\mathcal O}_{D^{(i)}}[T]$
and distinct complex numbers $\mu^{(i)}_1,\ldots,\mu^{(i)}_r\in\mathbb{C}$.
The definition of
$(\bnu,\bmu)$-connection is given 
in Definition \ref {def-connection}
as a tuple $(E,\nabla,\{N^{(i)}\})$,
where $E$ is an algebraic vector bundle on $C$,
$\nabla$ is a connection on $E$ admitting poles along $D$
and $N^{(i)}\in \End(E|_{D^{(i)}})$ satisfies
$\nabla|_{D^{(i)}}=\nu(N^{(i)})\dfrac{dz} {z^{m_i}-\epsilon^{m_i}}$
and $\varphi^{(i)}_{\bmu}(N^{(i)})=0$,
where $\varphi^{(i)}_{\bmu}(T)=(T-\mu^{(i)}_1)\cdots(T-\mu^{(i)}_r)$.
In subsection \ref {subsection:moduli setting}, we define the relative moduli space 
$M^{\balpha}_{{\mathcal C},{\mathcal D}}(\tilde{\bnu},\bmu)
\longrightarrow {\mathcal T}_{\bmu,\blambda}$
of $\balpha$-stable
$(\bnu,\bmu)$-connections,
whose existence is provided by Theorem \ref{theorem:algebraic-moduli-unfolding}.
Here 
${\mathcal T}_{\bmu,\blambda}\longrightarrow \Delta_{\epsilon_0}$
is constructed in subsection \ref  {subsection:moduli setting},
on which there are a full family of pointed curves $(C,t_1,\ldots,t_n)$,
divisors $D^{(i)}$ given by the local equation
$z^{m_i}-\epsilon^{m_i}=0$
and a full family of exponents $\bnu$.
The fiber of the moduli space
$M^{\balpha}_{{\mathcal C},{\mathcal D}}(\tilde{\bnu},\bmu)$
over $\epsilon\neq 0$
is a moduli space of regular singular connections
and the fiber over $\epsilon=0$
is a moduli space of generic unramified irregular singular connections.

The fiber $M^{\balpha}_{{\mathcal C},{\mathcal D}}(\tilde{\bnu},\bmu)_{\epsilon=0}$
over $\epsilon=0\in \Delta_{\epsilon_0}$
is the moduli space of unramified irregular singular connections.
In \cite{Inaba-Saito}, we construct an algebraic splitting
\[
 \Psi_0\colon
 (\pi_{{\mathcal T}_{\bnu,\blambda,\epsilon=0}})^*
 T_{{\mathcal T}_{\bnu,\blambda,\epsilon=0}}
 \longrightarrow
 T_{M^{\balpha}_{{\mathcal C},{\mathcal D}}(\tilde{\bnu},\bmu)_{\epsilon=0}}
\]
of the surjection
$d\pi_{{\mathcal T}_{\bnu,\blambda,\epsilon=0}}\colon
T_{M^{\balpha}_{{\mathcal C},{\mathcal D}}(\tilde{\bnu},\bmu)_{\epsilon=0}}
\longrightarrow
(\pi_{{\mathcal T}_{\bnu,\blambda,\epsilon=0}})^*
T_{{\mathcal T}_{\bnu,\blambda,\epsilon=0}}$,
where
$T_{{\mathcal T}_{\bmu,\blambda,\epsilon=0}}$
and
$T_{M^{\balpha}_{{\mathcal C},{\mathcal D}}(\tilde{\bnu},\bmu)_{\epsilon=0}}$
are the tangent bundles of
${\mathcal T}_{\bmu,\blambda,\epsilon=0}$
and
$M^{\balpha}_{{\mathcal C},{\mathcal D}}(\tilde{\bnu},\bmu)_{\epsilon=0}$,
respectively.
The splitting
$\Psi_0$ is the irregular singular generalized isomonodromic deformation
arising from the theory by Jimbo, Miwa and Ueno in \cite{Jimbo-Miwa-Ueno}.
The idea of the construction of $\Psi_0$ is to construct a horizontal lift of the universal
relative connection, which is a first order infinitesimal extension of the relative connection
with an integrability condition.
We notice here that the complete description of the Jimbo-Miwa-Ueno equation
in \cite{Jimbo-Miwa-Ueno} says that the essential number of independent variables
of the unramified irregular singular generalized isomonodromic deformation is
$3g-3+\sum_{i=1}^n (r(m_i-1)+1)$.

One of the reasons of the difficulty in the confluence problem will be that 
the number $3g-3+\deg D$ of independent variables of the regular singular
isomonodromic deformation
is much smaller than the number
$3g-3+\sum_{i=1}^n (r(m_i-1)+1)$
of independent variables of the irregular singular generalized isomonodromic deformation.
Here we have $\deg D=\sum_{i=1}^n m_i$,
because the divisors are connected by a flat family.
In this paper, we try to extend the splitting $\Psi_0$ locally to the unfolded moduli space
$M^{\balpha}_{{\mathcal C},{\mathcal D}}(\tilde{\bnu},\bmu)$
via regarding ${\mathcal T}_{\bmu,\blambda}$ as the space of
independent variables.  
The main theorem of this paper is the following:
\begin{theorem}\label {thm:holomorphic-splitting}
For a general point
$x\in M^{\balpha}_{{\mathcal C},{\mathcal D}}(\tilde{\bnu},\bmu)_{\epsilon=0}$
satisfying Assumption \ref {assumption:irreducibility for universal family}
in subsection \ref {subsection:unfolded global horizontal lift},
there exist an analytic open neighborhood 
$M^{\circ}\subset M^{\balpha}_{{\mathcal C},{\mathcal D}}(\tilde{\bnu},\bmu)$
of $x$ whose image in ${\mathcal T}_{\bmu,\blambda}$ is denoted by
${\mathcal T}^{\circ}$,
blocks of local horizontal lifts
$\big(
\nabla^{flat}_{\mathbb{P}^1\times M^{\circ}[\bar{h}],v^{(i)}_{l,j}}
\big)$
defined in Definition \ref  {def:block of local horizontal lift}
and a holomorphic homomorphism
\[
  \Psi \colon
 (\pi_{{\mathcal T}^{\circ}})^*
 T^{hol}_{{\mathcal T}^{\circ}/\Delta_{\epsilon_0}}
  \longrightarrow
 T^{hol}_{M^{\circ}/\Delta_{\epsilon_0}}
\]
depending on 
$\big(
\nabla^{flat}_{\mathbb{P}^1\times M^{\circ}[\bar{h}],v^{(i)}_{l,j}}
\big)$,
which is a splitting of the canonical surjection of the tangent bundles
$T_{M^{\circ} /\Delta_{\epsilon_0}}
 \xrightarrow{ d \pi_{{\mathcal T}^{\circ}} }
 (\pi_{{\mathcal T}^{\circ}} )^*
 T_{{\mathcal T}^{\circ}/\Delta_{\epsilon_0}}$,
such that the restriction
$\Psi\big|_
{M^{\balpha}_{{\mathcal C},{\mathcal D}}(\tilde{\bnu},\bmu)_{\epsilon=0}\cap M^{\circ}}$
of $\Psi$ to the irregular singular locus coincides with
the  irregular singular generalized isomonodromic deformation
$\Psi_0^{hol}\big|_
{M^{\balpha}_{{\mathcal C},{\mathcal D}}(\tilde{\bnu},\bmu)_{\epsilon=0}\cap M^{\circ}}$.
\end{theorem}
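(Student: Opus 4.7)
The plan is to mimic the moduli-theoretic construction of $\Psi_0$ used on the irregular singular locus in \cite{Inaba-Saito}, namely to build, for every holomorphic tangent vector $v$ on ${\mathcal T}^{\circ}$, a first-order infinitesimal horizontal extension of the universal relative $(\bnu,\bmu)$-connection on $M^{\circ}$, and to read off $\Psi(v) \in T^{hol}_{M^{\circ}/\Delta_{\epsilon_0}}$ as the resulting infinitesimal deformation of $(E,\nabla,\{N^{(i)}\})$. The building blocks are the local horizontal lifts $\big(\nabla^{flat}_{\mathbb{P}^1\times M^{\circ}[\bar{h}],v^{(i)}_{l,j}}\big)$ from Definition \ref{def:block of local horizontal lift}, which supply, on tubular neighborhoods of each component $D^{(i)}$, a holomorphic family of flat extensions of $\nabla|_{D^{(i)}}$ along the tangent directions $v^{(i)}_{l,j}$. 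The non-canonical choice of these blocks is precisely what accounts for the dependence of $\Psi$ on that data, matching the freedom asserted in the statement.

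After shrinking $M^{\circ}$ analytically around $x$ so that the universal family admits a convenient normal form near each $D^{(i)}$, I would extract from the blocks a canonical first-order deformation of $\nabla$ together with a compatible deformation of $N^{(i)}$ on each tubular neighborhood; away from $D$ the deformation is the trivial one obtained by holding the flat structure of $\nabla$ fixed. On overlaps, two such local extensions of $v$ differ by an infinitesimal gauge transformation preserving the $(\bnu,\bmu)$-structure to first order, i.e.\ by a \v{C}ech cocycle valued in the hypercohomology of the two-term complex $\bigl[\End(E)\xrightarrow{[\nabla,\,\cdot\,]}\End(E)\otimes\Omega^1_C(D)\bigr]$ with boundary conditions dictated by $\{N^{(i)}\}$. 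This hypercohomology is canonically identified with the fiber of $T_{M^{\balpha}_{{\mathcal C},{\mathcal D}}(\tilde{\bnu},\bmu)/{\mathcal T}_{\bmu,\blambda}}$, so assembling the local pieces delivers an element $\Psi(v) \in T^{hol}_{M^{\circ}/\Delta_{\epsilon_0}}$. Linearity in $v$ and the splitting identity $d\pi_{{\mathcal T}^{\circ}} \circ \Psi = \mathrm{id}$ are automatic from the construction.

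The main obstacle is ensuring holomorphy across the special fiber, where the divisor $D$ degenerates from $\sum m_i$ regular singular points into $n$ irregular ones. Two ingredients will be needed. First, the obstruction to gluing lives in the second hypercohomology of the complex above; it vanishes at $x$ by the very existence of $\Psi_0$, and Assumption \ref{assumption:irreducibility for universal family} together with upper semicontinuity of cohomology extends the vanishing to an analytic neighborhood of $x$. Second, one must exhibit a holomorphically varying \v{C}ech splitting of the resulting gluing data over $M^{\circ}$; this follows from the holomorphic dependence of the local blocks on $M^{\circ}$ combined with the standard holomorphic splitting of a surjection of locally free sheaves in a neighborhood of a point where an obstruction group vanishes. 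The compatibility $\Psi\big|_{\epsilon=0} = \Psi_0^{hol}\big|_{\epsilon=0}$ is then a direct consequence of the fact that at $\epsilon=0$ the local blocks specialize to the Jimbo-Miwa-Ueno horizontal lifts used to build $\Psi_0$, so that the gluing recipe reduces on the special fiber to the one of \cite{Inaba-Saito}.
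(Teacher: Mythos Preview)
Your overall plan---local horizontal lifts from the blocks near each $D^{(i)}$, trivial extensions away from $D$, glue, then read off $\Psi(v)$ via the moduli property---matches the paper's. The divergence, and the gap, is in how you handle the gluing.

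In the paper (Proposition \ref{prop:existence-horizontal-lift}) there is no obstruction argument at all. On an overlap $U_\beta\cap U_{\beta'}$ (chosen disjoint from $\Gamma$), one writes down the gauge transformation $I_r+\bar{h}Q_{\beta\beta'}$ with $Q_{\beta\beta'}$ the difference of the $d\bar{h}$-coefficients of the two local connection matrices, and checks by a direct computation using integrability of both sides that it carries one local horizontal lift to the other. Because a local horizontal lift is determined up to \emph{unique} isomorphism by its $d\bar{h}$-part, these transition maps automatically satisfy the cocycle condition, and the local pieces glue to a global $({\mathcal E}^v,\nabla^v,\{{\mathcal N}^{(i)}_v\})$. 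The induced relative connection $({\mathcal E}^v,\overline{\nabla^v},\{{\mathcal N}^{(i)}_v\})$ then yields a morphism $M'[v]\to M^{\balpha}_{{\mathcal C},{\mathcal D}}(\tilde{\bnu},\bmu)$ over $I_v$, and \emph{that morphism} is $\Phi(v)$; the passage to $\Psi$ is the adjunction (\ref{equation:adjoint bijection}) after Proposition \ref{prop:splitting is a homomorphism} establishes ${\mathcal O}^{hol}_{{\mathcal T}^\circ}$-linearity.

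Your cohomological route has two problems. First, the complex $[\End(E)\xrightarrow{[\nabla,\cdot]}\End(E)\otimes\Omega^1_C(D)]$ (or rather its refinement ${\mathcal F}^\bullet$) governs deformations of the \emph{relative} connection, not of integrable horizontal lifts; identifying its $\mathbf{H}^1$ with $T_{M/{\mathcal T}}$ does not by itself hand you a vector in the absolute tangent bundle $T_{M^\circ/\Delta_{\epsilon_0}}$---that would presuppose the very splitting you are constructing. Second, your vanishing argument (``the obstruction vanishes at $x$ by the very existence of $\Psi_0$'') is circular, since $\Psi_0$ is built in \cite{Inaba-Saito} by exactly the same direct gluing, and in any case $\mathbf{H}^2({\mathcal F}^\bullet)\cong\mathbb{C}$ does \emph{not} vanish (Lemma \ref{lemma:trace-isomorphism}); what vanishes in the smoothness proof is a specific obstruction \emph{class}, irrelevant here. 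The paper never needs to locate or kill any obstruction because uniqueness of the local horizontal lift makes the cocycle condition automatic.
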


The main idea of the construction of $\Psi$
in Theorem \ref {thm:holomorphic-splitting} is
to consider the restriction
$(\tilde{E},\tilde{\nabla},\{\tilde{N}^{(i)}\})|_{\Delta\times M^{\circ}}$
of the universal family of connections
to a local holomorphic disk $\Delta$
containing $D^{(i)}$ and to extend it 
to a family of connections on $\mathbb{P}^1$
admitting regular singularity along $\infty$.
We extend this family of connections on $\mathbb{P}^1$
to a family of integrable connections
$\nabla^{flat}_{\mathbb{P}^1\times M^{\circ}[\bar{h}],v^{(i)}_{l,j}}$
on $\mathbb{P}^1\times\Spec\mathbb{C}[h]/(h^2)$
depending on the data
$(\tilde{\Xi}^{(i)}_{l,j}(z))$ adjusting the residue part at $\infty$.
We glue the local integrable connections
$\nabla^{flat}_{\mathbb{P}^1\times M^{\circ}[\bar{h}],v^{(i)}_{l,j}}
\big|_{\Delta\times M^{\circ}}$
and obtain a global horizontal lift of
$(\tilde{E},\tilde{\nabla},\{\tilde{N}^{(i)}\})|_{{\mathcal C}_{M^{\circ}}}$,
which induces an unfolding in Theorem \ref {thm:holomorphic-splitting}.
In our unfolded generalized isomonodromic deformation determined by
$\Psi$, the monodromy along a loop surrounding whole the unfolding divisor $D^{(i)}$
is preserved constant, but the local monodromy
around each regular singular point in $D^{(i)}$ is not preserved constant,
because the local exponents are not constant.
So our unfolded generalized isomonodromic deformation does not mean
the usual regular singular isomonodromic deformation.
We notice that the splitting $\Psi$ in the theorem is not canonical because
it is essentially determined by the blocks of local horizontal lifts
$\big(\nabla^{flat}_{\mathbb{P}^1\times M^{\circ}[\bar{h}],v^{(i)}_{l,j}}\big)$
constructed in subsection \ref {subsection:local horizontal lift},
which depend on the data $(\tilde{\Xi}^{(i)}_{l,j}(z))$
adjusting the residue part
and also on a fundamental solution commuting with the monodromy
around $\infty$.
So we cannot expect the splitting $\Psi$
to be defined globally on
$M^{\balpha}_{{\mathcal C},{\mathcal D}}(\tilde{\bnu},\bmu)$.
Moreover, we cannot expect the integrability of the subbundle
$\im\Psi\subset T^{hol}_{M^{\circ}/\Delta_{\epsilon_0}}$.

The author's hope was to construct the unfolding $\Psi$
via adopting the asymptotic arguments in the unfolding theory
established by Hurtubise, Lambert and Rousseau
in a series of papers  \cite{Lambert-Rousseau-1}, \cite{Lambert-Rousseau-2},
\cite{Hurtubise-Lambert-Rousseau}, \cite{Hurtubise-Rousseau}.
Unfortunately we cannot achieve in such an easy way, because we do not know
that the unfolded Stokes matrices
defined in \cite{Hurtubise-Rousseau} are constant
for our generalized isomonodromic deformation $\Psi$.
This is another reason why the splitting $\Psi$
cannot be extended globally.
At the present, the framework of this paper is tentative because
the moduli space
$M^{\balpha}_{{\mathcal C},{\mathcal D}}(\tilde{\bnu},\bmu)$
dose not seem to be enough for the description of the
unfolded generalized isomonodromic deformation.
The author's hope is to find a good replacement of the moduli space
which describes our splitting $\Psi$ adequately.

The organization of this paper is the following.

In section \ref {section:linear-algebra}, we introduce a factorization
$V \xrightarrow {\kappa} V^{\vee} \xrightarrow{\theta} V$
of a given linear endomorphism $f\colon V\longrightarrow V$
whose minimal polynomial is of degree $\dim V$.
This gives the correspondence in Proposition \ref {prop:factorization-lemma}
and Proposition \ref  {prop:factorization-unique}
between
the linear endomorphisms $f\colon V\longrightarrow V$
whose minimal polynomial is of maximal degree
and the pairs $[(\theta,\kappa)]$ with $\theta,\kappa$ symmetric.
Using this correspondence, we can give in Proposition \ref{prop:kirillov-kostant-form}
a certain kind of expression of the Kirillov-Kostant symplectic form on
a $GL_r(\mathbb{C})$ adjoint orbit.

In section \ref  {section:algebraic moduli construction},
we introduce the notion of $(\bnu,\bmu)$-connection
which involves both a regular singular connection
and an unramified irregular singular connection.
We give a construction of the moduli space of
$(\bnu,\bmu)$-connections essentially using the
construction method in \cite{IIS-1}.
From the idea in section \ref {section:linear-algebra},
we can see that a $(\bnu,\bmu)$ connection corresponds to 
a tuple $(E,\nabla,\{\theta^{(i)},\kappa^{(i)}\})$.
Doing the deformation theory for this tuple,
we can get the smoothness of the moduli space
and a symplectic form.
These are summarized in Theorem \ref {theorem:algebraic-moduli-unfolding}.

In section \ref {section:theory of Hurtubise-Lambert-Rousseau},
we give an introduction to the unfolding theory constructed by
Hurtubise, Lambert and Rousseau 
by means of the restriction to a most easy case
when the perturbation of the singularity is given by the
equation $z^m-\epsilon^m=0$.
We need a consideration on the flows
given by $dz/dt=e^{\sqrt{-1}\theta}(z^m-\epsilon^m)$
in Proposition \ref {proposition:open-covering}.
One of the main tool in the unfolding theory is
a fundamental solution given in Theorem \ref {theorem:existence-of-fundamental-solutions}
which has an asymptotic property estimated by
flows given in Proposition \ref {proposition:open-covering}.

In section \ref {section:local horizontal lift},
we consider a family of connections $\nabla$
on a holomorphic disk $\Delta=\{z\in\mathbb{C} \, | \, |z|<1 \}$
admitting poles along $\{z^m-\epsilon^m=0\}$. 
Under some generic assumption on $\nabla$,
we give an extension of $\nabla$ as 
a family of connections on ${\mathcal O}_{\mathbb{P}^1}^{\oplus r}$
with a regular singularity along $\infty$,
whose connection matrix is given by
$A(z)dz/(z^m-\epsilon^m)$.
Using linear algebraic argument, we obtain
an adjusting data
$\tilde{\Xi}_{l,j}(z)$ such that
$\tilde{\Xi}_{l,j}(z)dz/(z^m-\epsilon^m)$
has no residue at $\infty$.
Then we can get a family of integrable connections
on $\mathbb{P}^1\times\Spec\mathbb{C}[h]/(h^2)$
given by a connection matrix
$(A(z)+\bar{h}\tilde{\Xi}_{l,j}(z))dz/(z^m-\epsilon^m)+B(z)d\bar{h}$
in Proposition \ref  {prop:local horizontal lift},
where $B(z)$ is a matrix of multivalued functions.

In section \ref  {section:construction of unfolding},
we give the setting of the relative moduli space of
$(\bnu,\bmu)$-connections whose generic fiber
is a moduli space of regular singular connections
and a special fiber is a moduli space of unramified irregular singular connections.
On the irregular singular fiber, we can define the generalized isomonodromic
deformation $\Psi_0$, which is basically determined by the Jimbo-Miwa-Ueno theory
and precisely given in \cite{Inaba-Saito}.
The integrability of the irregular singular generalized isomonodromic deformation
on $\mathbb{P}^1$ is proved in \cite{Jimbo-Miwa-Ueno}, which is
extended to ramified case in \cite{Bremer-Sage-2}.
We give in Theorem \ref {thm:integrability condition}
an alternative proof of its integrability involving the higher genus case
from the uniqueness property of its formulation.
Gluing the local integrable connections constructed in section \ref {section:local horizontal lift},
we construct a global horizontal lift in Proposition \ref {prop:existence-horizontal-lift},
which gives a local analytic lift of the unramified
irregular singular generalized isomonodromic deformation
and obtain Theorem \ref {thm:holomorphic-splitting}.

\section{An observation from linear algebra on a $GL_r(\mathbb{C})$ adjoint orbit}
\label {section:linear-algebra}

In this section, we give a small remark on an adjoint orbit of $GL_r(\mathbb{C})$
on $\mathfrak{gl}_r(\mathbb{C})$.
From the idea of the observation in this section, we will get in section
\ref{section:algebraic moduli construction} a convenient parametrization
of the local exponents of connections.
Furthermore, we will get a pertinent expression of the relative symplectic form on an unfolded
moduli space of connections on smooth projective curves
in section \ref {section:algebraic moduli construction}.

\subsection{Factorization of a linear endomorphism
whose minimal polynomial is of maximal degree}
\label {subsection:factorization of linear map}

Let $V$ be a vector space over $\mathbb{C}$ of dimension $r$
and $\mu_1,\ldots,\mu_r\in\mathbb{C}$ be mutually distinct
complex numbers.
If we consider the subvariety
\[
 C(\mu_1,\ldots,\mu_r):=
 \left\{ f \colon V \longrightarrow V\colon
 \text{linear map with the eigenvalues $\mu_1,\ldots,\mu_r$}
 \right\}
\]
of the affine space
$\Hom_{\mathbb{C}}(V,V)$,
then $C(\mu_1,\ldots,\mu_r)$
is isomorphic to the $GL_r(\mathbb{C})$-adjoint orbit of
the diagonal matrix
\[
 \begin{pmatrix}
  \mu_1 & \cdots & 0 \\
  \vdots & \ddots & \vdots \\
  0 & \cdots & \mu_r
 \end{pmatrix}.
\]
So $C(\mu_1,\ldots,\mu_r)$ has a symplectic structure
given by the Kirillov-Kostant symplectic form.
Indeed there is a canonical morphism from
$C(\mu_1,\ldots,\mu_r)$ to the complete flag variety
$F(V)$ by sending each $f$ to the flag of
$V$ induced by the eigen space decomposition of $f$.
The fiber is isomorphic to the set of upper triangular nilpotent matrices
which is also isomorphic to the cotangent space of
$F(V)$.
So $C(\mu_1,\ldots,\mu_r)$
is locally isomorphic over $F(V)$ to the cotangent bundle over $F(V)$
and the symplectic structure from the cotangent bundle
coincides with the Kirillov-Kostant symplectic form.
In  subsection \ref {subsection:expression of kirillov-kostant form}, 
we give another expression of the symplectic form
on the adjoint orbit
$C(\mu_1,\ldots,\mu_r)$.
For the construction of the symplectic form,
we extend to a slightly more general setting.

Let
$\varphi(T)\in\mathbb{C}[T]$ be a monic polynomial
of degree $r$
and $V$ be a vector space over $\mathbb{C}$ of dimension $r$.
We put
\[
 C_{\varphi(T)}:=\left\{
 f\colon V\longrightarrow V
 \left| \text{$f$ is a linear map whose minimal polynomial is $\varphi(T)$}
 \right\}\right..
\]
Recall that $\varphi(T)$ is a minimal polynomial of $f\colon V\longrightarrow V$
if and only if $\varphi(f)=0$ and the induced map
\[
 \mathbb{C}[T]/(\varphi(T)) \ni \overline{P(T)}
 \mapsto P(f)\in \End(V)
\]
is injective.

\begin{proposition}\label {prop:factorization-lemma}
 For each $f\in C_{\varphi(T)}$,
 there are an isomorphism $\theta\colon V^{\vee}\xrightarrow{\sim}V$
 and a linear map $\kappa\colon V\longrightarrow V^{\vee}$
 satisfying $f=\theta\circ\kappa$,
 $^t\theta=\theta$ and $^t\kappa=\kappa$.
 Here $V^{\vee}$ is the dual vector space of $V$,
 $^t\theta\colon V^{\vee}\longrightarrow(V^{\vee})^{\vee}=V$
 is the dual of $\theta$
 and
 $^t\kappa\colon V=(V^{\vee})^{\vee}\longrightarrow V^{\vee}$
 is the dual of $\kappa$.
\end{proposition}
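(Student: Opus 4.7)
My plan is to reduce the proposition to the classical fact that every cyclic endomorphism admits a non-degenerate symmetric bilinear form with respect to which it is self-adjoint, and then package such a form as a pair $(\theta,\kappa)$.

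First I would show how to manufacture $(\theta,\kappa)$ from such a form. Assume $G$ is a symmetric non-degenerate bilinear form on $V$ satisfying $G(f(x),y)=G(x,f(y))$ for every $x,y\in V$. Let $\beta\colon V\to V^{\vee}$ be the isomorphism $x\mapsto G(x,-)$; symmetry of $G$ translates, under the canonical identification $V^{\vee\vee}=V$, into ${}^{t}\beta=\beta$. Define
\[
 \theta:=\beta^{-1}\colon V^{\vee}\xrightarrow{\sim} V,\qquad
 \kappa:=\beta\circ f\colon V\longrightarrow V^{\vee}.
\]
Then ${}^{t}\theta=({}^{t}\beta)^{-1}=\beta^{-1}=\theta$, and
\[
 {}^{t}\kappa={}^{t}f\circ{}^{t}\beta={}^{t}f\circ\beta=\beta\circ f=\kappa,
\]
where the last equality is exactly the self-adjointness of $f$ with respect to $G$. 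Finally $\theta\circ\kappa=\beta^{-1}\circ\beta\circ f=f$, so $(\theta,\kappa)$ is a factorization of the required kind.

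To produce $G$, I would exploit that $\varphi(T)$ is the minimal polynomial of $f$ and $\deg\varphi=r=\dim V$, so $f$ is cyclic: there is $v\in V$ with $v,f(v),\ldots,f^{r-1}(v)$ a basis, and the map
\[
 R:=\mathbb{C}[T]/(\varphi(T))\xrightarrow{\sim} V,\qquad \overline{P(T)}\longmapsto P(f)(v)
\]
is an isomorphism intertwining multiplication by $T$ on $R$ with $f$ on $V$. The algebra $R$ is a zero-dimensional complete intersection, hence a Frobenius algebra: for any linear functional $\ell\colon R\to\mathbb{C}$ whose kernel contains no nonzero ideal---for instance $\ell(\overline{P})$ equal to the coefficient of $T^{r-1}$ in the representative of $\overline{P}$ of degree $<r$---the pairing $(\overline{P},\overline{Q})\mapsto \ell(\overline{P}\,\overline{Q})$ on $R$ is symmetric, non-degenerate, and automatically satisfies $(T\overline{P},\overline{Q})=(\overline{P},T\overline{Q})$. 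Transporting this pairing to $V$ via the isomorphism above supplies the form $G$ required in the preceding paragraph.

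The only genuinely non-trivial point is the non-degeneracy of $G$, i.e.\ that $\theta$ is indeed an isomorphism. I would verify it by inspecting the Gram matrix of $G$ in the cyclic basis $\{f^i(v)\}_{i=0}^{r-1}$: with the functional $\ell$ chosen above, entries strictly above the antidiagonal vanish and entries on the antidiagonal equal $1$, so the matrix has determinant $\pm 1$; entries below the antidiagonal are polynomial in the coefficients of $\varphi$ and do not affect invertibility. Everything else in the proof---the transpose identities and the equality $\theta\circ\kappa=f$---is formal manipulation done in the first paragraph.
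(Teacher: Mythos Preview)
Your proof is correct and takes a somewhat different route from the paper. The paper argues more abstractly: since $f$ and ${}^{t}f$ share the minimal polynomial $\varphi$, both $V$ and $V^{\vee}$ are cyclic $\mathbb{C}[T]$-modules isomorphic to $\mathbb{C}[T]/(\varphi(T))$, so one may pick \emph{any} $\mathbb{C}[T]$-linear isomorphism $\theta\colon V^{\vee}\to V$ and set $\kappa=\theta^{-1}\circ f$. The identities ${}^{t}\theta=\theta$ and ${}^{t}\kappa=\kappa$ are then verified by a direct pairing computation with cyclic generators $v^{*}\in V^{\vee}$ and $v=\theta(v^{*})\in V$, which ultimately reduces to the commutativity $P_{1}(f)P_{2}(f)=P_{2}(f)P_{1}(f)$; no explicit bilinear form or Gram matrix appears. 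Your approach is instead to build a \emph{specific} symmetric form $G$ from the Frobenius-algebra structure on $\mathbb{C}[T]/(\varphi(T))$, so that symmetry and self-adjointness of $f$ are immediate from the construction, while non-degeneracy is checked by hand via the anti-triangular Gram matrix. The paper's route has the mild advantage of showing that every $\mathbb{C}[T]$-isomorphism $\theta$ is automatically symmetric, which feeds directly into the subsequent uniqueness statement; your route has the advantage of bypassing the generator-by-generator verification once the Frobenius form is in hand.
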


\begin{proof}
The ring homomorphism
$\mathbb{C}[T]\ni P(T)\mapsto P(f)\in\End(V)$
induces a $\mathbb{C}[T]$-module structure on $V$.
By an elementary theory of linear algebra, there is  an isomorphism
\[
 V\xrightarrow{\sim}\mathbb{C}[T]/(\varphi(T)),
\]
of $\mathbb{C}[T]$-modules,
because the minimal polynomial $\varphi(T)$ of
$f$ has degree $r=\dim V$.
Since the minimal polynomial of $\,^tf$ coincides with $\varphi(T)$,
there is an isomorphism
\[
 V^{\vee}\xrightarrow{\sim} \mathbb{C}[T]/(\varphi(T))
\]
of $\mathbb{C}[T]$-modules.
So we can take an isomorphism
\[
 \theta\colon V^{\vee}\xrightarrow{\sim} V
\]
of $\mathbb{C}[T]$-modules.
If we put
\[
 \kappa:=\theta^{-1}\circ f\colon V\longrightarrow V^{\vee},
\]
then $\kappa$ becomes a homomorphism of $\mathbb{C}[T]$-modules
and $f=\theta\circ\kappa$.
We take a generator $v^*\in V^{\vee}$ of $V^{\vee}$
as a $\mathbb{C}[T]$-module.
Then $v:=\theta(v^*)\in V$
is a generator of $V$ as a $\mathbb{C}[T]$-module.
Take any $w^*_1,w^*_2\in V^{\vee}$.
Then we can write
$w^*_1=P_1(\, ^t f)v^*$ and $w^*_2=P_2(\, ^t f)v^*$ for certain polynomials
$P_1(T),P_2(T)\in\mathbb{C}[T]$.
For the dual pairing
$\langle \ , \  \rangle\colon V^{\vee}\times V\longrightarrow\mathbb{C}$,
we have
\begin{align*}
 \langle w^*_2, \ ^t\theta(w^*_1) \rangle 
 =\langle w^*_1\circ\theta, w^*_2 \rangle 
 &= \langle w^*_1, \ \theta(w^*_2) \rangle \\ 
 &=\langle P_1(\, ^t f)v^*, \theta (P_2(\, ^t f)v^*) \rangle \\
 &=\langle v^*\circ P_1(f), P_2(f) (\theta (v^*) ) \rangle \\
 &=\langle v^*, P_1(f) P_2(f) (\theta (v^*) ) \rangle \\
 &=\langle v^*,P_2(f)P_1(f) (\theta (v^*) ) \rangle \\ 
 &=\langle P_2(\,^t f)v^*,\theta(P_1(\,^t f)v^*) \rangle 
 =\langle w^*_2,\theta(w^*_1)\rangle.
\end{align*}
So we have $^t\theta(w^*_1)=\theta(w^*_1)$
and $^t\theta=\theta$.

Take any
$w_1,w_2\in V$.
Then there are polynomials $P_1(T),P_2(T)\in\mathbb{C}[T]$
satisfying $w_1=P_1(f)v$ and $w_2=P_2(f)v$.
We have
\begin{align*}
 \langle \,^t\kappa(w_1), w_2 \rangle 
 =
 \langle \kappa(w_2), w_1 \rangle  
 &=
 \langle \kappa(P_2(f)v), P_1(f)v \rangle \\ 
 &=
 \langle \theta^{-1}f P_2(f)v,P_1(f)v \rangle \\
 &=
 \langle ^t (f  P_2(f)) \theta^{-1}(v),P_1(f)v \rangle \\
 &=
 \langle \theta^{-1}(v), f P_2(f) P_1(f) v \rangle \\
 &=
 \langle \theta^{-1}(v), f P_1(f)P_2(f)v \rangle \\ 
 &=\langle \kappa(P_1(f)v), P_2(f)v \rangle 
 =\langle \kappa(w_1), w_2 \rangle.
\end{align*}
So we have $^t\kappa(w_1)=\kappa(w_1)$
and $^t\kappa=\kappa$ holds.
\end{proof}

\begin{proposition}\label {prop:factorization-unique}
For $f\in C_{\varphi(T)}$, assume that
$\theta_1,\theta_2\colon V^{\vee}\xrightarrow{\sim} V$
are isomorphisms
and
$\kappa_1,\kappa_2\colon V\longrightarrow V^{\vee}$
are linear maps satisfying
$f=\theta_1\circ\kappa_1=\theta_2\circ\kappa_2$,
$^t\theta_1=\theta_1$, $^t\theta_2=\theta_2$,
$^t\kappa_1=\kappa_1$ and $^t\kappa_2=\kappa_2$.
Then there exists $\overline{P(T)}\in(\mathbb{C}[T]/(\varphi(T)))^{\times}$
satisfying
$\theta_2=\theta_1\circ P(\,^t f)$ and $\kappa_2=(P(\,^t f))^{-1}\circ\kappa_1$.
\end{proposition}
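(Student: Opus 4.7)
The plan is to reduce the uniqueness question to computing the $\mathbb{C}[T]$-module automorphism group of $V^\vee$, using the fact established in the proof of Proposition \ref{prop:factorization-lemma} that $V^\vee \cong \mathbb{C}[T]/(\varphi(T))$ as a $\mathbb{C}[T]$-module (with the action of $T$ given by $^t f$).

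The key first step is to observe that the symmetry assumptions $^t\theta_i = \theta_i$ and $^t\kappa_i = \kappa_i$ force each $\theta_i$ to be a $\mathbb{C}[T]$-module homomorphism (from $V^\vee$ with the $^tf$-action to $V$ with the $f$-action). Indeed, taking the transpose of $f = \theta_i \circ \kappa_i$ gives $^tf = \,^t\kappa_i \circ \,^t\theta_i = \kappa_i \circ \theta_i$, and therefore
\[
 \theta_i \circ \,^tf = \theta_i \circ \kappa_i \circ \theta_i = f \circ \theta_i,
\]
which is exactly the intertwining relation. An analogous computation shows $\kappa_i$ is also $\mathbb{C}[T]$-linear.

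Next, I would form the composition $\theta_1^{-1} \circ \theta_2 \colon V^\vee \to V^\vee$, which by the previous step is a $\mathbb{C}[T]$-module automorphism of $V^\vee$. Using the identification $V^\vee \cong \mathbb{C}[T]/(\varphi(T))$, the endomorphism ring is $\mathbb{C}[T]/(\varphi(T))$ itself (acting by multiplication), so $\theta_1^{-1} \circ \theta_2 = P(^tf)$ for some polynomial $P(T) \in \mathbb{C}[T]$, and the automorphism condition translates into $\overline{P(T)} \in (\mathbb{C}[T]/(\varphi(T)))^{\times}$. This immediately yields $\theta_2 = \theta_1 \circ P(^tf)$.

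Finally, substituting this into the identity $\theta_2 \circ \kappa_2 = f = \theta_1 \circ \kappa_1$ gives $\theta_1 \circ P(^tf) \circ \kappa_2 = \theta_1 \circ \kappa_1$; cancelling the invertible $\theta_1$ produces $\kappa_2 = P(^tf)^{-1} \circ \kappa_1$. There is no real obstacle in this argument; the only subtlety is the transpose manipulation used to promote symmetry into $\mathbb{C}[T]$-linearity, after which everything reduces to the elementary description of endomorphisms of a cyclic $\mathbb{C}[T]$-module.
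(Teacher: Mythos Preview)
Your proposal is correct and follows essentially the same approach as the paper: form $\sigma=\theta_1^{-1}\circ\theta_2$, show it commutes with ${}^tf$ (the paper does this by a direct two-sided computation, while you first establish the intertwining $\theta_i\circ{}^tf=f\circ\theta_i$ and then compose, which is a minor organizational variant), and then invoke $\End_{\mathbb{C}[T]}(V^{\vee})\cong\mathbb{C}[T]/(\varphi(T))$ to write $\sigma=P({}^tf)$.
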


\begin{proof}
Put $\sigma:=\theta_1^{-1}\circ\theta_2\colon V^{\vee}\longrightarrow V^{\vee}$.
Then
$^t f\circ \sigma=\,^t\kappa_1\circ\,^t\theta_1\circ\theta_1^{-1}\circ\theta_2
=\kappa_1\circ\theta_1\circ\theta_1^{-1}\circ\theta_2=\kappa_1\circ\theta_2$
and
$\sigma\circ \, ^t f=\theta_1^{-1}\circ\theta_2\circ\,^t\kappa_2\circ \,^t \theta_2
=\theta_1^{-1}\circ\theta_2\circ\kappa_2\circ\theta_2
=\theta_1^{-1}\circ f\circ\theta_2
=\theta_1^{-1}\circ\theta_1\circ\kappa_1\circ\theta_2
=\kappa_1\circ\theta_2$.
So $\sigma\circ\,^t f=\,^t f\circ\sigma$
and
$\sigma\colon V^{\vee}\xrightarrow{\sim} V^{\vee}$
becomes a $\mathbb{C}[T]$-isomorphism.
Since $\mathbb{C}[T]/(\varphi(T))\xrightarrow{\sim}\Hom_{\mathbb{C}[T]}(V^{\vee},V^{\vee})$,
there exists $\overline{P(T)}\in(\mathbb{C}[T]/(\varphi(T)))^{\times}$
satisfying $P(\,^t f)=\sigma=\theta_1^{-1}\circ\theta_2$.
So we have $\theta_1\circ P(\,^t f)=\theta_2$,
$\kappa_1=\theta_1^{-1}\circ f=\theta_1^{-1}\circ\theta_2\circ\kappa_2
=\sigma\circ\kappa_2$
and
$\kappa_2=\sigma^{-1}\circ\kappa_1=P(\,^t f)^{-1}\circ\kappa_1$.
\end{proof}

\subsection{An expression of the symplectic form on a $GL_r(\mathbb{C})$ adjoint orbit}
\label {subsection:expression of kirillov-kostant form}

Let the notations $V$, $\varphi(T)$, $r$ and $C_{\varphi(T)}$ be as in
subsection \ref {subsection:factorization of linear map}.
We set
\begin{align*}
 S(V^{\vee},V)&= \left\{ \theta\in\Hom_{\mathbb{C}}(V^{\vee},V)
 \left| \,^t\theta=\theta \right\}\right. \\
 S(V,V^{\vee})&= \left\{ \kappa\in\Hom_{\mathbb{C}}(V,V^{\vee})
 \left| \,^t\kappa=\kappa \right\}\right. 
\end{align*}
and
\[
 {\mathcal S}:=\left\{ (\theta,\kappa)\in  S(V^{\vee},V)\times S(V,V^{\vee})
 \left| 
 \begin{array}{l}
 \text {$\theta$ is isomorphic, $\varphi(\theta\circ\kappa)=0$ and the induced map} \\
 \text {$\mathbb{C}[T]/(\varphi(T))\ni\overline{P(T)}\mapsto
 P(\theta\circ\kappa) \in \End(V)$ is injective}
 \end{array}
 \right\}\right..
\]
Then there is an action of the commutative algebraic group
$(\mathbb{C}[T]/(\varphi(T)))^{\times}$ on ${\mathcal S}$ defined by
\[
 \overline{P(T)}\cdot (\theta,\kappa)
 =(\theta\circ P(\kappa\circ\theta) , \, P(\kappa\circ\theta)^{-1}\circ\kappa).
\]
for $\overline{P(T)} \in (\mathbb{C}[T]/(\varphi(T)))^{\times}$.
We can see by Proposition \ref {prop:factorization-lemma}
and Proposition \ref {prop:factorization-unique} that
the quotient of ${\mathcal S}$ by the action of $(\mathbb{C}[T]/(\varphi(T)))^{\times}$
is isomorphic to $C_{\varphi(T)}$:
\[
 {\mathcal S}/(\mathbb{C}[T]/(\varphi(T)))^{\times}\cong C_{\varphi(T)}.
\]
We describe the tangent space of $C_{\varphi(T)}$
at $f=\theta\circ\kappa$ via this isomorphism.
Let us consider the complex
\begin{equation} \label {equation:factorization complex}
 \mathbb{C}[T]/(\varphi(T)) \xrightarrow{d^0}
 S(V^{\vee},V)\oplus S(V,V^{\vee}) \xrightarrow{d^1}
 (\mathbb{C}[T]/(\varphi(T)))^{\vee}
\end{equation}
defined by
\begin{gather*}
  d^0(\overline{P(T)}) = (\theta\circ P(\,^tf) , \, -P(\,^tf)\circ\kappa)
 \hspace{30pt}
 \big( \overline{P(T)}\in \mathbb{C}[T]/(\varphi(T)) \big)  \\
  d^1(\tau,\xi) \colon \mathbb{C}[T]/(\varphi(T))\ni
 \overline{P(T)}\mapsto \Tr(P(f)\circ (\theta\circ\xi+\tau\circ\kappa))
 \in\mathbb{C}
 \hspace{20pt} 
 \big( (\tau,\xi)\in S(V^{\vee},V)\oplus S(V,V^{\vee}) \big).
\end{gather*}

\begin{proposition} \label {prop:tangent space of S}
 The tangent space $T_{\mathcal S}(\theta,\kappa)$
 of ${\mathcal S}$ at $(\theta,\kappa)$ is isomorphic to
 $\ker d^1$.
\end{proposition}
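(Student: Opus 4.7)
The plan is to unpack the defining conditions of ${\mathcal S}$ infinitesimally, prove one inclusion by a direct computation with the trace pairing, and close the reverse inclusion by a dimension count using the principal bundle structure established in Propositions \ref{prop:factorization-lemma} and \ref{prop:factorization-unique}. First, I would identify the tangent vectors at $(\theta,\kappa) \in {\mathcal S}$ as pairs $(\tau,\xi) \in S(V^{\vee}, V) \oplus S(V, V^{\vee})$ such that $(\theta + h\tau, \kappa + h\xi)$ defines a point of ${\mathcal S}$ over $\mathbb{C}[h]/(h^{2})$. The open conditions---invertibility of $\theta$ and injectivity of evaluation at $\theta\circ\kappa$---impose no first-order constraint, so the only equation is $\varphi\bigl((\theta+h\tau)\circ(\kappa+h\xi)\bigr) \equiv 0 \pmod{h^{2}}$. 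Writing $f = \theta\circ\kappa$ and $\delta f = \theta\circ\xi + \tau\circ\kappa$, this linearizes to $\varphi_{*}(\delta f) = 0$ in $\End(V)$, where $\varphi_{*}$ denotes the differential at $f$ of the polynomial map $g \mapsto \varphi(g)$.

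Next, I would prove the inclusion $\ker d^{1} \subseteq T_{\mathcal S}(\theta,\kappa)$. For $(\tau,\xi) \in \ker d^{1}$, the condition $\Tr\bigl(P(f)\,\delta f\bigr) = 0$ for every $P \in \mathbb{C}[T]/(\varphi(T))$ says exactly that $\delta f$ is orthogonal to $\mathbb{C}[f]$ under the trace pairing on $\End(V)$. The commutator map $A \mapsto [f, A]$ is skew-adjoint for this pairing and its kernel is the centralizer of $f$, which equals $\mathbb{C}[f]$ because the minimal polynomial of $f$ has degree $r = \dim V$; consequently $\mathbb{C}[f]^{\perp} = [f, \End(V)]$. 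Hence $\delta f = [A, f]$ for some $A \in \End(V)$, and the conjugation identity $(1+hA)\, f\, (1+hA)^{-1} \equiv f + h[A,f] \pmod{h^{2}}$ yields $\varphi(f + h\delta f) \equiv (1+hA)\, \varphi(f)\, (1+hA)^{-1} \equiv 0 \pmod{h^{2}}$, so $(\tau,\xi) \in T_{\mathcal S}(\theta,\kappa)$.

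For the reverse inclusion I would compare dimensions. Propositions \ref{prop:factorization-lemma} and \ref{prop:factorization-unique} realize ${\mathcal S}$ as the total space of a principal $(\mathbb{C}[T]/(\varphi(T)))^{\times}$-bundle over the adjoint orbit $C_{\varphi(T)}$. Since the centralizer of $f$ is $\mathbb{C}[f]$ of dimension $r$, the orbit $C_{\varphi(T)}$ is smooth of dimension $r^{2} - r$, and the structure group has dimension $r$, so the total space ${\mathcal S}$ is smooth of dimension $r^{2}$. Meanwhile $(\mathbb{C}[T]/(\varphi(T)))^{\vee}$ has dimension $r$, so $\dim \ker d^{1} \geq r(r+1) - r = r^{2}$. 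Combined with the inclusion from the previous paragraph, equality $T_{\mathcal S}(\theta,\kappa) = \ker d^{1}$ follows.

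The main obstacle is the dimension assertion $\dim T_{\mathcal S}(\theta,\kappa) = r^{2}$, or equivalently the containment $\ker \varphi_{*} \subseteq [\End(V), f]$. A direct algebraic proof is delicate when $\varphi$ has repeated roots, because then $\varphi'(f)$ is not invertible and the natural trace identity $\Tr\bigl(P(f)\varphi_{*}(\delta f)\bigr) = \Tr\bigl(P(f)\varphi'(f)\delta f\bigr)$ only controls $\delta f$ against the proper subspace $\mathbb{C}[f]\cdot\varphi'(f) \subsetneq \mathbb{C}[f]$. Importing the dimension from the smooth orbit $C_{\varphi(T)}$ via the principal bundle structure bypasses this difficulty entirely.
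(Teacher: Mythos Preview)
Your forward inclusion $\ker d^{1}\subseteq T_{\mathcal S}(\theta,\kappa)$ and your reduction of the tangent condition to $\varphi_{*}(\delta f)=0$ with $\delta f=\theta\xi+\tau\kappa$ are correct and match the paper's argument via Lemma~\ref{lemma:adjoint exact sequence}. The gap is in the reverse direction.

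Your dimension count needs the upper bound $\dim T_{\mathcal S}(\theta,\kappa)\le r^{2}$, which you deduce from smoothness of $\mathcal S$ as a principal bundle over $C_{\varphi(T)}$. But Propositions~\ref{prop:factorization-lemma} and~\ref{prop:factorization-unique} establish this bundle structure only \emph{set-theoretically}: they show the fibres of $(\theta,\kappa)\mapsto\theta\kappa$ are torsors for $(\mathbb C[T]/(\varphi))^{\times}$. The tangent space in the proposition is the Zariski tangent space of the \emph{scheme} cut out by $\varphi(\theta\kappa)=0$ (this is exactly how both you and the paper interpret $\mathbb C[h]/(h^{2})$-points), and that scheme could a priori be non-reduced. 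Passing through the surjection $(\tau,\xi)\mapsto\delta f$, the missing statement is precisely $\ker\varphi_{*}\subseteq[\End(V),f]$ at a regular $f$, i.e.\ smoothness of the scheme $\{g:\varphi(g)=0\}$ along its regular locus. Knowing that the underlying reduced orbit has dimension $r^{2}-r$ does not bound $\dim\ker\varphi_{*}$ from above.

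The paper supplies exactly this missing step by a short cyclic-module argument: since $f$ is regular, both $V$ (via $f$) and $V\otimes\mathbb C[t]/(t^{2})$ (via $f+t\,\delta f$) are free rank-one modules over $\mathbb C[T]/(\varphi)$ and $\mathbb C[t]/(t^{2})[T]/(\varphi)$ respectively; comparing the two module structures yields an automorphism $\mathrm{id}+tQ$ with $(\mathrm{id}+tQ)^{-1}f(\mathrm{id}+tQ)=f+t\,\delta f$, whence $\delta f=[f,Q]$. Your final paragraph correctly diagnoses why the trace identity $\Tr\bigl(P(f)\varphi_{*}(\delta f)\bigr)=\Tr\bigl(P(f)\varphi'(f)\,\delta f\bigr)$ alone is inadequate when $\varphi'(f)$ is not a unit, but the principal-bundle dimension count does not bypass this difficulty---it presupposes the very reducedness that the cyclic-module argument proves.
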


Before proving the proposition, we prove the following lemma.

\begin{lemma} \label {lemma:adjoint exact sequence}
For $f\in C_{\varphi(T)}$, the sequence
\[
 0 \longrightarrow \mathbb{C}[T]/(\varphi(T))
 \xrightarrow{\iota_f} \End_{\mathbb{C}}(V) \xrightarrow{\mathrm{ad}(f)}
 \End_{\mathbb{C}}(V) \xrightarrow{\pi_f}
 (\mathbb{C}[T]/(\varphi(T)))^{\vee}
 \longrightarrow 0
\]
is exact,
where $\iota_f$ is defined by $\iota_f(\overline{P(T)})=P(f)$
and $\pi_f$ is the dual of $\iota_f$.
\end{lemma}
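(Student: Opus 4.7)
The plan is to verify exactness term by term, using two standard facts about a cyclic (non-derogatory) endomorphism together with the non-degenerate trace pairing on $\End_{\mathbb{C}}(V)$.

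First I would handle the two ``easy'' ends. Injectivity of $\iota_f$ is immediate: $\iota_f(\overline{P(T)}) = P(f) = 0$ forces $\varphi(T) \mid P(T)$, because $\varphi(T)$ is by definition the minimal polynomial of $f$. Surjectivity of $\pi_f$ is then formal, since in finite dimensions $\pi_f$ is just the dual of the injective map $\iota_f$.

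Next I would establish exactness at the first $\End_{\mathbb{C}}(V)$, i.e.\ $\im \iota_f = \ker \mathrm{ad}(f)$. One inclusion is obvious: every polynomial in $f$ commutes with $f$. For the reverse inclusion, I would use that the hypothesis $\deg \varphi = r = \dim V$ means the minimal polynomial equals the characteristic polynomial, so $f$ is cyclic. Concretely, as in the proof of Proposition \ref{prop:factorization-lemma}, one has an isomorphism $V \cong \mathbb{C}[T]/(\varphi(T))$ of $\mathbb{C}[T]$-modules, and hence
\[
 \ker \mathrm{ad}(f) = \End_{\mathbb{C}[T]}(V) \cong \End_{\mathbb{C}[T]}\!\big(\mathbb{C}[T]/(\varphi(T))\big) \cong \mathbb{C}[T]/(\varphi(T)),
\]
which has dimension $r$. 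Since $\mathbb{C}[f] = \im \iota_f$ is already contained in $\ker \mathrm{ad}(f)$ and has the same dimension $r$ by injectivity of $\iota_f$, the two coincide.

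Finally I would prove exactness at the second $\End_{\mathbb{C}}(V)$, i.e.\ $\im \mathrm{ad}(f) = \ker \pi_f$, by invoking the non-degenerate trace pairing $\langle g, h \rangle = \Tr(gh)$ on $\End_{\mathbb{C}}(V)$. Unwinding the definition, $\pi_f(g) = 0$ says $\Tr(P(f) g) = 0$ for every $P$, i.e.\ $g \in (\im \iota_f)^{\perp} = \mathbb{C}[f]^{\perp}$. On the other hand, $\mathrm{ad}(f)$ is skew self-adjoint with respect to this pairing, since
\[
 \Tr\bigl([f,g] \, h\bigr) = \Tr(fgh) - \Tr(gfh) = -\Tr\bigl(g \, [f,h]\bigr),
\]
so $\im \mathrm{ad}(f) = (\ker \mathrm{ad}(f))^{\perp} = \mathbb{C}[f]^{\perp}$ as well, and the two agree. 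A quick dimension tally ($r - r^2 + r^2 - r = 0$) confirms the whole sequence is exact. The only nontrivial step is the centralizer computation for a cyclic endomorphism, and that is where the hypothesis $\deg \varphi = \dim V$ is genuinely used.
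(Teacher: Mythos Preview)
Your proof is correct and follows essentially the same approach as the paper's: both rely on the cyclic structure $V\cong\mathbb{C}[T]/(\varphi(T))$ to identify the centralizer, and on the trace pairing to handle exactness at the second $\End_{\mathbb{C}}(V)$. The only cosmetic difference is that for $\im\mathrm{ad}(f)=\ker\pi_f$ the paper verifies $\pi_f\circ\mathrm{ad}(f)=0$ by a direct trace computation and then invokes a dimension count, whereas you package the same computation as ``$\mathrm{ad}(f)$ is skew self-adjoint, hence $\im\mathrm{ad}(f)=(\ker\mathrm{ad}(f))^{\perp}$''; these are the same argument in different clothing.
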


{\it Proof of Lemma \ref {lemma:adjoint exact sequence}.}
The map $\iota_f$ is injective since $f$ belongs to
$C_{\varphi(T)}$.
Since the minimal polynomial of $f$ is of degree $r=\dim V$,
the linear map
\[
 \mathrm{ad}(f) \colon \End_{\mathbb{C}}(V) \ni g \mapsto f\circ g-g\circ f
 \in \End_{\mathbb{C}}(V)
\]
satisfies $\ker \mathrm{ad}(f)=\mathbb{C}[f]=\im \iota_f$.
In particular, we have $\rank\mathrm{ad}(f)=r^2-r$.
The map $\pi_f$ is given by
\[
 \pi_f(g) (\overline{P(T)})
 =
 \Tr ( g \circ P(f) )
\]
for $g\in\End_{\mathbb{C}}(V)$ and $\overline{P(T)}\in\mathbb{C}[T]/(\varphi(T))$.
So we have
\begin{align*}
 \pi_f(\mathrm{ad}(f)(g)) (\overline{P(T)})
 &=\Tr ( (f\circ g-g\circ f)(P(f)) \\
 &=\Tr (P(f)\circ f\circ g) - \Tr( g\circ f\circ P(f))
 =\Tr( f\circ P(f)\circ g) - \Tr (f\circ P(f)\circ g)
 =0
\end{align*}
for $g\in \End_{\mathbb{C}}(V)$ and $\overline{P(T)}\in \mathbb{C}[T]/(\varphi(T))$,
which means $\pi_f\circ\mathrm{ad}(f)=0$.
So we have
\[
 \im \mathrm{ad}(f)=\ker \pi_f
 =
 \left\{ g\in\End_{\mathbb{C}}(V) \left|
 \text{$\Tr(f^i\circ g)=0$ for $i=0,1,\ldots,r-1$} \right\}\right. ,
\]
because the right hand side is of dimension $r^2-r$.
Thus we have proved the lemma.
\hfill $\square$

\vspace{10pt}

\noindent
{\it Proof of Proposition \ref {prop:tangent space of S}.}
If we take $(\tau,\xi) \in \ker d^1$, we have
$\pi_f(\theta\circ\xi+\tau\circ\kappa)=d^1(\tau,\xi)=0$.
By Lemma \ref {lemma:adjoint exact sequence},
there is $g\in \End(V)$ satisfying
$\theta\circ\xi+\tau\circ\kappa=f\circ g-g\circ f$.
We write
$\varphi(T)=b_r T^r+b_{r-1}T^{r-1}+\cdots +b_1T+b_0$
with $b_r=1$.
Then the $\mathbb{C}[t]/(t^2)$-valued point
$(\theta+\tau\bar{t},\kappa+\xi\bar{t})$
of $S(V^{\vee},V)\times S(V,V^{\vee})$
satisfies
\begin{align*}
 \varphi((\theta+\tau\bar{t})\circ(\kappa+\xi\bar{t}))
 &=
 \varphi(f+(\theta\circ\xi+\tau\circ\kappa)\bar{t}) 
 =\varphi(f+(f\circ g-g\circ f)\bar{t}) 
 =
 \sum_{i=0}^r b_i \; (f+(f\circ g-g\circ f)\bar{t})^i \\
 &=
 \sum_{i=0}^r b_i \left( f^i +\sum_{j=0}^{i-1}f^j(f\circ g-g\circ f)f^{i-j-1}\bar{t}\right) 
 =
 \sum_{i=0}^r b_i \left( f^i + (f^i\circ g-g\circ f^i)\bar{t} \right) \\
 &=
 \varphi(f)+(\varphi(f)\circ g-g\circ\varphi(f))\bar{t}=0.
\end{align*}
So $(\theta+\tau\bar{t},\kappa+\xi\bar{t})$ gives a tangent vector of
${\mathcal S}$ at $(\theta,\kappa)$.

Conversely take a tangent vector of ${\mathcal S}$ 
and let $(\theta+\tau\bar{t},\kappa+\xi\bar{t})$ 
be the  corresponding $\mathbb{C}[t]/(t^2)$-valued point of ${\mathcal S}$.
Then we have
$\varphi((\theta+\tau\bar{t})\circ(\kappa+\xi\bar{t}))=0$ and
\begin{equation} \label {equation:injective ring homomorphism for adjoint orbit}
 \mathbb{C}[t]/(t^2)[T]/(\varphi(T))
 \ni \overline{P(T)}\mapsto P((\theta+\tau\bar{t})\circ(\kappa+\xi\bar{t}))
 \in \End_{\mathbb{C}[t]/(t^2)}(V\otimes_{\mathbb{C}}\mathbb{C}[t]/(t^2))
\end{equation}
is injective, whose cokernel is flat over $\mathbb{C}[t]/(t^2)$.
Recall that there is an isomorphism
$\sigma\colon \mathbb{C}[T]/(\varphi(T))\xrightarrow{\sim}V$.
So we can take a generator $v=\sigma(1)$ of $V$ as a $\mathbb{C}[T]$-module.
If we take a lift $\tilde{v}\in V\otimes\mathbb{C}[t]/(t^2)$ of
$v$, then $\tilde{v}$ becomes a generator of $V\otimes\mathbb{C}[t]/(t^2)$
as a $\mathbb{C}[t]/(t^2)[T]$-module with respect to the action of
$\mathbb{C}[t]/(t^2)[T]$ induced by the ring homomorphism 
(\ref {equation:injective ring homomorphism for adjoint orbit}).
So we have an isomorphism
\[
 \tilde{\sigma}\colon
 \mathbb{C}[t]/(t^2)[T]/(\varphi(T))\xrightarrow{\sim} V\otimes\mathbb{C}[t]/(t^2)
\]
satisfying $\tilde{\sigma}(1)=\tilde{v}$.
If we denote by $\mathrm{id}$ the identity map,
$\sigma\otimes\mathrm{id}\colon
\mathbb{C}[T]/(\varphi(T))\otimes\mathbb{C}[t]/(t^2)
\xrightarrow{\sim} V\otimes_{\mathbb{C}}\mathbb{C}[t]/(t^2)$
is another $\mathbb{C}[t]/(t^2)[T]$-isomorphism
with respect to the action of $\mathbb{C}[t]/(t^2)[T]$
on $V\otimes_{\mathbb{C}}\mathbb{C}[t]/(t^2)$ via the ring homomorphism
\[
 \mathbb{C}[t]/(t^2)[T] \ni P(T) \mapsto P(\theta\circ\kappa\otimes\mathrm{id})
 \in \End_{\mathbb{C}[t]/(t^2)}(V\otimes\mathbb{C}[t]/(t^2)).
\]
Composing $\tilde{\sigma}^{-1}$ with $\sigma\otimes\mathrm{id}$,
we obtain a $\mathbb{C}[t]/(t^2)$-automorphism of
$V\otimes\mathbb{C}[t]/(t^2)$
of the form
$\mathrm{id}+Q\bar{t}$ 
with $Q\in\End_{\mathbb{C}}(V)$
which makes the diagram
\[
 \begin{CD}
  V\otimes_{\mathbb{C}}\mathbb{C}[t]/(t^2) @> (\theta+\tau\bar{t})\circ(\kappa+\xi\bar{t}) >>
  V\otimes_{\mathbb{C}}\mathbb{C}[t]/(t^2)  \\
  @V \mathrm{id}+Q\bar{t}  V V     @V \mathrm{id}+Q\bar{t} V V \\
  V\otimes_{\mathbb{C}}\mathbb{C}[t]/(t^2) @> \theta\circ\kappa\otimes \mathrm{id} >>
  V\otimes_{\mathbb{C}}\mathbb{C}[t]/(t^2)
 \end{CD}
\]
commutative.
Then we have
\[
 (\theta\circ\xi+\tau\circ\kappa)\bar{t}
 =
 (\theta+\tau\bar{t})\circ(\kappa+\xi\bar{t}) - \theta\circ\kappa  
 =
 (\mathrm{id}-Q\bar{t})\circ(\theta\circ\kappa)\circ(\mathrm{id}+Q\bar{t})
 -\theta\circ\kappa
 =(f\circ Q-Q\circ f)\bar{t}
\]
and
\[
 \Tr(f^i\circ(\theta\circ\xi+\tau\circ\kappa))=
 \Tr(f^i(f\circ Q-Q\circ f))
 =\Tr(f^{i+1}\circ Q-Q\circ f^{i+1})=0
\]
for any $i\geq 0$.
Thus we have $(\tau,\xi)\in\ker d^1$.
By the correspondence $(\tau,\xi)\mapsto (\theta+\tau\bar{t},\kappa+\xi\bar{t})$,
we get the isomorphism from $\ker d^1$ to the tangent space of
${\mathcal S}$ at $(\theta,\kappa)$.
\hfill $\square$

We can see that
$\im (d^0)$ coincides with
the tangent space of the $(\mathbb{C}[T]/(\varphi(T))^{\times}$-orbit
of $(\theta,\kappa)$
in ${\mathcal S}$.
So the tangent space of
$C_{\varphi(T)}={\mathcal S}/(\mathbb{C}[T]/(\varphi(T))^{\times}$ at
$f=\theta\circ\kappa$
is isomorphic to
$T_{\mathcal S}(\theta,\kappa)/\im d^0$
which is the first cohomology of the complex (\ref {equation:factorization complex}):
\[
 T_{C_{\varphi(T)}}(f)\cong
 H^1\left(\mathbb{C}[T]/(\varphi(T)) \xrightarrow{d^0}
 S(V^{\vee},V)\oplus S(V,V^{\vee}) \xrightarrow{d^1}
 (\mathbb{C}[T]/(\varphi(T)))^{\vee}\right).
\]
We define a pairing
\[
 \omega_{C_{\varphi(T)}} \colon
 T_{C_{\varphi(T)}}(f) \times T_{C_{\varphi(T)}}(f)
 \longrightarrow \mathbb{C}
\]
by
\begin{equation} \label {equation:symplectic form for adjoint orbit}
 \omega_{C_{\varphi(T)}}( [(\tau,\xi)], [(\tau',\xi')]) =
 \frac{1}{2} \Tr(\tau\circ\xi'-\tau'\circ\xi).
\end{equation}
If $[(\tau,\xi)]=0$, then we can write
$\tau=\theta\circ P(\,^t f)$ and $\xi=-P(\,^t f)\circ\kappa$.
So we have
\[
 \Tr(\tau\circ\xi'-\tau'\circ\xi)=
 \Tr(\theta\circ P(\,^t f)\circ\xi'+\tau'\circ P(\,^t f)\circ\kappa)
 =\Tr(P(f)\circ(\theta\circ\xi'+\tau'\circ\kappa))=0.
\]
Similarly we can see that $ \Tr(\tau\circ\xi'-\tau'\circ\xi)=0$
if $[(\tau',\xi')]=0$.
Thus the pairing (\ref {equation:symplectic form for adjoint orbit})
is well-defined.
On the other hand, there is a well-known symplectic form so called
the Kirillov-Kostant form.
For two tangent vectors
$[(\tau,\xi)], [(\tau',\xi')] \in T_{C_{\varphi(T)}}(f)$
of $C_{\varphi(T)}$
at $f=\theta\circ\kappa$,
we can see by Lemma \ref {lemma:adjoint exact sequence}
that there exist $g, g' \in\Hom(V,V)$ satisfying
$f\circ g-g\circ f=\theta\circ\xi+\tau\circ\kappa$ and
$f\circ g'-g'\circ f=\theta\circ\xi'+\tau'\circ\kappa$.
The Kirillov-Kostant symplectic form $\omega_{\text{K-K}}$ is
defined in \cite[page 5, Definition 1]{Kirillov} by
\[
 \omega_{\text{K-K}}([(\tau,\xi)],[(\tau',\xi')])
 =\Tr(f\circ([g,g'])).
\]

\begin{proposition} \label {prop:kirillov-kostant-form}
 The pairing $\omega_{C_{\varphi(T)}}$ defined
 in (\ref {equation:symplectic form for adjoint orbit})
 coincides with the Kirillov-Kostant symplectic form $\omega_{\text{K-K}}$
 on the adjoint orbit $C_{\varphi(T)}$. 
\end{proposition}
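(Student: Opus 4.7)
The plan is to exploit $GL(V)$-equivariance to reduce the identity to a single convenient base point in $C_{\varphi(T)}$, and then verify it there by direct matrix computation.

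First, I would observe that both pairings are equivariant under the conjugation action of $GL(V)$ on $C_{\varphi(T)}$. Indeed, for $P\in GL(V)$, the map $f\mapsto PfP^{-1}$ is covered by $(\theta,\kappa)\mapsto (P\theta\,^tP,\,^tP^{-1}\kappa P^{-1})$ and on tangent vectors by $(\tau,\xi)\mapsto (P\tau\,^tP,\,^tP^{-1}\xi P^{-1})$. The formula $\tfrac{1}{2}\Tr(\tau\xi'-\tau'\xi)$ is then unchanged by cyclicity of trace, and $\omega_{\text{K-K}}$ is invariant by its intrinsic definition via the Lie bracket. Since $C_{\varphi(T)}$ is a single $GL(V)$-orbit, it suffices to verify the identity at one well-chosen $f_0$.

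Next, I would specialize to the case most directly needed in later sections, namely $\varphi(T)$ with distinct roots $\mu_1,\ldots,\mu_r$. Choose a basis of $V$ in which $f_0=\Diag(\mu_1,\ldots,\mu_r)$, and take the factorization $\theta_0=\mathrm{id}$, $\kappa_0=\Diag(\mu_1,\ldots,\mu_r)$; both are symmetric and satisfy $\theta_0\kappa_0=f_0$. For $(\tau,\xi),(\tau',\xi')\in\ker d^1$ with symmetric entries, the condition $(\tau,\xi)\in\ker d^1$ translates into the diagonal relations $\xi_{ii}=-\mu_i\tau_{ii}$ (and similarly for the primed pair), while a solution of $[f_0,g]=\theta_0\xi+\tau\kappa_0=\xi+\tau f_0$ is given by $g_{ij}=(\xi_{ij}+\mu_j\tau_{ij})/(\mu_i-\mu_j)$ for $i\ne j$, with $g_{ii}$ arbitrary (reflecting the $\mathbb{C}[f_0]$-ambiguity).

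One then computes
\[
 \Tr(f_0[g,g']) \;=\; \sum_{i\ne j}(\mu_i-\mu_j)\,g_{ij}g'_{ji},
\]
and pairing the $(i,j)$ and $(j,i)$ contributions (using the symmetry of $\tau,\tau',\xi,\xi'$) the $\mu_i-\mu_j$ factors cancel and the sum collapses to $\sum_{i<j}(\tau_{ij}\xi'_{ij}-\tau'_{ij}\xi_{ij})$. On the other hand, $\tfrac{1}{2}\Tr(\tau\xi'-\tau'\xi)=\tfrac{1}{2}\sum_{i,k}(\tau_{ik}\xi'_{ik}-\tau'_{ik}\xi_{ik})$ by symmetry of $\xi,\xi'$; the diagonal contributions vanish thanks to $\xi_{ii}=-\mu_i\tau_{ii}$, and the remaining off-diagonal part yields exactly $\sum_{i<k}(\tau_{ik}\xi'_{ik}-\tau'_{ik}\xi_{ik})$, matching the above.

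The main point of care is the bookkeeping: one must track the identifications of $V$ with $V^\vee$ induced by the chosen basis so that transposes and the trace pairings make unambiguous matrix-theoretic sense, and check that the whole expression is insensitive to the $\mathbb{C}[f_0]$-ambiguity in $g,g'$ — which holds because $\sigma'=\theta_0\xi'+\tau'\kappa_0$ lies in $\im\,\mathrm{ad}(f_0)$ and is therefore trace-orthogonal to $\mathbb{C}[f_0]$ by Lemma \ref{lemma:adjoint exact sequence}. The case where $\varphi(T)$ has repeated roots, not strictly needed for the moduli constructions later in the paper, can be handled either by replacing the diagonalization with the rational canonical form of $f_0$ and performing the analogous block computation, or by a Zariski-density argument on the entries of $(\theta,\kappa,\tau,\xi,\tau',\xi')$.
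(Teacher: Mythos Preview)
Your argument is correct for the case of distinct roots, and the equivariance reduction is sound; the explicit matrix computation checks out. However, your route differs substantially from the paper's. The paper gives a direct, coordinate-free computation valid for arbitrary $\varphi(T)$: it first shows that $\im d^0$ equals the kernel of $(a,b)\mapsto \theta\circ b+a\circ\kappa$, which allows replacing the representative $(\tau',\xi')$ by $(-g'\theta-\theta\,{}^tg',\,\kappa g'+{}^tg'\kappa)$; then a short trace manipulation using $\Tr(u\circ v)=\Tr(v\circ u)$ for $u\colon V\to V^\vee$, $v\colon V^\vee\to V$ converts $\Tr(f[g,g'])=\Tr((\theta\xi+\tau\kappa)g')$ directly into $\tfrac{1}{2}\Tr(\tau\xi'-\tau'\xi)$. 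This buys uniformity in $\varphi(T)$ and avoids any limiting or density argument. Your approach is more hands-on and perhaps easier to verify line by line in the semisimple case, but the general $\varphi(T)$ case remains only sketched: the Zariski-density suggestion is delicate because $C_{\varphi(T)}$ is fixed once $\varphi$ is, so one must work over a larger parameter space where $\varphi$ varies; the rational-canonical-form alternative would require a symmetric factorization of the companion matrix (which the paper in fact provides later, via the matrices $\Phi_1,\Phi_2$ in the proof of Proposition~\ref{prop-obstruction-class}), followed by a more intricate block computation.
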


\begin{proof}
Take any member
$(a,b)\in  S(V^{\vee},V)\oplus S(V,V^{\vee})$
satisfying
$\theta\circ b+ a\circ\kappa=0$.
Then we have
\[
 (\theta+a\, \bar{t})\circ(\kappa+b\, \bar{t})=\theta\circ\kappa
 =f
 \in \End_{\mathbb{C}[t]/(t^2)}(V\otimes_{\mathbb{C}}\mathbb{C}[t]/(t^2)),
\]
from which we can see
\[
 (\kappa+b\, \bar{t})\circ(\theta+a\, \bar{t})
 = \,^t(\kappa+b\, \bar{t})\circ\,^t(\theta+a\, \bar{t})
 =\,^t\left( (\theta+a\, \bar{t})\circ(\kappa+b\, \bar{t})\right)
 =\,^tf=\kappa\circ\theta.
\]
So we have
\begin{align*}
 (\mathrm{id}+\theta^{-1}a\, \bar{t})\circ \,^t f
 &= \theta^{-1}\circ(\theta+a\, \bar{t})\circ(\kappa+b\, \bar{t})\circ(\theta+a\, \bar{t}) \\
 &=\theta^{-1}\circ\theta\circ\kappa\circ(\theta+a\, \bar{t})
 =\kappa\circ\theta+\kappa\circ \theta\circ\theta^{-1}\circ a\, \bar{t}
 =\,^t f \circ (\mathrm{id}+\theta^{-1}\circ a\, \bar{t}).
\end{align*}
Then we have
$\theta^{-1}\circ a \in \End_{\mathbb{C}[T]}(V^{\vee})\cong\mathbb{C}[T]/(\varphi(T))$
and there exists $\overline{P(T)}\in\mathbb{C}[T]/(\varphi(T))$
satisfying $\theta^{-1}a=P(\,^tf)$.
So we have $a=\theta\circ P(\,^tf)$ and
$b=-\theta^{-1}\circ a\circ\kappa=-P(\,^tf)\circ\kappa$,
which mean that
$(a,b) \in \im (d^0)$.
Thus we have proved 
\begin{equation} \label {equation:kernel as tangent of orbit}
 \im (d^0)=
 \ker \Big( S(V^{\vee},V)\oplus S(V,V^{\vee})\ni (a,b)\mapsto
 \theta\circ b+ a\circ\kappa\in \Hom(V,V) \Big).
\end{equation}

Take two tangent vectors
$[(\tau,\xi)], [(\tau',\xi')] \in T_{C_{\varphi(T)}}(f)$
of $C_{\varphi(T)}$
at $f=\theta\circ\kappa$.
Since $(\tau,\xi), (\tau',\xi') \in \ker d^1$,
we can see from Lemma \ref {lemma:adjoint exact sequence}
that there exist $g, g' \in\Hom(V,V)$ satisfying
$f\circ g-g\circ f=\theta\circ\xi+\tau\circ\kappa$ and
$f\circ g'-g'\circ f=\theta\circ\xi'+\tau'\circ\kappa$.
Note that we have
\[
 \theta\circ(\kappa\circ g'+\,^t g'\circ\kappa)
 +(-g'\circ\theta-\theta\circ\,^t g')\circ\kappa
 =\theta\circ\kappa\circ g'-g'\circ\theta\circ\kappa
 =f\circ g'-g'\circ f=\theta\circ\xi'+\tau'\circ\kappa.
\]
By the equality (\ref {equation:kernel as tangent of orbit}),
we have
$[(\tau',\xi')]=[(-g'\circ\theta-\theta\circ\,^t g',\kappa\circ g'+\,^t g'\circ\kappa)]$
in $T_{C_{\varphi(T)}}(f)$
and we may assume that
$\tau'=-g'\circ\theta-\theta\circ\,^t g'$
and
$\xi'=\kappa\circ g'+\,^t g'\circ\kappa$.
We have
\begin{align*}
 \omega_{\text{K-K}}([(\tau,\xi)],[(\tau',\xi')])
 &=
 \Tr(f\circ([g,g'])) = \Tr( f\circ(g\circ g'-g'\circ g)) \\
 &=
 \Tr((f\circ g-g\circ f)\circ g'+(g\circ f\circ g'-f\circ g'\circ g)) \\
 &=
 \Tr((\theta\circ\xi+\tau\circ\kappa)\circ g')+\Tr(g\circ (f\circ g')-(f\circ g')\circ g) \\
 &=
 \Tr((\theta\circ\xi+\tau\circ\kappa)\circ g')
 =\Tr(g'\circ\theta\circ\xi)+\Tr(\tau\circ\kappa\circ g') \\
 &=
 \frac{1}{2}
 \left( \Tr(g'\circ \theta\circ\xi)+\Tr(\,^t\xi\circ\,^t\theta\circ\,^t g')
 +\Tr(\tau\circ\kappa\circ g')+\Tr(\,^t g'\circ\,^t\kappa\circ\,^t\tau) \right).
\end{align*}
\begin{claim}\label {claim:trace equality}
 $\Tr(u\circ v)=\Tr(v\circ u)$
 for any $u \in \Hom(V,V^{\vee})$ and any $v \in \Hom(V^{\vee},V)$.
\end{claim}

Using the above claim, we have
$\Tr(\,^t\xi\circ\,^t\theta\circ\,^t g')=\Tr(\,^t\theta\circ\,^t g'\circ\,^t\xi)
=\Tr(\theta\circ\,^t g'\circ\xi)$
and
$\Tr(\,^t g'\circ\,^t\kappa\circ\,^t\tau))=\Tr(\,^t\tau\circ \,^t g'\circ\,^t\kappa)
=\Tr(\tau\circ \,^t g'\circ\kappa)$.
So we have
\begin{align*}
 \omega_{\text{K-K}}([(\tau,\xi)],[(\tau',\xi')])
 &=
 \frac{1}{2}\left(\Tr(g'\circ \theta\circ\xi)+\Tr(\,^t\xi\circ\,^t\theta\circ\,^t g')
 +\Tr(\tau\circ\kappa\circ g')+\Tr(\,^t g'\circ\,^t\kappa\circ\,^t\tau)\right) \\
 &=
 \frac{1}{2}\left( \Tr((g'\circ\theta+\theta\circ\,^t g')\circ\xi)
 +\Tr(\tau\circ(\kappa\circ g'+\,^t g'\circ\kappa)) \right) \\
 &=\frac{1}{2} \left( \Tr(-\tau'\circ \xi)+\Tr(\tau\circ\xi') \right)
 =\omega_{C_{\varphi(T)}}([(\tau,\xi)],[(\tau',\xi')]).
\end{align*}

For the proof of Claim \ref {claim:trace equality},
we take a basis $e_1,\ldots,e_r$ of $V$
and its dual basis $e_1^*,\ldots,e_r^*$ of $V^{\vee}$.
If write $u(e_j)=\sum_{i=1}^r a_{i j} e^*_i$
and $v(e^*_l)=\sum_{k=1}^r b_{kl} e_k$,
then we have
\begin{align*}
 \Tr(u\circ v)
 &=
 \Tr\left( \sum_{i,l=1}^r \sum_{k=1}^r a_{i k} b_{kl} \, e^*_i\otimes e_l \right)
 =\sum_{k=1}^r \sum_{i=1}^r a_{i k}b_{k i} \\
 \Tr(v\circ u)
 &=
 \Tr\left( \sum_{j,k=1}^r\sum_{i=1}^r a_{i j} b_{k i} \, e_k\otimes e_j^*\right)
 =\sum_{i=1}^r \sum_{k=1}^r a_{i k} b_{k i}
\end{align*}
So we have $\Tr(u\circ v)=\Tr(v\circ u)$ and
Claim \ref {claim:trace equality} follows.
Thus we have proved $\omega_{\text{K-K}}=\omega_{C_{\varphi(T)}}$.
\end{proof}

\section{Algebraic construction of an unfolding of the moduli space
of unramified irregular singular connections}
\label {section:algebraic moduli construction}

\subsection{Regular singular and unramified irregular singular connections
as $(\bnu,\bmu)$-connections}

Let $C$ be a complex smooth projective irreducible curve of genus $g$.
We take an effective divisor $D\subset C$,
which has a decomposition
$D=D^{(1)}+D^{(2)}+\cdots+D^{(n)}
=D^{(1)}\sqcup\cdots\sqcup D^{(n)}$,
where each $D^{(i)}$ is an effective divisor of degree $m_i$
and $D^{(i)}\cap D^{(i')}=\emptyset$ for $i\neq i'$.
We write
$D^{(i)}=p^{(i)}_1+p^{(i)}_2+\cdots +p^{(i)}_{m_i}$  for $1\leq i\leq n$,
where each $p^{(i)}_j$ is a reduced point in $C$ and it may be possible
that $p^{(i)}_j=p^{(i)}_{j'}$ for $j\neq j'$.

Using the Chinese remainder theorem
\[
 {\mathcal O}_{2D^{(i)}}\cong
 \prod_{p\in D^{(i)}} {\mathcal O}_{2D^{(i)},p},
\]
we can choose $\bar{z}^{(i)}\in{\mathcal O}_{2D^{(i)}}$ satisfying
$\bar{z}^{(i)}(p^{(i)}_j)\neq \bar{z}^{(i)}(p^{(i)}_{j'})$
for $p^{(i)}_j\neq p^{(i)}_{j'}$ and
$d\bar{z}^{(i)}|_{p^{(i)}_j}\neq 0\in\Omega^1_C|_{p^{(i)}_j}$ for $j=1,\ldots,m_i$.
We write $\bar{z}^{(i)}_j:=\bar{z}^{(i)}-\bar{z}^{(i)}(p^{(i)}_j)$,
where $\bar{z}^{(i)}(p^{(i)}_j)\in\mathbb{C}$ is the value of $\bar{z}^{(i)}$ at $p^{(i)}_j$.
We take local lifts
$z^{(i)} \in {\mathcal O}_{\mathcal C}$ of $\bar{z}^{(i)}$,
put $z^{(i)}_j:=z^{(i)}-z^{(i)}(p^{(i)}_j)$ and define
\begin{equation} \label {equation: unfolding equation on finite scheme}
 \frac{d\bar{z}^{(i)}}{\bar{z}^{(i)}_1\bar{z}^{(i)}_2\cdots \bar{z}^{(i)}_{m_i}}
 :=
 \frac{dz^{(i)}}{z^{(i)}_1 z^{(i)}_2\cdots z^{(i)}_{m_i}} \bigg|_{D^{(i)}}
 \in \Omega^1_C(D)|_{D^{(i)}}
\end{equation}
which becomes a local basis of $\Omega^1_C(D)|_{D^{(i)}}$.
Note that the above definition is independent of the choice of representatives
$z^{(i)}$ of $\bar{z}^{(i)}$.
We denote the multiplicity of $D^{(i)}$ at each $p\in D^{(i)}$ by $m^{(i)}_p$.
If $l_1,\ldots,l_{m_i}$ are integers satisfying
$0\leq l_1,\ldots,l_{m_i}\leq 1$,
there is a unique decomposition
\begin{equation} \label {equation:factor decomposition}
 \frac{d\bar{z}^{(i)}}{(\bar{z}^{(i)}_1)^{l_1}(\bar{z}^{(i)}_2)^{l_2}\cdots (\bar{z}^{(i)}_{m_i})^{l_{m_i}}}
 =
 \sum_{p\in D^{(i)}}
 \sum_{1\leq j\leq m^{(i)}_p} \frac{ a^{(i)}_{p,j} \; d\bar{z}^{(i)} } { (\bar{z}^{(i)}-\bar{z}^{(i)}(p)) )^j }
\end{equation}
with $a^{(i)}_{p,j}\in\mathbb{C}$.
Since $a^{(i)}_{p,j}$ is determined by
\[
 a^{(i)}_{p,j}=
 \lim_{z^{(i)} \to p}  \; \frac {1} { (m^{(i)}_p-j) !} \;
 \frac { d^{m^{(i)}_p-j} } { d\, (z^{(i)})^{m^{(i)}_p-j} }
 \left( \frac { (z^{(i)}-z^{(i)}(p))^{m^{(i)}_p} } { z^{(i)}_1\cdots z^{(i)}_{m_i} } \right) ,
\]
we can see that $a^{(i)}_{p,j}$ is independent of
the choice of the lift $z^{(i)}$ of $\bar{z}^{(i)}$.
Then we define
\[
 \res_p \left( \frac{d\bar{z}^{(i)}}{(\bar{z}^{(i)}_1)^{l_1}\cdots(\bar{z}^{(i)}_{m_i})^{l_{m_i}}} \right)
 := a^{(i)}_{p,1}.
\]

\begin{lemma} \label {lemma:residue-identity}
If $l_1,\ldots,l_{m_i}$ are integers satisfying $0\leq l_1,\ldots,l_{m_i}\leq 1$ and
$l_1+\cdots+l_{m_i}\geq 2$,
the equality
\[
 \sum_{p\in D^{(i)}}\res_{p}\left(
 \frac{d\bar{z}^{(i)}}{(\bar{z}^{(i)}_1)^{l_1}\cdots(\bar{z}^{(i)}_{m_i})^{l_{m_i}}}
 \right)=0
\]
holds.
\end{lemma}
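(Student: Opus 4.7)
My plan is to reduce the identity to the residue theorem on $\mathbb{P}^1$. Although the symbol $\frac{d\bar{z}^{(i)}}{(\bar{z}^{(i)}_1)^{l_1}\cdots(\bar{z}^{(i)}_{m_i})^{l_{m_i}}}$ is defined only as an element of $\Omega^1_C(D)|_{D^{(i)}}$, the coefficients $a^{(i)}_{p,1}$ in (\ref{equation:factor decomposition}) depend only on the principal part of a genuine rational 1-form. More precisely, I would fix a single local lift $z=z^{(i)}$ of $\bar{z}^{(i)}$ in a neighborhood of the support of $D^{(i)}$ and form the rational 1-form
\[
\Omega:=\frac{dz}{(z-a_1)^{l_1}(z-a_2)^{l_2}\cdots(z-a_{m_i})^{l_{m_i}}}
\]
on $\mathbb{P}^1$, where $a_j:=z^{(i)}(p^{(i)}_j)\in\mathbb{C}$.

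The limit formula for $a^{(i)}_{p,j}$ displayed just before the statement of the lemma is exactly the standard formula for the coefficient of $(z-z^{(i)}(p))^{-j}$ in the partial fraction expansion of $\Omega$; consequently, the residue of $\Omega$ at the finite point $z=z^{(i)}(p)$ equals $a^{(i)}_{p,1}$, i.e.\ the quantity that the lemma denotes by $\res_p\!\bigl(\cdots\bigr)$. By the classical residue theorem on $\mathbb{P}^1$, the sum of the residues of $\Omega$ over all points of $\mathbb{P}^1$ vanishes, so the asserted identity is equivalent to $\res_{\infty}(\Omega)=0$.

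To evaluate the residue at infinity I would change coordinates to $w=1/z$, under which $dz=-dw/w^2$ and $z-a_j=(1-a_jw)/w$, obtaining
\[
\Omega=-\frac{w^{L-2}}{\prod_{j=1}^{m_i}(1-a_jw)^{l_j}}\,dw, \qquad L:=\sum_{j=1}^{m_i}l_j.
\]
Under the hypothesis $L\geq 2$ the coefficient of $dw$ is holomorphic at $w=0$ (in fact it vanishes to order $L-2$), so $\res_{\infty}(\Omega)=0$ and the lemma follows. I expect no serious obstacle; the only point that needs a small verification is the compatibility between the paper's Laurent-coefficient definition of $a^{(i)}_{p,j}$ and the partial fraction coefficient of $\Omega$, which is immediate from the explicit limit formula.
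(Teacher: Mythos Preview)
Your proposal is correct and follows essentially the same route as the paper: both arguments identify the defined residues with the ordinary residues of a rational $1$-form on $\mathbb{P}^1$ and then invoke the residue theorem, showing that the residue at $\infty$ vanishes because the total pole order is at least $2$. The only cosmetic difference is that the paper first reduces to the case $l_1=\cdots=l_{m_i}=1$ and phrases the computation at infinity as a contour integral, whereas you keep the general $l_j$ and carry out the coordinate change $w=1/z$ explicitly.
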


\begin{proof}
It is sufficient to prove the equality for the case
$l_1=l_2=\cdots=l_{m_i}=1$.
Since the equality which we want is a formal equality determined by
(\ref  {equation:factor decomposition}),
it is sufficient to prove the equality
\begin{equation}\label{equation:residue-identity-complex-plane}
 \sum_{p\in\{p_1,\ldots,p_m\}}\res_{z=p}\left( 
 \frac{dz}{(z-p_1)(z-p_2)\cdots(z-p_m)} \right)=0
\end{equation}
when $z$ is a coordinate of the complex plane $\mathbb{C}$,
$m\geq 2$ and $p_1\ldots,p_m\in\mathbb{C}$ may not be distinct.
If we take a circle $\gamma$ in $\mathbb{C}$
which is a boundary of a large disk containing all the points
$p_1,\ldots,p_m$ within,
then we have
\begin{align*}
 \sum_{p\in\{p_1,\ldots,p_m\}}\res_{z=p}\left( 
 \frac{dz}{(z-p_1)(z-p_2)\cdots(z-p_m)} \right)
 &=\frac{1}{2\pi\sqrt{-1}}\int_{\gamma}\frac{dz}{(z-p_1)(z-p_2)\cdots(z-p_m)} \\
 &=-\res_{z=\infty}\left( 
 \frac{dz}{(z-p_1)(z-p_2)\cdots(z-p_m)} \right)=0
\end{align*}
because $m\geq 2$.
Thus the equality (\ref{equation:residue-identity-complex-plane}) holds.
\end{proof}

We take 
$\bmu=(\mu^{(i)}_j)^{1\leq i\leq n}_{1\leq j\leq r}
\in H^0(D^{(i)}, {\mathcal O}_{D^{(i)}}^{nr})$
such that
$\mu^{(i)}_1|_p,\ldots,\mu^{(i)}_r|_p$
are mutually distinct at any point $p\in D^{(i)}$.
Then we define a polynomial
$\varphi_{\bmu}^{(i)}(T) \in
H^0 ( D^{(i)}, {\mathcal O}_{D^{(i)}} )[T]$
by setting
\[
 \varphi_{\bmu}^{(i)}(T):=\prod_{k=1}^r(T-\mu^{(i)}_k).
\]
We fix a tuple of complex numbers
$\blambda=(\lambda^{(i)}_k)^{1\leq i\leq n}_{1\leq k\leq r}\in\mathbb{C}^{nr}$
satisfying
$\sum_{i=1}^n\sum_{k=1}^r \lambda^{(i)}_k \in\mathbb{Z}$ and
put
\[
 a:=-\sum_{i=1}^n\sum_{k=1}^r \lambda^{(i)}_k .
\]
For each $i$, we take a polynomial 
$\nu^{(i)}(T)=c^{(i)}_0+c^{(i)}_1T+\cdots+c^{(i)}_{r-1}T^{r-1}
\in H^0( D^{(i)}, {\mathcal O}_{D^{(i)}} )[T]$
such that the expression
\[
 \nu^{(i)}(\mu^{(i)}_k)
 =
 \sum_{\genfrac{}{}{0pt}{}{ 0\leq l_1,\ldots,l_{m_i} \leq 1,} {0\leq l_1+\cdots+l_{m_i}<m_i} }
 a^{(i)}_{k,l_1,\ldots,l_{m_i}}
 (\bar{z}^{(i)}_1)^{l_1}(\bar{z}^{(i)}_2)^{l_2}\cdots(\bar{z}^{(i)}_{m_i})^{l_{m_i}}
\]
with $a^{(i)}_{k,l_1,\ldots,l_{m_i}}\in\mathbb{C}$ satisfies the equality
\begin{equation}\label{equation:residue-condition}
 \lambda^{(i)}_k=
 ( a^{(i)}_{k,0,1,\ldots,1}+a^{(i)}_{k,1,0,1,\ldots,1}+\cdots+a^{(i)}_{k,1,\ldots,1,0} )
\end{equation}
for any $i,k$.
We can see by Lemma \ref{lemma:residue-identity} that
\begin{align*}
 \sum_{p\in D^{(i)}}\res_{p}\left( \nu^{(i)}(\mu^{(i)}_k)
 \frac{d\bar{z}^{(i)}}{\bar{z}^{(i)}_1\cdots\bar{z}^{(i)}_{m_i}}
 \right)
 &=
 \sum_{ \genfrac{}{}{0pt}{}{ 0\leq  l_1,\ldots,l_{m_i}\leq 1}{0\leq l_1+\cdots+l_{m_i}<m_i}}
 a^{(i)}_{k,l_1,\ldots,l_{m_i}} 
 \sum_{p\in D^{(i)}} 
 \res_p \left( \frac{d\bar{z}^{(i)}}{(\bar{z}^{(i)}_1)^{1-l_1}\cdots(\bar{z}^{(i)}_{m_i})^{1-l_{m_i}}} \right) \\
 &=
 \sum_{s=1}^{m_i} a^{(i)}_{k,1,\ldots,l_s=0,\ldots,1}   
 \sum_{p\in D^{(i)}} \res_p\left( \frac{d\bar{z}^{(i)}}{\bar{z}^{(i)}_s} \right) \\
 &=
 a^{(i)}_{k,0,1,\ldots,1}+a^{(i)}_{k,1,0,1,\ldots,1}+\cdots+a^{(i)}_{k,1,\ldots,1,0} .
\end{align*}
So (\ref{equation:residue-condition}) means the equality
\begin{equation} \label {equation:residue-equality}
 \lambda^{(i)}_k=\sum_{p\in D^{(i)}}
 \res_p\left(\nu^{(i)}(\mu^{(i)}_k)
 \frac{d\bar{z}^{(i)}}{\bar{z}^{(i)}_1\bar{z}^{(i)}_2\cdots \bar{z}^{(i)}_{m_i}}\right)
\end{equation}
where $\sum_{p\in D^{(i)}}$ runs over the set theoretical points $p$ of $D^{(i)}$.

We assume the following assumption
on $\bnu=(\nu^{(i)}(T))^{1\leq i\leq n}$:
\begin{assumption} \label {assumption-generic} \rm
 For each $i$,
 $\nu^{(i)}(\mu^{(i)}_1)\big|_p,\ldots,\nu^{(i)}(\mu^{(i)}_r)\big|_p$
 are mutually distinct at any point $p\in D^{(i)}$.
\end{assumption}

\begin{definition} \label {def-connection} \rm
We say that a tuple
$(E,\nabla,\{N^{(i)}\}_{1\leq i\leq n})$ is a $(\bnu,\bmu)$-connection on $(C,D)$ if
\begin{itemize}
\item[(i)] $E$ is an algebraic vector bundle on $C$ of rank $r$ and degree $a$,
\item[(ii)] $\nabla\colon E\longrightarrow E\otimes\Omega^1_C(D)$
is an algebraic connection on $E$ admitting poles along $D$,
\item[(iii)] 
$N^{(i)}\colon E|_{D^{(i)}}\longrightarrow
E|_{D^{(i)}}$
is an ${\mathcal O}_{D^{(i)}}$-homomorphism satisfying
$\varphi^{(i)}_{\bmu}(N^{(i)})=0$,
the homomorphism
\begin{equation} \label {equation:injectivity in definition}
 {\mathcal O}_{D^{(i)}}[T]/(\varphi^{(i)}_{\bmu}(T))
 \ni \overline{P(T)}\mapsto P(N^{(i)}) \in \End(E|_{D^{(i)}})
\end{equation}
is injective  and
$\displaystyle\nu^{(i)}(N^{(i)})
\frac{d\bar{z}^{(i)}}{\bar{z}^{(i)}_1\bar{z}^{(i)}_2\cdots \bar{z}^{(i)}_{m_i}}
=\nabla|_{D^{(i)}}$
for $1\leq i\leq n$.
\end{itemize}
\end{definition}

\begin{remark} \label {remark:injectivity implies subbundle}
\rm
The injectivity of the homomorphism (\ref {equation:injectivity in definition})
in Definition \ref {def-connection} implies that
${\mathcal O}_{D^{(i)}}[T]/(\varphi^{(i)}_{\bmu}(T))$
becomes an ${\mathcal O}_{D^{(i)}}$-subbundle of
$\End(E|_{D^{(i)}})$.
\end{remark}

\begin{proposition}
Assume that $D$ is a reduced divisor on $C$.
In other words, we assume that
$p^{(i)}_j\neq p^{(i)}_{j'}$ for $j\neq j'$.
Then giving a $(\bnu,\bmu)$-connection on $(C,D)$
is equivalent to giving a regular singular connection
$(E,\nabla)$ on $C$ admitting poles along $D$
whose residue $\res_{p^{(i)}_j}(\nabla)$
at $p^{(i)}_j$ has the distinct eigenvalues
\[
 \left.\left\{
 \nu^{(i)}(\mu^{(i)}_k) \big| _{p^{(i)}_j}\res_{p^{(i)}_j}
 \left(\dfrac{d\bar{z}^{(i)}}{\bar{z}^{(i)}_1\bar{z}^{(i)}_2\cdots \bar{z}^{(i)}_{m_i}}\right)
 \right| 1\leq k \leq r \right\}.
\]
\end{proposition}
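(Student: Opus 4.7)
The plan is to prove both directions by analyzing the situation pointwise at each $p^{(i)}_j$, exploiting that $D$ reduced means ${\mathcal O}_{D^{(i)}}$ splits as a product of copies of $\mathbb{C}$ indexed by the points of $D^{(i)}$, so that all the defining data of a $(\bnu,\bmu)$-connection localize independently to each point. The strategy is to write down maps in both directions and verify they are mutually inverse.

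For the forward direction, given $(E,\nabla,\{N^{(i)}\})$ I would send it to $(E,\nabla)$ and check the pole and eigenvalue assertions. Because $d\bar{z}^{(i)}/(\bar{z}^{(i)}_1\cdots\bar{z}^{(i)}_{m_i})$ has only simple poles along $D^{(i)}$, the identity $\nabla|_{D^{(i)}}=\nu^{(i)}(N^{(i)})\,d\bar{z}^{(i)}/(\bar{z}^{(i)}_1\cdots\bar{z}^{(i)}_{m_i})$ forces $\nabla$ to be regular singular. At each $p^{(i)}_j$ the injectivity of (\ref{equation:injectivity in definition}) localizes to say that the minimal polynomial of $N^{(i)}|_{p^{(i)}_j}$ has degree $r$, hence equals $\varphi^{(i)}_{\bmu}|_{p^{(i)}_j}$; since the $\mu^{(i)}_k|_{p^{(i)}_j}$ are distinct, $N^{(i)}|_{p^{(i)}_j}$ is diagonalizable with eigenvalues exactly $\mu^{(i)}_1|_{p^{(i)}_j},\dots,\mu^{(i)}_r|_{p^{(i)}_j}$. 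Therefore $\nu^{(i)}(N^{(i)})|_{p^{(i)}_j}$ has eigenvalues $\nu^{(i)}(\mu^{(i)}_k)|_{p^{(i)}_j}$, and taking $\res_{p^{(i)}_j}$ of the defining equation gives the eigenvalues prescribed in the statement.

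For the reverse direction, given a regular singular $(E,\nabla)$ with $\res_{p^{(i)}_j}(\nabla)$ having the prescribed distinct eigenvalues, I would construct $N^{(i)}$ pointwise. At each $p^{(i)}_j$, decompose $E|_{p^{(i)}_j}=\bigoplus_{k=1}^{r}V_k$ into one-dimensional eigenspaces of $\res_{p^{(i)}_j}(\nabla)$, where $V_k$ is labelled by the index $k$ for which $\nu^{(i)}(\mu^{(i)}_k)|_{p^{(i)}_j}\,\res_{p^{(i)}_j}\!\big(d\bar{z}^{(i)}/(\bar{z}^{(i)}_1\cdots\bar{z}^{(i)}_{m_i})\big)$ is the corresponding eigenvalue. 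The labelling is canonical by Assumption \ref{assumption-generic}, which makes $k\mapsto\nu^{(i)}(\mu^{(i)}_k)|_{p^{(i)}_j}$ injective. Define $N^{(i)}|_{p^{(i)}_j}$ to act by the scalar $\mu^{(i)}_k|_{p^{(i)}_j}$ on $V_k$, and assemble over $j$ to get $N^{(i)}\in\End(E|_{D^{(i)}})$. By construction $\varphi^{(i)}_{\bmu}(N^{(i)})=0$, the minimal polynomial of $N^{(i)}|_{p^{(i)}_j}$ has degree $r$ so (\ref{equation:injectivity in definition}) is injective factor by factor and hence globally, and $\nu^{(i)}(N^{(i)})|_{p^{(i)}_j}$ equals $\res_{p^{(i)}_j}(\nabla)$ divided by the scalar residue of the form, which upgrades to $\nu^{(i)}(N^{(i)})\,d\bar{z}^{(i)}/(\bar{z}^{(i)}_1\cdots\bar{z}^{(i)}_{m_i})=\nabla|_{D^{(i)}}$.

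To close, I would check the two constructions are mutually inverse. In one direction, starting from $(E,\nabla,\{N^{(i)}\})$ the forward map records exactly the residue-eigenvalue data used by the reverse map to rebuild $N^{(i)}$, and uniqueness of diagonalization with distinct eigenvalues returns the original $N^{(i)}$. In the other direction, the output $\nabla|_{D^{(i)}}$ of the reverse map agrees with the original restriction by construction. The only nontrivial point, and the main conceptual obstacle, is justifying that the eigenspaces of $\res_{p^{(i)}_j}(\nabla)$ admit a \emph{canonical} labelling by the index $k$: this is exactly the content of Assumption \ref{assumption-generic}, and without it the reverse construction would only be defined up to a permutation of the $\mu^{(i)}_k$.
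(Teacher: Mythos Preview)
Your proposal is correct and follows essentially the same approach as the paper. The only cosmetic difference is in the reverse construction: where you define $N^{(i)}|_{p^{(i)}_j}$ directly via the eigenspace decomposition of $\res_{p^{(i)}_j}(\nabla)$, the paper instead interpolates a polynomial $\psi^{(i)}_j$ with $\psi^{(i)}_j\big(\nu^{(i)}(\mu^{(i)}_k)|_{p^{(i)}_j}\,\res_{p^{(i)}_j}(d\bar{z}^{(i)}/(\bar{z}^{(i)}_1\cdots\bar{z}^{(i)}_{m_i}))\big)=\mu^{(i)}_k|_{p^{(i)}_j}$ and sets $N^{(i)}|_{p^{(i)}_j}:=\psi^{(i)}_j(\res_{p^{(i)}_j}(\nabla))$; this yields the same operator since both $\res_{p^{(i)}_j}(\nabla)$ and the target are simultaneously diagonalized with the same eigenspaces.
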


\begin{proof}
Let $(E,\nabla,\{N^{(i)}\})$ be a $(\bnu,\bmu)$-connection on $(C,D)$.
The restriction $N^{(i)}|_{p^{(i)}_j}\colon E|_{p^{(i)}_j}\longrightarrow E|_{p^{(i)}_j}$
of $N^{(i)}$ to the fiber $E|_{p^{(i)}_j}$ of $E$ at $p^{(i)}_j$
satisfies $\prod_{k=1}^r(N^{(i)}|_{p^{(i)}_j}-\mu^{(i)}_k\mathrm{id}_{E|_{p^{(i)}_j}})=0$,
because $\varphi^{(i)}_{\bmu}(N^{(i)})=0$.
From the  injectivity of the homomorphism (\ref {equation:injectivity in definition})
in Definition \ref {def-connection}, the induced homomorphism
\[
 \mathbb{C}[T]/(\varphi^{(i)}_{\bmu}(T))
 \ni \overline{P(T)} \mapsto P(N^{(i)}|_{p^{(i)}_j})
 \in \End(E|_{p^{(i)}_j})
\]
is injective.
So $N^{(i)}|_{p^{(i)}_j}$ has the distinct eigenvalues
$\mu^{(i)}_1|_{p^{(i)}_j},\ldots,\mu^{(i)}_r|_{p^{(i)}_j}$.
By Assumption \ref{assumption-generic}, the linear endomorphism on $E|_{p^{(i)}_j}$
\[
 \nu^{(i)}(N^{(i)})|_{p^{(i)}_j}=
 c^{(i)}_0|_{p^{(i)}_j}\mathrm{id}_{E|_{p^{(i)}_j}}+c^{(i)}_1|_{p^{(i)}_j} N^{(i)}|_{p^{(i)}_j}
 +\cdots+c^{(i)}_r |_{p^{(i)}_j}(N^{(i)}|_{p^{(i)}_j})^{m_ir-r}
 \colon E|_{p^{(i)}_j} \longrightarrow E|_{p^{(i)}_j}
\]
has the distinct eigenvalues
$\nu^{(i)}(\mu^{(i)}_1)|_{p^{(i)}_j},\ldots,\nu^{(i)}(\mu^{(i)}_r)|_{p^{(i)}_j}$.
Since $\displaystyle\nu^{(i)}(N^{(i)})
\frac{d\bar{z}^{(i)}}{\bar{z}^{(i)}_1\bar{z}^{(i)}_2\cdots \bar{z}^{(i)}_{m_i}}
=\nabla|_{D^{(i)}}$,
the residue homomorphism
$\res_{p^{(i)}_j}(\nabla)\colon E|_{p^{(i)}_j} \longrightarrow E|_{p^{(i)}_j}$
has the eigenvalues
\[
 \left.\left\{
 \nu^{(i)}(\mu^{(i)}_k) \big| _{p^{(i)}_j}\res_{p^{(i)}_j}
 \left(\dfrac{d\bar{z}^{(i)}}{\bar{z}^{(i)}_1\bar{z}^{(i)}_2\cdots \bar{z}^{(i)}_{m_i}}\right)
 \right| 1\leq k \leq r \right\}.
\]

Conversely let $E$ be a vector bundle on $C$ of rank $r$ and
$\nabla\colon E\longrightarrow E\otimes\Omega^1_C(D)$
be a connection whose residue $\res_{p^{(i)}_j}(\nabla)$ at $p^{(i)}_j$
has the distinct eigenvalues
$\left.\left\{
\nu^{(i)}(\mu^{(i)}_k) \big| _{p^{(i)}_j}\res_{p^{(i)}_j}
\left(\dfrac{d\bar{z}^{(i)}}{\bar{z}^{(i)}_1\bar{z}^{(i)}_2\cdots \bar{z}^{(i)}_{m_i}}\right)
\right| 1\leq k \leq r \right\}$.
Since the diagonal matrix
\[
 R=
 \begin{pmatrix}
  \nu^{(i)}(\mu^{(i)}_1) \big| _{p^{(i)}_j}\res_{p^{(i)}_j}
  \left(\dfrac{d\bar{z}^{(i)}}{\bar{z}^{(i)}_1\bar{z}^{(i)}_2\cdots \bar{z}^{(i)}_{m_i}}\right)
  & \cdots & 0 \\
  \vdots & \ddots & \vdots \\
  0 & \cdots &
 \nu^{(i)}(\mu^{(i)}_r) \big| _{p^{(i)}_j}\res_{p^{(i)}_j}
 \left(\dfrac{d\bar{z}^{(i)}}{\bar{z}^{(i)}_1\bar{z}^{(i)}_2\cdots \bar{z}^{(i)}_{m_i}}\right)
 \end{pmatrix}
\]
has the distinct eigenvalues and commutes with the diagonal matrix
$N=
\begin{pmatrix}
  \mu^{(i)}_1|_{p^{(i)}_j} & \cdots & 0 \\
  \vdots & \ddots & \vdots \\
  0 & \cdots & \mu^{(i)}_r|_{p^{(i)}_j}
 \end{pmatrix}$,
the matrix
$N$
can be written as a polynomial $\psi^{(i)}_j(R)$ in $R$ with coefficients in $\mathbb{C}$,
that is, $N=\psi^{(i)}_j(R)$.
Consider the linear map
\[
 \psi^{(i)}_j(\res_{p^{(i)}_j}(\nabla))\colon E|_{p^{(i)}_j}\longrightarrow E|_{p^{(i)}_j}.
\]
By the Chinese remainder theorem
${\mathcal O}_{D^{(i)}}\xrightarrow{\sim}
\bigoplus_{j=1}^{m_i}{\mathcal O}_{p^{(i)}_j}$,
we have an isomorphism
\[
 \Hom_{{\mathcal O}_{D^{(i)}}}(E|_{D^{(i)}},E|_{D^{(i)}})
 \xrightarrow{\sim}
 \bigoplus_{j=1}^{m_i}\Hom_{{\mathcal O}_{p^{(i)}_j}}(E|_{p^{(i)}_j},E|_{p^{(i)}_j}).
\]
So there is an endomorphism
$N^{(i)}\colon E|_{D^{(i)}}\longrightarrow E|_{D^{(i)}}$
satisfying
$N^{(i)}|_{p^{(i)}_j}=\psi^{(i)}_j(\res_{p^{(i)}_j}(\nabla))$ for $1\leq j\leq m_i$.
Since
\[
 R=\nu^{(i)}(N)|_{p^{(i)}_j}\res_{p^{(i)}_j}
 \left(\dfrac{d\bar{z}^{(i)}}{\bar{z}^{(i)}_1\bar{z}^{(i)}_2\cdots \bar{z}^{(i)}_{m_i}}\right)
 =\nu^{(i)}(\psi^{(i)}_j(R))\res_{p^{(i)}_j}
 \left(\dfrac{d\bar{z}^{(i)}}{\bar{z}^{(i)}_1\bar{z}^{(i)}_2\cdots \bar{z}^{(i)}_{m_i}}\right),
\]
we can see
\[
 \res_{p^{(i)}_j}(\nabla)
 =\nu^{(i)}\left(\psi^{(i)}_j(\res_{p^{(i)}_j}(\nabla)\right)
 \res_{p^{(i)}_j}\left(\dfrac{d\bar{z}^{(i)}}{\bar{z}^{(i)}_1\bar{z}^{(i)}_2\cdots  \bar{z}^{(i)}_{m_i}}\right)
 =\displaystyle\nu^{(i)}(N^{(i)}) \big| _{p^{(i)}_j}
 \res_{p^{(i)}_j}\left(\frac{d\bar{z}^{(i)}}{\bar{z}^{(i)}_1\bar{z}^{(i)}_2\cdots \bar{z}^{(i)}_{m_i}}\right)
\]
for $1\leq j\leq m_i$,
which is equivalent to
$\displaystyle\nu^{(i)}(N^{(i)})
\frac{d\bar{z}^{(i)}}{\bar{z}^{(i)}_1\bar{z}^{(i)}_2\cdots \bar{z}^{(i)}_{m_i}}
=\nabla|_{D^{(i)}}$.
From the definition, each $N^{(i)}|_{p^{(i)}_j}$ has the distinct eigenvalues
$\mu^{(i)}_1|_{p^{(i)}_j}, \ldots, \mu^{(i)}_r|_{p^{(i)}_j}$
and so the identity $\varphi_{\bmu}^{(i)}(N^{(i)})=0$ follows.
Thus $(E,\nabla,\{N^{(i)}\})$ becomes a
$(\bnu,\bmu)$-connection.
\end{proof}

The following definition of unramified irregular singular parabolic connection is 
given in \cite{Inaba-Saito}.
Here we restrict to the case of generic exponents
and a notation of suffix is slightly changed.

\begin{definition}\label{def:unramified-connection}\rm
Let $t_1,\ldots,t_n\in C$ be distinct points
and $m_1,\ldots,m_n$ be integers satisfying $m_i>1$ for any $i$.
Take a generator $z_i\in\mathfrak{m}_{t_i}$ of the maximal ideal
$\mathfrak{m}_{t_i}$ of ${\mathcal O}_{C,t_i}$.
Assume that $\nu^{(i)}_1,\ldots,\nu^{(i)}_r\in{\mathcal O}_{m_it_i}$
satisfy $\nu^{(i)}_k|_{t_i}\neq \nu^{(i)}_{k'}|_{t_i}$
for $k\neq k'$.
Then $(E,\nabla,\{l^{(i)}_k\})$ is said to be an unramified irregular singular parabolic connection
with the exponents $\nu^{(i)}_1\dfrac{dz_i}{z_i^{m_i}},\ldots,\nu^{(i)}_r\dfrac{dz_i}{z_i^{m_i}}$
at $t_i$ if $E$ is an algebraic vector bundle on $C$,
$\nabla\colon E\longrightarrow E\otimes\Omega^1_C(\sum_{i=1}^nm_it_i)$
is an algebraic connection,
$E|_{m_it_i}=l^{(i)}_1\supset l^{(i)}_2\supset \cdots\supset l^{(i)}_r\supset l^{(i)}_{r+1}=0$
is a filtration satisfying $l^{(i)}_k/l^{(i)}_{k+1}\cong{\mathcal O}_{m_it_i}$ and
$\Big(\nabla|_{m_it_i}-\nu^{(i)}_k\dfrac{dz_i}{z_i^{m_i}}\mathrm{id}\Big)(l^{(i)}_k)
\subset l^{(i)}_{k+1}\dfrac{dz_i}{z_i^{m_i}}$
for any $k$.
\end{definition}

\begin{remark} \label {remark: unramified exponent} \rm
Assume that $(E,\nabla,\{l^{(i)}_k\})$ is an unramified irregular singular parabolic connection
with the exponents $\nu^{(i)}_1\dfrac{dz_i}{z_i^{m_i}},\ldots,\nu^{(i)}_r\dfrac{dz_i}{z_i^{m_i}}$
in Definition \ref{def:unramified-connection}
satisfying $\nu^{(i)}_k|_{t_i}\neq\nu^{(i)}_{k'}|_{t_i}$ for $k\neq k'$.
Then we can see as in the proof of \cite[Proposition 2.3]{Inaba-Saito}
that there is a decomposition
\begin{equation} \label {equation:decomposition for unramified case}
 E|_{m_it_i}=\bigoplus_{k=1}^r\ker \left(\nabla|_{m_it_i}-\nu^{(i)}_k\dfrac{dz_i}{z_i^{m_i}}\right)
\end{equation}
which induces the filtration $l^{(i)}_*$
and the diagonal representation matrix
of $\nabla|_{m_it_i}$
\[
 \begin{pmatrix}
  \nu^{(i)}_1\dfrac{dz_i}{z_i^{m_i}} & \cdots & 0 \\
  \vdots & \ddots & \ddots \\
  0 & \cdots & \nu^{(i)}_r\dfrac{dz_i}{z_i^{m_i}}
 \end{pmatrix}
\]
with respect to a  basis of $E|_{m_it_i}$ obtained from the decomposition
(\ref {equation:decomposition for unramified case}).
\end{remark}

\begin{proposition}
Under Assumption \ref{assumption-generic},
suppose that each $D^{(i)}$ is a multiple divisor of degree $m_i$
for $1\leq i\leq n$.
In other words, we assume that $p^{(i)}_j=p^{(i)}_{j'}$ for any $j,j'$ and
$D^{(i)}=m_ip^{(i)}_1$.
Then giving a $(\bnu,\bmu)$-connection on $(C,D)$ is
equivalent to giving an unramified irregular singular parabolic connection
$(E,\nabla,\{l^{(i)}_k\})$ on $(C,D)$ with the exponents
$\left.\left\{ \nu^{(i)}(\mu^{(i)}_k)\dfrac{d\bar{z}^{(i)}_1}{(\bar{z}^{(i)}_1)^{m_i}} \right| 1\leq k\leq r \right\}$
at $p^{(i)}_1$.
\end{proposition}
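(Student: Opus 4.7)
The plan is to set up a mutually inverse pair of constructions between $(\bnu,\bmu)$-connection data and unramified irregular singular parabolic connection data, using the fact that under the hypothesis $D^{(i)}=m_i p^{(i)}_1$, we have $\bar z^{(i)}_1=\cdots=\bar z^{(i)}_{m_i}$, so
\[
 \frac{d\bar z^{(i)}}{\bar z^{(i)}_1\bar z^{(i)}_2\cdots \bar z^{(i)}_{m_i}}
 =\frac{d\bar z^{(i)}_1}{(\bar z^{(i)}_1)^{m_i}},
\]
and so that the local exponents match. Set $t_i:=p^{(i)}_1$ and $z_i:=\bar z^{(i)}_1$, and write $\nu^{(i)}_k:=\nu^{(i)}(\mu^{(i)}_k)\in{\mathcal O}_{m_it_i}$; under Assumption \ref{assumption-generic} these satisfy $\nu^{(i)}_k|_{t_i}\neq\nu^{(i)}_{k'}|_{t_i}$ for $k\neq k'$, which is the generic exponent hypothesis of Definition \ref{def:unramified-connection}.

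First I would construct the forward direction. Given a $(\bnu,\bmu)$-connection $(E,\nabla,\{N^{(i)}\})$, the injectivity of (\ref{equation:injectivity in definition}) endows $E|_{D^{(i)}}$ with the structure of a finitely generated module over the ring ${\mathcal O}_{D^{(i)}}[T]/(\varphi^{(i)}_{\bmu}(T))$. Because the constant terms $\mu^{(i)}_k|_{t_i}$ are distinct, the differences $\mu^{(i)}_k-\mu^{(i)}_{k'}$ are units in ${\mathcal O}_{D^{(i)}}$, so the Chinese remainder theorem gives
\[
 {\mathcal O}_{D^{(i)}}[T]/(\varphi^{(i)}_{\bmu}(T))\;\xrightarrow{\sim}\;
 \prod_{k=1}^{r}{\mathcal O}_{D^{(i)}}[T]/(T-\mu^{(i)}_k)\;\cong\;{\mathcal O}_{D^{(i)}}^{\oplus r}.
\]
Pushing this decomposition through the module structure produces an ${\mathcal O}_{D^{(i)}}$-splitting
$E|_{D^{(i)}}=\bigoplus_{k=1}^{r}E^{(i)}_k$, where $E^{(i)}_k:=\ker(N^{(i)}-\mu^{(i)}_k\,\mathrm{id})$ is a free ${\mathcal O}_{D^{(i)}}$-module of rank one (rank-count follows from the total rank being $r$ together with the injectivity of (\ref{equation:injectivity in definition})). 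Defining $l^{(i)}_k:=\bigoplus_{j\geq k}E^{(i)}_j$ yields the required filtration with $l^{(i)}_k/l^{(i)}_{k+1}\cong E^{(i)}_k\cong{\mathcal O}_{D^{(i)}}$. Since $N^{(i)}$ acts as multiplication by $\mu^{(i)}_j$ on $E^{(i)}_j$, the identity $\nabla|_{D^{(i)}}=\nu^{(i)}(N^{(i)})\,d\bar z^{(i)}_1/(\bar z^{(i)}_1)^{m_i}$ restricts on each $E^{(i)}_j$ to multiplication by $\nu^{(i)}_j\,d\bar z^{(i)}_1/(\bar z^{(i)}_1)^{m_i}$; therefore $\big(\nabla|_{D^{(i)}}-\nu^{(i)}_k\,d\bar z^{(i)}_1/(\bar z^{(i)}_1)^{m_i}\big)$ vanishes on $E^{(i)}_k$ and maps $l^{(i)}_k$ into $l^{(i)}_{k+1}\cdot d\bar z^{(i)}_1/(\bar z^{(i)}_1)^{m_i}$, as required.

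For the reverse direction, start from an unramified irregular singular parabolic connection $(E,\nabla,\{l^{(i)}_k\})$ with the prescribed exponents. Remark \ref{remark: unramified exponent} supplies the eigen-decomposition
\[
 E|_{m_it_i}=\bigoplus_{k=1}^{r}\ker\!\Bigl(\nabla|_{m_it_i}-\nu^{(i)}_k\frac{dz_i}{z_i^{m_i}}\Bigr),
\]
and each summand is free of rank one over ${\mathcal O}_{m_it_i}$. Define $N^{(i)}\in\End(E|_{D^{(i)}})$ to act as multiplication by $\mu^{(i)}_k$ on the $k$-th summand. Then $\varphi^{(i)}_{\bmu}(N^{(i)})=0$ automatically, the map in (\ref{equation:injectivity in definition}) is injective because the $\mu^{(i)}_k$ are pointwise distinct (so no nonzero polynomial of degree $<r$ vanishes on all eigenvalues), and $\nu^{(i)}(N^{(i)})$ acts on the $k$-th summand by $\nu^{(i)}(\mu^{(i)}_k)=\nu^{(i)}_k$, giving $\nabla|_{D^{(i)}}=\nu^{(i)}(N^{(i)})\,d\bar z^{(i)}_1/(\bar z^{(i)}_1)^{m_i}$.

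Finally I would verify that these constructions are mutually inverse: starting from $(E,\nabla,\{N^{(i)}\})$, the eigenspace decomposition of the $N^{(i)}$ recovered from the filtration via Remark \ref{remark: unramified exponent} coincides with the original $\bigoplus_k E^{(i)}_k$ (both are the simultaneous eigen-decomposition by distinct eigenvalues), and the diagonal $N^{(i)}$ reconstructed in the reverse construction is identical to the one we started with. The main technical point to watch is the step that extracts the free rank-one summands over the Artinian local ring ${\mathcal O}_{m_it_i}$ from the injectivity hypothesis (\ref{equation:injectivity in definition}); this is the one place where we genuinely use both the distinct leading eigenvalue condition and the injectivity, and everything else is a direct bookkeeping check.
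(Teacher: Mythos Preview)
Your proof is correct and reaches the same eigenspace decomposition $E|_{D^{(i)}}=\bigoplus_k\ker(N^{(i)}-\mu^{(i)}_k)$ as the paper, but by a somewhat different route in the forward direction. The paper argues by considering the complex
\[
  E|_{D^{(i)}}\xrightarrow{N^{(i)}-\mu^{(i)}_k}E|_{D^{(i)}}\xrightarrow{\prod_{k'\neq k}(N^{(i)}-\mu^{(i)}_{k'})}E|_{D^{(i)}}
\]
and showing, via reduction to the closed point $p^{(i)}_1$, that the induced map $\coker(N^{(i)}-\mu^{(i)}_k)\to E|_{D^{(i)}}$ is injective with image a rank-one subbundle. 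You instead invoke the Chinese remainder theorem on the coefficient ring ${\mathcal O}_{D^{(i)}}[T]/(\varphi^{(i)}_{\bmu}(T))$, using that the $\mu^{(i)}_k-\mu^{(i)}_{k'}$ are units, to obtain the module decomposition in one step. Your argument is a bit cleaner; the paper's is more explicit about why each summand has rank exactly one. On that point your justification (``rank-count follows from the total rank being $r$ together with the injectivity'') is correct but compressed: the injectivity forces each idempotent of the CRT decomposition to act nontrivially, so each $E^{(i)}_k$ is nonzero, hence (being a direct summand of a free module over a local ring) free of rank at least one, and the ranks sum to $r$. The reverse directions in your proposal and the paper are essentially identical. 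One minor phrasing issue: the module structure on $E|_{D^{(i)}}$ comes from the ring homomorphism itself, not from its injectivity; the injectivity is only needed later for the rank count.
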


\begin{proof}
Assume that a $(\bnu,\bmu)$ connection
$(E,\nabla,\{N^{(i)}\})$ on $(C,D)$ is given.
First note that there is a complex
\[
  E|_{D^{(i)}} \xrightarrow{N^{(i)}-\mu^{(i)}_k} E|_{D^{(i)}}
  \xrightarrow{ \prod_{k'\neq k}(N^{(i)}-\mu^{(i)}_{k'})} E|_{D^{(i)}} 
\]
which induces the homomorphism
\[
 \overline{\prod_{k'\neq k}(N^{(i)}-\mu^{(i)}_{k'})} \colon
 \coker(N^{(i)}-\mu^{(i)}_k) \longrightarrow E|_{D^{(i)}}.
\]
By Remark \ref  {remark:injectivity implies subbundle},
the restriction
$\mathbb{C}[T]/(\varphi_{\bmu}(T)|_{p^{(i)}_1})
\ni \overline{P(T)} \mapsto P(N^{(i)}|_{p^{(i)}_1})
\in\End(E|_{p^{(i)}_1})$
 of the homomorphism (\ref  {equation:injectivity in definition})
in Definition \ref {def-connection}
to the reduced point $p^{(i)}_1$ of $D^{(i)}=m_ip^{(i)}_1$
is also injective.
So
$N^{(i)}|_{p^{(i)}_1}\colon E|_{p^{(i)}_1}\longrightarrow E|_{p^{(i)}_1}$ has the distinct eigenvalues
$\mu^{(i)}_1|_{p^{(i)}_1},\ldots,\mu^{(i)}_r|_{p^{(i)}_1}$
and
\[
 \overline{ \prod_{k'\neq k}(N^{(i)}-\mu^{(i)}_{k'})|_{p^{(i)}_1} } \colon
 \coker((N^{(i)}-\mu^{(i)}_k)|_{p^{(i)}_1})
 \longrightarrow E|_{p^{(i)}_1}
\]
is an injection to the eigen subspace of $E|_{p^{(i)}_1}$
with respect to the eigenvalue $\mu^{(i)}_k|_{p^{(i)}_1}$ of $N^{(i)}|_{p^{(i)}_1}$.
Therefore we can see that
\[
 \overline{\prod_{k'\neq k}(N^{(i)}-\mu^{(i)}_{k'})}\colon
 \coker(N^{(i)}-\mu^{(i)}_k) \longrightarrow E|_{D^{(i)}}
\]
is also injective and its cokernel is a free ${\mathcal O}_{D^{(i)}}$-module of
rank $r-1$.
So
\[
 \coker(N^{(i)}-\mu^{(i)}_k)\xrightarrow{\sim}\ker (N^{(i)}-\mu^{(i)}_k)
 \subset E|_{D^{(i)}}
\]
is a rank one subbundle of $E|_{D^{(i)}}$
and we have a decomposition
\begin{equation}\label {equation:decomposition}
 E|_{D^{(i)}}=\bigoplus_{k=1}^r \ker (N^{(i)}-\mu^{(i)}_k). 
\end{equation}
By the equality
$\nu^{(i)}(N^{(i)})\dfrac{d\bar{z}^{(i)}}{\bar{z}^{(i)}_1\bar{z}^{(i)}_2\cdots \bar{z}^{(i)}_{m_i}}
=\nabla|_{D^{(i)}}$,
we can see that
the representation matrix of $\nabla|_{D^{(i)}}$
with respect to a basis giving the direct sum decomposition (\ref {equation:decomposition})
of $E|_{D^{(i)}}$ is
\[
 \begin{pmatrix}
 \nu^{(i)}(\mu^{(i)}_1)\dfrac{d\bar{z}^{(i)}}{(\bar{z}^{(i)}_1)^{m_i}} & \cdots & 0 \\
 \vdots & \ddots & \vdots \\
 0 & \cdots & \nu^{(i)}(\mu^{(i)}_r)\dfrac{d\bar{z}^{(i)}}{(\bar{z}^{(i)}_1)^{m_i}}
 \end{pmatrix}.
\]
If we choose the parabolic structure $\{l^{(i)}_k\}$
compatible with the decomposition (\ref{equation:decomposition}),
then $(E,\nabla,\{l^{(i)}_k\})$ becomes an unramified irregular singular parabolic connection
with the exponents 
$\bigg\{ \nu^{(i)}(\mu^{(i)}_k)\dfrac{d\bar{z}^{(i)}_1}{(\bar{z}^{(i)}_1)^{m_i}} \bigg\}_{1\leq k\leq r}$
at $p^{(i)}_1$ for $1\leq i\leq n$.

Conversely, let $(E,\nabla,\{l^{(i)}_k\})$ be an unramified irregular singular parabolic connection
with the exponents
$\bigg\{ \nu^{(i)}(\mu^{(i)}_k)\dfrac{d\bar{z}^{(i)}_1}{(\bar{z}^{(i)}_1)^{m_i}} \bigg\}_{1\leq k\leq r}$
at $p^{(i)}_1$.
Since $\nu^{(i)}(\mu^{(i)}_1)|_{p^{(i)}_1},\ldots,\nu^{(i)}(\mu^{(i)}_r)|_{p^{(i)}_1}$ are mutually distinct,
we have a decomposition
\[
 E|_{D^{(i)}}=\bigoplus_{k=1}^r
 \ker \left(\nabla|_{D^{(i)}}-\nu^{(i)}(\mu^{(i)}_k)
 \dfrac{d\bar{z}^{(i)}_1}{(\bar{z}^{(i)}_1)^{m_i}} \right)
\]
as in Remark \ref {remark: unramified exponent}
which is compatible with $\{ l^{(i)}_k \}$.
If we define a homomorphism $N^{(i)}\colon E|_{D^{(i)}}\longrightarrow E|_{D^{(i)}}$
by setting
\[
 N^{(i)}\big|_{\ker \big(\nabla|_{D^{(i)}}-\nu^{(i)}(\mu^{(i)}_k)
 \frac{d\bar{z}^{(i)}}{(z^{(i)}_1)^{m_i}} \big)}
 =\mu^{(i)}_k \cdot
 \mathrm{id}_{\ker \big(\nabla|_{D^{(i)}}-\nu^{(i)}(\mu^{(i)}_k)
 \frac{d\bar{z}^{(i)}}{(z^{(i)}_1)^{m_i}} \big)}
\]
for each $k$,
then $N^{(i)}$ satisfies $\varphi^{(i)}_{\bmu}(N^{(i)})=0$ and
$\nabla|_{D^{(i)}}=\nu^{(i)}(N^{(i)})\dfrac{d\bar{z}^{(i)}}{(\bar{z}^{(i)}_1)^{m_i}}$.
Since $N^{(i)}|_{p^{(i)}_1}$ has the distinct eigenvalues
$\mu^{(i)}_1|_{p^{(i)}_1},\ldots,\mu^{(i)}_r|_{p^{(i)}_1}$,
the homomorphism
\[
 {\mathcal O}_{D^{(i)}}[T]/(\varphi^{(i)}_{\bmu}(T))
 \ni \overline{P(T)} \mapsto P(N^{(i)})
 \in \End(E|_{D^{(i)}})
\]
is injective, because of the injectivity of its restriction 
to the reduced point $p^{(i)}_1$ of $D^{(i)}$.
So $(E,\nabla,\{N^{(i)}\})$ becomes a $(\bnu,\bmu)$-connection.
\end{proof}

Now we come back to the general setting in Definition \ref{def-connection}
and define a stability for a $(\bnu,\bmu)$-connection
$(E,\nabla,\{N^{(i)}\})$ which is necessary for the construction
of the moduli space.
By Assumption \ref{assumption-generic},
there is a unique filtration
\begin{equation} \label {equation:filtration}
 E|_{D^{(i)}}=l^{(i)}_1\supset l^{(i)}_2\supset\cdots\supset l^{(i)}_r\supset l^{(i)}_{r+1}=0
\end{equation}
such that $l^{(i)}_k/l^{(i)}_{k+1}\cong{\mathcal O}_{D^{(i)}}$,
$\Big( \nabla|_{D^{(i)}}-\nu^{(i)}(\mu^{(i)}_k)\dfrac{dz_i}{z_i^{m_i}}\mathrm{id} \Big) (l^{(i)}_k)
\subset l^{(i)}_{k+1}\dfrac{dz_i}{z_i^{m_i}}$
and $(N^{(i)}-\mu^{(i)}_k\mathrm{id})(l^{(i)}_k)\subset l^{(i)}_{k+1}$
for any $i,k$.

We take a tuple of positive rational numbers
$\balpha=(\alpha^{(i)}_k)^{1\leq i\leq n}_{1\leq k\leq r}$
satisfying
$0<\alpha^{(i)}_1<\alpha^{(i)}_2<\cdots<\alpha^{(i)}_r<1$
for any $i$
and $\alpha^{(i)}_k\neq\alpha^{(i')}_{k'}$
for $(i,k)\neq(i',k')$.
The following definition in fact depends on the ordering of
$\mu^{(i)}_1,\ldots,\mu^{(i)}_r$.

\begin{definition}\rm
A $(\bnu, \bmu)$-connection
$(E,\nabla,\{N^{(i)}\})$  on $(C,D)$ is
$\balpha$-stable (resp.\ $\balpha$-semistable) if the inequality
\[
 \frac{\displaystyle\deg F+\sum_{i=1}^n\sum_{k=1}^r
 \alpha^{(i)}_k\length ((F|_{D^{(i)}}\cap l^{(i)}_k)/(F|_{D^{(i)}}\cap l^{(i)}_{k+1}))}
 {\rank F}
 \genfrac{}{}{0pt}{}{<}{\text{(resp.\ $\leq$)}}
  \frac{\displaystyle\deg E+\sum_{i=1}^n\sum_{k=1}^r
 \alpha^{(i)}_k\length (l^{(i)}_k/l^{(i)}_{k+1})}{\rank E}
\]
holds for any subbundle $0\neq F\subsetneq E$
satisfying $\nabla(F)\subset F\otimes\Omega^1_C(D)$,
where $\{ l^{(i)}_k\}$ is the filtration (\ref {equation:filtration})
of $E|_{D^{(i)}}$ determined by $\nabla|_{D^{(i)}}$.
\end{definition}

\subsection{Relative moduli space of $(\tilde{\bnu},\tilde{\bmu})$-connections}
\label {subsection:former main theorem}

Let $S$ be an irreducible algebraic variety over $\Spec\mathbb{C}$ and
let ${\mathcal C}\longrightarrow S$ be a smooth projective morphism
whose geometric fibers are smooth projective irreducible curves of genus $g$.
Assume that ${\mathcal D}$ is an effective Cartier divisor on ${\mathcal C}$
flat over $S$, which has a decomposition
\[
 {\mathcal D}={\mathcal D}^{(1)}+\cdots+{\mathcal D}^{(n)}
 ={\mathcal D}^{(1)}\sqcup\cdots\sqcup{\mathcal D}^{(n)},
\]
where ${\mathcal D}^{(i)}$ is an effective Cartier divisor on ${\mathcal C}$
flat over $S$,
which also has a decomposition
\[
 {\mathcal D}^{(i)}={\mathcal D}^{(i)}_1+{\mathcal D}^{(i)}_2+\cdots+{\mathcal D}^{(i)}_{m_i}
\]
such that the composition ${\mathcal D}^{(i)}_j\hookrightarrow{\mathcal C}\longrightarrow S$
is isomorphic.
Here we assume that ${\mathcal D}^{(i)}\cap{\mathcal D}^{(i')}=\emptyset$
for $i\neq i'$  and $({\mathcal D}^{(i)}_j)_s\cap({\mathcal D}^{(i)}_{j'})_s=\emptyset$
for $j\neq j'$ if $({\mathcal D}^{(i)}_j)_s, ({\mathcal D}^{(i)}_{j'})_s$ are generic fibers but
${\mathcal D}^{(i)}_j$ and ${\mathcal D}^{(i)}_{j'}$ may intersect.

Assume that we can take a section
$\bar{z}^{(i)}\in{\mathcal O}_{2{\mathcal D}^{(i)}}$ such that
$\bar{z}^{(i)}-\bar{z}^{(i)}({\mathcal D}^{(i)}_j)=0$ is a defining equation
of ${\mathcal D}^{(i)}_j$ in $2{\mathcal D}^{(i)}$
and that
$d\bar{z}^{(i)}|_p$ gives a local basis of
$\Omega^1_{{\mathcal C}/S}\otimes{\mathcal O}_{{\mathcal D}^{(i)}}|_p$
for any point $p\in {\mathcal D}^{(i)}$,
where $\bar{z}^{(i)}({\mathcal D}^{(i)}_j)\in{\mathcal O}_S$
corresponds to $\bar{z}^{(i)}|_{{\mathcal D}^{(i)}_j}$
via the isomorphism ${\mathcal D}^{(i)}_j\xrightarrow{\sim}S$.
We denote
$\bar{z}^{(i)}-\bar{z}^{(i)}({\mathcal D}^{(i)}_j)\in
{\mathcal O}_{2{\mathcal D}^{(i)}}$ by $\bar{z}^{(i)}_j$.
Then we can define
\begin{equation} \label {equation: fundamental differential form}
 \frac{d\bar{z}^{(i)}}{\bar{z}^{(i)}_1\bar{z}^{(i)}_2\cdots \bar{z}^{(i)}_{m_i}}\in
 \Omega^1_{{\mathcal C}/S}({\mathcal D}^{(i)})|_{{\mathcal D}^{(i)}}
\end{equation}
similarly to (\ref {equation: unfolding equation on finite scheme})
which  is a local basis of
$\Omega^1_{{\mathcal C}/S}({\mathcal D}^{(i)})
|_{{\mathcal D}^{(i)}}$.

We fix
$\tilde{\bmu}=(\tilde{\mu}^{(i)}_j)^{1\leq i\leq n}_{1\leq j\leq r}
\in H^0( {\mathcal D}^{(i)}, {\mathcal O}_{{\mathcal D}^{(i)}}^{nr}) $
such that
$\tilde{\mu}^{(i)}_1|_p,\ldots,\tilde{\mu}^{(i)}_r|_p\in\mathbb{C}$
are mutually distinct at any point $p\in {\mathcal D}^{(i)}$.
Then we define a tuple
$\bvarphi_{\tilde{\bmu}}=(\varphi_{\tilde{\bmu}}^{(i)}(T))^{1\leq i\leq n}$
of polynomials by
\[
 \varphi_{\tilde{\bmu}}^{(i)}(T)=\prod_{k=1}^r(T-\tilde{\mu}^{(i)}_k)
 \in H^0( {\mathcal D}^{(i)}, {\mathcal O}_{{\mathcal D}^{(i)}} )[T].
\]
Assume that $a\in\mathbb{Z}$ 
and  $\tilde{\blambda}=(\tilde{\lambda}^{(i)}_k)\in H^0(S,{\mathcal O}_S)^{nr}$
satisfying
\[
 a+\sum_{i=1}^n\sum_{k=1}^r \tilde{\lambda}^{(i)}_k=0
\]
are given.
We also take a tuple $\tilde{\bnu}=(\tilde{\nu}^{(i)}(T))^{1\leq i\leq n}$
of polynomials
\[
 \tilde{\nu}^{(i)}(T)=c^{(i)}_0+c^{(i)}_1T+\cdots+c^{(i)}_{r-1} T^{r-1}\in
 H^0( {\mathcal D}^{(i)}, {\mathcal O}_{{\mathcal D}^{(i)}} )[T]
\]
such that the expression
\[
 \nu^{(i)}(\mu^{(i)}_k)
 =
 \sum_{\genfrac{}{}{0pt}{}{0\leq l_1,\ldots,l_{m_i}\leq 1}{0\leq l_1+\cdots+l_{m_i}<m_i}}
 a^{(i)}_{k,l_1,\ldots,l_{m_i}}
 (\bar{z}^{(i)}_1)^{l_1}\cdots(\bar{z}^{(i)}_{m_i})^{l_{m_i}}
\]
with $a^{(i)}_{k,l_1,\ldots,l_{m_i}}\in H^0(S,{\mathcal O}_S)$
satisfies the equality
\[
 \tilde{\lambda}^{(i)}_k
 =
 a^{(i)}_{k,0,1,\ldots,1}+a^{(i)}_{k,1,0,1,\ldots,1}+\cdots+a^{(i)}_{k,1,\ldots,1,0} 
\]
for any $i,k$.
Furthermore, we assume that
$\tilde{\nu}^{(i)}(\mu^{(i)}_1)|_p,\ldots,\tilde{\nu}^{(i)}(\mu^{(i)}_r)|_p$
are mutually distinct
for each $i$ and $p\in{\mathcal D}^{(i)}$.

Before the definition of a moduli functor,
we mention a convention of  notation used in this paper.
For a noetherian scheme $S'$ with a morphism $S'\longrightarrow S$,
we denote ${\mathcal C}\times_SS'$ by ${\mathcal C}_{S'}$
and denote ${\mathcal D}\times_SS'$ by ${\mathcal D}_{S'}$
and so on.
For a coherent sheaf $E$ on ${\mathcal C}$,
we denote the pull-back of $E$ under the morphism
${\mathcal C}\times_SS'\longrightarrow {\mathcal C}$
by $E_{S'}$ and so on.

\begin{definition}
\rm \label {def:moduli functor of (nu,mu)-connections}
We define a contravariant functor
${\mathcal M}^{\balpha}_{{\mathcal C},{\mathcal D}}(\tilde{\bnu},\tilde{\bmu})
\colon (\sch/S)^o \longrightarrow (\sets)$
from the category $(\sch/S)$ of noetherian schemes over $S$
to the category $(\sets)$ of sets by setting
\[
 {\mathcal M}^{\balpha}_{{\mathcal C},{\mathcal D}}(\tilde{\bnu},\tilde{\bmu})(S')
 =
 \left\{ (E,\nabla,\{N^{(i)}\}_{1\leq i\leq n}) \left|
 \text{ $(E,\nabla,\{N^{(i)}\})$ satisfies the following (a),(b),(c),(d) } \right\} \right/ \sim,
\]
for a noetherian scheme $S'$ over $S$, where
\begin{itemize}
\item[(a)] $E$ is a vector bundle on ${\mathcal C}_{S'}$ of rank $r$
and $\deg (E|_{{\mathcal C}_s})=a$ for any geometric point $s$ of $S$,
\item[(b)] $\nabla\colon E\longrightarrow E\otimes\Omega^1_{{\mathcal C}_{S'}/S'}({\mathcal D}_{S'})$
is an $S'$-relative connection, in other words, $\nabla(fa)=a\otimes df +f\nabla(a)$
for $f\in{\mathcal O}_{{\mathcal C}_{S'}}$ and $a\in E$,
\item[(c)] $N^{(i)} \colon E|_{{\mathcal D}^{(i)}_{S'}} \longrightarrow E|_{{\mathcal D}^{(i)}_{S'}}$
is an ${\mathcal O}_{{\mathcal D}^{(i)}_{S'}}$-homomorphim satisfying
$\varphi_{\bmu}^{(i)}(N^{(i)})=0$, the homomorphism
\[
 {\mathcal O}_{{\mathcal D}^{(i)}_{S'}}[T]/(\varphi_{\bmu}^{(i)}(T))
 \ni \overline{P(T)} \mapsto P(N^{(i)}) \in \End( E|_{{\mathcal D}^{(i)}_{S'}})
\]
is an injection whose cokernel is flat over $S'$,
$\nu^{(i)}(N^{(i)})\dfrac{d\bar{z}^{(i)}}{\bar{z}^{(i)}_1\bar{z}^{(i)}_2\cdots\bar{z}^{(i)}_{m_i}}
=\nabla|_{{\mathcal D}^{(i)}_{S'}}$
for $1\leq i\leq n$ and
\item[(d)] $(E|_{{\mathcal C}_s},\nabla|_{{\mathcal C}_s},\{N^{(i)}|_{{\mathcal D}^{(i)}_s}\})$
is $\balpha$-stable for any geometric point $s$ of $S'$.
\end{itemize}
Here $(E,\nabla,\{N^{(i)}\})\sim(E',\nabla',\{N'^{(i)}\})$
if there are a line bundle ${\mathcal L}$ on $S'$ and an isomorphism
$\sigma\colon E\xrightarrow{\sim}E'\otimes{\mathcal L}$
satisfying $(\mathrm{id}\otimes\sigma)\circ\nabla=\nabla'\circ\sigma$
and $\sigma|_{{\mathcal D}^{(i)}_{S'}}\circ N^{(i)}=(N'^{(i)}\otimes\mathrm{id})\circ\sigma|_{{\mathcal D}^{(i)}_{S'}}$
for any $i$.
\end{definition}

\begin{theorem} \label {theorem:algebraic-moduli-unfolding}
There exists a coarse moduli scheme
$M^{\balpha}_{{\mathcal C},{\mathcal D}}(\tilde{\bnu},\tilde{\bmu})$
of ${\mathcal M}^{\balpha}_{{\mathcal C},{\mathcal D}}(\tilde{\bnu},\tilde{\bmu})$.
The structure morphism
$M^{\balpha}_{{\mathcal C},{\mathcal D}}(\tilde{\bnu},\tilde{\bmu})
\longrightarrow S$
is a smooth and quasi-projective morphism
whose non-empty fiber is of dimension
$2r^2(g-1)+2+r(r-1)\sum_{i=1}^n m_i$.
Moreover, there is a relative symplectic form on
$M^{\balpha}_{{\mathcal C},{\mathcal D}}(\tilde{\bnu},\tilde{\bmu})$
over $S$.
\end{theorem}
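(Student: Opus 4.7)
The plan is to follow the Simpson--Nitsure GIT construction as adapted in \cite{IIS-1} and \cite{Inaba-1} for parabolic connections, replacing the parabolic data by the endomorphisms $N^{(i)}$. First, I would establish boundedness: for $\balpha$-stable $(\bnu,\bmu)$-connections the underlying bundle $E$ has bounded Harder--Narasimhan type, since the pole orders along ${\mathcal D}$ are fixed and the eigenvalues $\tilde{\mu}^{(i)}_k$ of $N^{(i)}$ are uniformly bounded. After twisting by a sufficiently ample line bundle one realises the family as an open subscheme of a Quot-scheme parametrising tuples $({\mathcal O}^N\twoheadrightarrow E,\nabla,\{N^{(i)}\})$; the condition $\nu^{(i)}(N^{(i)})\tfrac{d\bar z^{(i)}}{\bar z^{(i)}_1\cdots\bar z^{(i)}_{m_i}}=\nabla|_{{\mathcal D}^{(i)}}$ and $\varphi^{(i)}_{\bmu}(N^{(i)})=0$ cut out a locally closed subscheme, and the injectivity of the evaluation map in Definition \ref{def:moduli functor of (nu,mu)-connections}(c) is an open condition. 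Taking the GIT quotient by the natural $PGL_N$-action with linearisation determined by $\balpha$ yields $M^{\balpha}_{{\mathcal C},{\mathcal D}}(\tilde{\bnu},\tilde{\bmu})$ as a quasi-projective coarse moduli scheme over $S$; stability guarantees that it is in fact a geometric quotient.

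For smoothness and the tangent space computation, I would carry out the deformation theory of the tuple $(E,\nabla,\{N^{(i)}\})$ via a two-term complex ${\mathcal F}^\bullet$ on ${\mathcal C}$ whose degree-zero term is ${\mathcal E}nd(E)$ and whose degree-one term is ${\mathcal E}nd(E)\otimes\Omega^1_{{\mathcal C}/S}({\mathcal D})$, with differential $\mathrm{ad}(\nabla)$, modified at ${\mathcal D}^{(i)}$ by the compatibility with $N^{(i)}$. Concretely, the correct complex uses the observation from Section \ref{section:linear-algebra}: via the factorisation $N^{(i)}=\theta^{(i)}\circ\kappa^{(i)}$ with $\theta^{(i)},\kappa^{(i)}$ symmetric, the constraints $\varphi^{(i)}_{\bmu}(N^{(i)})=0$ and $\nabla|_{{\mathcal D}^{(i)}}=\nu^{(i)}(N^{(i)})\tfrac{d\bar z^{(i)}}{\bar z^{(i)}_1\cdots\bar z^{(i)}_{m_i}}$ together with the $(\mathbb{C}[T]/\varphi)^\times$-action are precisely of the form analysed in Propositions \ref{prop:factorization-lemma}--\ref{prop:tangent space of S}. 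Consequently the local contribution at ${\mathcal D}^{(i)}$ to ${\mathcal F}^\bullet$ is governed by the local complex (\ref{equation:factorization complex}) whose middle cohomology is $T_{C_{\varphi^{(i)}_{\bmu}}}$. The tangent space to the moduli functor at $(E,\nabla,\{N^{(i)}\})$ is then $\mathbb{H}^1({\mathcal F}^\bullet)$, and I would identify the obstruction in $\mathbb{H}^2({\mathcal F}^\bullet)$ with a Serre-dual space which vanishes by $\balpha$-stability (any trace-free global section of the dual complex must vanish since it corresponds to a nontrivial invariant subbundle).

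With this complex in hand, the dimension follows from Riemann--Roch plus the local contributions. The Euler characteristic of ${\mathcal E}nd(E)\xrightarrow{\mathrm{ad}(\nabla)}{\mathcal E}nd(E)\otimes\Omega^1_{{\mathcal C}/S}({\mathcal D})$ is $2r^2(g-1)+r^2\deg{\mathcal D}$, from which one subtracts the codimension of the relations imposed at ${\mathcal D}^{(i)}$ (namely $\dim\ker d^1-\dim\im d^0=\dim C_{\varphi^{(i)}_{\bmu}}=r(r-1)$ per divisor point of weight one in the multiplicity) and adds back the $\chi({\mathcal E}nd(E))=r^2(1-g)$ scalars. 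A careful bookkeeping, using that $\sum_im_i=\deg{\mathcal D}$, yields the stated dimension $2r^2(g-1)+2+r(r-1)\sum_im_i$. For the symplectic form, one combines Serre duality on ${\mathcal C}/S$ with the local Kirillov--Kostant form from Proposition \ref{prop:kirillov-kostant-form}: an explicit antisymmetric pairing on $\mathbb{H}^1({\mathcal F}^\bullet)$ is obtained by cup product composed with trace, and at each ${\mathcal D}^{(i)}$ one uses the expression (\ref{equation:symplectic form for adjoint orbit}) in terms of $(\theta^{(i)},\kappa^{(i)})$, which by Proposition \ref{prop:kirillov-kostant-form} equals the Kirillov--Kostant form.

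The main obstacle I anticipate is checking nondegeneracy and closedness of the relative two-form globally. Nondegeneracy reduces to showing the cup-product pairing on $\mathbb{H}^1({\mathcal F}^\bullet)$ induced by the pairing of ${\mathcal F}^\bullet$ with its Serre dual is perfect, which requires identifying ${\mathcal F}^\bullet$ with a shift of its dual; the symmetry of $\theta^{(i)},\kappa^{(i)}$ is exactly what makes this self-duality work. Closedness is a formal consequence of the Cartan calculus once the form is written in a Darboux-type local model coming from the cotangent bundle description of $C_{\varphi^{(i)}_{\bmu}}$ given in subsection \ref{subsection:factorization of linear map}, combined with the standard fact that the algebraic symplectic form on a moduli of flat connections constructed via the Atiyah--Bott/Goldman formula is closed.
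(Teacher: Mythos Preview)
Your overall architecture is close to the paper's, but there is a genuine gap in the smoothness step and a weaker one for closedness.

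\textbf{Smoothness.} You assert that the obstruction space $\mathbb{H}^2({\mathcal F}^\bullet)$ vanishes by $\balpha$-stability, arguing that any trace-free global section of the dual complex must vanish. This only shows $\mathbb{H}^2({\mathcal F}^\bullet)\cong\mathbb{C}$, not zero: stability gives $\mathbb{H}^0({\mathcal F}^\bullet)=\mathbb{C}\cdot\mathrm{id}$, and by the self-duality you invoke, $\mathbb{H}^2$ is one-dimensional as well. So the obstruction class need not vanish a priori. The paper handles this by showing that the trace map $\mathbb{H}^2({\mathcal F}^\bullet)\to\mathbb{H}^2({\mathcal L}^\bullet)\cong\mathbb{C}$ is an isomorphism and sends the obstruction class for $(E,\nabla,\{N^{(i)}\})$ to the obstruction class for the determinant $(\det E,\det\nabla)$. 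The latter lives in a moduli space which is an affine bundle over the Jacobian, hence smooth, so its obstruction vanishes. Without this determinant trick your smoothness argument does not close.

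\textbf{Closedness of the symplectic form.} Your appeal to a Darboux local model and the ``standard fact'' for flat connections is not enough here: the connections have poles and the local description of $C_{\varphi}$ as a cotangent bundle is only pointwise, not functorial in a way that produces global Darboux coordinates on the moduli space. The paper instead observes that the restriction of the form to a generic fiber (where the divisor is reduced, hence the regular singular case) coincides with the parabolic symplectic form of \cite{Inaba-1}, whose closedness is already proved there; this forces $d\omega=0$ on the whole family.

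\textbf{Minor differences.} For the existence of the coarse moduli, the paper does not redo GIT but cites the moduli $\overline{M^{{\mathcal D},\balpha',\bbeta,\gamma}_{{\mathcal C}/S}}$ of parabolic $\Lambda^1_{\mathcal D}$-triples from \cite{IIS-1} and carves out a locally closed subscheme imposing the conditions on $N^{(i)}$. Your direct Quot-scheme construction would work but is longer. Also, the paper's deformation complex is a genuine \emph{three}-term complex ${\mathcal F}^0\to{\mathcal F}^1\to{\mathcal F}^2$ built from the factorized data $(\theta^{(i)},\kappa^{(i)})$, with ${\mathcal F}^2$ encoding both the relation $\nabla|_{{\mathcal D}^{(i)}}=\nu^{(i)}(N^{(i)})\frac{d\bar z}{\prod\bar z_j}$ and the trace constraints coming from $\varphi^{(i)}_{\bmu}(N^{(i)})=0$; describing it as a two-term complex ``modified at ${\mathcal D}^{(i)}$'' obscures precisely the structure needed for the duality and the dimension count.
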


We call $M^{\balpha}_{{\mathcal C},{\mathcal D}}(\tilde{\bnu},\tilde{\bmu})$
in Theorem \ref {theorem:algebraic-moduli-unfolding}
the relative moduli space of
$\balpha$-stable $(\tilde{\bnu},\tilde{\bmu})$ connections 
on $({\mathcal C},{\mathcal D})$ over $S$.
First we give a proof of the existence of the moduli space
$M^{\balpha}_{{\mathcal C},{\mathcal D}}(\tilde{\bnu},\tilde{\bmu})$.
We define a moduli functor
${\mathcal M}\colon (\sch/S)^o\longrightarrow (\sets)$
by
\[
 {\mathcal M}(S')=\left.\left\{ (E,\nabla,\{l^{(i)}_k\}) \right|
 \text{$(E,\nabla,\{l^{(i)}_k\})$ satisfies the following (i),(ii),(iii),(iv)} \right\}/\sim
\]
for a noetherian scheme $S'$ over $S$, where
\begin{itemize}
\item[(i)] $E$ is a vector bundle on ${\mathcal C}\times_SS'$ of rank $r$
and $\deg(E|_{{\mathcal C}_s})=a$ for any geometric point $s$ of $S'$,
\item[(ii)] $\nabla\colon E\longrightarrow 
E\otimes\Omega^1_{{\mathcal C}_{S'}/S'}({\mathcal D}_{S'})$
is a relative connection,
\item[(iii)] $E|_{{\mathcal D}^{(i)}_{S'}}=l^{(i)}_0\supset l^{(i)}_1\supset\cdots\supset l^{(i)}_{r-1}
\supset l^{(i)}_r=0$
is a filtration by coherent ${\mathcal O}_{{\mathcal D}^{(i)}_{S'}}$-submodules
such that each $l^{(i)}_k/l^{(i)}_{k+1}$ is flat over $S'$ and
$\length((l^{(i)}_k/l^{(i)}_{k+1})|_{{\mathcal D}^{(i)}_s})=m_i$ for any $s\in S'$,
\item[(iv)] for any geometric point $s$ of $S'$, the fiber
$(E,\nabla,\{l^{(i)}_k\})|_{{\mathcal C}_s}$ satisfies the stability condition
\begin{gather*}
 \frac{\deg F+\sum_{i=1}^n\sum_{k=1}^r
 \alpha^{(i)}_k\length ((F|_{{\mathcal D}^{(i)}_s}\cap l^{(i)}_k|_{{\mathcal D}^{(i)}_s})
 /(F|_{{\mathcal D}^{(i)}_s}\cap l^{(i)}_{k+1}|_{{\mathcal D}^{(i)}_s}))}
 {\rank F} \\
 <
 \frac{\deg E|_{{\mathcal D}^{(i)}_s}+\sum_{i=1}^n\sum_{k=1}^r
 \alpha^{(i)}_k\length (l^{(i)}_k|_{{\mathcal D}^{(i)}_s}/l^{(i)}_{k+1}|_{{\mathcal D}^{(i)}_s})}{\rank E}
\end{gather*}
for any subbundle $0\neq F\subsetneq E|_{{\mathcal C}_s}$
satisfying
$\nabla|_{{\mathcal C}_s}(F)\subset F\otimes\Omega^1_{{\mathcal C}_s}({\mathcal D}_s)$.
\end{itemize}
Here $(E,\nabla,\{l^{(i)}_k\})\sim(E',\nabla',\{l'^{(i)}_k\})$
if there are a line bundle ${\mathcal L}$ on $S'$ and an isomorphism
$(E,\nabla,\{l^{(i)}_k\})\xrightarrow{\sim}
(E',\nabla',\{l'^{(i)}_k\})\otimes_{{\mathcal O}_{S'}}{\mathcal L}$.
Note that  the parabolic structure $\{l^{(i)}_k\}$ in (iii) has no relationship
with the connection $\nabla$ in (ii).
The following lemma is already used in  \cite{IIS-1}, \cite{Inaba-1} and \cite{Inaba-Saito}.

\begin{lemma}\label{lemma:moduli-exists}
There exists a coarse moduli scheme $M$ of ${\mathcal M}$.
$M$ is quasi-projective over $S$ and represents the \'etale sheafification
of the moduli functor ${\mathcal M}$.
\end{lemma}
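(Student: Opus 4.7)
The plan is to follow the GIT construction of moduli of parabolic $\Lambda$-modules in the style of \cite{Simpson-1}, \cite{Simpson-2}, \cite{Nitsure}, \cite{IIS-1}, and \cite{Inaba-1}. A key simplification relative to those references is that in (iii) the parabolic filtration $\{l^{(i)}_k\}$ is entirely decoupled from the connection $\nabla$, so the flag plays the role of an independent variable rather than being constrained by residue data.

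First I would establish boundedness of the underlying parabolic bundles $(E,\{l^{(i)}_k\})$. The fixed numerical data (rank $r$, degree $a$, parabolic lengths $m_i$) together with the stability condition (iv) bound the Harder--Narasimhan polygon of any $E$ that can appear in ${\mathcal M}(S')$; the existence of a connection with poles along ${\mathcal D}$ imposes no additional boundedness constraint, since once one such connection exists the set of all of them forms a torsor under the coherent sheaf $\mathcal{H}om(E,E\otimes\Omega^1_{{\mathcal C}/S}({\mathcal D}))$. For an auxiliary ample $\mathcal{O}_{{\mathcal C}/S}(1)$ and $N\gg 0$, $E(N)$ becomes globally generated and cohomologically trivial on every fiber of $\pi_{{\mathcal C}/S}$, which after framing the direct image yields an embedding into a Quot scheme $\Quot_{{\mathcal C}/S}$.

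Next I would construct a parameter scheme $R$ over the open subset of $\Quot_{{\mathcal C}/S}$ where the universal quotient is locally free of the correct rank, by adjoining: (a) a section of the relative flag bundle over each ${\mathcal D}^{(i)}$ providing the filtration $\{l^{(i)}_k\}$; and (b) a relative connection $\nabla$, cut out as a closed subscheme of the relative $\Hom$-scheme parametrizing $\mathcal{O}$-linear maps $E\to E\otimes\Omega^1({\mathcal D})$ by imposing the Leibniz rule. The group $G=PGL(P(N))$ acts on $R$ through change of framing of $(\pi_{{\mathcal C}/S})_*E(N)$, and two $R$-points give equivalent triples in ${\mathcal M}(S')$ precisely when they lie in the same $G$-orbit modulo tensoring by a line bundle pulled back from $S'$.

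Finally I would linearize the $G$-action on $R$ using a combination of the standard Quot linearization with the $\balpha$-weighted line bundles on the flag factors, following \cite{IIS-1}. Via the Hilbert--Mumford criterion, one-parameter subgroups of $G$ correspond to filtrations of $E$ by subbundles; the weight calculation reproduces the parabolic slope inequality in (iv), and the $\nabla$-invariance of the test subbundle is captured because the closed condition defining $R$ imposes equivariance of $\nabla$ under the one-parameter subgroup. Since the stability (iv) is strict, there are no strictly semistable points, so the GIT quotient $M=R^{s}/\!\!/G$ is a geometric quotient that is quasi-projective over $S$ and represents the \'etale sheafification of ${\mathcal M}$; the passage to the \'etale sheafification captures the equivalence $\sim$, since the line-bundle twist ambiguity on $S'$ is not Zariski-local but trivializes \'etale-locally on $S'$. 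The main technical point, though routine, is to choose the linearization so that its numerical stability matches (iv) exactly, and in particular so that the distinct weights $\alpha^{(i)}_k$ correctly separate the contributions from each pair $(i,k)$; this Hilbert--Mumford weight computation is carried out in detail in \cite{IIS-1} and \cite{Inaba-1}, and needs only to be transported to the current relative and non-compatible-parabolic setting.
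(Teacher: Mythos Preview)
Your proposal is correct in outline, and it is in fact the underlying construction that the cited references carry out; but the paper takes a shorter route. Rather than redoing the GIT construction from scratch, the paper invokes the already-built moduli space $\overline{M^{{\mathcal D},\balpha',\bbeta,\gamma}_{{\mathcal C}/S}(r,a,\{m_i\})}$ of parabolic $\Lambda^1_{\mathcal D}$-triples $(E_1,E_2,\phi,\nabla,\{l^{(i)}_k\})$ from \cite[Theorem 5.1]{IIS-1}, and observes that the assignment $(E,\nabla,\{l^{(i)}_k\})\mapsto (E,E,\mathrm{id}_E,\nabla,\{l^{(i)}_k\})$ exhibits ${\mathcal M}$ as an open subfunctor of that larger moduli functor (for a suitable choice of $(\balpha',\bbeta,\gamma)$). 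The desired $M$ is then simply the corresponding Zariski open subset. Your approach buys self-containment and avoids unpacking the $\Lambda$-triple formalism; the paper's approach buys brevity by outsourcing the entire GIT machinery (boundedness, parameter scheme, linearization, Hilbert--Mumford) to \cite{IIS-1}. The one substantive point you would still need to verify carefully in your version is that the connection $\nabla$, being only $\mathbb{C}$-linear, is parametrized not by a closed subscheme of a $\Hom$-scheme but rather by an affine bundle (a torsor under $\Hom(E,E\otimes\Omega^1({\mathcal D}))$) over the locus where a connection exists; this is handled in \cite{IIS-1} precisely by the $\Lambda$-triple device, which linearizes the problem by treating $\phi$ and $\nabla$ as independent data.
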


\begin{proof}
By \cite[Theorem 5.1]{IIS-1}, there exists a relative coarse
moduli scheme $\overline{M^{{\mathcal D},\balpha',\bbeta,\gamma}_{{\mathcal C}/S}(r,a,\{m_i\})}$
over $S$
of parabolic $\Lambda_{{\mathcal D}}^1$-triples
$(E_1,E_2,\phi,\nabla,\{l^{(i)}_k\})$,
where $E_1$ and $E_2$ are algebraic vector bundles of rank $r$
on a fiber of ${\mathcal C}$ over $S$,
$\phi\colon E_1\longrightarrow E_2$ is an ${\mathcal O}_{\mathcal C}$-homomorphism,
$\nabla\colon E_1\longrightarrow E_2\otimes\Omega^1_{{\mathcal C}/S}({\mathcal D})$
satisfies $\nabla(fa)=\phi(a)\otimes df +f\nabla(a)$
for $f\in{\mathcal O}_{\mathcal C}$, $a\in E_1$,
$E_1|_{{\mathcal D}^{(i)}_s}=l^{(i)}_0\supset l^{(i)}_1\supset\cdots\supset l^{(i)}_r=0$
is a filtration satisfying $\length(l^{(i)}_k/l^{(i)}_{k+1})=m_i$
and $(E_1,E_2,\phi,\nabla,\{l^{(i)}_k\})$
satisfies a stability condition with respect to $(\balpha',\bbeta,\gamma)$.
Furthermore, $\overline{M^{{\mathcal D},\balpha',\bbeta,\gamma}_{{\mathcal C}/S}(r,a,\{m_i\})}$
is quasi-projective over $S$.
The detail is written in \cite[section 5]{IIS-1}.
If we denote the moduli functor corresponding to
$\overline{M^{{\mathcal D},\balpha',\bbeta,\gamma}_{{\mathcal C}/S}(r,a,\{m_i\})}$
by
$\overline{{\mathcal M}^{{\mathcal D},\balpha',\bbeta,\gamma}_{{\mathcal C}/S}(r,a,\{m_i\})}$
and choose an appropriate stability parameter $(\balpha',\bbeta,\gamma)$
by a similar argument to that in \cite[section 5]{IIS-1},
then we can define a morphism of functors
\[
 {\mathcal M}\longrightarrow
 \overline{{\mathcal M}^{{\mathcal D},\balpha',\bbeta,\gamma}_{{\mathcal C}/S}(r,a,\{m_i\})}
\]
given by $(E,\nabla,\{l^{(i)}_k\})\mapsto (E,E,\mathrm{id}_E,\nabla,\{l^{(i)}_k\})$
which is represented by an open immersion.
So there is a Zariski open subset
$M\subset \overline{M^{{\mathcal D},\balpha',\bbeta,\gamma}_{{\mathcal C}/S}(r,a,\{m_i\})}$
satisfying
\[
 {\mathcal M} \cong
 M\times_{ \overline{M^{{\mathcal D},\balpha',\bbeta,\gamma}_{{\mathcal C}/S}(r,a,\{m_i\})} }
 \overline{{\mathcal M}^{{\mathcal D},\balpha',\bbeta,\gamma}_{{\mathcal C}/S}(r,a,\{m_i\})}.
\]
Then $M$ represents the \'etale sheafification of ${\mathcal M}$
and becomes a coarse moduli scheme of ${\mathcal M}$.
\end{proof}

\noindent
{\it Proof of the existence of
$M^{\balpha}_{{\mathcal C},{\mathcal D}}(\tilde{\bnu},\tilde{\bmu})$.}

For some quasi-finite \'etale covering
$\tilde{M}\longrightarrow M$, there is a universal family
$(\tilde{E},\tilde{\nabla},\{\tilde{l}^{(i)}_k\})$ on ${\mathcal C}\times_S\tilde{M}$.
Let $Y$ be the maximal locally closed subscheme of $\tilde{M}$
such that
$(l^{(i)}_k)_Y/(l^{(i)}_{k+1})_Y$ is a locally free ${\mathcal O}_{{\mathcal D}^{(i)}_Y}$-module
of rank one for $i=1,\ldots,n$
and
$\bigg(\tilde{\nabla}|_{{\mathcal D}^{(i)}_Y}
-\nu^{(i)}(\mu^{(i)}_k)\mathrm{id}
\dfrac{d\bar{z}^{(i)}}{\bar{z}^{(i)}_1\bar{z}^{(i)}_2\cdots \bar{z}^{(i)}_{m_i}}\bigg)
\big( (\tilde{l}^{(i)}_k)_Y \big)
\subset (l^{(i)}_{k+1})_Y\otimes\Omega^1_{{\mathcal C}_Y/Y}(D_Y)$
for $1\leq k\leq r$.
We set
\[
 P:=\prod_{i=1}^n
 \Spec S^*_Y\left({\mathcal Hom}(\tilde{E}|_{{\mathcal D}^{(i)}_Y},
 \tilde{E}|_{{\mathcal D}^{(i)}_Y})^{\vee}\right)
\]
and take universal families
$\tilde{N}^{(i)}\colon \tilde{E}|_{{\mathcal D}^{(i)}_P}\longrightarrow \tilde{E}|_{{\mathcal D}^{(i)}_P}$
for $i=1,\ldots,n$,
where $S^*_Y\left( {\mathcal Hom}(\tilde{E}|_{{\mathcal D}^{(i)}_Y},
 \tilde{E}|_{{\mathcal D}^{(i)}_Y})^{\vee}\right)$
denotes the symmetric algebra of
${\mathcal Hom}(\tilde{E}|_{{\mathcal D}^{(i)}_Y},
 \tilde{E}|_{{\mathcal D}^{(i)}_Y})^{\vee}$
over $Y$.
Let $Z$ be the maximal locally closed subscheme of $P$
satisfying
$\varphi_{\tilde{\bmu}}(\tilde{N}^{(i)})_Z=0\in\End(\tilde{E}|_{{\mathcal D}^{(i)}_Z})$,
$\tilde{\nu}^{(i)}(\tilde{N}^{(i)})\left.\dfrac{d\bar{z}^{(i)}}{\bar{z}^{(i)}_1\bar{z}^{(i)}_2\cdots \bar{z}^{(i)}_{m_i}}
\right|_{{\mathcal D}^{(i)}_Z}
=\tilde{\nabla}|_{{\mathcal D}^{(i)}_Z}$
and
\[
 {\mathcal O}_{{\mathcal D}^{(i)}_p}[T]/(\varphi^{(i)}_{\tilde{\bmu}}(T))
 \ni \overline { P(T) } \mapsto P((\tilde{N}^{(i)})_p)
 \in \End(\tilde{E}|_{{\mathcal D}^{(i)}_p}) 
\]
is injective for any $\mathbb{C}$-valued point $p$ of $Z$.
By construction, we can easily see that $Z$ descends to a quasi-projective scheme
$M^{\balpha}_{C,D}(\tilde{\bnu},\tilde{\bmu})$ over $M$,
which is the desired moduli space.
\hfill $\boxed{ }$

The proof of Theorem \ref {theorem:algebraic-moduli-unfolding}
will be completed at the end of subsection \ref {subsection:symplectic form}.

\subsection{Factorized $(\bnu,\bmu)$-connection}

For the rest of the proof of Theorem \ref{theorem:algebraic-moduli-unfolding},
we need to describe the tangent space of the moduli space.
We will describe the tangent space and give a symplectic structure
via the idea in section \ref{section:linear-algebra}.
So we introduce the notion of factorized $(\bnu,\bmu)$-connection
which comes from the idea of factorization of a linear map in 
subsection \ref {subsection:factorization of linear map}.

Let $C,D,D^{(i)},D^{(i)}_j,\bmu,\varphi^{(i)}_{\bmu},\bnu,\bar{z}^{(i)}$ and $\bar{z}^{(i)}_j$ be
as in Definition \ref{def-connection}.
The following notion of factorized connection is useful for
describing the deformation theory of $(\bnu,\bmu)$-connections
and the relative symplectic form on the moduli space.

\begin{definition}\rm
We say that a tuple $(E,\nabla,\{\theta^{(i)},\kappa^{(i)}\})$ 
is a factorized $(\bnu,\bmu)$-connection if
\begin{itemize}
\item[(1)] $E$ is an algebraic vector bundle on $C$
of rank $r$ and degree $a$,
\item[(2)] $\nabla\colon E\longrightarrow E\otimes\Omega^1_C(D)$
is an algebraic connection admitting poles along $D$,
\item[(3)] $\theta^{(i)}\colon E^{\vee}|_{D^{(i)}}
\stackrel{\sim}\longrightarrow E|_{D^{(i)}}$
is an ${\mathcal O}_{D^{(i)}}$-isomorphism satisfying $^t\theta^{(i)}=\theta^{(i)}$,
\item[(4)] $\kappa^{(i)}\colon E|_{D^{(i)}} \longrightarrow E^{\vee}|_{D^{(i)}}$
is an ${\mathcal O}_{D^{(i)}}$-homomorphism satisfying
$^t\kappa^{(i)}=\kappa^{(i)}$,
\item[(5)] the composition $N^{(i)}:=\theta^{(i)}\circ\kappa^{(i)}\colon
E|_{D^{(i)}} \longrightarrow E|_{D^{(i)}}$
satisfies
$\displaystyle\nu^{(i)}(N^{(i)})\frac{d\bar{z}^{(i)}}{\bar{z}^{(i)}_1\bar{z}^{(i)}_2\cdots \bar{z}^{(i)}_{m_i}}
=\nabla|_{D^{(i)}}$,
$\varphi^{(i)}_{\bmu}(N^{(i)})=0$
and the injectivity of the ring homomorphism
\[
 {\mathcal O}_{D^{(i)}}[T]/(\varphi^{(i)}_{\bmu}(T))
 \ni \overline{P(T)} \mapsto P(N^{(i)})\in \End_{{\mathcal O}_{D^{(i)}}}(E|_{D^{(i)}}).
\]
\end{itemize}
Two factorized $(\bnu,\bmu)$-connections
$(E,\nabla,\{\theta^{(i)},\kappa^{(i)}\})$ and
$(E',\nabla',\{\theta'^{(i)},\kappa'^{(i)}\})$ are isomorphic if
there is an isomorphism $\sigma\colon E\stackrel{\sim}\longrightarrow E'$
of algebraic vector bundles
such that
$(\sigma\otimes 1)\circ\nabla=\nabla'\circ\sigma$,
and
the diagrams
\[
 \begin{CD}
  E|_{D^{(i)}} @>\kappa^{(i)}>> E^{\vee}|_{D^{(i)}} \\
  @V \sigma|_{D^{(i)}} V \cong V 
  @V \cong V  ^t P^{(i)}(N^{(i)})\circ (\,^t\sigma|_{D^{(i)}})^{-1} V \\
  E'|_{D^{(i)}} @>\kappa'^{(i)}>> E'^{\vee}|_{D^{(i)}}
 \end{CD}
 \hspace{160pt}
 \begin{CD}
  E^{\vee}|_{D^{(i)}} @>\theta^{(i)}>> E|_{D^{(i)}}  \\
  @V ^t P^{(i)}(N^{(i)})\circ (\,^t\sigma|_{D^{(i)}})^{-1} V \cong V
  @V \sigma|_{D^{(i)}} V \cong V \\
  E'^{\vee}|_{D^{(i)}} @>\theta'^{(i)}>> E'|_{D^{(i)}} 
 \end{CD}
\]
are commutative
for some
$\overline{P^{(i)}(T)}\in
\left( {\mathcal O}_{D^{(i)}}[T]/(\varphi^{(i)}_{\bmu}(T))\right)^{\times}$.
\end{definition}

\begin{proposition}\label{prop:correspondence-factorized}
The correspondence
$(E,\nabla,\{\theta^{(i)},\kappa^{(i)}\})
\mapsto
(E,\nabla,\{\theta^{(i)}\circ\kappa^{(i)}\})$
gives a bijective correspondence between the isomorphism classes of
factorized $(\bnu,\bmu)$-connections and
the isomorphism classes of $(\bnu,\bmu)$-connections on $(C,D)$.
\end{proposition}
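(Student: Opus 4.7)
The plan is to apply the linear algebraic correspondence of
Propositions~\ref{prop:factorization-lemma} and \ref{prop:factorization-unique}
fiberwise over $D^{(i)}$, after verifying that the relevant modules over
${\mathcal A}^{(i)}:={\mathcal O}_{D^{(i)}}[T]/(\varphi^{(i)}_{\bmu}(T))$
are free of rank one.

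First I would establish the key algebraic fact. Since each $D^{(i)}$ is zero-dimensional
(as a closed subscheme of $C$), the ring ${\mathcal A}^{(i)}$ is a finite-dimensional
$\mathbb{C}$-algebra, hence Artinian and semi-local. The injectivity condition on
${\mathcal O}_{D^{(i)}}[T]/(\varphi^{(i)}_{\bmu}(T))\to \End(E|_{D^{(i)}})$ in
Definition~\ref{def-connection} makes $E|_{D^{(i)}}$ a module over ${\mathcal A}^{(i)}$
via $T\mapsto N^{(i)}$. At each closed point $p\in D^{(i)}$ the distinct eigenvalues
$\mu^{(i)}_1|_p,\ldots,\mu^{(i)}_r|_p$ give $E|_p\cong {\mathcal A}^{(i)}\otimes_{{\mathcal O}_{D^{(i)}}}\mathbb{C}(p)$
as a module over the Artinian fiber ring. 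Comparing ${\mathcal O}_{D^{(i)}}$-ranks
(both equal to $r$) and using Nakayama, $E|_{D^{(i)}}$ is locally free of rank one
over ${\mathcal A}^{(i)}$; semi-locality upgrades this to freeness of rank one.
The same holds for $E^{\vee}|_{D^{(i)}}$ viewed as an ${\mathcal A}^{(i)}$-module via
$T\mapsto {}^tN^{(i)}$.

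Next I would prove surjectivity. Given a $(\bnu,\bmu)$-connection $(E,\nabla,\{N^{(i)}\})$,
fix any ${\mathcal A}^{(i)}$-isomorphism
$\theta^{(i)}\colon E^{\vee}|_{D^{(i)}}\xrightarrow{\sim}E|_{D^{(i)}}$ (which exists
by the freeness result) and set
$\kappa^{(i)}:=(\theta^{(i)})^{-1}\circ N^{(i)}\colon E|_{D^{(i)}}\to E^{\vee}|_{D^{(i)}}$.
Then $N^{(i)}=\theta^{(i)}\circ\kappa^{(i)}$ by construction, and $\kappa^{(i)}$ is also
${\mathcal A}^{(i)}$-linear. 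Exactly the dual-pairing computation in the proof of
Proposition~\ref{prop:factorization-lemma} shows that any ${\mathcal A}^{(i)}$-linear
map between $E|_{D^{(i)}}$ and $E^{\vee}|_{D^{(i)}}$ is automatically symmetric; this
argument uses only the cyclicity of the generator $v^*$ and commutativity of
$\mathbb{C}[T]$, both of which survive in our relative setting because the modules
are free of rank one over the commutative algebra ${\mathcal A}^{(i)}$. Hence
${}^t\theta^{(i)}=\theta^{(i)}$ and ${}^t\kappa^{(i)}=\kappa^{(i)}$, producing a
factorized $(\bnu,\bmu)$-connection mapping to $(E,\nabla,\{N^{(i)}\})$.

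For well-definedness of the forward map, suppose
$\sigma\colon(E,\nabla,\{\theta^{(i)},\kappa^{(i)}\})\xrightarrow{\sim}
(E',\nabla',\{\theta'^{(i)},\kappa'^{(i)}\})$ is an isomorphism of factorized
connections. Composing the two prescribed commutative squares yields
$N'^{(i)}\circ\sigma|_{D^{(i)}}=\theta'^{(i)}\circ\kappa'^{(i)}\circ\sigma|_{D^{(i)}}
=\sigma|_{D^{(i)}}\circ\theta^{(i)}\circ\kappa^{(i)}=\sigma|_{D^{(i)}}\circ N^{(i)}$,
so $\sigma$ induces an isomorphism of $(\bnu,\bmu)$-connections. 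Finally, for
injectivity (i.e., that the inverse assignment is well defined on isomorphism classes),
given two factorizations $(\theta^{(i)},\kappa^{(i)})$ and $(\theta'^{(i)},\kappa'^{(i)})$
of the same $N^{(i)}$ on the same $(E,\nabla)$, the relative analogue of
Proposition~\ref{prop:factorization-unique} (which again goes through verbatim since
the proof only used that the endomorphism algebra of a free rank-one module over a
commutative ring is that ring itself) produces
$\overline{P^{(i)}(T)}\in({\mathcal A}^{(i)})^{\times}$ with
$\theta'^{(i)}=\theta^{(i)}\circ P^{(i)}({}^tN^{(i)})$ and
$\kappa'^{(i)}=P^{(i)}({}^tN^{(i)})^{-1}\circ\kappa^{(i)}$; taking $\sigma=\mathrm{id}_E$
and the corresponding $\overline{P^{(i)}(T)}^{-1}$ (to match the convention of the
vertical arrows in the defining diagrams) gives the required isomorphism of
factorized connections.

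The main obstacle will be the relative freeness step: one must be careful that the
injectivity hypothesis in Definition~\ref{def-connection}(iii) suffices to conclude
$E|_{D^{(i)}}$ is a free ${\mathcal A}^{(i)}$-module of rank one globally over $D^{(i)}$,
rather than only pointwise or locally. Once that is in place, all the symmetry and
uniqueness statements transport directly from Section~\ref{section:linear-algebra} via
a trivialization.
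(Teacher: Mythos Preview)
Your proposal is correct and follows essentially the same strategy as the paper: both arguments reduce to showing that $E|_{D^{(i)}}$ and $E^{\vee}|_{D^{(i)}}$ are free of rank one over ${\mathcal O}_{D^{(i)}}[T]/(\varphi^{(i)}_{\bmu}(T))$, then choose an ${\mathcal A}^{(i)}$-linear isomorphism $\theta^{(i)}$, set $\kappa^{(i)}=(\theta^{(i)})^{-1}\circ N^{(i)}$, and transport the symmetry and uniqueness computations from Propositions~\ref{prop:factorization-lemma} and \ref{prop:factorization-unique}. The only cosmetic difference is that the paper establishes freeness by explicitly picking a cyclic generator $v_x\in E|_x$ at each closed point $x\in D^{(i)}$ and lifting to a global $v\in E|_{D^{(i)}}$, whereas you package the same step as Nakayama plus semi-locality; these are equivalent formulations of the same argument.
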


\begin{proof}
We will give the inverse correspondence.
Let $(E,\nabla,\{N^{(i)}\})$ be a 
$(\bnu,\bmu)$-connection on $(C,D)$.
We can define an ${\mathcal O}_{D^{(i)}}[T]$-module structure on
$E|_{D^{(i)}}$ by
\[
 {\mathcal O}_{D^{(i)}}[T] \times E|_{D^{(i)}}
 \ni (P(T) , v ) \mapsto P(N^{(i)})v \in E|_{D^{(i)}}.
\]
We also define  an ${\mathcal O}_{D^{(i)}}[T]$-module structure on
$E^{\vee}|_{D^{(i)}}$ by
\[
 {\mathcal O}_{D^{(i)}}[T] \times E^{\vee}|_{D^{(i)}}
 \ni ( P(T) , v ) \mapsto P(\,^tN^{(i)})v \in E^{\vee}|_{D^{(i)}}.
\]
For any point $x\in D^{(i)}$,
the homomorphism
$\mathbb{C}[T]/(\varphi^{(i)}_{\bmu}(T))\ni \overline{P(T)}
\mapsto P(N^{(i)}|_x)\in
\End_{\mathbb{C}}(E|_x)$
is injective by Remark \ref  {remark:injectivity implies subbundle}.
So the minimal polynomial of the endomorphism
$N^{(i)}|_x$ on the vector space $E|_x$ is $\varphi^{(i)}_{\bmu}|_x(T)$
whose degree is $r=\dim_{\mathbb{C}}E|_x$.
Thus an elementary theory of linear algebra implies that
there is an element $v_x\in E|_x$ such that
the homomorphism
$\mathbb{C}[T]/(\varphi^{(i)}_{\bmu}(T))\ni \overline{P(T)}
\mapsto P(N^{(i)})v_x\in E|_x$
is an isomorphism of $\mathbb{C}[T]$-modules.
If we take an element $v\in E|_{D^{(i)}}$
such that $v|_x=v_x$ for any $x\in D^{(i)}$,
then the homomorphism
\[
 {\mathcal O}_{D^{(i)}}[T]/(\varphi^{(i)}_{\bmu}(T))
 \ni \overline{P(T)}
 \mapsto P(N^{(i)})v\in E|_{D^{(i)}}
\]
is an isomorphism of
${\mathcal O}_{D^{(i)}}[T]$-modules.
Similarly $E^{\vee}|_{D^{(i)}}$
is isomorphic to
${\mathcal O}_{D^{(i)}}[T]/(\varphi^{(i)}_{\bmu}(T))$
as an ${\mathcal O}_{D^{(i)}}[T]$-module.
So we can take an ${\mathcal O}_{D^{(i)}}[T]$-isomorphism
$\theta^{(i)}\colon E^{\vee}|_{D^{(i)}}
 \stackrel{\sim}\longrightarrow E|_{D^{(i)}}$, which makes the diagram
\[
 \begin{CD}
  E^{\vee}|_{D^{(i)}} @>\theta^{(i)}>\sim> E|_{D^{(i)}}  \\
  @V ^tN^{(i)} V V
  @V N^{(i)} V  V \\
  E^{\vee}|_{D^{(i)}} @>\theta^{(i)}>\sim> E|_{D^{(i)}} 
 \end{CD}
\]
commutative.
If we define
\[
 \kappa^{(i)}:=(\theta^{(i)})^{-1}\circ N^{(i)}\colon
 E|_{D^{(i)}} \longrightarrow E^{\vee}|_{D^{(i)}},
\]
then $\kappa^{(i)}$ also becomes a homomorphism of
${\mathcal O}_{D^{(i)}}[T]$-modules.
By definition, we have
$\theta^{(i)}\circ\kappa^{(i)}=N^{(i)}$
and we can verify
the equalities
$^t\theta^{(i)}=\theta^{(i)}$ and
$^t\kappa^{(i)}=\kappa^{(i)}$
in the same way as Proposition \ref{prop:factorization-lemma}.
We can see by the same argument as
Proposition \ref{prop:factorization-unique}
that the ambiguity of the choice of $\theta^{(i)}$
is just a composition with the automorphism
of $E|_{{\mathcal D}^{(i)}_s}^{\vee}$
of the form $P(\,^t N^{(i)})$
for some $P(T)\in\mathbb{C}[T]$.
Thus we can define a correspondence
$(E,\nabla,\{N^{(i)}\})\mapsto
(E,\nabla,\{\theta^{(i)},\kappa^{(i)}\})$
which is the desired inverse correspondence
by its construction.
\end{proof}

We extend the above proposition to a relative setting
over a noetherian local scheme, that is, a scheme isomorphic to
$\Spec A$ for some noetherian local ring $A$.
Let ${\mathcal C},{\mathcal D},{\mathcal D}^{(i)},{\mathcal D}^{(i)}_j,
\tilde{\bnu},\tilde{\bmu},\varphi^{(i)}_{\tilde{\bmu}},\bar{z}^{(i)}$ and $\bar{z}^{(i)}_j$
be as in subsection \ref {subsection:former main theorem}.
Assume that $S':=\Spec A'$ is an noetherian local scheme with a morphism
$S' \longrightarrow S$.
We say that $(E,\nabla,\{N^{(i)}\})$ is a flat family of
$(\tilde{\bnu}_{S'},\tilde{\bmu}_{S'})$-connections
on $({\mathcal C}_{S'},{\mathcal D}_{S'})$ over $S'$ if
$E$ is a vector bundle on ${\mathcal C}_{S'}$ of rank $r$,
$\nabla\colon E\longrightarrow E\otimes\Omega^1_{{\mathcal C}_{S'}/S'}({\mathcal D}_{S'})$
is an $S'$-relative connection and
$N^{(i)}\colon E|_{{\mathcal D}^{(i)}_{S'}}\longrightarrow E|_{{\mathcal D}^{(i)}_{S'}}$
is an ${\mathcal O}_{{\mathcal D}^{(i)}_{S'}}$-homomorphism
such that $\varphi^{(i)}_{\tilde{\bmu}}(N^{(i)})=0$,
$\displaystyle\tilde{\nu}^{(i)}(N^{(i)})
\frac{d\bar{z}^{(i)}}{\bar{z}^{(i)}_1\bar{z}^{(i)}_2\cdots \bar{z}^{(i)}_{m_i}}
=\nabla|_{{\mathcal D}^{(i)}_{S'}}$
and the homomorphism
\[
 {\mathcal O}_{{\mathcal D}^{(i)}_{S'}}[T]/(\varphi^{(i)}_{\tilde{\bmu}}(T))
 \ni \overline{P(T)}\mapsto P(N^{(i)})\in\End(E|_{{\mathcal D}^{(i)}_{S'}})
\]
is an injection whose cokernel is flat over $S'$.
Similarly we say that
$(E,\nabla,\{\theta^{(i)},\kappa^{(i)}\})$
is a flat family of factorized $(\tilde{\bnu}_{S'},\tilde{\bmu}_{S'})$-connections 
on $({\mathcal C}_{S'},{\mathcal D}_{S'})$ over $S'$ if
$E$ is a vector bundle on ${\mathcal C}_{S'}$ of rank $r$,
$\nabla\colon E\longrightarrow E\otimes\Omega^1_{{\mathcal C}_{S'}/S'}({\mathcal D}_{S'})$
is an $S'$-relative connection,
$\theta^{(i)}\colon E^{\vee}|_{{\mathcal D}^{(i)}_{S'}}\longrightarrow E|_{{\mathcal D}^{(i)}_{S'}}$
is an isomorphism,
$\kappa^{(i)}\colon E|_{{\mathcal D}^{(i)}_{S'}}\longrightarrow E^{\vee}|_{{\mathcal D}^{(i)}_{S'}}$
is a homomorphism such that $^t\theta^{(i)}=\theta^{(i)}$,
$^t\kappa^{(i)}=\kappa^{(i)}$,
$\varphi^{(i)}(\theta^{(i)}\circ\kappa^{(i)})=0$,
$\displaystyle\nu^{(i)}(\theta^{(i)}\circ\kappa^{(i)})
\frac{d\bar{z}^{(i)}}{\bar{z}^{(i)}_1\bar{z}^{(i)}_2\cdots \bar{z}^{(i)}_{m_i}}
=\nabla|_{{\mathcal D}^{(i)}_{S'}}$
and the homomorphism
\[
 {\mathcal O}_{{\mathcal D}^{(i)}_{S'}}[T]/(\varphi^{(i)}_{\tilde{\bmu}}(T))
 \ni \overline{P(T)}\mapsto P(\theta^{(i)}\circ\kappa^{(i)})\in\End(E|_{{\mathcal D}^{(i)}_{S'}})
\]
is an injection
whose cokernel is flat over $S'$.

\begin{proposition}
Let ${\mathcal C},{\mathcal D},{\mathcal D}^{(i)},{\mathcal D}^{(i)}_j,
\tilde{\bnu},\tilde{\bmu},\varphi^{(i)}_{\tilde{\bmu}},\bar{z}^{(i)}$ and $\bar{z}^{(i)}_j$
be as in subsection \ref{subsection:former main theorem}
and let $S'$ be a noetherian local scheme with a morphism
$S' \longrightarrow S$.
Then the correspondence 
\[
 (E,\nabla,\{\theta^{(i)},\kappa^{(i)}\})
 \mapsto (E,\nabla,\{\theta^{(i)}\circ\kappa^{(i)}\})
\]
gives a bijective correspondence between the flat families of
factorized $(\tilde{\bnu}_{S'},\tilde{\bmu}_{S'})$-connections
on $({\mathcal C}_{S'},{\mathcal D}_{S'})$ over $S'$
and the flat families of
$(\tilde{\bnu}_{S'},\tilde{\bmu}_{S'})$-connections on
$({\mathcal C}_{S'},{\mathcal D}_{S'})$ over $S'$.
\end{proposition}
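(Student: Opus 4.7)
The plan is to adapt the strategy of Proposition \ref{prop:correspondence-factorized} to the relative setting, with Nakayama's lemma handling the passage between the closed fiber and the total family. The forward direction $(E,\nabla,\{\theta^{(i)},\kappa^{(i)}\})\mapsto(E,\nabla,\{\theta^{(i)}\circ\kappa^{(i)}\})$ is purely formal: one sets $N^{(i)}:=\theta^{(i)}\circ\kappa^{(i)}$ and observes that every defining condition for a flat family of $(\tilde{\bnu}_{S'},\tilde{\bmu}_{S'})$-connections is immediate from the axioms for factorized ones, including the injectivity-with-flat-cokernel property of the map $\overline{P(T)}\mapsto P(N^{(i)})$.

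The inverse direction is the substantive step. Given $(E,\nabla,\{N^{(i)}\})$, I would first endow $E|_{{\mathcal D}^{(i)}_{S'}}$ and $E^{\vee}|_{{\mathcal D}^{(i)}_{S'}}$ with ${\mathcal O}_{{\mathcal D}^{(i)}_{S'}}[T]$-module structures via $N^{(i)}$ and $^tN^{(i)}$ respectively; these descend to $R:={\mathcal O}_{{\mathcal D}^{(i)}_{S'}}[T]/(\varphi^{(i)}_{\tilde{\bmu}}(T))$ because $\varphi^{(i)}_{\tilde{\bmu}}(N^{(i)})=0$. The core claim is that each of $E|_{{\mathcal D}^{(i)}_{S'}}$ and $E^{\vee}|_{{\mathcal D}^{(i)}_{S'}}$ is free of rank one over $R$. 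Since both the source $R$ and the target $E|_{{\mathcal D}^{(i)}_{S'}}$ are finite free ${\mathcal O}_{{\mathcal D}^{(i)}_{S'}}$-modules of rank $r$, it suffices to exhibit an element $v$ whose associated $R$-linear map $\overline{P(T)}\mapsto P(N^{(i)})v$ is surjective. Because $S'=\Spec A'$ is local and ${\mathcal D}^{(i)}_{S'}$ is finite over $S'$, the sheaf ${\mathcal O}_{{\mathcal D}^{(i)}_{S'}}$ is semilocal, so Nakayama's lemma reduces the surjectivity check to the closed fiber over the residue field. The flatness of the cokernel of the map $R\hookrightarrow\End(E|_{{\mathcal D}^{(i)}_{S'}})$ guarantees that this injectivity persists on the closed fiber, which puts us exactly in the setting of Proposition \ref{prop:correspondence-factorized}; a cyclic generator there lifts to the required $v$. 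The analogous argument applied to $E^{\vee}|_{{\mathcal D}^{(i)}_{S'}}$ gives rank-one freeness on the dual side. Choose any $R$-linear isomorphism $\theta^{(i)}\colon E^{\vee}|_{{\mathcal D}^{(i)}_{S'}}\xrightarrow{\sim}E|_{{\mathcal D}^{(i)}_{S'}}$ (sending a generator to a generator) and set $\kappa^{(i)}:=(\theta^{(i)})^{-1}\circ N^{(i)}$. The symmetry equalities $^t\theta^{(i)}=\theta^{(i)}$ and $^t\kappa^{(i)}=\kappa^{(i)}$ then follow from the same dual-pairing computation as in Proposition \ref{prop:correspondence-factorized}, which only uses $R$-linearity and never the base being a point. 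Verification that the two correspondences are mutually inverse (on the appropriate equivalence classes, as in the absolute case via Proposition \ref{prop:factorization-unique}) is formal.

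The main obstacle, as indicated above, is establishing rank-one freeness of $E|_{{\mathcal D}^{(i)}_{S'}}$ and $E^{\vee}|_{{\mathcal D}^{(i)}_{S'}}$ as $R$-modules in the relative setting. This is where one must genuinely use both the semilocal structure of ${\mathcal O}_{{\mathcal D}^{(i)}_{S'}}$ over the local ring $A'$ and the flat-cokernel hypothesis, which together guarantee that the cyclic generator on the closed fiber provided by the absolute case lifts to a generator of the entire family. Once this is in hand, the symmetry arguments and the verification of the inverse correspondence are entirely parallel to the absolute case and do not introduce new difficulties.
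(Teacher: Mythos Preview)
Your proposal is correct and follows essentially the same route as the paper, which simply states that the proof is exactly the same as that of Proposition \ref{prop:correspondence-factorized}. You have usefully made explicit the role of Nakayama's lemma over the semilocal ring ${\mathcal O}_{{\mathcal D}^{(i)}_{S'}}$ and of the flat-cokernel hypothesis in passing from the closed fiber to the family, which is precisely the content hidden behind the paper's ``same proof'' remark.
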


\begin{proof}
The proof is exactly the same as that of Proposition \ref{prop:correspondence-factorized}.
\end{proof}

\subsection{Tangent space of the moduli space of
$(\tilde{\bnu},\tilde{\bmu})$-connections}
\label {subsection:tangent}

We use the same notations as in subsection \ref {subsection:former main theorem}.
We take a $\mathbb{C}$-valued point $x$ of
$M^{\balpha}_{{\mathcal C},{\mathcal D}}(\tilde{\bnu},\tilde{\bmu})$
over a $\mathbb{C}$-valued point $s$ of $S$.
Let
$\big(E,\nabla,\{N^{(i)}\}\big)$
be the  $(\bnu,\bmu)$-connection 
on the fiber $({\mathcal C}_s,{\mathcal D}_s)$
corresponding to $x$,
where we put $(\bnu,\bmu):=(\tilde{\bnu}_s,\tilde{\bmu}_s)$.
By Proposition \ref {prop:correspondence-factorized},
we can take a factorized
$(\bnu,\bmu)$-connection
$\big(E,\nabla,\{ \theta^{(i)},\kappa^{(i)}\}\big)$
corresponding to $\big(E,\nabla,\{N^{(i)}\}\big)$.  
We will consider the deformation theory of
$\big(E,\nabla,\{N^{(i)}\}\big)$.

Recall that $\tilde{\nu}^{(i)}(T)$ is given by
\[
 \tilde{\nu}^{(i)}(T)=\sum_{j=0}^{r-1} c^{(i)}_j T^j \in
 H^0 ( {\mathcal D}^{(i)}, {\mathcal O}_{{\mathcal D}^{(i)}} )[T].
\]
We define homomorphisms
\begin{align*}
 \sigma^{(i) -}_{\theta^{(i)}} &\colon
 \End(E|_{{\mathcal D}^{(i)}_s})\oplus
 {\mathcal O}_{{\mathcal D}^{(i)}_s}[T]\big/\big(\varphi^{(i)}_{\bmu}(T)\big)
 \longrightarrow 
 \Hom(E|_{{\mathcal D}^{(i)}_s}^{\vee},E|_{{\mathcal D}^{(i)}_s}) \\
 \sigma^{(i)+}_{\kappa^{(i)}} &\colon
 \End(E|_{{\mathcal D}^{(i)}_s})\oplus
 {\mathcal O}_{{\mathcal D}^{(i)}_s}[T]\big/\big(\varphi^{(i)}_{\bmu}(T)\big)
 \longrightarrow 
 \Hom(E|_{{\mathcal D}^{(i)}_s},E|_{{\mathcal D}^{(i)}_s}^{\vee}) \\
 \delta^{(i)}_{\bnu,N^{(i)}} &\colon \End(E|_{{\mathcal D}^{(i)}_s})\longrightarrow
 \End(E|_{{\mathcal D}^{(i)}_s})\otimes\Omega^1_{{\mathcal C}_s}({\mathcal D}_s)
\end{align*}
by setting
\begin{align}
 \sigma^{(i)-}_{\theta^{(i)}} \big( u,\overline{P(T)} \big)
 &= 
 -u\circ\theta^{(i)}-\theta^{(i)}\circ \ ^t u +\theta^{(i)}\circ P(\ ^t N^{(i)})
 \label {equation:definition of sigma-} \\
 \sigma^{(i)+}_{\kappa^{(i)}} \big( u,\overline{P(T)} \big) 
 &=
 \kappa^{(i)}\circ u + \ ^t u\circ\kappa^{(i)}-P(\ ^t N^{(i)})\circ\kappa^{(i)}
 \label {equation:definition of sigma+} \\
 \delta^{(i)}_{\bnu,N^{(i)}}(u) &=
 \sum_{j=1}^{r-1}\sum_{l=1}^j c^{(i)}_j \: (N^{(i)})^{j-l}\circ u\ \circ (N^{(i)})^{l-1}
 \frac{d\bar{z}^{(i)}}{\bar{z}^{(i)}_1\bar{z}^{(i)}_2\cdots \bar{z}^{(i)}_{m_i}}
 \label {equation:definition of delta}
\end{align}
for $u\in\End(E|_{{\mathcal D}^{(i)}_s})$ and
$\overline{P(T)}\in {\mathcal O}_{{\mathcal D}^{(i)}_s}[T]
\big/\big(\varphi^{(i)}_{\bmu}(T)\big)$.
For each fixed $u\in\End(E|_{{\mathcal D}^{(i)}_s})$, we define a homomorphism
$\Theta_{u}^{(i)}\colon
{\mathcal O}_{{\mathcal D}^{(i)}_s}[T]\big/\big( \varphi^{(i)}_{\bmu}(T) \big)
\longrightarrow
\Omega^1_{{\mathcal C}_s}({\mathcal D}_s) \big|_{{\mathcal D}^{(i)}_s}$
by setting
\begin{equation} \label {equation:definition of Theta_u}
 \Theta_u^{(i)} \big(\overline{P(T)} \big)
 =
 \Tr \big( P(N^{(i)})\circ u \big)
 \frac{d\bar{z}^{(i)}}{\bar{z}^{(i)}_1\bar{z}^{(i)}_2\cdots \bar{z}^{(i)}_{m_i}}
\end{equation}
for $\overline{P(T)}\in {\mathcal O}_{{\mathcal D}^{(i)}_s}[T]/(\varphi^{(i)}_{\bmu}(T))$.
We put
\[
 {\mathcal G}^0 := {\mathcal End}(E), \quad
 {\mathcal G}^1 :={\mathcal End}(E)\otimes\Omega^1_{{\mathcal C}_s}({\mathcal D}_s), \quad
 G^1:=\bigoplus_{i=1}^n 
 \Hom \big( E|_{{\mathcal D}^{(i)}_s},E|_{{\mathcal D}^{(i)}_s}
 \otimes\Omega^1_{{\mathcal C}_s}({\mathcal D}^{(i)}_s) \big).
\]
Furthermore we put
\begin{align*}
 S(E|_{{\mathcal D}_s}^{\vee},E|_{{\mathcal D}_s})
 &=
 \Big\{ (\tau^{(i)})\in \bigoplus_{i=1}^n\
 \Hom \big( E|_{{\mathcal D}^{(i)}_s}^{\vee}, E|_{{\mathcal D}^{(i)}_s} \big) \Big|
 \,
 \text{$^t\tau^{(i)}=\tau^{(i)}$ for any $i$}
 \Big\} \\
 S(E|_{{\mathcal D}_s},E|_{{\mathcal D}_s}^{\vee})
 &=
 \Big\{
 (\xi^{(i)})\in\bigoplus_{i=1}^n 
 \Hom \big( E|_{{\mathcal D}^{(i)}_s}, E|_{{\mathcal D}^{(i)}_s}^{\vee} \big)
 \Big| \,
 \text{$^t\xi^{(i)}=\xi^{(i)}$ for any $i$}
 \Big\}
\end{align*}
and
\[
 Z^0 :=\bigoplus_{i=1}^n 
 {\mathcal O}_{{\mathcal D}^{(i)}_s}[T]\big/\big(\varphi^{(i)}_{\bmu}(T)\big), \quad
 Z^1:=\bigoplus_{i=1}^n\Hom_{{\mathcal O}_{{\mathcal D}^{(i)}_s}}
 \left({\mathcal O}_{{\mathcal D}^{(i)}_s}[T]\big/\big(\varphi^{(i)}_{\bmu}(T)\big),
 \Omega^1_{{\mathcal C}_s}({\mathcal D}^{(i)}_s)|_{{\mathcal D}^{(i)}_s} \right).
\]
We define sheaves ${\mathcal F}^0, {\mathcal F}^1, {\mathcal F}^2$
on ${\mathcal C}_s$ by
\begin{align*}
 {\mathcal F}^0
 &:=
 {\mathcal G}^0\oplus Z^0, \\
 {\mathcal F}^1
 &:=
 {\mathcal G}^1\oplus
 S(E|_{{\mathcal D}_s}^{\vee},E|_{{\mathcal D}_s})\oplus
 S(E|_{{\mathcal D}_s},E|_{{\mathcal D}_s}^{\vee}), \\
 {\mathcal F}^2
 &:=
 G^1\oplus Z^1
\end{align*}
and define homomorphisms
$d^0\colon{\mathcal F}^0\longrightarrow {\mathcal F}^1$,
$d^1\colon {\mathcal F}^1\longrightarrow {\mathcal F}^2$
by
\begin{align*}
 d^0\big(u,(\overline{P^{(i)}(T)})\big) 
 &= \left( \nabla\circ u - u\circ\nabla,
 \Big(\sigma^{(i)-}_{\theta^{(i)}} \big( u|_{{\mathcal D}^{(i)}_s},\overline{P^{(i)}(T)} \big) \Big),
 \Big(\sigma^{(i)+}_{\kappa^{(i)}} \big(u|_{{\mathcal D}^{(i)}_s},\overline{P^{(i)}(T)} \big) \Big) \right) \\
 d^1\big(v,(\tau^{(i)}),(\xi^{(i)})\big) 
 &=\left( \Big(
 v|_{{\mathcal D}^{(i)}_s}
 -\delta^{(i)}_{\bnu,N^{(i)}}\big(\tau^{(i)}\circ\kappa^{(i)}+\theta^{(i)}\circ\xi^{(i)}\big)\Big),
 \Big(\Theta^{(i)}_{(\tau^{(i)}\circ\kappa^{(i)}+\theta^{(i)}\circ\xi^{(i)})}\Big) \right).
\end{align*}

\begin{lemma}\label{lemma:define-complex}
 Under the above notation, $d^1\circ d^0=0$.
\end{lemma}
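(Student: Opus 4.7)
The plan is to verify the two-component identity $d^1 \circ d^0 = 0$ by direct computation, reducing everything to the commutator $[N^{(i)}, u|_{{\mathcal D}^{(i)}_s}]$ along each ${\mathcal D}^{(i)}_s$. The key preliminary step is to combine the $\theta^{(i)}$- and $\kappa^{(i)}$-components that appear inside $d^1$. Specifically, for $(u,(\overline{P^{(i)}(T)})) \in {\mathcal F}^0$, I would compute
\[
 \sigma^{(i)-}_{\theta^{(i)}} \big( u|_{{\mathcal D}^{(i)}_s},\overline{P^{(i)}(T)} \big) \circ \kappa^{(i)}
 + \theta^{(i)} \circ
 \sigma^{(i)+}_{\kappa^{(i)}} \big( u|_{{\mathcal D}^{(i)}_s},\overline{P^{(i)}(T)} \big).
\]
Using $\theta^{(i)} \circ \kappa^{(i)} = N^{(i)}$ and the definitions (\ref{equation:definition of sigma-}),(\ref{equation:definition of sigma+}), the two terms involving $\theta^{(i)} \circ P^{(i)}({}^t N^{(i)}) \circ \kappa^{(i)}$ appear with opposite signs and cancel; the two terms $\theta^{(i)} \circ {}^tu \circ \kappa^{(i)}$ also cancel; what remains is $-u \circ N^{(i)} + N^{(i)} \circ u = [N^{(i)}, u|_{{\mathcal D}^{(i)}_s}]$. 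Thus the whole combination is $[N^{(i)}, u|_{{\mathcal D}^{(i)}_s}]$, independent of $P^{(i)}(T)$.

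Next I would check the $G^1$-component of $d^1 \circ d^0$, which asks that $(\nabla \circ u - u \circ \nabla)|_{{\mathcal D}^{(i)}_s}$ equals $\delta^{(i)}_{\bnu, N^{(i)}}\big([N^{(i)}, u|_{{\mathcal D}^{(i)}_s}]\big)$. By condition (5) in the definition of a factorized $(\bnu,\bmu)$-connection, $\nabla|_{{\mathcal D}^{(i)}_s}$ is represented by $\tilde{\nu}^{(i)}(N^{(i)})\, d\bar{z}^{(i)} / (\bar{z}^{(i)}_1 \cdots \bar{z}^{(i)}_{m_i})$, so the left-hand side equals $[\tilde{\nu}^{(i)}(N^{(i)}),\, u|_{{\mathcal D}^{(i)}_s}] \cdot d\bar{z}^{(i)}/(\bar{z}^{(i)}_1 \cdots \bar{z}^{(i)}_{m_i})$. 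Expanding $\tilde{\nu}^{(i)}(T) = \sum_j c^{(i)}_j T^j$ and applying the telescoping identity $\sum_{l=1}^{j} (N^{(i)})^{j-l} [N^{(i)}, u] (N^{(i)})^{l-1} = [(N^{(i)})^j, u]$ matches the formula (\ref{equation:definition of delta}) for $\delta^{(i)}_{\bnu, N^{(i)}}$, yielding the required equality term by term.

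For the $Z^1$-component, I would verify $\Theta^{(i)}_{[N^{(i)}, u|_{{\mathcal D}^{(i)}_s}]} = 0$. By (\ref{equation:definition of Theta_u}), applied to any class $\overline{Q(T)} \in {\mathcal O}_{{\mathcal D}^{(i)}_s}[T]/(\varphi^{(i)}_{\bmu}(T))$ this evaluates to $\Tr\big(Q(N^{(i)}) \circ [N^{(i)}, u|_{{\mathcal D}^{(i)}_s}]\big) \cdot d\bar{z}^{(i)}/(\bar{z}^{(i)}_1 \cdots \bar{z}^{(i)}_{m_i})$, which vanishes because $Q(N^{(i)})$ commutes with $N^{(i)}$ and the trace is cyclic.

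No serious obstacle is anticipated. The computation is formally parallel to the tangent space argument in Proposition \ref{prop:tangent space of S}, with $f$ replaced by $N^{(i)}$ and an extra commutator from $\nabla$ interlocking with $N^{(i)}$ via condition (5). The most delicate bookkeeping point is ensuring the sign conventions in $\sigma^{(i)\pm}$ and $\delta^{(i)}_{\bnu,N^{(i)}}$ are correctly applied so that the terms involving $P^{(i)}({}^tN^{(i)})$ cancel and the residual pieces reassemble into the single commutator $[N^{(i)}, u|_{{\mathcal D}^{(i)}_s}]$ on which both components of $d^1 \circ d^0$ then vanish.
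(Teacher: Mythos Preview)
Your proposal is correct and follows essentially the same route as the paper's own proof: first collapse $\sigma^{(i)-}_{\theta^{(i)}}(u,\overline{P^{(i)}})\circ\kappa^{(i)}+\theta^{(i)}\circ\sigma^{(i)+}_{\kappa^{(i)}}(u,\overline{P^{(i)}})$ to the commutator $[N^{(i)},u|_{{\mathcal D}^{(i)}_s}]$, then verify the $G^1$-component via the telescoping identity $\sum_{l=1}^{j}(N^{(i)})^{j-l}[N^{(i)},u](N^{(i)})^{l-1}=[(N^{(i)})^j,u]$ and the $Z^1$-component via trace cyclicity. The paper carries out exactly these three steps with the same cancellations and the same two checks.
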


\begin{proof}
Take $(u,(\overline{P^{(i)}(T)}))\in{\mathcal F}^0={\mathcal G}^0\oplus Z^0$.
Note that
\begin{align*}
 &\quad
 \sigma^{(i)-}_{\theta^{(i)}}\big(u|_{{\mathcal D}^{(i)}_s},(\overline{P^{(i)}(T)})\big)\circ\kappa^{(i)}
 +\theta^{(i)}\circ\sigma^{(i)+}_{\kappa^{(i)}}\big(u|_{{\mathcal D}^{(i)}_s},(\overline{P^{(i)}(T)})\big) \\
 &=
\left (-u|_{{\mathcal D}^{(i)}_s}\circ\theta^{(i)}-\theta^{(i)}\circ\,^tu|_{{\mathcal D}^{(i)}_s} 
 +\theta^{(i)}\circ P^{(i)}(\,^tN^{(i)}) \right)\circ\kappa^{(i)}  \\
 &\hspace{100pt} +\theta^{(i)}\circ \left(\kappa^{(i)}\circ u|_{{\mathcal D}^{(i)}_s}
 +\,^tu|_{{\mathcal D}^{(i)}_s}\circ\kappa^{(i)}
 -P^{(i)}(\,^tN^{(i)})\circ\kappa^{(i)} \right) \\
 &=
 \theta^{(i)}\circ\kappa^{(i)}\circ u|_{{\mathcal D}^{(i)}_s}
 -u|_{{\mathcal D}^{(i)}_s}\circ\theta^{(i)}\circ\kappa^{(i)} \\
 &=
 N^{(i)}\circ u|_{{\mathcal D}^{(i)}_s}-u|_{{\mathcal D}^{(i)}_s}\circ N^{(i)} .
\end{align*}
So the first component of $d^1\big(d^0\big(u,(\overline{P^{(i)}(T)})\big)\big)$ is
\begin{align*}
 &\left( (\nabla\circ u-u\circ\nabla)|_{{\mathcal D}^{(i)}_s}
 -\delta^{(i)}_{\bnu,N^{(i)}}
 \left( \sigma^{(i)-}_{\theta^{(i)}}\big(u|_{{\mathcal D}^{(i)}_s},(\overline{P^{(i)}(T)})\big)
 \circ\kappa^{(i)}
 +\theta^{(i)}\circ
 \sigma^{(i)+}_{\kappa^{(i)}}\big(u|_{{\mathcal D}^{(i)}_s},(\overline{P^{(i)}(T)})\big)\right) \right) \\
 &=
 \left( (\nabla\circ u-u\circ\nabla)|_{{\mathcal D}^{(i)}_s}
 -\delta^{(i)}_{\bnu,N^{(i)}}( N^{(i)}\circ u|_{{\mathcal D}^{(i)}_s}-u|_{{\mathcal D}^{(i)}_s}\circ N^{(i)} )
 \right) \\
 &=
 \left( (\nabla\circ u-u\circ\nabla)|_{{\mathcal D}^{(i)}_s}
 -\sum_{j=1}^{r-1}\sum_{l=1}^j c^{(i)}_j \; (N^{(i)})^{j-l}\circ
 ( N^{(i)}\circ u|_{{\mathcal D}^{(i)}_s}-u|_{{\mathcal D}^{(i)}_s}\circ N^{(i)} )
 \circ (N^{(i)})^{l-1} \frac{d\bar{z}^{(i)}}{\bar{z}^{(i)}_1\cdots \bar{z}^{(i)}_{m_i}}
 \right) \\
 &=
 \left( (\nabla\circ u-u\circ\nabla)|_{{\mathcal D}^{(i)}_s}
 - \Big(\sum_{j=0}^{r-1} c^{(i)}_j \, (N^{(i)})^j\circ u|_{{\mathcal D}^{(i)}_s}
 - \sum_{j=0}^{r-1} c^{(i)}_j \, u|_{{\mathcal D}^{(i)}_s}\circ (N^{(i)})^j  \Big)
 \frac{d\bar{z}^{(i)}}{\bar{z}^{(i)}_1\cdots \bar{z}^{(i)}_{m_i}} \right) \\
 &=
 \left( (\nabla\circ u-u\circ\nabla)|_{{\mathcal D}^{(i)}_s}
 -\big( \nabla|_{{\mathcal D}^{(i)}_s}\circ u|_{{\mathcal D}^{(i)}_s} 
 -u|_{{\mathcal D}^{(i)}_s}\circ \nabla|_{{\mathcal D}^{(i)}_s}
 \big) \right) \\
 &=0.
\end{align*}
The second component of $d^1\big(d^0\big(u,(\overline{P^{(i)}(T)})\big)\big)$ is
\[
 \left( \Theta^{(i)}_{\sigma^{(i)-}_{\theta^{(i)}}
 (u|_{{\mathcal D}^{(i)}_s},(\overline{P^{(i)}(T)}))\circ\kappa^{(i)}
 +\theta^{(i)}\circ\sigma^{(i)+}_{\kappa^{(i)}}
 (u|_{{\mathcal D}^{(i)}_s},(\overline{P^{(i)}(T)}))} \right)
 =\left( \Theta^{(i)}_{ N^{(i)}\circ u|_{{\mathcal D}^{(i)}_s}-u|_{{\mathcal D}^{(i)}_s}\circ N^{(i)} } \right),
\]
which is zero because
\begin{align*}
 &\quad
 \Theta^{(i)}_{ N^{(i)}\circ u|_{{\mathcal D}^{(i)}_s}-u|_{{\mathcal D}^{(i)}_s}\circ N^{(i)} }
 ( \overline{Q(T)} )  \\
 &= \Tr\left( Q(N^{(i)})\circ N^{(i)}\circ u|_{{\mathcal D}^{(i)}_s}
 -Q(N^{(i)})\circ u|_{{\mathcal D}^{(i)}_s}\circ N^{(i)} \right)
 \frac{d\bar{z}^{(i)}}{\bar{z}^{(i)}_1\bar{z}^{(i)}_2\cdots \bar{z}^{(i)}_{m_i}} \\
 &= \left( \Tr\left( Q(N^{(i)})\circ N^{(i)} \circ u|_{{\mathcal D}^{(i)}_s} \right)
 -\Tr\left( N^{(i)}\circ Q(N^{(i)})\circ  u|_{{\mathcal D}^{(i)}_s}\right) \right)
 \frac{d\bar{z}^{(i)}}{\bar{z}^{(i)}_1\bar{z}^{(i)}_2\cdots \bar{z}^{(i)}_{m_i}} \\
 &=0
\end{align*}
for any 
$\overline{Q(T)}\in
{\mathcal O}_{{\mathcal D}^{(i)}_s}[T]\big/\big({\varphi^{(i)}_{\bmu}}(T)\big)$.
Thus we have proved
$d^1\big(d^0\big(u,(\overline{P^{(i)}(T)})\big)\big)=0$.
\end{proof}

By Lemma \ref{lemma:define-complex},
${\mathcal F}^{\bullet}=
[{\mathcal F}^0\xrightarrow{d^0}{\mathcal F}^1\xrightarrow{d^1}{\mathcal F}^2]$
becomes a complex.
Note that there is an exact commutative diagram
\[
 \begin{CD}
  0 \longrightarrow & 0 @>>> {\mathcal G}^0\oplus Z^0 @>>>
 {\mathcal G}^0\oplus Z^0 & \longrightarrow  0 \\
   & @VVV @V d^0 VV @VVV \\
  0 \longrightarrow & {\mathcal G}^1\oplus S(E|_{{\mathcal D}_s},E|_{{\mathcal D}_s}^{\vee}) @>>>
  {\mathcal G}^1\oplus S(E|_{{\mathcal D}_s}^{\vee},E|_{{\mathcal D}_s})\oplus
  S(E|_{{\mathcal D}_s},E|_{{\mathcal D}_s}^{\vee})
  @>>> S(E|_{{\mathcal D}_s}^{\vee},E|_{{\mathcal D}_s}) & \longrightarrow 0 \\
   & @VVV @V d^1 VV @VVV \\
  0 \longrightarrow & G^1\oplus Z^1 @>>> G^1\oplus Z^1 @>>> 0 & \longrightarrow 0.
 \end{CD}
\]
If we denote by ${\mathcal F}_0^{\bullet}$ the complex
${\mathcal G}^0\oplus Z^0\longrightarrow S(E|_{{\mathcal D}_s}^{\vee},E|_{{\mathcal D}_s})$
concentrated in degree $0$ and $1$
and if we  denote by ${\mathcal F}_1^{\bullet}$
the complex
${\mathcal G}^1\oplus S(E|_{{\mathcal D}_s},E|_{{\mathcal D}_s}^{\vee})
\longrightarrow G^1\oplus Z^1$
concentrated in degree $0$ and $1$,
then the above commutative diagram is a short exact sequence of complexes
\begin{equation} \label {equation:fundamental short exact sequence of complexes}
 0\longrightarrow {\mathcal F}_1^{\bullet}[-1]\longrightarrow{\mathcal F}^{\bullet}
 \longrightarrow {\mathcal F}_0^{\bullet}\longrightarrow 0
\end{equation}
which induces a long exact sequence of hyper cohomologies:
\begin{equation}\label{equation:fundamental exact sequence of cohomologies}
 0\longrightarrow \mathbf{H}^0({\mathcal F}^{\bullet})
 \longrightarrow
 \mathbf{H}^0({\mathcal F}_0^{\bullet})
 \longrightarrow
 \mathbf{H}^0({\mathcal F}_1^{\bullet})
 \longrightarrow
 \mathbf{H}^1({\mathcal F}^{\bullet})
 \longrightarrow
 \mathbf{H}^1({\mathcal F}_0^{\bullet})
 \longrightarrow
 \mathbf{H}^1({\mathcal F}_1^{\bullet})
 \longrightarrow
 \mathbf{H}^2({\mathcal F}^{\bullet})
 \longrightarrow 0.
\end{equation}

\begin{proposition} \label {prop:first deformation}
Let $A$ be an artinian local ring over $S$
with the maximal ideal $\mathfrak{m}$ satisfying $A/\mathfrak{m}=\mathbb{C}$
and let $I$ be an ideal of $A$ satisfying $\mathfrak{m}I=0$.
Assume that there exists a flat family
$(E',\nabla',\{N'^{(i)}\})\in{\mathcal M}^{\balpha}_{C,D}(\tilde{\bnu},\tilde{\bmu})(A)$
of $(\tilde{\bnu},\tilde{\bmu})$-connections over $A$ such that
$(E',\nabla',\{N'^{(i)}\})\otimes A/\mathfrak{m}\cong
(E,\nabla,\{N^{(i)}\})$.
Consider the restriction map
\[
 \rho_{A/I}\colon {\mathcal M}^{\balpha}_{C,D}(\tilde{\bnu},\tilde{\bmu})(A)
 \ni (\tilde{E},\tilde{\nabla},\{\tilde{N}^{(i)}\})
 \mapsto  (\tilde{E},\tilde{\nabla},\{\tilde{N}^{(i)}\})\otimes A/I
 \in {\mathcal M}^{\balpha}_{C,D}(\tilde{\bnu},\tilde{\bmu})(A/I).
\]
Then there exists a bijective correspondence
$\rho_{A/I}^{-1}((E',\nabla',\{N'^{(i)}\})\otimes A/I)
\cong \mathbf{H}^1({\mathcal F}^{\bullet})\otimes_{\mathbb{C}}I$.
\end{proposition}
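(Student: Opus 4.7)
The plan is to pass to factorized $(\bnu,\bmu)$-connections via Proposition~\ref{prop:correspondence-factorized} and to identify ${\mathcal F}^{\bullet}$ as the tangent complex of the deformation functor of factorized $(\tilde{\bnu},\tilde{\bmu})$-connections, which reduces the assertion to a \v{C}ech-hypercohomology torsor computation. Choose an affine open cover $\{U_\alpha\}$ of ${\mathcal C}_s$ small enough that $E|_{U_\alpha}$ is free; then both $E'|_{U_\alpha\times\Spec A}$ and the bundle underlying any other lift are free over $A$. Given a lift $(\tilde E,\tilde\nabla,\{\tilde\theta^{(i)},\tilde\kappa^{(i)}\})$ whose reduction modulo $I$ is equivalent to that of the base lift $(E',\nabla',\{\theta'^{(i)},\kappa'^{(i)}\})$, I would choose local isomorphisms $\sigma_\alpha\colon \tilde E|_{U_\alpha}\xrightarrow{\sim}E'|_{U_\alpha}$ extending the fixed identification over $A/I$; such $\sigma_\alpha$ exist because $\mathfrak{m}I=0$.

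With these choices, the \v{C}ech $1$-cochain $\sigma_\beta\circ\sigma_\alpha^{-1}-\mathrm{id}\in\Gamma(U_{\alpha\beta},{\mathcal G}^0)\otimes I$, the \v{C}ech $0$-cochain $\sigma_\alpha\circ\tilde\nabla\circ\sigma_\alpha^{-1}-\nabla'\in\Gamma(U_\alpha,{\mathcal G}^1)\otimes I$, and the induced symmetric corrections of $\theta^{(i)}$ and $\kappa^{(i)}$ on ${\mathcal D}^{(i)}$ assemble into a \v{C}ech $1$-hypercocycle for ${\mathcal F}^{\bullet}\otimes I$. Its $d^1$-closedness is exactly the first-order expansion of the defining relations $\varphi_{\bmu}^{(i)}(\tilde\theta^{(i)}\tilde\kappa^{(i)})=0$ and $\tilde\nu^{(i)}(\tilde\theta^{(i)}\tilde\kappa^{(i)})\,d\bar z^{(i)}/(\bar z^{(i)}_1\cdots\bar z^{(i)}_{m_i})=\tilde\nabla|_{{\mathcal D}^{(i)}}$ that hold over $A$, via formulas (\ref{equation:definition of delta}) and (\ref{equation:definition of Theta_u}). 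The resulting hypercohomology class in $\mathbf{H}^1({\mathcal F}^{\bullet})\otimes I$ is independent of the $\sigma_\alpha$: modifying $\sigma_\alpha$ by $\mathrm{id}+h_\alpha$ with $h_\alpha\in\Gamma(U_\alpha,{\mathcal G}^0)\otimes I$ adds the $d^0$-image of $(h_\alpha,0)$, while by Proposition~\ref{prop:factorization-unique} the residual freedom in matching $(\tilde\theta^{(i)},\tilde\kappa^{(i)})$ with $(\theta'^{(i)},\kappa'^{(i)})$ up to first order is parametrized exactly by $Z^0\otimes I$, contributing the $d^0$-image of $(0,(\overline{P^{(i)}(T)}))$ via (\ref{equation:definition of sigma-}) and (\ref{equation:definition of sigma+}).

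The inverse map is constructed by perturbing the trivial local lifts of $(E',\nabla',\{\theta'^{(i)},\kappa'^{(i)}\})$ on each $U_\alpha$ by the ${\mathcal G}^1\oplus S(E|_{{\mathcal D}_s}^{\vee},E|_{{\mathcal D}_s})\oplus S(E|_{{\mathcal D}_s},E|_{{\mathcal D}_s}^{\vee})$-part of a given hypercocycle and gluing on overlaps using its ${\mathcal G}^0$-part; $d^1$-closedness of the cocycle ensures that the result is a flat family of factorized $(\tilde{\bnu},\tilde{\bmu})$-connections over $A$, and cohomologous cocycles produce equivalent families. The main obstacle is the bookkeeping between the vector-bundle automorphism freedom and the factorization-ambiguity freedom on ${\mathcal D}^{(i)}$, whose compatibility is precisely what Proposition~\ref{prop:factorization-unique} and Lemma~\ref{lemma:define-complex} have been set up to handle; once this is in place, the remaining verifications are routine first-order computations parallel to the proof of Proposition~\ref{prop:tangent space of S}, using Lemma~\ref{lemma:adjoint exact sequence} to identify the commutator part with the trace-free deformations captured by $\Theta^{(i)}$.
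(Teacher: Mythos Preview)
Your proposal is correct and follows essentially the same approach as the paper: pass to factorized $(\tilde{\bnu},\tilde{\bmu})$-connections, extract a \v{C}ech $1$-hypercocycle for ${\mathcal F}^{\bullet}\otimes I$ from local trivializations $\sigma_\alpha$, verify $d^1$-closedness from the defining relations (using the diagonalizability of $\tilde N^{(i)}$ and $N'^{(i)}$ to express their difference as a commutator, exactly as in Proposition~\ref{prop:tangent space of S}), and invert by perturbing and gluing. The paper additionally arranges the cover so that each ${\mathcal D}^{(i)}$ lies in a single chart, which simplifies the bookkeeping for the $S(E|_{{\mathcal D}_s}^{\vee},E|_{{\mathcal D}_s})\oplus S(E|_{{\mathcal D}_s},E|_{{\mathcal D}_s}^{\vee})$ part, but this is a cosmetic difference.
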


\begin{proof}
We can take an affine open covering
${\mathcal C}_A=\bigcup_{\alpha}U_{\alpha}$ such that
$\sharp\{ i \,|\, {\mathcal D}^{(i)}_A \cap U_{\alpha}\neq\emptyset \}\leq 1$
for any $\alpha$
and $\sharp\{\alpha\,|\,{\mathcal D}^{(i)}_A\subset U_{\alpha}\}=1$
for any $i$.
We may assume that
$E'|_{U_{\alpha}}\cong{\mathcal O}_{U_{\alpha}}^{\oplus r}$ for any $\alpha$.
Take any member
$(\tilde{E},\tilde{\nabla},\{\tilde{N}^{(i)}\})
\in \rho_{A/I}^{-1}((E',\nabla',\{N'^{(i)}\})\otimes A/I)$.
Let $(E',\nabla',\{\theta'^{(i)},\kappa'^{(i)}\})$ and
$(\tilde{E},\tilde{\nabla},\{\tilde{\theta}^{(i)},\tilde{\kappa}^{(i)}\})$
be the flat families of factorized $(\tilde{\bnu},\tilde{\bmu})\otimes A$-connections
on $({\mathcal C}_A,{\mathcal D}_A)$ over $A$ corresponding to
$(E',\nabla',\{N'^{(i)}\})$ and $(\tilde{E},\tilde{\nabla},\{\tilde{N}^{(i)}\})$,
respectively.
We can take an isomorphism
$\sigma_{\alpha}\colon \tilde{E}|_{U_{\alpha}}
\xrightarrow{\sim} E'|_{U_{\alpha}}$
which is a lift of the given isomorphism
$\tilde{E}\otimes A/I|_{U_{\alpha}\otimes A/I}\xrightarrow{\sim}
E'\otimes A/I|_{U_{\alpha}\otimes A/I}$.
Then we put
\[
 u_{\alpha\beta}:=
 \sigma_{\alpha}\circ\sigma_{\beta}^{-1}
 -\mathrm{id}_{E'|_{U_{\alpha\beta}}}
 \in {\mathcal G}^0(U_{\alpha\beta})\otimes I,
 \quad
 v_{\alpha} := \sigma_{\alpha}\circ\tilde{\nabla}\circ\sigma_{\alpha}^{-1}-\nabla'
 \in {\mathcal G}^1(U_{\alpha})\otimes I 
\]
and
\[
  \tau^{(i)}_{\alpha} := \sigma_{\alpha}|_{{\mathcal D}^{(i)}_A}\circ\tilde{\theta}^{(i)}
 \circ\,^t\sigma_{\alpha}|_{{\mathcal D}^{(i)}_A}
  -\theta'^{(i)}, \quad
  \xi^{(i)}_{\alpha} := \,^t\sigma_{\alpha}|_{{\mathcal D}^{(i)}_A}^{-1}\circ \tilde{\kappa}^{(i)}
  \circ \sigma_{\alpha}|_{{\mathcal D}^{(i)}_A}^{-1}
  -\kappa'^{(i)}
\]
if ${\mathcal D}^{(i)}_A\subset U_{\alpha}$.
Note that we have
$((\tau^{(i)}_{\alpha}),(\xi^{(i)}_{\alpha})) \in
( S(E|_{{\mathcal D}_s}^{\vee},E|_{{\mathcal D}_s})\oplus
S(E|_{{\mathcal D}_s},E|_{{\mathcal D}_s}^{\vee}) ) (U_{\alpha})\otimes_{\mathbb{C}} I$.
We can easily check the equalities
\[
 u_{\beta\gamma}-u_{\alpha\gamma}+u_{\alpha\beta}=0, \quad
 \nabla\circ u_{\alpha\beta}-u_{\alpha\beta}\circ\nabla
 =v_{\beta}-v_{\alpha}.
\]
Since
\begin{align*}
 &\tau^{(i)}_{\alpha}\circ\kappa^{(i)}+\theta^{(i)}\circ\xi^{(i)}_{\alpha} \\
 &=
 \big(\sigma_{\alpha}|_{{\mathcal D}^{(i)}_A}\circ\tilde{\theta}^{(i)}
 \circ \,^t\sigma_{\alpha}|_{{\mathcal D}^{(i)}_A}-\theta'^{(i)}\big)
 \circ \,^t\sigma_{\alpha}|_{{\mathcal D}^{(i)}_A}^{-1}\circ\tilde{\kappa}^{(i)}
 \circ \sigma_{\alpha}|_{{\mathcal D}^{(i)}_A}^{-1}
 +\theta'^{(i)}\circ \big(\,^t\sigma_{\alpha}|_{{\mathcal D}^{(i)}_A}^{-1}
  \circ\tilde{\kappa}^{(i)}\circ \sigma_{\alpha}|_{{\mathcal D}^{(i)}_A}^{-1}
 -\kappa'^{(i)}\big)  \\
 &=
 \sigma_{\alpha}|_{{\mathcal D}^{(i)}_A}\circ\tilde{\theta}^{(i)}\circ\tilde{\kappa}^{(i)}
 \circ \sigma_{\alpha}|_{{\mathcal D}^{(i)}_A}^{-1}
 -\theta'^{(i)}\circ\kappa'^{(i)}  \\
 &=
 \sigma_{\alpha}|_{{\mathcal D}^{(i)}_A}\circ \tilde{N}^{(i)}
 \circ\sigma_{\alpha}|_{{\mathcal D}^{(i)}_A}^{-1} - N'^{(i)},
\end{align*}
we have
\begin{align*}
 &\delta^{(i)}_{\bnu,N^{(i)}}( \tau^{(i)}_{\alpha}\circ\kappa^{(i)}+\theta^{(i)}\circ\xi^{(i)}_{\alpha} ) \\
 &=
 \sum_{j=1}^{r-1}\sum_{l=1}^j c^{(i)}_j
 \big(\sigma_{\alpha}|_{{\mathcal D}^{(i)}_A}\circ\tilde{N}^{(i)}
 \circ\sigma_{\alpha}|_{{\mathcal D}^{(i)}_A}^{-1} \big)^{j-l}
 \circ \big( \sigma_{\alpha}|_{{\mathcal D}^{(i)}_A}\circ \tilde{N}^{(i)}
 \circ\sigma_{\alpha}|_{{\mathcal D}^{(i)}_A}^{-1} - N'^{(i)} \big)
 \circ (N'^{(i)})^{l-1} \frac{d\bar{z}^{(i)}}{\bar{z}^{(i)}_1\cdots\bar{z}^{(i)}_{m_i}} \\
 &=
 \sum_{j=1}^{r-1} c^{(i)}_j
 \Big( \big( \sigma_{\alpha}|_{{\mathcal D}^{(i)}_A}\circ\tilde{N}^{(i)}
 \circ\sigma_{\alpha}|_{{\mathcal D}^{(i)}_A}^{-1} \big)^j
 -\big( N'^{(i)} \big)^j \Big)
 \frac{d\bar{z}^{(i)}}{\bar{z}^{(i)}_1\cdots\bar{z}^{(i)}_{m_i}}  \\
 &=
 \sigma_{\alpha}|_{{\mathcal D}^{(i)}_A}\circ\tilde{\nabla}|_{{\mathcal D}^{(i)}_A}
 \circ\sigma_{\alpha}|_{{\mathcal D}^{(i)}_A}^{-1}-\nabla'|_{{\mathcal D}^{(i)}_A}.
\end{align*}
So the first component of $d^1(v_{\alpha},(\tau^{(i)}_{\alpha}),(\xi^{(i)}_{\alpha}))$
becomes
\begin{align*}
 v_{\alpha}|_{{\mathcal D}^{(i)}_A}
 -\delta^{(i)}_{\bnu,N^{(i)}}( \tau^{(i)}_{\alpha}\circ\kappa^{(i)}+\theta^{(i)}\circ\xi^{(i)}_{\alpha} ) 
 &=
 (\sigma_{\alpha}\circ\tilde{\nabla}\circ\sigma_{\alpha}^{-1}-\nabla')|_{{\mathcal D}^{(i)}_A}
 -(\sigma_{\alpha}|_{{\mathcal D}^{(i)}_A}\circ\tilde{\nabla}|_{{\mathcal D}^{(i)}_A}
 \circ\sigma_{\alpha}|_{{\mathcal D}^{(i)}_A}^{-1}-\nabla'|_{{\mathcal D}^{(i)}_A}) \\
 &=0.
\end{align*}
On the other hand,
$N'^{(i)}$ has a representation matrix
\[
 \begin{pmatrix}
  \mu^{(i)}_1 & \cdots & 0 \\
  \vdots & \ddots & \vdots \\
  0 & \cdots & \mu^{(i)}_r
 \end{pmatrix}
\] 
with respect to a basis $e'_1,\ldots,e'_r$ of $E'|_{{\mathcal D}^{(i)}_A}$
and $\tilde{N}^{(i)}$ has the same representation matrix
with respect to a basis $\tilde{e}_1,\ldots,\tilde{e}_r$
of $\tilde{E}|_{{\mathcal D}^{(i)}_A}$
from Definition \ref  {def:moduli functor of (nu,mu)-connections}, (c).
Moreover, we may assume that
$(e'_1,\ldots,e'_r)\otimes A/I=(\tilde{e}_1,\ldots,\tilde{e}_r)\otimes A/I$,
because $\tilde{N}^{(i)}\otimes A/I=N'^{(i)}\otimes A/I$.
So there exists $g \in I\End(E'|_{{\mathcal D}^{(i)}_A})$
satisfying
$(\mathrm{id}-g) \circ N'^{(i)} \circ (\mathrm{id}+g)
=\sigma_{\alpha}|_{{\mathcal D}^{(i)}_A}\circ\tilde{N}^{(i)}
\circ\,^t\sigma_{\alpha}|_{{\mathcal D}^{(i)}_A}$.
In other words,
$\sigma_{\alpha}|_{{\mathcal D}^{(i)}_A}\circ\tilde{N}^{(i)}
\circ\,^t\sigma_{\alpha}|_{{\mathcal D}^{(i)}_A}-N'^{(i)}
=N'^{(i)}\circ g-g\circ N'^{(i)}
=N^{(i)}\circ g-g\circ N^{(i)}$.
So the second component of $d^1(v_{\alpha},(\tau^{(i)}_{\alpha}),(\xi^{(i)}_{\alpha}))$
becomes
\[
 \Theta^{(i)}_{(\tau^{(i)}_{\alpha}\circ\kappa^{(i)}+\theta^{(i)}\circ\xi^{(i)}_{\alpha})}
 =\Theta^{(i)}_{(\sigma_{\alpha}|_{{\mathcal D}^{(i)}_A}\circ\tilde{N}^{(i)}
 \circ\sigma_{\alpha}|_{{\mathcal D}^{(i)}_A}^{-1}-N'^{(i)})} 
 =\Theta^{(i)}_{(N^{(i)}\circ g-g\circ N^{(i)})} 
 =0.
\]
Thus the element
\[
 \Phi(v):=\big[\big\{(u_{\alpha\beta},0)\big\},
 \big\{(v_{\alpha},(\tau^{(i)}_{\alpha}),(\xi^{(i)}_{\alpha}))\big\}\big]
 \in \mathbf{H}^1({\mathcal F}^{\bullet})\otimes I
\]
can be defined.

Conversely assume that
$w=\big[ \big\{(u_{\alpha\beta},0)\big\},
\big\{(v_{\alpha},(\tau^{(i)}_{\alpha}),(\xi^{(i)}_{\alpha}))\big\} \big]
 \in \mathbf{H}^1({\mathcal F}^{\bullet})\otimes I$
is given.
We put
$E_{\alpha}:=E'|_{U_{\alpha}}$ and define a connection
$\nabla_{\alpha}\colon E_{\alpha}\longrightarrow
E_{\alpha}\otimes\Omega^1_{{\mathcal C}_A/A}({\mathcal D}_A)$
by
$\nabla_{\alpha}=\nabla'+v_{\alpha}$.
Furthermore, we put
$\theta^{(i)}_{\alpha}:=\theta'^{(i)}+\tau^{(i)}_{\alpha}$,
$\kappa^{(i)}_{\alpha}:=\kappa'^{(i)}+\xi^{(i)}_{\alpha}$
if ${\mathcal D}^{(i)}_A\subset U_{\alpha}$.
We define the isomorphism
\[
 \varphi_{\beta\alpha}=\mathrm{id}+u_{\beta\alpha}\colon
 E_{\alpha}|_{U_{\alpha\beta}}\xrightarrow{\sim}
 E_{\beta}|_{U_{\alpha\beta}}.
\]
Since $(\{(u_{\alpha\beta},0)\},\{(v_{\alpha},(\tau^{(i)}_{\alpha}),(\xi^{(i)}_{\alpha}))\})$
satisfies the cocycle conditions
$\nabla\circ u_{\alpha\beta}-u_{\alpha\beta}\circ\nabla=v_{\beta}-v_{\alpha}$
and
$u_{\beta\alpha}-u_{\gamma\alpha}+u_{\gamma\beta}=0$,
we have the gluing condition
\[
 \varphi_{\gamma\alpha}=\varphi_{\gamma\beta}\circ\varphi_{\beta\alpha},
 \quad
 (\varphi_{\beta\alpha}\otimes 1)\circ\nabla_{\alpha}=\nabla_{\beta}\circ\varphi_{\beta\alpha}.
\]
So we can patch the local connections
$\{ (E_{\alpha},\nabla_{\alpha},\{ \theta^{(i)}_{\alpha},\kappa^{(i)}_{\alpha}\}) \}$
together via $\{ \varphi_{\beta\alpha} \}$
and obtain a flat family 
$(\tilde{E},\tilde{\nabla},\{\tilde{\theta}^{(i)},\tilde{\kappa}^{(i)}\})$
of factorized $(\tilde{\bnu},\tilde{\bmu})\otimes A$-connections over $A$,
which we denote by $\Psi(w)$.
By construction the correspondence
$\mathbf{H}^1({\mathcal F}^{\bullet})\otimes I \ni w
\mapsto \Psi(w)\in \rho_{A/I}^{-1} ((E',\nabla',\{N'^{(i)}\})\otimes A/I)$
gives the inverse of $\Phi$.
\end{proof}

As a corollary of Proposition \ref {prop:first deformation},
we get the following.

\begin{corollary}\label{cor:tangent-space}
The relative tangent space of the moduli space
$M^{\balpha}_{{\mathcal C},{\mathcal D}}(\tilde{\bnu},\tilde{\bmu})$ over $S$ at 
$(E,\nabla,\{N^{(i)}\})\in M^{\balpha}_{{\mathcal C},{\mathcal D}}(\tilde{\bnu},\tilde{\bmu})$
is isomorphic to
$\mathbf{H}^1({\mathcal F}^{\bullet})$.
\end{corollary}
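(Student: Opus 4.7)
The plan is to specialize Proposition \ref{prop:first deformation} to the case of dual numbers. Concretely, I take $A=\mathbb{C}[\epsilon]/(\epsilon^2)$ equipped with the morphism $\Spec A\to S$ that factors through the inclusion of the closed point $s\in S$, and I set $I=(\epsilon)$, so that $A/I=\mathbb{C}$. For the required flat lift over $A$ to apply the proposition, I use the trivial extension $(E,\nabla,\{N^{(i)}\})\otimes_{\mathbb{C}} A$, regarded as an object of ${\mathcal M}^{\balpha}_{{\mathcal C},{\mathcal D}}(\tilde{\bnu},\tilde{\bmu})(A)$.

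By the standard characterization of the relative tangent space of an $S$-scheme in terms of $A$-valued points sitting over the base $\mathbb{C}$-point $s$, the set $\rho_{A/I}^{-1}((E,\nabla,\{N^{(i)}\}))$ is canonically identified with the fiber of the relative tangent bundle $T_{M^{\balpha}_{{\mathcal C},{\mathcal D}}(\tilde{\bnu},\tilde{\bmu})/S}$ at the given point. Proposition \ref{prop:first deformation} then supplies a bijection of this set with $\mathbf{H}^1({\mathcal F}^{\bullet})\otimes_{\mathbb{C}}I$, and the one-dimensionality of $I$ yields the canonical isomorphism with $\mathbf{H}^1({\mathcal F}^{\bullet})$.

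Two minor things remain to be verified. First, that the bijection constructed in the proof of Proposition \ref{prop:first deformation} respects the $\mathbb{C}$-vector space structure: this is immediate from the explicit Čech-theoretic formula $\Phi((\tilde{E},\tilde{\nabla},\{\tilde{N}^{(i)}\}))=[\{(u_{\alpha\beta},0)\},\{(v_{\alpha},(\tau^{(i)}_{\alpha}),(\xi^{(i)}_{\alpha}))\}]$, since the cocycle data depend linearly on the first-order perturbations $\sigma_{\alpha}-\mathrm{id}$, $\tilde{\nabla}-\nabla$, $\tilde{\theta}^{(i)}-\theta^{(i)}$, $\tilde{\kappa}^{(i)}-\kappa^{(i)}$. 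Second, Lemma \ref{lemma:moduli-exists} only asserts that $M^{\balpha}_{{\mathcal C},{\mathcal D}}(\tilde{\bnu},\tilde{\bmu})$ represents the \'etale sheafification of the moduli functor; but tangent vectors are an \'etale-local invariant, and the \'etale-local existence of a universal family (provided by the cover $\tilde{M}\to M$ used in the construction of $M^{\balpha}_{{\mathcal C},{\mathcal D}}(\tilde{\bnu},\tilde{\bmu})$) allows the bijection above to be transported to the coarse moduli, giving the claimed isomorphism. The main (and essentially only) conceptual step is the identification of the deformation-theoretic preimage with the Zariski tangent space, which follows from the general yoga that coarse moduli spaces representing \'etale sheafifications have tangent spaces computed by their defining deformation functor.
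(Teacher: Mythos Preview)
Your proposal is correct and follows exactly the approach the paper intends: the paper states this result as an immediate corollary of Proposition~\ref{prop:first deformation} without further proof, and your specialization to $A=\mathbb{C}[\epsilon]/(\epsilon^2)$, $I=(\epsilon)$ with the trivial lift is precisely the standard way to extract the tangent-space statement from such a deformation-theoretic proposition. Your additional remarks on linearity of the \v{C}ech cocycle assignment and on the \'etale sheafification issue are careful elaborations that the paper leaves implicit.
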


\subsection{Nondegenerate pairing on the cohomologies}
\label {subsection:nondenerate pairing}

We use the same notations as in subsection \ref{subsection:tangent}.

If we denote
the complex
\[
 {\mathcal O}_{\mathcal C}
 \xrightarrow{d}
 \Omega^1_{{\mathcal C}/S}({\mathcal D})
 \longrightarrow
 \Omega^1_{{\mathcal C}/S}({\mathcal D})|_{\mathcal D}.
\]
by ${\mathcal L}^{\bullet}$, then there is a canonical quasi-isomorphism
$\Omega^{\bullet}_{{\mathcal C}/S}\longrightarrow
{\mathcal L}^{\bullet}$
and there is an isomorphism
\[
 \mathbf{H}^2({\mathcal L}^{\bullet}_s)\cong
 \mathbf{H}^2(\Omega^{\bullet}_{{\mathcal C}_s})
 \cong\mathbb{C},
\]
where ${\mathcal L}^{\bullet}_s:={\mathcal L}^{\bullet}|_{{\mathcal C}_s}$
is the restriction of the complex ${\mathcal L}^{\bullet}$
to the fiber ${\mathcal C}_s$.
We consider the modified complex
\[
 \tilde{\mathcal L}^{\bullet}_s \colon\quad
 {\mathcal L}^0_s\xrightarrow{\tilde{d}^0} {\mathcal L}^1_s\oplus Z^1
 \xrightarrow{\tilde{d}^1} {\mathcal L}^2_s\oplus Z^1,
\]
defined by
\[
 \tilde{d}^0(u)=(\,d u, \, 0 \,), \quad
 \tilde{d}^1(v,(Q^i))=
 \left( \big( v|_{{\mathcal D}^{(i)}_s}-Q^i\big( (\nu^{(i)})'(T)\big) \big), (Q^i) \right),
\]
where $(\nu^{(i)})'(T)$ is the derivative of
the polynomial $\nu^{(i)}(T)$ in $T$.
Then there is a canonical quasi-isomorphism
${\mathcal L}^{\bullet}_s\longrightarrow
\tilde{\mathcal L}^{\bullet}_s$.

We define a morphism of complexes
$\Tr\colon {\mathcal F}^{\bullet}
\longrightarrow \tilde{\mathcal L}^{\bullet}_s$
by
\begin{gather*}
 \Tr^0\big(u,(\overline{P^{(i)}(T)})\big)=\Tr(u), \quad
 \Tr^1\big(v,(\tau^{(i)}),(\xi^{(i)})\big)
 =\big( \Tr(v), ( \Theta_{\tau^{(i)}\circ\kappa^{(i)}+\theta^{(i)}\circ\xi^{(i)}} )\big), \\
 \Tr^2\big((g^{(i)}),(Q^{(i)})\big)=
 \big( ( \Tr ( g^{(i)} ) ) , (Q^{(i)}) \big).
\end{gather*}
Indeed  we can check the following commutative diagram:
\[
 \begin{CD}
  {\mathcal G}^0 \oplus Z^0 @>>>
  {\mathcal G}^1\oplus S(E|_{{\mathcal D}_s}^{\vee},E|_{{\mathcal D}_s})
  \oplus S(E|_{{\mathcal D}_s},E|_{{\mathcal D}_s}^{\vee})
  @>>> G^1\oplus Z^1 \\
  @V\Tr^0VV @V\Tr^1VV @V\Tr^2VV \\
  {\mathcal O}_{{\mathcal C}_s} @>d>>\Omega^1_{{\mathcal C}_s}({\mathcal D}_s)\oplus Z^1
  @>>> \Omega^1_{{\mathcal C}_s}({\mathcal D}_s)|_{{\mathcal D}_s}\oplus Z^1.
 \end{CD}
\]

For
$((\tau^{(i)}),(\xi^{(i)})),((\tau'^{(i)}),(\xi'^{(i)}))\in
S(E|_{{\mathcal D}_s}^{\vee},E|_{{\mathcal D}_s})\oplus
S(E|_{{\mathcal D}_s},E|_{{\mathcal D}_s}^{\vee})$,
we define
$\Xi^{(\tau^{(i)},\xi^{(i)})}_{(\tau'^{(i)},\xi'^{(i)})} \in
\Omega^1_{{\mathcal C}_s}({\mathcal D}^{(i)}_s)|_{{\mathcal D}^{(i)}_s}$ 
by setting
\begin{align}
 \Xi^{(\tau^{(i)},\xi^{(i)})}_{(\tau'^{(i)},\xi'^{(i)})}
 &=
 \frac{1}{2}\sum_{j=1}^{r-1} \sum_{l=0}^{j-1}  c^{(i)}_j 
 \Tr\Big(   
 \tau'^{(i)} \circ (\,^tN^{(i)})^l\circ\xi^{(i)} \circ (N^{(i)})^{j-l-1} \Big)
 \frac{d\bar{z}^{(i)}}{\bar{z}^{(i)}_1\bar{z}^{(i)}_2\cdots \bar{z}^{(i)}_{m_i}} 
 \label {equation:main part in the definition of symplectic form} \\
 & \hspace{30pt}
 - \frac{1}{2}\sum_{j=1}^{r-1} \sum_{l=0}^{j-1}  c^{(i)}_j 
 \Tr\Big(   \tau^{(i)} \circ (\,^tN^{(i)})^l \circ \xi'^{(i)} \circ (N^{(i)})^{j-1-l} \Big)
 \frac{d\bar{z}^{(i)}}{\bar{z}^{(i)}_1\bar{z}^{(i)}_2\cdots \bar{z}^{(i)}_{m_i}}.
 \notag
\end{align}

\begin{remark} \label{remark: kirillov-kostant factor in symplectic form}
\rm
In the extreme case when $\mu^{(i)}_k=\nu^{(i)}(\mu^{(i)}_k)$ for any $k$,
we have $c^{(i)}_1=1$ and $c^{(i)}_j=0$ for $j\neq 1$.
So we have
\[
 \Xi^{(\tau^{(i)},\xi^{(i)})}_{(\tau'^{(i)},\xi'^{(i)})}
 =
 \frac{1}{2} \Tr \Big(   
 \tau'^{(i)} \circ \xi^{(i)} - \tau^{(i)} \circ \xi'^{(i)}  \Big)
 \frac{d\bar{z}^{(i)}}{\bar{z}^{(i)}_1\bar{z}^{(i)}_2\cdots \bar{z}^{(i)}_{m_i}}
\]
which is almost the same form as the expression
in subsection \ref {subsection:expression of kirillov-kostant form},
(\ref {equation:symplectic form for adjoint orbit})
 of the Kirillov-Kostant form in Proposition \ref {prop:kirillov-kostant-form}.
\end{remark}

We define a bilinear pairing
\begin{equation*} 
 \omega_{(E,\nabla,\{N^{(i)}\})}\colon
 \mathbf{H}^1({\mathcal F}^{\bullet}) \times \mathbf{H}^1({\mathcal F}^{\bullet})
 \longrightarrow \mathbf{H}^2({\mathcal L}^{\bullet}_s)
 \cong\mathbb{C}
\end{equation*}
on $\mathbf{H}^1({\mathcal F}^{\bullet})$ by setting
\begin{align}
 &\omega_{(E,\nabla,\{N^{(i)}\})}
 \left( \big[ \big\{ (u_{\alpha\beta},0) \big\},
 \big\{ (v_{\alpha}, ((\tau^{(i)}_{\alpha}),(\xi^{(i)}_{\alpha})))\big\} \big],
 \big[ \big\{ (u'_{\alpha\beta},0) \big\} ,
 \big\{ (v'_{\alpha}, ((\tau'^{(i)}_{\alpha}),(\xi'^{(i)}_{\alpha}))) \big\} \big] \right)
 \label {equation:definition of nondegenerate pairing}  \\
 &=\left[
 \left\{ \Tr(u_{\alpha\beta}\circ u'_{\beta\gamma})\right\},
 -\left\{ \Tr(u_{\alpha\beta}\circ v'_{\beta}-v_{\alpha}\circ u'_{\alpha\beta}) \right\}, 
 \left\{ \Big(\Xi^{(\tau^{(i)}_{\alpha},\xi^{(i)}_{\alpha})}_{(\tau'^{(i)}_{\alpha},\xi'^{(i)}_{\alpha})}\Big)
 \right\} \right]
 \in \mathbf{H}^2({\mathcal L}^{\bullet}_s).  \notag
\end{align}
We will check that
the cohomology class (\ref {equation:definition of nondegenerate pairing}) 
in $\mathbf{H}^2({\mathcal L}^{\bullet}_s)$
is independent of the choice of the representatives
$\big(\big\{(u_{\alpha\beta},0)\big\},
\big\{(v_{\alpha},((\tau^{(i)}_{\alpha}),(\xi^{(i)}_{\alpha})))\big\}\big)$
and
$\big( \big\{(u'_{\alpha\beta},0)\big\},
\big\{(v'_{\alpha},((\tau'^{(i)}_{\alpha}),(\xi'^{(i)}_{\alpha})))\big\}\big)$,
respectively.
Indeed assume that
$\big[\big\{(u_{\alpha\beta},0)\big\},
\big\{(v_{\alpha},((\tau^{(i)}_{\alpha}),(\xi^{(i)}_{\alpha})))\big\}\big]=0$
in $\mathbf{H}^1({\mathcal F}^{\bullet})$.
Then there is
$\big\{ u_{\alpha}, \big(\overline{P^{(i)}_{\alpha}(T)}\big) \big\}\in
C^0(\{U_{\alpha}\},{\mathcal G}^0\oplus Z^0)$
satisfying
\begin{align*}
 &
 u_{\alpha\beta}=u_{\beta}-u_{\alpha}, \hspace{30pt}
 v_{\alpha}=\nabla\circ u_{\alpha}-u_{\alpha}\circ\nabla, \\
 & 
 \tau^{(i)}_{\alpha}=
 \sigma^{^{(i)}-}_{\theta^{(i)}}(u_{\alpha}|_{{\mathcal D}^{(i)}_s},\overline{P^{(i)}(T)})
 =-(u_{\alpha}|_{{\mathcal D}^{(i)}_s}\circ\theta^{(i)}
 +\theta^{(i)}\circ \,^t u_{\alpha}|_{{\mathcal D}^{(i)}_s})
 +\theta^{(i)}\circ P(\,^tN^{(i)}) \\
 &
 \xi^{(i)}_{\alpha}=
 \sigma^{^{(i)}+}_{\kappa^{(i)}}(u_{\alpha}|_{{\mathcal D}^{(i)}_s},\overline{P^{(i)}(T)})
 =\kappa^{(i)}\circ u_{\alpha}|_{{\mathcal D}^{(i)}_s}
 +\,^t u_{\alpha}|_{{\mathcal D}^{(i)}_s}\circ\kappa^{(i)}
 -P(\,^tN^{(i)})\circ\kappa^{(i)}.
\end{align*}
So we can write
\begin{align}
 &
 \omega_{(E,\nabla,\{N^{(i)}\})}
 \left( \big[
 \big\{ (u_{\alpha\beta},0) \big\},
 \big\{ (v_{\alpha},((\tau^{(i)}_{\alpha}),(\xi^{(i)}_{\alpha}))) \big\} \big],
 \big[ \big\{ (u'_{\alpha\beta},0) \big\},
 \big\{ (v'_{\alpha},((\tau'^{(i)}_{\alpha}),(\xi'^{(i)}_{\alpha}))) \big\} \big] \right)
  \label {equation: symplectic form coboundary} \\
 &=
 \bigg[ \big\{ \Tr((u_{\beta}-u_{\alpha})\circ u'_{\beta\gamma}) \big\},
 -\big\{ \Tr( (u_{\beta}-u_{\alpha})\circ v'_{\beta}
 - (\nabla\circ u_{\alpha}-u_{\alpha}\circ\nabla)\circ u'_{\alpha\beta}) \big\},  \notag \\
 &\hspace{200pt}
 \Big\{ \Big(
 \Xi^{\sigma^{^{(i)}-}_{\theta^{(i)}}(u_{\alpha}|_{{\mathcal D}^{(i)}_s},\overline{P^{(i)}(T)}),
  \sigma^{^{(i)}+}_{\kappa^{(i)}}(u_{\alpha}|_{{\mathcal D}^{(i)}_s},\overline{P^{(i)}(T)})}
 _{(\tau'^{(i)}_{\alpha},\xi'^{(i)}_{\alpha})}
  \Big) \Big\} \bigg]. \notag
\end{align}
If we put $c_{\alpha\beta}:=\Tr(u_{\alpha}\circ u'_{\alpha\beta})$, then
$\{c_{\alpha\beta}\}\in C^1(\{U_{\alpha}\}, L^0_s)$ and
\begin{equation} \label {equation: first component of coboundary of c}
 \big\{ \Tr((u_{\beta}-u_{\alpha})\circ u'_{\beta\gamma}) \big\}
 =\big\{ \Tr(u_{\beta}\circ u'_{\beta\gamma}
 -u_{\alpha}\circ(u'_{\alpha\gamma}-u'_{\alpha\beta})) \big\}
 =\big\{ c_{\beta\gamma}-c_{\alpha\gamma}+c_{\alpha\beta} \big\}.
\end{equation}
If we put $b_{\alpha}:=\Tr(u_{\alpha}\circ v'_{\alpha})$,
then $\{b_{\alpha}\}\in C^0(\{U_{\alpha}\},{\mathcal L}^1_s)$ and we have
\begin{align}
 d^0_{{\mathcal L}^{\bullet}_s} \big(\{c_{\alpha\beta} \}\big)
 &= \big\{ d \Tr(u_{\alpha}\circ u'_{\alpha\beta}) \big\}
 =
 \big\{ \Tr (\nabla\circ u_{\alpha}\circ u'_{\alpha\beta}
 -u_{\alpha}\circ u'_{\alpha\beta}\circ \nabla) \big\}  
  \label {equation: second component of coboundary of c}  \\
 &=
 \big\{ \Tr( (\nabla\circ u_{\alpha}-u_{\alpha}\circ\nabla)\circ u'_{\alpha\beta}
 +u_{\alpha}\circ( \nabla\circ u'_{\alpha\beta}-u'_{\alpha\beta}\circ\nabla)) \big\}
 \notag \\
 &=
 \big\{ \Tr( (\nabla\circ u_{\alpha}-u_{\alpha}\circ\nabla)\circ u'_{\alpha\beta}
 +u_{\alpha}\circ( v'_{\beta}-v'_{\alpha})) \big\}
 \notag \\
 &=
 \big\{ \Tr( (\nabla\circ u_{\alpha}-u_{\alpha}\circ\nabla)\circ u'_{\alpha\beta}
 +(u_{\alpha}-u_{\beta})\circ v'_{\beta}+(u_{\beta}\circ v'_{\beta}-u_{\alpha}\circ v'_{\alpha}))
 \big\} \notag \\
 &=
 -\big\{ \Tr( (u_{\beta}-u_{\alpha})\circ v'_{\beta}
 - (\nabla\circ u_{\alpha}-u_{\alpha}\circ\nabla)\circ u'_{\alpha\beta}) \big\}
 + \big\{ b_{\beta}-b_{\alpha} \big\}.   \notag
\end{align}
Since 
$\Tr\big(
 (\tau'^{(i)}_{\alpha}\circ\kappa^{(i)}+\theta^{(i)}\circ \xi'^{(i)}_{\alpha})
 \circ(N^{(i)})^l\circ P^{(i)}(N^{(i)})\circ(N^{(i)})^{j-1-l} \big)=0$
follows from
$\Theta^{(i)}_{\tau'^{(i)}\circ\kappa^{(i)}+\theta^{(i)}\circ\xi'^{(i)}}=0$,
\begin{align*}
 &\Tr\big( \tau'^{(i)}_{\alpha}\circ(\,^tN^{(i)})^l\circ
 \sigma^{^{(i)}+}_{\kappa^{(i)}}(u_{\alpha}|_{{\mathcal D}^{(i)}_s},
 \overline{P^{(i)}(T)})\circ(N^{(i)})^{j-1-l}\big) \\
 &\hspace{30pt}
 -\Tr\big( \sigma^{^{(i)}-}_{\theta^{(i)}}(u_{\alpha}|_{{\mathcal D}^{(i)}_s},\overline{P^{(i)}(T)})
 \circ(\,^tN^{(i)})^l \circ\xi'^{(i)}_{\alpha}\circ(N^{(i)})^{j-1-l}\big) \\
 &=
  \Tr \left( \tau'^{(i)}_{\alpha}\circ (\,^tN^{(i)})^l\circ \big(
 \kappa^{(i)}\circ u_{\alpha}|_{{\mathcal D}^{(i)}_s}
 +\,^tu_{\alpha}|_{{\mathcal D}^{(i)}_s}\circ\kappa^{(i)}
 -P^{(i)}(\,^tN^{(i)})\circ\kappa^{(i)} \big)\circ (N^{(i)})^{j-1-l} \right) \\
 & \hspace{30pt}
 -\Tr \left( \big(-u_{\alpha}|_{{\mathcal D}^{(i)}_s}\circ\theta^{(i)}
 -\theta^{(i)}\circ\,^tu_{\alpha}|_{{\mathcal D}^{(i)}_s}
 +\theta^{(i)}\circ P^{(i)}(\,^tN^{(i)})\big)\circ(\,^tN^{(i)})^l
 \circ \xi'^{(i)}_{\alpha}\circ(N^{(i)})^{j-1-l} \right) \\
 &=
  \Tr \left( 
 \tau'^{(i)}_{\alpha}\circ(\,^tN^{(i)})^l\circ\kappa^{(i)}\circ
 u_{\alpha}|_{{\mathcal D}^{(i)}_s}\circ(N^{(i)})^{j-1-l}
 +(\,^tN^{(i)})^{j-1-l}\circ\kappa^{(i)}\circ u_{\alpha}|_{{\mathcal D}^{(i)}_s}
 \circ (N^{(i)})^l \circ\tau'^{(i)}_{\alpha} \right) \\
 &\hspace{20pt}
  + \Tr\left(
 u_{\alpha}|_{{\mathcal D}^{(i)}_s}\circ\theta^{(i)}\circ
 (\,^tN^{(i)})^l\circ\xi'^{(i)}_{\alpha}\circ(N^{(i)})^{j-1-l}
 +(\,^tN^{(i)})^{j-1-l}\circ\xi'^{(i)}_{\alpha}\circ(N^{(i)})^l\circ
 u_{\alpha}|_{{\mathcal D}^{(i)}_s}\circ\theta^{(i)} \right) \\
  &\hspace{30pt}
  -\Tr\left(
 \big( \tau'^{(i)}_{\alpha}\circ\kappa^{(i)}+\theta^{(i)}\circ \xi'^{(i)}_{\alpha} \big)
 \circ(N^{(i)})^l\circ P^{(i)}(N^{(i)})\circ(N^{(i)})^{j-1-l} \right) \\
 &=
  \Tr \left( 
 u_{\alpha}|_{{\mathcal D}^{(i)}_s}\circ(N^{(i)})^{j-1-l} \circ
 \tau'^{(i)}_{\alpha}\circ\kappa^{(i)}\circ(N^{(i)})^l
 +u_{\alpha}|_{{\mathcal D}^{(i)}_s}\circ (N^{(i)})^l  \circ
 \tau'^{(i)}_{\alpha}\circ \kappa^{(i)}\circ (N^{(i)})^{j-1-l} \right) \\
 & \hspace{20pt}
  + \Tr\left(
 u_{\alpha}|_{{\mathcal D}^{(i)}_s}\circ(N^{(i)})^l\circ\theta^{(i)}
 \circ\xi'^{(i)}_{\alpha}\circ(N^{(i)})^{j-1-l}
 +u_{\alpha}|_{{\mathcal D}^{(i)}_s}\circ (N^{(i)})^{j-1-l}\circ
 \theta^{(i)}\circ\xi'^{(i)}_{\alpha}\circ(N^{(i)})^l \right).
\end{align*}
So we have
\begin{align*}
 & \Xi^{\sigma^{^{(i)}-}_{\theta^{(i)}}(u_{\alpha}|_{{\mathcal D}^{(i)}_s},\overline{P^{(i)}(T)}),
  \sigma^{^{(i)}+}_{\kappa^{(i)}}(u_{\alpha}|_{{\mathcal D}^{(i)}_s},\overline{P^{(i)}(T)})}
 _{(\tau'^{(i)}_{\alpha},\xi'^{(i)}_{\alpha})} \\
 &=
 \frac{1}{2}\sum_{j=1}^{r-1}\sum_{l=0}^{j-1} c^{(i)}_j
 \Tr \bigg( \tau'^{(i)}_{\alpha} \circ \big( \,^tN^{(i)} \big)^l \circ
 \sigma^{^{(i)}+}_{\kappa^{(i)}}\big( u_{\alpha}|_{{\mathcal D}^{(i)}_s},\overline{P^{(i)}(T)} \big)
 \circ \big( N^{(i)} \big)^{j-1-l} \\
 & \hspace{100pt}
 -
 \sigma^{^{(i)}-}_{\theta^{(i)}} \big( u_{\alpha}|_{{\mathcal D}^{(i)}_s},\overline{P^{(i)}(T)} \big)
 \circ(\,^tN^{(i)})^l \circ \xi'^{(i)}_{\alpha} \circ \big( N^{(i)} \big)^{j-1-l} \bigg)
  \frac{d\bar{z}^{(i)}}{\bar{z}^{(i)}_1\bar{z}^{(i)}_2\cdots \bar{z}^{(i)}_{m_i}} \\
 &=
  \sum_{j=1}^{r-1} \sum_{l=0}^{j-1}  c^{(i)}_j
 \Tr \bigg( 
 u_{\alpha}|_{{\mathcal D}^{(i)}_s}\circ \big( N^{(i)} \big)^l \circ
 \big( \tau'^{(i)}_{\alpha}\circ\kappa^{(i)} +\theta^{(i)}\circ\xi'^{(i)}_{\alpha} \big)
 \circ \big( N^{(i)} \big)^{j-1-l} \bigg)
 \frac{d\bar{z}^{(i)}}{\bar{z}^{(i)}_1\bar{z}^{(i)}_2\cdots \bar{z}^{(i)}_{m_i}} \\
 &=
 \Tr \Big( u_{\alpha}|_{{\mathcal D}^{(i)}_s}\circ
 \delta^{(i)}_{\bnu,N^{(i)}}
 \big( \tau'^{(i)}_{\alpha}\circ\kappa^{(i)}+\theta^{(i)}\circ\xi'^{(i)}_{\alpha} \big) \Big).
\end{align*}
Since
$v'_{\alpha}|_{{\mathcal D}^{(i)}_s}
=\delta^{(i)}_{\bnu,N^{(i)}}(\tau'^{(i)}_{\alpha}\circ\kappa^{(i)}+\theta^{(i)}\circ\xi'^{(i)}_{\alpha})$,
we have
\begin{align}
 d^1_{{\mathcal L}^{\bullet}_s} \left\{ (b_{\alpha}) \right\}
 =
 \left\{ \left(\Tr( u_{\alpha}\circ v'_{\alpha} )|_{{\mathcal D}^{(i)}_s}\right) \right\} 
 &= \left\{  \Big(\Tr
 \Big( u_{\alpha}|_{{\mathcal D}^{(i)}_s}\circ \delta^{(i)}_{\bnu,N^{(i)}}  (\tau'^{(i)}_{\alpha}\circ\kappa^{(i)}
 +\theta^{(i)}\circ\xi'^{(i)}_{\alpha}) \Big) \Big) \right\} 
  \label {equation: coboundary of b}  \\
 &=\Big\{  \Big(
  \Xi^{\sigma^{^{(i)}-}_{\theta^{(i)}}(u_{\alpha}|_{{\mathcal D}^{(i)}_s},\overline{P^{(i)}(T)}),
  \sigma^{^{(i)}+}_{\kappa^{(i)}}(u_{\alpha}|_{{\mathcal D}^{(i)}_s},\overline{P^{(i)}(T)})}
 _{(\tau'^{(i)}_{\alpha},\xi'^{(i)}_{\alpha})} \Big) \Big\}. \notag
\end{align}
The equalities (\ref {equation: first component of coboundary of c}),
(\ref {equation: second component of coboundary of c})
and (\ref  {equation: coboundary of b}) mean that
the cohomology class (\ref  {equation: symplectic form coboundary})
is represented as the coboundary of
$\big( \big\{ c_{\alpha\beta}\big\}, \big\{ b_{\alpha} \big\} \big)
\in C^0(\{U_{\alpha}\}, {\mathcal L}^{\bullet}_s)$,
which should be zero in $\mathbf{H}^2({\mathcal L}^{\bullet}_s)$.
Similarly (\ref {equation:definition of nondegenerate pairing}) becomes zero
when
$\big[ \big\{ (u'_{\alpha\beta},0) \big\} ,
\big\{ (v'_{\alpha}, (\tau'^{(i)}_{\alpha}),(\xi'^{(i)}_{\alpha})) \big\}] =0$
in $\mathbf{H}^1({\mathcal F}^{\bullet})$.
Thus we have proved that the bilinear pairing
$\omega_{(E,\nabla,\{N^{(i)}\})}$ is well-defined.

\begin{lemma} \label {lemma-non-degenerate}
 The bilinear pairing
$\omega_{(E,\nabla,\{N^{(i)}\})}\colon
\mathbf{H}^1({\mathcal F}^{\bullet}) \times \mathbf{H}^1({\mathcal F}^{\bullet})
 \longrightarrow \mathbf{H}^2({\mathcal L}^{\bullet}_s)
 \cong\mathbb{C}$
 defined in (\ref {equation:definition of nondegenerate pairing})  is a non-degenerate pairing.
\end{lemma}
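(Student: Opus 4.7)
The plan is to identify $\omega_{(E,\nabla,\{N^{(i)}\})}$ as the cup product coming from a morphism of complexes
\[
\eta\colon {\mathcal F}^{\bullet}\otimes^L {\mathcal F}^{\bullet}\longrightarrow \tilde{\mathcal L}^{\bullet}_s
\]
on ${\mathcal C}_s$ and then to deduce non-degeneracy from Serre-Grothendieck duality. First I would read off $\eta$ from the explicit formula (\ref{equation:definition of nondegenerate pairing}): its nonzero components on the sheaf level are the trace pairings $(u,\overline{P})\otimes(u',\overline{P'})\mapsto \Tr(u\circ u')$ landing in ${\mathcal L}^0_s$, the pairing $(u,\overline{P})\otimes(v,\tau,\xi)\mapsto \Tr(u\circ v)$ landing in ${\mathcal L}^1_s$ (together with its transpose), and the pairing $(v,\tau,\xi)\otimes(v',\tau',\xi')\mapsto \Xi^{(\tau,\xi)}_{(\tau',\xi')}$ landing in the ${\mathcal L}^2_s$-summand of $\tilde{\mathcal L}^2_s$. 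The verification that these components commute with $d^0,d^1$ and $\tilde d^0,\tilde d^1$ is a direct calculation of exactly the same flavour as Lemma \ref{lemma:define-complex}, using the identity $\Tr(P(N^{(i)})\circ (N^{(i)}\circ u-u\circ N^{(i)}))=0$ and the defining relation $\nu^{(i)}(N^{(i)})\frac{d\bar z^{(i)}}{\bar z^{(i)}_1\cdots\bar z^{(i)}_{m_i}}=\nabla|_{{\mathcal D}^{(i)}_s}$.

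By adjunction $\eta$ is equivalent to a morphism $\Phi\colon {\mathcal F}^{\bullet}\to R{\mathcal H}om({\mathcal F}^{\bullet},\tilde{\mathcal L}^{\bullet}_s)$ in the derived category of ${\mathcal C}_s$. Since both sides are bounded complexes with coherent cohomology and $\mathbf{H}^2(\tilde{\mathcal L}^{\bullet}_s)\cong \mathbb{C}$, Serre-Grothendieck duality on the smooth projective curve ${\mathcal C}_s$ identifies $\mathbf{H}^1({\mathcal F}^{\bullet})$ with the dual of $\mathbf{H}^1(R{\mathcal H}om({\mathcal F}^{\bullet},\tilde{\mathcal L}^{\bullet}_s))$, so the non-degeneracy of $\omega_{(E,\nabla,\{N^{(i)}\})}$ reduces to showing that $\Phi$ is a quasi-isomorphism. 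This is a local question on ${\mathcal C}_s$. Over the open set ${\mathcal C}_s\setminus {\mathcal D}_s$ the complex ${\mathcal F}^{\bullet}$ restricts to $[{\mathcal End}(E)\xrightarrow{\mathrm{ad}(\nabla)}{\mathcal End}(E)\otimes\Omega^1_{{\mathcal C}_s}]$, the complex $\tilde{\mathcal L}^{\bullet}_s$ restricts to the de Rham complex $[{\mathcal O}\xrightarrow{d}\Omega^1_{{\mathcal C}_s}]$, and $\Phi$ restricts to the classical trace pairing, which is a quasi-isomorphism by the usual Serre-duality argument applied to the adjoint connection $\mathrm{ad}(\nabla)$.

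The main obstacle is to check the quasi-isomorphism at each point $p\in {\mathcal D}^{(i)}_s$, where the boundary summands $Z^0$, $Z^1$, $S(E|_{{\mathcal D}_s}^{\vee},E|_{{\mathcal D}_s})$ and $S(E|_{{\mathcal D}_s},E|_{{\mathcal D}_s}^{\vee})$ enter and where $\nabla$ has poles. The cleanest way to handle this is via the short exact sequence (\ref{equation:fundamental short exact sequence of complexes}) and the long exact sequence (\ref{equation:fundamental exact sequence of cohomologies}): I would show that $\Phi$ sends ${\mathcal F}_0^{\bullet}$ quasi-isomorphically into $R{\mathcal H}om({\mathcal F}_1^{\bullet}[-1],\tilde{\mathcal L}^{\bullet}_s)$ and, symmetrically, ${\mathcal F}_1^{\bullet}[-1]$ into $R{\mathcal H}om({\mathcal F}_0^{\bullet},\tilde{\mathcal L}^{\bullet}_s)$, so that the five-lemma applied to the long exact sequence and its Serre dual yields non-degeneracy on $\mathbf{H}^1({\mathcal F}^{\bullet})$. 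The crucial local input at the boundary is precisely the $\Xi$ pairing (\ref{equation:main part in the definition of symplectic form}): as noted in Remark \ref{remark: kirillov-kostant factor in symplectic form}, its restriction to the $S\oplus S$-summand at each $p\in {\mathcal D}^{(i)}_s$ equals, up to the nonzero scalar $\res_p(d\bar z^{(i)}/\bar z^{(i)}_1\cdots\bar z^{(i)}_{m_i})$, the pairing (\ref{equation:symplectic form for adjoint orbit}) on the tangent space of the $GL_r(\mathbb{C})$-adjoint orbit $C_{\varphi^{(i)}_{\bmu}|_p(T)}$ of $N^{(i)}|_p$. By Proposition \ref{prop:kirillov-kostant-form} this agrees with the Kirillov-Kostant symplectic form on the orbit and is therefore non-degenerate. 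Combining this boundary non-degeneracy with the Serre-duality statement on the interior, through the two long exact sequences paired by $\Phi$, will conclude the proof.
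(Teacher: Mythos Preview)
Your overall architecture --- use the filtration (\ref{equation:fundamental short exact sequence of complexes}), show the pairing induces isomorphisms on ${\mathcal F}_0^{\bullet}$ against ${\mathcal F}_1^{\bullet}$ via Serre duality in the interior plus a boundary check, then conclude by the five lemma --- is exactly the paper's strategy. The derived-category packaging is cosmetic; the substance is the same.

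The gap is in your boundary check. You invoke Remark \ref{remark: kirillov-kostant factor in symplectic form} to say that the $\Xi$-pairing at $p\in{\mathcal D}^{(i)}_s$ is, up to a nonzero residue scalar, the pairing (\ref{equation:symplectic form for adjoint orbit}) on the adjoint orbit of $N^{(i)}|_p$, and then cite Proposition \ref{prop:kirillov-kostant-form}. But Remark \ref{remark: kirillov-kostant factor in symplectic form} explicitly treats only the degenerate case $c^{(i)}_1=1$, $c^{(i)}_j=0$ for $j\neq 1$; in general the formula (\ref{equation:main part in the definition of symplectic form}) contains the coefficients $c^{(i)}_j$ of $\nu^{(i)}$ and powers of $N^{(i)}$, and is \emph{not} the Kirillov--Kostant form on the orbit of $N^{(i)}$. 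Moreover, in the irregular case $D^{(i)}=m_i p$ with $m_i\geq 2$ your ``nonzero scalar'' $\res_p\big(d\bar z^{(i)}/(\bar z^{(i)})^{m_i}\big)$ vanishes, so the pairing into $\mathbb{C}$ cannot be a scalar multiple of anything; rather, one needs local duality for the Gorenstein scheme $D^{(i)}_s$ to pass from a perfect ${\mathcal O}_{D^{(i)}_s}$-bilinear pairing to a perfect $\mathbb{C}$-bilinear one.

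The paper's actual boundary argument (the pairing (\ref{equation:nondegenerate trace pairing})) proceeds by a direct linear-algebra computation: if $(\xi^{(i)})\in\ker(S\to Z^1)$ pairs trivially with everything, then nondegeneracy of the trace forces $\sum_{j,l}c^{(i)}_j(\,^tN^{(i)})^l\xi^{(i)}(N^{(i)})^{j-1-l}=0$; writing $\theta^{(i)}\xi^{(i)}=[N^{(i)},g]$ (which uses $\Theta^{(i)}_{\theta^{(i)}\xi^{(i)}}=0$), this becomes $[\nu^{(i)}(N^{(i)}),g]=0$, and only then does Assumption \ref{assumption-generic} let you conclude $[N^{(i)},g]=0$, hence $\xi^{(i)}=0$. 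Your proposal never invokes Assumption \ref{assumption-generic}, and without it the statement is false (if $\nu^{(i)}$ were not generic, the $\Xi$-pairing could degenerate). So the missing ingredient is precisely this use of the genericity assumption in place of the Kirillov--Kostant identification.
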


\begin{proof}
Let 
$\sigma\colon \mathbf{H}^1({\mathcal F}^{\bullet})\longrightarrow\mathbf{H}^1({\mathcal F})^{\vee}$
be the homomorphism determined by the pairing
$\omega_{(E,\nabla,\{N^{(i)}\})}$.
We have to show that $\sigma$ is an isomorphism.
We can see that $\sigma$ induces the following exact commutative  diagram
\[
 \begin{CD}
  \mathbf{H}^0({\mathcal F}_0^{\bullet}) @>>> \mathbf{H}^0({\mathcal F}_1^{\bullet})
  @>>> \mathbf{H}^1({\mathcal F}^{\bullet}) @>>> \mathbf{H}^1({\mathcal F}_0^{\bullet})
  @>>> \mathbf{H}^1({\mathcal F}_1^{\bullet})  \\
  @V\sigma_1VV   @V\sigma_2VV    @V\sigma VV   @V\sigma_3VV    @V\sigma_4VV  \\
  \mathbf{H}^1({\mathcal F}_1^{\bullet})^{\vee} @>>> \mathbf{H}^1({\mathcal F}_0^{\bullet})^{\vee}
  @>>> \mathbf{H}^1({\mathcal F}^{\bullet})^{\vee} @>>> \mathbf{H}^0({\mathcal F}_1^{\bullet})^{\vee}
  @>>> \mathbf{H}^0({\mathcal F}_0^{\bullet})^{\vee}.
 \end{CD}
\]
Here
$\sigma_2 \colon  \mathbf{H}^0({\mathcal F}_1^{\bullet})
\longrightarrow
\mathbf{H}^1({\mathcal F}_0^{\bullet})^{\vee}$
and
$\sigma_3 \colon \mathbf{H}^1({\mathcal F}_0^{\bullet})
\longrightarrow \mathbf{H}^0({\mathcal F}_1^{\bullet})^{\vee}$
are given by the pairing
\begin{align*}
 \mathbf{H}^0({\mathcal F}_1^{\bullet})
 \times \mathbf{H}^1({\mathcal F}_0^{\bullet})
 \longrightarrow & \;
 \mathbf{H}^2({\mathcal L}^{\bullet}_s)
 \cong\mathbb{C} \\
 \left( \Big[ \big\{(v_{\alpha},(\xi^{(i)}_{\alpha})) \big\} \Big],
 \Big[ \big\{ (u'_{\alpha\beta},(\tau'^{(i)}_{\alpha}))\big\} \Big] \right) 
 \mapsto & \;
 \left[\left\{\Tr(v_{\alpha}\circ u'_{\alpha\beta})\right\},
 \Big\{ 
 \big(\Xi^{(0,\xi^{(i)}_{\alpha})}_{(\tau'^{(i)}_{\alpha},0)}\big)
 \Big\}\right]
\end{align*}
and 
$\sigma_1\colon \mathbf{H}^0({\mathcal F}_0^{\bullet})
\longrightarrow \mathbf{H}^1({\mathcal F}_1^{\bullet})^{\vee}$
and
$\sigma_4\colon \mathbf{H}^1({\mathcal F}_1^{\bullet})
\longrightarrow \mathbf{H}^0({\mathcal F}_0^{\bullet})^{\vee}$
are defined by the pairing
\begin{align*}
 & \hspace{20pt}
 \mathbf{H}^0({\mathcal F}_0^{\bullet})
 \times \mathbf{H}^1({\mathcal F}_1^{\bullet})
 \longrightarrow
 \mathbf{H}^2({\mathcal L}^{\bullet}_s)
 \cong\mathbb{C} \\
 &
 \left( \bigl[ \big\{(u_{\alpha},(\overline{P^{(i)}_{\alpha}}))\big\}\bigl],
 \big[ \big\{v'_{\alpha\beta}\big\},  \big\{ (g'^{(i)}_{\alpha}), (Q'^{(i)}_{\alpha})\big\}\big]
 \right) \\
 & \hspace{30pt}
 \mapsto 
 \left[\left\{-\Tr(u_{\alpha}\circ v'_{\alpha\beta})\right\},
 -\Big\{ \Big( \Tr\big(u_{\alpha}|_{{\mathcal D}^{(i)}_s}\circ g'^{(i)}_{\alpha}\big)
 \frac{d\bar{z}^{(i)}}{\bar{z}^{(i)}_1\bar{z}^{(i)}_2\cdots \bar{z}^{(i)}_{m_i}}
 +\frac{1}{2} Q'^{(i)}_{\alpha}\big(\overline{P^{(i)}_{\alpha}(T)(\nu^{(i)})'(T)}\big)\Big)
 \Big\}\right].
\end{align*}

We denote the short exact sequence of complexes
\[
 \begin{CD}
 0 @>>>  {\mathcal G}^1 @>>>
 {\mathcal G}^1\oplus S(E|_{{\mathcal D}_s},E|_{{\mathcal D}_s}^{\vee})
 @>>> S(E|_{{\mathcal D}_s},E|_{{\mathcal D}_s}^{\vee}) @>>> 0 \\
  &  & @VVV @VVV @VVV \\
 0 @>>>  G^1 @>>> G^1\oplus Z^1 @>>> Z^1 @>>> 0
\end{CD}
\]
simply by
$0 \longrightarrow [{\mathcal G}^1\rightarrow G^1] 
 \longrightarrow {\mathcal F}_1^{\bullet}
 \longrightarrow [S(E|_{{\mathcal D}_s},E|_{{\mathcal D}_s}^{\vee})\rightarrow Z^1]
 \longrightarrow 0$
and denote the short exact sequence of complexes
\[
 \begin{CD}
  0@>>> Z^0 @>>> {\mathcal G}^0\oplus Z^0 @>>> {\mathcal G}^0 @>>> 0 \\
  & & @VVV @VVV @VVV \\
  0 @>>> S(E|_{{\mathcal D}_s}^{\vee},E|_{{\mathcal D}_s}) @>>>
  S(E|_{{\mathcal D}_s}^{\vee},E|_{{\mathcal D}_s}) @>>> 0 @>>> 0
 \end{CD}
\]
simply by
$0\longrightarrow [Z^0\rightarrow  S(E|_{{\mathcal D}_s}^{\vee},E|_{{\mathcal D}_s})]
 \longrightarrow
 {\mathcal F}_0^{\bullet} \longrightarrow {\mathcal G}^0 \longrightarrow 0$.
These short exact sequences of complexes induce the exact commutative diagram
\[
 \begin{CD}
  0 \longrightarrow \; & H^0(\ker({\mathcal G}^1\rightarrow G^1)) & \; \longrightarrow \; &
  \mathbf{H}^0({\mathcal F}_1^{\bullet})
  & \; \longrightarrow \; & \ker(S(E|_{{\mathcal D}_s},E|_{{\mathcal D}_s}^{\vee})\rightarrow Z^1)
  & \; \longrightarrow \; & H^1(\ker({\mathcal G}^1\rightarrow G^1)) \\
  & @V\eta_1 VV @V\sigma_2 VV  @V\eta_2 VV @V\eta_3 VV \\
  0 \longrightarrow \; &   H^1({\mathcal G}^0)^{\vee} & \; \longrightarrow & \;
  \mathbf{H}^1({\mathcal F}_0^{\bullet})^{\vee}  & \; \longrightarrow \; &
  \coker(Z^0\rightarrow S(E|_{{\mathcal D}_s}^{\vee},E|_{{\mathcal D}_s}))^{\vee}
  & \; \longrightarrow \; & H^0({\mathcal G}^0)^{\vee}.
 \end{CD}
\]
Here $\eta_1$ and $\eta_3$ are induced by
the trace pairing
\[
 {\mathcal G}^0\otimes \ker({\mathcal G}^1\rightarrow G^1)
 \ni u\otimes v \mapsto \Tr(u\otimes v) \in \Omega^1_{{\mathcal C}_s}
\]
and the isomorphism
$H^1(\Omega^1_{{\mathcal C}_s})\xrightarrow{\sim}
\mathbf{H}^2(\tilde{L}^{\bullet}_s)\xrightarrow{\sim}\mathbb{C}$.
Since the above trace pairing induces the isomorphism
$\ker({\mathcal G}^1\rightarrow G^1)\xrightarrow{\sim}
({\mathcal G}^0)^{\vee}\otimes\Omega^1_{{\mathcal C}_s}$,
$\eta_1$, $\eta_3$ are the  isomorphisms induced by this isomorphism
and the Serre duality.
The homomorphism $\eta_2$ is induced by the  pairing
\begin{align}
 \label {equation: residue trace pairing}
 \ker\left(S(E|_{{\mathcal D}_s},E|_{{\mathcal D}_s}^{\vee})\rightarrow Z^1\right)\times
 \coker \left(Z^0\rightarrow S(E|_{{\mathcal D}_s}^{\vee},E|_{{\mathcal D}_s})\right)
 & \longrightarrow
 \mathbf{H}^2({\mathcal L}^{\bullet}_s)
 \cong \mathbb{C} \\
  ( (\xi^{(i)}) , (\tau^{(i)}) )  & \mapsto
  \left[\left\{ \big(\Xi^{(0,\xi^{(i)})}_{(\tau^{(i)},0)}\big) \right\}\right].
 \notag
\end{align}
Note that
$\Big[\big(\Xi^{(0,\xi^{(i)})}_{(\tau^{(i)},0)}\big) \Big]
\in \mathbf{H}^2({\mathcal L}^{\bullet}_s)$
corresponds to
\begin{gather}
 \frac{1}{2} \sum_{i=1}^n \res_{p\in{\mathcal D}^{(i)}_s} 
 \bigg(\sum_{j=1}^{r-1} \sum_{l=0}^{j-1} c^{(i)}_j
 \Tr \Big(   
 \tau^{(i)}\circ(\,^tN^{(i)})^l\circ\xi^{(i)}\circ(N^{(i)})^{j-l}\Big)
 \frac{d\bar{z}^{(i)}}{\bar{z}^{(i)}_1\bar{z}^{(i)}_2\cdots \bar{z}^{(i)}_{m_i}} \bigg) \notag
\end{gather}
via the isomorphism
$\mathbf{H}^2({\mathcal L}^{\bullet}_s)
\xrightarrow{\sim} \mathbb{C}$.
Let us consider the restriction to each point $p\in {\mathcal D}_s$ of the pairing
\begin{gather}
 \ker\left(S(E|_{{\mathcal D}_s},E|_{{\mathcal D}_s}^{\vee})\rightarrow Z^1\right)\times
 \coker \left(Z^0\rightarrow S(E|_{{\mathcal D}_s}^{\vee},E|_{{\mathcal D}_s})\right)
  \longrightarrow
  {\mathcal O}_{{\mathcal D}_s}
 \label {equation:nondegenerate trace pairing} \\
  ( (\xi^{(i)}) , (\tau^{(i)}) )   \mapsto
  \frac{1}{2} \sum_{i=1}^n  \sum_{j=1}^{r-1} \sum_{l=0}^{j-1} c^{(i)}_j
 \Tr\Big(   
 \tau^{(i)}\circ(\,^tN^{(i)})^l\circ\xi^{(i)}\circ(N^{(i)})^{j-1-l}\Big). \notag
\end{gather}
Assume that
$(\xi^{(i)})\in \ker\left(S(E|_{{\mathcal D}_s},E|_{{\mathcal D}_s}^{\vee})\rightarrow Z^1\right)_p$
satisfies
\[\sum_{i=1}^n  \sum_{j=1}^{r-1} \sum_{l=0}^{j-1} c^{(i)}_j
 \Tr\Big(   
 \tau^{(i)}\circ(\,^tN^{(i)})^l\circ\xi^{(i)}\circ(N^{(i)})^{j-1-l}\Big)=0
\]
for any
$(\tau^{(i)})\in \coker \left(Z^0\rightarrow S(E|_{{\mathcal D}_s}^{\vee},E|_{{\mathcal D}_s})\right)_p$.
Since the usual trace pairing is nondegenerate, we have
$\sum_{j=1}^{r-1}\sum_{l=0}^{j-1} c^{(i)}_j \, (\,^tN^{(i)})^l\circ\xi^{(i)}\circ (N^{(i)})^{j-1-l}=0$.
Recall that $\Theta^{(i)}_{(\theta^{(i)}\circ\xi^{(i)})}=0$
by the choice of $(\xi^{(i)})$, which is equivalent to the existence of
some $g\in \End(E|_p)$ satisfying
$\theta^{(i)}\circ \xi^{(i)}=N^{(i)}\circ g-g\circ N^{(i)}$.
So we have
$\sum_{j=1}^{r-1} c^{(i)}_j \: (\theta^{(i)})^{-1}\circ ((N^{(i)})^j\circ g-g\circ (N^{(i)})^j)=0$,
which means $\nu^{(i)}(N^{(i)})\circ g=g\circ\nu^{(i)}(N^{(i)})$.
Since $\nu^{(i)}$ satisfies Assumption \ref {assumption-generic},
we have $N^{(i)}\circ g=g\circ N^{(i)}$ and $\xi^{(i)}=0$.
Thus the pairing (\ref  {equation:nondegenerate trace pairing})
is nondegenerate because
$\rank_{{\mathcal O}_{\mathcal D}}
\ker\left(S(E|_{{\mathcal D}_s},E|_{{\mathcal D}_s}^{\vee})\rightarrow Z^1\right)
=\dfrac{r(r-1)}{2}
=\rank_{{\mathcal O}_{\mathcal D}} 
\coker \left(Z^0\rightarrow S(E|_{{\mathcal D}_s}^{\vee},E|_{{\mathcal D}_s})\right)$.
So the pairing (\ref   {equation: residue trace pairing}) becomes a nondegenerate pairing
of vector spaces over $\mathbb{C}$ and $\eta_2$ becomes isomorphic.
Thus the homomorphism
$\sigma_2\colon \mathbf{H}^0({\mathcal F}_1^{\bullet})
\xrightarrow{\sim} \mathbf{H}^1({\mathcal F}_0^{\bullet})$
becomes an isomorphism by the five lemma.
The homomorphism 
$\sigma_3\colon \mathbf{H}^1({\mathcal F}^{\bullet}_0)
\xrightarrow{\sim} \mathbf{H}^0({\mathcal F}^{\bullet}_1)$
is isomorphic because it is the dual of $\sigma_2$.

On the other hand, we have the exact commutative diagram
\[
 \begin{CD}
  \ker (Z^0\rightarrow S(E|_{{\mathcal D}_s}^{\vee},E|_{{\mathcal D}_s}))
  & \longrightarrow & \; \mathbf{H}^0({\mathcal F}_0^{\bullet})
  & \longrightarrow & \; H^0({\mathcal G}^0) & \longrightarrow & \; 
  \coker (Z^0\rightarrow S(E|_{{\mathcal D}_s}^{\vee},E|_{{\mathcal D}_s})) \\
  @VVV  @V\sigma_1 VV  @V\eta_4 VV  @V ^t\eta_2 V\cong V \\
  \coker (S(E|_{{\mathcal D}_s},E|_{{\mathcal D}_s}^{\vee})\rightarrow Z^1)^{\vee}
  & \longrightarrow & \; \mathbf{H}^1({\mathcal F}_1^{\bullet})^{\vee}
  & \longrightarrow & \; H^1(\ker({\mathcal G}^1\rightarrow G^1))^{\vee} 
  & \longrightarrow & \;
  \ker (S(E|_{{\mathcal D}_s},E|_{{\mathcal D}_s}^{\vee})\rightarrow Z^1) ^{\vee}.
 \end{CD}
\]
Note that
$\ker (Z^0\rightarrow S(E|_{{\mathcal D}_s}^{\vee},E|_{{\mathcal D}_s}))=0$
and
$\coker (S(E|_{{\mathcal D}_s},E|_{{\mathcal D}_s}^{\vee})\rightarrow Z^1)=0$.
The homomorphism $\eta_4$ is isomorphic
since it is induced by the isomorphism
$\ker({\mathcal G}^1\rightarrow G^1)^{\vee}\otimes\Omega^1_{{\mathcal C}_s}\cong{\mathcal G}^0$
and the Serre duality.
Thus the homomorphism $\sigma_1$ is an isomorphism.
The homomorphism
$\sigma_4\colon
\mathbf{H}^1({\mathcal F}_1^{\bullet})\longrightarrow\mathbf{H}^0({\mathcal F}_0^{\bullet})^{\vee}$
is isomorphic, because it is the dual of $\sigma_1$.

From all the above arguments, the homomorphism
$\sigma\colon\mathbf{H}^1({\mathcal F}^{\bullet})
\longrightarrow\mathbf{H}^1({\mathcal F}^{\bullet})^{\vee}$
is isomorphic by the five lemma,
because $\sigma_1,\sigma_2,\sigma_3,\sigma_4$ are all isomorphic.
\end{proof}

\begin{lemma}\label{lemma:trace-isomorphism}
 $\mathbf{H}^2(\Tr)\colon
 \mathbf{H}^2({\mathcal F}^{\bullet})\longrightarrow
 \mathbf{H}^2(\tilde{L}^{\bullet}_s)\cong\mathbb{C}$
 is an isomorphism.
\end{lemma}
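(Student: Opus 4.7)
The plan is to exploit the short exact sequence of complexes (\ref{equation:fundamental short exact sequence of complexes}) in parallel with an analogous decomposition of the target complex $\tilde{\mathcal L}^{\bullet}_s$, and then apply the five lemma to the resulting commutative diagram of long exact sequences of hypercohomologies.

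First, I would write $\tilde{\mathcal L}^{\bullet}_s$ as an extension with quotient $\tilde{\mathcal L}_{0}^{\bullet}=[{\mathcal O}_{{\mathcal C}_s}]$ in degree zero and subcomplex $\tilde{\mathcal L}_{1}^{\bullet}[-1]$, where $\tilde{\mathcal L}_{1}^{\bullet}=[\Omega^{1}_{{\mathcal C}_s}({\mathcal D}_s)\oplus Z^{1}\to \Omega^{1}_{{\mathcal C}_s}({\mathcal D}_s)|_{{\mathcal D}_s}\oplus Z^{1}]$. By inspection the trace morphism $\Tr$ respects the filtrations on both sides, so it induces componentwise morphisms $\Tr_{0}\colon{\mathcal F}_{0}^{\bullet}\to\tilde{\mathcal L}_{0}^{\bullet}$ and $\Tr_{1}\colon{\mathcal F}_{1}^{\bullet}\to\tilde{\mathcal L}_{1}^{\bullet}$, assembling the hypercohomology long exact sequences into a ladder. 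It is then enough to show that $\mathbf{H}^{i}(\Tr_{0})$ and $\mathbf{H}^{i}(\Tr_{1})$ are isomorphisms for every $i$, from which the five lemma yields the claim for $\mathbf{H}^{2}({\mathcal F}^{\bullet})$.

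For the ${\mathcal G}^{\bullet}$ factors, the usual trace pairing ${\mathcal End}(E)\otimes\bigl({\mathcal End}(E)\otimes\Omega^{1}_{{\mathcal C}_s}({\mathcal D}_s)\bigr)\to\Omega^{1}_{{\mathcal C}_s}({\mathcal D}_s)$ is perfect, so after the standard splitting ${\mathcal End}(E)={\mathcal O}_{{\mathcal C}_s}\oplus {\mathcal End}^{0}(E)$ the trace projection computes the curve-level de Rham hypercohomology. For the parabolic and $Z$-summands, the relevant nondegeneracy is exactly that of the pairing (\ref{equation:nondegenerate trace pairing}) established inside the proof of Lemma \ref{lemma-non-degenerate}, together with the identification $Z^{1}\cong (Z^{0})^{\vee}$ given by the pairing $(\overline{P(T)},Q)\mapsto Q\bigl(\overline{P(T)(\nu^{(i)})'(T)}\bigr)$ built into $\tilde d^{1}$. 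Combining these, the componentwise $\Tr_{0}$ and $\Tr_{1}$ have vanishing kernel and cokernel on hypercohomology.

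The main obstacle is tracking the twist by $(\nu^{(i)})'(T)$ in the definition of $\tilde d^{1}$: this is what makes $Z^{0}$ and $Z^{1}$ pair nondegenerately, and one needs Assumption \ref{assumption-generic} (equivalently, the genericity of the exponents $\tilde{\nu}^{(i)}(\tilde{\mu}^{(i)}_{k})$) to ensure that $(\nu^{(i)})'(N^{(i)})$ acts invertibly on ${\mathcal O}_{{\mathcal D}^{(i)}_{s}}[T]/(\varphi^{(i)}_{\bmu}(T))$, so that the $Z$-contribution to $\Tr_{1}$ on hypercohomology is an isomorphism rather than merely injective. Once this bookkeeping is done, the conclusion follows by a routine application of the five lemma to the ladder of long exact sequences, exactly parallel to the argument at the end of the proof of Lemma \ref{lemma-non-degenerate}.
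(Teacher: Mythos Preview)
Your approach has a genuine gap: the claim that $\mathbf{H}^{i}(\Tr_{0})$ and $\mathbf{H}^{i}(\Tr_{1})$ are isomorphisms for every $i$ is false. The trace morphism $\Tr\colon{\mathcal F}^{\bullet}\to\tilde{\mathcal L}^{\bullet}_s$ is \emph{not} a quasi-isomorphism. Indeed, $\mathbf{H}^{1}({\mathcal F}^{\bullet})$ is the relative tangent space of the moduli space and has dimension $2r^2(g-1)+2+r(r-1)\sum_i m_i$ by Corollary~\ref{cor:dimension of tangent space}, while $\mathbf{H}^{1}(\tilde{\mathcal L}^{\bullet}_s)\cong\mathbf{H}^{1}(\Omega^{\bullet}_{{\mathcal C}_s})$ has dimension $2g$. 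Already at the level of $\Tr_0$, the map ${\mathcal G}^0\oplus Z^0\to{\mathcal O}_{{\mathcal C}_s}$ has a large kernel (the trace-free part ${\mathcal End}^0(E)$, plus all of $Z^0$), so there is no chance for $\mathbf{H}^{i}(\Tr_0)$ to be an isomorphism in general. You have conflated ``the trace pairing is perfect'' with ``the trace projection induces isomorphisms on cohomology''; these are unrelated statements.

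The paper's argument is entirely different and crucially uses $\balpha$-stability, which your proposal never invokes. From the pairing $\omega_{(E,\nabla,\{N^{(i)}\})}$ of Lemma~\ref{lemma-non-degenerate} one extracts a duality $\sigma_5\colon\mathbf{H}^{2}({\mathcal F}^{\bullet})\xrightarrow{\sim}\mathbf{H}^{0}({\mathcal F}^{\bullet})^{\vee}$ (via the isomorphisms $\sigma_3,\sigma_4$ already established there and a short five-lemma argument at the tail of the long exact sequence). Stability forces $\mathbf{H}^{0}({\mathcal F}^{\bullet})=\mathbb{C}$, and one then checks that the composite $\mathbf{H}^{2}({\mathcal F}^{\bullet})\xrightarrow{\sigma_5}\mathbf{H}^{0}({\mathcal F}^{\bullet})^{\vee}\cong\mathbf{H}^{0}(\tilde{\mathcal L}^{\bullet}_s)^{\vee}\cong\mathbf{H}^{2}(\tilde{\mathcal L}^{\bullet}_s)$ agrees with $\mathbf{H}^{2}(\Tr)$. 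The point is that $\mathbf{H}^{2}(\Tr)$ is an isomorphism only because both sides are one-dimensional, not because $\Tr$ is anything like a quasi-isomorphism.
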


\begin{proof}
From the proof of Lemma \ref{lemma-non-degenerate},
the exact commutative diagram
\[
 \begin{CD}
  \mathbf{H}^1({\mathcal F}^{\bullet}_0) @>>> \mathbf{H}^1({\mathcal F}^{\bullet}_1)
  @>>> \mathbf{H}^2({\mathcal F}^{\bullet}) @>>> 0 \\
  @V\sigma_3 VV @V\sigma_4 VV @V\sigma_5 VV \\
  \mathbf{H}^0({\mathcal F}^{\bullet}_1)^{\vee} @>>> \mathbf{H}^0({\mathcal F}^{\bullet}_0)^{\vee}
  @>>> \mathbf{H}^0({\mathcal F}^{\bullet})^{\vee} @>>> 0
 \end{CD}
\]
is induced
and
$\sigma_5\colon \mathbf{H}^2({\mathcal F}^{\bullet})
\xrightarrow{\sim} \mathbf{H}^0({\mathcal F}^{\bullet})^{\vee}$
is an isomorphism because $\sigma_3$ and $\sigma_4$ are isomorphic.
Note that $\mathbf{H}^0({\mathcal F}^{\bullet})=\mathbb{C}$
because $(E,\nabla,\{N^{(i)}\})$ is $\balpha$-stable
whose endomorphisms are only scalar multiplications.
We can see from the construction that the composition
\[
 \mathbf{H}^2({\mathcal F}^{\bullet})\xrightarrow[\sim]{\sigma_5}
 \mathbf{H}^0({\mathcal F}^{\bullet})^{\vee}
 \xrightarrow{\sim} \mathbf{H}^0(\tilde{\mathcal L}^{\bullet}_s)^{\vee}
 \xrightarrow{\sim} \mathbf{H}^2(\tilde{\mathcal L}^{\bullet}_s)
\]
coincides with $\mathbf{H}^2(\Tr)$
and the result follows.
\end{proof}

\begin{corollary}\label {cor:dimension of tangent space}
The dimension of the relative tangent space
of $M^{\balpha}_{{\mathcal C},{\mathcal D}}(\tilde{\bnu},\tilde{\bmu})$ over $S$
at $(E,\nabla,\{N^{(i)}\})$
is given by
\[
 \dim \mathbf{H}^1({\mathcal F}^{\bullet})=
 2r^2(g-1)+2+r(r-1)\sum_{i=1}^n m_i.
\]
\end{corollary}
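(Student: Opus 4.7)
The plan is to compute $\dim \mathbf{H}^1({\mathcal F}^{\bullet})$ through the Euler characteristic of the complex ${\mathcal F}^{\bullet}$ on the projective curve ${\mathcal C}_s$. By Corollary \ref{cor:tangent-space} the relative tangent space is $\mathbf{H}^1({\mathcal F}^{\bullet})$; moreover, from the proof of Lemma \ref{lemma:trace-isomorphism} we already know that $\mathbf{H}^0({\mathcal F}^{\bullet}) \cong \mathbb{C}$ (since $(E,\nabla,\{N^{(i)}\})$ is $\balpha$-stable, so its endomorphisms are only scalars) and $\mathbf{H}^2({\mathcal F}^{\bullet}) \cong \mathbf{H}^2(\tilde{\mathcal L}^{\bullet}_s) \cong \mathbb{C}$. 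Hence it suffices to establish
\[
 \chi({\mathcal F}^{\bullet}) \;=\; 2r^2(1-g) - r(r-1) \sum_{i=1}^n m_i,
\]
for then $\dim \mathbf{H}^1({\mathcal F}^{\bullet}) = \dim\mathbf{H}^0 + \dim\mathbf{H}^2 - \chi({\mathcal F}^{\bullet})$ gives the asserted formula.

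First I would compute $\chi$ of each term summand by summand, using that for a vector bundle $V$ of rank $n$ and degree $d$ on ${\mathcal C}_s$ one has $\chi(V) = d + n(1-g)$, while for a coherent sheaf supported on ${\mathcal D}_s$ the Euler characteristic equals its length. The locally free terms give $\chi({\mathcal G}^0) = r^2(1-g)$ and, since ${\mathcal End}(E) \otimes \Omega^1_{{\mathcal C}_s}({\mathcal D}_s)$ has rank $r^2$ and degree $r^2(2g-2+\sum_i m_i)$,
\[
 \chi({\mathcal G}^1) = r^2(g-1) + r^2\sum_{i=1}^n m_i.
\]
For the torsion terms: ${\mathcal O}_{{\mathcal D}^{(i)}_s}[T]/(\varphi^{(i)}_{\bmu}(T))$ is locally free of rank $r$ over ${\mathcal O}_{{\mathcal D}^{(i)}_s}$, and the same holds for its $\Hom$ into the rank-one sheaf $\Omega^1_{{\mathcal C}_s}({\mathcal D}^{(i)}_s)|_{{\mathcal D}^{(i)}_s}$, so $\chi(Z^0) = \chi(Z^1) = r\sum_i m_i$. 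The symmetry condition $\,{}^t\theta = \theta$ identifies $S(E|_{{\mathcal D}^{(i)}_s}^{\vee}, E|_{{\mathcal D}^{(i)}_s})$ with the second symmetric power of $E|_{{\mathcal D}^{(i)}_s}$, which is locally free of rank $r(r+1)/2$; likewise for $S(E|_{{\mathcal D}^{(i)}_s}, E|_{{\mathcal D}^{(i)}_s}^{\vee})$. Finally $G^1$ has length $r^2 \sum_i m_i$.

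Collecting terms by tracking the constant part and the coefficient of $\sum_i m_i$ separately, I get
\[
 \chi({\mathcal F}^0) - \chi({\mathcal F}^1) + \chi({\mathcal F}^2)
 = \bigl(-r^2(g-1) - r^2(g-1)\bigr) + \Bigl(r - (r^2 + r(r+1)) + (r^2 + r)\Bigr) \sum_{i=1}^n m_i,
\]
in which the coefficient of $\sum_i m_i$ collapses to $r - r^2 = -r(r-1)$, producing the claimed value of $\chi({\mathcal F}^{\bullet})$. The identity $\dim \mathbf{H}^1({\mathcal F}^{\bullet}) = 2 + 2r^2(g-1) + r(r-1)\sum_{i=1}^n m_i$ then follows immediately. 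Since $\mathbf{H}^0$ and $\mathbf{H}^2$ have been identified in earlier lemmas, no part of the argument is a serious obstacle; the only step to perform with care is the verification of ranks and lengths of the torsion components, in particular the ranks $r(r+1)/2$ of the symmetric Hom bundles.
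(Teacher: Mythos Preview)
Your proposal is correct. Both you and the paper reduce the computation to an Euler characteristic, relying on $\dim\mathbf{H}^0({\mathcal F}^{\bullet})=\dim\mathbf{H}^2({\mathcal F}^{\bullet})=1$ (stability plus Lemma~\ref{lemma:trace-isomorphism}). The only difference is that the paper uses the long exact sequence (\ref{equation:fundamental exact sequence of cohomologies}) together with the dualities $\mathbf{H}^j({\mathcal F}_1^{\bullet})\cong\mathbf{H}^{1-j}({\mathcal F}_0^{\bullet})^{\vee}$ from the proof of Lemma~\ref{lemma-non-degenerate} to obtain $\dim\mathbf{H}^1({\mathcal F}^{\bullet})=-2\chi({\mathcal F}_0^{\bullet})+2$, so it only has to compute the Euler characteristic of the two-term complex ${\mathcal F}_0^{\bullet}$; you instead compute $\chi({\mathcal F}^{\bullet})=\chi({\mathcal F}^0)-\chi({\mathcal F}^1)+\chi({\mathcal F}^2)$ term by term. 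Since $\chi({\mathcal F}^{\bullet})=2\chi({\mathcal F}_0^{\bullet})$ by that same duality, the two computations agree, and your direct approach avoids invoking the subcomplexes at the cost of handling twice as many summands.
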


\begin{proof}
Since we will prove the smoothness of the moduli space
$M^{\balpha}_{{\mathcal C},{\mathcal D}}(\tilde{\bnu},\tilde{\bmu})$ over $S$
in Proposition \ref {prop:smoothness of moduli},
we can deduce the corollary from \cite[Theorem 2.1]{Inaba-1}
and \cite[Theorem 2.2]{Inaba-Saito}.
We give here a direct proof using the proof of Lemma \ref {lemma-non-degenerate}.
Since $\mathbf{H}^0({\mathcal F}^{\bullet})\cong\mathbb{C}$
and $\mathbf{H}^2({\mathcal F}^{\bullet})\cong \mathbb{C}$,
the exact sequence (\ref {equation:fundamental exact sequence of cohomologies})
becomes
\[
 0\longrightarrow \mathbb{C}\longrightarrow
 \mathbf{H}^0({\mathcal F}_0^{\bullet})
 \longrightarrow \mathbf{H}^0({\mathcal F}_1^{\bullet})
 \longrightarrow \mathbf{H}^1({\mathcal F}^{\bullet})
 \longrightarrow \mathbf{H}^1({\mathcal F}_0^{\bullet})
 \longrightarrow \mathbf{H}^1({\mathcal F}_1^{\bullet})
 \longrightarrow \mathbb{C} \longrightarrow 0.
\]
Since $\mathbf{H}^0({\mathcal F}_1^{\bullet})\cong \mathbf{H}^1({\mathcal F}_0^{\bullet})^{\vee}$
and $\mathbf{H}^1({\mathcal F}_1^{\bullet})\cong  \mathbf{H}^0({\mathcal F}_0^{\bullet})^{\vee}$
by the proof of Lemma \ref {lemma-non-degenerate},  we have
\begin{align}
 \dim \mathbf{H}^1({\mathcal F}^{\bullet})&=
 \dim  \mathbf{H}^0({\mathcal F}_1^{\bullet}) + \dim \mathbf{H}^1({\mathcal F}_0^{\bullet})
 - \dim \mathbf{H}^0({\mathcal F}_0^{\bullet}) - \dim \mathbf{H}^1({\mathcal F}_1^{\bullet})
 +\dim\mathbb{C}+\dim\mathbb{C} 
  \label {equation:calculation of dimension} \\
 &= 2\dim \mathbf{H}^1({\mathcal F}_0^{\bullet})-2\mathbf{H}^0({\mathcal F}_0^{\bullet})+2 \notag \\
 &=-2\chi({\mathcal F}_0^{\bullet})+2 \notag 
\end{align}
Using the Riemann-Roch formula, we can see
\begin{align*}
 \chi({\mathcal F}_0^{\bullet})
 &=\chi({\mathcal G}^0)+\length Z^0-\length  S(E|_{{\mathcal D}_s}^{\vee},E|_{{\mathcal D}_s})) \\
 &=r^2(1-g)+\sum_{i=1}^n rm_i -\sum_{i=1}^n \frac{r(r+1)}{2}m_i.
\end{align*}
Substituting this in (\ref {equation:calculation of dimension})
we get the corollary.
\end{proof}

\subsection{Smoothness of the moduli space of $(\tilde{\bnu},\tilde{\bmu})$-connections}

We use the same notations as in subsection \ref{subsection:tangent}
and subsection \ref{subsection:nondenerate pairing}.

\begin{proposition}\label {prop-obstruction-class}
 Let $A$ be an artinian local ring over $S$
 with the maximal ideal $\mathfrak{m}$
 and $I$ be an ideal of $A$ satisfying $\mathfrak{m}I=0$
 and $A/\mathfrak{m}=\mathbb{C}$.
 Let $(E',\nabla',\{N'^{(i)}_j\})$ be a flat family of
 $(\tilde{\bnu},\tilde{\bmu})\otimes A/I$-connections on $({\mathcal C}_{A/I},{\mathcal D}_{A/I})$
 over $A/I$
 such that
 $(E',\nabla',\{N'^{(i)}\})\otimes A/\mathfrak{m}
 \cong (E,\nabla,\{N^{(i)}\})$.
 Then there is an obstruction class
 $o(E',\nabla',\{N'^{(i)}\})\in \mathbf{H}^2({\mathcal F}^{\bullet})\otimes I$
 whose vanishing is equivalent to the existence of a lift of
 $(E',\nabla',\{N'^{(i)}_j\})$ to a flat family of
 $(\tilde{\bnu},\tilde{\bmu})\otimes A$-connections on $({\mathcal C}_{A},{\mathcal D}_{A})$
 over $A$.
\end{proposition}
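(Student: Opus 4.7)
The plan is to imitate the \v{C}ech-theoretic construction of Proposition \ref{prop:first deformation}, but one degree higher. First I would pass to the equivalent factorized data, so that the deformation problem concerns a tuple $(E',\nabla',\{\theta'^{(i)},\kappa'^{(i)}\})$ on $({\mathcal C}_{A/I},{\mathcal D}_{A/I})$, and choose an affine open covering ${\mathcal C}_A=\bigcup_\alpha U_\alpha$ satisfying the two auxiliary conditions used in the proof of Proposition \ref{prop:first deformation} (each $U_\alpha$ meets at most one ${\mathcal D}^{(i)}_A$, and each ${\mathcal D}^{(i)}_A$ is contained in exactly one $U_\alpha$), refined so that $E'|_{U_\alpha\otimes A/I}$ is free.

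Next I would construct local lifts. On each $U_\alpha$, $E'|_{U_\alpha\otimes A/I}$ admits a flat lift $\tilde E_\alpha$ to a free ${\mathcal O}_{U_\alpha}$-module, and the relative connection $\nabla'|_{U_\alpha}$ lifts to some $\tilde\nabla_\alpha$ on $\tilde E_\alpha$ simply by lifting its connection matrix. For $U_\alpha\supset{\mathcal D}^{(i)}_A$, one lifts $\theta'^{(i)}$ and $\kappa'^{(i)}$ to symmetric homomorphisms $\tilde\theta^{(i)}_\alpha, \tilde\kappa^{(i)}_\alpha$ with $\tilde\theta^{(i)}_\alpha$ still an isomorphism, and then corrects $\tilde\nabla_\alpha$ in a neighborhood of ${\mathcal D}^{(i)}_A$ so that the local residue equation $\tilde\nu^{(i)}(\tilde\theta^{(i)}_\alpha\circ\tilde\kappa^{(i)}_\alpha)\frac{d\bar z^{(i)}}{\bar z^{(i)}_1\cdots \bar z^{(i)}_{m_i}}=\tilde\nabla_\alpha|_{{\mathcal D}^{(i)}_A}$ and $\varphi^{(i)}_{\tilde\bmu}(\tilde\theta^{(i)}_\alpha\circ\tilde\kappa^{(i)}_\alpha)=0$ are satisfied on $U_\alpha$; since $U_\alpha$ is affine and the relevant local moduli is smooth, these corrections exist.

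Then I would extract the cocycle. Choose isomorphisms $\varphi_{\beta\alpha}\colon\tilde E_\alpha|_{U_{\alpha\beta}}\xrightarrow{\sim}\tilde E_\beta|_{U_{\alpha\beta}}$ lifting the identity on $A/I$, and set
\[
 u_{\alpha\beta\gamma}:=\varphi_{\gamma\alpha}^{-1}\circ\varphi_{\gamma\beta}\circ\varphi_{\beta\alpha}-\mathrm{id}\in{\mathcal G}^0(U_{\alpha\beta\gamma})\otimes I,
\]
\[
 v_{\alpha\beta}:=\varphi_{\beta\alpha}^{-1}\circ\tilde\nabla_\beta\circ\varphi_{\beta\alpha}-\tilde\nabla_\alpha\in{\mathcal G}^1(U_{\alpha\beta})\otimes I,
\]
together with $\tau^{(i)}_{\alpha\beta}$, $\xi^{(i)}_{\alpha\beta}$ obtained as the differences of the symmetric lifts of $\theta^{(i)}, \kappa^{(i)}$ transported by $\varphi$. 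Because the local lifts all satisfy the residue and minimal-polynomial equations, the first component of $d^1$ applied to $(v_{\alpha\beta},\tau^{(i)}_{\alpha\beta},\xi^{(i)}_{\alpha\beta})$ vanishes, and the second component $\Theta^{(i)}_{\tau^{(i)}_{\alpha\beta}\circ\kappa^{(i)}+\theta^{(i)}\circ\xi^{(i)}_{\alpha\beta}}$ measures only a term lying in $Z^1$; one checks by a direct calculation, parallel to the computation in the proof of Proposition \ref{prop:first deformation}, that
\[
 \bigl((u_{\alpha\beta\gamma},0),(v_{\alpha\beta},(\tau^{(i)}_{\alpha\beta}),(\xi^{(i)}_{\alpha\beta}))\bigr)
\]
is a \v{C}ech $2$-cocycle for the complex ${\mathcal F}^\bullet\otimes I$, defining the class $o(E',\nabla',\{N'^{(i)}\})\in\mathbf{H}^2({\mathcal F}^\bullet)\otimes I$.

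Finally, independence of the choices of local lifts and of the transition isomorphisms $\varphi_{\beta\alpha}$ follows since any two such choices differ by sections of ${\mathcal F}^0\otimes I$ or ${\mathcal F}^1\otimes I$, and this difference modifies the cocycle only by a coboundary. If $o=0$, a coboundary correction to the $\{\varphi_{\beta\alpha}\}$ and $\{\tilde\nabla_\alpha,\tilde\theta^{(i)}_\alpha,\tilde\kappa^{(i)}_\alpha\}$ produces the cocycle conditions needed to glue the local data into a global $(\tilde\bnu,\tilde\bmu)\otimes A$-connection lifting $(E',\nabla',\{N'^{(i)}\})$; conversely any global lift tautologically provides local lifts with $\varphi_{\beta\alpha}$ compatible on triple overlaps, so that $o=0$. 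The main obstacle I anticipate is the verification that the residue and minimal-polynomial equations can be arranged simultaneously in the local lifting step, and correspondingly that the assembled cocycle lands in the subcomplex ${\mathcal F}^2=G^1\oplus Z^1$ rather than a larger group — that is, checking that the two algebraic constraints defining a $(\tilde\bnu,\tilde\bmu)$-connection are infinitesimally unobstructed locally but meet the group $\mathbf{H}^2({\mathcal F}^\bullet)$ as their global obstruction space.
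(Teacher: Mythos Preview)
Your approach is essentially the paper's: pass to factorized data, choose the same affine covering, build local lifts, and read off the obstruction as a \v{C}ech $2$-cocycle for ${\mathcal F}^\bullet\otimes I$. Two remarks. First, under your covering hypothesis (each ${\mathcal D}^{(i)}_A$ lies in a single $U_\alpha$), the overlaps $U_{\alpha\beta}$ miss ${\mathcal D}$ entirely, so your $\tau^{(i)}_{\alpha\beta}$ and $\xi^{(i)}_{\alpha\beta}$ are automatically zero; the paper's cocycle is simply $[(\{(u_{\alpha\beta\gamma},0)\},\{(v_{\alpha\beta},(0,0))\},\{(0,0)\})]$ and the verification that it lands in $\mathbf{H}^2({\mathcal F}^\bullet)$ reduces to the two identities you already know from Proposition~\ref{prop:first deformation}. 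Second, the obstacle you anticipate --- producing symmetric lifts $\tilde\theta^{(i)}_\alpha,\tilde\kappa^{(i)}_\alpha$ whose product still satisfies $\varphi^{(i)}_{\tilde\bmu}=0$ over $A$ --- is exactly where the paper supplies a concrete trick rather than invoking abstract smoothness: writing $\varphi^{(i)}_{\tilde\bmu}(T)=T^r+b_{r-1}T^{r-1}+\cdots+b_0$ over ${\mathcal O}_{{\mathcal D}^{(i)}_A}$, it exhibits explicit symmetric matrices $\Phi_1$ (invertible) and $\Phi_2$ with $\Phi_1^{-1}\Phi_2=\,^tN$ the companion matrix, so $\varphi^{(i)}_{\tilde\bmu}(\Phi_2\Phi_1^{-1})=0$ holds identically; conjugating by a lift $\tilde g$ of the trivialization over $A/I$ then gives the desired $\tilde\theta^{(i)}_\alpha=\tilde g\Phi_1^{-1}\,{}^t\tilde g$, $\tilde\kappa^{(i)}_\alpha=({}^t\tilde g)^{-1}\Phi_2\tilde g^{-1}$. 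With this in hand the rest of your argument goes through unchanged.
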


\begin{proof}
We can define the ${\mathcal O}_{{\mathcal D}^{(i)}_{A/I}}[T]$-module structures
on $E'|_{{\mathcal D}^{(i)}_{A/I}}$ and on $E'^{\vee}|_{{\mathcal D}^{(i)}_{A/I}}$
by $N'^{(i)}$ and $^tN'^{(i)}$, respectively.
Then we can take an ${\mathcal O}_{{\mathcal D}^{(i)}_{A/I}}[T]$-isomorphism
$\theta'^{(i)}\colon E'^{\vee}|_{{\mathcal D}^{(i)}_{A/I}}
 \xrightarrow{\sim} E'|_{{\mathcal D}^{(i)}_{A/I}}$ 
which is a lift of $\theta^{(i)}$.
If we put
$\kappa'^{(i)}:=(\theta'^{(i)})^{-1}\circ N'^{(i)}\colon
E'|_{{\mathcal D}^{(i)}_{A/I}} \longrightarrow E'^{\vee}|_{{\mathcal D}^{(i)}_{A/I}}$,
then 
$(E',\nabla',\{\theta'^{(i)},\kappa'^{(i)}\})$
is a flat family of factorized $(\tilde{\bnu},\tilde{\bmu})\otimes A/I$-connections on
$({\mathcal C}_{A/I},{\mathcal D}_{A/I})$ over $A/I$.

We can take an affine open covering
${\mathcal C}_A=\bigcup_{\alpha}U_{\alpha}$ such that
$\sharp\{i \, | \, {\mathcal D}^{(i)}_A\cap U_{\alpha}\neq\emptyset \}\leq 1$
for any $\alpha$
and
$\sharp\{\alpha \, | \, {\mathcal D}^{(i)}_A\subset U_{\alpha}\}=1$
for any $i$.
Furthermore, we may assume that
$E'|_{U_{\alpha}\otimes A/I}\cong{\mathcal O}_{U_{\alpha}\otimes A/I}^{\oplus r}$.
Take a free ${\mathcal O}_{U_{\alpha}}$-module $E_{\alpha}$
with an isomorphism
$\psi_{\alpha}\colon E_{\alpha}\otimes A/I\xrightarrow{\sim} E'|_{U_{\alpha}\otimes A/I}$
and a lift
$\sigma_{\beta\alpha}\colon E_{\alpha}|_{U_{\alpha\beta}}
\xrightarrow{\sim} E_{\beta}|_{U_{\alpha\beta}}$
of the composite
$\psi_{\beta}^{-1}\circ\psi_{\alpha}\colon
E_{\alpha}|_{U_{\alpha\beta}}\otimes A/I\xrightarrow[\sim]{\psi_{\alpha}}
E'|_{U_{\alpha\beta}\otimes A/I}
\xrightarrow[\sim]{\psi_{\beta}^{-1}}
E_{\beta}|_{U_{\alpha\beta}}\otimes A/I$.

If we write
$\varphi_{\tilde{\bmu}\otimes A}^{(i)}(T)=T^r+b_{r-1}T^{r-1}+\cdots+b_1T+b_0$
with $b_i\in{\mathcal O}_{{\mathcal D}^{(i)}_A}$
and define matrices $N,\Phi_1,\Phi_2$ by
\begin{gather*}
 N= 
 \begin{pmatrix}
  -b_{r-1} & 1 & 0 & \cdots & 0 \\
  -b_{r-2} & 0 & 1 & \cdots & 0 \\
  \vdots & \vdots & \ddots & \ddots & \vdots \\
  -b_1 & 0 & \cdots & 0 & 1 \\
  -b_0 & 0 & \cdots & \cdots & 0 
 \end{pmatrix},
 \quad
 \Phi_1=
 \begin{pmatrix}
  0 & 0 &  \cdots & 0  & 1 \\
  0 & 0 &  \cdots & 1 & b_{r-1} \\
  \vdots & \vdots  & \iddots & \iddots  & \vdots \\
  0 & 1 & b_{r-1} & \cdots  & b_2 \\
  1 & b_{r-1}  &  b_{r-2} & \cdots  &  b_1 
 \end{pmatrix} , \\
 \Phi_2=
 \begin{pmatrix}
  0 & 0 & \cdots & 0  & 1 & 0 \\
  0 & 0 & \cdots &  1 & b_{r-1} & 0 \\
  \vdots \vspace{2pt}& \vdots & \iddots  & \iddots & \vdots & \vdots  \\
  0 & 1 & b_{r-1} & \cdots & b_3 & 0 \\
  1 & b_{r-1} & b_{r-2} & \cdots &  b_2 & 0 \\
  0 & 0 & 0 &  \cdots & 0 & -b_0
 \end{pmatrix},
\end{gather*}
then $^t\Phi_1=\Phi_1$, $^t\Phi_2=\Phi_2$ and $\Phi_1$ is invertible.
We can check  $N\Phi_1=\Phi_2$,
which is equivalent to $N=\Phi_2 \Phi_1^{-1}$.
So there is a matrix factorization
\[
 \,^tN= \Phi_1^{-1} \Phi_2
 \colon {\mathcal O}_{{\mathcal D}^{(i)}_A}^{\oplus r}
 \xrightarrow{\Phi_2} 
 \Big({\mathcal O}_{{\mathcal D}^{(i)}_A}^{\oplus r}\Big)^{\vee}
 \xrightarrow{\Phi_1^{-1}} {\mathcal O}_{{\mathcal D}^{(i)}_A}^{\oplus r}.
\]
After  replacing the representative
$((\theta'^{(i)}),(\kappa'^{(i)}))$
by the action of an element of
$\left({\mathcal O}_{{\mathcal D}^{(i)}_{A/I}}[T]
/(\varphi_{\tilde{\bmu}\otimes A/I}^{(i)}(T))\right)^{\times}$,
we may assume that there is  an isomorphism
$g\colon {\mathcal O}_{{\mathcal D}^{(i)}_{A/I}}^{\oplus r}
\xrightarrow{\sim} E'|_{{\mathcal D}^{(i)}_{A/I}}$
satisfying
$\theta'^{(i)}=g\circ(\Phi_1^{-1}\otimes A/I)\circ \,^tg$
and
$\kappa'^{(i)}= \,^t g^{-1}\circ(\Phi_2\otimes A/I)\circ g^{-1}$.
We take a lift
$\tilde{g}\colon
{\mathcal O}_{{\mathcal D}^{(i)}_A}^{\oplus r}
\xrightarrow{\sim}E_{\alpha}|_{{\mathcal D}^{(i)}_A}$
of $g$, that is, $\psi_{\alpha}\circ(\tilde{g}\otimes A/I)=g$.
If we put
$\theta_{\alpha}^{(i)}:=\tilde{g}\circ \Phi_1^{-1}\circ\,^t\tilde{g}$
and
$\kappa^{(i)}_{\alpha}:=(\,^t\tilde{g})^{-1}\circ \Phi_2\circ\tilde{g}^{-1}$,
then $(\theta^{(i)}_{\alpha},\kappa^{(i)}_{\alpha})$
becomes a lift of
$(\theta'^{(i)},\kappa'^{(i)})$ and
$N^{(i)}_{\alpha}:=\theta^{(i)}_{\alpha}\circ\kappa^{(i)}_{\alpha}\colon
E_{\alpha}|_{{\mathcal D}^{(i)}_A}\longrightarrow
E_{\alpha}|_{{\mathcal D}^{(i)}_A}$
becomes a lift of $N'^{(i)}$.
We can take an $A$-relative local connection
$\nabla_{\alpha}\colon E_{\alpha}\longrightarrow
E_{\alpha}\otimes\Omega^1_{{\mathcal C}_A/A}({\mathcal D}_A)$
satisfying
$\displaystyle
\nu^{(i)}(N^{(i)}_{\alpha})\frac{d\overline{z}^{(i)}}{\bar{z}^{(i)}_1\bar{z}^{(i)}_2\cdots\bar{z}^{(i)}_{m_i}}
=\nabla_{\alpha}|_{{\mathcal D}^{(i)}_A}$
and
$\nabla_{\alpha}\otimes A/I
=\psi_{\alpha}^{-1}\circ\nabla'|_{U_{\alpha}\otimes A/I}\circ\psi_{\alpha}$.

If we put
\[
 u_{\alpha\beta\gamma}=
 \psi_{\alpha}\circ
 (\sigma_{\gamma\alpha}^{-1}\circ\sigma_{\gamma\beta}\circ\sigma_{\beta\alpha}
 -\mathrm{id}_{E_{\alpha}})\circ\psi_{\alpha}^{-1},
 \quad
 v_{\alpha\beta}=\psi_{\alpha}\circ(
 \sigma_{\beta\alpha}^{-1}\circ\nabla_{\beta}\circ\sigma_{\beta\alpha}
 -\nabla_{\alpha})
 \circ\psi_{\alpha}^{-1},
\]
then we have
\[
 v_{\beta\gamma}-v_{\alpha\gamma}+v_{\alpha\beta}=
 \nabla'\circ u_{\alpha\beta\gamma}-u_{\alpha\beta\gamma}\circ\nabla',
 \quad
 u_{\beta\gamma\delta}-u_{\alpha\gamma\delta}+u_{\alpha\beta\delta}-u_{\alpha\beta\gamma}=0
\]
and we can define an element
\[
 o(E',\nabla',\{N'^{(i)}\}):=
 [(\{(u_{\alpha\beta\gamma},0)\},\{(v_{\alpha\beta},(0,0))\},\{(0,0)\})]
 \in \mathbf{H}^2({\mathcal F}^{\bullet})\otimes I.
\]

Assume that $o(E',\nabla',\{N'^{(i)}\})=0$.
Then there are
\begin{align*}
 \{a_{\alpha\beta}\} 
 &\in
 I\otimes C^1(\{U_{\alpha}\},{\mathcal G}^0), \\
 \{b_{\alpha},(\tau^{(i)}_{\alpha}),(\xi^{(i)}_{\alpha})\}
 &\in
 I\otimes C^0(\{U_{\alpha}\},{\mathcal G}^1\oplus
 S(E^{\vee}|_{{\mathcal D}_s},E|_{{\mathcal D}_s})\oplus
 S(E|_{{\mathcal D}_s},E|_{{\mathcal D}_s}^{\vee}))
\end{align*}
satisfying
\begin{align*}
 & u_{\alpha\beta\gamma}
 =
 a_{\beta\gamma}-a_{\alpha\gamma}+a_{\alpha\beta},
 \hspace{30pt} 
 v_{\alpha\beta}
 =
 \nabla a_{\alpha\beta}-a_{\alpha\beta}\nabla
 -(b_{\beta}-b_{\alpha}), \\
 & b_{\alpha}|_{{\mathcal D}^{(i)}_s}
 =
 \delta^{(i)}_{\bnu,N^{(i)}}
 (\tau^{(i)}_{\alpha}\circ\kappa^{(i)}+\theta^{(i)}\circ\xi^{(i)}_{\alpha}),
 \hspace{20pt}
 \Theta^{(i)}_{\tau^{(i)}_{\alpha}\circ\kappa^{(i)}+\theta^{(i)}\circ\xi^{(i)}_{\alpha}}
 =0.
\end{align*}
If we put
$\tilde{\theta}^{(i)}_{\alpha}:=
\theta^{(i)}_{\alpha}+\psi_{\alpha}^{-1}\circ\tau^{(i)}_{\alpha}\circ\psi_{\alpha}$,
$\tilde{\kappa}^{(i)}_{\alpha}:=
\kappa^{(i)}_{\alpha}+\psi_{\alpha}^{-1}\circ\xi^{(i)}_{\alpha}\circ\psi_{\alpha}$,
then the composition
$\tilde{N}^{(i)}_{\alpha}:=\tilde{\theta}^{(i)}_{\alpha}\circ\tilde{\kappa}^{(i)}_{\alpha}
=N^{(i)}_{\alpha}+\psi_{\alpha}^{-1}\circ
(\tau^{(i)}_{\alpha}\circ\kappa^{(i)}+\theta^{(i)}\circ\xi^{(i)}_{\alpha})\circ\psi_{\alpha}$
satisfies
$\varphi^{(i)}_{\tilde{\bmu}}(\tilde{N}^{(i)}_{\alpha})=0$,
because there is
$g^{(i)}_{\alpha}\in \End(E|_{{\mathcal D}^{(i)}_s})\otimes I$
satisfying
$N^{(i)}\circ g^{(i)}_{\alpha}-g^{(i)}_{\alpha}\circ N^{(i)}
=\tau^{(i)}_{\alpha}\circ\kappa^{(i)}+\theta^{(i)}\circ\xi^{(i)}_{\alpha}$
from the condition
$\Theta^{(i)}_{\tau^{(i)}_{\alpha}\circ\kappa^{(i)}+\theta^{(i)}\circ\xi^{(i)}_{\alpha}} =0$.
We define a connection $\tilde{\nabla}_{\alpha}$ on $E_{\alpha}$ by
$\tilde{\nabla}_{\alpha}:=\nabla_{\alpha}+\psi_{\alpha}^{-1}\circ b_{\alpha}\circ \psi_{\alpha}$.
Then we have
\begin{align*}
 \tilde{\nabla}_{\alpha}|_{{\mathcal D}^{(i)}_A}
 &=
 \nabla_{\alpha}|_{{\mathcal D}^{(i)}_A}
 + ( \psi_{\alpha}^{-1}\circ b_ {\alpha} \circ \psi_{\alpha} ) |_{{\mathcal D}^{(i)}_A}
 =
 \tilde{\nu}^{(i)}(N^{(i)}_{\alpha}) \frac {d\bar{z}^{(i)}} {\bar{z}^{(i)}_1\cdots\bar{z}^{(i)}_{m_i}}
 +\delta^{(i)}_{\bnu,N^{(i)}} (\tilde{N}^{(i)}_{\alpha}-N^{(i)}_{\alpha}) \\
 &=
 \left( \tilde{\nu}^{(i)}(N^{(i)}_{\alpha}) 
 +\sum_{j=1}^{r-1}\sum_{l=1}^{j} c^{(i)}_j
 (\tilde{N}^{(i)}_{\alpha})^{j-l}(\tilde{N}^{(i)}_{\alpha}-N^{(i)}_{\alpha})
 (N^{(i)}_{\alpha})^{l-1}
 \right) \frac {d\bar{z}^{(i)}} {\bar{z}^{(i)}_1\cdots\bar{z}^{(i)}_{m_i}} \\
 &=
 \tilde{\nu}^{(i)}(N^{(i)}_{\alpha}) +
 \sum_{j=0}^{r-1} c^{(i)}_j \: (\tilde{N}^{(i)}_{\alpha})^j
 \frac {d\bar{z}^{(i)}} {\bar{z}^{(i)}_1\cdots\bar{z}^{(i)}_{m_i}}
 -\sum_{j=0}^{r-1} c^{(i)}_j \: (N^{(i)}_{\alpha})^j
 \frac {d\bar{z}^{(i)}} {\bar{z}^{(i)}_1\cdots\bar{z}^{(i)}_{m_i}} \\
 &=\tilde{\nu}^{(i)}(\tilde{N}^{(i)}_{\alpha})\frac {d\bar{z}^{(i)}} {\bar{z}^{(i)}_1\cdots\bar{z}^{(i)}_{m_i}}.
\end{align*}
If we put
$\tilde{\sigma}_{\beta\alpha}:=
\sigma_{\beta\alpha}\circ
(\mathrm{id}-\psi_{\alpha}^{-1}\circ a_{\alpha\beta}\circ\psi_{\alpha})$,
then
\begin{align*}
 &(\tilde{\sigma}_{\gamma\alpha})^{-1}\circ\tilde{\sigma}_{\gamma\beta}
 \circ\tilde{\sigma}_{\beta\alpha} \\
 &=
 (\mathrm{id}+\psi_{\alpha}^{-1}\circ a_{\alpha\gamma}\circ\psi_{\alpha})\circ
 \sigma_{\gamma\alpha}^{-1}
 \circ\sigma_{\gamma\beta}\circ
 (\mathrm{id}-\psi_{\beta}^{-1}\circ a_{\beta\gamma}\circ\psi_{\beta})
 \circ\sigma_{\beta\alpha}\circ
 (\mathrm{id}-\psi_{\alpha}^{-1}\circ a_{\alpha\beta}\circ\psi_{\alpha}) \\
 &=
 (\mathrm{id}+\psi_{\alpha}^{-1}\circ a_{\alpha\gamma}\circ\psi_{\alpha})
 \circ\sigma_{\gamma\alpha}^{-1}
 \circ\sigma_{\gamma\beta}\circ\sigma_{\beta\alpha}\circ
 (\mathrm{id}-\psi_{\alpha}^{-1}\circ a_{\beta\gamma}\circ\psi_{\alpha})
 \circ(\mathrm{id}+\psi_{\alpha}^{-1}\circ a_{\alpha\beta}\circ\psi_{\alpha}) \\
 &=
 \sigma_{\gamma\alpha}^{-1} \circ\sigma_{\gamma\beta}\circ\sigma_{\beta\alpha}
 \circ(\mathrm{id}+\psi_{\alpha}^{-1}\circ a_{\alpha\gamma}\circ\psi_{\alpha})
 \circ (\mathrm{id}-\psi_{\alpha}^{-1}\circ a_{\beta\gamma}\circ \psi_{\alpha})
 \circ (\mathrm{id}-\psi_{\alpha}^{-1}\circ a_{\alpha\beta}\circ\psi_{\alpha})  \\
 &=
 (\mathrm{id}+\psi_{\alpha}^{-1}\circ u_{\alpha\beta\gamma}\circ\psi_{\alpha})\circ
 (\mathrm{id}+\psi_{\alpha}^{-1}\circ
 (a_{\alpha\gamma}-a_{\beta\gamma}-a_{\alpha\beta})\circ\psi_{\alpha}) \\
 &=\mathrm{id}+\psi_{\alpha}^{-1}\circ ( u_{\alpha\beta\gamma}
 -(a_{\beta\gamma}-a_{\alpha\gamma}+a_{\alpha\beta})) \circ\psi_{\alpha}
 =\mathrm{id}
\end{align*}
because $\sigma_{\beta\alpha}\otimes A/I=\mathrm{id}$.
We also have
\begin{align*}
 \tilde{\sigma}_{\beta\alpha}^{-1}\circ\tilde{\nabla}_{\beta}\circ\tilde{\sigma}_{\beta\alpha}
 &=
 (\mathrm{id}+\psi_{\alpha}^{-1}\circ a_{\alpha\beta}\circ\psi_{\alpha})
 \circ\sigma_{\beta\alpha}^{-1}
 \circ(\nabla_{\beta}+\psi_{\beta}^{-1}\circ b_{\beta}\circ\psi_{\beta})
 \circ\sigma_{\beta\alpha}\circ
 (\mathrm{id}-\psi_{\alpha}^{-1}\circ a_{\alpha\beta}\circ\psi_{\alpha}) \\
 &=
 \sigma_{\beta\alpha}^{-1}\circ\nabla_{\beta}\circ\sigma_{\beta\alpha}
 -\psi_{\alpha}^{-1}\circ\nabla'\circ a_{\alpha\beta}\circ\psi_{\alpha}
 +\psi_{\alpha}^{-1}\circ a_{\alpha\beta}\circ\nabla' \circ\psi_{\alpha}
 +\psi_{\alpha}^{-1}\circ b_{\beta}\circ\psi_{\alpha} \\
 &=
 \nabla_{\alpha}+\psi_{\alpha}^{-1}\circ v_{\alpha\beta}\circ\psi_{\alpha}
 -\psi_{\alpha}^{-1}\circ
 (\nabla'\circ a_{\alpha\beta}-a_{\alpha\beta}\circ\nabla'-b_{\beta})\circ\psi_{\alpha}  \\
 &=
 \nabla_{\alpha}+\psi_{\alpha}^{-1}\circ b_{\alpha}\circ\psi_{\alpha}
 =\tilde{\nabla}_{\alpha}.
\end{align*}
Thus we can patch
$(E_{\alpha},\tilde{\nabla}_{\alpha}, \{ \tilde{\theta}^{(i)}_{\alpha}, \tilde{\kappa}^{(i)}_{\alpha} \})$
together via the gluing isomorphisms $\{\tilde{\sigma}_{\beta\alpha}\}$
and obtain a flat family 
$(\tilde{E},\tilde{\nabla},\{ \tilde{\theta}^{(i)},\tilde{\kappa}^{(i)}\})$
of factorized $(\tilde{\bnu},\tilde{\bmu})\otimes A$-connections over $A$
which is a lift of $(E',\nabla',\{\theta'^{(i)},\kappa'^{(i)}\})$.
Conversely, we can immediately see that
$o(E',\nabla',\{N'^{(i)}\})=0$ if there is a lift of
$(E',\nabla',\{\theta'^{(i)},\kappa'^{(i)}\})$ over $A$,
which corresponds to a lift of
$(E',\nabla',\{N'^{(i)}\})$ over $A$.
Thus the proposition is proved.
\end{proof}

\begin{lemma}\label{lemma-trace-obstruction}
The isomorphism $\mathbf{H}^2(\Tr)\colon \mathbf{H}^2({\mathcal F}^{\bullet}\otimes I)
\xrightarrow{\sim}\mathbf{H}^2(\tilde{\mathcal L}^{\bullet}_s\otimes I)
=\mathbf{H}^2({\mathcal L}^{\bullet}_s\otimes I)$
in Lemma \ref{lemma:trace-isomorphism}
sends the obstruction class $o(E',\nabla',\{N'^{(i)}\})$
defined in the proof of Proposition \ref{prop-obstruction-class}
to an element of $\mathbf{H}^2({\mathcal L}^{\bullet}_s\otimes I)$
whose vanishing is equivalent to the existence of an extension of
$(\det(E',\nabla'))$ to a pair $(L,\nabla_L)$
of a line bundle $L$ on $C\times\Spec A$
and a connection
$\nabla_L\colon L\longrightarrow
L\otimes\Omega^1_{{\mathcal C}_A/A}({\mathcal D}_A)$
satisfying
$(L,\nabla_L)\otimes A/I\cong\det(E',\nabla')$
and
$\nabla_L|_{{\mathcal D}^{(i)}_A}=
\sum_{k=1}^r \tilde{\nu}^{(i)}(\tilde{\mu}^{(i)}_k)_A$.
\end{lemma}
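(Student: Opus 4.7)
The plan is to explicitly compute the image of $o(E',\nabla',\{N'^{(i)}\})$ under $\mathbf{H}^2(\Tr)$ and identify it with the classical Čech obstruction to lifting $(\det E',\Tr\nabla')$ with the prescribed residues. Applying $\Tr$ degreewise to the cocycle representative
\[
o(E',\nabla',\{N'^{(i)}\})=[\{(u_{\alpha\beta\gamma},0)\},\{(v_{\alpha\beta},(0,0))\},\{(0,0)\}]
\]
from the proof of Proposition \ref{prop-obstruction-class}, one immediately obtains
\[
\mathbf{H}^2(\Tr)\big(o(E',\nabla',\{N'^{(i)}\})\big)=\big[\{\Tr(u_{\alpha\beta\gamma})\},\{(\Tr(v_{\alpha\beta}),0)\},\{(0,0)\}\big]\in\mathbf{H}^2(\tilde{\mathcal L}^\bullet_s\otimes I),
\]
where the $Z^1$-components vanish since those of the original representative do.

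Next I would verify that this class is precisely the standard obstruction to extending the pair $(\det E',\Tr\nabla')$ from $A/I$ to $A$. From the identity $\sigma_{\gamma\alpha}^{-1}\circ\sigma_{\gamma\beta}\circ\sigma_{\beta\alpha}=\mathrm{id}+\psi_\alpha^{-1}\circ u_{\alpha\beta\gamma}\circ\psi_\alpha$ in the proof of Proposition \ref{prop-obstruction-class} and the first-order determinant identity $\det(\mathrm{id}+X)=1+\Tr(X)$ valid for $X\in\End(E)\otimes I$ with $\mathfrak{m}I=0$, the local determinant data $\{\det E_\alpha,\det\sigma_{\beta\alpha}\}$ fails to satisfy the cocycle condition precisely by $\{\Tr(u_{\alpha\beta\gamma})\}$. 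For the connection part, writing $\nabla_\beta=d+A_\beta$ locally and using $\Tr(\sigma_{\beta\alpha}^{-1}\,d\sigma_{\beta\alpha})=d\log\det\sigma_{\beta\alpha}$ together with the conjugation-invariance of the trace, one obtains
\[
(\det\sigma_{\beta\alpha})^{-1}\circ\Tr\nabla_\beta\circ\det\sigma_{\beta\alpha}-\Tr\nabla_\alpha=\Tr(v_{\alpha\beta}).
\]
Hence the first two components of the image class represent the classical Čech obstruction in the hypercohomology of $[{\mathcal O}_{{\mathcal C}_s}\xrightarrow{d}\Omega^1_{{\mathcal C}_s}({\mathcal D}_s)]\otimes I$ to gluing the local pairs $(\det E_\alpha,\Tr\nabla_\alpha)$ into a global lift $(L,\nabla_L)$ of $(\det E',\Tr\nabla')$.

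Finally, I would account for the residue condition via the $Z^1$-terms. By construction $\nabla_\alpha|_{{\mathcal D}^{(i)}_A}=\tilde\nu^{(i)}(N^{(i)}_\alpha)\,\frac{d\bar{z}^{(i)}}{\bar{z}^{(i)}_1\cdots\bar{z}^{(i)}_{m_i}}$, so
\[
\Tr\nabla_\alpha|_{{\mathcal D}^{(i)}_A}=\sum_{k=1}^r\tilde\nu^{(i)}(\tilde\mu^{(i)}_k)_A\,\frac{d\bar{z}^{(i)}}{\bar{z}^{(i)}_1\cdots\bar{z}^{(i)}_{m_i}}
\]
independently of $\alpha$. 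The vanishing of the $Z^1$-components of the image cocycle, combined with the definition $\tilde d^1(v,(Q^{(i)}))=((v|_{{\mathcal D}^{(i)}_s}-Q^{(i)}((\nu^{(i)})'(T))),(Q^{(i)}))$, encodes exactly that any lift $\nabla_L$ produced by trivializing the class must carry the prescribed polar part $\sum_{k=1}^r\tilde\nu^{(i)}(\tilde\mu^{(i)}_k)_A\,\frac{d\bar{z}^{(i)}}{\bar{z}^{(i)}_1\cdots\bar{z}^{(i)}_{m_i}}$ along each ${\mathcal D}^{(i)}_A$. The main subtlety will be to precisely track the residue-prescription mechanism through the quasi-isomorphism ${\mathcal L}^\bullet_s\xrightarrow{\sim}\tilde{\mathcal L}^\bullet_s$, verifying that the $Z^1=0$ condition in the image cocycle corresponds on the line-bundle side to the normalization of residues by $\sum_k\tilde\nu^{(i)}(\tilde\mu^{(i)}_k)$ rather than to any other shift; once this is done, the equivalence between vanishing of the image class and existence of $(L,\nabla_L)$ follows from the standard deformation theory of pairs (line bundle, connection) on a smooth projective curve.
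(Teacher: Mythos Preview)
Your proposal is correct and follows essentially the same approach as the paper: both compute $\mathbf{H}^2(\Tr)$ on the explicit \v{C}ech representative from Proposition~\ref{prop-obstruction-class} and identify $\{\Tr(u_{\alpha\beta\gamma})\},\{\Tr(v_{\alpha\beta})\}$ with the standard obstruction cocycle for lifting $(\det E',\det\nabla')$ via the first-order identities $\det(\mathrm{id}+X)=1+\Tr(X)$ and $\Tr$ of the gauge-transformed connection. The paper is slightly more direct in that it writes the determinant obstruction class in $\mathbf{H}^2({\mathcal L}^\bullet_s\otimes I)$ from the outset (the local lifts $\nabla_\alpha$ already carry the correct residues by construction), so the ``subtlety'' you flag about tracking the residue prescription through the quasi-isomorphism ${\mathcal L}^\bullet_s\to\tilde{\mathcal L}^\bullet_s$ does not actually arise.
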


\begin{proof}
Take the same affine open covering $\{U_{\alpha}\}$ of ${\mathcal C}_A$
and the lifts $(E_{\alpha},\nabla_{\alpha})$ of $(E',\nabla')|_{U_{\alpha}\times \Spec A/I}$
as in the proof of Proposition \ref {prop-obstruction-class}.
Then $\det(E_{\alpha},\nabla_{\alpha})$ is a lift of
$\det(E',\nabla')|_{U_{\alpha}\times \Spec A/I}$
and the class
\begin{align*}
 o(\det(E',\nabla'))
 &:=
 \Big[ \big\{ \det(\psi_{\alpha})\circ(\det(\sigma_{\gamma\alpha}^{-1}
 \circ\sigma_{\gamma\beta}\circ\sigma_{\beta\alpha})-\mathrm{id}_{\det E_{\alpha}})
 \circ\det(\psi_{\alpha}^{-1}) \big\},  \\
 & \hspace{20pt}
 \big\{ \det(\psi_{\alpha})\circ(
 \det(\sigma_{\beta\alpha}^{-1})\circ\det(\nabla_{\beta})\circ
 \det(\sigma_{\beta\alpha})
 -
 \det(\nabla_{\alpha}) )
 \circ\det(\psi_{\alpha}^{-1})  \big\} \Big]
 \in \mathbf{H}^2({\mathcal L}^{\bullet}\otimes I)
\end{align*}
is nothing but the obstruction for the existence of a lift $(L,\nabla_L)$ of
$\det(E',\nabla')$ over $A$ satisfying
$\nabla_L|_{{\mathcal D}^{(i)}_A}=\sum_{k=1}^r \tilde{\nu}^{(i)}(\tilde{\mu}^{(i)}_k)_A$.
Here
$\det\nabla_{\alpha}\colon \det E_{\alpha}\longrightarrow 
\det E_{\alpha}\otimes\Omega^1_{{\mathcal C}_A/A}(D_A)$
is the $A$-relative connection on $\det(E_{\alpha})$
induced from $\nabla_{\alpha}$, which is defined by
\[
 (\det(\nabla_{\alpha}))(v_1\wedge v_2\wedge\cdots v_r)=
 \nabla_{\alpha}(v_1)\wedge v_2\wedge \cdots \wedge v_r
 +\cdots+ v_1\wedge\cdots\wedge v_{r-1}\wedge\nabla_{\alpha}(v_r)
\]
for $v_1,\ldots,v_r\in E_{\alpha}$.
For the notations $\{u_{\alpha\beta\gamma}\}, \{v_{\alpha\beta}\}$ in the proof of
Proposition \ref {prop-obstruction-class}, we have
\begin{align*}
 \Tr(u_{\alpha\beta\gamma}) &=
 \det(\psi_{\alpha})\circ(\det(\sigma_{\gamma\alpha}^{-1}
 \circ\sigma_{\gamma\beta}\circ\sigma_{\beta\alpha})-\mathrm{id}_{\det E_{\alpha}})
 \circ\det(\psi_{\alpha}^{-1}) \\
 \Tr(v_{\alpha\beta}) &=\det(\psi_{\alpha})\circ(
 \det(\sigma_{\beta\alpha}^{-1})\circ\det(\nabla_{\beta})\circ
 \det(\sigma_{\beta\alpha})
 -\det(\nabla_{\alpha}))
 \circ\det(\psi_{\alpha}^{-1}).
\end{align*}
So $o(\det(E',\nabla'))$ is nothing but
the image of the obstruction class
$o(E',\nabla',\{l'^{(i)}_j,N'^{(i)}_j\})\in \mathbf{H}^2({\mathcal F}^{\bullet}\otimes I)$
under the isomorphism
$\mathbf{H}^2(\Tr)\colon \mathbf{H}^2({\mathcal F}^{\bullet}\otimes I)
\xrightarrow{\sim}\mathbf{H}^2({\mathcal L}^{\bullet}_s\otimes I)$.
\end{proof}

\begin{proposition}\label {prop:smoothness of moduli}
The moduli space $M^{\balpha}_{{\mathcal C},{\mathcal D}}(\tilde{\bnu},\tilde{\bmu})$
is smooth over $S$.
\end{proposition}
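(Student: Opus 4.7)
The plan is to verify the infinitesimal lifting criterion using the obstruction theory already built in Proposition \ref{prop-obstruction-class} together with the trace isomorphism of Lemma \ref{lemma:trace-isomorphism}. More precisely, let $A$ be an artinian local ring over $S$ with maximal ideal $\mathfrak{m}$, $A/\mathfrak{m}=\mathbb{C}$, and $I\subset A$ an ideal with $\mathfrak{m}I=0$; given a flat family $(E',\nabla',\{N'^{(i)}\})\in {\mathcal M}^{\balpha}_{{\mathcal C},{\mathcal D}}(\tilde{\bnu},\tilde{\bmu})(A/I)$ whose closed fiber is $(E,\nabla,\{N^{(i)}\})$, I need to show that it extends to an $A$-valued point. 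By Proposition \ref{prop-obstruction-class}, there is an obstruction class
\[
 o(E',\nabla',\{N'^{(i)}\}) \in \mathbf{H}^2({\mathcal F}^{\bullet})\otimes I
\]
whose vanishing is necessary and sufficient for the existence of such a lift.

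The key step is to transport this obstruction via the trace map. By Lemma \ref{lemma:trace-isomorphism}, the homomorphism $\mathbf{H}^2(\Tr)\colon \mathbf{H}^2({\mathcal F}^{\bullet})\otimes I \xrightarrow{\sim} \mathbf{H}^2({\mathcal L}^{\bullet}_s)\otimes I$ is an isomorphism, and by Lemma \ref{lemma-trace-obstruction} it sends $o(E',\nabla',\{N'^{(i)}\})$ to the obstruction class $o(\det(E',\nabla'))$ for lifting the determinant pair $(\det E',\det\nabla')$ to a pair $(L,\nabla_L)$ over $A$ with $\nabla_L|_{{\mathcal D}^{(i)}_A}=\sum_{k=1}^r\tilde{\nu}^{(i)}(\tilde{\mu}^{(i)}_k)_A$ for each $i$. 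Hence smoothness reduces to the vanishing of this determinant obstruction.

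The remaining step is therefore to show that any rank-one connection $(\det E',\det\nabla')$ on $({\mathcal C}_{A/I},{\mathcal D}_{A/I})$ with the prescribed trace-of-exponents datum admits a lift to $A$. The compatibility condition $a+\sum_{i,k}\tilde{\lambda}^{(i)}_k=0$ imposed in subsection \ref{subsection:former main theorem} is precisely the residue constraint ensuring that a connection on a degree-$a$ line bundle with prescribed polar part along ${\mathcal D}$ exists. I would argue that the functor parametrizing pairs $(L,\nabla_L)$ with $L$ of degree $a$ and $\nabla_L|_{{\mathcal D}^{(i)}}$ equal to $\sum_k\tilde{\nu}^{(i)}(\tilde{\mu}^{(i)}_k)\frac{d\bar{z}^{(i)}}{\bar{z}^{(i)}_1\cdots \bar{z}^{(i)}_{m_i}}$ is smooth over $S$: its fiber over a geometric point is a torsor under $H^0({\mathcal C}_s,\Omega^1_{{\mathcal C}_s})$ over the Picard variety $\Pic^a({\mathcal C}_s)$, both of which are smooth; equivalently, the relevant obstruction group $H^2$ of a length-two de Rham complex of coherent sheaves on a curve vanishes for dimension reasons.

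The main obstacle is therefore really only bookkeeping: making sure that the deformation problem for $(\det E', \det \nabla')$ with the exact residue specification dictated by the data $\tilde{\bnu}, \tilde{\bmu}$ is literally the deformation problem whose obstruction is represented by the class in Lemma \ref{lemma-trace-obstruction}, and confirming that this obstruction vanishes automatically. Once this is done, the isomorphism $\mathbf{H}^2(\Tr)$ forces $o(E',\nabla',\{N'^{(i)}\})=0$, so the lift exists and smoothness of $M^{\balpha}_{{\mathcal C},{\mathcal D}}(\tilde{\bnu},\tilde{\bmu})\to S$ follows from the infinitesimal lifting criterion. Combined with Corollary \ref{cor:dimension of tangent space}, this also identifies the fiber dimension as $2r^2(g-1)+2+r(r-1)\sum_{i=1}^n m_i$, completing the proof of Theorem \ref{theorem:algebraic-moduli-unfolding} once the symplectic form constructed in subsection \ref{subsection:nondenerate pairing} is shown to descend.
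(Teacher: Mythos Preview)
Your proposal is correct and follows essentially the same approach as the paper: reduce via $\mathbf{H}^2(\Tr)$ and Lemma \ref{lemma-trace-obstruction} to the determinant deformation problem, then observe that the rank-one moduli space (pairs $(L,\nabla_L)$ with prescribed polar part) is an affine bundle over the Jacobian with fiber $H^0(\Omega^1_{{\mathcal C}_s})$ and hence smooth, forcing the original obstruction to vanish. The closing remarks about dimension and the symplectic form are not part of this proposition's proof but correctly point to how Theorem \ref{theorem:algebraic-moduli-unfolding} is assembled.
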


\begin{proof}
Consider the $S$-relative moduli space
$M_{{\mathcal C},{\mathcal D}}(\Tr(\tilde{\bnu}),\Tr(\tilde{\bmu}))$
whose $S'$-valued points are the pairs $(L,\nabla_L)$ of a line bundle
$L$ on ${\mathcal C}_{S'}$ and a relative connection
$\nabla_L\colon L\longrightarrow L\otimes\Omega^1_{{\mathcal C}_{S'}/S'}({\mathcal D}_{S'})$
satisfying
$\nabla_L|_{{\mathcal D}^{(i)}_{S'}}=\sum_{k=1}^r \tilde{\nu}^{(i)}(\tilde{\mu}^{(i)}_k)_{S'}$.
Then $M_{{\mathcal C},{\mathcal D}}(\Tr(\tilde{\bnu}),\Tr(\tilde{\bmu}))$
is an affine space bundle over the Jacobian
variety of ${\mathcal C}$ over $S$ whose fiber is isomorphic to
$H^0(\Omega^1_{{\mathcal C}_s})$.
So we can prove by the same method as in the proof of
\cite[Theorem 2.1]{Inaba-1} that
$M_{{\mathcal C},{\mathcal D}}(\Tr(\tilde{\bnu}),\Tr(\tilde{\bmu}))$
is smooth over $S$ and the obstruction class $o(\det(E',\nabla'))$ should vanish.
Thus the obstruction class $o(E',\nabla',\{N'^{(i)}_j\})$
also vanishes by Lemma \ref{lemma-trace-obstruction}
and the moduli space $M^{\balpha}_{{\mathcal C},{\mathcal D}}(\tilde{\bnu},\tilde{\bmu})$
is smooth over $S$.
\end{proof}

\subsection{Relative symplectic form on the moduli space}
\label {subsection:symplectic form}

\begin{proposition} \label {prop:symplectic form}
There exists an $S$-relative symplectic form
$\omega\in H^0(M^{\balpha}_{{\mathcal C},{\mathcal D}}(\tilde{\bnu},\tilde{\bmu}),
\Omega^2_{M^{\balpha}_{{\mathcal C},{\mathcal D}}(\tilde{\bnu},\tilde{\bmu})/S})$
on the moduli space
$M^{\balpha}_{{\mathcal C},{\mathcal D}}(\tilde{\bnu},\tilde{\bmu})$.
\end{proposition}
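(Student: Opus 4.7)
The plan is to upgrade the fibrewise construction of the pairing $\omega_{(E,\nabla,\{N^{(i)}\})}$ from subsection \ref{subsection:nondenerate pairing} into a global relative $2$-form and then check skew-symmetry, non-degeneracy and $d$-closedness. First, after passing to an \'etale covering $\tilde M \to M^{\balpha}_{{\mathcal C},{\mathcal D}}(\tilde{\bnu},\tilde{\bmu})$ on which a universal family $(\tilde E,\tilde\nabla,\{\tilde N^{(i)}\})$ exists, I would form the relative version $\tilde{\mathcal F}^{\bullet}$ of the complex ${\mathcal F}^{\bullet}$ on ${\mathcal C}_{\tilde M}$, whose relative hyperderived direct image $R^1\tilde\pi_*(\tilde{\mathcal F}^{\bullet})$ is, by Corollary \ref{cor:tangent-space} and the deformation theory of Proposition \ref{prop:first deformation}, canonically identified with the relative tangent sheaf $T_{\tilde M/S}$. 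The same Čech-type formula (\ref {equation:definition of nondegenerate pairing}) then produces a morphism $R^1\tilde\pi_*(\tilde{\mathcal F}^{\bullet})\otimes R^1\tilde\pi_*(\tilde{\mathcal F}^{\bullet}) \longrightarrow R^2\tilde\pi_*(\tilde{\mathcal L}^{\bullet}) \cong {\mathcal O}_{\tilde M}$, i.e.\ a section $\tilde\omega \in H^0(\tilde M, \Omega^2_{\tilde M/S})$.

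Next I would check that $\tilde\omega$ descends to $M^{\balpha}_{{\mathcal C},{\mathcal D}}(\tilde{\bnu},\tilde{\bmu})$. The formula is manifestly invariant under the only ambiguity in the choice of universal family, namely tensoring $\tilde E$ with a line bundle pulled back from $\tilde M$ and rescaling $\theta^{(i)},\kappa^{(i)}$ by elements of $({\mathcal O}_{{\mathcal D}^{(i)}_{\tilde M}}[T]/(\varphi^{(i)}_{\tilde{\bmu}}(T)))^{\times}$, since all its ingredients are built from $\Tr$ applied to compositions that are invariant under these operations; the same well-definedness argument that was carried out fibrewise right after (\ref {equation:definition of nondegenerate pairing}) globalises verbatim. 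Non-degeneracy is then pointwise Lemma \ref{lemma-non-degenerate}, so $\omega$ is already a non-degenerate relative $2$-form. Skew-symmetry $\omega(a,b)=-\omega(b,a)$ is verified by exchanging the primed and unprimed Čech cocycles in (\ref {equation:definition of nondegenerate pairing}): the two trace-of-commutator terms are antisymmetric up to a Čech coboundary (this is the standard antisymmetry of the degree-$1$ cup product), and the boundary piece is antisymmetric by the explicit shape of (\ref {equation:main part in the definition of symplectic form}), which is already a difference of the same expression with the two arguments swapped.

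The hard part is $d$-closedness. My plan is to decompose $\omega$ into two pieces whose closedness is known. Away from the divisor ${\mathcal D}$ the complex ${\mathcal F}^{\bullet}$ reduces to the usual Atiyah complex $[{\mathcal End}(E)\xrightarrow{[\nabla,-]}{\mathcal End}(E)\otimes\Omega^1_{{\mathcal C}}]$ and the first two Čech components of $\omega$ recover the Atiyah--Bott/Goldman symplectic form on the moduli of connections, which is classically closed (this is the same structure used in \cite{IIS-1},\cite{Inaba-1}). The new contribution is the boundary term $\Xi^{(\tau^{(i)},\xi^{(i)})}_{(\tau'^{(i)},\xi'^{(i)})}$. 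Using Proposition \ref{prop:factorization-lemma}, Proposition \ref{prop:factorization-unique} and Proposition \ref{prop:kirillov-kostant-form}, this term is identified along each component ${\mathcal D}^{(i)}$ with the pullback of the Kirillov--Kostant form on the $GL_r(\mathbb{C})$-adjoint orbit $C_{\varphi^{(i)}_{\bmu}(T)}$, weighted by the coefficients $c^{(i)}_j$ of $\tilde\nu^{(i)}$ (cf.\ Remark \ref{remark: kirillov-kostant factor in symplectic form}); the Kirillov--Kostant form is coadjoint-invariant and closed, so $d\Xi=0$.

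Finally, to glue the two closedness statements, I would either (i) trace $\omega$ through the \'etale-local splitting of the short exact sequence of complexes (\ref{equation:fundamental short exact sequence of complexes}) and apply closedness of the Goldman form and of the Kirillov--Kostant form separately, or (ii) reduce to the trace case via Lemma \ref{lemma:trace-isomorphism}: the morphism $\Tr\colon {\mathcal F}^{\bullet}\to\tilde{\mathcal L}^{\bullet}_s$ intertwines $\omega$ with the natural $2$-form on the affine bundle $M_{{\mathcal C},{\mathcal D}}(\Tr(\tilde{\bnu}),\Tr(\tilde{\bmu}))$ used in Proposition \ref{prop:smoothness of moduli}, whose closedness is standard, and the kernel direction is handled by the Kirillov--Kostant analysis. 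I expect route (i) to be the cleanest; the main obstacle is organising the Čech coboundary manipulation so that the derivative of $\omega$ along a third tangent direction can be written as a sum of $[\nabla,-]$-commutators plus a Kirillov--Kostant Jacobi identity. Combining this with Proposition \ref{prop:smoothness of moduli} and the fibre dimension computation in Corollary \ref{cor:dimension of tangent space} completes the proof of Theorem \ref{theorem:algebraic-moduli-unfolding} as well.
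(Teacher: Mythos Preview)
Your construction of the global pairing via a universal family on an \'etale cover, its descent, and the pointwise non-degeneracy are essentially what the paper does. For skew-symmetry the paper takes a different route: rather than manipulating \v Cech cocycles directly, it identifies $\omega(v,v)$ with the image under $\mathbf{H}^2(\Tr)$ of the obstruction class (Proposition~\ref{prop-obstruction-class}) for lifting the $\mathbb{C}[t]/(t^2)$-family corresponding to $v$ to a $\mathbb{C}[t]/(t^3)$-family, and this obstruction vanishes by smoothness (Proposition~\ref{prop:smoothness of moduli}). Your cup-product argument can also be made to work, but the obstruction-theoretic one is shorter.

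The substantive gap is in $d$-closedness. Route~(ii) does not address closedness: the isomorphism $\mathbf{H}^2(\Tr)$ lives on the \emph{target} $\mathbf{H}^2$, not on the domain $\mathbf{H}^1$ of the pairing, so compatibility with the rank-one form says nothing about $d\omega$ in the traceless directions. Route~(i) is not a decomposition into independently closed summands: the boundary term $\Xi$ is \emph{not} in general the pullback of a Kirillov--Kostant form from a fixed adjoint orbit---Remark~\ref{remark: kirillov-kostant factor in symplectic form} only makes that identification in the degenerate case $\nu^{(i)}=\mathrm{id}$---and the bulk/boundary cross-terms in $d\omega$ do not obviously cancel. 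Carrying out your plan would amount to redoing \cite[Proposition~7.3]{Inaba-1} from scratch in the factorised setting.

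The paper sidesteps all of this with a density argument you have overlooked. For generic $s\in S$ the divisor ${\mathcal D}_s$ is reduced, so the fibre $M^{\balpha}_{{\mathcal C},{\mathcal D}}(\tilde{\bnu},\tilde{\bmu})_s$ is a moduli space of regular singular parabolic connections; on such a fibre the paper shows that $\omega$ agrees (via the complex $\tilde{\mathcal F}^{\bullet}_{par}$) with the symplectic form of \cite[Proposition~7.2]{Inaba-1}, whose closedness is already established in \cite[Proposition~7.3]{Inaba-1}. Since $d\omega$ is a relative $3$-form vanishing on a Zariski-dense set of fibres, it vanishes identically. This reduction to the known regular singular case is the step your proposal is missing.
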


\begin{proof}
For some quasi-finite \'etale covering
$\tilde{M}\longrightarrow M^{\balpha}_{{\mathcal C},{\mathcal D}}(\tilde{\bnu},\tilde{\bmu})$,
there is a universal flat family of $(\tilde{\bnu},\tilde{\bmu})$-connections
$(\tilde{E},\tilde{\nabla},\{\tilde{N}^{(i)}\})$
on ${\mathcal C}\times_S\tilde{M}$ over $\tilde{M}$.
Replacing $\tilde{M}$ by a refinement,
there is a corresponding flat family
$(\tilde{E},\tilde{\nabla},\{\tilde{\theta}^{(i)},\tilde{\kappa}^{(i)}\})$
of factorized $(\tilde{\bnu},\tilde{\bmu})$-connections
on ${\mathcal C}\times_S\tilde{M}$ over $\tilde{M}$.
We define homomorphisms
\begin{align*}
 \sigma^{(i) -}_{\theta^{(i)}} &\colon
 {\mathcal End}(E|_{{\mathcal D}^{(i)}_{\tilde{M}}})
 \oplus{\mathcal O}_{\tilde{M}}[T]/(\varphi^{(i)}_{\tilde{\bmu}}(T))
 \longrightarrow
 {\mathcal Hom}(E|_{{\mathcal D}^{(i)}_{\tilde{M}}}^{\vee},E|_{{\mathcal D}^{(i)}_{\tilde{M}}}) \\
 \sigma^{(i)+}_{\kappa^{(i)}} &\colon
 {\mathcal End}(E|_{{\mathcal D}^{(i)}_{\tilde{M}}})
 \oplus{\mathcal O}_{\tilde{M}}[T]/(\varphi^{(i)}_{\tilde{\bmu}}(T))
 \longrightarrow
 {\mathcal Hom}(E|_{{\mathcal D}^{(i)}_{\tilde{M}}},E|_{{\mathcal D}^{(i)}_{\tilde{M}}}^{\vee}) \\
 \delta^{(i)}_{\bnu,N^{(i)}} &\colon
 {\mathcal End}(E|_{{\mathcal D}^{(i)}_{\tilde{M}}})\longrightarrow
 {\mathcal End}(E|_{{\mathcal D}^{(i)}_{\tilde{M}}})\otimes
 \Omega^1_{{\mathcal C}_{\tilde{M}}/\tilde{M}}({\mathcal D}_{\tilde{M}})
\end{align*}
by the same formulas as in
subsection \ref{subsection:tangent},
(\ref{equation:definition of sigma-}), (\ref{equation:definition of sigma+})
and (\ref{equation:definition of delta}).
For each $u\in{\mathcal End}(E|_{{\mathcal D}^{(i)}_{\tilde{M}}})$,
we define a homomorphism
\[
 \Theta_{u}^{(i)}\colon
 {\mathcal O}_{{\mathcal D}^{(i)}_{\tilde{M}}}[T]/(\varphi^{(i)}_{\tilde{\bmu}}(T))
 \longrightarrow
 \Omega^1_{{\mathcal C}_{\tilde{M}}/\tilde{M}}({\mathcal D}_{\tilde{M}})|_{{\mathcal D}^{(i)}_{\tilde{M}}}
\]
by the same formula as subsection \ref {subsection:tangent},
 (\ref {equation:definition of Theta_u}).
We put
\begin{align*}
 &
 \tilde{\mathcal G}^0
 :=
 {\mathcal End} (\tilde{E}), \quad
 \tilde{\mathcal G}^1:=
 {\mathcal End} (\tilde{E})\otimes
 \Omega^1_{ {\mathcal C}\times_S \tilde{M}/\tilde{M} } ({\mathcal D}_{\tilde{M}}), \quad
 \tilde{G}^1  := \tilde{\mathcal G}^1|_{{\mathcal D}_{\tilde{M}}}, \\
 &
 S(\tilde{E}|_{{\mathcal D}_{\tilde{M}}}^{\vee},\tilde{E}|_{{\mathcal D}_{\tilde{M}}})
 :=
 \left.\left\{ (\tau^{(i)}) \in \bigoplus_{i=1}^n 
 {\mathcal Hom} ( \tilde{E}|_{{\mathcal D}^{(i)}_{\tilde{M}}}^{\vee},
 \tilde{E}|_{{\mathcal D}^{(i)}_{\tilde{M}}})
 \right| \; \text{$^t\tau^{(i)}=\tau^{(i)}$ for any $i$} \right\}, \\
 &
 S(\tilde{E}|_{{\mathcal D}_{\tilde{M}}},\tilde{E}|_{{\mathcal D}_{\tilde{M}}}^{\vee})
 :=
 \left.\left\{ (\xi^{(i)}) \in \bigoplus_{i=1}^n 
 {\mathcal Hom}(\tilde{E}|_{{\mathcal D}^{(i)}_{\tilde{M}}},
 \tilde{E}|_{{\mathcal D}^{(i)}_{\tilde{M}}}^{\vee})
 \right| \ \text{$^t\xi^{(i)}=\xi^{(i)}$ for any $i$} \right\}, \\
 &
 \tilde{Z}^0
 := \bigoplus_{i=1}^n {\mathcal O}_{{\mathcal D}^{(i)}_{\tilde{M}}}[T]
 /(\varphi_{\tilde{\bmu}}^{(i)}(T)),
 \quad
 \tilde{Z}^1 := \bigoplus_{i=1}^n {\mathcal Hom}_{{\mathcal O}_{{\mathcal D}^{(i)}_{\tilde{M}}}}
 ({\mathcal O}_{{\mathcal D}^{(i)}_{\tilde{M}}} [T]/(\varphi_{\tilde{\bmu}}^{(i)}(T)) ,
 \Omega^1_{{\mathcal C}_{\tilde{M}}/\tilde{M}} ({\mathcal D}_{\tilde{M}})
 \big| _{{\mathcal D}_{\tilde{M}}}).
\end{align*}
We define a complex
$\tilde{\mathcal F}^{\bullet}=
[\tilde{\mathcal F}^0 \xrightarrow{d^0} \tilde{\mathcal F}^1 \xrightarrow{d^1} \tilde{\mathcal F}^2]$
in the same way as subsection \ref {subsection:tangent};
\begin{align*}
 &\tilde{\mathcal F}^0 = \tilde{\mathcal G}^0 \oplus \tilde{Z}^0, \quad
 \tilde{\mathcal F}^1 = \tilde{\mathcal G}^1 \oplus
 S(\tilde{E}|_{{\mathcal D}_{\tilde{M}}}^{\vee},\tilde{E}|_{{\mathcal D}_{\tilde{M}}})
 \oplus
 S(\tilde{E}|_{{\mathcal D}_{\tilde{M}}},\tilde{E}|_{{\mathcal D}_{\tilde{M}}}^{\vee}), \quad
 \tilde{\mathcal F}^2 = \tilde{G}^1 \oplus \tilde{Z}^1 \\
 & d^0(u,(\overline{P^{(i)}(T)})) 
 = \left( \nabla\circ u - u\circ\nabla,
 \left(\sigma^{(i)-}_{\theta^{(i)}} \left( u|_{{\mathcal D}^{(i)}_s},\overline{P^{(i)}(T)} \right) \right),
 \left(\sigma^{(i)+}_{\kappa^{(i)}}\left(u|_{{\mathcal D}^{(i)}_s},\overline{P^{(i)}(T)} \right)\right) \right) \\
 & d^1(v,(\tau^{(i)}),(\xi^{(i)})) 
 =\left( \left(
 v|_{{\mathcal D}^{(i)}_s}
 -\delta^{(i)}_{\bnu,N^{(i)}}(\tau^{(i)}\circ\kappa^{(i)}+\theta^{(i)}\circ\xi^{(i)})\right),
 \left(\Theta^{(i)}_{(\tau^{(i)}\circ\kappa^{(i)}+\theta^{(i)}\circ\xi^{(i)})}\right) \right).
\end{align*}
Then we can see by the same proof as Proposition \ref {prop:first deformation}
that the relative tangent bundle
$T_{\tilde{M}/S}$ of $\tilde{M}$ over $S$ is isomorphic to
$\mathbf{R}^1(p_{\tilde{M}})_*(\tilde{\mathcal F}^{\bullet})$,
where $p_{\tilde{M}}\colon {\mathcal C}\times_S\tilde{M}\longrightarrow\tilde{M}$
is the structure morphism.
We define
$\big( \Xi^{(\tau^{(i)},\xi^{(i)})}_{(\tau'^{(i)},\xi'^{(i)})} \big)
 \in \Omega^1_{{\mathcal C}_{\tilde{M}}/\tilde{M}}({\mathcal D}_{\tilde{M}})
 |_{{\mathcal D}_{\tilde{M}}}$
for
$((\tau^{(i)}),(\xi^{(i)})), ((\tau'^{(i)}),(\xi'^{(i)})) \in 
S(\tilde{E}|_{{\mathcal D}_{\tilde{M}}}^{\vee},\tilde{E}|_{{\mathcal D}_{\tilde{M}}})
\oplus
S(\tilde{E}|_{{\mathcal D}_{\tilde{M}}},\tilde{E}|_{{\mathcal D}_{\tilde{M}}}^{\vee})$
in the same way as (\ref {equation:main part in the definition of symplectic form})
in subsection \ref {subsection:nondenerate pairing}.
We take an affine open covering
$\{U_{\alpha}\}$ of ${\mathcal C}$ and define a pairing
\[
 \omega_{\tilde{M}} \colon
 \mathbf{R}^1(p_{\tilde{M}})_*(\tilde{\mathcal F}^{\bullet})
 \times \mathbf{R}^1(p_{\tilde{M}})_*(\tilde{\mathcal F}^{\bullet})
 \longrightarrow \mathbf{R}^2(p_{\tilde{M}})_*
 ({\mathcal L}^{\bullet}_M)
 \cong {\mathcal O}_{\tilde{M}}
\]
by
\begin{align*}
 &\omega_{\tilde{M}}
 \left( \big[ \big\{ (u_{\alpha\beta},0) \big\},
 \big\{ (v_{\alpha},((\tau^{(i)}_{\alpha}),(\xi^{(i)}_{\alpha}))) \big\} \big],
 \big[ \big\{ (u'_{\alpha\beta},0) \big\},
 \big\{ (v'_{\alpha},((\tau'^{(i)}_{\alpha}),(\xi'^{(i)}_{\alpha}))) \big\} \big] \right) \\
 &=\left[
 \left\{ \Tr(u_{\alpha\beta}\circ u'_{\beta\gamma})\right\},
 -\left\{ \left( \Tr(u_{\alpha\beta}\circ v'_{\beta}-v_{\alpha}\circ u'_{\alpha\beta}) , 0 \right) \right\}, 
 \Big\{ \big( \Xi^{(\tau^{(i)}_{\alpha},\xi^{(i)}_{\alpha})}_{(\tau'^{(i)}_{\alpha},\xi'^{(i)}_{\alpha})}
 \big) \Big\} \right]
\end{align*}
using the \u{C}ech cohomology with respect to the covering $\{U_{\alpha}\times_S\tilde{M}\}$.
Then the restriction $\omega_{\tilde{M}}\big|_x$ at a point
$x$ of $\tilde{M}$ whose image in
$M^{\balpha}_{{\mathcal C},{\mathcal D}}(\tilde{\bnu},\tilde{\bmu})$
corresponds to $(E,\nabla,\{l^{(i)}\})$ is nothing but the pairing
$\omega_{(E,\nabla,\{l^{(i)}\})}$ in Lemma \ref{lemma-non-degenerate},
which is nondegenerate.
We can easily see that $\omega_{\tilde{M}}$ descends to a pairing
\[
 \omega_{M^{\balpha}_{{\mathcal C},{\mathcal D}}(\tilde{\bnu},\tilde{\bmu})}\colon
 T_{M^{\balpha}_{{\mathcal C},{\mathcal D}}(\tilde{\bnu},\tilde{\bmu})/S}\times
 T_{M^{\balpha}_{{\mathcal C},{\mathcal D}}(\tilde{\bnu},\tilde{\bmu})/S}
 \longrightarrow {\mathcal O}_{M^{\balpha}_{{\mathcal C},{\mathcal D}}(\tilde{\bnu},\tilde{\bmu})}
\]
which is nondegenerate.
If we take a tangent vector
$v\in T_{M^{\balpha}_{{\mathcal C},{\mathcal D}}(\tilde{\bnu},\tilde{\bmu})/S}(x)$
at a point $x\in M^{\balpha}_{{\mathcal C},{\mathcal D}}(\tilde{\bnu},\tilde{\bmu})$
corresponding to a $(\tilde{\bnu}_s,\tilde{\bmu}_s)$-connection $(E,\nabla,\{l^{(i)}\})$,
$v$ corresponds to a $\mathbb{C}[t]/(t^2)$-valued point
$(E',\nabla',\{l'^{(i)}\})$ of
${\mathcal M}^{\balpha}_{{\mathcal C},{\mathcal D}}(\tilde{\bnu},\tilde{\bmu})$
which is a lift of $(E,\nabla,\{l^{(i)}\})$.
Then we can check that
$\omega_{M^{\balpha}_{{\mathcal C},{\mathcal D}}(\tilde{\bnu},\tilde{\bmu})} (v,v)$
coincides with the image by
$\Tr\colon \mathbf{H}^2({\mathcal F}^{\bullet})\xrightarrow{\sim}
\mathbf{H}^2({\mathcal L}^{\bullet}_s)$
of the obstruction class
$o(E',\nabla',\{l'^{(i)}\})$ for the lifting of $(E',\nabla',\{l'^{(i)}\})$ to
a $\mathbb{C}[t]/(t^3)$-valued point of
${\mathcal M}^{\balpha}_{{\mathcal C},{\mathcal D}}(\tilde{\bnu},\tilde{\bmu})$
which is given in Proposition \ref {prop-obstruction-class}.
Since $M^{\balpha}_{{\mathcal C},{\mathcal D}}(\tilde{\bnu},\tilde{\bmu})$
is smooth over $S$ by Proposition \ref {prop:smoothness of moduli},
we have
$\omega_{M^{\balpha}_{{\mathcal C},{\mathcal D}}(\tilde{\bnu},\tilde{\bmu})} (v,v)=0$.
So the pairing $\omega_{M^{\balpha}_{{\mathcal C},{\mathcal D}}(\tilde{\bnu},\tilde{\bmu})}$
is skew-symmetric and define a relative $2$-form
$ \omega_{M^{\balpha}_{{\mathcal C},{\mathcal D}}}
\in H^0(M^{\balpha}_{{\mathcal C},{\mathcal D}}(\tilde{\bnu},\tilde{\bmu}),
\Omega^2_{M^{\balpha}_{{\mathcal C},{\mathcal D}}(\tilde{\bnu},\tilde{\bmu})/S})$.

A generic geometric fiber
$M^{\balpha}_{{\mathcal C},{\mathcal D}}(\tilde{\bnu},\tilde{\bmu})_s$
over $S$ is the moduli space of regular singular connections on ${\mathcal C}_s$
along the reduced divisor ${\mathcal D}_s$.
If we put
$\tilde{M}_s:=\tilde{M}\times_{M^{\balpha}_{{\mathcal C},{\mathcal D}}(\tilde{\bnu},\tilde{\bmu})}
M^{\balpha}_{{\mathcal C},{\mathcal D}}(\tilde{\bnu},\tilde{\bmu})_s$,
there is a universal parabolic structure
$\tilde{E}_{\tilde{M}_s}|_{(\tilde{\mathcal D}^{(i)}_j)_{\tilde{M}_s}}=\tilde{l}^{(i)}_{j,0}
\supset\cdots\supset \tilde{l}^{(i)}_{j,r-1}\supset\tilde{l}^{(i)}_{j,r}=0$
determined by $\tilde{\nabla}_{\tilde{M}_s}$.
If we put
\begin{align*}
 \tilde{\mathcal F}_{par}^0
 &:=
 \left\{ u\in
 \tilde{\mathcal G}^0_{M^{\balpha}_{{\mathcal C},{\mathcal D}}(\tilde{\bnu},\tilde{\bmu})_s}
 \left|
 \text{$u|_{({\mathcal D}^{(i)}_j)_{\tilde{M}_s}}(\tilde{l}^{(i)}_{j,k})\subset \tilde{l}^{(i)}_{j,k}$
 for any $i,j,k$}
 \right\}\right. \\
 \tilde{\mathcal F}_{par}^1
 &:=
 \left\{ v\in
 \tilde{\mathcal G}^1_{M^{\balpha}_{{\mathcal C},{\mathcal D}}(\tilde{\bnu},\tilde{\bmu})_s}
 \left|
 \text{$v|_{({\mathcal D}^{(i)}_j)_{\tilde{M}_s}}(\tilde{l}^{(i)}_{j,k})\subset
 \tilde{l}^{(i)}_{j,k+1}\otimes
 \Omega^1_{{\mathcal C}_{\tilde{M}_s}/\tilde{M}_s}({\mathcal D}_{\tilde{M}_s})$
 for any $i,j,k$}
 \right\}\right. \\
 \nabla_{\tilde{\mathcal F}_{par}^{\bullet}} 
 &\colon
 \tilde{\mathcal F}^0_{par} \ni u\mapsto
 \tilde{\nabla}\circ u-u\circ\tilde{\nabla} \in \tilde{\mathcal F}^1_{par},
\end{align*}
then the canonical inclusions
$\tilde{\mathcal F}^0_{par}\hookrightarrow \tilde{\mathcal G}^0_{\tilde{M}_s}$
and $\tilde{\mathcal F}^1_{par} \hookrightarrow \tilde{\mathcal G}^1_{\tilde{M}_s}$ 
induce a morphism
$\tilde{\mathcal F}^{\bullet}_{par}
\longrightarrow
\tilde{\mathcal F}^{\bullet}_{\tilde{M}_s}$
of complexes
which induces an isomorphism
\[
 \mathbf{R}^1(\pi_{\tilde{M}_s})_*(\tilde{\mathcal F}^{\bullet}_{par})
 \xrightarrow{\sim}
 \mathbf{R}^1(\pi_{\tilde{M}_s})_*(\tilde{\mathcal F}^{\bullet}_{\tilde{M}_s})
\]
because they are both isomorphic to the tangent bundle
of $\tilde{M}_s$.
A symplectic form $\omega_{\tilde{M}_s}$ on $\tilde{M}_s$
is defined in \cite[Proposition 7.2]{Inaba-1},
which satisfies $d\omega_{\tilde{M}_s}=0$ by \cite[Porposition 7.3]{Inaba-1}.
By construction, we can see that
$\omega_{\tilde{M}_s}=\omega_{\tilde{M}}|_{\tilde{M}_s}$.
So we have  $d\omega_{M^{\balpha}_{{\mathcal C},{\mathcal D}}(\tilde{\bnu},\tilde{\bmu})}|
_{M^{\balpha}_{{\mathcal C},{\mathcal D}}(\tilde{\bnu},\tilde{\bmu})_s}=0$,
which implies that $\omega_{M^{\balpha}_{{\mathcal C},{\mathcal D}}(\tilde{\bnu},\tilde{\bmu})}$
is relatively $d$-closed on $M^{\balpha}_{{\mathcal C},{\mathcal D}}(\tilde{\bnu},\tilde{\bmu})$
over $S$.
\end{proof}

Eventually Theorem \ref {theorem:algebraic-moduli-unfolding}
follows from Corollary \ref  {cor:dimension of tangent space},
Proposition \ref {prop:smoothness of moduli}
and Proposition \ref  {prop:symplectic form}.
.

\section{Fundamental solution of an unfolded linear differential equation with
an asymptotic property}
\label {section:theory of Hurtubise-Lambert-Rousseau}

In this section, we introduce the existence theorem of fundamental solutions
with an asymptotic property
of an unfolded linear differential equation,
which is one of the main tools in the unfolding theory of linear differential equations
established by Hurtubise, Lambert and Rousseau
in \cite{Hurtubise-Lambert-Rousseau} and \cite{Hurtubise-Rousseau}.
Unfortunately, the unfolded generalized isomonodromic deformation
in Theorem \ref {thm:holomorphic-splitting} is not compatible with the asymptotic property
given in the unfolding theory in \cite{Hurtubise-Lambert-Rousseau}, \cite{Hurtubise-Rousseau}.
However, it will be worth pointing out what is the difficulty in
adopting the asymptotic property
in \cite{Hurtubise-Lambert-Rousseau}, \cite{Hurtubise-Rousseau}
to our moduli theoretic setting constructed 
in section \ref  {section:algebraic moduli construction}.
Since the unfolding theory in
\cite{Hurtubise-Lambert-Rousseau}, \cite{Hurtubise-Rousseau}
are written in a very general setting and hard to follow all of them,
we restrict to the easy case when the unfolding of the singular divisor
is given by the equation $z^m-\epsilon^m=0$.

\subsection{Flows for an asymptotic estimate}
\label{subsection:flow}

Let $\Delta=\{ z\in\mathbb{C} \, | \, |z|<1 \}$ be a unit disk in the complex plane $\mathbb{C}$.
For an integer $m$ with $m\geq 2$,
we put
$\zeta_m:=\exp\left(\dfrac{2\pi\sqrt{-1}}{m}\right)$.
Then we have
$z^m-\epsilon^m= (z-\epsilon \zeta_m)(z-\epsilon \zeta_m^2)\cdots(z-\epsilon \zeta_m^m)$
for $z,\epsilon\in\Delta$.
We set
\[
 D:=\left\{ (z,\epsilon) \in \Delta\times\Delta \left| \,
 z^m-\epsilon^m=0 \right\} \right..
\]
Note that there is an equality
\[
 \frac{1}{z^m-\epsilon^m}=
 \frac{1}{(z-\epsilon \zeta_m)\cdots(z-\epsilon \zeta_m^m)}
 =
 \sum_{j=1}^m \frac{1}{\prod_{j\neq i} \epsilon ( \zeta_m^i-\zeta_m^j)}
 \frac{1}{z-\epsilon \zeta_m^i}
\]
for $(z,\epsilon)\in (\Delta\times\Delta)\setminus D$.
By Lemma \ref {lemma:residue-identity}, we have
\[
 \sum_{i=1}^m\frac{1}{\prod_{j\neq i}(\epsilon \zeta_m^i-\epsilon \zeta_m^j)}=
 \sum_{i=1}^m\res_{z=\epsilon \zeta_m^i}
 \left(\dfrac{dz}{(z-\epsilon \zeta_m)(z-\epsilon \zeta_m^2)
 \cdots(z-\epsilon \zeta_m^m)}\right)=0
\]
for $\epsilon\neq 0$, since $m\geq 2$.

For a fixed $\theta\in\mathbb{R}$,
we consider a holomorphic differential equation
\begin{equation}\label{equation:original-holomorphic-differential-equation}
 \frac{dz}{d\tau}= e^{\sqrt{-1}\theta}(z^m-\epsilon^m)
 =e^{\sqrt{-1}\theta}
 (z-\epsilon \zeta_m)(z-\epsilon \zeta_m^2)\cdots(z-\epsilon \zeta_m^m).
\end{equation}
Under the above equation, we can regard $\tau$ as a multi-valued function in
$z\in (\Delta\times\Delta)\setminus D$.
We substitute into $\tau\in\mathbb{C}$ a real variable
$t\in\mathbb{R}$ and consider the restricted differential equation
\begin{equation} \label {equation:original-differential-equation}
 \frac{dz}{dt}= e^{\sqrt{-1}\theta}(z^m-\epsilon^m)
 =e^{\sqrt{-1}\theta}
 (z-\epsilon \zeta_m)(z-\epsilon \zeta_m^2)\cdots(z-\epsilon \zeta_m^m).
\end{equation}
Note that giving a solution
$z(t)=x(t)+\sqrt{-1}y(t)$ of
the differential equation (\ref {equation:original-differential-equation})
is equivalent to giving a flow of the vector field
\begin{equation}\label{equation:vector-field}
 v_{\epsilon,\theta}
 =\mathrm{Re}\left(e^{\sqrt{-1}\theta}
 (z^m-\epsilon^m)\right)
 \frac{\partial}{\partial x}
 +
 \mathrm{Im}\left(e^{\sqrt{-1}\theta}
 (z^m-\epsilon^m)\right)
 \frac{\partial}{\partial y}.
\end{equation}

For the investigation of the flow of the vector field $v_{\epsilon,\theta}$,
we consider the surjective morphism
\[
 \varpi\colon \Delta\times [0,1)\times S^1 \longrightarrow
 \Delta\times\Delta
\]
defined by
\[
 \varpi (z,s,e^{\sqrt{-1}\psi})=(z,s e^{\sqrt{-1}\psi})
\]
and we call $\varpi$ a polar blow up of $\Delta\times\Delta$ along $\Delta\times\{0\}$.
Here we denote $\{t\in \mathbb{R} \,| \,  a\leq t <b\}$
by $[a,b)$ for real numbers $a,b$ satisfying $a<b$.

We consider the following proposition which treats
an easy restricted case of the analysis of
flows in a series of papers \cite{Lambert-Rousseau-1}, \cite{Lambert-Rousseau-2},
\cite{Hurtubise-Lambert-Rousseau},
\cite{Hurtubise-Rousseau}.
We give here just an elementary proof in an easy restricted case
for the purpose of the author's understanding.
So it may seem trivial for experts.

\begin{proposition} \label {proposition:open-covering}
There is an open neighborhood $U$ of $\{0\}\times\{0\}\times S^1$
in $\Delta\times[0,1)\times S^1$
and an open covering
\begin{equation} \label {equation: open covering}
 U\setminus (U\cap\varpi^{-1}(D))
 =\bigcup_{j=1}^m\bigcup_{0\leq\psi_0\leq 2\pi}\bigcup_{\xi=1,2}
 W^{(j)}_{\psi_0,\xi}
\end{equation}
such that any flow of the vector field
\[
 v_{\epsilon,\theta^{(j)}_{\psi_0,\xi}}
 =\mathrm{Re}\left(e^{\sqrt{-1}\theta^{(j)}_{\psi_0,\xi}}
 (z^m-\epsilon^m)\right)
 \frac{\partial}{\partial x}
 +
 \mathrm{Im}\left(e^{\sqrt{-1}\theta^{(j)}_{\psi_0,\xi}}
 (z^m-\epsilon^m)\right)
 \frac{\partial}{\partial y}
\]
starting at a point of $W^{(j)}_{\psi_0,\xi}$ converges to
a point in $\varpi^{-1}(D)$,
where $\theta^{(j)}_{\psi_0,\xi}$ is determined by $j,\psi_0,\xi$.
\end{proposition}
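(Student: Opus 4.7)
The plan is to analyze the flow of $v_{\epsilon,\theta}$ via the rectifying coordinate $P(z):=e^{-\sqrt{-1}\theta}\int dz/(z^m-\epsilon^m)$, which satisfies $\dot P=1$ along trajectories. For $\epsilon\neq 0$ this $P$ has logarithmic singularities at the $m$ critical points $z_j(\epsilon):=\epsilon\zeta_m^j$ with coefficient $1/\lambda_j$, where $\lambda_j(\epsilon,\theta):=me^{\sqrt{-1}\theta}\epsilon^{m-1}\zeta_m^{j(m-1)}$ is the linearization eigenvalue of $v_{\epsilon,\theta}$ at $z_j$. Consequently a trajectory starting near $z_j$ converges to $z_j$ as $t\to+\infty$ (resp.\ $-\infty$) iff $\mathrm{Re}(\lambda_j)<0$ (resp.\ $>0$). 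Writing $\epsilon=se^{\sqrt{-1}\psi}$, one has $\lambda_j=ms^{m-1}\exp(\sqrt{-1}\phi_j)$ with $\phi_j:=\theta+(m-1)\psi+2\pi j(m-1)/m$, so the sign of $\mathrm{Re}(\lambda_j)$ is the sign of $\cos\phi_j$.

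For each $(j,\psi_0)$ and $\xi\in\{1,2\}$ I will set
\[
 \theta^{(j)}_{\psi_0,1}:=\pi-(m-1)\psi_0-\frac{2\pi j(m-1)}{m},\qquad
 \theta^{(j)}_{\psi_0,2}:=\theta^{(j)}_{\psi_0,1}+\pi,
\]
which makes $\lambda_j$ a negative (resp.\ positive) real at $\psi=\psi_0$, and by continuity keeps $\mathrm{Re}(\lambda_j)$ of the correct sign for $\psi$ in a small neighborhood of $\psi_0$ and all $s>0$ small. I then declare $W^{(j)}_{\psi_0,\xi}$ to be the set of $(z,s,e^{\sqrt{-1}\psi})\in U$ with $|\psi-\psi_0|$ sufficiently small and $z$ lying in the basin of attraction (forward time for $\xi=1$, backward for $\xi=2$) of $z_j(\epsilon)$ under $v_{\epsilon,\theta^{(j)}_{\psi_0,\xi}}$. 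Openness of each $W^{(j)}_{\psi_0,\xi}$ is immediate from hyperbolicity of $z_j$ together with continuous dependence of flows on initial data and parameters.

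The substantive step is the covering. For it I rescale $z=sw$ with $\epsilon=se^{\sqrt{-1}\psi}$ and reparametrize time by $s^{m-1}$; the ODE $\dot z=e^{\sqrt{-1}\theta}(z^m-\epsilon^m)$ becomes the $s$-independent equation $\dot w=e^{\sqrt{-1}\theta}(w^m-e^{\sqrt{-1}m\psi})$, whose associated real vector field $\tilde v_{\psi,\theta}$ lives on a fixed disk $\{|w|\le R\}$. For $\tilde v_{\psi,\theta}$, every trajectory is bi-asymptotic to a (source, sink) pair among the critical points $\zeta_m^{k}e^{\sqrt{-1}\psi}$, and the complement of the finitely many separatrices is the open union of the basins of these critical points. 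Letting $(j,\xi)$ range, every non-separatrix orbit falls into some basin, and a compactness argument on the compact set $\{|w|\le R\}\times S^1_\psi$ extracts a finite subcover that pulls back under the rescaling to a covering of $U\setminus\varpi^{-1}(D)$ of the required form. The main obstacle is the handling of orbits that lie on separatrices of $\tilde v_{\psi,\theta^{(j)}_{\psi_0,\xi}}$ for every prescribed $(j,\psi_0,\xi)$; this is resolved by exploiting the freedom to perturb $\theta^{(j)}_{\psi_0,\xi}$ within the open half-plane where $\mathrm{Re}(\lambda_j)$ keeps the correct sign, since the separatrix configuration depends analytically on $\theta$ and hence sweeps out only a measure-zero set of initial conditions as $\theta$ varies.
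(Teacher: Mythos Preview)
There are two genuine gaps. The most serious is that your rescaling $z=sw$ degenerates at $s=0$, and the argument gives no control there. The neighbourhood $U$ must contain the fibre $\{s=0\}$ (it is a neighbourhood of $\{0\}\times\{0\}\times S^1$), and on that fibre $\varpi^{-1}(D)=\{z=0\}$, so every point $(z,0,e^{\sqrt{-1}\psi})$ with $z\neq 0$ must lie in some $W^{(j)}_{\psi_0,\xi}$. At $s=0$ the field is $\dot z=e^{\sqrt{-1}\theta}z^m$, whose unique critical point is degenerate, so your linearisation-and-basin picture is unavailable. Even for $s>0$, compactness on $\{|w|\le R\}$ only controls $\{|z|\le Rs\}$, which shrinks to a point as $s\to 0$ and therefore cannot produce a uniform neighbourhood of $\{0\}\times\{0\}\times S^1$. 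The content of the proposition is exactly the uniformity across the confluence $s\to 0$; a fibrewise dynamical argument after rescaling misses this.

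Second, your use of $\xi=2$ does not do what you need. With $\theta^{(j)}_{\psi_0,2}=\theta^{(j)}_{\psi_0,1}+\pi$ one has $v_{\epsilon,\theta^{(j)}_{\psi_0,2}}=-v_{\epsilon,\theta^{(j)}_{\psi_0,1}}$, so the backward basin of $z_j$ under the second field equals the forward basin under the first: your $W^{(j)}_{\psi_0,2}$ coincides with $W^{(j)}_{\psi_0,1}$ and adds nothing to the covering. Moreover the proposition asks for \emph{forward} convergence of $v_{\epsilon,\theta^{(j)}_{\psi_0,\xi}}$, which you have not established for $\xi=2$. In the paper the two values of $\xi$ correspond to $\theta^{(j)}_{\psi_0,\xi}=\pi-(m-1)\psi_0-\tfrac{2\pi j(m-1)}{m}\pm\delta$ for a small $\delta>0$, chosen to handle the two adjacent angular half-sectors around the ray $\arg z=\psi_0+2\pi j/m$; both give forward-time convergence to $\epsilon\zeta_m^j$. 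The paper then builds explicit regions $R^{(j)}_{\psi_0,\xi}$ by angular inequalities (which make sense also at $s=0$), checks that $v_{\epsilon,\theta}$ points inward along each boundary piece, and proves a uniform Lyapunov estimate $\tfrac{d}{dt}\,|z(t)-\epsilon\zeta_m^j|^{-2m}\ge m/4^m$ along forward trajectories. That estimate is uniform in $(s,\psi)$ and survives the limit $s\to 0$, which is precisely what your rescaling cannot deliver.
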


\begin{proof}
We take a point
$(z_0,s_0,e^{\sqrt{-1}\psi_0}) \in
(\Delta\setminus \{0\})\times\left[0,\,\frac{1}{3}\right)\times S^1$
satisfying $0<|z_0|<\frac{1}{4}$.
We can choose an integer $j$ with $1\leq j\leq m$ satisfying
\[
 -\frac{\pi}{m} \leq \arg(z_0)-\psi_0-\frac{2j\pi}{m} \leq \frac{\pi}{m}.
\]
We divide into two cases:
\begin{gather*}
 0\leq\arg(z_0)-\psi_0-\dfrac{2j\pi}{m}\leq\dfrac{\pi}{m}, \quad
  -\dfrac{\pi}{m} \leq \arg(z_0)-\psi_0-\dfrac{2j\pi}{m} < 0.
\end{gather*}

\noindent
{\bf Case 1.}
$0\leq \arg(z_0)-\dfrac{2j\pi}{m}-\psi_0 \leq \dfrac{\pi}{m}$. \\
In this case we choose small $\delta>0$ satisfying
$\delta<\dfrac{\pi}{24m}$
and put 
\begin{equation} \label {equation:definition of theta case 1}
 \theta^{(j)}_{\psi_0,1}:=-\dfrac{2j(m-1)\pi}{m}-(m-1)\psi_0+\pi+\delta.
\end{equation}
We simply denote $\theta^{(j)}_{\psi_0,1}$ by $\theta$ in the following.
So $\theta$ is given by
\[
 \frac{\theta-\pi}{m-1}=-\frac{2j\pi}{m}-\psi_0+\frac{\delta}{m-1}.
\]
Note that we have 
\begin{gather*}
 \dfrac{\delta}{m-1}\leq
 \arg(z_0)+\dfrac{\theta-\pi}{m-1}
 \leq \dfrac{\pi}{m}+\dfrac{\delta}{m-1},
 \hspace{30pt}
 \psi_0+\frac{2j\pi}{m}+\frac{\theta-\pi}{m-1}
 =\frac{\delta}{m-1}.
\end{gather*}
If we replace $\delta>0$ sufficiently smaller,
we may assume that the two segments
\begin{align*}
 l_1&=
 \left\{ z\in\mathbb{C} \left|
 \arg\left(e^{\sqrt{-1}\frac{\pi}{3m}}-e^{\sqrt{-1}\frac{\theta-\pi}{m-1}}z\right)
 = \dfrac{(2m+1)\delta}{m-1} , \;
 |z| <1 , \; \mathrm{Re}(z)>0 \right\}\right. \\ 
 l_2&=
 \left\{ z\in\mathbb{C} \left|
 \arg(z)+\dfrac{\theta-\pi}{m-1}=\dfrac{\pi}{m}+\dfrac{2\delta}{m-1} , \;
 |z| < 1 , \; \mathrm{Re}(z)>0  \right\}\right.
\end{align*}
intersects at a point
$s_1 e^{\sqrt{-1}(\frac{\pi}{m}-\frac{\theta-\pi}{m-1}+\frac{2\delta}{m-1})}$
satisfying $\dfrac{1}{4}<s_1<1$.
Then we put
\[
 P^{(j)}_{\psi_0,1}=
 \left\{(z,(s,e^{\sqrt{-1}\psi}))\in
 \Delta\times \Big[0,\frac{1}{3}\Big)\times S^1\left|
 \begin{array}{l}
 \displaystyle -\dfrac{3\delta}{2m-2}<\psi+\dfrac{2j\pi}{m}+\frac{\theta-\pi}{m-1}
 <\frac{3\delta}{2m-2}, \; z\neq 0 \\
 \displaystyle
 \text{
 $\dfrac{(2m+1)\delta}{m-1}<
 \arg\left( e^{\sqrt{-1}\frac{\pi}{3m}}-e^{\sqrt{-1}\frac{\theta-\pi}{m-1}}z\right)
 <\dfrac{\pi}{2}+\dfrac{\pi}{3m}$} \\
 \text{and
 $-\dfrac{\pi}{3m}<\arg(z)+\dfrac{\theta-\pi}{m-1}
 <\dfrac{\pi}{m}+\dfrac{2\delta}{m-1}$
 } 
 \end{array}
 \right\}\right..
\]
A picture of the region
$\left\{ \tilde{z}=e^{\sqrt{-1}\frac{\theta-\pi}{m-1}}z \, \left| \,
(z,s,e^{\sqrt{-1}\psi}) \in
P^{(j)}_{\psi_0,1}\cap \Big(\Delta\times \big\{(s,e^{\sqrt{-1}\psi})\big\}\Big)
\right\}\right.$
looks like [figure 1].
Since
$\arg\left( e^{\sqrt{-1}\frac{\theta-\pi}{m-1}}e^{\sqrt{-1}\theta}
 \big(  e^{\sqrt{-1}\psi} \zeta_m^j \big)^m \right)
 =\dfrac{\theta-\pi}{m-1}+\theta+m\psi
 =\dfrac{m(\theta-\pi)}{m-1}+m\psi+\pi$,
we have
\begin{equation}\label{equation:inequality-epsilon}
 \pi-\frac{3m\delta}{2m-2}<
 \arg\left( e^{\sqrt{-1}\frac{\theta-\pi}{m-1}}e^{\sqrt{-1}\theta} 
 \big( e^{\sqrt{-1}\psi} \zeta_m^j \big)^m \right)
 <\pi+\frac{3m\delta}{2m-2}
\end{equation}
if
$-\dfrac{3\delta}{2m-2}<\psi+\dfrac{2j\pi}{m}+\dfrac{\theta-\pi}{m-1}
<\dfrac{3\delta}{2m-2}$.
So we can take $\eta>0$ depending on $m,j,\theta,\delta$ such that
\begin{equation} \label {equation: choice of delta}
 -\frac{2m\delta}{m-1}<
 \arg \left( e^{\sqrt{-1}\frac{\theta-\pi}{m-1}}e^{\sqrt{-1}\theta}
 (w^m-(e^{\sqrt{-1}\psi}\zeta_m^j)^m) \right)<\frac{2m\delta}{m-1}
\end{equation}
holds for any $w\in\Delta$ satisfying $|w| \leq \eta$,
when
$-\dfrac{3\delta}{2m-2}<\psi+\dfrac{2j\pi}{m}+\dfrac{\theta-\pi}{m-1}
<\dfrac{3\delta}{2m-2}$.
We put
\[
 Q^{(j)}_{\psi_0,1}:=
 \left\{ (z,s,e^{\sqrt{-1}\psi})\in
 \Delta\times \Big[ 0,\frac{1}{3} \Big) \times S^1
 \left|
 \begin{array}{l}
 e^{\sqrt{-1}\frac{\theta-\pi}{m-1}}z-\eta s e^{\sqrt{-1}\pi}\neq 0, \\
 -\dfrac{\pi}{6m} <\arg\left( e^{\sqrt{-1}\frac{\theta-\pi}{m-1}}z
 -\eta s e^{\sqrt{-1}\pi}\right)
 <\dfrac{\pi}{6m} \\
 \text{$-\dfrac{3\delta}{2m-2}<\psi+\dfrac{2j\pi}{m}+\dfrac{\theta-\pi}{m-1}
 <\dfrac{3\delta}{2m-2}$ and} \\
 \text{
 $\dfrac{\pi}{m}+\dfrac{2\delta}{m-1}\leq\arg(z)+\dfrac{\theta-\pi}{m-1}
 \leq 2\pi-\dfrac{\pi}{3m}$
 if $z\neq 0$}
 \end{array}
 \right\}\right..
\]
and set
\[
 R^{(j)}_{\psi_0,1}:= 
 P^{(j)}_{\psi_0,1} \cup Q^{(j)}_{\psi_0,1}.
\]
We may assume $\eta<\dfrac{1}{4}$ and then the segment
\[
 l_3^+=\left\{ z\in\mathbb{C} \left|
 \arg\left( e^{\sqrt{-1}\frac{\theta-\pi}{m-1}}z
 -\eta s e^{\sqrt{-1}\pi} \right)
 =\dfrac{\pi}{6m}, \;
 -\eta < \mathrm{Re}(z) < 1 \right\}\right.
\]
intersects with the segment $l_2$
at a point $s_2 e^{\sqrt{-1}(\frac{\pi}{m}-\frac{\theta-\pi}{m-1}+\frac{2\delta}{m-1})}$
satisfying $0<s_2<\dfrac{1}{4}<s_1$ if $s>0$.
A picture of the region
$\left\{ \tilde{z}=e^{\sqrt{-1}\frac{\theta-\pi}{m-1}}z \, \left| \, 
(z,s,e^{\sqrt{-1}\psi})\in
R^{(j)}_{\psi_0,1}\cap \Big(\Delta\times \big\{(s,e^{\sqrt{-1}\psi})\big\}\Big)
\right\}\right.$
looks like [figure 2]


\includegraphics[width=480pt] {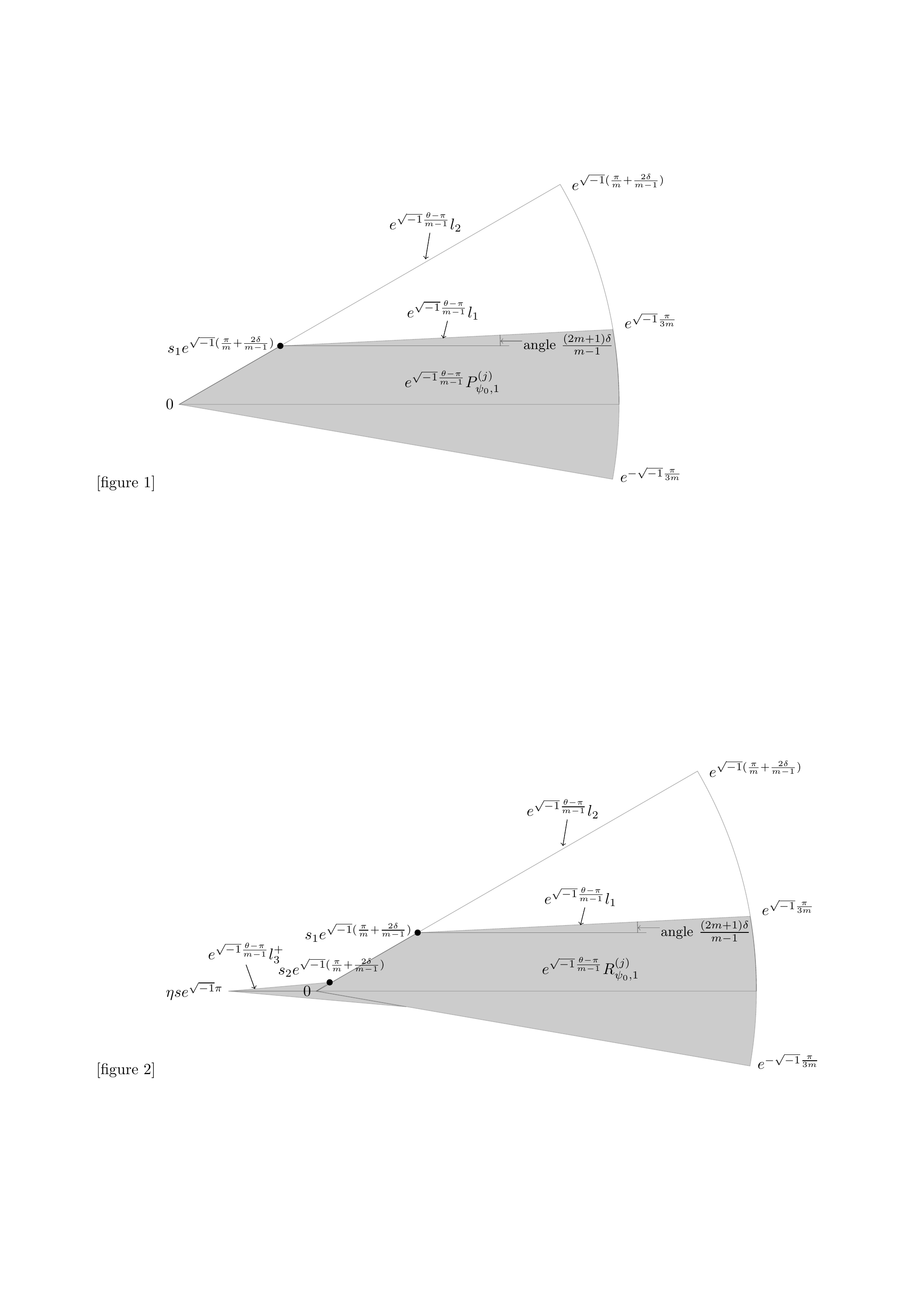}


In the case of $\epsilon=s e^{\sqrt{-1}\psi}=0$, we can see
$Q^{(j)}_{\psi_0,1}\cap
\Big(\Delta\times \big\{ \big( 0,e^{\sqrt{-1}\psi} \big) \big\}\Big)
=\emptyset$
by the definition of $Q^{(j)}_{\psi_0,1}$,
from which we have
\[
 R^{(j)}_{\psi_0,1}\cap
 \Big(\Delta\times \big\{ \big( 0 , e^{\sqrt{-1}\psi} \big) \big\} \Big)
 =
 P^{(j)}_{\psi_0,1}\cap
 \Big(\Delta\times \big\{ \big( 0 , e^{\sqrt{-1}\psi} \big ) \big\} \Big).
\]
In any case, $\big(z_0,s_0,e^{\sqrt{-1}\psi_0}\big)$
lies in $R^{(j)}_{\psi_0,1}$
and
\[
 R^{(j)}_{\psi_0,1}\cap \varpi^{-1}(D)=
 \left.\left\{ \big( \epsilon \zeta_m^j, s, e^{\sqrt{-1}\psi}\big)
 \in R^{(j)}_{\psi_0,1} \right|
 \epsilon=s e^{\sqrt{-1}\psi}\right\}.
\]

Consider the differential equation
\[
 \frac{dz(t)}{dt}=e^{\sqrt{-1}\theta}(z(t)^m-\epsilon^m)
 =e^{\sqrt{-1}\theta}
 (z(t)-\epsilon \zeta_m)(z(t)-\epsilon \zeta_m^2)\cdots(z(t)-\epsilon \zeta_m^m)
\]
with respect to a real time variable $t$ and the initial point
$z(0)\in R^{(j)}_{\psi_0,1}\setminus
(\varpi^{-1}(D)\cap R^{(j)}_{\psi_0})$. 
The solution of the above differential equation
is equivalent to
the flow of the vector field
\[
 v_{\epsilon,\theta}
 =\mathrm{Re}\left(e^{\sqrt{-1}\theta}
 (z^m-\epsilon^m)\right)
 \frac{\partial}{\partial x}
 +
 \mathrm{Im}\left(e^{\sqrt{-1}\theta}
 (z^m-\epsilon^m)\right)
 \frac{\partial}{\partial y}
\]
starting at a point in
$R^{(j)}_{\psi_0,1}\setminus
(\varpi^{-1}(D)\cap R^{(j)}_{\psi_0,1})$.
Notice that the direction of the vector $v_{\epsilon,\theta}$
is given by
$\arg \left( e^{\sqrt{-1}\theta}(z(t)^m-\epsilon^m) \right)$.
We investigate the direction of the vector $v_{\epsilon,\theta}$
at each boundary point of
the fiber
$R^{(j)}_{\psi_0,1}\cap
\Big(\Delta\times\big\{ (s,e^{\sqrt{-1}\psi}) \big\} \Big)$
of $R^{(j)}_{\psi_0,1}$
over $(s,e^{\sqrt{-1}\psi})\in [0,\frac{1}{3})\times S^1$.

First take
a boundary point $(z,s,e^{\sqrt{-1}\psi})$ of
$R^{(j)}_{\psi_0,1}\cap
\Big(\Delta\times \big\{(s,e^{\sqrt{-1}\psi})\big\}\Big)$
satisfying 
$\arg(z)+\dfrac{\theta-\pi}{m-1}=\dfrac{\pi}{m}+\dfrac{2\delta}{m-1}$.
Then we have
\[
 \arg \left(e^{\sqrt{-1}\frac{\theta-\pi}{m-1}}e^{\sqrt{-1}\theta}z^m \right)
 =\frac{\theta-\pi}{m-1}+\theta+m\arg(z) 
 =\frac{m(\theta-\pi)}{m-1}+m\arg(z)+\pi =2\pi+\frac{2m\delta}{m-1}. 
\]
Combined with the inequality (\ref{equation:inequality-epsilon}),
we have
\[
 -\frac{3m\delta}{2m-2}<
 \arg\left( e^{\sqrt{-1}\frac{\theta-\pi}{m-1}}
 e^{\sqrt{-1}\theta}(z^m-(\epsilon \zeta_m^j)^m)\right)
 <\frac{2m\delta}{m-1}
 <\frac{\pi}{m}+\frac{2\delta}{m-1},
\]
from which we can see that the vector $v_{\epsilon,\theta}$ 
faces toward the interior of the region 
$R^{(j)}_{\psi_0,1}\cap
\Big(\Delta\times \big\{(s,e^{\sqrt{-1}\psi})\big\}\Big)$.

Secondly take 
a boundary point $(z,s,e^{\sqrt{-1}\psi})$
of $R^{(j)}_{\psi_0,1}\cap
\Big(\Delta\times \big\{(s,e^{\sqrt{-1}\psi})\big\}\Big)$
satisfying 
$\arg(z)+\dfrac{\theta-\pi}{m-1}=-\dfrac{\pi}{3m}$.
Then we have
\[
 \arg \left(e^{\sqrt{-1}\frac{\theta-\pi}{m-1}}e^{\sqrt{-1}\theta}z^m \right)
 =\frac{\theta-\pi}{m-1}+\theta+m\arg(z) 
 =\frac{m(\theta-\pi)}{m-1}+m\arg(z)+\pi =\frac{2\pi}{3}. 
\]
Combined with (\ref{equation:inequality-epsilon}), we have
\[
 -\frac{\pi}{3m}<
 -\frac{3m\delta}{2m-2}<
 \arg\left( e^{\sqrt{-1}\frac{\theta-\pi}{m-1}}
 e^{\sqrt{-1}\theta}(z^m-(\epsilon \zeta_m^j)^m)\right)
 <\frac{2\pi}{3}.
\]
So the vector $v_{\epsilon,\theta}$ 
faces toward the interior of the region 
$R^{(j)}_{\psi_0,1}\cap
\Big(\Delta\times \big\{(s,e^{\sqrt{-1}\psi})\big\}\Big)$.

Thirdly we take a boundary point $(z,s,e^{\sqrt{-1}\psi})$ of
$R^{(j)}_{\psi_0,1}\cap
\Big(\Delta\times \big\{(s,e^{\sqrt{-1}\psi})\big\}\Big)$
which satisfies the equality
$\arg\left(e^{\sqrt{-1}\frac{\pi}{3m}}-e^{\sqrt{-1}\frac{\theta-\pi}{m-1}}z\right)=
 \dfrac{(2m+1)\delta}{m-1}$,
which means that $z$ lies on the segment $l_1$.
Since
$\dfrac{\pi}{3}+\pi \leq
\arg\left( e^{\sqrt{-1}\frac{\theta-\pi}{m-1}}e^{\sqrt{-1}\theta}z^m \right)
\leq 2\pi+\dfrac{2m\delta}{m-1}$,
we can see by the inequality (\ref{equation:inequality-epsilon})
that the inequality
\[
 -\frac{2\pi}{3}\leq
 \arg\left( e^{\sqrt{-1}\frac{\theta-\pi}{m-1}}e^{\sqrt{-1}\theta}
 (z^m-(\epsilon \zeta_m^j)^m)\right)
 \leq \frac{2m\delta}{m-1}<\frac{(2m+1)\delta}{m-1}
\]
holds.
So
the vector $v_{\epsilon,\theta}$ faces toward the interior of the region
$R^{(j)}_{\psi_0,1}\cap
\Big(\Delta\times \big\{(s,e^{\sqrt{-1}\psi})\big\}\Big)$
at this point.
A picture of the direction of the vector $v_{\epsilon,\theta}$
is [figure 3].

Fourthly we take a boundary point
$\big(z,s,e^{\sqrt{-1}\psi}\big)$ of
$R^{(j)}_{\psi_0,1}\cap\Big(\Delta\times
\big\{\big(s,e^{\sqrt{-1}\psi}\big)\big\}\Big)$
satisfying  $|z|=1$.
Note that we have
$-\dfrac{\pi}{3m}\leq\arg(z)+\dfrac{\theta-\pi}{m-1}\leq\dfrac{\pi}{3m}$.
If $\dfrac{\pi}{6m}\leq\arg(z)+\dfrac{\theta-\pi}{m-1}\leq\dfrac{\pi}{3m}$,
then 
\[
 \frac{7\pi}{6}\leq
 \arg \left(e^{\sqrt{-1}\frac{\theta-\pi}{m-1}}e^{\sqrt{-1}\theta}z^m \right)
 =\frac{\theta-\pi}{m-1}+\theta+m\arg(z) 
 =\frac{m(\theta-\pi)}{m-1}+m\arg(z)+\pi \leq \frac{4\pi}{3}.
\]
Since $|\epsilon^m|\leq s < \dfrac{1}{3}=\dfrac{1}{3}|z|^m$
and
$\dfrac{5\pi}{6}\leq
\arg\left(e^{\sqrt{-1}\frac{\theta-\pi}{m-1}}e^{\sqrt{-1}\theta}\epsilon^m\right)
\leq\dfrac{7\pi}{6}$
by (\ref{equation:inequality-epsilon}),
we have a rough estimate
\[
 \dfrac{7\pi}{6} \leq
 \arg\left( e^{\sqrt{-1}\frac{\theta-\pi}{m-1}}
 e^{\sqrt{-1}\theta}(z^m-(\epsilon \zeta_m^j)^m)\right)
 < \dfrac{3\pi}{2}.
\]
So the vector $v_{\epsilon,\theta}$ faces toward the interior of the region
$R^{(j)}_{\psi_0,1}\cap\Big(\Delta\times
\big\{\big(s,e^{\sqrt{-1}\psi}\big)\big\}\Big)$ 
at this point.
If $-\dfrac{\pi}{3m}\leq\arg(z)+\dfrac{\theta-\pi}{m-1} \leq -\dfrac{\pi}{6m}$,
then we have
\[
 \frac{2\pi}{3} \leq
 \arg \left(e^{\sqrt{-1}\frac{\theta-\pi}{m-1}}e^{\sqrt{-1}\theta}z^m \right)
 =\frac{m(\theta-\pi)}{m-1}+m\arg(z)+\pi \leq \dfrac{5\pi}{6}
\]
and we have, from (\ref{equation:inequality-epsilon}) and
$|\epsilon^m|<\frac{1}{3}=\frac{1}{3}|z^m|$, a rough estimate
\[
 \frac{\pi}{2} <
 \arg \left(e^{\sqrt{-1}\frac{\theta-\pi}{m-1}}e^{\sqrt{-1}\theta}(z^m -\epsilon^m) \right)
 =\frac{m(\theta-\pi)}{m-1}+m\arg(z)+\pi \leq \dfrac{5\pi}{6}.
\]
So the vector $v_{\epsilon,\theta}$ faces toward the interior of  the region
$R^{(j)}_{\psi_0,1}\cap
\Big(\Delta\times \big\{\big(s,e^{\sqrt{-1}\psi}\big)\big\}\Big)$
at this point.
If $-\dfrac{\pi}{6m}\leq \arg(z)+\dfrac{\theta-\pi}{m-1}\leq \dfrac{\pi}{6m}$,
then we have
$\dfrac{5\pi}{6}\leq
 \arg \left(e^{\sqrt{-1}\frac{\theta-\pi}{m-1}}e^{\sqrt{-1}\theta}z^m  \right)
 \leq \dfrac{7\pi}{6}$,
from which we obtain a rough estimate
\[
 \dfrac{2\pi}{3} <
 \arg \left(e^{\sqrt{-1}\frac{\theta-\pi}{m-1}}e^{\sqrt{-1}\theta}
 (z^m -\epsilon^m) \right)
 <\dfrac{4\pi}{3}
\]
using  (\ref{equation:inequality-epsilon})
and $|\epsilon^m|< \frac{1}{3}=\frac{1}{3}|z|$.
So $v_{\epsilon,\theta}$ faces toward the interior of the region
$R^{(j)}_{\psi_0}\cap\Big(\Delta\times
\big\{\big(s,e^{\sqrt{-1}\psi}\big)\big\}\Big)$.

Finally we take a boundary point
$\big(z,s,e^{\sqrt{-1}\psi}\big)$ of
$R^{(j)}_{\psi_0,1}\cap\Big(\Delta\times
\big\{\big(s,e^{\sqrt{-1}\psi}\big)\big\}\Big)$
satisfying
$\big(z,s,e^{\sqrt{-1}\psi}\big)\in \overline{ Q^{(j)}_{\psi_0,1} }$
and
$\arg\left( e^{\sqrt{-1}\frac{\theta-\pi}{m-1}}z
-\eta s e^{\sqrt{-1}\pi}\right)
=\pm\dfrac{\pi}{6m}$.
Then we have $|z|\leq s \eta$ and
\[
 -\frac{\pi}{6m}<-\frac{2m\delta}{m-1}<
 \arg \left( e^{\sqrt{-1}\frac{\theta-\pi}{m-1}}e^{\sqrt{-1}\theta}
 \big(z^m-\big(s e^{\sqrt{-1}\psi}\big)^m\big) \right)
 <\frac{2m\delta}{m-1}<\frac{\pi}{6m}
\]
because of the inequality (\ref {equation: choice of delta})
and the assumption
$0<\delta<\dfrac{\pi}{24m}$.
Thus the vector $v_{\epsilon,\theta}$ faces toward the interior of the region
$R^{(j)}_{\psi_0,1}\cap\Big(\Delta\times \big\{(s,e^{\sqrt{-1}\psi})\big\}\Big)$
at this point.
A picture of the direction of $v_{\epsilon,\theta}$
is [figure 4].


\includegraphics[width=480pt] {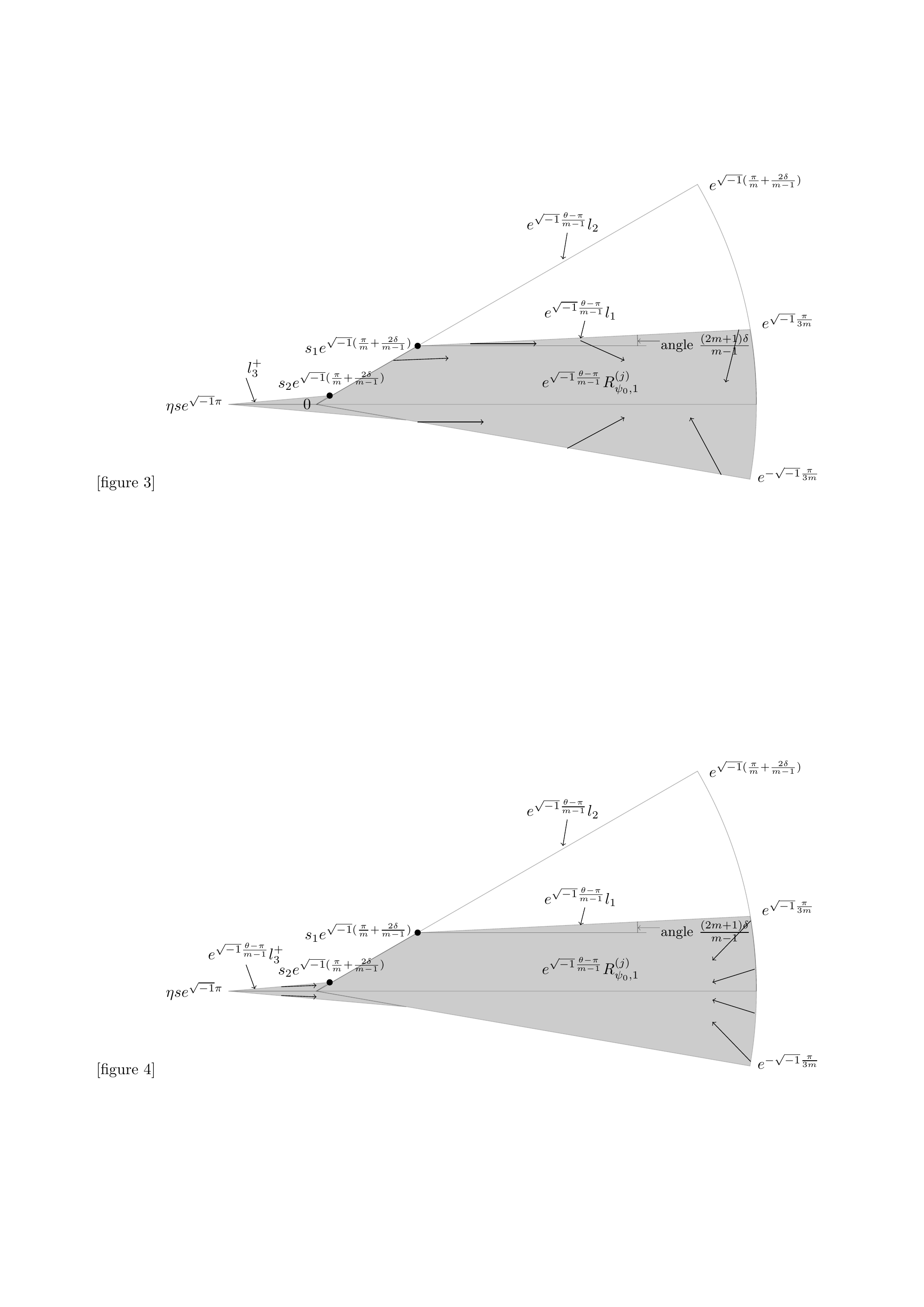}


From all the above arguments, we can see that
the flows of the vector field $v_{\epsilon,\theta}$ stay inside the region
$R^{(j)}_{\psi_0,1}\setminus(\varpi^{-1}(D)\cap R^{(j)}_{\psi_0,1})$.
Take a flow $\{(z(t),(s,e^{\sqrt{-1}\psi}))|t\geq 0\}$ inside
$R^{(j)}_{\psi_0,1}\setminus(\varpi^{-1}(D)\cap R^{(j)}_{\psi_0,1})$.
If we set
\[
 R':=\left\{ (z,s,e^{\sqrt{-1}\psi})\in
 \Delta\times\Big[ 0,\frac{1}{3}\Big)\times S^1  \left| 
 \begin{array}{l}
 -\dfrac{3\delta}{2m-2}<\psi+\dfrac{2j\pi}{m}+\dfrac{\theta-\pi}{m-1}<\dfrac{3\delta}{2m-2}, \\
 z\neq 0, \;
 -\dfrac{\pi}{3m}<\arg(z)+\dfrac{\theta-\pi}{m-1}<\dfrac{\pi}{3m}
 \end{array}
 \right\}\right. ,
\]
then we have
$R'\subset R^{(j)}_{\psi_0,1}$
and we can see by the argument similar to the former analysis on the direction of
$v_{\epsilon,\theta}$
that flows of $v_{\epsilon,\theta}$ starting at points in
$R'\setminus(\varpi^{-1}(D)\cap R')$ stay inside 
$R'\setminus(\varpi^{-1}(D)\cap R')$.
Take any point
$(z,s,e^{\sqrt{-1}\psi})\in R^{(j)}_{\psi_0,1}\setminus R'$.
If $z\neq 0$, then
we have either 
$(z,s,e^{\sqrt{-1}\psi})\in Q^{(j)}_{\psi_0,1}$ or
$\dfrac{\pi}{3m}<\arg(z)+\dfrac{\theta-\pi}{m-1}<\dfrac{\pi}{m}+\dfrac{2\delta}{m-1}$.
So we have either
$|z|<\eta s$ or
\begin{equation} \label {equation:range of angles in R'}
 \frac{4\pi}{3}<
 \arg \left( e^{\sqrt{-1}\frac{\theta-\pi}{m-1}}e^{\sqrt{-1}\theta} z^m \right)
 <2\pi+\frac{2m\delta}{m-1}.
\end{equation}
Combined with (\ref{equation:inequality-epsilon}),
we have $e^{\sqrt{-1}\theta}(z^m-\epsilon^m)\neq 0$.
If $z=0$, then $s>0$ and we have $e^{\sqrt{-1}\theta}(z^m-\epsilon^m)\neq 0$
again.
So
$v_{\epsilon,\theta}$ does not vanish on 
$R^{(j)}_{\psi_0,1}\setminus R'$
and there is no limit point
$\lim_{t\to\infty}z(t)$ inside  $R^{(j)}_{\psi_0,1}\setminus R'$.
Since the inequality (\ref {equation:range of angles in R'})
holds as long as $(z,s,e^{\sqrt{-1}\psi})$ lies in 
$P^{(j)}_{\psi_0,1}\setminus R'$,
flows of $v_{\epsilon,\theta}$ do not stay inside
$R^{(j)}_{\psi_0,1}\setminus R'$ and there exists $t_0>0$ such that
$(z(t_0),s,e^{\sqrt{-1}\psi})$ is contained in the region 
$R'\setminus(\varpi^{-1}(D)\cap R')$.

If
$(z(t),(s,e^{\sqrt{-1}\psi}))\in
R'\setminus(\varpi^{-1}(D)\cap R')$, then we have
\[
 -\frac{(m-1)\pi}{3m} \leq
 \arg\left(\sum_{l=0}^{m-1}\left(z(t)e^{\sqrt{-1}\frac{\theta-\pi}{m-1}}\right)^{m-1-l}
 \left(e^{\sqrt{-1}\frac{\theta-\pi}{m-1}}\epsilon \zeta_m^j \right)^l\right)
 \leq \frac{(m-1)\pi}{3m}.
 \]
By the calculation
\begin{align*}
 \frac{d}{dt}\frac{1}{|z(t)-\epsilon \zeta_m^j |^{2m}} 
 &=
 \frac{1}{\overline{(z(t)-\epsilon \zeta_m^j)^m}}\frac{d}{dt}
 \left(\frac{1}{(z(t)-\epsilon \zeta_m^j)^m} \right)
 + \frac{1}{(z(t)-\epsilon \zeta_m^j)^m} \frac{d}{dt}
 \left(\frac{1}{\overline{(z(t)-\epsilon \zeta_m^j)^m}} \right)
 \notag \\
 &=
 \frac{1}{\overline{(z(t)-\epsilon \zeta_m^j)^m}}
 \frac{-m}{(z(t)-\epsilon \zeta_m^j)^{m+1}}\frac{d z(t)}{dt}
 +\frac{1}{(z(t)-\epsilon \zeta_m^j)^m}
 \frac{-m}{\overline{(z-\epsilon \zeta_m^j)^{m+1}}}\frac{d\overline{z(t)}}{dt}
 \notag  \\
 &=
 -\frac{me^{\sqrt{-1}\theta}(z(t)^m-(\epsilon \zeta_m^j)^m)}
 {(z(t)-\epsilon \zeta_m^j)^{m+1}\overline{(z(t)-\epsilon \zeta_m^j)^m}}
 -\frac{m\overline{e^{\sqrt{-1}\theta}(z(t)^m-(\epsilon \zeta_m^j)^m)}}
 {(z(t)-\epsilon \zeta_m^j)^m\overline{(z(t)-\epsilon \zeta_m^j)^{m+1}}} 
 \notag \\
 &=
 \frac{2m } {|z(t)-\epsilon \zeta_m^j|^{2m}}
 \mathrm{Re}\left( -e^{\sqrt{-1}\theta}
 \frac{z(t)^m-(\epsilon \zeta_m^j)^m}{z(t)-\epsilon \zeta_m^j} \right)
 \notag \\
 &=\frac{2m\, \mathrm{Re}\left(- e^{\sqrt{-1}\theta} 
 \big(z(t)^{m-1}+\epsilon \zeta_m^j z(t)^{m-2}+\cdots+(\epsilon \zeta_m^j)^{m-2}z(t)
 +(\epsilon \zeta_m^j)^{m-1}\big) \right) }
 {|z(t)-\epsilon \zeta_m^j |^{2m}},
\end{align*}
we can see
\begin{align*}
 \frac{d}{dt}\frac{1}{ | z(t)-\epsilon \zeta_m^j |^{2m} } 
 &=
 \frac{2m\, \mathrm{Re}\left(e^{\sqrt{-1}(\theta-\pi)} 
 \big(z(t)^{m-1}+\epsilon \zeta_m^j z(t)^{m-2}+\cdots+(\epsilon \zeta_m^j)^{m-2}z(t)
 + ( \epsilon \zeta_m^j )^{m-1}\big) \right) }
 { | z(t)-\epsilon \zeta_m^j |^{2m} } \\
 &=
 \frac{2m}{ | z(t)-\epsilon \zeta_m^j |^{2m}} \;
 \mathrm{Re} \left(
 \sum_{l=0}^{m-1}\left(z(t)e^{\sqrt{-1}\frac{\theta-\pi}{m-1}}\right)^{m-1-l}
 \left( e^{\sqrt{-1}\frac{\theta-\pi}{m-1}}\epsilon \zeta_m^j \right)^l \right) \\
 &\geq
 \frac{2m}{ |z(t)-\epsilon \zeta_m^j |^{2m} }\left( \max\{|z(t)|,|\epsilon|\} \right)^{m-1}
 \cos\left( \frac{(m-1)\pi}{3m} \right) \\
 &\geq
 \frac{2m}{ |z(t)-\epsilon \zeta_m^j |^{2m} }
 \left( \frac{|z(t)-\epsilon\zeta_m^j|}{2} \right)^{m-1}
 \frac{1}{2} \\
 &=
 \frac{m}{2^{m-1}|z(t)-\epsilon\zeta_m^j|^{m+1}}
 \geq \frac{m}{4^m}>0.
\end{align*}
So we have
$\dfrac{1} { |z(t)-\epsilon \zeta_m^j |^{2m} } 
\geq \dfrac{m}{4^m}  t-C$
for some constant $C>0$.
Thus we have
\[
 \lim_{t\to\infty} z(t)=\epsilon \zeta_m^j.
\]
and
the flow of $v_{\theta,\epsilon}$ starting at any point of
$R^{(j)}_{\psi_0,1}\setminus(\varpi^{-1}(D)\cap R^{(j)}_{\psi_0,1})$
converges to $(\epsilon \zeta_m^j, s, e^{\sqrt{-1}\psi})\in \varpi^{-1}(D)$.

\noindent
{\bf Case 2.} $-\dfrac{\pi}{m}\leq \arg(z_0)-\dfrac{2j\pi}{m}-\psi_0 < 0$. \\
In this case, we take $\delta>0$ satisfying
$\delta<\dfrac{\pi}{24m}$ and put
\begin{equation} \label {equation:definition of theta case 2}
 \theta^{(i)}_{\psi_0,2}:=-\frac{2j(m-1)\pi}{m}-(m-1)\psi_0+\pi-\delta.
\end{equation}
If we simply write
$\theta:=\theta^{(i)}_{\psi_0,2}$,
then we have
\[
 -\frac{\pi}{m}-\frac{\delta}{m-1}\leq
 \arg(z_0)+\frac{\theta-\pi}{m-1}
 \leq -\frac{\delta}{m-1}.
\]
We take $\frac{1}{4}<s_1<1$ and $\eta>0$ similarly to Case 1 and put
\begin{align*}
 P^{(j)}_{\psi_0,2} 
 &=
 \left\{(z,(s,e^{\sqrt{-1}\psi}))\in \Delta\times \Big[0,\frac{1}{3}\Big)\times S^1
 \left|
 \begin{array}{l}
 \displaystyle -\dfrac{3\delta}{2m-2}<\psi+\dfrac{2j\pi}{m}+\frac{\theta-\pi}{m-1}
 <\frac{3\delta}{2m-2}, \; z\neq 0 \\
 \displaystyle
 \text{
 $-\dfrac{\pi}{2}-\dfrac{\pi}{3m} <
 \arg\left(e^{-\sqrt{-1}\frac{\pi}{3m}}-e^{\sqrt{-1}\frac{\theta-\pi}{m-1}}z\right)
 < -\dfrac{(2m+1)\delta}{m-1}$} \\
 \text{and $-\dfrac{\pi}{m}-\dfrac{2\delta}{m-1}<\arg(z)+\dfrac{\theta-\pi}{m-1}<\dfrac{\pi}{3m}$} 
 \end{array}
 \right\}\right. \\
 Q^{(j)}_{\psi_0,2}
 &:=\left\{ (z,s,e^{\sqrt{-1}\psi})\in\Delta\times\Big[0,\frac{1}{3}\Big)\times S^1
 \left|
 \begin{array}{l}
 e^{\sqrt{-1}\frac{\theta-\pi}{m-1}}z-\eta s e^{\sqrt{-1}\pi}\neq 0, \\
 -\dfrac{\pi}{6m} <\arg\left( e^{\sqrt{-1}\frac{\theta-\pi}{m-1}}z-\eta s e^{\sqrt{-1}\pi}\right)
 <\dfrac{\pi}{6m}, \\
 \text{$-\dfrac{3\delta}{2m-2}<\psi+\dfrac{2k\pi}{m}+\dfrac{\theta-\pi}{m-1}
 <\dfrac{3\delta}{2m-2}$ and} \\
 \text{$\dfrac{\pi}{3m}\leq\arg(z)+\dfrac{\theta-\pi}{m-1}
 \leq 2\pi-\dfrac{\pi}{m}-\dfrac{2\delta}{m-1}$ for $z\neq 0$}
 \end{array}
 \right\}\right.. \\
 R^{(j)}_{\psi_0,2}
 &:=
 P^{(j)}_{\psi_0,2} \cup Q^{(j)}_{\psi_0,2}.
\end{align*}
By the similar argument to Case 1, we can see that
$(z_0,s_0,e^{\sqrt{-1}\psi_0})\in R^{(j)}_{\psi_0,2}$
and
the flow $(z(t),s,e^{\sqrt{-1}\psi})_{t\geq 0}$ of $v_{\epsilon,\theta}$
starting at a point in
$R^{(j)}_{\psi_0,2}\setminus(\varpi^{-1}(D)\cap R^{(j)}_{\psi_0,2})$
satisfies
\[
 \lim_{t\to\infty} z(t)=\epsilon \zeta_m^j.
\]

If we put
\[
 U:= (\{0\}\times\{0\}\times S^1) \cup \bigcup R^{(j)}_{\psi_0,2},
\]
then we can see by the construction of $R^{(j)}_{\psi_0,2}$
that 
$\left\{z\in\Delta\left| \, |z|<\frac{1}{4}\right\}\right.
\times \big[ 0,\frac{1}{3}\big) \times S^1$
is contained in $U$.
So we can write
\[
 U=\left(\Big\{z\in\Delta \Big| \, |z|<\frac{1}{4}\Big\}
 \times \Big[ 0,\frac{1}{3}\Big) \times S^1\right)
 \cup  \bigcup R^{(j)}_{\psi_0,2}
\]
and we can see that
$U$ is an open neighborhood of $\{0\}\times\{0\}\times S^1$
in $\Delta\times[0,1)\times S^1$.
If we put
\[
 W^{(j)}_{\psi_0,2}:=
 R^{(j)}_{\psi_0,2}\setminus(\varpi^{-1}(D)\cap R^{(j)}_{\psi_0,2}),
\]
then we have an open covering
\[
 U\setminus(U\cap\varpi^{-1}(D))=
 \bigcup W^{(j)}_{\psi_0,2}.
\]
This covering satisfies the statement of the proposition.
\end{proof}

\subsection {Fundamental solution with an asymptotic property}

We use the same notations as in subsection \ref{subsection:flow}
Take a point
$p_0\in W^{(j)}_{\psi_0,\xi}$
and consider the holomorphic solution
$\big(z(\tau),s,e^{\sqrt{-1}\psi}\big)$ of the differential equation
\begin{equation} \label {equation:equation of holomorphic flow}
 \frac{d z(\tau)}{d\tau}=e^{\sqrt{-1}\theta}(z(\tau)^m-\epsilon^m)
\end{equation}
satisfying $\big(z(0),s,e^{\sqrt{-1}\psi}\big)=p_0$,
where $\epsilon=s e^{\sqrt{-1}\psi}$
and $\theta=\theta^{(j)}_{\psi_0,\xi}$.
If we take $t_1,u_1\in\mathbb{R}$
and if we fix $t_1+\sqrt{-1}u_1$ constant,
$\big(z(t+t_1+\sqrt{-1}u_1),s,e^{\sqrt{-1}\psi}\big)_{t\geq 0}$
coincides with the flow
$\big(z_{t_1+\sqrt{-1}u_1}(t),s,e^{\sqrt{-1}\psi}\big)$ of 
$v_{\epsilon,\theta}$
satisfying $z_{t_1+\sqrt{-1}u_1}(0)=z(t_1+\sqrt{-1}u_1)$.
So we can extend the solution $(z(\tau),s,e^{\sqrt{-1}\psi})$
by an analytic continuation to a holomorphic function
in $\tau$ on an open neighborhood of $\mathbb{R}_{\geq 0}$
whose image by $z(\tau)$ is an open neighborhood of the flow
of $v_{\epsilon,\theta}$ starting at the point $p_0$.
Note that we have
\[
 \lim_{t\to\infty} z(t+\sqrt{-1}u_1)=\epsilon \zeta_m^j
\]
and
$z_{t_1+\sqrt{-1}u_1}(t)=z(t+t_1+\sqrt{-1}u_1)=z_{\sqrt{-1}u_1}(t+t_1)$.

The following theorem is a weak unfolded analogue
of the existence theorem of fundamental solutions
with an asymptotic property \cite[Theorem 12.1]{Wasow}
in the irregular singular case.
It is an easy restricted case of
a more general theorem in \cite{Hurtubise-Lambert-Rousseau}
and \cite{Hurtubise-Rousseau},
which is one of the main tools in the unfolding theory
by Hurtubise, Lambert and Rousseau.

\begin{theorem}[{\cite[Theorem 5.3]{Hurtubise-Lambert-Rousseau}},
{\cite[Theorem 2.5]{Hurtubise-Rousseau}}]
\label {theorem:existence-of-fundamental-solutions}
Consider the linear differential equation
\begin{equation} \label {equation:unfolded-differential-equation}
 \begin{pmatrix} \dfrac{df_1}{dz} \\ \vdots \\ \dfrac{df_r}{dz} \end{pmatrix}
 =
 \frac{A(z,\epsilon,w)} { (z^m-\epsilon^m) }
 \begin{pmatrix} f_1 \\ \vdots \\ f_r \end{pmatrix}
\end{equation}
on the polydisk $\Delta\times\Delta \times\Delta^s$,
where $A(z,\epsilon,w)$ is an $r\times r$ matrix of holomorphic functions
in $(z,\epsilon,w)=(z,\epsilon,w_1,\ldots,w_s)\in\Delta\times\Delta\times\Delta^s$
such that 
\[
 A(z,\epsilon,w)-
 \begin{pmatrix}
   \nu_1(z,\epsilon,w) & \cdots & 0 \\
   \vdots & \ddots & \vdots \\
   0 & \cdots & \nu_r(z,\epsilon,w)
 \end{pmatrix}
 \in (z^m-\epsilon^m) M_r({\mathcal O}_{\Delta\times\Delta\times\Delta^s}^{hol}),
\]
where $\nu_1(z,\epsilon,w),\ldots,\nu_r(z,\epsilon,w)$ are polynomials in $z$
whose coefficients are holomorphic functions in $\epsilon,w$
and $\nu_1(\epsilon\zeta_m^j,\epsilon,w),\ldots,\nu_r(\epsilon\zeta_m^j,\epsilon,w)$
are mutually distinct for any fixed $j$, $\epsilon$ and $w$.
Then for a certain choice of the open covering
$\{ W^{(j)}_{\psi_0,\xi} \}$ of $U\setminus (\varpi^{-1}(D)\cap U)$
in Proposition \ref{proposition:open-covering},
there are an open covering
\[
 W^{(j)}_{\psi_0,\xi}\ \times \Delta^s =
 \bigcup_{p\in W^{(j)}_{\psi_0,\xi}}  S^{(j)}_{\psi_0,\xi,p},
\]
and a matrix
$Y_{\vartheta}(z)=
\left(y_1^{\vartheta}(z),\ldots,y_r^{\vartheta}(z)\right)$
of solutions on
$S_{\vartheta}:=S^{(j)}_{\psi_0,\xi,p}$
of the differential equation (\ref{equation:unfolded-differential-equation}),
that is,
\[
 \frac{d \, Y_{\vartheta}(z) }{dz}
 =\frac{A(z,\epsilon,w)}{z^m-\epsilon^m} \; Y_{\vartheta}(z)
\]
such that for the solution $z(\tau)$ of the holomorphic differential equation 
(\ref  {equation:equation of holomorphic flow}) with the initial value
$z(0)=p\in S^{(j)}_{\psi_0,\xi,p}$,
the limit
\begin{align*}
 &
 \lim_{t\to\infty}  
 Y_{\vartheta}(z(t+u))
 \exp \left( -
 \begin{pmatrix}
  \int_{t_0}^t \nu_1(z(t+u))e^{\sqrt{-1}\theta} dt & \cdots & 0 \\
  \vdots & \ddots & \vdots \\
  0 & \cdots & \int_{t_0}^t \nu_r(z(t+u))e^{\sqrt{-1}\theta}dt
 \end{pmatrix}
 \right) \\
 &=C^{\vartheta}_u(s,e^{\sqrt{-1}\psi},w) 
\end{align*}
along the flow $(z(t+u))_{t\geq 0}$
exists and the limit 
$C^{\vartheta}_u(s e^{\sqrt{-1}\psi} ,w)$
is a diagonal matrix of functions
continuous in $s,e^{\sqrt{-1}\psi},w,t_1,u_1$ 
and holomorphic in $w$ and $\epsilon=s e^{\sqrt{-1}\psi}\neq 0$.
\end{theorem}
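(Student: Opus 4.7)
The strategy is to parameterize along the flows furnished by Proposition \ref{proposition:open-covering}, reduce to a non-autonomous linear system on a half-line, and solve it by a contraction mapping in a suitable space of bounded matrix-valued functions. Concretely, on each $W^{(j)}_{\psi_0,\xi}$, fix an initial point $p$ and a fixed branch of the multi-valued time function $\tau$ determined by $dz/d\tau = e^{\sqrt{-1}\theta}(z^m-\epsilon^m)$. Along the resulting holomorphic flow $z(\tau)$, the equation (\ref{equation:unfolded-differential-equation}) becomes
\[
 \frac{dY}{d\tau}=e^{\sqrt{-1}\theta}A(z(\tau),\epsilon,w)\,Y
 =e^{\sqrt{-1}\theta}\bigl(\Lambda(z(\tau),\epsilon,w)+(z(\tau)^m-\epsilon^m)B(z(\tau),\epsilon,w)\bigr)\,Y,
\]
where $\Lambda=\mathrm{diag}(\nu_1,\ldots,\nu_r)$ and $B$ is holomorphic. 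Since $z(\tau)\to\epsilon\zeta_m^j$ as $\mathrm{Re}(\tau)\to\infty$ by Proposition \ref{proposition:open-covering}, the perturbation term $(z^m-\epsilon^m)B$ decays to zero along the flow, in fact at a polynomial rate coming from $|z(t)-\epsilon\zeta_m^j|^{-2m}\gtrsim t$ established in the proof of that proposition.

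The next step is the diagonal gauge change: set
\[
 D(\tau):=\exp\Bigl(\int_{t_0}^{\tau}e^{\sqrt{-1}\theta}\Lambda(z(\tau'),\epsilon,w)\,d\tau'\Bigr),
 \qquad Y=D(\tau)U,
\]
so that $U$ satisfies $\frac{dU}{d\tau}=e^{\sqrt{-1}\theta}(z^m-\epsilon^m)\,D^{-1}BD\,U$. Writing $U=I+V$ and splitting $V$ into entries $V_{kl}$, the diagonal part $V_{kk}$ evolves by an integrable scalar equation with kernel $e^{\sqrt{-1}\theta}(z^m-\epsilon^m)B_{kk}(z(\tau))$, which converges absolutely as $\tau\to\infty$ thanks to the decay of $z^m-\epsilon^m$ along flows. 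For the off-diagonal entries $V_{kl}$ ($k\neq l$), the kernel carries an extra multiplicative factor $\exp\bigl(\int_{t_0}^{\tau}(\nu_k-\nu_l)e^{\sqrt{-1}\theta}d\tau'\bigr)$, whose growth or decay is governed by the sign of $\mathrm{Re}\bigl((\nu_k(\epsilon\zeta_m^j)-\nu_l(\epsilon\zeta_m^j))e^{\sqrt{-1}\theta}\bigr)$ in the limit.

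At this point one refines the cover of $W^{(j)}_{\psi_0,\xi}\times\Delta^s$ into finitely many connected pieces $S^{(j)}_{\psi_0,\xi,p}$ on which the signs of $\mathrm{Re}\bigl((\nu_k-\nu_l)e^{\sqrt{-1}\theta}\bigr)|_{z=\epsilon\zeta_m^j}$ are constant for every pair $k\neq l$; genericity of the $\nu_k(\epsilon\zeta_m^j,\epsilon,w)$ in the hypothesis allows one to assume these real parts are nonzero on the closed subregion. On $S_\vartheta$, set up the integral equation by integrating the off-diagonal components $V_{kl}$ backward from $+\infty$ when the corresponding sign is positive (so that the exponential factor is a contraction) and forward from $t_0$ when it is negative, while the diagonal components are integrated forward from $t_0$. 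This produces a Volterra-type operator on the Banach space of bounded continuous matrix-valued functions on $S_\vartheta$. Standard contraction estimates, combined with the integrable decay of $(z^m-\epsilon^m)\,D^{-1}BD$ recorded above, yield a unique fixed point $V$, and hence $Y_\vartheta=D(I+V)$. The limit $C^\vartheta_u$ then equals $\lim_{t\to\infty}(I+V(t+u))$, which by construction is diagonal (all off-diagonal entries have been made to vanish at the chosen endpoint of integration), continuous in the parameters, and holomorphic in $(w,\epsilon)$ on the locus $\epsilon\neq 0$ by standard holomorphy of the fixed point in parameters.

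The main obstacle will be the bookkeeping at the intersection of several regions: one must choose the subdivision $\{S^{(j)}_{\psi_0,\xi,p}\}$ so that on each piece every ordered pair $(k,l)$ has a well-defined integration direction and the integral operator is actually contracting uniformly, while still ensuring that $z(\tau)$ for every parameter value in the subregion remains in the flow domain of Proposition \ref{proposition:open-covering} for the whole ray $[t_0,\infty)$. Handling the degeneration $\epsilon\to 0$, where several of the limit points $\epsilon\zeta_m^j$ collide and the integrands behave singularly, forces a careful uniform estimate in $\epsilon$; this is exactly where the unfolded theory of Hurtubise--Lambert--Rousseau diverges from the classical Wasow construction and requires the specific shape of the sectors $W^{(j)}_{\psi_0,\xi}$ produced in Proposition \ref{proposition:open-covering}.
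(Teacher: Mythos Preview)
Your overall architecture---restrict to the real flow of $v_{\epsilon,\theta}$, reduce to a perturbed diagonal system, and run a Levinson-type fixed-point argument---is exactly the mechanism behind the result, and is what the paper does as well (it simply quotes Levinson's theorem \cite{Levinson} rather than redoing the contraction). But two points need correction.

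First, the decay rate you invoke is too weak. The bound $|z(t)-\epsilon\zeta_m^j|^{-2m}\gtrsim t$ from the proof of Proposition~\ref{proposition:open-covering} only yields $|z(t)-\epsilon\zeta_m^j|\lesssim t^{-1/(2m)}$, hence $|z(t)^m-\epsilon^m|\lesssim t^{-1/2}$ at $\epsilon=0$ (and no better uniformly), which is \emph{not} integrable on $[t_0,\infty)$; your Volterra operator is then not a contraction. The paper instead proves directly, along the flow, that
\[
\frac{d}{dt}\,|z(t+u)^m-\epsilon^m|^{-(m-1)/m}\;\ge\;\frac{m-1}{2},
\]
giving $|z^m-\epsilon^m|\lesssim t^{-m/(m-1)}$ with $m/(m-1)>1$, which \emph{is} integrable and makes the Levinson hypotheses (equivalently, your contraction estimate) go through uniformly in $(\epsilon,w)$. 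You need this sharper estimate.

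Second, your gauge is on the wrong side. With $Y=D\,U$ one has $U=D^{-1}Y$, whereas the asymptotic in the statement is $Y\,D^{-1}$; these differ by conjugation by $D$ and do not have the same limit. The correct normalization is $\Phi:=Y\,D^{-1}$, and column by column one sets up the integral equation for $\phi_k=y_k\,e^{-\int\lambda_k}$ so that $\phi_k\to e_k$; then $\Phi\to I$. In particular your sentence ``all off-diagonal entries vanish at the chosen endpoint'' is not quite right: the entries integrated forward from $t_0$ tend to $0$ because of the exponential damping $e^{\int_s^t(\lambda_j-\lambda_k)}$ combined with integrability of the perturbation, not because of the endpoint. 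The paper sidesteps this by first producing, via Levinson, a solution $Y^u$ along each real translate $z(t+u)$ with diagonal limit, then taking a single holomorphic-in-$z$ fundamental solution $Y_\vartheta$ with matched initial value at $u=0$, writing $Y_\vartheta(z(t+u))=Y^u(t)\,P(u)$, and showing $P(u)$ is upper triangular (hence the limit stays diagonal) first for real $u$ and then for complex $u$ by analyticity. This two-step route cleanly separates the asymptotic construction from the holomorphy in $z$, which your one-step sketch leaves implicit.
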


\begin{proof}
For the solution $z(\tau)$ of the differential equation
(\ref {equation:equation of holomorphic flow})
with an initial value $(z(0),s,e^{\sqrt{-1}\psi})=p$ in 
$W^{(j)}_{\psi_0,\xi}$,
we consider $z(t+u)$
for $u\in\mathbb{C}$ with $|u|\ll 1$.
If we write $\epsilon:=s e^{\sqrt{-1}\psi}$,
the restriction of the differential equation
(\ref{equation:unfolded-differential-equation})
to the flow $z(t+u)$ of $v_{\epsilon,\theta}$ becomes
\begin{equation*}
 \begin{pmatrix} \dfrac{df_1(z(t+u),\epsilon,w)}{dt} \\ \vdots \\
 \dfrac{df_r(z(t+u),\epsilon,w)}{dt} \end{pmatrix}
 =
 e^{\sqrt{-1}\theta}A(z(t+u),\epsilon,w)
 \begin{pmatrix} f_1(z(t+u),\epsilon,w) \\ \vdots
  \\ f_r(z(t+u),\epsilon,w)
 \end{pmatrix}.
\end{equation*}
Since the flow $(z(t+u),s,e^{\sqrt{-1}\psi},w)$ is contained in
$W^{(j)}_{\psi_0,\xi}\times\Delta^s$,
we have
$\lim_{t\to\infty} z(t+u)= \epsilon \zeta_m^j$ and
\begin{align*}
 \lim_{t\to\infty} e^{\sqrt{-1}\theta} A(z(t+u),\epsilon,w)
 &=e^{\sqrt{-1}\theta} A(\epsilon \zeta_m^j,\epsilon, w) \\
 &=
 \begin{pmatrix}
  e^{\sqrt{-1}\theta}\nu_1(\epsilon \zeta_m^j,\epsilon,w) & \cdots & 0 \\
  \vdots & \ddots & \vdots \\
  0 & \cdots & e^{\sqrt{-1}\theta}\nu_r(\epsilon \zeta_m^j,\epsilon,w)
 \end{pmatrix}.
\end{align*}
We may assume by a suitable choice of $\delta>0$ for defining
$\theta=\theta^{(j)}_{\psi_0,\xi}$ in
(\ref {equation:definition of theta case 1}) and
(\ref {equation:definition of theta case 2}) 
that the real parts
$\mathrm{Re}\left( e^{\sqrt{-1}\theta}\nu_1(\epsilon \zeta_m^j,\epsilon,w) \right),
\ldots,\mathrm{Re}\left( e^{\sqrt{-1}\theta}\nu_r(\epsilon \zeta_m^j,\epsilon,w) \right)$
of the eigenvalues of the matrix $e^{\sqrt{-1}\theta} A(\epsilon \zeta_m^j,\epsilon, w)$
are mutually distinct.
Moreover we may assume by replacing the order of a holomorphic frame that
\begin{equation} \label {equation:assumption of ordering inequality}
 \mathrm{Re}\left( e^{\sqrt{-1}\theta}\nu_1(\epsilon \zeta_m^j,\epsilon,w) \right)
 <\cdots <
 \mathrm{Re}\left( e^{\sqrt{-1}\theta}\nu_r(\epsilon \zeta_m^j,\epsilon,w) \right)
\end{equation}
holds.
As in the proof of Proposition \ref{proposition:open-covering},
we have
\[
 -\frac{(m-1)\pi}{3m} \leq
 \arg\left(\left(e^{\sqrt{-1}\frac{\theta-\pi}{m-1}}z(t+u)\right)^{m-1}
 \right)
 \leq \frac{(m-1)\pi}{3m}
\]
for sufficiently large $t>0$.
So we have
\begin{align*} 
 \dfrac{d}{dt}|z(t+u)^m-\epsilon^m| 
 &=
 \frac{1}{2 (|z(t+u)^m-\epsilon^m|^2)^{\frac{1}{2}}}
 \dfrac{d}{dt}\left( (z(t+u)^m-\epsilon^m)
 \overline{(z(t+u)^m-\epsilon^m)} \right) \\
 &=
 \frac  { 2 \: \mathrm{Re}\left(
 m z(t+u)^{m-1}z'(t+u)
 \overline{(z(t+u)^m-\epsilon^m)} \right) } 
 {2|z(t+u)^m-\epsilon^m|} \\
 &=
 \frac  { \mathrm{Re}\left(
 m e^{\sqrt{-1}\theta}z(t+u)^{m-1}(z(t+u)^m-\epsilon^m)
 \overline{(z(t+u)^m-\epsilon^m)} \right) } 
 {|z(t+u)^m-\epsilon^m|} \\
 &=
 \mathrm{Re} \left( -m \left(e^{\sqrt{-1}\frac{\theta-\pi}{m-1}}z(t+u)\right)^{m-1} \right)
 \left | z(t+u)^m-\epsilon^m \right |  \\
 &\leq
 -m\cos\left(\frac{(m-1)\pi}{3\pi}\right)
 \left|z(t+u)^{m-1}\right|  \left | z(t+u)^m-\epsilon^m \right | \\
 &\leq
 -\frac{m}{2}\left | z(t+u)^m-\epsilon^m \right |^{\frac{m-1}{m}}
 \left | z(t+u)^m-\epsilon^m \right |
\end{align*}
for sufficiently large $t>0$, from which we have
\begin{align*}
 \dfrac{d}{dt}
 \left( \left | z(t+u)^m-\epsilon^m \right |^{-\frac{m-1}{m}} \right)
 &=
 -\frac{m-1}{m}  \left | z(t+u)^m-\epsilon^m \right |^{-\frac{m-1}{m}-1}
 \dfrac{d}{dt}|z(t+u)^m-\epsilon^m|  \\
 &\geq
 \frac{m-1}{2}.
\end{align*}
So there exists a constant $C>0$ such that
\[
 \left| z(t+u)^m-\epsilon^m \right|^{-\frac{m-1}{m}}
 \geq \frac{m-1}{2}t-C
\]
holds for sufficiently large $t>0$.
If we write $\nu_k=\sum_{l=0}^q b_l(\epsilon,w)z^l$, we have
\[
 \frac{d}{dt} \ 
 e^{\sqrt{-1}\theta}\nu_k(z(t+u),\epsilon,w)
  =e^{\sqrt{-1}\theta}\sum_{l=0}^q l \: b_l(\epsilon,w)z(t+u)^{l-1}
  e^{\sqrt{-1}\theta}(z(t+u)^m-\epsilon^m).
\]
So there is a constant $C'>0$ satisfying
$\left| \dfrac{d}{dt} \ 
 e^{\sqrt{-1}\theta}\nu_j(z(t+u),\epsilon,w) \right| 
 \leq C' \left| z(t+u)^m-\epsilon^m \right|$
 and
\begin{align*}
 \int_{a_0}^{\infty} \left| \frac{d}{dt} \ 
 e^{\sqrt{-1}\theta}\nu_k(z(t+u),\epsilon,w) \right| dt 
 &\leq
 C' \int_{a_0}^{\infty} \left| z(t+u)^m-\epsilon^m \right| dt \\
 &\leq
 C'\int_{a_0}^{\infty} \left( \frac{m-1}{2}t-C \right)^{-1-\frac{1}{m-1}} dt < \infty
\end{align*}
for a reference point $a_0\in\mathbb{R}_{>0}$.
Similarly we have
\[
 \int_{a_0}^{\infty} \left\| A(z(t+u),\epsilon,w)-
 \begin{pmatrix}
   \nu_1(z(t+u),\epsilon,w) & \cdots & 0 \\
   \vdots & \ddots & \vdots \\
   0 & \cdots & \nu_r(z(t+u),\epsilon,w)
 \end{pmatrix} \right\| dt <\infty
\]
because the absolute values of the entries of the matrix
\[
 A(z(t+u),\epsilon,w)-
 \begin{pmatrix}
   \nu_1(z(t+u),\epsilon,w) & \cdots & 0 \\
   \vdots & \ddots & \vdots \\
   0 & \cdots & \nu_r(z(t+u),\epsilon,w)
 \end{pmatrix}
\]
are bounded by $C'' \left| z(t+u)^m-\epsilon^m \right|$
for some constant $C''>0$.
Thus, by the theorem of Levinson (\cite[Theorem 1]{Levinson}),
there are $t_0>0$ and a matrix
\[
 Y^u(t,s,e^{\sqrt{-1}\psi},w)
 =\left( y^u_1(t,s,e^{\sqrt{-1}\psi},w) \, , \, \ldots \, , \,
 y^u_r(t,s,e^{\sqrt{-1}\psi},w) \right)
\]
of solutions
$y^u_1(t,s,e^{\sqrt{-1}\psi},w),\ldots,y^u_r(t,s,e^{\sqrt{-1}\psi},w)$
of the differential equation
\begin{equation} \label {equation:restricted differential equation}
 \frac{d y(t)}{dt}= e^{\sqrt{-1}\theta} A(z(t+u),\epsilon,w) \; y(t)
\end{equation}
defined for $t>t_0-b$ for some $b>0$, which satisfies
\begin{align}
 &\lim_{t\to\infty}
 Y^u(t,s,e^{\sqrt{-1}\psi},w)
  \; \exp \left( -
 \begin{pmatrix}
  \int_{t_0}^t \nu_1(z(t+u))e^{\sqrt{-1}\theta}dt & \cdots & 0 \\
  \vdots & \ddots & \vdots \\
  0 & \cdots & \int_{t_0}^t \nu_r(z(t+u))e^{\sqrt{-1}\theta}dt
 \end{pmatrix} \right) 
  \label {equation:asymptotic limit of modified fundamental solution} \\
 &=
 C_u(s,e^{\sqrt{-1}\psi},w)
 =
 \begin{pmatrix}
  c_1(u) & \cdots & 0 \\
  \vdots & \ddots & \vdots \\
  0 & \cdots & c_r(u)
 \end{pmatrix}
 \notag 
\end{align}
with $C_u(\epsilon,w)$
constant in $z$
satisfying
\[
 A(\epsilon \zeta_m^j,\epsilon,w) \; C_u(s,e^{\sqrt{-1}\psi},w)
 =
 C_u(s,e^{\sqrt{-1}\psi},w) \;
 \begin{pmatrix}
  \nu_1(\epsilon \zeta_m^j,\epsilon,w) & \cdots & 0 \\
  \vdots & \ddots & \vdots \\
  0 & \cdots & \nu_r(\epsilon \zeta_m^j,\epsilon,w)
 \end{pmatrix}.
\]
Notice that $y^u_k(t,s,e^{\sqrt{-1}\psi},w)$
is constructed in \cite{Levinson} by applying 
an infinite sum and integrations of the form $\int_a^t$ or $\int_t^{\infty}$
to given functions in $t,s,e^{\sqrt{-1}\psi},w,u$ constructed from $A(z,\epsilon,w)$.
So we can see by their construction in \cite{Levinson}
that the solutions
$y^u_k(t,s,e^{\sqrt{-1}\psi},w)$
are functions continuous in $s,e^{\sqrt{-1}\psi},w,u$
and holomorphic in $w,u$ and $\epsilon\neq 0$.
Furthermore,
$C_u(s,e^{\sqrt{-1}\psi}, w)$
is a matrix of functions continuous in
$s,e^{\sqrt{-1}\psi},w,u$
and holomorphic  in $w$, $u$ and $\epsilon\neq 0$.
Since $A(\epsilon \zeta_m^j,\epsilon,w)$ is a diagonal matrix
with the distinct eigenvalues 
by the assumption,
$C_u(\epsilon,w)$
becomes a diagonal matrix.

By the fundamental theorem of ordinary linear differential equations,
there exists a fundamental solution
\[
 Y_{\vartheta}(z,s,e^{\sqrt{-1}\psi},w)
 =\left( y_1^{\vartheta}(z,s,e^{\sqrt{-1}\psi},w),\ldots,
 y_r^{\vartheta}(z,s,e^{\sqrt{-1}\psi},w) \right)
\]
of the differential equation
(\ref {equation:unfolded-differential-equation}),
that is to say, 
\[
 \frac{d Y_{\vartheta}}{ dz}=\frac{A(z)}{z^m-s^me^{\sqrt{-1}m\psi}}Y_{\vartheta}
\]
in a neighborhood of $(z(t_0),s,e^{\sqrt{-1}\psi},w)$ which satisfies the initial condition
\[
 Y_{\vartheta}(z(t_0),s,e^{\sqrt{-1}\psi},w)=Y^0(t_0,s,e^{\sqrt{-1}\psi},w).
 \]
Here the suffix $\vartheta$ means the data
$p,j,\psi_0,\xi$.
Since the solutions of the linear differential equation
(\ref {equation:unfolded-differential-equation})
form a local system on $U\setminus(D\cap U)$,
we can extend $Y_{\vartheta}(z)$ to a matrix of holomorphic functions
in a neighborhood of $\{z(t)|t\geq t_0\}$
by an analytic continuation.
We fix $u\in\mathbb{C}$ close to the origin $0$.
Since both 
$Y_{\vartheta}(z(t+u),s,e^{\sqrt{-1}\psi},w):=
 \Big(
  y^{\vartheta}_1(z(t+u),s,e^{\sqrt{-1}\psi},w) \, ,  \ldots ,
  y^{\vartheta}_r(z(t+u),s,e^{\sqrt{-1}\psi},w)$
and $Y^u(t,s,e^{\sqrt{-1}\psi},w)$
satisfy the same linear differential equation
\[
 \frac{dY}{dt}=e^{\sqrt{-1}\theta}A(z(t+u)) \: Y,
\]
there is a matrix
$P(u)$
of functions continuous in
$s,e^{\sqrt{-1}\psi},w,u$
and holomorphic in $w$ and $u$ satisfying
\[
 Y_{\vartheta}(z(t+u),s,e^{\sqrt{-1}\psi},w)
 =
 Y^u(t,s,e^{\sqrt{-1}\psi},w) \: P(u)
\]
for $t$ close to $t_0$.
We put
$\Lambda_k(t,u):=
\exp\left( \int_{t_0}^t \nu_k(z(t+u))e^{\sqrt{-1}\theta}dt \right)$.
By (\ref {equation:assumption of ordering inequality}),
$\lim_{t\to\infty}\Lambda_k(t)^{-1}\Lambda_{k'}(t)$
is divergent if $k<k'$.
If $u\in\mathbb{R}$ is a real number,
we can see by the property 
(\ref {equation:asymptotic limit of modified fundamental solution})
for $u=0$ that 
\begin{align*}
 &\lim_{t\to\infty} Y_{\vartheta}(z(t+u))
 \; \exp  
 \begin{pmatrix}
 -\int_{t_0}^t  \nu_1(z(t+u)) e^{\sqrt{-1}\theta} dt & \cdots & 0 \\
  \vdots & \ddots & \vdots \\
  0 & \cdots &  -\int_{t_0}^t \nu_r(z(t+u)) e^{\sqrt{-1}\theta} dt
 \end{pmatrix}     \\
 &=
 \lim_{t\to\infty} Y_{\vartheta}(z(t+u))
 \; \exp  
 \begin{pmatrix}
  -\int_{t_0+u}^{t+u}  \nu_1(z(t')) e^{\sqrt{-1}\theta} dt' & \cdots & 0 \\
  \vdots & \ddots & \vdots \\
  0 & \cdots &  -\int_{t_0+u}^{t+u} \nu_r(z(t')) e^{\sqrt{-1}\theta} dt'
 \end{pmatrix}     \\
 &=
 C_u(s,e^{\sqrt{-1}\psi},w) 
 \;
 \begin{pmatrix}
   \exp  \left(\int_{t_0}^{t_0+u}  \nu_1(z(t')) e^{\sqrt{-1}\theta} dt'\right) & \cdots & 0 \\
  \vdots & \ddots & \vdots \\
  0 & \cdots &   \exp  \left(\int_{t_0}^{t_0+u} \nu_r(z(t')) e^{\sqrt{-1}\theta} dt'\right)
 \end{pmatrix}   
\end{align*}
is convergent and its limit is a diagonal matrix.
If we put
\[
 Y^u(t)=
 \begin{pmatrix} y^u_1(t),\ldots, y^u_r(t) \end{pmatrix},
 \hspace{20pt}
 P(u)
 =
 \begin{pmatrix}
  p_{1,1}(u) & \cdots & p_{1,r}(u) \\
  \vdots & \ddots & \vdots \\
  p_{r,1}(u) & \cdots & p_{r,r}(u)
 \end{pmatrix},
\]
then, for $u\in\mathbb{R}$,
\begin{align*}
 &Y_{\vartheta}(z(t+u))
 \begin{pmatrix}
 \Lambda_1(t)^{-1} & \cdots & 0 \\
  \vdots & \ddots & \vdots \\
  0 & \cdots & \Lambda_r(t)^{-1}
 \end{pmatrix}
 =
 Y^u(t) \: P(u) \: 
 \begin{pmatrix}
 \Lambda_1(t)^{-1} & \cdots & 0 \\
  \vdots & \ddots & \vdots \\
  0 & \cdots & \Lambda_r(t)^{-1}
 \end{pmatrix} \\
 &=
 \begin{pmatrix} y^u_1(t),\ldots, y^u_r(t) \end{pmatrix}
 \begin{pmatrix}
  p_{1,1}(u) & \cdots & p_{1,r}(u) \\
  \vdots & \ddots & \vdots \\
  p_{r,1}(u) & \cdots & p_{r,r}(u)
 \end{pmatrix}
 \begin{pmatrix}
 \Lambda_1(t)^{-1} & \cdots & 0 \\
  \vdots & \ddots & \vdots \\
  0 & \cdots & \Lambda_r(t)^{-1}
 \end{pmatrix} \\
 &=
 \left( \sum_{k=1}^r p_{k,1}(u)\Lambda_1(t)^{-1}y^u_k(t),
 \ldots,  \sum_{k=1}^r p_{k,r}(u)\Lambda_r(t)^{-1}y^u_k(t) \right)
\end{align*}
is bounded when $t\to\infty$.
Note that
\[
 \Lambda_l(t)^{-1}y^u_k(t)=
 \left(\Lambda_l(t)^{-1}\Lambda_k(t)\right) \;
 \left(\Lambda_k(t)^{-1} y^u_k(t) \right)
\]
is divergent for $l<k$ when $t\to\infty$,
because $\lim_{t\to\infty}  \Lambda_l(t)^{-1}\Lambda_k(t)$
is divergent
and $\lim_{t\to\infty} \Lambda_k(t)^{-1} y^u_k(t) =c_k(u)e_k\neq 0$.
So we should have $p_{k,l}(u)=0$ for $k>l$ and $u\in\mathbb{R}$ with $|u|\ll 1$.
Since $p_{k,l}(u)$ is holomorphic in $u$, we have $p_{k,l}(u)=0$
for $u\in\mathbb{C}$ with $|u|\ll 1$.
In other words, $P(u)$ is an upper triangular matrix of holomorphic functions in $u$.
Then we have, for $u\in\mathbb{C}$ with $|u| \ll 1$, that
\begin{align*}
 &\lim_{t\to\infty} Y_{\vartheta}(z(t+u))
 \; \exp  
 \begin{pmatrix}
 -\int_{t_0}^t  \nu_1(z(s+u)) e^{\sqrt{-1}\theta} ds & \cdots & 0 \\
  \vdots & \ddots & \vdots \\
  0 & \cdots &  -\int_{t_0}^t \nu_r(z(s+u)) e^{\sqrt{-1}\theta} ds
 \end{pmatrix}     \\
 &=
 \lim_{t\to\infty}
 \begin{pmatrix} y^u_1(t),\ldots, y^u_r(t) \end{pmatrix}
 \begin{pmatrix}
  p_{1,1}(u) & \cdots & p_{1,r}(u) \\
  \vdots & \ddots & \vdots \\
  0 & \cdots & p_{r,r}(u)
 \end{pmatrix}
 \begin{pmatrix}
 \Lambda_1(t)^{-1} & \cdots & 0 \\
  \vdots & \ddots & \vdots \\
  0 & \cdots & \Lambda_r(t)^{-1}
 \end{pmatrix} 
\end{align*}
converges to a diagonal matrix
$C^{\vartheta}_u(s,e^{\sqrt{-1}\psi},w)$.
\end{proof}

\begin{remark} \rm
 Although a formal solution transforming
 an unfolded linear differential equation to
 a normal form is given in \cite[Theorem 3.2]{Hurtubise-Lambert-Rousseau},
 we cannot expect to construct a fundamental solution
 of (\ref{equation:unfolded-differential-equation})
 with an asymptotic property with respect to the formal solution
 as in the irregular singular case
 (\cite[Theorem 12.1]{Wasow}).
\end{remark}

\section{Construction of a local horizontal lift}
\label {section:local horizontal lift}

In this section,
we construct an integrable connection which is  a first order
infinitesimal extension of a given local relative connection.
We call this extension a local horizontal lift,
or a block of local horizontal lifts in section \ref {section:construction of unfolding},
which is a key part in the construction  of an unfolding
of the unramified irregular singular generalized isomonodromic deformation.
A basic idea in this section is to extend a local connection to a global
connection on $\mathbb{P}^1$
with regular singularity at $\infty$.
Unfortunately, our construction of a local horizontal lift
is not canonical but it is systematically determined.
So it enables us to construct a non-canonical
global horizontal lift in section \ref  {section:construction of unfolding},
which induces an unfolded
generalized isomonodromic deformation.

\subsection {Extension of a local connection to a global connection
on $\mathbb{P}^1$}
\label {subsection:global connection on P^1}

Consider the divisor
\[
 D:= \{ (z,\epsilon,w)\in\Delta\times\Delta\times\Delta^s
 | z^m-\epsilon^m=0 \}
\]
on the polydisk $\Delta\times \Delta\times\Delta^s$,
where $\Delta=\{z\in\mathbb{C} \, | \, |z|<1\}$.
If we put
\[
 D_j:=
 \left\{ (z,\epsilon,w)\in\Delta\times\Delta\times\Delta^s
 \left| z-\epsilon\zeta_m^j=0 \right\}\right.
\]
for $j=1,\ldots, m$ with
$\zeta_m=\exp(\frac{2\pi\sqrt{-1}}{m})$,
then we can write
\[
 D=D_1+\cdots+D_m
\]
as an effective divisor on $\Delta\times\Delta\times\Delta^s$.
We consider a family of intervals
\[
 \Gamma_{\Delta,j}=
 \left.\left\{ (s \zeta_m^j \epsilon , \epsilon, w)
 \in \Delta\times\Delta\times\Delta^s \right|
 0\leq s\leq 1 \right\}
\]
which join the origin $0$ and $\zeta_m^j\epsilon$ and consider their union
\[
 \Gamma_{\Delta}:=\bigcup_{j=1}^m \Gamma_{\Delta,j}.
\]

We consider the embedding
$\Delta\times\Delta\times\Delta^s
\hookrightarrow\mathbb{P}^1\times\Delta\times\Delta^s=
\mathbb{P}^1_{\Delta\times\Delta^s}$
and regard $D$ as an effective divisor on
$\mathbb{P}^1\times\Delta\times\Delta^s$.

We prepare a notation of diagonal matrix.
\begin{notation}
\rm
We denote the diagonal matrix whose $(k,k)$ entry is $a_k$ by
$\mathrm{Diag}_{(a_k)}$;
\[
 \mathrm{Diag}_{(a_k)}=
 \begin{pmatrix}
  a_1 & \cdots & 0 \\
  \vdots & \ddots & \vdots \\
  0 & \cdots & a_r
 \end{pmatrix}.
\]
\end{notation}

Take mutually distinct complex numbers
$\mu_1,\ldots,\mu_r$
and a polynomial
$\nu(T)\in{\mathcal O}_D[T]$
given by
\begin{align} \label {equation:expression of nu}
 \nu(T) 
 &=
 \sum_{l=0}^{r-1}\Big(  \sum_{j=0}^{m-1} c_{l,j} z^j \Big) T^l 
\end{align}
with $c_{l,j} \in {\mathcal O}_{\Delta\times\Delta^s}$
such that
$\nu(\mu_1)|_p,\ldots,\nu(\mu_r)|_p$ are distinct complex numbers
at any point $p\in D$.

We denote the closed interval
$\{t\in\mathbb{R} \, | \, 0\leq t\leq 1\}$
by $[0,1]$.
We take a continuous map
\[
 \tilde{\gamma}\colon [0,1]\times \Delta\times\Delta^s
 \longrightarrow \Delta\times\Delta\times\Delta^s
\]
and an open subset $W\subset \Delta\times\Delta\times\Delta^s$
such that 
$\tilde{\gamma}(0,b)=\tilde{\gamma}(1,b)$
for any $b\in \Delta\times\Delta^s$,
each fiber $W_b$ over $b\in \Delta\times\Delta^s$
is a disk containing $D_b$ and that the boundary
$\partial W_b$ coincides with the image
$\tilde{\gamma}([0,1]\times\{b\})$.

Let
\begin{equation} \label {equation: local connection}
 \nabla_{\Delta}\colon
 {\mathcal O}_{ \Delta\times \Delta\times\Delta^s }^{\oplus r}
 \ni
 \begin{pmatrix} f_1 \\ \vdots \\ f_r \end{pmatrix}
 \mapsto
 \begin{pmatrix} d f_1 \\ \vdots \\  d f_r \end{pmatrix}
 +
 A(z,\epsilon,w) \dfrac{dz}{z^m-\epsilon^m}
 \begin{pmatrix} f_1 \\ \vdots \\ f_r \end{pmatrix}
 \in
 \Omega^1_{\Delta\times\Delta\times\Delta^s/\Delta\times\Delta^s}(D) ^{\oplus r} 
\end{equation}
be a relative connection on
$\Delta\times\Delta\times\Delta^s$ over $\Delta\times\Delta^s$
satisfying
\begin{equation} \label {equation:condition of local exponent}
 A(z,\epsilon,w)\big|_D
 =
 \Diag_{(\nu(\mu_k))}\big|_D
 =
 \left.
 \begin{pmatrix}
  \nu(\mu_1) & \cdots & 0 \\
  \vdots & \ddots & \vdots \\
  0 & \cdots & \nu(\mu_r)
 \end{pmatrix}
 \right|_D.
\end{equation}
For each point $b\in\Delta\times\Delta^s$,
we consider the restriction
$\nabla_{\Delta_b}:=\nabla_{\Delta}|_{\Delta\times\{b\}}$
and its associated connection
\[
 \nabla_{\Delta_b}^{\dag}\colon
 {\mathcal End} \big( {\mathcal O}_{\Delta\times\{b\}}^{\oplus r} \big)
 \ni u \mapsto \nabla_{\Delta_b} \circ u- u \circ \nabla_{\Delta_b} 
 \in
 {\mathcal End} \big( {\mathcal O}_{\Delta\times\{b\}}^{\oplus r} \big)
 \otimes \Omega^1_{\Delta\times\{b\}} (D_b).
\]

We assume the following condition for $\nabla_{\Delta}$:
\begin{assumption} \label {assumption:local irreducible}
\begin{itemize}
\item[(i)] the monodromy of $\nabla_{\Delta_b}$ along $\tilde{\gamma}_b$
has a diagonal representation matrix of holomorphic functions
over $\Delta\times\Delta^s$ with $r$ distinct eigenvalues
for any $b\in\Delta\times\Delta^s$
and
\item[(ii)]
$H^0\big(\Delta\times\{b\}, \ker \big(\nabla_{\Delta_b}^{\dag}\big)\big)
=\mathbb{C}$
for each $b\in \Delta\times\Delta^s$.
\end{itemize}
\end{assumption}

\begin{proposition} \label {prop:normalization of local connection via extension to P^1}
There exist an open neighborhood
${\mathcal V}$ of $(0,0)$ in $\Delta\times\Delta^s$
and a relative connection
\[
 \nabla^{\mathbb{P}^1} \colon
 \big({\mathcal O}^{hol}_{\mathbb{P}^1\times{\mathcal V}}\big)^{\oplus r}
 \longrightarrow
 \big({\mathcal O}^{hol}_{\mathbb{P}^1\times{\mathcal V}}\big)^{\oplus r}
 \otimes \Omega^1_{\mathbb{P}^1\times{\mathcal V}/{\mathcal V}}
 \big( (D\cap(\Delta\times{\mathcal V}))\cup
 (\{\infty\}\times {\mathcal V}) \big)^{hol}
\]
on $\mathbb{P}^1\times{\mathcal V}$ over ${\mathcal V}$
admitting poles along
$(D\cap(\Delta\times{\mathcal V}))\cup
(\{\infty\}\times {\mathcal V})$
such that the restriction
$\nabla^{\mathbb{P}^1}|_{\Delta\times {\mathcal V}}$ is isomorphic to
the restriction $\nabla_{\Delta}|_{\Delta\times{\mathcal V}}$
of $\nabla_{\Delta}$
in (\ref {equation: local connection}).
\end{proposition}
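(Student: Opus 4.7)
The plan is to extend $\nabla_{\Delta}$ to $\mathbb{P}^1$ by a cut-and-paste construction: on a disk $W$ slightly larger than needed (containing $D$), keep $\nabla_{\Delta}$; on a neighborhood of $\infty$, build an explicit model connection with regular singularity; and on the overlapping annulus, use Assumption \ref{assumption:local irreducible}(i) to match the two via a holomorphic gauge transformation. The proposition then reduces to ensuring that the resulting holomorphic bundle on $\mathbb{P}^1\times\mathcal{V}$ can be made trivial by a suitable choice of monodromy representatives and by shrinking the base.

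Concretely, I would fix radii $0<r_1<r_2<1$ and shrink the parameter space so that $D\subset\{|z|<r_1\}$ and $\tilde{\gamma}$ traces $\{|z|=r_2\}$, setting $W=\{|z|<r_2\}$ and $\mathcal{A}=\{r_1<|z|<r_2\}$. By Assumption \ref{assumption:local irreducible}(i), the monodromy of $\nabla_{\Delta_b}$ along $\tilde{\gamma}_b$ is diagonalizable with distinct eigenvalues $e^{2\pi\sqrt{-1}\lambda_k(\epsilon,w)}$, with $\lambda_k$ holomorphic in $(\epsilon,w)$. The restriction of $\nabla_{\Delta}$ to $\mathcal{A}$ is a holomorphic flat connection with diagonalizable monodromy; because $\mathcal{A}\times\mathcal{V}$ is Stein and the eigenvalues are distinct, it splits holomorphically as a direct sum of rank-one eigen-subbundles, furnishing a holomorphic gauge transformation $P(z;\epsilon,w)\in GL_r\bigl({\mathcal O}^{hol}(\mathcal{A}\times\mathcal{V})\bigr)$ with
\[
  P^{-1}\circ\nabla_{\Delta}\circ P = d+\mathrm{Diag}(\lambda_k)\,\frac{dz}{z}.
\]

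With this in hand, define the model connection $\nabla^{(\infty)}=d+\mathrm{Diag}(\lambda_k)\,dz/z$ on $\mathbb{P}^1\setminus\{|z|\le r_1\}$; it has a regular singularity at $\infty$ with residue $-\mathrm{Diag}(\lambda_k)$. Since $P$ intertwines $\nabla_{\Delta}$ and $\nabla^{(\infty)}$ on $\mathcal{A}$, it serves as the transition function for a holomorphic vector bundle $E$ on $\mathbb{P}^1\times\mathcal{V}$ with a meromorphic connection whose poles lie in $(D\cap(\Delta\times\mathcal{V}))\cup(\{\infty\}\times\mathcal{V})$. The main obstacle is to make $E\cong\bigl({\mathcal O}^{hol}_{\mathbb{P}^1\times\mathcal{V}}\bigr)^{\oplus r}$: by Grothendieck, $E$ restricted to each fiber splits as $\bigoplus{\mathcal O}(a_k)$, and the trivial splitting is the generic (open) type. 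The $\lambda_k$ are determined only modulo $\mathbb{Z}$, and this freedom exactly allows one to balance the integers $a_k$ to zero at the base point $(0,0)$ by a Birkhoff-factorization argument applied to $P(\,\cdot\,,0,0)$. Openness of triviality (via semicontinuity of $\dim H^1(E\otimes{\mathcal O}(-1))$ in $(\epsilon,w)$) then yields a neighborhood $\mathcal{V}$ of $(0,0)$ on which $E$ is trivial; a choice of global trivialization converts the meromorphic connection into the required $\nabla^{\mathbb{P}^1}$, and the restriction to $\Delta\times\mathcal{V}$ is conjugate to $\nabla_{\Delta}$ via the part $P_-$ of the Birkhoff factorization $P=P_-\cdot P_+$ which extends holomorphically across $W$.

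The hard part is precisely the triviality step: one must choose the integer shifts of $\lambda_k$ carefully so that at the central fiber the Grothendieck type is trivial, and then appeal to semicontinuity. A secondary but routine difficulty is making all branch cuts of $Y_{\Delta}$ and $z^{\mathrm{Diag}(\lambda_k)}$ compatible so that $P$ is unambiguously single-valued on $\mathcal{A}\times\mathcal{V}$; this is handled by fixing one radial slit in $\mathcal{A}$ and prescribing the branch of each multi-valued solution across it. Assumption \ref{assumption:local irreducible}(ii), ensuring only scalar flat endomorphisms, is not required for existence but rigidifies the construction and will be crucial for downstream uniqueness arguments in the paper.
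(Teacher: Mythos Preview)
Your overall architecture matches the paper's: both proofs glue $\nabla_{\Delta}$ on a disk containing $D$ to a regular-singular model near $\infty$ using that the two connections have the same monodromy on the intermediate annulus, and then repair the resulting bundle on $\mathbb{P}^1$ so that it becomes trivial (after shrinking the base and invoking semicontinuity of the splitting type). The essential difference lies in the ``make the bundle trivial'' step. The paper does not appeal to Birkhoff factorization or to adjusting the integer parts of the $\lambda_k$ directly; instead it fixes the residue eigenvalues at $\infty$ once and for all in the strip $\{0\le\mathrm{Re}\,z<1\}$, obtains a bundle $E_0$ with some splitting type $\bigoplus\mathcal{O}(a_k)$ at the central fiber, and then performs an explicit sequence of elementary (Hecke) modifications at $\infty$ along eigenlines of the residue. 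At each step one chooses an eigenline onto which the top summand $\mathcal{O}(a_1)$ surjects, takes the kernel of the corresponding projection $E_0\to E_0|_{\infty}\to L_k$, and thereby decreases $a_1$ by one while keeping the connection. Iterating reaches a balanced type $(N_0,\dots,N_0)$, and a final twist by $\mathcal{O}(-N_0)$ gives the trivial bundle.

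Your Birkhoff argument is morally the same operation---shifting $\lambda_k\mapsto\lambda_k+n_k$ right-multiplies the clutching function $P$ by $z^{N}$ with $N=\mathrm{Diag}(n_k)$, which is exactly a Hecke modification at $\infty$ along the $k$-th eigenline---but the sentence ``this freedom exactly allows one to balance the integers $a_k$ to zero by a Birkhoff-factorization argument'' is where the proposal is thin. Birkhoff gives $P=P_{-}\,z^{D}\,P_{+}$, but $Pz^{N}=P_{-}z^{D}P_{+}z^{N}$ does not obviously have trivial type for any particular $N$, because $P_{+}$ need not commute with $z^{N}$. What is actually needed is the inductive choice of \emph{which} eigenline to modify along (equivalently, which $\lambda_k$ to shift) so that the top degree drops; this is precisely the content of the paper's iterative procedure and is the missing ingredient in your sketch. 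Once you supply that choice, your argument and the paper's coincide. Your remark that Assumption~\ref{assumption:local irreducible}(ii) is not used in this proposition is correct; the paper's proof does not invoke it either.
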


\begin{proof}

Let $\mathrm{Mon}_{\tilde{\gamma}}(\nabla_{\Delta})$
be the monodromy matrix of $\nabla_{\Delta}$ along $\tilde{\gamma}$
with respect to a local basis of $\ker \nabla_{\Delta}$.
We can take a contractible open subset $W'\subset\Delta\times\Delta\times\Delta^s$
with $\overline{W'}\subset W$
such that the fiber $\overline{W'}_b$
is a closed disk for each $b\in \Delta\times\Delta^s$
and that the fundamental group
$\pi_1((\Delta\times\Delta\times \Delta^s)\setminus\overline{W'},*)$
is isomorphic to $\mathbb{Z}$ which is generated by $\tilde{\gamma}$.
We can take a regular singular relative connection
\[
 \nabla_{\infty}\colon
 \big(
 {\mathcal O}_{\mathbb{P}^1\times \Delta\times\Delta^s \setminus \overline{W'}}^{hol}
 \big)^{\oplus r}
 \longrightarrow
 \big(
 {\mathcal O}_{\mathbb{P}^1\times \Delta\times\Delta^s \setminus \overline{W'}}^{hol}
 \big)^{\oplus r}
 \otimes
 \Omega^1_{(\mathbb{P}^1\times \Delta\times\Delta^s \setminus \overline{W'})
 / \Delta\times\Delta^s }
 (\{\infty\}\times \Delta\times\Delta^s)
\]
such that  the monodromy of $\nabla_{\infty}$
along $\tilde{\gamma}$ 
is given by $\mathrm{Mon}_{\tilde{\gamma}}(\nabla_{\Delta})$
and that
the set of eigenvalues of
$\res_{(\infty,b')}\big(\nabla_{\infty}\big|_
{(\mathbb{P}^1\times\{b'\})\setminus(\overline{W}\cap(\mathbb{P}^1\times\{b'\}))}\big)$
is contained in
$\{ z\in\mathbb{C} \, | \, 0\leq \mathrm{Re}(z)<1\}$
for any $b'\in \Delta\times\Delta^s$.
Note that
$\big(\big(
{\mathcal O}^{hol}_{(\Delta\times\Delta\times\Delta^s)\setminus\overline{W'}}\big)^{\oplus r},
\nabla_{\Delta}\big|_{(\Delta\times\Delta\times\Delta^s)\setminus\overline{W'}}\big)$
and
$\big(\big(
{\mathcal O}^{hol}_{(\Delta\times\Delta\times\Delta^s)\setminus\overline{W'}}\big)^{\oplus r},
\nabla_{\infty}\big|_{(\Delta\times\Delta\times\Delta^s)\setminus\overline{W'}}\big)$
are isomorphic,
because their corresponding representations of the fundamental group
$\pi_1\big((\Delta\times\Delta\times\Delta^s)\setminus\overline{W'},*\big)
\cong\mathbb{Z}$
are given by the
same monodromy matrix $\mathrm{Mon}_{\tilde{\gamma}}(\nabla_{\Delta})$.
So we can patch $\nabla_{\infty}$,
$\nabla_{\Delta}|_{\Delta\times \Delta\times\Delta^s}$
and obtain a global relative connection
\[
 \nabla_0\colon E_0 \longrightarrow 
 E_0 \otimes
 \Omega^1_{(\mathbb{P}^1\times \Delta\times\Delta^s)/\Delta\times\Delta^s}
 \big( D\cup \big( \{\infty\}\times \Delta\times\Delta^s \big) \big)
\]
on $\mathbb{P}^1\times \Delta\times\Delta^s$ over $\Delta\times\Delta^s$.
We can write
\[
 E_0|_{\mathbb{P}^1\times\{(0,0)\}}\cong
 \bigoplus_{k=1}^r {\mathcal O}_{\mathbb{P}^1}(a_k)
\]
with $a_1\geq a_2\geq \cdots\geq a_r$.
Assume that $a_1>a_r$.
For some choice of $k$, the projection
\begin{align*}
 \psi'_0\colon E_0 \longrightarrow
 E_0|_ { \{\infty\}\times \Delta\times\Delta^s}
 &=
 \bigoplus_{k=1}^r \ker \Big(\nabla_0|_{\{\infty\}\times \Delta\times\Delta^s}
 -\nu(\mu_k)\frac{dz}{z^m-\epsilon^m}\Big|_{\{\infty\}\times\Delta\times\Delta^s} \Big)
 \\
 &\longrightarrow
 \ker \Big(\nabla_0|_{\{\infty\}\times \Delta\times\Delta^s}
 -\nu(\mu_k)\frac{dz}{z^m-\epsilon^m}\Big|_{\{\infty\}\times\Delta\times\Delta^s} \Big)
\end{align*}
satisfies
$\psi'_0 |_{\{(\infty ,(0,0))\}}({\mathcal O}_{\mathbb{P}^1}(a_1))
=\ker \Big(\nabla_0|_{\{(\infty,(0,0))\}}
 -\nu(\mu_k) \dfrac{dz}{z^m-\epsilon^m}\Big|_{\{(\infty,(0,0))\}} \Big)
\cong {\mathcal O}_{\{(\infty ,(0,0))\}}$.
Then there is an open neighborhood ${\mathcal V}$ of $(0,0)$ in
$\Delta\times\Delta^s$ such that
\[
 \psi_0:=\psi'_0|_{\mathbb{P}^1\times{\mathcal V}} \colon
 E_0|_{\mathbb{P}^1\times{\mathcal V}} \longrightarrow
 \ker \Big(\nabla_0|_{\{\infty\}\times {\mathcal V}}
  -\nu(\mu_k)\frac{dz}{z^m-\epsilon^m}
 \Big|_{\{\infty\}\times{\mathcal V}} \Big)
\]
is surjective.
If we put
$(E_1 , \nabla_1):=(\ker \psi_0,\nabla_0|_{\ker \psi_0})$,
then $\nabla_1$ is a relative connection on 
$\mathbb{P}^1\times {\mathcal V}$ over ${\mathcal V}$
admitting poles along 
$(D\cap(\Delta\times{\mathcal V}))\cup
(\{\infty\}\times {\mathcal V})$
and  we have
\[
 E_1|_{\mathbb{P}^1\times \{(0,0)\}}\cong
 {\mathcal O}_{\mathbb{P}^1}(a_1-1)\oplus
 \bigoplus_{k=2}^{r}{\mathcal O}_{\mathbb{P}^1}(a_k).
\]
Similarly we can choose a surjection
$\psi_1\colon E_1\longrightarrow {\mathcal O}_{\{\infty\}\times{\mathcal V}}$
after shrinking ${\mathcal V}$
such that $\ker\psi_1$ is preserved by $\nabla_1$ and that
$\psi_1({\mathcal O}(\tilde{a}_1))={\mathcal O}_{\{\infty\}\times{\mathcal V}}$
for $\tilde{a}_1:=\max\{a_1-1,a_2\}$.
Then we put $(E_2,\nabla_2):=(\ker\psi_1,\nabla_1|_{\ker\psi_1})$.
Repeating this procedure, we finally obtain $(E_N,\nabla_N)$ such that
$E_N|_{\mathbb{P}^1\times {\mathcal V}}\cong 
{\mathcal O}_{\mathbb{P}^1_{{\mathcal V}}}(N_0)^{\oplus r}$.
So the connection
$\nabla_N\otimes{\mathcal O}(-N_0)$
satisfies the condition of the proposition.
\end{proof}

\subsection {The construction of  a local horizontal lift}
\label {subsection:local horizontal lift}

We use the same notations as in subsection \ref {subsection:global connection on P^1}.
We consider the non-reduced analytic space
$\mathbb{P}^1\times\Delta\times\Delta^s\times\Spec\mathbb{C}[h]/(h^2)$.
For an analytic open subset $U\subset\mathbb{P}^1\times\Delta\times\Delta^s$,
we denote by
$U[\bar{h}]$
the analytic open subspace of
$\mathbb{P}^1\times\Delta\times\Delta^s\times\Spec\mathbb{C}[h]/(h^2)$
whose underlying set of points coincides with $U$.
In this subsection, we will construct an extension of
the relative connection $\nabla^{\mathbb{P}^1}$
constructed in Proposition \ref {prop:normalization of local connection via extension to P^1}
to an integrable connection on
$\mathbb{P}^1\times{\mathcal V}[\bar{h}]$
over ${\mathcal V}$.
This produces a block of local horizontal lifts defined in
Definition \ref {def:block of local horizontal lift},
which is a key concept in the construction of a global horizontal lift
in subsection \ref {subsection:unfolded global horizontal lift}.

Recall that the sheaf of holomorphic differential forms
$\big( \Omega^1_{ \left. \left(\mathbb{P}^1_{\Delta\times\Delta^s}
\setminus \Gamma_{\Delta} \right)[\bar{h}]
\right/\Delta\times\Delta^s} \big)^{hol}$
on
$\big(\mathbb{P}^1_{\Delta\times\Delta^s}\setminus
\Gamma_{\Delta} \big)[\bar{h}]$
is given by
\[
 \big(\Omega^1_{\left.\left(\mathbb{P}^1_{\Delta\times\Delta^s}\setminus
 \Gamma_{\Delta} \right)[\bar{h}] \right/\Delta\times\Delta^s} \big)^{hol}
 =I_{\Delta_{ \left.\left( \mathbb{P}^1_{\Delta\times\Delta^s}
 \setminus \Gamma_{\Delta} \right)[\bar{h}]  \right/\Delta\times\Delta^s}} ^{hol}
 \big/
 \big(I_{\Delta_{ \left. \left( \mathbb{P}^1_{\Delta\times\Delta^s}
 \setminus \Gamma_{\Delta} \right)[\bar{h}] 
 \right/\Delta\times\Delta^s}}^{hol} \big)^2,
\]
where
$I_{\Delta_{ \left.\left( \mathbb{P}^1_{\Delta\times\Delta^s}
 \setminus \Gamma_{\Delta} \right)[\bar{h}]  \right/\Delta\times\Delta^s}} ^{hol}$
is the ideal sheaf of 
${\mathcal O}_{\left( \mathbb{P}^1_{\Delta\times\Delta^s}
\setminus \Gamma_{\Delta} \right)[\bar{h}]   \times_{\Delta\times\Delta^s}
 \left( \mathbb{P}^1_{\Delta\times\Delta^s}\setminus
\Gamma_{\Delta} \right)[\bar{h}]  }^{hol}$
which defines the diagonal
\[
 \big(\mathbb{P}^1_{\Delta\times\Delta^s}\setminus
 \Gamma_{\Delta}\big)[\bar{h}] 
 \hookrightarrow
 \left( \mathbb{P}^1_{\Delta\times\Delta^s}\setminus \Gamma_{\Delta}\right)[\bar{h}]  
 \times_{\Delta\times\Delta^s}
 \left( \mathbb{P}^1_{\Delta\times\Delta^s} \setminus \Gamma_{\Delta}\right)[\bar{h}] .
\]
Let
\[
 \iota_{\left(\mathbb{P}^1_{\Delta\times\Delta^s}
 \setminus \Gamma_{\Delta}\right)[\bar{h}]}
 \colon \big(\mathbb{P}^1_{\Delta\times\Delta^s}\setminus 
 \Gamma_{\Delta}\big)[\bar{h}]
 \hookrightarrow \mathbb{P}^1_{\Delta\times\Delta^s}[\bar{h}]
\]
be the inclusion.
We put
${\mathcal V}[\bar{h}]:={\mathcal V}\times\Spec\mathbb{C}[h]/(h^2)$.
We denote
$D\times_{\Delta\times\Delta^s}{\mathcal V}$,
$\Gamma\times_{\Delta\times\Delta^s}{\mathcal V}$
by $D_{{\mathcal V}}$, $\Gamma_{\mathcal V}$, respectively
and denote
$D\times_{\Delta\times\Delta^s}{\mathcal V}[\bar{h}]$
by
$D_{{\mathcal V}}[\bar{h}]$.
We first construct an extension of the relative connection
$\nabla^{\mathbb{P}^1}$
to a relative connection
on $\mathbb{P}^1\times{\mathcal V}[\bar{h}]$
over ${\mathcal V}[\bar{h}]$.
We need the following lemma:

\begin{lemma} \label {lemma:summation of adjoint images}
Let $A_1,\ldots,A_m$ be elements of $\End_{\mathbb{C}}(\mathbb{C}^r)$
satisfying
\[
 \bigcap_{j=1}^m \ker \mathrm{ad}(A_j) =\mathbb{C}\cdot\mathrm{id},
\] 
where $\mathrm{ad}(A_j)\colon
\End_{\mathbb{C}}(\mathbb{C}^r)
\ni X \mapsto A_j X-X A_j \in \End_{\mathbb{C}}(\mathbb{C}^r)$
is the adjoint map.
Then we have
\[
 \sum_{j=1}^m \im (\mathrm{ad}(A_j)) =
 \ker \left( \End_{\mathbb{C}} ( \mathbb{C}^r )
 \xrightarrow {\Tr} \mathbb{C} \right).
\]
\end{lemma}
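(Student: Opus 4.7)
The plan is to derive the equality by means of the trace form. Recall that the bilinear pairing $\langle X, Y\rangle := \Tr(XY)$ on $\End_{\mathbb{C}}(\mathbb{C}^r)$ is symmetric and nondegenerate. The first step will be to verify that each $\mathrm{ad}(A_j)$ is skew with respect to this pairing: using the cyclic property of the trace, one has
\[
 \Tr((A_j X - X A_j) Y) = \Tr(X Y A_j) - \Tr(X A_j Y) = -\Tr(X(A_j Y - Y A_j)),
\]
so that $\langle \mathrm{ad}(A_j) X, Y\rangle = -\langle X, \mathrm{ad}(A_j) Y\rangle$ for all $X,Y$. This skew-adjointness implies the orthogonality relation $\im(\mathrm{ad}(A_j))^{\perp} = \ker(\mathrm{ad}(A_j))$.

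Next I would take perpendicular complements inside the finite-dimensional space $\End_{\mathbb{C}}(\mathbb{C}^r)$. Since the trace pairing is nondegenerate, the operation $W \mapsto W^{\perp}$ satisfies $(W_1 + W_2)^{\perp} = W_1^{\perp}\cap W_2^{\perp}$ and $(W^{\perp})^{\perp} = W$. Applying this with $W_j = \im(\mathrm{ad}(A_j))$ gives
\[
 \Big(\sum_{j=1}^m \im(\mathrm{ad}(A_j))\Big)^{\perp} = \bigcap_{j=1}^m \im(\mathrm{ad}(A_j))^{\perp} = \bigcap_{j=1}^m \ker(\mathrm{ad}(A_j)) = \mathbb{C}\cdot\mathrm{id},
\]
where the last equality is the hypothesis.

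Finally, I would identify $(\mathbb{C}\cdot\mathrm{id})^{\perp}$. Since $\langle \mathrm{id}, X\rangle = \Tr(X)$, the orthogonal complement of $\mathbb{C}\cdot\mathrm{id}$ is exactly $\ker(\Tr)$. Taking perpendiculars once more in the displayed equation and using $(W^{\perp})^{\perp}=W$ yields the desired equality $\sum_{j=1}^m \im(\mathrm{ad}(A_j)) = \ker(\Tr)$.

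There is essentially no hard step here: the only thing to be careful about is the verification of skew-adjointness and of the duality formula for sums and intersections of subspaces under the nondegenerate trace form, both of which are routine. The assumption that $\bigcap_j \ker(\mathrm{ad}(A_j))$ is one-dimensional is used only at the single place where orthogonal complements are first taken, and it is precisely this hypothesis that ensures the sum on the left-hand side has codimension one, matching $\ker(\Tr)$.
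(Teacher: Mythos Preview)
Your proof is correct and is essentially the same argument as the paper's: both use the trace pairing to show $\mathrm{ad}(A_j)$ is skew-adjoint, then pass to duals/orthogonal complements to convert the hypothesis $\bigcap_j \ker\mathrm{ad}(A_j)=\mathbb{C}\cdot\mathrm{id}$ into the conclusion about $\sum_j \im\mathrm{ad}(A_j)$. The paper phrases the duality step via transposes and the quotient $\End_{\mathbb{C}}(\mathbb{C}^r)/\sum_j\im\mathrm{ad}(A_j)$, while you phrase it via orthogonal complements under the nondegenerate trace form, but these are equivalent formulations of the same idea.
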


\begin{proof}
In general we have
$^t\mathrm{ad}(A_j)=-\mathrm{ad}(A_j)$, because
\begin{align*}
 \Tr ( \,^t\mathrm{ad}(A_j)(X)\cdot B)
 =\Tr( X\cdot \mathrm{ad}(A_j)(B) ) 
 &=\Tr ( X \cdot (A_j B-B A_j) ) \\
 &= \Tr ((X A_j-A_j X)B +A_jXB-XBA_j) \\
 &=\Tr((XA_j-A_jX)B) +\Tr(A_jXB)-\Tr(XBA_j) \\
 &=\Tr(-\mathrm{ad}(A_j)(X)\cdot B)
\end{align*}
for any $X,B\in\End_{\mathbb{C}}(\mathbb{C}^r)$.
So there are exact sequences
\[
 0 \longrightarrow \ker \mathrm{ad}(A_j)
 \longrightarrow \End_{\mathbb{C}}(\mathbb{C}^r)
 \xrightarrow { \mathrm{ad}(A_j) }
 \End_{\mathbb{C}} (\mathbb{C}^r)
 \longrightarrow 
 (\ker \mathrm{ad}(A_j))^{\vee} \longrightarrow 0.
\]
for $j=1,\ldots,m$.
Since
$\End_{\mathbb{C}}(\mathbb{C}^r)\xrightarrow{\pi}
\End_{\mathbb{C}}(\mathbb{C}^r)\big/\sum_{j=1}^m\im(\mathrm{ad}(A_j))$
is the largest quotient vector space
satisfying $\pi\circ\mathrm{ad}(A_j)=0$
for $j=1,\ldots,m$,
its dual is given by
\[
 \Big( \End_{\mathbb{C}}(\mathbb{C}^r)\Big/\sum_{j=1}^m\im(\mathrm{ad}(A_j)) \Big)^{\vee}
 =
 \bigcap_{j=1}^m  \ker \,^t\mathrm{ad}(A_j) 
 =
 \bigcap_{j=1}^m  \ker \mathrm{ad}(A_j)
 =\mathbb{C}\cdot\mathrm{id}
 \subset \End_{\mathbb{C}}(\mathbb{C}^r).
\]
Taking the dual again, we obtain
\[
 \End_{\mathbb{C}}(\mathbb{C}^r)\Big/\sum_{j=1}^m\im(\mathrm{ad}(A_j)) 
 =
 (\mathbb{C}\cdot\mathrm{id})^{\vee}
 =
 \End_{\mathbb{C}} (\mathbb{C}^r)\big/
 \ker \big(\End_{\mathbb{C}}(\mathbb{C}^r)\xrightarrow{\Tr}\mathbb{C})\big).
\]
Thus we have
$\sum_{j=1}^m \im (\mathrm{ad}(A_j))=
\ker\big(\End_{\mathbb{C}}(\mathbb{C}^r)\xrightarrow{\Tr}\mathbb{C}\big)$.
\end{proof}

For the relative connection
\begin{equation} \label {equation:extension of local connections to P^1}
 \nabla^{\mathbb{P}^1} \colon
 \big({\mathcal O}_{\mathbb{P}^1\times {\mathcal V}}^{hol}\big)^{\oplus r}
 \longrightarrow
 \big({\mathcal O}_{\mathbb{P}^1\times {\mathcal V}}^{hol}\big)^{\oplus r} \otimes
 \Omega^1_{\mathbb{P}^1\times {\mathcal V}/{\mathcal V}}
 \big(D_{{\mathcal V}}\cup
 (\{\infty\}\times {\mathcal V})\big)^{hol}
\end{equation}
constructed in Proposition \ref {prop:normalization of local connection via extension to P^1},
let $A_{\infty}(z,\epsilon)\dfrac{dz}{z^m-\epsilon^m}$
be the connection matrix of
$\nabla^{\mathbb{P}^1}$.
Since $\nabla^{\mathbb{P}^1}$ is regular singular at $z=\infty$,
we can write
\[
 A_{\infty}(z,\epsilon)=A_{\infty,0}(\epsilon)+A_{\infty,1}(\epsilon) z+\cdots
 +A_{\infty,m-1}(\epsilon) z^{m-1}
\]
with matrices $A_{\infty,0}(\epsilon),\ldots,A_{\infty,m-1}(\epsilon)$
of holomorphic functions in  $(\epsilon,w)\in {\mathcal V}$.
Using
$\nabla_{\mathbb{P}^1}|_{\Delta\times{\mathcal V}}
=\nabla_{\Delta}|_{\Delta\times{\mathcal V}}$
and (\ref {equation:condition of local exponent}),
we can see that
there exists an invertible matrix $P(z,\epsilon)$
of holomorphic functions on a neighborhood of $D_{\mathcal V}$
such that
\begin{equation} \label {equation: restriction to 2D}
 \left( P(z,\epsilon)^{-1} d P(z,\epsilon)
 +
 P(z,\epsilon)^{-1} A_{\infty}(z,\epsilon)\frac{dz}{z^m-\epsilon^m} P(z,\epsilon) \right)
 \Big|_{2D_{{\mathcal V}}}
 =
 \Diag_{(\nu(\mu_k))} \frac{dz}{z^m-\epsilon^m} 
 \Big|_{2D_{{\mathcal V}}}.
\end{equation}
Since $\nu(\mu_1)|_p,\ldots,\nu(\mu_r)|_p$ are distinct
at any point $p\in D_{\mathcal V}$,
there exists a polynomial
$\bar{\psi}(T)=\bar{a}_{r-1}T^{r-1}+\cdots+\bar{a}_1T+\bar{a}_0
\in {\mathcal O}^{hol}_{D_{{\mathcal V}}}[T]$
satisfying
\[
 \bar{\psi} \left( \Diag_{(\nu(\mu_k))}\right) \frac{dz} {z^m-\epsilon^m} \Big|_{D_{{\mathcal V}}}
 =
 \Diag_{(\mu_k)} \frac{dz}{z^m-\epsilon^m} \Big|_{D_{{\mathcal V}}}.
\]
After shrinking ${\mathcal V}$,
we can take lifts
$a_0(z,\epsilon),a_1(z,\epsilon),\ldots,a_{r-1}(z,\epsilon)
\in{\mathcal O}^{hol}_{\mathcal V}[z]$
of $\bar{a}_0,\bar{a}_1,\ldots,\bar{a}_{r-1}$ and put
\[
 \psi(T):=a_{r-1}(z,\epsilon)T^{r-1}+a_{r-2}(z,\epsilon)T^{r-2}
 +\cdots+a_1(z,\epsilon)T+a_0(z,\epsilon)
 \in{\mathcal O}_{\mathcal V}[z][T].
\]
Here we may assume that
$a_0(z,\epsilon),\ldots,a_{r-1}(z,\epsilon)$ are polynomials
in $z$ of degree less than $m$.
Then $\psi(A_{\infty}(z,\epsilon))$ is a matrix of polynomials in $z$
and we have
\[
 P(z,\epsilon)^{-1} \psi ( A_{\infty}(z,\epsilon) ) P(z,\epsilon)
 \frac{dz}{z^m-\epsilon^m} \Big|_{D_{{\mathcal V}}}
 =
 \Diag_{(\mu_k)} \frac{dz}{z^m-\epsilon^m} \Big|_{D_{{\mathcal V}}}.
\]
For $l=0,1,\ldots,r-1$ and for $j'=0,1,\ldots,m-2$, we have
\begin{align*}
 \res_{z=\infty} \left( \Tr \left(\psi(A_{\infty}(z,\epsilon))^l 
 \frac{  z^{j'}  dz} {z^m-\epsilon^m} \right)  \right) 
 &=-\sum_{j=1}^m \res_{z=\epsilon\zeta_m^j} 
 \left( \Tr \left( \psi(A_{\infty}(z,\epsilon))^l \frac{z^{j'} dz} {z^m-\epsilon^m} \right)  \right) \\
 &=
 -\sum_{j=1}^m \res_{z=\epsilon\zeta_m^j}
 \left( \Tr \left( P(z,\epsilon)^{-1} \psi(A_{\infty}(z,\epsilon))^l P(z,\epsilon)
 \frac{z^{j'} dz} {z^m-\epsilon^m}\right)\right) \\
 & =
 -\sum_{j=1}^m \res_{z=\epsilon\zeta_m^j}
 \left( \Tr \left( \left( \Diag_{(\mu_k)} \right)^l
 \frac {z^{j'} dz} {z^m-\epsilon^m} \right)\right)  \\
 &=
 \res_{z=\infty}
 \left( \Tr \left( \Diag_{(\mu_k^l)} \frac {z^{j'} dz} {z^m-\epsilon^m} \right)\right) 
 =0.
\end{align*}
We can write
\[
 \psi(A_{\infty}(z,\epsilon))^l=\sum_{q=0}^Q C^{(l)}_{q}(\epsilon) z^{q}
\]
for matrices $C^{(l)}_{q}(\epsilon)$ constant in $z$.
We define
\begin{equation} \label {equation:definition of Xi}
 \Xi_{l,j}(z,\epsilon):=
 \sum_{j'=0}^{m-1}\sum_{\genfrac{}{}{0pt}{}{p\geq 0}{0 \leq pm+j'-j \leq Q}} 
 \epsilon^{pm} z^{j'} C^{(l)}_{pm+j'-j}(\epsilon)
\end{equation}
for $j=0,1,\ldots,m-1$
and $l=0,1\ldots,r-1$.
In other words, $\Xi_{l,j}(z,\epsilon)$
is obtained from $z^j\psi(A_{\infty}(z,\epsilon))^l$
by substituting $\epsilon^m$ in $z^m$.
Then we have
\begin{align*}
 A_{\infty}(z,\epsilon)\frac{dz}{z^m-\epsilon^m} \Big|_{D_{{\mathcal V}}}
 &=
 P(z,\epsilon) \: \nu\left( \Diag_{(\mu_k)} \right) P(z,\epsilon)^{-1}
 \frac{dz}{z^m-\epsilon^m} \Big|_{D_{{\mathcal V}}} \\
 &=
 \sum_{l=0}^{r-1} \sum_{j=0}^{m-1}
 c_{l,j}z^j \psi(A_{\infty}(z,\epsilon))^l \frac{dz}{z^m-\epsilon^m} \Big|_{D_{{\mathcal V}}}
 =
 \sum_{l=0}^{r-1} \sum_{j=0}^{m-1} c_{l,j}
 \Xi_{l,j}(z,\epsilon) \frac{dz}{z^m-\epsilon^m} \Big|_{D_{{\mathcal V}}},
\end{align*}
from which we have
\[
 A_{\infty}(z,\epsilon)=\sum_{l=0}^{r-1}\sum_{j=0}^{m-1}
 c_{l,j} \Xi_{l,j}(z,\epsilon).
\]
Note that we have
\begin{align}
 \res_{z=\infty} 
 \left( \Tr \left(
 \Xi_{l,j}(z,\epsilon) \frac{dz}{z^m-\epsilon^m} \right) \right)  
 &=
 -\Tr \Big( \sum_{0\leq pm+m-1-j \leq Q} \epsilon^{pm} C^{(l)}_{pm+m-1-j}(\epsilon) \Big)
 \label {equation:trace of residue is zero}  \\
 &=
 \res_{z=\infty}
 \left( \Tr \left(
 z^j \psi(A_{\infty}(z,\epsilon))^l \frac {dz} {z^m-\epsilon^m} \right) \right)
 =0 \notag
\end{align}
for $j=0,1,\ldots,m-2$.

We put
${\mathcal V}_{\epsilon^m}:={\mathcal V}\times_{\Delta\times\Delta^s}
(\Spec\mathbb{C}[\epsilon]/(\epsilon^m)\times\Delta^s)$
and
${\mathcal V}_{\epsilon^m}[\bar{h}]
:={\mathcal V}_{\epsilon^m}\times\Spec\mathbb{C}[h]/(h^2)$.
Then the restriction 
\begin{align}
 \nabla^{\mathbb{P}^1}|_{\mathbb{P}^1\times{\mathcal V}_{\epsilon^m}}
 &\colon
 ({\mathcal O}^{hol}_{\mathbb{P}^1\times{\mathcal V}_{\epsilon^m}})^{\oplus r}
 \longrightarrow
({\mathcal O}^{hol}_{\mathbb{P}^1\times{\mathcal V}_{\epsilon^m}})^{\oplus r}
 \otimes \Omega^1_{\mathbb{P}^1\times{\mathcal V}_{\epsilon^m}/{\mathcal V}_{\epsilon^m}}
 \left(D_{{\mathcal V}_{\epsilon^m}}\cup (\infty\times{\mathcal V}_{\epsilon^m})\right) 
 \label {equation:restriction of connections on P^1 to irregular locus} \\
 & \begin{pmatrix} f_1 \\ \vdots \\ f_r \end{pmatrix} 
 \mapsto \begin{pmatrix} df_1 \\ \vdots \\ df_r \end{pmatrix}
 +
 A_{\infty}(z,\bar{\epsilon})\frac{dz}{z^m} \begin{pmatrix} f_1 \\ \vdots \\ f_r \end{pmatrix}
 \notag
\end{align}
of the relative connection $\nabla^{\mathbb{P}^1}$ given in
(\ref {equation:extension of local connections to P^1})
to $\mathbb{P}^1\times{\mathcal V}_{\epsilon^m}$
becomes a relative irregular singular connection,
where $A_{\infty}(z,\bar{\epsilon})$ is the restriction of
$A_{\infty}(z,\epsilon)$ to
$\mathbb{P}^1\times{\mathcal V}_{\epsilon^m}$.
If we put
\[
 B_{0,l,j}(z):= P(z,\bar{\epsilon}) \: \Diag_{\big(\int \mu_k^l z^j\frac{dz}{z^m}\big)}
 P(z,\bar{\epsilon})^{-1} 
\]
for $j=0,1,\ldots,m-2$ and $l=0,1,\ldots,r-1$,
then $B_{0,l,j}(z)$ becomes a matrix of single valued meromorphic forms
whose pole order at $z=0$ is at most $m-1$,
because $\mu_k^l\dfrac{z^j dz}{z^m}$ has no residue part.
If we put
\begin{equation} \label {equation:definition of A_{epsilon^m,h,v}}
 A_{\epsilon^m,\bar{h},v_{l,j}}(z)\frac{dz}{z^m}:= dB_{0,l,j}(z)
 +\left[ A_{\infty}(z,\bar{\epsilon}), B_{0,l,j}(z) \right] \frac{dz}{z^m},
\end{equation}
then we can see that
$P(z,\bar{\epsilon})^{-1} A_{\epsilon^m,\bar{h},v_{l,j}}(z)
P(z,\bar{\epsilon}) \big|_{D_{{\mathcal V}_{\epsilon^m}}}
=
\Diag_{(\mu_k^l z^j)} \big|_{D_{{\mathcal V}_{\epsilon^m}}}$
because of (\ref {equation: restriction to 2D}).
Let us consider the connection
\begin{align}
 \nabla_{\Delta\times{\mathcal V}_{\epsilon^m}[\bar{h}],v_{l,j}}^{flat}
 &\colon
 ({\mathcal O}^{hol}_{\Delta\times{\mathcal V}_{\epsilon^m}[\bar{h}]})^{\oplus r}
 \longrightarrow
 ({\mathcal O}^{hol}_{\Delta\times{\mathcal V}_{\epsilon^m}[\bar{h}]})^{\oplus r}
 \otimes 
 \Omega_{\Delta\times{\mathcal V}_{\epsilon^m}[\bar{h}]/{\mathcal V}_{\epsilon^m}}
 (D_{{\mathcal V}_{\epsilon^m}[\bar{h}]}) 
 \label {equation:local horizontal lift of irregular connections} \\
 &
 \begin{pmatrix} f_1 \\ \vdots \\ f_r \end{pmatrix}
 \mapsto
 \begin{pmatrix} df_1 \\ \vdots \\ df_r \end{pmatrix}
 +
 \left( (A_{\infty}(z,\bar{\epsilon}) +\bar{h} A_{\epsilon^m,\bar{h},v_{l,_j}}(z) )\frac{dz}{z^m}
 +B_{0,l,j}(z) d\bar{h} \right)
 \begin{pmatrix} f_1 \\ \vdots \\ f_r \end{pmatrix}.
 \notag
\end{align}

\begin{lemma}
The connection
$\nabla^{flat}_{\Delta\times{\mathcal V}_{\epsilon}[\bar{h}],v_{l,j}}$
given in (\ref  {equation:local horizontal lift of irregular connections})
satisfies the integrability condition
\begin{align*}
 & d \left( (A_{\infty}+\bar{h}A_{\epsilon^m,\bar{h},v_{l,j}}) \frac{dz}{z^m} 
 +B_{0,l,j} d\bar{h} \right)  \\
 & \hspace{20pt}
 + \left[ \left( (A_{\infty}+\bar{h}A_{\epsilon^m,\bar{h},v_{l,j}}) \frac{dz}{z^m}
 +B_{0,l,j} d\bar{h} \right)  ,
 \left( (A_{\infty}+\bar{h}A_{\epsilon^m,\bar{h},v_{l,j}}) \frac{dz}{z^m}
 +B_{0,l,j} d\bar{h} \right) \right]
 =0.
\end{align*}
\end{lemma}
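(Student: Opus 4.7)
The plan is a direct computation in forms on $\Delta\times{\mathcal V}_{\epsilon^m}[\bar{h}]$ relative to ${\mathcal V}_{\epsilon^m}$. I would write $\omega := (A_{\infty}+\bar{h}A_{\epsilon^m,\bar{h},v_{l,j}})\tfrac{dz}{z^m}+B_{0,l,j}\,d\bar{h}$ and abbreviate $\tilde{A} := A_{\epsilon^m,\bar{h},v_{l,j}}$, which despite its subscript has no $\bar{h}$-dependence. The whole argument is powered by the elementary identity $\bar{h}\,d\bar{h} = \tfrac{1}{2}\,d(\bar{h}^2) = 0$ on $\Spec\mathbb{C}[h]/(h^2)$, which in particular gives $\bar{h}\cdot(dz\wedge d\bar{h}) = 0$.

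First I would compute $d\omega$. Since $A_{\infty}(z,\bar{\epsilon})$, $\tilde{A}(z)$, and $B_{0,l,j}(z)$ depend only on $z$ (the other coordinates of ${\mathcal V}_{\epsilon^m}$ being fixed parameters), the only surviving contribution comes from differentiating the $\bar{h}$-dependent coefficient, yielding
\[
 d\omega \;=\; -\tilde{A}\,\tfrac{dz\wedge d\bar{h}}{z^m} \;+\; dB_{0,l,j}\wedge d\bar{h}.
\]

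Next I would compute $\omega\wedge\omega$, with matrix multiplication implicit. The $(dz)^{\wedge 2}$ and $(d\bar{h})^{\wedge 2}$ diagonal pieces vanish, so only the cross term survives and equals $[A_{\infty}+\bar{h}\tilde{A},\,B_{0,l,j}]\,\tfrac{dz\wedge d\bar{h}}{z^m}$; the identity $\bar{h}\cdot(dz\wedge d\bar{h})=0$ then kills the $\bar{h}\tilde{A}$ contribution, leaving $\omega\wedge\omega = [A_{\infty},B_{0,l,j}]\,\tfrac{dz\wedge d\bar{h}}{z^m}$. Summing the two expressions gives
\[
 d\omega + \omega\wedge\omega \;=\; \Big(-\tilde{A}\,\tfrac{dz}{z^m} + dB_{0,l,j} + [A_{\infty},B_{0,l,j}]\,\tfrac{dz}{z^m}\Big)\wedge d\bar{h},
\]
and the parenthetical expression is identically zero by the very definition (\ref{equation:definition of A_{epsilon^m,h,v}}) of $A_{\epsilon^m,\bar{h},v_{l,j}}$.

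There is no essential mathematical obstacle here; the only minor subtlety is the convention for the bracket in the stated integrability condition, which I read as shorthand for the matrix-valued wedge square $\omega\wedge\omega$ rather than the graded commutator $2\,\omega\wedge\omega$. The content of the lemma is then essentially tautological: the formula for $A_{\epsilon^m,\bar{h},v_{l,j}}$ was designed precisely so that, together with the $d\bar{h}$-component $B_{0,l,j}$, the connection $\omega$ satisfies the Maurer--Cartan equation modulo $\bar{h}^2$.
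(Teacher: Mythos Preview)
Your proof is correct and follows essentially the same direct computation as the paper's proof. The paper writes the calculation in one line as $d\bar{h}\wedge A_{\epsilon^m,\bar{h},v_{l,j}}\tfrac{dz}{z^m} + dB_{0,l,j}\wedge d\bar{h} + [A_{\infty}\tfrac{dz}{z^m}, B_{0,l,j}\,d\bar{h}] = 0$ and appeals to (\ref{equation:definition of A_{epsilon^m,h,v}}); your reading of the bracket as the matrix-valued wedge square $\omega\wedge\omega$ matches the paper's usage, and your observation that $\bar{h}\,dz\wedge d\bar{h}=0$ is exactly what makes the $\bar{h}\tilde{A}$ term drop out.
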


\begin{proof}
The lemma follows from the immediate calculation
\begin{align*}
 & d \left( (A_{\infty}+\bar{h}A_{\epsilon^m,\bar{h},v_{l,j}}) \frac{dz}{z^m} 
 +B_{0,l,j} d\bar{h} \right)  \\
 & \hspace{20pt}
 + \left[ \left( (A_{\infty}+\bar{h}A_{\epsilon^m,\bar{h},v_{l,j}}) \frac{dz}{z^m}
 +B_{0,l,j} d\bar{h} \right)  ,
 \left( (A_{\infty}+\bar{h}A_{\epsilon^m,\bar{h},v_{l,j}}) \frac{dz}{z^m}
 +B_{0,l,j} d\bar{h} \right) \right] \\
 &=
 d\bar{h}\wedge A_{\epsilon^m,\bar{h},v_{l,j}}\frac{dz}{z^m}
 +dB_{0,l,j}\wedge d\bar{h}
 + \left[ A_{\infty} \frac{dz}{z^m} , B_{0,l,j} d\bar{h} \right]
 =0 
\end{align*}
using (\ref{equation:definition of A_{epsilon^m,h,v}}).
\end{proof}

We choose a fundamental solution $Y_{0,\infty}(z)$ of
$\nabla^{\mathbb{P}^1}_{\Delta\times{\mathcal V}_{\epsilon^m}}$
and put
$\tilde{Y}_{0,\infty}(z,\bar{h}):=Y_{0,\infty}(z) - \bar{h} B_{0,l,j}(z)Y_{0,\infty}(z)$.

\begin{lemma} \label {lemma:fundamental solution of irregular relative connection}
$\tilde{Y}_{0,\infty}(z,\bar{h})=Y_{0,\infty}(z) - \bar{h} B_{0,l,j}(z)Y_{0,\infty}(z)$
is a fundamental solution of the relative connection
\begin{equation} \label {equation:relative irregular connection induced by local horizontal lift}
 \overline { \nabla^{flat}_{\Delta\times{\mathcal V}_{\epsilon^m}[\bar{h}],v_{l,j} } }
 \colon
 ({\mathcal O}^{hol}_{\Delta\times{\mathcal V}_{\epsilon^m}[\bar{h}]})^{\oplus r}
 \longrightarrow
 ({\mathcal O}^{hol}_{\Delta\times{\mathcal V}_{\epsilon^m}[\bar{h}]})^{\oplus r}
 \otimes 
 \Omega_{\Delta\times{\mathcal V}_{\epsilon^m}[\bar{h}]/{\mathcal V}_{\epsilon^m}[\bar{h}]}
 (D_{{\mathcal V}_{\epsilon^m}[\bar{h}]}) 
\end{equation}
induced by
$\nabla^{flat}_{\Delta\times{\mathcal V}_{\epsilon^m}[\bar{h}],v_{l,j} }$,
whose connection matrix is
$\displaystyle (A_{\infty}+\bar{h}A_{\epsilon^m,\bar{h},v_{l,j}}) \frac{dz}{z^m}$.
\end{lemma}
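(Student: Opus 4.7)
The plan is a direct verification: I would show that $\tilde{Y}_{0,\infty}(z,\bar h)$ satisfies the differential equation imposed by $\overline{\nabla^{flat}_{\Delta\times{\mathcal V}_{\epsilon^m}[\bar h],v_{l,j}}}$ and that it is invertible. The connection $\overline{\nabla^{flat}}$ is obtained from $\nabla^{flat}_{\Delta\times{\mathcal V}_{\epsilon^m}[\bar h],v_{l,j}}$ by restricting the target from $\Omega^1_{/{\mathcal V}_{\epsilon^m}}$ to $\Omega^1_{/{\mathcal V}_{\epsilon^m}[\bar h]}$, which kills the $d\bar h$-component of the connection form and leaves exactly $(A_{\infty}+\bar h\,A_{\epsilon^m,\bar h,v_{l,j}})\frac{dz}{z^m}$; so there is nothing to check about the connection matrix itself, only that $\tilde{Y}_{0,\infty}$ is horizontal.

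First I would compute, using the Leibniz rule and $\bar h^2=0$,
\[
 d_z\tilde{Y}_{0,\infty}(z,\bar h)
 =
 d_z Y_{0,\infty}(z)
 -\bar h\,\bigl(dB_{0,l,j}(z)\bigr)Y_{0,\infty}(z)
 -\bar h\,B_{0,l,j}(z)\,d_z Y_{0,\infty}(z),
\]
where $d_z$ denotes the relative differential along $\Delta$. Since $Y_{0,\infty}(z)$ is a fundamental solution of $\nabla^{\mathbb{P}^1}|_{\mathbb{P}^1\times{\mathcal V}_{\epsilon^m}}$, we have $d_z Y_{0,\infty}=-A_{\infty}(z,\bar\epsilon)\frac{dz}{z^m}\,Y_{0,\infty}$. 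Substituting and collecting the $\bar h$-terms, I obtain
\[
 d_z\tilde{Y}_{0,\infty}
 =
 -A_{\infty}\tfrac{dz}{z^m}Y_{0,\infty}
 +\bar h\,\Bigl(-dB_{0,l,j}+B_{0,l,j}A_{\infty}\tfrac{dz}{z^m}\Bigr)Y_{0,\infty}.
\]
On the other hand, horizontality with connection matrix $(A_{\infty}+\bar h\,A_{\epsilon^m,\bar h,v_{l,j}})\frac{dz}{z^m}$ requires
\[
 d_z\tilde{Y}_{0,\infty}
 =
 -(A_{\infty}+\bar h\,A_{\epsilon^m,\bar h,v_{l,j}})\tfrac{dz}{z^m}\,\tilde{Y}_{0,\infty}
 =
 -A_{\infty}\tfrac{dz}{z^m}Y_{0,\infty}
 +\bar h\Bigl(A_{\infty}\tfrac{dz}{z^m}B_{0,l,j}-A_{\epsilon^m,\bar h,v_{l,j}}\tfrac{dz}{z^m}\Bigr)Y_{0,\infty},
\]
again using $\bar h^2=0$. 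Equating the two expressions and cancelling $Y_{0,\infty}$ on the right (which is invertible), the identity reduces precisely to
\[
 A_{\epsilon^m,\bar h,v_{l,j}}\tfrac{dz}{z^m}
 =
 dB_{0,l,j}+\bigl[A_{\infty},B_{0,l,j}\bigr]\tfrac{dz}{z^m},
\]
which is the defining equation (\ref{equation:definition of A_{epsilon^m,h,v}}) of $A_{\epsilon^m,\bar h,v_{l,j}}$. Hence $\tilde{Y}_{0,\infty}$ is horizontal.

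Finally, invertibility is automatic: $\tilde{Y}_{0,\infty}=(\mathrm{id}-\bar h\,B_{0,l,j})Y_{0,\infty}$, the factor $\mathrm{id}-\bar h\,B_{0,l,j}$ has inverse $\mathrm{id}+\bar h\,B_{0,l,j}$ modulo $\bar h^2$, and $Y_{0,\infty}$ is itself a fundamental solution. So $\tilde{Y}_{0,\infty}$ is a fundamental solution of $\overline{\nabla^{flat}_{\Delta\times{\mathcal V}_{\epsilon^m}[\bar h],v_{l,j}}}$. No step in this argument is genuinely delicate; the only thing to be careful about is the distinction between the two relative-differential structures on $\Delta\times{\mathcal V}_{\epsilon^m}[\bar h]$, so that the $d\bar h$-term $B_{0,l,j}\,d\bar h$ in $\nabla^{flat}$ is correctly discarded when passing to $\overline{\nabla^{flat}}$—and this is built into the statement.
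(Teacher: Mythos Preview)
Your proof is correct and follows essentially the same approach as the paper: a direct computation of $d_z\tilde{Y}_{0,\infty}$ using the Leibniz rule, the equation $d_zY_{0,\infty}=-A_{\infty}\tfrac{dz}{z^m}Y_{0,\infty}$, and the defining relation~(\ref{equation:definition of A_{epsilon^m,h,v}}) for $A_{\epsilon^m,\bar h,v_{l,j}}$. The paper presents it as a single chain of equalities rather than computing both sides and matching, and does not spell out the invertibility of $\tilde{Y}_{0,\infty}$, but the content is the same.
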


\begin{proof}
The lemma follows from the calculation
\begin{align}
 \frac{\partial} {\partial z}
 \left( Y_{0,\infty}
 -\bar{h} B_{0,l,j}(z) Y_{0,\infty} \right) dz 
  &=
 d Y_{0,\infty}(z)
 -\bar{h} \big( dB_{0,l,j}(z) \: Y_{0,\infty}
 +B_{0,l,j}(z) \: dY_{0,\infty} \big)  
 \label {equation:relative fundamental solution by integrability} \\
 &= 
 -\frac{ A_{\infty}(z,\bar{\epsilon}) dz}{z^m}  Y_{0,\infty} 
 -
 \bar{h} A_{\epsilon^m,\bar{h},v_{l,j}}(z) \frac{dz}{z^m}
 \: Y_{0,\infty} \notag \\
 & \hspace{20pt}
 +\bar{h}\Big(
 \big[ A_{\infty}(z,\bar{\epsilon}), B_{0,l,j}(z) \big] 
 +B_{0,l,j}(z) A_{\infty}(z,\bar{\epsilon}) \Big) \frac{dz}{z^m} 
 \: Y_{0,\infty} \notag \\
 &=
 -\big( A_{\infty}(z,\bar{\epsilon})+\bar{h}A_{\epsilon^m,\bar{h},v_{l,j}}(z) \big)
 \frac{dz}  {z^m}
 \: \big( Y_{0,\infty}
 -\bar{h} B_{0,l,j}(z) Y_{0,\infty} \big) . \notag
\end{align}
\end{proof}

Let $\mathrm{Mon}_{\tilde{\gamma}}$ be
the monodromy matrix of $Y_{0,\infty}(z)$
along $\tilde{\gamma}$.
Then
$\tilde{Y}_{0,\infty}(z,\bar{h})=Y_{0,\infty}(z) - \bar{h} B_{0,l,j}(z)Y_{0,\infty}(z)$
has the monodromy matrix
$\mathrm{Mon}_{\tilde{\gamma}}$
along $\tilde{\gamma}$,
because $B_{0,l,j}(z)$
is single valued on
$(\Delta\times{\mathcal V}_{\epsilon^m})\setminus D_{{\mathcal V}_{\epsilon^m}}$.
By the similar method to that in the proof of Proposition
\ref {prop:normalization of local connection via extension to P^1},
we can construct a global connection
\begin{align*}
 \nabla_{\mathbb{P}^1\times{\mathcal V}_{\epsilon^m}[\bar{h}],v_{l,j}}
 &\colon
 ({\mathcal O}^{hol}_{\mathbb{P}^1\times{\mathcal V}_{\epsilon^m}[\bar{h}]})^{\oplus r}
 \longrightarrow
({\mathcal O}^{hol}_{\mathbb{P}^1\times{\mathcal V}_{\epsilon^m}[\bar{h}]})^{\oplus r}
 \otimes \Omega^1_{\mathbb{P}^1\times{\mathcal V}_{\epsilon^m}[\bar{h}]
 /{\mathcal V}[\bar{h}]}
 \left(D_{{\mathcal V}_{\epsilon^m}[\bar{h}]} \cup
 (\infty\times{\mathcal V}_{\epsilon^m}[\bar{h}])\right) \\
 & \begin{pmatrix} f_1 \\ \vdots \\ f_r \end{pmatrix} 
 \mapsto \begin{pmatrix} df_1 \\ \vdots \\ df_r \end{pmatrix}
 +
 (A_{\infty}(z,\bar{\epsilon})+\bar{h}\tilde{A}'_{\epsilon^m,\bar{h},v_{l,j}}(z))
 \frac{dz}{z^m-\epsilon^m} \begin{pmatrix} f_1 \\ \vdots \\ f_r \end{pmatrix}
\end{align*}
satisfying
\[
 \res_{z=\infty} \left( \tilde{A}'_{\epsilon^m,\bar{h},v_{l,j}}(z)
 \frac{dz}{z^m} \right)=0
\]
such that the restriction of
$\nabla_{\mathbb{P}^1\times{\mathcal V}_{\epsilon^m}[\bar{h}],v_{l,j}}$
to
$\mathbb{P}^1\times{\mathcal V}_{\epsilon^m}$
coincides with the restriction
$\nabla^{\mathbb{P}^1}|_{\mathbb{P}^1\times{\mathcal V}_{\epsilon^m}}$
given in (\ref {equation:restriction of connections on P^1 to irregular locus})
and that the restriction of
$\nabla_{\mathbb{P}^1\times{\mathcal V}_{\epsilon^m}[\bar{h}],v_{l,j}}$
to
$\Delta\times{\mathcal V}_{\epsilon^m}[\bar{h}]$
is isomorphic to the irregular singular relative connection
$\overline{ \nabla_{\Delta\times{\mathcal V}_{\epsilon^m}[\bar{h}],v_{l,j}}^{flat} }$
given in (\ref {equation:relative irregular connection induced by local horizontal lift}).
By construction, there is a convergent power series
\[
 \sum_{l'=0}^{\infty}R'^{(l)}_{0,j,l'}z^{l'}
\]
such that
\begin{align*}
 &(A_{\infty}(z,\bar{\epsilon})+\bar{h}A_{\epsilon^m,\bar{h},v_{l,j}}(z))
 \frac{dz}{z^m} \\
 &=
 \bar{h}\sum_{l'=1}^{\infty} l' R'^{(l)}_{0,j,l'} z^{l'-1} dz  
 +\Big( 1-\bar{h}\sum_{l'=0}^{\infty}R'^{(l)}_{0,j,l'}z^{l'}\Big)
 \big(A_{\infty}(z,\bar{\epsilon})+\bar{h}\tilde{A}'_{\epsilon^m,\bar{h},v_{l,j}}(z)\big)
 \frac{dz}{z^m}
 \Big( 1+\bar{h}\sum_{l'=0}^{\infty}R'^{(l)}_{0,j,l'}z^{l'}\Big),
\end{align*}
which implies
\begin{align*}
 \Xi_{l,j}(z,\bar{\epsilon}) \big|_{D_{{\mathcal V}_{\epsilon^m}}}
 &=
 \psi(A_{\infty}(z,\bar{\epsilon}))^l z^j  \big|_{D_{{\mathcal V}_{\epsilon^m}}}
 =
 P(z,\bar{\epsilon}) \: \Diag_{ (\mu_k^l z^j)} \: P(z,\bar{\epsilon})^{-1}
 \big|_{D_{{\mathcal V}_{\epsilon^m}}}
 =
 A_{\epsilon^m,\bar{h},v_{l,j}}(z) 
 \big|_{D_{{\mathcal V}_{\epsilon^m}}} \\
 &=
 \bigg(\tilde{A}'_{\epsilon^m,\bar{h},v_{l,j}}(z)
 + \sum_{j'=0}^{m-1} \sum_{l'=0}^{j'}
  \left[ A_{\infty,j'-l'}(\bar{\epsilon}) , R'^{(l)}_{0,j,l'} \right] z^{j'} \bigg) \bigg)
  \bigg|_{D_{{\mathcal V}_{\epsilon^m}}}.
\end{align*}
So we have
\begin{equation} \label {equation:adjusting in irregular case}
 \Xi_{l,j}(z,\bar{\epsilon})
 =
 \tilde{A}'_{\epsilon^m,\bar{h},v_{l,j}}(z)
 +
 \sum_{j'=0}^{m-1}\sum_{l'=0}^{j'}
 \Big[ A_{\infty,j'-l'}(\bar{\epsilon}) , R'^{(l)}_{0,j,l'} \Big] z^{j'} .
\end{equation}
We put
\[
 B'_{0,l,j}(z):=
 B_{0,l,j}(z)-\sum_{l'=0}^{\infty} R'^{(l)}_{0,j,l'} z^{l'}.
\]

\begin{lemma} \label {lemma:local horizontal lift of irregular connection}
The connection on
$({\mathcal O}_{\Delta\times{\mathcal V}_{\epsilon^m}[\bar{h}]}^{hol})^{\oplus r}$
given by the connection matrix
\[
 (A_{\infty}(z,\bar{\epsilon})+\bar{h}\tilde{A}'_{\epsilon^m,\bar{h},v_{l,j}}(z))
 \frac{dz}{z^m}
 +B'_{0,l,j}(z) d\bar{h}
\]
is isomorphic to the connection
$\nabla_{\Delta\times{\mathcal V}_{\epsilon^m}[\bar{h}],v_{l,j}}^{flat}$
given in (\ref {equation:local horizontal lift of irregular connections})
and satisfies the integrability condition.

\end{lemma}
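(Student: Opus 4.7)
The plan is to exhibit an explicit gauge transformation identifying the connection of the lemma with the already integrable connection $\nabla^{flat}_{\Delta\times{\mathcal V}_{\epsilon^m}[\bar{h}],v_{l,j}}$ of the preceding lemma; once this is done, the integrability assertion follows immediately because integrability is preserved under gauge equivalence.

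I would set $M(z) := \sum_{l' \geq 0} R'^{(l)}_{0,j,l'} z^{l'}$ and put $g := \mathrm{id} + \bar{h}\, M(z)$, so that $g^{-1} = \mathrm{id} - \bar{h}\, M(z)$ in $\mathbb{C}[h]/(h^{2})$. Since $R'^{(l)}_{0,j,l'}$ is independent of $z$ and $\bar{h}$, one has $dM = \sum_{l' \geq 1} l'\, R'^{(l)}_{0,j,l'} z^{l'-1}\, dz$, and the convergent power-series identity displayed immediately above equation (\ref{equation:adjusting in irregular case}) can be recognized as the gauge-transformation law
\[
  g^{-1}\bigl((A_\infty + \bar{h}\, \tilde{A}'_{\epsilon^m,\bar{h},v_{l,j}})\tfrac{dz}{z^m}\bigr)g
  \;=\;
  (A_\infty + \bar{h}\, A_{\epsilon^m,\bar{h},v_{l,j}})\tfrac{dz}{z^m} - \bar{h}\, dM
\]
for the $\tfrac{dz}{z^m}$-component of the connection matrices.

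For the $d\bar{h}$-component I would use the definitional identity $B'_{0,l,j} = B_{0,l,j} - M$ together with the relations $\bar{h}^{2} = 0$ and $\bar{h}\, d\bar{h} = 0$ in the ring of relative differentials on ${\mathcal V}_{\epsilon^m}[\bar{h}]$ over ${\mathcal V}_{\epsilon^m}$. A direct expansion yields $g^{-1}(B'_{0,l,j}\, d\bar{h})g = B'_{0,l,j}\, d\bar{h}$ and $g^{-1} dg = d\bar{h}\, M + \bar{h}\, dM$, so that, writing $\Omega^{\mathrm{new}}$ for the connection matrix of the lemma, the cross-cancellation between $-\bar{h}\, dM$ and $+\bar{h}\, dM$ gives
\begin{align*}
  g^{-1}\Omega^{\mathrm{new}}\, g + g^{-1} dg
  &= (A_\infty + \bar{h}\, A_{\epsilon^m,\bar{h},v_{l,j}})\tfrac{dz}{z^m} + B'_{0,l,j}\, d\bar{h} + d\bar{h}\, M \\
  &= (A_\infty + \bar{h}\, A_{\epsilon^m,\bar{h},v_{l,j}})\tfrac{dz}{z^m} + (B'_{0,l,j} + M)\, d\bar{h},
\end{align*}
which is precisely the connection matrix of $\nabla^{flat}_{\Delta\times{\mathcal V}_{\epsilon^m}[\bar{h}],v_{l,j}}$, since $B'_{0,l,j} + M = B_{0,l,j}$. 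This proves the isomorphism, whence integrability of the new connection is inherited from the previous lemma.

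The main obstacle is purely bookkeeping: one must recognize that $M$ plays two simultaneous roles --- the gauge transformation and the correction distinguishing $B'_{0,l,j}$ from $B_{0,l,j}$ --- and one must consistently use $\bar{h}^{2} = 0$ and $\bar{h}\, d\bar{h} = 0$ to collapse higher-order terms in $\bar{h}$. No further analytic input is needed beyond the preparation already carried out in the section; in particular the adjustment identity (\ref{equation:adjusting in irregular case}) enters only implicitly, as it is precisely what certifies the $\tfrac{dz}{z^m}$ part of the gauge law.
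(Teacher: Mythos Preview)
Your proof is correct and follows exactly the approach of the paper: the paper states in one line that the isomorphism is given by $I_r+\bar{h}\sum_{l'=0}^{\infty} R'^{(l)}_{0,j,l'} z^{l'}$ (your $g=\mathrm{id}+\bar{h}M$) and that integrability is inherited, while you carry out the gauge computation explicitly. Your detailed verification of both the $dz/z^m$ and $d\bar{h}$ components is accurate and simply fills in what the paper leaves to the reader.
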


\begin{proof}
Indeed the isomorphism is given by
$ I_r+\bar{h}\sum_{l'=0}^{\infty} B'_{0,j,l'} z^{l'}$
and the integrability follows from that of
$\nabla_{\Delta\times{\mathcal V}_{\epsilon^m}[\bar{h}],v_{l,j}}^{flat}$.
\end{proof}

We will give a lift of the connection given in
Lemma \ref {lemma:local horizontal lift of irregular connection}
as a connection on $\Delta\times{\mathcal V}[\bar{h}]$,
by means of extending the data
$(R'^{(l)}_{0,j,l'})$.

\begin{definition} \rm
We say that
$\big( R^{(l)}_{j,l'}(\epsilon) \big)^{0\leq l \leq r-1}
_{0\leq j\leq m-1, 0\leq l'\leq r-1}$
is an adjusting data for the connection
$\nabla^{\mathbb{P}^1}$ given in (\ref  {equation:extension of local connections to P^1})
if each $R^{(l)}_{j,l'}(\epsilon)$ is a matrix whose entries belong to
${\mathcal O}_{\mathcal V}^{hol}$ such that
$R^{(l)}_{j,l'}(\epsilon) \big|_{\epsilon^m=0}=R'^{(l)}_{0,j,l'}$
and that
the $z^{m-1}$-coefficient of
$\Xi_{l,j}(z,\epsilon)$ given in (\ref {equation:definition of Xi})
is expressed by
\begin{equation} \label {equation:adjustment at residue part}
 \sum_{0\leq pm+m-1-j\leq Q} \epsilon^{pm} C^{(l)}_{pm+m-1-j}(\epsilon)
 = \sum_{l'=0}^{m-1} \left[ A_{\infty,m-l'-1}(\epsilon) , R^{(l)}_{j,l'}(\epsilon) \right].
\end{equation}
\end{definition}

\begin{lemma} \label {lemma:existence of adjusting data}
There exists an adjusting data
$\big( R^{(l)}_{j,l'}(\epsilon) \big)^{0\leq l \leq r-1}
_{0\leq j\leq m-1, 0\leq l'\leq r-1}$
for the connection
$\nabla^{\mathbb{P}^1}$.
\end{lemma}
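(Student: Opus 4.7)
The plan is to regard (\ref{equation:adjustment at residue part}) as a linear system depending holomorphically on $\epsilon \in {\mathcal V}$, show it is solvable on the special fiber $\epsilon^m = 0$ via what is already computed in (\ref{equation:adjusting in irregular case}), and then lift the solution from the infinitesimal neighborhood ${\mathcal V}_{\epsilon^m}$ to the ambient Stein neighborhood by applying Lemma \ref{lemma:summation of adjoint images} to the family $\{A_{\infty,l''}(\epsilon)\}_{l''=0}^{m-1}$ and using Cartan's Theorem B.

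For each fixed pair $(l,j)$, I will first consider the ${\mathcal O}_{\mathcal V}^{hol}$-linear map
$$
 \Phi^{(l,j)} \colon M_r\bigl({\mathcal O}_{\mathcal V}^{hol}\bigr)^{\oplus m}
 \longrightarrow M_r\bigl({\mathcal O}_{\mathcal V}^{hol}\bigr), \qquad
 (R_0,\ldots,R_{m-1}) \mapsto \sum_{l'=0}^{m-1}
 \bigl[A_{\infty,m-l'-1}(\epsilon),\, R_{l'}\bigr],
$$
together with the target
$b^{(l,j)}(\epsilon) := \sum_{0 \leq pm+m-1-j\leq Q} \epsilon^{pm} C^{(l)}_{pm+m-1-j}(\epsilon)$,
which is the left-hand side of (\ref{equation:adjustment at residue part}).
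Since each $\mathrm{ad}(A)$ has image in $\ker\Tr$ and (\ref{equation:trace of residue is zero}) forces $b^{(l,j)}(\epsilon) \in \ker\Tr$, both live inside $\ker\Tr \otimes {\mathcal O}_{\mathcal V}^{hol}$. The central step is verifying the hypothesis of Lemma \ref{lemma:summation of adjoint images} at $\epsilon=0$. A constant $X \in M_r(\mathbb{C})$ that belongs to $\bigcap_{l''} \ker\mathrm{ad}(A_{\infty,l''}(0))$ commutes with the entire connection matrix $A_{\infty}(z,0)\frac{dz}{z^m}$, hence defines a flat endomorphism of $\nabla^{\mathbb{P}^1}|_{\mathbb{P}^1 \times \{0\}}$; via the isomorphism of Proposition \ref{prop:normalization of local connection via extension to P^1}, such an $X$ restricts to an element of $H^0\bigl(\Delta\times\{0\}, \ker \nabla^{\dag}_{\Delta_0}\bigr)$, which by Assumption \ref{assumption:local irreducible}(ii) must be a scalar. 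Lemma \ref{lemma:summation of adjoint images} then yields $\im(\Phi^{(l,j)}_0) = \ker\Tr$, and lower semicontinuity of rank, combined with the always-inclusion $\im(\Phi^{(l,j)}_\epsilon) \subset \ker\Tr$, forces constant rank $r^2-1$ after shrinking ${\mathcal V}$; in particular $\ker \Phi^{(l,j)}$ becomes a holomorphic vector subbundle of $M_r({\mathcal O}_{\mathcal V}^{hol})^{\oplus m}$.

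The lifting step then proceeds in two moves. First, after further shrinking ${\mathcal V}$ to a Stein neighborhood, the short exact sequence
$$
 0 \longrightarrow \ker \Phi^{(l,j)} \longrightarrow
 M_r({\mathcal O}_{\mathcal V}^{hol})^{\oplus m}
 \xrightarrow{\Phi^{(l,j)}}
 \ker\Tr \otimes {\mathcal O}_{\mathcal V}^{hol}
 \longrightarrow 0
$$
splits holomorphically and furnishes some global solution $\hat R^{(l)}_{j,l'}(\epsilon)$ with $\Phi^{(l,j)}(\hat R) = b^{(l,j)}$. Second, the difference $\hat R^{(l)}_{j,l'}|_{{\mathcal V}_{\epsilon^m}} - R'^{(l)}_{0,j,l'}$ is, by extracting the $z^{m-1}$-coefficient of (\ref{equation:adjusting in irregular case}) together with the vanishing of the residue at $\infty$ of $\tilde A'_{\epsilon^m,\bar h,v_{l,j}}(z)\frac{dz}{z^m}$, a section of $\ker \Phi^{(l,j)}|_{{\mathcal V}_{\epsilon^m}}$, and it extends to a global section of the holomorphic bundle $\ker \Phi^{(l,j)}$ on the Stein base via Cartan's Theorem B applied to the ideal sheaf of ${\mathcal V}_{\epsilon^m}$; adding this extension to $\hat R^{(l)}_{j,l'}$ produces the required $R^{(l)}_{j,l'}(\epsilon)$. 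The step I expect to be the most delicate is the centralizer-intersection identity at $\epsilon=0$, namely the identification of $\bigcap_{l''}\ker \mathrm{ad}(A_{\infty,l''}(0))$ with the horizontal endomorphisms appearing in Assumption \ref{assumption:local irreducible}(ii): it pivots on the trivial-bundle presentation and the restriction isomorphism built into Proposition \ref{prop:normalization of local connection via extension to P^1}, so one must be careful to check that every constant commuting matrix genuinely extends to the global horizontal endomorphism used by the assumption.
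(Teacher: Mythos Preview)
Your argument is correct and follows the same essential route as the paper: verify the intersection-of-centralizers hypothesis of Lemma~\ref{lemma:summation of adjoint images} for the family $\{A_{\infty,l''}(\epsilon)\}$ via Assumption~\ref{assumption:local irreducible}(ii), deduce surjectivity of $\Phi^{(l,j)}$ onto $\ker\Tr$, solve the resulting linear problem holomorphically, and then adjust by a kernel element to match $R'^{(l)}_{0,j,l'}$ over ${\mathcal V}_{\epsilon^m}$ using (\ref{equation:adjusting in irregular case}).

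The one difference worth noting is that you check the centralizer condition only at the single point $\epsilon=0$ and then invoke semicontinuity of rank, whereas the paper applies Assumption~\ref{assumption:local irreducible}(ii) at \emph{every} fiber $b\in{\mathcal V}$: any $u$ in $\bigcap_{j}\ker\mathrm{ad}(A_{\infty,j}(\epsilon))$ commutes with $A_\infty(z,\epsilon)\frac{dz}{z^m-\epsilon^m}$, hence $u|_{\Delta\times\{b\}}\in H^0(\Delta\times\{b\},\ker\nabla^\dag_{\Delta_b})=\mathbb{C}\cdot\mathrm{id}$ for all $b$, giving directly $\bigcap_j\ker\mathrm{ad}(A_{\infty,j}(\epsilon))={\mathcal O}_{\mathcal V}^{hol}\cdot\mathrm{id}$ on all of ${\mathcal V}$. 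This shortcut replaces your semicontinuity step and makes the surjectivity onto $\ker\Tr$ automatic at every fiber. Your more explicit appeal to Cartan's Theorem~B for the holomorphic splitting and for extending the kernel section from ${\mathcal V}_{\epsilon^m}$ is a reasonable unpacking of what the paper compresses into ``after shrinking ${\mathcal V}$'' and ``we may assume \ldots\ by using (\ref{equation:adjusting in irregular case}).''
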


\begin{proof}
For each
$u\in \bigcap_{j=0}^{m-1} \ker (\mathrm{ad}(A_{\infty,j}(\epsilon)) )$,
we have
$u \cdot A_{\infty}(z,\epsilon)\dfrac{dz}{z^m-\epsilon^m}
- A_{\infty}(z,\epsilon)\dfrac{dz}{z^m-\epsilon^m} \cdot u=0$.
So $u|_{\Delta\times\{ b\}}$ is a section of
$\ker \nabla^{\dag}_{\Delta_b}$
on $\Delta\times \{b\} $
for each $b\in {\mathcal V}$,
which is a scalar endomorphism by
Assumption  \ref {assumption:local irreducible}, (ii).
Then we have
$u\in {\mathcal O}_{\mathcal V}^{hol}\cdot\mathrm{id}$ 
and
\begin{equation} \label {equation:intersection of adjoint kernel}
 \bigcap_{j=0}^{m-1} \ker \left(\mathrm{ad}(A_{\infty,j}(\epsilon)) \right)
 ={\mathcal O}_{\mathcal V}^{hol} \cdot\mathrm{id}
\end{equation}
follows.
So we can see
\[
 \sum_{j=0}^{m-1} \im (\mathrm{ad}(A_{\infty,j}(\epsilon)))
 =
 \ker \left( {\mathcal End}_{{\mathcal O}^{hol}_{\mathcal V}}
 \left( \big({\mathcal O}_{\mathcal V}^{hol}\big)^{\oplus r}\right)
 \xrightarrow{\Tr} {\mathcal O}_{\mathcal V}^{hol} \right),
\]
because the equality for the restriction to each $b'\in{\mathcal V}$
holds by Lemma \ref {lemma:summation of adjoint images}.
Then, after shrinking ${\mathcal V}$,
there are  matrices $R^{(l)}_{j,0}(\epsilon),\ldots,R^{(l)}_{j,m-1}(\epsilon)$
constant in $z$ such that
\[
 \sum_{0\leq pm+m-1-j\leq Q} \epsilon^{pm} C^{(l)}_{pm+m-1-j}(\epsilon)
 = \sum_{l'=0}^{m-1} \left[ A_{\infty,m-l'-1}(\epsilon) , R^{(l)}_{j,l'}(\epsilon) \right].
\]
because of (\ref {equation:trace of residue is zero}).
Here we may assume
$R^{(l)}_{j,l'}(\epsilon) \big|_{\epsilon^m=0}=R'^{(l)}_{0,j,l'}$
by using (\ref {equation:adjusting in irregular case}).
\end{proof}

For $l=0,1,\ldots,r-1$ and for $j=0,1,\ldots,m-2$,
we take an adjusting data
$\big( R^{(l)}_{j,l'}(\epsilon) \big)^{0\leq l \leq r-1}
_{0\leq j\leq m-1, 0\leq l'\leq r-1}$
for the connection
$\nabla^{\mathbb{P}^1}$
and define
\begin{align} 
 \tilde{\Xi}_{l,j} (z,\epsilon)
 &:=
 \Xi_{l,j} (z,\epsilon)-
 \sum_{q=0}^{m-1}\sum_{0 \leq l' \leq m-1-q}
 \Big[ A_{\infty,q}(\epsilon) , R^{(l)}_{j,l'}(\epsilon) \Big] z^{q+l'}
  \label {equation:definition of adjustment of Xi}  \\
 &\hspace{70pt}
 -\sum_{q=0}^{m-1} \sum_{m-q \leq l' \leq m-1} 
 \Big[ A_{\infty,q}(\epsilon) , R^{(l)}_{j,l'}(\epsilon) \Big] \epsilon^m z^{q+l'-m}.
 \notag
\end{align}
Then, using (\ref {equation:adjustment at residue part}), we have the equality
\begin{align}
 \res_{z=\infty} \left( \tilde{\Xi}_{l,j} (z,\epsilon) \frac{dz}{z^m-\epsilon^m} \right)
 &=\res_{z=\infty} \left( \Xi_{l,j} (z,\epsilon) \frac{dz}{z^m-\epsilon^m} 
 -\sum_{l'=0}^{m-1} \left[ A_{\infty,m-l'-1}(\epsilon) , R^{(l)}_{j,l'}(\epsilon) \right] 
 \frac { z^{m-1}dz } { z^m-\epsilon^m }
 \right)  \label {equation:adjusted residue part is zero} \\
 &=
 -\sum_{0\leq pm+m-1-j\leq Q} \epsilon^{pm} C^{(l)}_{pm+m-1-j}(\epsilon)
 +\sum_{l'=0}^{m-1} \left[ A_{\infty,m-l'-1}(\epsilon) , R^{(l)}_{j,l'}(\epsilon) \right]
 \notag \\
 &=0 \notag
\end{align}
for $j=0,1,\ldots,m-2$, $l=0,1,\ldots,r-1$ and we have
\begin{align*}
 \tilde{\Xi}_{l,j} (z,\epsilon) \frac {dz} {z^m-\epsilon^m} \bigg|_{D_{{\mathcal V}}}
 &=
 P(z,\epsilon) z^j \Diag_{(\mu_k^l)} P(z,\epsilon)^{-1}
 \frac{dz}{z^m-\epsilon^m} \Big|_{D_{{\mathcal V}}}  \\
 & \hspace{40pt}
 -\Big[A_{\infty}(z,\epsilon),\sum_{l'=0}^{m-1} R^{(l)}_{j,l'}(\epsilon) z^{l'} \Big]
 \frac{dz}{z^m-\epsilon^m} \Big|_{D_{{\mathcal V}}}.
\end{align*}

Let
\begin{equation} \label {equation: lifted relative connection on projective line}
 \nabla_{\mathbb{P}^1\times{\mathcal V},v_{l,j}} \colon
 ({\mathcal O}_{\mathbb{P}^1\times{\mathcal V}[\bar{h}]}^{hol})^{\oplus r}
 \longrightarrow
 ({\mathcal O}_{\mathbb{P}^1\times{\mathcal V}[\bar{h}]}^{hol})^{\oplus r}
 \otimes\Omega^1_{\mathbb{P}^1\times{\mathcal V}[\bar{h}]
 /{\mathcal V}[\bar{h}]}
 \left(D_{{\mathcal V}[\bar{h}]} \cup
 (\infty\times {\mathcal V}[\bar{h}]) \right)^{hol}
\end{equation}
be the relative connection defined by
\[
 \nabla_{\mathbb{P}^1\times{\mathcal V}[\bar{h}],v_{l,j}}
 \begin{pmatrix} f_1 \\ \vdots \\ f_r \end{pmatrix}
 =
  \begin{pmatrix} df_1 \\ \vdots \\ df_r \end{pmatrix}
  +\Big(A_{\infty}(z,\epsilon)+ \bar{h} \tilde{\Xi}_{l,j}(z,\epsilon) \Big)
  \frac{dz}{z^m-\epsilon^m}
  \begin{pmatrix} f_1 \\ \vdots \\ f_r \end{pmatrix}.
\]
Then
$\nabla_{\mathbb{P}^1\times{\mathcal V}[\bar{h}],v_{l,j}} 
\big|_{\Delta\times{\mathcal V}_{\epsilon^m}[\bar{h}]}$
is isomorphic to
$\overline {\nabla^{flat}_{\Delta\times{\mathcal V}_{\epsilon^m}[\bar{h}],v_{l,j}} }$
by the construction.
Using (\ref  {equation:adjusted residue part is zero}), we can see the equality
$\res_{z=\infty}(\nabla_{\mathbb{P}^1\times{\mathcal V}[\bar{h}],v_{l,j}})
=\res_{z=\infty}\big(\nabla^{\mathbb{P}^1}\big)$.
By construction, there is an invertible matrix
$\tilde{P}(z,\bar{h})$ such that
\begin{align*}
 &\Big(
 \tilde{P}(z,\bar{h})^{-1} d\tilde{P}(z,\bar{h})
 +\tilde{P}(z,\bar{h})^{-1}\Big( A_{\infty}(z,\epsilon)+\bar{h} \tilde{\Xi}_{l,j}(z,\epsilon) \Big)
 \frac{dz}{z^m-\epsilon^m}\tilde{P}(z,\bar{h})
 \Big)\bigg|_{D_{{\mathcal V}[\bar{h}]}} \\
 &=
 \Diag_{(\nu(\mu_k)+\bar{h}\mu_k^l z^j)}\frac{dz}{z^m-\epsilon^m}
 \bigg|_{D_{{\mathcal V[\bar{h}]}}}.
\end{align*}
We may further assume that
\[
 \tilde{P}(z,\bar{h})P(z)^{-1} \big|_{D_{{\mathcal V}[\bar{h}]}}
 =
 \Big( I_r+\bar{h}\sum_{l'=0}^{m-1} R^{(l)}_{j,l'} z^{l'}\Big) 
 \Big|_{D_{{\mathcal V}[\bar{h}]}}.
\]

We will construct an integrable connection on
$\mathbb{P}^1\times{\mathcal V}[\bar{h}]$
over ${\mathcal V}$ which is an extension of
(\ref {equation: lifted relative connection on projective line}).

\begin{definition} \rm
We say that a connection
\begin{align} 
 \nabla_{\mathbb{P}^1\times{\mathcal V}[\bar{h}],v_{l,j}}^{flat}  \colon
 &({\mathcal O}_{\mathbb{P}^1\times\tilde{\mathcal V}[\bar{h}]}^{hol})^{\oplus r}
 \longrightarrow
 ({\mathcal O}_{\mathbb{P}^1\times\tilde{\mathcal V}[\bar{h}]}^{hol})^{\oplus r}
 \otimes
 (\iota_{{\mathcal V}[\bar{h}]})_*
 \Omega^1_{ (\mathbb{P}^1\times{\mathcal V}
 \setminus\Gamma_{\mathcal V})[\bar{h}] 
 \big/{\mathcal V} }
 \left( \infty\times{\mathcal V}[\bar{h}] \right)^{hol}
 \label {equation:fundamental local flat connection} \\
 &\begin{pmatrix} f_1 \\ \vdots \\ f_r \end{pmatrix}
 \mapsto
 \begin{pmatrix} df_1 \\ \vdots \\  df_r \end{pmatrix}
 +
 \left( (A_{\infty}(z,\epsilon)+\bar{h} \tilde{\Xi}_{l,j}(z,\epsilon))\frac{dz}{z^m-\epsilon^m}
 + B_{l,j}(z,\epsilon)d\bar{h} \right)
 \begin{pmatrix} f_1 \\ \vdots \\ f_r \end{pmatrix} \notag
\end{align}
is a horizontal lift of
$\nabla_{\mathbb{P}^1\times{\mathcal V},v_{l,j}}$
if
$B_{l,j}(z,\epsilon)|_{\epsilon^m=0}=B'_{0,l,j}(z)$
and
$\nabla_{\mathbb{P}^1\times{\mathcal V}[\bar{h}],v_{l,j}}^{flat}$
is integrable in the sense that
\begin{align*}
 &
 d \left( \big(A_{\infty}(z)+\bar{h} \tilde{\Xi}_{l,j}(z)\big)\frac{dz}{z^m-\epsilon^m}
 + B_{l,j}(z)d\bar{h} \right) \\
 & \;
 + \left[ \left( \big(A_{\infty}(z)+\bar{h} \tilde{\Xi}_{l,j}(z)\big)\frac{dz}{z^m-\epsilon^m}
 + B_{l,j}(z)d\bar{h} \right),
 \left( \big(A_{\infty}(z)+\bar{h} \tilde{\Xi}_{l,j}(z) \big) \frac{dz}{z^m-\epsilon^m}
 + B_{l,j}(z)d\bar{h} \right) \right] \\
 &=0.
\end{align*}
\end{definition}

\begin{proposition} \rm \label {prop:local horizontal lift}
There exists a horizontal lift
\begin{equation*} 
 \nabla_{\mathbb{P}^1\times{\mathcal V}[\bar{h}],v_{l,j}}^{flat}  \colon
 ({\mathcal O}_{\mathbb{P}^1\times\tilde{\mathcal V}[\bar{h}]}^{hol})^{\oplus r}
 \longrightarrow
 ({\mathcal O}_{\mathbb{P}^1\times\tilde{\mathcal V}[\bar{h}]}^{hol})^{\oplus r}
 \otimes
 (\iota_{{\mathcal V}[\bar{h}]})_*
 \Omega^1_{ (\mathbb{P}^1\times{\mathcal V}
 \setminus\Gamma_{\mathcal V})[\bar{h}] 
 \big/{\mathcal V} }
 \left( \infty\times{\mathcal V}[\bar{h}] \right)^{hol}
\end{equation*}
of the relative connection
$\nabla_{\mathbb{P}^1\times{\mathcal V},v_{l,j}}$
given in
(\ref {equation: lifted relative connection on projective line})
after shrinking ${\mathcal V}$,
where
$\iota_{{\mathcal V}[\bar{h}]}\colon
(\mathbb{P}^1\times{\mathcal V}\setminus\Gamma_{\mathcal V})[\bar{h}]
\hookrightarrow
\mathbb{P}^1\times{\mathcal V}[\bar{h}]$
is the canonical inclusion..
\end{proposition}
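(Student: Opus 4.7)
The plan is to produce $B_{l,j}(z,\epsilon)$ as a matrix of multivalued holomorphic functions on $(\mathbb{P}^1\times{\mathcal V})\setminus\Gamma_{\mathcal V}$, regular at $\infty$, satisfying both the integrability equation and the specialization $B_{l,j}|_{\epsilon^m=0}=B'_{0,l,j}(z)$ of Lemma \ref{lemma:local horizontal lift of irregular connection}. First I would expand $d\omega+\omega\wedge\omega=0$ for $\omega=(A_\infty+\bar h\tilde\Xi_{l,j})\frac{dz}{z^m-\epsilon^m}+B_{l,j}\,d\bar h$. Since $\bar h^2=0$, the wedge products involving $\bar h\tilde\Xi_{l,j}$ and $B_{l,j}$ vanish, and flatness reduces to the single linear ODE
\[
 \frac{\partial B_{l,j}}{\partial z}
 +\Big[\frac{A_{\infty}(z,\epsilon)}{z^m-\epsilon^m},\,B_{l,j}\Big]
 =\frac{\tilde\Xi_{l,j}(z,\epsilon)}{z^m-\epsilon^m}
\]
in the variable $z$, with $\epsilon$ a parameter.

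To solve this ODE, I would pick a fundamental solution $Y_\infty(z,\epsilon)$ of $\nabla^{\mathbb{P}^1}$ on the universal cover of $(\mathbb{P}^1\times{\mathcal V})\setminus(D_{\mathcal V}\cup(\{\infty\}\times{\mathcal V}))$; the equation becomes $\partial_z(Y_\infty^{-1}B_{l,j}Y_\infty)=Y_\infty^{-1}\tilde\Xi_{l,j}Y_\infty/(z^m-\epsilon^m)$. Fixing a base point $z_0\notin D\cup\{\infty\}$, I would set
\[
 B_{l,j}(z,\epsilon)
 :=Y_\infty(z,\epsilon)\Bigl[\,C_0(\epsilon)+\int_{z_0}^{z}Y_\infty(w,\epsilon)^{-1}\tilde\Xi_{l,j}(w,\epsilon)Y_\infty(w,\epsilon)\frac{dw}{w^m-\epsilon^m}\Bigr]Y_\infty(z,\epsilon)^{-1},
\]
where the initial value $C_0(\epsilon)$ is a matrix depending holomorphically on $\epsilon$, to be determined so that both (a) $B_{l,j}$ is single-valued on $(\mathbb{P}^1\times{\mathcal V})\setminus\Gamma_{\mathcal V}$ and (b) $C_0(\epsilon)|_{\epsilon^m=0}=Y_\infty(z_0,0)^{-1}B'_{0,l,j}(z_0)Y_\infty(z_0,0)$.

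The main obstacle is step (a): both $Y_\infty$ and the integral are a priori only defined on the universal cover of the complement of $D_{\mathcal V}\cup\{\infty\}$, whereas the target domain $(\mathbb{P}^1\times{\mathcal V})\setminus\Gamma_{\mathcal V}$ is larger, containing the loop $\tilde\gamma_b$ that encloses the whole fiber $D_b$. This is where Assumption \ref{assumption:local irreducible}, (i) enters: the monodromy matrix $\mathrm{Mon}_{\tilde\gamma}$ of $Y_\infty$ along $\tilde\gamma$ can be put in diagonal form with distinct eigenvalues. Under analytic continuation around $\tilde\gamma$, $Y_\infty\mapsto Y_\infty\mathrm{Mon}_{\tilde\gamma}$ and the period integral over $\tilde\gamma$ acquires an additive term $N(\epsilon)$; the residue vanishing at $\infty$ established in equation (\ref{equation:adjusted residue part is zero}) implies, via the residue theorem applied inside the disk bounded by $\tilde\gamma$, that $N(\epsilon)$ commutes with $\mathrm{Mon}_{\tilde\gamma}$. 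Since $\mathrm{Mon}_{\tilde\gamma}$ is diagonal with distinct eigenvalues, $N(\epsilon)$ is diagonal, and I can solve the cohomological equation $N(\epsilon)=\mathrm{Mon}_{\tilde\gamma}^{-1}C_0(\epsilon)\mathrm{Mon}_{\tilde\gamma}-C_0(\epsilon)$ modulo the commutant $\mathbb{C}\cdot\mathrm{id}$; the residual scalar ambiguity is absorbed by noting that $N(\epsilon)$ itself lies in the traceless part by Lemma \ref{lemma:summation of adjoint images}, together with the fact that the irregular-case $B'_{0,l,j}$ was already built from a specific diagonal decomposition, which pins down $C_0(\epsilon)$ uniquely.

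Finally, for (b), restriction to $\epsilon^m=0$ makes $\tilde\Xi_{l,j}$ agree with the source term yielding the already-constructed single-valued solution $B'_{0,l,j}(z)$ of Lemma \ref{lemma:local horizontal lift of irregular connection}; comparison via the uniqueness of solutions to the ODE, together with the matching initial value $C_0(\epsilon)|_{\epsilon^m=0}$, forces $B_{l,j}|_{\epsilon^m=0}=B'_{0,l,j}$. All of this is carried out on some polydisk neighborhood of $(0,0)\in{\mathcal V}$ on which $Y_\infty$ and the relevant integrals converge uniformly; shrinking ${\mathcal V}$ as needed, this yields the desired horizontal lift $\nabla^{flat}_{\mathbb{P}^1\times{\mathcal V}[\bar h],v_{l,j}}$.
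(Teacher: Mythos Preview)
Your reduction of integrability to the linear ODE
\[
 \partial_z B_{l,j}+\Big[\tfrac{A_\infty}{z^m-\epsilon^m},\,B_{l,j}\Big]=\tfrac{\tilde\Xi_{l,j}}{z^m-\epsilon^m}
\]
and the variation-of-constants formula $B_{l,j}=Y_\infty[C_0+F]Y_\infty^{-1}$ are correct, and this is a legitimate alternative route to the paper's. But step (a) contains a genuine error. Write $M=\mathrm{Mon}_{\tilde\gamma}$ and compute the monodromy: one finds $B_{l,j}(\gamma z)=B_{l,j}(z)+Y_\infty[MC_0M^{-1}-C_0+MNM^{-1}]Y_\infty^{-1}$, so single-valuedness amounts to $C_0-MC_0M^{-1}=MNM^{-1}$. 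In the eigenbasis of $M$ (distinct eigenvalues $m_i$) this reads $(1-m_i/m_j)(C_0)_{ij}=(m_i/m_j)N_{ij}$, which is solvable for $i\neq j$ but forces $N_{ii}=0$. Thus the obstruction is precisely the \emph{diagonal} of $N$, and showing that $N$ commutes with $M$ (i.e.\ is diagonal) is the opposite of what you need: a nonzero diagonal $N$ would make the equation unsolvable. Moreover, your justification (``residue theorem inside the disk bounded by $\tilde\gamma$'') fails, since inside that disk the integrand $Y_\infty^{-1}\tilde\Xi_{l,j}Y_\infty/(z^m-\epsilon^m)$ has genuine singularities at the points of $D$ where $Y_\infty$ is not single-valued. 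The reference to Lemma~\ref{lemma:summation of adjoint images} is also misplaced: tracelessness of $N$ is much weaker than $N_{ii}=0$ for all $i$, and that lemma concerns the $A_{\infty,j}$, not $M$.

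A correct direct argument exists: near $\infty$ write $Y_\infty=H(w)w^L$ with $w=1/z$ and $L$ diagonal (this is where Assumption~\ref{assumption:local irreducible}(i) and the choice $\mathrm{Re}(\text{eigenvalues of }\res_\infty)\in[0,1)$ enter). The $(i,i)$ entry of $Y_\infty^{-1}\tilde\Xi_{l,j}Y_\infty\,dz/(z^m-\epsilon^m)$ equals $[H^{-1}\tilde\Xi_{l,j}H]_{ii}\,dz/(z^m-\epsilon^m)$, which is single-valued and holomorphic at $w=0$ because $\res_\infty(\tilde\Xi_{l,j}\,dz/(z^m-\epsilon^m))=0$; hence its period around $\infty$ (equivalently $\tilde\gamma$) vanishes, giving $N_{ii}=0$. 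The freedom in the diagonal of $C_0$ (not just a scalar) is then used to match $B'_{0,l,j}$ at $\epsilon^m=0$. The paper bypasses all of this by a cleaner device: since $\res_\infty$ is unchanged under $\bar h\tilde\Xi_{l,j}$, one can choose a fundamental solution $\tilde Y_\infty(z,\epsilon,\bar h)$ of the thickened relative connection with the \emph{same} monodromy matrix as $Y_\infty$, normalize it so that its restriction to $\epsilon^m=0$ is $Y_\infty-\bar h B'_{0,l,j}Y_\infty$ (adjusting by a polynomial in $\mathrm{Mon}_\infty$, then lifting the coefficients off $\epsilon^m=0$), and set $B_{l,j}:=-\partial_{\bar h}\tilde Y_\infty\cdot Y_\infty^{-1}$. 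Single-valuedness on $(\mathbb{P}^1\times{\mathcal V})\setminus\Gamma_{\mathcal V}$ and holomorphy at $\infty$ are then automatic, and the curvature vanishes by a direct check.
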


\begin{proof}
After shrinking ${\mathcal V}$, we can take a local basis
$\tilde{Y}_{\infty}(z,\epsilon,\bar{h})$ of
$\ker (\nabla_{\mathbb{P}^1\times{\mathcal V}[\bar{h}],v_{l,j}})$
on
$(U_{\infty}\setminus
\Gamma_{\infty})\times{\mathcal V}[\bar{h}]$
for some open neighborhood $U_{\infty}$ of
$\infty$ in $\mathbb{P}^1$
and a slit $\Gamma_{\infty}\subset U_{\infty}$
which is a  simple path joining $\infty$ and a boundary point 
$b_{\infty}\in \partial U_{\infty}$ of $U_{\infty}$.
Here we may assume that
the restriction $Y_{\infty}(z,\bar{\epsilon})$
of $\tilde{Y}_{\infty}(z,\epsilon,\bar{h})$
to $(U_{\infty}\setminus\Gamma_{\infty})\times{\mathcal V}_{\epsilon^m}$
coincides with
$Y_{0,\infty}(z)$
which is chosen before
Lemma \ref {lemma:fundamental solution of irregular relative connection}.
We may further assume that the monodromy matrix
$\mathrm{Mon}_{\infty}(\epsilon)$ of
$\tilde{Y}_{\infty}(z,\epsilon,\bar{h})$ around
$\infty\times{\mathcal V}[\bar{h}]$
coincides with that of
$Y_{\infty}(z,\epsilon):=\tilde{Y}_{\infty}(z,\epsilon,0)$,
because the residue part of the connection matrix of
$\nabla_{\mathbb{P}^1\times{\mathcal V}[\bar{h}],v_{l,j}}$
at $z=\infty$ is constant in $\bar{h}$.
Consider the restriction
$\tilde{Y}_{\infty}(z,\bar{\epsilon},\bar{h})$
of $\tilde{Y}_{\infty}(z,\epsilon,\bar{h})$ to
$(U_{\infty}\setminus\Gamma_{\infty})
\times{\mathcal V}_{\epsilon^m}[\bar{h}]$.
Using  the integrability condition
of $\nabla_{\Delta\times{\mathcal V}_{\epsilon^m}[\bar{h}],v_{l,j}}^{flat}$,
we can see in the same way as
(\ref {equation:relative fundamental solution by integrability})
that
$Y_{\infty}(z,\bar{\epsilon})-\bar{h} B'_{0,l,j}(z) Y_{\infty}(z,\bar{\epsilon})$
is a fundamental solution of
$\nabla_{\mathbb{P}^1\times{\mathcal V}[\bar{h}]}
\big|_{(U_{\infty}\setminus\Gamma_{\infty})
\times{\mathcal V}_{\epsilon^m}[\bar{h}]}$
after an analytic continuation.
So we can write
\[
 Y_{\infty}(z,\bar{\epsilon})-\bar{h} B'_{0,l,j}(z) Y_{\infty}(z,\bar{\epsilon})
 =
 \tilde{Y}_{\infty}(z,\bar{\epsilon},\bar{h}) \: C(\bar{\epsilon},\bar{h})
\]
for a matrix $C(\bar{\epsilon},\bar{h})$ constant in $z$.
Since both 
$Y_{\infty}(z,\bar{\epsilon})-\bar{h} B'_{0,l,j}(z) Y_{\infty}(z,\bar{\epsilon})$
and
$\tilde{Y}_{\infty}(z,\bar{\epsilon},\bar{h})$
have the same monodromy
$\mathrm{Mon}_{\infty}(\bar{\epsilon})
:=\mathrm{Mon}_{\infty}(\epsilon)|_{\epsilon^m=0}$,
we should have
\[
 \left(
 Y_{\infty}(z,\bar{\epsilon})-\bar{h} B'_{0,l,j}(z) Y_{\infty}(z,\bar{\epsilon})
 \right)
 \mathrm{Mon}_{\infty}(\bar{\epsilon})
 =
 \tilde{Y}_{\infty}(z,\bar{\epsilon},\bar{h}) \: 
 \mathrm{Mon}_{\infty}(\bar{\epsilon}) \: C(\bar{\epsilon},\bar{h})
\]
from which we have
\[
 C(\bar{\epsilon},\bar{h}) \: \mathrm{Mon}_{\infty}(\bar{\epsilon})
 =
 \mathrm{Mon}_{\infty}(\bar{\epsilon}) \: C(\bar{\epsilon},\bar{h}).
\]
So we can write
\[
 C(\bar{\epsilon},\bar{h})=\sum_{l=0}^{r-1} b_l(\bar{\epsilon},\bar{h}) 
 \mathrm{Mon}_{\infty}(\bar{\epsilon})^l,
\]
because $\mathrm{Mon}_{\infty}(\bar{\epsilon})|_b$
has the $r$ distinct eigenvalues at each $b\in {\mathcal V}_{\epsilon^m}$.
Shrinking ${\mathcal V}$,
we can take lifts $b_l(\epsilon,\bar{h})$ of $b_l(\bar{\epsilon},\bar{h})$
as holomorphic functions in $\epsilon$.
If we replace
$\tilde{Y}_{\infty}(z,\epsilon,\bar{h})$
by
$\tilde{Y}_{\infty}(z,\epsilon,\bar{h})
\sum_{l=0}^{r-1} b_l(\epsilon,\bar{h}) 
\mathrm{Mon}_{\infty}(\epsilon)^l$,
then the restriction of
$\tilde{Y}_{\infty}(z,\epsilon,\bar{h})$ to
$(U_{\infty}\times{\mathcal V}_{\epsilon^m}[\bar{h}])\setminus
(\Gamma_{\infty}\times{\mathcal V}_{\epsilon^m}[\bar{h}])$
coincides with
$Y_{\infty}(z,\bar{\epsilon})-\bar{h} B'_{0,l,j}(z) Y_{\infty}(z,\bar{\epsilon})$.

If we define
\begin{equation} \label {equation:definition of B(z) }
 B_{l,j}(z,\epsilon):=- \frac{\partial \tilde{Y}_{\infty}(z,\epsilon,\bar{h})} {\partial \bar{h}}
 Y_{\infty}(z,\epsilon)^{-1},
\end{equation}
we have
$B_{l,j}(z,\epsilon)|_{\epsilon^m=0}
=B'_{0,l,j}(z)$.
Since both $\tilde{Y}_{\infty}(z,\epsilon,\bar{h})$
and $Y_{\infty}(z,\epsilon)$ have the same monodromy matrix around $\infty$,
we can regard
$B_{l,j}(z,\epsilon)$ as a matrix of single valued holomorphic functions on
$(\mathbb{P}^1\times{\mathcal V})\setminus \Gamma_{\mathcal V}$
after an analytic continuation.
Let us consider the connection
\begin{align*} 
 \nabla_{\mathbb{P}^1\times{\mathcal V}[\bar{h}],v_{l,j}}^{flat}  \colon
 &({\mathcal O}_{\mathbb{P}^1\times\tilde{\mathcal V}[\bar{h}]}^{hol})^{\oplus r}
 \longrightarrow
 ({\mathcal O}_{\mathbb{P}^1\times\tilde{\mathcal V}[\bar{h}]}^{hol})^{\oplus r}
 \otimes
 (\iota_{{\mathcal V}[\bar{h}]})_*
 \Omega^1_{ (\mathbb{P}^1\times{\mathcal V}
 \setminus\Gamma_{\mathcal V})[\bar{h}] 
 \big/{\mathcal V} }
 \left( \infty\times{\mathcal V}[\bar{h}] \right)^{hol}
  \\
 &\begin{pmatrix} f_1 \\ \vdots \\ f_r \end{pmatrix}
 \mapsto
 \begin{pmatrix} df_1 \\ \vdots \\  df_r \end{pmatrix}
 +
 \left( (A_{\infty}(z,\epsilon)+\bar{h} \tilde{\Xi}_{l,j}(z,\epsilon))\frac{dz}{z^m-\epsilon^m}
 + B_{l,j}(z,\epsilon)d\bar{h} \right)
 \begin{pmatrix} f_1 \\ \vdots \\ f_r \end{pmatrix} .
\end{align*}
The curvature form of
$\nabla_{\mathbb{P}^1\times{\mathcal V}[\bar{h}],v_{l,j}}^{flat}$
becomes
\begin{align*}
 &
 d \left( \big(A_{\infty}(z)+\bar{h} \tilde{\Xi}_{l,j}(z)\big)\frac{dz}{z^m-\epsilon^m}
 + B_{l,j}(z)d\bar{h} \right) \\
 & \;
 + \left[ \left( \big(A_{\infty}(z)+\bar{h} \tilde{\Xi}_{l,j}(z)\big)\frac{dz}{z^m-\epsilon^m}
 + B_{l,j}(z)d\bar{h} \right),
 \left( \big(A_{\infty}(z)+\bar{h} \tilde{\Xi}_{l,j}(z) \big) \frac{dz}{z^m-\epsilon^m}
 + B_{l,j}(z)d\bar{h} \right) \right] \\
 &=
 \tilde{\Xi}_{l,j}(z)\: d\bar{h} \wedge \frac{dz}{z^m-\epsilon^m}
  +\frac{\partial B_{l,j}(z)}{\partial z} dz\wedge d\bar{h}
 +(A_{\infty}(z)B_{l,j}(z)-B_{l,j}(z)A_{\infty}(z))\frac{dz}{z^m-\epsilon}\wedge d\bar{h} \\
 &=-\tilde{\Xi}_{l,j}(z) \frac{dz}{z^m-\epsilon^m}\wedge d\bar{h}
  -\frac{\partial}{\partial z} \Big(
 \frac{\partial \tilde{Y}_{\infty}}{\partial \bar{h}}(z,\bar{h}) \;
 Y_{\infty}(z)^{-1} \Big) dz\wedge d\bar{h}
 \\
 & \quad
 + \Big( -A_{\infty}(z) \frac{\partial \tilde{Y}_{\infty}}{\partial \bar{h}}(z,\bar{h}) \;
 Y_{\infty}(z)^{-1}
 +\frac{\partial \tilde{Y}_{\infty}}{\partial \bar{h}}(z,\bar{h}) \;
 Y_{\infty}(z)^{-1} A_{\infty}(z) \Big)
 \frac{dz}{z^m-\epsilon^m} \wedge d\bar{h} \\
 &=
 -
 \tilde{\Xi}_{l,j}(z) \frac{dz}{z^m-\epsilon^m}\wedge d\bar{h}
 -\bigg( \frac{\partial^2 \tilde{Y}_{\infty}}{\partial \bar{h} \partial z} \;
 Y_{\infty}^{-1} \bigg) dz\wedge d\bar{h}
 +
 \bigg(\frac{\partial \tilde{Y}_{\infty}}{\partial \bar{h}} \;
 Y_{\infty}^{-1} \: 
 \frac{\partial Y_{\infty}}{\partial z} \:
 Y_{\infty}^{-1} \bigg) dz\wedge d\bar{h} \\
 & \hspace{20pt}
 - \Big( A_{\infty}(z) \frac{\partial \tilde{Y}_{\infty}}{\partial \bar{h}} \;
 Y_{\infty}^{-1}
 -\frac{\partial \tilde{Y}_{\vartheta}}{\partial \bar{h}} \;
 Y_{\infty}^{-1} A_{\infty}(z) \Big)
 \frac{dz}{z^m-\epsilon^m} \wedge d\bar{h} \\
 &=
 -\frac{ \tilde{\Xi}_{l,j}(z) dz}{z^m-\epsilon^m}\wedge d\bar{h} 
 - 
 \frac{\partial}{\partial \bar{h}}
 \bigg( -\frac{A_{\infty}(z)+\bar{h} \tilde{\Xi}_{l,j}(z) } {z^m-\epsilon^m}
 \tilde{Y}_{\infty} \bigg)
 \; Y_{\infty}^{-1} \: dz\wedge d\bar{h}  
 -
 \frac{\partial \tilde{Y}_{\infty}}{\partial \bar{h}} \;
 Y_{\infty}^{-1} \: 
 \frac{ A_{\infty}(z) dz}{z^m-\epsilon^m}\wedge d\bar{h}  \\
 &\hspace{20pt}
 - \Big( A_{\infty}(z) \frac{\partial \tilde{Y}_{\infty}}{\partial \bar{h}} \;
 Y_{\infty}^{-1}
 -\frac{\partial \tilde{Y}_{\infty}}{\partial \bar{h}} \;
 Y_{\infty}^{-1} A_{\infty}(z) \Big)
 \frac{dz}{z^m-\epsilon^m} \wedge d\bar{h} \\
 &=
 - \frac{ \tilde{\Xi}_{l,j}(z)dz } {z^m-\epsilon^m}\wedge d\bar{h} 
 +\frac{ \tilde{\Xi}_{l,j}(z) } {z^m-\epsilon^m} dz\wedge d\bar{h}
 +\frac{A_{\infty}(z)}{z^m-\epsilon^m}
 \frac{\partial \tilde{Y}_{\infty}}{\partial \bar{h}}
 Y_{\infty}^{-1} dz\wedge d\bar{h}
 -\frac{\partial \tilde{Y}_{\infty}} {\partial \bar{h}}
 Y_{\infty}^{-1}
 \frac{A_{\infty}(z)dz}{z^m-\epsilon^m}\wedge d\bar{h}  \\
 &\hspace{20pt}
 - A_{\infty}(z) \frac{\partial \tilde{Y}_{\infty}}{\partial \bar{h}} \; Y_{\infty}^{-1}
 \frac{dz}{z^m-\epsilon^m} \wedge d\bar{h} 
 +\frac{\partial \tilde{Y}_{\infty}} {\partial \bar{h}} 
 Y_{\infty}^{-1}
 A_{\infty}(z)\frac{dz}{z^m-\epsilon^m} \wedge d\bar{h} \\
 &=0.
\end{align*}
So
$\nabla_{\mathbb{P}^1\times{\mathcal V}[\bar{h}],v_{l,j}}^{flat}$
is an integrable connection
and becomes a horizontal lift of
$\nabla_{\mathbb{P}^1\times{\mathcal V},v_{l,j}}$.
\end{proof}

\subsection{Comparison  with the asymptotic property
in the unfolding theory by Hurtubise, Lambert and Rousseau}

In the unfolding theory by Hurtubise, Lambert and Rousseau in
\cite{Hurtubise-Lambert-Rousseau}, \cite{Hurtubise-Rousseau},
unfolded Stokes matrices for unfolded linear differential equations
are defined.
So our integrable connection
$\nabla^{flat}_{\mathbb{P}^1\times{\mathcal V}[\bar{h}],v_{l,j}}$
constructed in Proposition \ref {prop:local horizontal lift}
induces unfolded Stokes matrices
but we cannot expect that
these matrices are constant in $\bar{h}$.
Although we cannot produce any positive result
on the asymptotic property concerned with the integrable connection
$\nabla^{flat}_{\mathbb{P}^1\times{\mathcal V}[\bar{h}],v_{l,j}}$
defined by (\ref{equation:fundamental local flat connection})
in subsection \ref {subsection:local horizontal lift},
it will be worth pointing out what is the difficulty.

We use the same notations as in subsection \ref {subsection:global connection on P^1}
and in subsection \ref {subsection:local horizontal lift}.
We consider the multivalued function
\[
 \tau_{\epsilon}(z):= \int \frac{dz}{z^m-\epsilon^m}
\]
which is single valued on
$\mathbb{P}^1_{\Delta\times\Delta^s}\setminus\Gamma_{\Delta}$.
Under a suitable choice of path integral, we may assume that
$\tau_{\epsilon}(z)$ does not vanish on
$\Gamma_{\Delta}\setminus(\Gamma_{\Delta}\cap D)$.
Let
\begin{align*}
 \varpi \colon
 & [0,1) \times S^1\longrightarrow \Delta \\
 & (s,e^{\sqrt{-1}\psi})\mapsto s e^{\sqrt{-1}\psi}
\end{align*}
be the polar blow up.
We can regard
$\Delta\times[0,1)\times S^1\times\Delta^s\subset
\mathbb{C}\times[0,1)\times S^1\times\Delta^s
\subset\mathbb{P}^1\times[0,1)\times S^1\times\Delta^s$.

By Proposition \ref{proposition:open-covering},
we can take an open neighborhood $U$ of
$\{0\}\times\{0\}\times S^1\times \Delta^s$ in
$\Delta\times [0,1)\times S^1\times\Delta^s$
and an open covering
\[
 U \setminus((\mathrm{id}\times\varpi\times\mathrm{id})^{-1}(D)\cap U))
 =\bigcup_{j=1}^m\bigcup_{0\leq\psi_0\leq 2\pi}\bigcup_{\xi=1}^2 W^{(j)}_{\psi_0,\xi}
\]
such that
any flow of the vector field
\[
 v_{\epsilon,\theta}
 =\mathrm{Re}\left(e^{\sqrt{-1}\theta}
 (z^m-\epsilon^m)\right)
 \frac{\partial}{\partial x}
 +
 \mathrm{Im}\left(e^{\sqrt{-1}\theta}
 (z^m-\epsilon^m)\right)
 \frac{\partial}{\partial y}
\]
starting at a point in
$W^{(j)}_{\psi_0,\xi}$
has an accumulation point in
$(\mathrm{id}\times\varpi^{-1}\times\mathrm{id})^{-1}(D)\cap U$,
where $x=\mathrm{Re}(z)$, $y=\mathrm{Im}(z)$.
Here $\theta=\theta^{(j)}_{\psi_0,\xi}\in\mathbb{R}$ is determined by
$j,\psi_0,\xi$ as in the proof of Proposition \ref {proposition:open-covering}.

We take an open covering
\[
 (\varpi\times\mathrm{id}_{\Delta^s})^{-1}({\mathcal V})
 =
 \bigcup_{b\in  (\varpi\times\mathrm{id}_{\Delta^s})^{-1}({\mathcal V})}
 \tilde{\mathcal V}'_b
\]
by small contractible open subsets ${\mathcal V}'_b$
of $(\varpi\times\mathrm{id}_{\Delta^s})^{-1}({\mathcal V})$.
By Theorem \ref {theorem:existence-of-fundamental-solutions},
we can see that there are an open covering
\[
 (\Delta\times \tilde{\mathcal V}'_b) \cap W^{(j)}_{\psi_0,\xi}
 =\bigcup_{p\in W^{(j)}_{\psi_0,\xi}} S^{(j)}_{\psi_0,\xi,p}
 =\bigcup_{\vartheta} S_{\vartheta}
\]
with $\vartheta=(j,\psi_0,\xi,p)$
and a matrix
\[
 Y_{\vartheta}(z,s,e^{\sqrt{-1}\psi},w,h)=
 \left( \tilde{y}_1^{\vartheta}\big(z,s,e^{\sqrt{-1}\psi},w,h\big) \, , \, \ldots \, , \,
 \tilde{y}_r^{\vartheta}\big(z,s,e^{\sqrt{-1}\psi},w,h \big) \right)
\]
of functions on $S_{\vartheta}\times\Delta_{\delta}$ for some $\delta>0$,
satisfying 
\begin{equation} 
 \frac{ d \tilde{Y}_{\vartheta}(z,s,e^{\sqrt{-1}\psi},w,h) } {dz} =
 -\frac{ A_{\infty}(z,\epsilon,w) + h \tilde{\Xi}^{(l)}_q(z,\epsilon,w) } {z^m-\epsilon^m}
 \; \tilde{Y}_{\vartheta}(z,s,e^{\sqrt{-1}\psi},w,h),
\end{equation}
such that the limit
\begin{equation} \label {equation: limit determining asymptotic}
 \lim_{t\to\infty}  
 \tilde{P}(z_{\theta}(t),h) \:
 \tilde{Y}_{\vartheta}(z_{\theta}(t),h) \;
 \mathrm{Diag}_{\big(\exp\big(\int_{t_0}^t 
 (\nu(\mu_k)(z_{\theta}(t)) + h \mu_k^l z_{\theta}(t)^q)
 e^{\sqrt{-1}\theta} dt\big)\big)} 
 =I_r
\end{equation}
is the identity matrix, where $z_{\theta}(t)$ is a flow of $v_{\epsilon,\theta}$
in $S_{\vartheta}=S^{(j)}_{\psi_0,\xi,p}$
and $\theta=\theta^{(j)}_{\psi_0,\xi}$ is determined from $\vartheta=(j,\psi_0,\xi,p)$.
We denote the restriction of 
$\tilde{Y}_{\vartheta}(z,s,e^{\sqrt{-1}\psi},h)$
to
$S_{\vartheta}[\bar{h}]$ by
$\tilde{Y}_{\vartheta}(z,\bar{h})$
and denote the restriction of
$\tilde{Y}_{\vartheta}(z,s,e^{\sqrt{-1}\psi},h)$
to
$S_{\vartheta}\times \{0\}$ by
$Y_{\vartheta}(z)$.
By (\ref {equation: limit determining asymptotic}), 
we have
\begin{equation} \label {equation: relative bounded limit}
 \lim_{t\to\infty}  
 \tilde{P}(z_{\theta}(t),\bar{h}) \:
 \tilde{Y}_{\vartheta}(z_{\theta}(t),\bar{h}) \;
 \mathrm{Diag}_{\big( \exp\big(\int_{t_0}^t (\nu(\mu_k)(z_{\theta}(t)))
 e^{\sqrt{-1}\theta}dt \big)
 \big(1+\bar{h}\int_{t_0}^t \mu_k^l z_{\theta}(t)^l e^{\sqrt{-1}\theta} dt)\big)\big)} 
 =I_r
\end{equation}
from which
$\tilde{Y}_{\vartheta}(z,\bar{h})
 \Diag_{ ( \exp (\int \nu(\mu_k)(z)\frac{dz}{z^m-\epsilon^m} ))}
 \big(I_r+\bar{h}\Diag_{(\mu_k^l z^q \frac{dz}{z^m-\epsilon^m})}\big)$
is bounded on $S_{\vartheta}[\bar{h}]$
and in particular
$Y_{\vartheta}(z)
 \Diag_{(\exp(\int \nu(\mu_k)(z)\frac{dz}{z^m-\epsilon^m}))}$
is  bounded on $S_{\vartheta}$.

Recall that we can write
$Y_{\infty}(z)=
\begin{pmatrix}
 y^{\infty}_1(z), & \ldots , & y^{\infty}_r(z)
\end{pmatrix}$
for $y^{\infty}_k(z):=\tilde{y}_k(z,0)$.


We take a family of loops
$\gamma\colon [0,1]\times\tilde{\mathcal V}'_b
\longrightarrow \big( \Delta\times\tilde{\mathcal V}'_b \big)
\setminus\Gamma_{\tilde{\mathcal V}'_b}$
satisfying $\gamma(0,w)=\gamma(1,w)$,
$p_2(\gamma(t,w))=w$
and that $\gamma(\bullet,w)\colon[0,1]\longrightarrow \Delta\times\{w\}$
is homotopic to $\tilde{\gamma}(\bullet,w)$  for any $w\in{\mathcal V}$.
From the analysis of flows in Proposition \ref  {proposition:open-covering},
we may assume that there are points
$0=t_1<t_2<\cdots<t_I<1$
such that
$t_i \in S_{\vartheta_i}$,
$\lim_{t\to\infty} z_{\theta_i}(t)=\epsilon \zeta_m^{j_i}$
and that either $j_{i+1}=j_i+1$ or
$j_{i+1}=j_i$
with $\epsilon\zeta_m^{j_i}\in \overline {S_{\vartheta_i}\cap S_{\vartheta_{i+1}}}$
holds.
Here in the case of
$\epsilon\zeta_m^{j_i}\in \overline {S_{\vartheta_i}\cap S_{\vartheta_{i+1}}}$,
we can further assume that
a flow $z_{\theta_i}(t)$ lie in
$S_{\vartheta_i}\cap S_{\vartheta_{i+1}}$
which is accumulated to $\epsilon\zeta_m^{j_i}$ and
a flow
$z_{\theta_{i+1}}(t)$ lie in $S_{\vartheta_i}\cap S_{\vartheta_{i+1}}$
which is accumulated to $\epsilon\zeta_m^{j_i}$.

\begin{lemma} \label {lemma:upper triangular matrix as comparison of asymptotic solutions}
Assume that
flows $z_{\theta}(t)$ (resp.\ $z_{\theta'}$)
of $v_{\epsilon,\theta}$ (resp.\ $v_{\epsilon,\theta'}$)
in $S_{\vartheta}$ (resp.\ $S_{\vartheta'}$)
lie in $S_{\vartheta}\cap S_{\vartheta'}$
for $\vartheta,\vartheta'$
and that
$\lim_{t\to\infty} z_{\theta}(t)=
\lim_{t\to\infty} z_{\theta'}(t)=\epsilon\zeta_m^j
\in \overline {S_{\vartheta}\cap S_{\vartheta'}}$.
We take a permutation $\sigma$ of $\{1,\ldots,r\}$ satisfying
\[
 \mathrm{Re} \big( e^{\sqrt{-1}\theta} \nu(\mu_{\sigma(1)})(\epsilon\zeta_m^j) \big)
 >\cdots>
 \mathrm{Re} \big( e^{\sqrt{-1}\theta} \nu(\mu_{\sigma(r)})(\epsilon\zeta_m^j) \big).
\]
Assume that
\[
 \tilde{Y}_{\vartheta'}(z,\bar{h})= \tilde{Y}_{\vartheta}(z,\bar{h}) 
 C_{\vartheta,\vartheta'}(\bar{h})
\]
holds under an analytic continuation along a path in
$S_{\vartheta}\cup S_{\vartheta'}$.
Then
\[
 (e_{\sigma(1)},\ldots, e_{\sigma(r)})^{-1} C_{\vartheta,\vartheta'}(\bar{h})
 (e_{\sigma(1)},\ldots, e_{\sigma(r)})
 \]
is an upper triangular matrix.
\end{lemma}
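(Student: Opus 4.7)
The plan is to follow the classical Stokes-matrix comparison argument, adapted to the unfolded setting via the asymptotic property stated in equation (\ref{equation: limit determining asymptotic}) of Theorem \ref{theorem:existence-of-fundamental-solutions}. Writing $C_{\vartheta,\vartheta'}(\bar{h})=(c_{kl}(\bar{h}))$, the relation $\tilde{Y}_{\vartheta'}(z,\bar{h})=\tilde{Y}_{\vartheta}(z,\bar{h})C_{\vartheta,\vartheta'}(\bar{h})$ means the $l$-th column $\tilde{y}^{\vartheta'}_l$ equals $\sum_{k=1}^r c_{kl}(\bar{h})\, \tilde{y}^{\vartheta}_k$. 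Our goal is to show $c_{\sigma(a),\sigma(b)}(\bar{h})=0$ whenever $a>b$, as then conjugation by the permutation matrix $(e_{\sigma(1)},\ldots,e_{\sigma(r)})$ produces the desired upper triangular matrix.

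First, I would rewrite the asymptotic estimate (\ref{equation: relative bounded limit}) to say that on each sector $S_{\vartheta}$ the column
\[
  \tilde{y}^{\vartheta}_k(z,\bar{h})\,\exp\!\left(\int \nu(\mu_k)(z)\tfrac{dz}{z^m-\epsilon^m}\right)\bigl(1+\bar{h}\textstyle\int \mu_k^l z^q\tfrac{dz}{z^m-\epsilon^m}\bigr)
\]
is bounded and tends to the $k$-th column of $\tilde{P}(z,\bar{h})^{-1}$ along any flow of $v_{\epsilon,\theta}$ with $\lim_{t\to\infty}z_\theta(t)=\epsilon\zeta_m^j$; analogously on $S_{\vartheta'}$ with $\theta'$. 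Next, multiplying the identity $\tilde{y}^{\vartheta'}_l=\sum_k c_{kl}(\bar{h})\,\tilde{y}^{\vartheta}_k$ by the weight $\exp(\int \nu(\mu_l)(z)\tfrac{dz}{z^m-\epsilon^m})(1+\bar{h}\int \mu_l^l z^q\tfrac{dz}{z^m-\epsilon^m})$ and evaluating along a flow $z_\theta(t)$ lying in $S_\vartheta\cap S_{\vartheta'}$ that converges to $\epsilon\zeta_m^j$, the right-hand side becomes bounded, while the $k$-th summand on the left acquires the factor
\[
  \exp\!\left(\int_{t_0}^{t}\bigl(\nu(\mu_l)(z_\theta(t))-\nu(\mu_k)(z_\theta(t))\bigr)e^{\sqrt{-1}\theta}\,dt\right),
\]
modulated by a $1+O(\bar{h})$-factor of polynomial growth.

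The third step is the key asymptotic observation. As $t\to\infty$ the integrand above tends to $e^{\sqrt{-1}\theta}(\nu(\mu_l)(\epsilon\zeta_m^j)-\nu(\mu_k)(\epsilon\zeta_m^j))$, so the modulus of this factor is comparable to $\exp\bigl(t\cdot\mathrm{Re}(e^{\sqrt{-1}\theta}(\nu(\mu_l)-\nu(\mu_k))(\epsilon\zeta_m^j))\bigr)$. Hence, if $\mathrm{Re}(e^{\sqrt{-1}\theta}\nu(\mu_l)(\epsilon\zeta_m^j))>\mathrm{Re}(e^{\sqrt{-1}\theta}\nu(\mu_k)(\epsilon\zeta_m^j))$, this factor blows up exponentially, and the only way the corresponding contribution to the bounded left-hand side can remain controlled is $c_{kl}(\bar{h})=0$. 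Writing $l=\sigma(b)$, $k=\sigma(a)$ and invoking the defining order of $\sigma$, this blow-up occurs precisely when $a>b$, yielding $c_{\sigma(a),\sigma(b)}(\bar{h})=0$ for $a>b$.

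The main obstacle I anticipate is organising the $\bar{h}$-deformation carefully: one must show that the $\bar{h}$-correction to the exponential weight does not spoil the dominance comparison. Because $\bar{h}^2=0$, the deformed exponential splits as $\exp(\int\nu(\mu_k)\tfrac{dz}{z^m-\epsilon^m})(1+\bar{h}\int\mu_k^l z^q\tfrac{dz}{z^m-\epsilon^m})$, and the $\bar{h}$-term gives only a polynomial correction along the flow, negligible against the genuinely exponential factor that encodes the inequality $\mathrm{Re}(e^{\sqrt{-1}\theta}\nu(\mu_l))>\mathrm{Re}(e^{\sqrt{-1}\theta}\nu(\mu_k))$. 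Treating the $\bar{h}=0$-part and the $\bar{h}$-part separately in the expansion of $c_{kl}(\bar{h})=c_{kl}^{(0)}+\bar{h}\,c_{kl}^{(1)}$ and matching each order against the asymptotic bound then gives the vanishing coefficient by coefficient, which completes the argument.
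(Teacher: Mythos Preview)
Your argument is essentially the paper's own: both use the classical Stokes comparison, exploiting that $\tilde{Y}_{\vartheta}\,\Diag(\Lambda_k)$ and $\tilde{Y}_{\vartheta'}\,\Diag(\Lambda_k)$ are bounded on $S_{\vartheta}\cap S_{\vartheta'}$, with $\Lambda_k(z,\bar h)=\exp\!\big(\int(\nu(\mu_k)+\bar h\,\mu_k^l z^q)\tfrac{dz}{z^m-\epsilon^m}\big)$, and then reading off which entries of $C_{\vartheta,\vartheta'}$ must vanish from the divergence of the ratios $\Lambda_{\sigma(a)}^{-1}\Lambda_{\sigma(b)}$.

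Two small points. First, you have swapped ``left'' and ``right'' in your second paragraph: it is the side $\tilde{y}^{\vartheta'}_l\cdot\Lambda_l$ (the left-hand side of your displayed identity) that is bounded, and the individual summands $c_{kl}\,\tilde{y}^{\vartheta}_k\cdot\Lambda_l$ on the right that acquire the exponential factor. Second, in your column-by-column formulation you must say a word about why several exponentially growing summands cannot cancel. The paper sidesteps this by working directly with the matrix
\[
\Diag(\Lambda_k)^{-1}\,C_{\vartheta,\vartheta'}\,\Diag(\Lambda_k)
=\bigl(\tilde{Y}_{\vartheta}\,\Diag(\Lambda_k)\bigr)^{-1}\bigl(\tilde{Y}_{\vartheta'}\,\Diag(\Lambda_k)\bigr),
\]
which is bounded, so that after conjugation by the permutation matrix each entry is a \emph{single} term $\Lambda_{\sigma(k')}^{-1}\Lambda_{\sigma(k)}\,c_{k',k}$ and the vanishing is immediate. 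Your version works too once you invoke the dominant-term argument (or the linear independence of the limits of $\tilde{y}^{\vartheta}_k\Lambda_k$), but the matrix phrasing is cleaner. Your treatment of the $\bar h$-correction is correct and in fact slightly more explicit than the paper's.
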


\begin{proof}
We put
\[
 \Lambda_k(z,\bar{h}):=\exp\left(\int(\nu(\mu_k)+\bar{h}\mu_k^lz^q)
 \frac{dz}{z^m-\epsilon^m}\right).
\]
If $k<k'$,
then $\Lambda_{\sigma(k)}(z,\bar{h})^{-1}\Lambda_{\sigma(k')}(z,\bar{h})$
tends to $0$ when $z$ tends to $\epsilon\zeta_m^j$.
Note that
\begin{align*}
 (\Diag_{(\Lambda_k(z,\bar{h}))} )^{-1}
 C_{\vartheta,\vartheta'}(\bar{h})  \:
 \Diag_{(\Lambda_k(z,\bar{h}))} 
 &=
 ( \tilde{Y}_{\vartheta}(z,\bar{h}) \Diag_{(\Lambda_k(z,\bar{h}))} )^{-1}
 \tilde{Y}_{\vartheta'}(z,\bar{h}) \: \Diag_{(\Lambda_k(z,\bar{h}))}
\end{align*}
tends to a matrix of bounded functions when $z$ tends to $\epsilon\zeta_m^j$
in $S_{\vartheta}\cap S_{\vartheta'}$.

If we put
\[
 C'(\bar{h}):=
 (e_{\sigma(1)},\ldots,e_{\sigma(r)})^{-1}
 C_{\vartheta,\vartheta'}(\bar{h}) (e_{\sigma(1)},\ldots,e_{\sigma(r)})
 =
 \begin{pmatrix}
  c_{1,1}(\bar{h}) & \cdots & c_{1,r}(\bar{h}) \\
  \vdots & \ddots & \vdots \\
  c_{r,1}(\bar{h}) & \cdots & c_{r,r}(\bar{h})
 \end{pmatrix}
\]
then we have
\begin{align*}
 &
 (e_{\sigma(1)},\ldots,e_{\sigma(r)})^{-1}
 (\Diag_{(\Lambda_k(z,\bar{h}))} )^{-1}
 C_{\vartheta,\vartheta'}(\bar{h})
 \Diag_{(\Lambda_k(z,\bar{h}))}
 (e_{\sigma(1)},\ldots,e_{\sigma(r)}) \\
 &=
 \begin{pmatrix}
  \Lambda_{\sigma(1)} & \cdots & 0 \\
  \vdots & \ddots & \vdots \\
  0 & \cdots & \Lambda_{\sigma(r)}
 \end{pmatrix}^{-1}
 \begin{pmatrix}
  c_{1,1}(\bar{h}) & \cdots & c_{1,r}(\bar{h}) \\
  \vdots & \ddots & \vdots \\
  c_{r,1}(\bar{h}) & \cdots & c_{r,r}(\bar{h})
 \end{pmatrix}
 \begin{pmatrix}
  \Lambda_{\sigma(1)} & \cdots & 0 \\
  \vdots & \ddots & \vdots \\
  0 & \cdots & \Lambda_{\sigma(r)}
 \end{pmatrix} \\
 &=
 \begin{pmatrix}
 c_{1,1}(\bar{h}) & \cdots & 
 \Lambda_{\sigma(1)}(z,\bar{h})^{-1}\Lambda_{\sigma(r)}(z,\bar{h}) c_{1,r}(\bar{h}) \\
 \vdots & \ddots & \vdots  \\
 \Lambda_{\sigma(1)}(z,\bar{h})\Lambda_{\sigma(r)}(z,\bar{h})^{-1} c_{r,1}(\bar{h})
 & \cdots & c_{r,r}(\bar{h})
 \end{pmatrix}.
 \end{align*}
Since $\Lambda_{\sigma(k)}(z,\bar{h})^{-1}\Lambda_{\sigma(k')}(z,\bar{h})$
is divergent for $k>k'$,
we should have
$c_{k',k}(\bar{h})=0$
for $k'>k$
\end{proof}

By an analytic continuation
we can write
\begin{align*}
 \tilde{Y}_{\vartheta_i}(z,\bar{h})
 &=
 \tilde{Y}_{\infty}(z,\bar{h}) C_{\infty,\vartheta_i}(\bar{h}) 
\end{align*}
from which we have
\[
 \tilde{Y}_{\vartheta_{i+1}}(z,\bar{h})=
 \tilde{Y}_{\vartheta_i}(z,\bar{h}) 
 C_{\infty,\vartheta_i}(\bar{h})^{-1} C_{\infty,\vartheta_{i+1}}(\bar{h}).
\]
If $j_i=j_{i+1}$, then
$(e_{\sigma(1)},\ldots, e_{\sigma(r)})^{-1}
C_{\infty,\vartheta_i}(\bar{h})^{-1} C_{\infty,\vartheta_{i+1}}(\bar{h})
 (e_{\sigma(1)},\ldots, e_{\sigma(r)})$
is an upper triangular matrix for a permutation $\sigma$
by Lemma \ref {lemma:upper triangular matrix as comparison of asymptotic solutions}.
The matrix $C_{\infty,\vartheta_i}(\bar{h})$
is  analogous to an unfolded Stokes matrix
given in \cite{Hurtubise-Lambert-Rousseau}
but we cannot say from its construction that
it is constant in $\bar{h}$.

We remark that the restriction 
$\tau_{\epsilon}(z)^{-1}B_{l,q}(z)|_{\epsilon^m=0}
=-(m-1)z^{m-1}B'_{0,l,q}(z)$
to the irregular singular locus $\epsilon^m=0$ is bounded around $z=0$
by its construction.
We can see that
\begin{align*}
 B_{l,q}(z)
 &=
 -\frac{\partial \tilde{Y}_{\infty}(z,\bar{h})} {\partial \bar{h}}
 \: Y_{\infty}(z)^{-1}  \\
 &=
 -\frac{\partial}{\partial\bar{h}}
 \left(\tilde{Y}_{\vartheta_i}(z,\bar{h}) C_{\infty,\vartheta_i}(\bar{h})^{-1}\right)
 (\tilde{Y}_{\vartheta_i}(z,0) C_{\infty,\vartheta_i}(0)^{-1})^{-1}  \\
 &=
 -\frac{\partial \tilde{Y}_{\vartheta_i}(z,\bar{h})} {\partial \bar{h}}
 \: Y_{\vartheta_i}(z)^{-1}
 +Y_{\vartheta_i}(z) \: C_{\infty,\vartheta_i}(0)^{-1}
 \frac{\partial C_{\infty,\vartheta_i}(\bar{h})} {\partial \bar{h}}
 \: Y_{\vartheta_i}(z)^{-1}.
\end{align*}
By the following proposition, we can say
that
$\tau_{\epsilon}(z)^{-1}\dfrac{\partial \tilde{Y}_{\vartheta_i}(z,\bar{h})} 
{\partial \bar{h}}
\: Y_{\vartheta_i}(z)^{-1}$
is bounded on $S_{\vartheta_i}$.
However,
$\tau_{\epsilon}(z)^{-1}
Y_{\vartheta_i}(z)  \: C_{\infty,\vartheta_i}(0)^{-1}
\dfrac{\partial C_{\infty,\vartheta_i}(\bar{h})} {\partial \bar{h}}Y_{\vartheta_i}(z)^{-1}$
 is not bounded unless
\[
 (e_{\sigma(1)},\ldots, e_{\sigma(r)})^{-1}
 C_{\infty,\vartheta_i}(0)^{-1}
 \dfrac{\partial C_{\infty,\vartheta_i}(\bar{h})} {\partial \bar{h}}
  (e_{\sigma(1)},\ldots, e_{\sigma(r)})
\]
is an upper triangular matrix.
So we can not say the boundedness of
$\tau_{\epsilon}(z)^{-1}B_{l,q}(z)$
on $S_{\vartheta}$. 
This is one of the reasons why we cannot get a canonical
global horizontal lift in section \ref  {section:construction of unfolding}.

\begin{proposition} \label{prop:boundedness by asymptotic}
$\tau_{\epsilon}(z)^{-1}\dfrac{\partial}{\partial\bar{h}} 
\tilde{Y}_{\vartheta}(z,\bar{h})
\: Y_{\vartheta}(z)^{-1}$
is bounded on $S_{\vartheta}$.
\end{proposition}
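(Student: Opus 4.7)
The strategy is to use the asymptotic property from Theorem \ref{theorem:existence-of-fundamental-solutions}, in its ``relative bounded limit'' form displayed just before this proposition, to extract the dominant growth of the $\bar{h}$-derivative of $\tilde{Y}_{\vartheta}$ as a conjugate of a scalar logarithmic function, and then to compare that logarithm with $\tau_{\epsilon}$. Write $\tilde{Y}_{\vartheta}(z,\bar{h})=Y_{\vartheta}(z)+\bar{h}F_{\vartheta}(z)$ so that $F_{\vartheta}=(\partial\tilde{Y}_{\vartheta}/\partial\bar{h})(z,0)$, and set
\[
\Lambda_k(z):=\exp\!\Big(\int\nu(\mu_k)(z)\tfrac{dz}{z^m-\epsilon^m}\Big),\qquad
\dot{I}_k(z):=\int\mu_k^l z^q \tfrac{dz}{z^m-\epsilon^m}.
\]
By Theorem \ref{theorem:existence-of-fundamental-solutions} together with the relative limit quoted above, both $Y_{\vartheta}(z)\,\Diag_{(\Lambda_k(z))}$ and $\tilde{Y}_{\vartheta}(z,\bar{h})\,\Diag_{(\Lambda_k(z)(1+\bar{h}\dot{I}_k(z)))}$ are bounded on $S_{\vartheta}$ and tend (along any flow) to the invertible diagonal limits $\tilde{P}(z,0)^{-1}$ and $\tilde{P}(z,\bar{h})^{-1}$, respectively; in particular $Y_{\vartheta}\Diag_{(\Lambda_k)}$ has a bounded inverse on $S_{\vartheta}$.

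First, I would extract the $\bar{h}^1$-coefficient of the bounded expression $\tilde{Y}_{\vartheta}\Diag_{(\Lambda_k)}(I+\bar{h}\Diag_{(\dot{I}_k)})$: this yields a matrix $B(z)$, bounded on $S_{\vartheta}$, satisfying
\[
F_{\vartheta}(z)\Diag_{(\Lambda_k)}+Y_{\vartheta}(z)\Diag_{(\Lambda_k)}\Diag_{(\dot{I}_k)}=B(z).
\]
Right-multiplying by $(Y_{\vartheta}\Diag_{(\Lambda_k)})^{-1}$ and using that the diagonal matrices commute, I obtain
\[
F_{\vartheta}(z)Y_{\vartheta}(z)^{-1}
=B(z)\,(Y_{\vartheta}\Diag_{(\Lambda_k)})^{-1}
-\big(Y_{\vartheta}\Diag_{(\Lambda_k)}\big)\Diag_{(\dot{I}_k)}\big(Y_{\vartheta}\Diag_{(\Lambda_k)}\big)^{-1}.
\]
The first summand is bounded on $S_{\vartheta}$. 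The second is a conjugate of $\Diag_{(\dot{I}_k)}$ by a bounded matrix with bounded inverse, so boundedness of $\tau_{\epsilon}(z)^{-1}F_{\vartheta}Y_{\vartheta}^{-1}$ reduces to boundedness of the diagonal matrix $\tau_{\epsilon}(z)^{-1}\Diag_{(\dot{I}_k(z))}$ on $S_{\vartheta}$ (together with $\tau_{\epsilon}(z)^{-1}$ being bounded there).

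Finally, I would verify that $\tau_{\epsilon}(z)^{-1}\dot{I}_k(z)$ is bounded on $S_{\vartheta}$. Both $\tau_{\epsilon}$ and $\dot{I}_k$ are antiderivatives of rational one-forms on $\mathbb{P}^1$ whose only singularities are simple poles along the reduced unfolding divisor $D$; at each pole $\epsilon\zeta_m^{j'}$ they have logarithmic leading parts with residues $\alpha_{j'}:=\res_{z=\epsilon\zeta_m^{j'}}\tfrac{dz}{z^m-\epsilon^m}$ and $\mu_k^l(\epsilon\zeta_m^{j'})^q\alpha_{j'}$, respectively, so the quotient extends continuously across each pole with limit $\mu_k^l(\epsilon\zeta_m^{j'})^q$. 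Away from $D$ both functions are holomorphic, and by the normalization $\tau_{\epsilon}\neq 0$ on $\Gamma_{\Delta}\setminus(\Gamma_{\Delta}\cap D)$ recalled at the beginning of Section \ref{section:local horizontal lift}, one can (after shrinking ${\mathcal V}$ and/or $S_{\vartheta}$ and choosing the appropriate branch of $\tau_{\epsilon}$) ensure $\tau_{\epsilon}$ is bounded away from zero on $S_{\vartheta}$ outside a neighborhood of $D$.

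\textbf{Main obstacle.} The delicate step is the last one: aligning the branches of the multivalued functions $\tau_{\epsilon}$ and $\dot{I}_k$ consistently on $S_{\vartheta}$ and arranging that $\tau_{\epsilon}$ does not vanish on $S_{\vartheta}$ away from $D$. This requires a coherent choice of additive constant in the path integrals compatible with the sectorial decomposition $\{S_{\vartheta}\}$ produced by Proposition \ref{proposition:open-covering} and Theorem \ref{theorem:existence-of-fundamental-solutions}, and may necessitate shrinking ${\mathcal V}$; the computation leading to $F_{\vartheta}Y_{\vartheta}^{-1}$ is otherwise a formal consequence of the asymptotic bound.
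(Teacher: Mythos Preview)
Your reduction to bounding $\tau_{\epsilon}^{-1}\Diag_{(\dot I_k)}$ is exactly the paper's argument: the paper sets $T_{\vartheta}(z,\bar h)=\tilde Y_{\vartheta}\Diag_{(\exp\int(\nu(\mu_k)+\bar h\mu_k^lz^q)\tfrac{dz}{z^m-\epsilon^m})}$ and takes $\partial/\partial\bar h$, obtaining
\[
\frac{\partial T_{\vartheta}}{\partial\bar h}
=\frac{\partial\tilde Y_{\vartheta}}{\partial\bar h}\,Y_{\vartheta}^{-1}\,T_{\vartheta}
+T_{\vartheta}\,\Diag_{(\dot I_k)},
\]
which after multiplying by $T_{\vartheta}^{-1}$ is precisely your identity for $F_{\vartheta}Y_{\vartheta}^{-1}$ (your ``extract the $\bar h^1$-coefficient'' is the same computation). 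So the strategy is identical.

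Your residue argument for $\tau_{\epsilon}^{-1}\dot I_k$, however, has a genuine gap on the $\epsilon=0$ locus of $S_{\vartheta}$. You assert that $\tfrac{dz}{z^m-\epsilon^m}$ and $\tfrac{\mu_k^lz^q\,dz}{z^m-\epsilon^m}$ have only simple poles along $D$ with logarithmic primitives; this is true only for $\epsilon\neq 0$. When $\epsilon=0$ the forms have poles of order $m$ and $m-q$ at $z=0$, the primitives are negative powers of $z$ rather than logarithms, and your ``quotient of logarithms extends continuously'' reasoning does not apply. The paper handles this by splitting cases: for $\epsilon=0$ it computes directly
\[
\tau_0(z)^{-1}\!\int\mu_k^lz^q\frac{dz}{z^m}
=-(m-1)z^{m-1}\Big(\frac{-\mu_k^l}{(m-q-1)z^{m-q-1}}+\text{const}\Big),
\]
which is visibly bounded; for $\epsilon\neq 0$ it writes out the log-quotient estimate essentially as you describe. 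Your argument is easily completed by adding the $\epsilon=0$ computation, but as written it does not cover that fiber.

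Your flagged ``main obstacle'' (branch choice and nonvanishing of $\tau_{\epsilon}$ away from $D$) is a fair point; the paper's proof simply writes the log-quotient expression and asserts it is bounded on $S_{\vartheta}\cap\{\epsilon\neq 0\}$, without further comment on this issue. So you are being more careful there than the paper is.
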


\begin{proof}
Since the limit in (\ref {equation: relative bounded limit}) is uniform in $\bar{h}$,
we can see that
\[
 T_{\vartheta}(z,\bar{h}):=
 \tilde{Y}_{\vartheta}(z,\bar{h}) \:
 \Diag_{\big(\exp\big(\int(\nu(\mu_k)+\bar{h}\mu_k^l z^q)
 \frac{dz}{z^m-\epsilon^m}\big)\big)}
\]
and its partial derivative in $\bar{h}$ is bounded on
$\Delta\times{\mathcal V}[\bar{h}]$.
So
\begin{align*}
 \frac {\partial T_{\vartheta}(z,\bar{h})} {\partial \bar{h}}  
 &=
 \frac{\partial \tilde{Y}_{\vartheta}(z,\bar{h})} {\partial \bar{h}} \:
 \Diag_{\big(\exp\big(\int\nu(\mu_k)\frac{dz}{z^m-\epsilon^m}\big)\big)}
 +
 Y_{\vartheta}(z) \:
 \frac{\partial}{\partial \bar{h}}
 \Diag_{\big(\exp\big(\int(\nu(\mu_k)+\bar{h}\mu_k^l z^q)\frac{dz}{z^m-\epsilon^m}\big)\big)}   \\
 &=
 \frac{\partial \tilde{Y}_{\vartheta}(z,\bar{h})} {\partial \bar{h}} 
 \: Y_{\vartheta}(z)^{-1} T_{\vartheta}(z)
 +
 T_{\vartheta}(z) \:
 \Diag_{\big( \int \mu_k^l z^q \frac{dz}{z^m-\epsilon^m}\big)}
\end{align*}
is bounded on $S_{\vartheta}$.
So it is sufficient to show that
$\tau_{\epsilon}(z)^{-1} \Diag_{\int( \mu_k^l z^q\frac{dz}{z^m-\epsilon^m})}$
is bounded.

If $\epsilon=0$, 
\begin{align*}
 \left| \tau_{\epsilon}(z)^{-1} \int \mu_k^l z^q \frac{dz}{z^m} \right|
 &=
 \left| \left(-\frac{1}{(m-1) z^{m-1}} \right)^{-1}
 \int 
 \frac{\mu_k^l}{z^{m-q}} dz \right|  \\
 &=
 \left|-(m-1) z^{m-1} \left( \frac{-\mu_k^l} {(m-q-1) z^{m-q-1}} 
 +(\text{constant}) \right) \right|  \\
 &\leq
 \frac{(m-1)|\mu_k^l| \: |z|^{q}} {m-q-1} +(\text{constant})
\end{align*}
is bounded on each
$S_{\vartheta}\cap (\Delta\times\varpi^{-1}(0)\times\Delta^s)$.

If $\epsilon\neq 0$, we can write
\[
 \mu_k^l z^q \dfrac{dz}{z^m-\epsilon^m}
 =
 \sum_{j=1}^m \frac{a^j_k} {z-\epsilon\zeta_m^j}dz
\]
for $0\leq q\leq m-2$.
Then
\begin{align*}
 \left| \tau_{\epsilon}(z)^{-1} \int \mu_k^l z^q \frac{dz}{z^m-\epsilon^m} \right|
 &=\left| \left(\sum_{j'=1}^m
 \frac{\log(z-\epsilon\zeta_m^{j'})}
 {\epsilon^{m-1}\prod_{j''\neq j'}(\zeta_m^{j''}-\zeta_m^{j'})}
 \right)^{-1}
 \int_{z_0}^z \sum_{j=1}^m a^j_k  
 \frac{dz}{z-\zeta_m^j\epsilon}  \right| \\
 &=\left| \left(\sum_{j'=1}^m
 \frac{\log(z-\epsilon\zeta_m^{j'}) }
 {\epsilon^{m-1}\prod_{j''\neq j'}(\zeta_m^{j''}-\zeta_m^{j'})} 
 \right)^{-1}
 \Big( \sum_{j=1}^m  a^j_k
 \log(z-\zeta_m^j\epsilon)   +(\text{constant}) \Big) \right| \\
 &\leq \sum_{j=1}^m
 \left| \sum_{j'=1}^m
 \frac{\log(z-\epsilon\zeta_m^{j'})} {\prod_{j''\neq j'}(\zeta_m^{j''}-\zeta_m^{j'})}
 \right| ^{-1} 
 |a^j_k| 
 \frac{ |\log (z-\zeta_m^j\epsilon) | } { |\epsilon|^{m-1} } +(\text{constant})
\end{align*}
is bounded on each $S_{\vartheta}\cap\{\epsilon\neq 0\}$.

Thus
$\tau_{\epsilon}(z)^{-1} \Diag_{(\int \mu_k^l z^q \frac{dz}{z^m-\epsilon^m})}$
is bounded on $S_{\vartheta}$ and
the proposition follows.
\end{proof}

In a precise setting in the paper \cite{Hurtubise-Rousseau}
by Hurtubise and Rousseau,
they consider a linear differential equation on $\mathbb{P}^1$
with poles along the unfolding  divisor and two regular singular points
$\infty^{\text{H-R}}$, $R^{\text{H-R}}$.
So we should associate a relative connection
\[
 \nabla'_{\mathbb{P}^1\times{\mathcal V}[\bar{h}],v_{l,q}}\colon
 ({\mathcal O}_{\mathbb{P}^1\times{\mathcal V}[\bar{h}]}^{hol})^{\oplus r}
 \longrightarrow 
 ({\mathcal O}_{\mathbb{P}^1\times{\mathcal V}[\bar{h}]}^{hol})^{\oplus r} \otimes 
 \Omega^1_{\mathbb{P}^1\times{\mathcal V}[\bar{h}]/
 {\mathcal V}[\bar{h}]}
 \left(D_{{\mathcal V}[\bar{h}]}\cup
 \big(\{\infty^{\rm H-R},R^{\rm H-R}\}\times{\mathcal V}[\bar{h}]\big)\right)^{hol}
\]
such that
$\nabla'_{\mathbb{P}^1\times{\mathcal V}[\bar{h}],v_{l,q}} 
\big|_{\Delta\times{\mathcal V}[\bar{h}]}
\cong
\nabla_{\mathbb{P}^1\times{\mathcal V}[\bar{h}],v_{l,q}} 
\big|_{\Delta\times{\mathcal V}[\bar{h}]}$.
In other words, we decompose the monodromy of
$\nabla_{\mathbb{P}^1\times{\mathcal V}[\bar{h}],v_{l,q}} $ along
$\infty\times{\mathcal V}[\bar{h}]$ to
the composition of the monodromy of
$\nabla'_{\mathbb{P}^1,\bar{h},v^{(l)}_q}$ around $\infty^{\text{H-R}}$
and that around a point  $R^{\text{H-R}}$ other than $\infty^{\text{H-R}}$.
The monodromy of $\nabla'$ around $R^{\text{H-R}}$
reflects the analytic continuation of fundamental solutions
of $\nabla_{\mathbb{P}^1\times{\mathcal V}[\bar{h}],v_{l,q}}$
along the `inner side' of the unfolded divisor
$D_{{\mathcal V}[\bar{h}]}$.
We can take a fundamental solution
$Y'_{\infty^{\text{H-R}}} (z,\bar{h})$
of $\nabla'_{\mathbb{P}^1\times{\mathcal V}[\bar{h}],v_{l,q}} $
near $\infty^{\text{H-R}}\times{\mathcal V}[\bar{h}]$.
Then we can write
\begin{align*}
 \tilde{Y}_{\vartheta_i}(z,\bar{h})
 &=
 Q(z,\bar{h})Y'_{\infty^{\text{H-R}}}(z,\bar{h})
 C'_{\infty^{\text{H-R}},\vartheta_i}(\bar{h}) 
\end{align*}
for an invertible matrix $Q(z,\bar{h})$ giving the isomorphism
$\nabla'_{\mathbb{P}^1\times{\mathcal V}[\bar{h}],v_{l,q}} |_{\Delta\times{\mathcal V}[\bar{h}]}
\cong
\nabla_{\mathbb{P}^1\times{\mathcal V}[\bar{h}],v_{l,q}} |_{\Delta\times{\mathcal V}[\bar{h}]}$.
Here the matrix $C'_{\infty^{\text{H-R}},\vartheta_i}(\bar{h})$
is a more close analogue of an unfolded Stokes matrix
in \cite{Hurtubise-Rousseau}.
Though there is an ambiguity in the choice of
$C'_{\infty^{\text{H-R}},\vartheta_i}(\bar{h})$
coming from the choices of
$\nabla'_{\mathbb{P}^1,{\mathcal V}[\bar{h}],v_{l,q}}$ and
$Y'_{\infty^{\text{H-R}}}(z,\bar{h})$,
we cannot say from its construction that 
$C'_{\infty^{\text{H-R}},\vartheta_i}(\bar{h})$ is constant in $\bar{h}$,
because we do not know the compatibility of the asymptotic properties
between $\tilde{Y}_{\vartheta_i}(z,\bar{h})$ and
$\tilde{Y}_{\vartheta_{i+1}}(z,\bar{h})$
when $j_i\neq j_{i+1}$.


We remark that in the general setting in \cite{Hurtubise-Lambert-Rousseau},
\cite{Hurtubise-Rousseau},
the asymptotic property of solutions of unfolded linear differential equations
is far more complicated than our one parameter deformation case.


\section {Construction of an unfolded
generalized isomonodromic deformation}
\label {section:construction of unfolding}

\subsection {Setting of the moduli space for an unfolded
generalized isomonodromic deformation}
\label {subsection:moduli setting}

In this subsection, we introduce the moduli theoretic setting for
describing an unfolding of the unramified irregular singular
generalized isomonodromic deformation.
Let us recall the independent variables of the usual
unramified irregular singular generalized isomonodromic deformation,
which basically comes from \cite{Jimbo-Miwa-Ueno}.
We consider unramified irregular singular connections
$\nabla\colon E\longrightarrow E\otimes\Omega^1_C(m_1t_1+\cdots +m_nt_n)$
and we take a certain \'etale covering
$U\longrightarrow {\mathcal M}_{g,n}^{\rm reg}$
of the moduli stack ${\mathcal M}_{g,n}^{\rm reg}$ of $n$-pointed smooth projective curves
of genus $g$
with a universal family $({\mathcal C},\tilde{t}_1,\ldots,\tilde{t}_n)$
over $U$.
Then
\[
 \left( \Omega^1_{{\mathcal C}/U}(m_1\tilde{t}_1+\cdots+m_n\tilde{t}_n)/
 \Omega^1_{{\mathcal C}/U}(\tilde{t}_1+\cdots+\tilde{t}_n) \right)^r
\]
becomes the space of independent variables
of the generalized isomonodromic deformation of $(E,\nabla)$.
We will give a certain perturbation of this space.


First we construct a smooth covering
${\mathcal H}\longrightarrow {\mathcal M}_{g,n}^{\rm reg}$
of the moduli stack of $n$-pointed smooth projective curves
of genus $g$ as follows.
If $g=0$, we put  $H:=\Spec\mathbb{C}$,
$Z:=\mathbb{P}^1$ and regard $Z$ as
a curve over $H$.
If $g=1$, we put
$H:=\left.\left\{ D\in | {\mathcal O}_{\mathbb{P}^2}(3) \right|
 \, | \, \text{$D$ is a smooth cubic curve} \right\}$
and we set
$Z \subset \mathbb{P}^2\times H$
as the universal family of smooth cubic curves.
Assume that $g\geq 2$.
Then we fix $l\geq 3$ and put
$N:=h^0(C,\omega_C^{\otimes l})-1$ for a smooth projective irreducible curve $C$ of genus $g$,
where $\omega_C$ is the canonical bundle of $C$.
We consider the locally closed subscheme
$H\subset\Hilb_{\mathbb{P}^N}$ of the Hilbert scheme
which parametrizes the closed subvarieties $C\subset \mathbb{P}^N$
isomorphic to the $l$-th canonical embeddings
$C\hookrightarrow\mathbb{P}(H^0(C,\omega_C^{\otimes l}))$
of smooth projective curves $C$ of genus $g$.
Let $Z\subset\mathbb{P}^N\times H$
be the universal family.
For any case $g\geq 0$, we define a Zariski open subset
\[
 {\mathcal H} := 
 \left.\left\{ (p_i) \in \prod_{i=1}^n Z \right| \text{$p_i\neq p_{i'}$ for $i\neq i'$} \right\} 
\]
of the fiber product $\prod_{i=1}^nZ$ of $n$ copies of $Z$ over $H$.
Similarly we define a Zariski open subset
\[
 {\mathcal P} :=
 \left.\left\{ (p_i),(p^{(i)}_j))\in \prod_{i=1}^n Z\times_H\prod_{i=1}^n\prod_{j=1}^{m_i} Z
 \right| \text{$p_i\neq p_{i'}$, $p_i\neq p^{(i')}_{j'}$ and
 $p^{(i)}_j\neq p^{(i')}_{j'}$ for $i\neq i'$}\right\}
\]
of the fiber product
$\prod_{i=1}^nZ\times_H\prod_{i=1}^n\prod_{j=1}^{m_i} Z$ 
of $n+\sum_{i=1}^n m_i$ copies of $Z$ over $H$.
Then there is a canonical projection
\[
 \pi_{{\mathcal P},{\mathcal H}}\colon{\mathcal P}\longrightarrow{\mathcal H}
\]
defined by $\pi_{{\mathcal P},{\mathcal H}}((p_i),(p^{(i)}_j))=(p_i)$
and there is a section
\[
 \tau_{{\mathcal H},{\mathcal P}}\colon{\mathcal H}\longrightarrow{\mathcal P}
\]
defined by $\tau_{{\mathcal H},{\mathcal P}}((p_i))=((p_i),(p_i))$.

We put ${\mathcal C}:=Z\times_H{\mathcal H}$ and
${\mathcal C}_{\mathcal P}=Z\times_H{\mathcal P}$.
Then there are universal sections
$\sigma_i\colon {\mathcal P}\longrightarrow{\mathcal C}_{\mathcal P}$ and
$\sigma^{(i)}_j\colon {\mathcal P}\longrightarrow {\mathcal C}_{\mathcal P}$
defined by
$\sigma_i((p_i),(p^{(i)}_j))=(p_i,(p_i),(p^{(i)}_j))$,
$\sigma^{(i)}_j((p_i),(p^{(i)}_j))=(p^{(i)}_j,(p_i),(p^{(i)}_j))$
which satisfy
$\sigma_i({\mathcal P})\cap\sigma_{i'}({\mathcal P})=\emptyset$,
$\sigma_i({\mathcal P})\cap\sigma^{(i')}_{j'}({\mathcal P})=\emptyset$ and
$\sigma^{(i)}_j({\mathcal P})\cap\sigma^{(i')}_{j'}({\mathcal P})=\emptyset$
for $i\neq i'$ and any $j,j'$.
We define divisors
${\mathcal D}_i$, ${\mathcal D}^{(i)}_j$, ${\mathcal D}^{(i)}$ and
${\mathcal D}$ on ${\mathcal C}_{\mathcal P}$ by putting
${\mathcal D}_i:=\sigma_i({\mathcal P})$,
${\mathcal D}^{(i)}_j:=\sigma^{(i)}_j({\mathcal P})$,
${\mathcal D}^{(i)}:=\sum_{j=1}^{m_i}{\mathcal D}^{(i)}_j$
and
${\mathcal D}:=\sum_{i=1}^n{\mathcal D}^{(i)}$.
We consider the closed subvariety
$\tau_{{\mathcal H},{\mathcal P}}({\mathcal H})\subset{\mathcal P}$
which can be written
\[
 \tau_{{\mathcal H},{\mathcal P}}({\mathcal H})
 =\left\{ ((p_i),(p^{(i)}_j))\in {\mathcal P}
 \left| \text{$p_i=p^{(i)}_j$ for any $i,j$} \right\}\right..
\]

It was necessary to set the differential form
(\ref {equation: fundamental differential form})
in subsection \ref {subsection:former main theorem}
for the formulation of the moduli space of $(\tilde{\bnu},\tilde{\bmu})$-connections.
For its construction, we use the following lemma.

\begin{lemma} \label {lem: choice of local coordinate}
 Let $f\colon X\longrightarrow S$ be a smooth morphism of algebraic schemes
 over $\Spec\mathbb{C}$
 such that all the geometric fibers of $X$ over $S$ are one dimensional.
 Assume that
 $X\longrightarrow S$ has a section $\sigma\colon S\longrightarrow X$.
 Consider the diagonals
 \begin{align*}
  \Delta_{1,2} &=\{ (x,y,z)\in X\times_S X\times_S X | x=y\} \\
  \Delta_{1,3} &=\{(x,y,z)\in X\times_S X\times_S X | x=z\} \\
  \Delta_{2,3} &=\{(x,y,z)\in X\times_S X\times_S X | y=z\} .
 \end{align*}
We denote the ideal sheaf of ${\mathcal O}_{X\times_SX\times_SX}$
defining $\Delta_{i,j}$ by $I_{\Delta_{i,j}}$.
Then for each closed point $p\in \sigma(S)\subset X$,
there exists an affine open neighborhood $W$ of $(p,p,p)$ in
$X\times_S X\times_S X$
such that
the ideal $I_{\Delta_{1,2}}|_W$ is generated by
a section $z_{1,2}\in H^0(W,I_{\Delta_{1,2}}|_W)$,
the ideal $I_{\Delta_{1,3}}|_W$ is generated by
a section $z_{1,3}\in H^0(W,I_{\Delta_{1,3}}|_W)$,
the ideal $I_{\Delta_{2,3}}|_W$ is generated by
$z_{1,2}-z_{1,3}$
and that
$z_{1,2}-z_{1,3}\in p_{2,3}^{-1}({\mathcal O}_V)$
for some open neighborhood $V$ of $(p,p)$ in $X\times_S X$.
\end{lemma}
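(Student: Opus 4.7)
The plan is to realize all three pairwise diagonal ideals uniformly by pulling back a single relative coordinate on $X$ along the three projections to $X$. Writing $p_i\colon X\times_SX\times_SX\to X$ for the $i$-th projection, I intend to set
\[
 z_{1,2}:=p_1^*z-p_2^*z, \qquad z_{1,3}:=p_1^*z-p_3^*z,
\]
for a suitable relative uniformizer $z\in{\mathcal O}_X$ near $p$; then $z_{1,2}-z_{1,3}=p_3^*z-p_2^*z$ is manifestly pulled back via $p_{2,3}\colon W\to X\times_SX$, which takes care of the last condition of the lemma almost automatically.

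The first step is to invoke smoothness of $f\colon X\to S$ of relative dimension one to obtain an affine open neighborhood $U\subset X$ of $p$ together with $z\in H^0(U,{\mathcal O}_U)$ for which $dz$ trivializes $\Omega^1_{X/S}|_U$. Since $\Omega^1_{X/S}$ is the conormal sheaf $I_\Delta/I_\Delta^2$ of the diagonal $\Delta\subset X\times_SX$ and $p_1^*z-p_2^*z$ represents $dz$ under this identification, Nakayama's lemma then furnishes an affine open neighborhood $V\subset U\times_SU$ of $(p,p)$ on which $I_\Delta|_V$ is principally generated by $p_1^*z-p_2^*z$. I will then take $W$ to be any affine open neighborhood of $(p,p,p)$ contained in $p_{1,2}^{-1}(V)\cap p_{1,3}^{-1}(V)\cap p_{2,3}^{-1}(V)$, so that each pairwise diagonal ideal $I_{\Delta_{i,j}}|_W$ is the pullback of $I_\Delta|_V$ along the corresponding projection $W\to V$ and is therefore principal with the obvious generator.

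With these choices every assertion of the lemma should become immediate: $I_{\Delta_{1,2}}|_W=(z_{1,2})$ and $I_{\Delta_{1,3}}|_W=(z_{1,3})$ by construction, while the identity $z_{1,2}-z_{1,3}=-p_{2,3}^*(p_1^*z-p_2^*z)$ simultaneously exhibits this element as a generator of $I_{\Delta_{2,3}}|_W$ and as an element of $p_{2,3}^{-1}({\mathcal O}_V)$. I do not foresee any substantive obstacle; the argument rests only on the standard identification of the conormal sheaf of the diagonal with $\Omega^1_{X/S}$ and the resulting principality of the diagonal ideal for a smooth morphism of relative dimension one. The hypothesis that $p$ lies in $\sigma(S)$ plays no essential role in the proof itself — it merely singles out which closed points are relevant for the subsequent moduli-theoretic construction in the paper.
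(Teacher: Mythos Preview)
Your proposal is correct and follows essentially the same route as the paper: both choose a local relative coordinate $z$ on $X$ near $p$ and take $z_{1,2}=p_1^*z-p_2^*z$, $z_{1,3}=p_1^*z-p_3^*z$, then observe that $z_{1,2}-z_{1,3}=p_3^*z-p_2^*z$ is pulled back via $p_{2,3}$ and generates $I_{\Delta_{2,3}}$. The only minor differences are organizational: the paper picks $z$ as a local generator of $I_{\sigma(S)}$ and applies Nakayama directly in the triple product, whereas you apply Nakayama once in $X\times_SX$ and then pull back via the three $p_{i,j}$ --- your version is slightly tidier, and your remark that the section hypothesis is inessential to the argument is correct.
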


\begin{proof}
If we put $s=f(p)$,
the stalk of $I_{\sigma(S)}\otimes{\mathcal O}_{X_s}=I_{\sigma(S)\cap X_s}$
at $p$ is a principal ideal of ${\mathcal O}_{X_s,p}$.
So there is an affine open neighborhood $U$ of $p$ in $X$
and a section $z\in H^0(U,I_{\sigma(S)}|_U)$
such that $z|_{U_s}$ is a generator of $I_{\sigma(S)\cap U_s}$.
By Nakayama's lemma, $z$ becomes a generator of $I_{\sigma(S)}|_U$
after shrinking $U$ if necessary.
Since 
\[
 \overline{z\otimes 1\otimes 1-1\otimes z\otimes 1}=\overline{dz\otimes 1}
 \in I_{\Delta_{1,2}}/I_{\Delta_{1,2}}^2|_{U\times_S U\times_SU}
 =\Omega^1_{U/S}\otimes_S{\mathcal O}_U
\]
is a generator after shrinking $U$,
Nakayama's lemma implies that
$z_{1,2}:=z\otimes 1\otimes 1-1\otimes z\otimes 1$ becomes
a generator of $I_{\Delta_{1,2}}|_W$ for some affine open neighborhood
$W$ of $(p,p,p)$ in $X\times_S X\times_S X$.
If we put
$z_{1,3}:=z\otimes 1\otimes 1-1\otimes 1\otimes z$,
then $z_{1,3}$ similarly becomes a generator of $I_{\Delta_{1,3}}|_W$
after shrinking $W$ again.
Since
\[
 z_{1,2}-z_{1,3}
 =(z\otimes 1\otimes 1-1\otimes z\otimes 1)-(z\otimes 1\otimes 1-1\otimes 1\otimes z)
 =1\otimes(1\otimes z-z\otimes 1)\in p_{2,3}^{-1}({\mathcal O}_{U\times_S U}),
\]
and $1\otimes(1\otimes z-z\otimes 1)$
becomes a generator of $I_{\Delta_{2,3}}$ after shrinking $W$,
the lemma is proved.
\end{proof}

\begin{remark} \label {remark: of lemma choice of local coordinate}  \rm
In the above lemma,  we may further assume that
$p_{2,3}^{-1}(V) \cap \Delta_{1,2} \subset W$
and 
$p_{2,3}^{-1}(V) \cap \Delta_{1,3} \subset W$.
\end{remark}

For each point $h_0 \in {\mathcal H}$,
we consider the fiber ${\mathcal C}_{h_0}$ of ${\mathcal C}_{\mathcal P}$ over
$\tau_{{\mathcal H},{\mathcal P}}(h_0)$.
If we put $p_0:=\sigma_i(\tau_{{\mathcal H},{\mathcal P}}(h_0))$,
then, by Lemma \ref {lem: choice of local coordinate}
and Remark \ref {remark: of lemma choice of local coordinate},
there is an affine open neighborhood $W$ of $p_0$ in 
${\mathcal C}_{\mathcal P}$ and sections $z^{(i)},z^{(i)}_j\in H^0(W,{\mathcal O}_W)$
such that
$z^{(i)}=0$ is a defining equation of ${\mathcal D}_i\cap W$,
$z^{(i)}_j=0$ is a defining equation of ${\mathcal D}^{(i)}_j\cap W$ for each $j$
and $z^{(i)}-z^{(i)}_j\in {\mathcal O}_{\mathcal P}$ for any $i,j$.
So we can take an affine open neighborhood ${\mathcal P'}$ of $p_0$ in
${\mathcal P}$ and an affine open covering
$\{{\mathcal U}_{\alpha}\}$ of ${\mathcal C}\times_{\mathcal P}{\mathcal P'}$
such that 
$\{\alpha \, | \ {\mathcal D}^{(i)}\times_{\mathcal P}{\mathcal P'}\subset
{\mathcal U}_{\alpha}\}
=\{\alpha \, | \ {\mathcal D}_i\times_{\mathcal P}{\mathcal P'}\subset
 {\mathcal U}_{\alpha}\}$
consists of a single element $\alpha_i$
for each $i$,
$\sharp\{ i \, | \  ({\mathcal D}_i\times_{\mathcal P}{\mathcal P'})\cap
{\mathcal U}_{\alpha}\neq\emptyset\} \leq 1$
and
$\sharp\{ i \, | \  ({\mathcal D}^{(i)}\times_{\mathcal P}{\mathcal P'})\cap
{\mathcal U}_{\alpha}\neq\emptyset\} \leq 1$
for each $\alpha$,
$({\mathcal D}_i)_{\mathcal P'}$ coincides with the zero scheme
of $z^{(i)}\in H^0({\mathcal U}_{\alpha_i},{\mathcal O}_{{\mathcal U}_{\alpha_i}})$,
$({\mathcal D}^{(i)}_j)_{\mathcal P'}$ coincides with the zero scheme of
$z^{(i)}_j\in  H^0({\mathcal U}_{\alpha_i},{\mathcal O}_{{\mathcal U}_{\alpha_i}})$,
$z^{(i)}_j-z^{(i)}\in {\mathcal O}_{\mathcal P'}$
and
$(z^{(i)}_j-z^{(i)})|_{\tau_{{\mathcal H},{\mathcal P}}({\mathcal H})
\times_{\mathcal P}{\mathcal P'}}=0$
for any $i,j$.
We denote the image of $z^{(i)}$ and $z^{(i)}_j$ in
${\mathcal O}_{2{\mathcal D}^{(i)}\times_{\mathcal P}{\mathcal P'}}$
by $\bar{z}^{(i)}$ and $\bar{z}^{(i)}_j$, respectively.
We put
\[
 \zeta_{m_i}:=\exp\left( \frac{2\pi \sqrt{-1}} {m_i} \right)
\]
and consider the locus
\[
 {\mathcal B}:=\left\{ h\in{\mathcal P'} \left|
 \begin{array}{l}
 \text{$(z^{(i)}_j-z^{(i)}) |_h =\zeta_{m_i}^j(z^{(i)}_{m_i}-z^{(i)}) |_h $ for any $i,j$} \\
 \text{and $(z^{(i)}_{m_i}-z^{(i)}) |_h = (z^{(i')}_{m_{i'}}-z^{(i')}) |_h $ for any $i,i'$}
 \end{array}
 \right\}\right.
\]
which is a smooth subvariety of ${\mathcal P'}$.
Note that we have $z^{(i)}_j-z^{(i)}\in H^0({\mathcal O}_{\mathcal P'})$
from the choice of ${\mathcal P'}$.
If we put $\epsilon(h):=(z^{(i)}_{m_i}-z^{(i)})(h)$
for $h\in {\mathcal B}$,
then 
$\epsilon \colon {\mathcal B}
 \longrightarrow \mathbb{A}^1=\mathbb{C}$
is an algebraic function.
There is a diagram
\[
 \xymatrix{
 {\mathcal C}_{\mathcal B} \ar[r] \ar[rd]
  & {\mathcal B} \ar[d] \ar[r]^-{\epsilon} & \mathbb{A}^1=\mathbb{C} \\
 & {\mathcal H} &
 }
\]
and we have
$z^{(i)}_j=z^{(i)}+\zeta_{m_i}^j\epsilon$
on ${\mathcal U}_{\alpha_i}\times_{\mathcal P}{\mathcal B}\subset{\mathcal C}_{\mathcal B}$.

Let  $(w_1,\ldots,w_s)$ 
be a holomorphic coordinate system
in a neighborhood of $h_0$ in ${\mathcal H}$.
Then we can see that
$(z^{(i)},\epsilon,w_1,\ldots,w_s)$ becomes a holomorphic coordinate system
in a neighborhood of $\sigma_i(\tau_{{\mathcal H},{\mathcal P}}(h_0))$
in ${\mathcal U}_{\alpha_i}\times_{\mathcal P'}{\mathcal B}$.
So we can take a disk $\Delta_{\epsilon_0}=\{z\in\mathbb{C} \,|\, |z|<\epsilon_0\}$
for small $\epsilon_0>0$,
an analytic open neighborhood ${\mathcal B'}$ of $\tau_{{\mathcal H},{\mathcal P}}(h_0)$ in
$\epsilon^{-1}(\Delta_{\epsilon_0})\subset{\mathcal B}$
and an analytic open neighborhood 
$U_i\subset{\mathcal U}_{\alpha_i}\times_{\mathcal P'}{\mathcal B'}$
of  $\sigma_i(\tau_{{\mathcal H},{\mathcal P}}(h_0))$ containing
${\mathcal D}^{(i)}\times_{\mathcal P}{\mathcal B'}$ such that
$U_i\cap U_{i'}=\emptyset$ for $i\neq i'$ and
\begin{equation} \label {equation:local biholomorphic}
 U_i \xrightarrow[\sim]{(z^{(i)},\epsilon,w_1,\ldots,w_s)}
 \Delta_a\times\Delta_{\epsilon_0}\times\Delta_r^s
\end{equation}
becomes biholomorphic for any $i$, 
where $a,r>0$, $\Delta_a=\{z\in\mathbb{C} \, | \, |z|<a\}$
and
$\Delta_r^s=\overbrace{\Delta_r\times\cdots\times\Delta_r}^s$
with $\Delta_r=\{z\in\mathbb{C} \, | \, |z|<r\}$.
We define a subset $\Gamma^{(i)}_{j,b}$
of the fiber ${\mathcal C}_{b}$
of ${\mathcal C}\times_{\mathcal H}{\mathcal B'}$
over $b\in {\mathcal B'}$ by setting
\[
 \Gamma^{(i)}_{j,b}:=
 \bigcup_{0\leq s\leq 1}\left\{
 x\in {\mathcal C}_{b}\cap U_i
 \left| (z^{(i)}+s\zeta_{m_i}^j\epsilon)(x)=0\right\}\right..
\]
Then $\Gamma^{(i)}_{j,b}$ becomes a simple path in 
${\mathcal C}_{b}$
joining the two points $({\mathcal D}_i)_{b}$
and $({\mathcal D}^{(i)}_j)_{b}$
for $\epsilon\in\Delta_{\epsilon_0}\setminus\{0\}$
because of the bijectivity of (\ref {equation:local biholomorphic}).
If we set
\[
 \Gamma^{(i)}_j :=\bigcup_{ b \in {\mathcal B'} }
 \Gamma^{(i)}_{j,b}, \quad
 \Gamma := \bigcup_{i,j}\Gamma^{(i)}_j,
\]
then $\Gamma^{(i)}_j$ and $\Gamma$ are closed subsets of
${\mathcal C}\times_{\mathcal P}{\mathcal B'}$
with respect to the analytic topology.

We fix distinct complex numbers
$\mu^{(i)}_1,\ldots,\mu^{(i)}_r\in\mathbb{C}$ for $i=1,\ldots,n$
and write $\bmu=(\mu^{(i)}_k)^{1\leq i\leq n}_{1\leq k\leq r}$.
Then we put
\[
 \varphi^{(i)}_{\bmu}(T):=(T-\mu^{(i)}_1)(T-\mu^{(i)}_2)\cdots(T-\mu^{(i)}_r)
 \in \mathbb{C} [T].
\]
We take an integer $a\in\mathbb{Z}$ and a tuple of complex numbers
$\blambda=(\lambda^{(i)}_k)\in\mathbb{C}^{nr}$
satisfying 
\begin{itemize}
\item[(i)]
 $ \displaystyle a+\sum_{i=1}^n\sum_{k=1}^r \lambda^{(i)}_k=0$,
\item[(ii)]
 $\lambda^{(i)}_k-\lambda^{(i)}_{k'}\notin\mathbb{Z}$ for $k\neq k'$.
\end{itemize}
We define an algebraic variety ${\mathcal T}_{\bmu,\blambda}$ over
${\mathcal B}$ whose set of $S$-valued points
is given by 
\[
 {\mathcal T}_{\bmu,\blambda}(S):=
 \left\{(\nu^{(i)}(T))_{1\leq i\leq n}
 \left|
 \begin{array}{l}
 \displaystyle \nu^{(i)}(T)=\sum_{l=0}^{r-1} \sum_{j=0}^{m_i-1}
 c^{(i)}_{l,j} \: (z^{(i)})^j \: T^l 
 \text{ with $c^{(i)}_{l,j}\in H^0({\mathcal O}_S)$}  \\
 \text{ satisfying the following (a) and (b)}
 \end{array}
 \right\}\right.
\]
for any noetherian scheme $S$ over ${\mathcal B}$;
\begin{itemize}
\item[(a)] $\displaystyle \lambda^{(i)}_k
 =\sum_{l=0}^{r-1} c^{(i)}_{l,m_i-1} (\mu^{(i)}_k)^l$
for each $i,k$
\item[(b)] $\nu^{(i)}(\mu^{(i)}_k)|_p \neq \nu^{(i)}(\mu^{(i)}_{k'})|_p$ for $k\neq k'$,
 $1\leq i\leq n$ and any $p\in {\mathcal D}^{(i)}_S$.
\end{itemize}
Here we intend to regard $(c^{(i)}_{l,j})^{1\leq i\leq n}_{0\leq l\leq r-1,0\leq j\leq m_i}$
with $c^{(i)}_{l,j}\in H^0(S,{\mathcal O}_S)$
as a precise data denoted by $(\nu^{(i)}(T))$.
We take a universal family
\[
 \tilde{\nu}^{(i)}(T) =\sum_{l=0}^{r-1} \sum_{l=0}^{m-1} c^{(i)}_{l,j} \: (z^{(i)})^j  \, T^l
\]
with $c^{(i)}_{l,j}\in H^0({\mathcal O}_{{\mathcal T}_{\bmu,\blambda}})$ 
and write $\tilde{\bnu}:=(\tilde{\nu}^{(i)}(T))$.
If we denote by $\tilde{\nu}^{(i)}_s$, $(c^{(i)}_{l,j})_s$
the restrictions of $\tilde{\nu}^{(i)}$, $c^{(i)}_{l,j}$
to $s\in {\mathcal T}_{\bmu,\blambda}$, respectively,
we can see by Lemma \ref{lemma:residue-identity} that
\begin{align*}
 \sum_{p\in {\mathcal D}^{(i)}_s}\res_{p}\left( \tilde{\nu}^{(i)}_s(\mu^{(i)}_k)
 \frac {d\bar{z}^{(i)}} {(\bar{z}^{(i)})^{m_i}-\epsilon^{m_i}}
 \right)
 &=
 \sum_{l=0}^{r-1} \sum_{j=0}^{m_i-1}  (c^{(i)}_{l,j})_s (\mu^{(i)}_k)^l
 \sum_{p\in {\mathcal D}^{(i)}_s} 
 \res_{z^{(i)}=p} \left( 
 \frac{ (z^{(i)})^j d z^{(i)} } {(z^{(i)})^{m_i}-\epsilon^{m_i}} \right) \\
 &=
 -\sum_{l=0}^{r-1} \sum_{j=0}^{m_i-1}  (c^{(i)}_{l,j})_s (\mu^{(i)}_k)^l
 \res_{z^{(i)}=\infty} \left(
 \frac{ (z^{(i)})^j d z^{(i)} } {(z^{(i)})^{m_i}-\epsilon^{m_i}} \right) \\
 &=\sum_{l=0}^{r-1} (c^{(i)}_{l,m_i-1})_s (\mu^{(i)}_k)^l .
\end{align*}
So the equality (a)
in the definition of ${\mathcal T}_{\bmu,\blambda}$ means the equality
\begin{equation} \label {equation:residue-identity}
 \lambda^{(i)}_k=\sum_{p\in {\mathcal D}^{(i)}_s}
 \res_p \left( \tilde{\nu}^{(i)}_s (\mu^{(i)}_k)
 \frac {d\bar{z}^{(i)}}  {(\bar{z}^{(i)})^{m_i}-\epsilon^{m_i}} \right)
\end{equation}
where $\sum_{p\in {\mathcal D}^{(i)}_s}$ runs over the set theoretical points $p$ of ${\mathcal D}^{(i)}_s$.
For each point $p=\epsilon\zeta_m^j\in {\mathcal D}^{(i)}_s$,
we have
\[
 \tilde{\nu}^{(i)}_s(\mu^{(i)}_k)\big|_p
 =\sum_{l=0}^{r-1} \sum_{j'=0}^{m_i-1}
 (c^{(i)}_{l,j'})_s(\epsilon(s) \zeta_m^j)^{j'}(\mu^{(i)}_k)^l
\]
for $1\leq k\leq r$.
The condition (b) in the definition of ${\mathcal T}_{\bmu,\blambda}$
is that
\[
 \sum_{l=0}^{r-1} \sum_{j'=0}^{m_i-1} (c^{(i)}_{l,j'})_s (\epsilon(s)\zeta_m^j)^{j'} (\mu^{(i)}_k)^l \neq
 \sum_{l=0}^{r-1} \sum_{j'=0}^{m_i-1} (c^{(i)}_{l,j'})_s (\epsilon(s)\zeta_m^j)^{j'} (\mu^{(i)}_{k'})^l 
\]
for $k\neq k'$,
when $\epsilon(s)\neq 0$ and that
\[
 \sum_{l=0}^{r-1} (c^{(i)}_{l,0})_s (\mu_k)^l \neq
 \sum_{l=0}^{r-1} (c^{(i)}_{l,0})_s (\mu_{k'})^l
\]
for $k\neq k'$ when $\epsilon(s)=0$.

By Theorem \ref{theorem:algebraic-moduli-unfolding}, there is a relative moduli space
\begin{equation}\label{equation:relative-moduli-scheme}
 \pi_{{\mathcal T}_{\bmu,\blambda}} \colon 
 M^{\balpha}_{{\mathcal C},{\mathcal D}}(\tilde{\bnu},\bmu)\longrightarrow 
 {\mathcal T}_{\bmu,\blambda}
\end{equation}
of $(\tilde{\bnu},\bmu)$-connections over ${\mathcal T}_{\bmu,\blambda}$.
Note that the morphism $\pi_{{\mathcal T}_{\bmu,\blambda}}$
in (\ref{equation:relative-moduli-scheme})
is an algebraic smooth morphism of quasi-projective schemes.
We consider the pull-back diagram
\[
 \begin{CD}
 M^{\balpha}_{{\mathcal C},{\mathcal D}}(\tilde{\bnu},\bmu)\times_{\mathcal B} {\mathcal B'}
 @>>>  M^{\balpha}_{{\mathcal C},{\mathcal D}}(\tilde{\bnu},\bmu) \\
 @VVV   @VVV \\
 {\mathcal B'} @>>>  {\mathcal B}
 \end{CD}
\]
where the horizontal arrows are open immersions as analytic spaces.

\subsection{Unramified irregular singular generalized isomonodromic deformation}
\label {subsection:irregular singular generalized isomonodromic deformation}

The unramified irregular singular generalized isomonodromic deformation
is the well-known theory by Jimbo, Miwa and Ueno,
which is completely given in \cite{Jimbo-Miwa-Ueno}, \cite{Jimbo-Miwa-2},
\cite{Jimbo-Miwa-3}
with explicit calculations using formal solutions based on the
Malgrange-Sibuya theorem (\cite[Theorem 4.5.1]{Babbitt-Varadarajan}).
We recall here a moduli theoretic construction of the unramified irregular singular
generalized isomonodromic deformation given in \cite {Inaba-Saito},
which is valid in a higher genus case.

Recall that there are compositions of morphisms
$M^{\balpha}_{{\mathcal C},{\mathcal D}}(\tilde{\bnu},\bmu) 
\longrightarrow
{\mathcal T}_{\bmu,\blambda}\longrightarrow {\mathcal B}
\stackrel{\epsilon}\longrightarrow \Delta_{\epsilon_0}$.
We consider the fibers
\[
 {\mathcal B}_{\epsilon=0}
 :=
 {\mathcal B}\times_{\Delta_{\epsilon_0}} 
 \{0\} ,   \quad
 {\mathcal T}_{\bmu,\blambda,\epsilon=0}
 :=
 {\mathcal T}_{\bmu,\blambda}\times_{\mathcal B} 
 {\mathcal B}_{\epsilon=0}, \quad
 M^{\balpha}_{{\mathcal C},{\mathcal D}}(\tilde{\bnu},\bmu)_{\epsilon=0}
 :=
 M^{\balpha}_{{\mathcal C},{\mathcal D}}(\tilde{\bnu},\bmu) 
 \times_{\mathcal B} {\mathcal B}_{\epsilon=0}
\]
over $\epsilon=0\in\Delta_{\epsilon_0}$.
Then
$\pi_{{\mathcal T}_{\bmu,\blambda,\epsilon=0}}\colon
M^{\balpha}_{{\mathcal C},{\mathcal D}}(\tilde{\bnu},\bmu)_{\epsilon=0}
\longrightarrow {\mathcal T}_{\bmu,\blambda,\epsilon=0}$
is the relative moduli space of unramified irregular singular connections.
In our moduli theoretic setting,
the unramified irregular singular generalized isomonodromic deformation
is given in \cite[Theorem 6.2]{Inaba-Saito} as an algebraic splitting
\[
 \Psi_0\colon
 \pi_{{\mathcal T_{\bmu,\blambda,\epsilon=0}}}^*
 T_{{\mathcal T}_{\bmu,\blambda,\epsilon=0}}
 \longrightarrow
 T_{M^{\balpha}_{{\mathcal C},{\mathcal D}}(\tilde{\bnu},\bmu)_{\epsilon=0}}
\]
of the canonical surjection
$T_{M^{\balpha}_{{\mathcal C},{\mathcal D}}(\tilde{\bnu},\bmu)_{\epsilon=0}}
\xrightarrow{d\pi_{{\mathcal T_{\bmu,\blambda,\epsilon=0}}}}
(\pi_{{\mathcal T_{\bmu,\blambda,\epsilon=0}}})^*
 T_{{\mathcal T}_{\bmu,\blambda,\epsilon=0}}$.
Here we use the symbol $\Psi_0$ instead of the symbol $D$ used in \cite{Inaba-Saito},
for the purpose of avoiding confusion with the divisor
of singularity of the connection.

Let us recall the construction of $\Psi_0$.
For each Zariski open subset
${\mathcal T}'_0\subset 
{\mathcal T}_{\bmu,\blambda,\epsilon=0}$
and for each vector field
$v\in H^0({\mathcal T}'_0,
T_{{\mathcal T}_{\bmu,\blambda,\epsilon=0}}|_{{\mathcal T}'_0})$,
let
${\mathcal T}'_0[v]:={\mathcal T}'_0\times\Spec\mathbb{C}[h]/(h^2)
\xrightarrow{I_v} {\mathcal T}'_0$
be the corresponding morphism
satisfying $I_v\otimes\mathbb{C}[h]/(h)=\mathrm{id}_{{\mathcal T}'_0}$.
If we put
\begin{align*}
 \nu^{(i)}_{0,hor}(T)
 &:=
 \sum_{l=0}^{r-1} \sum_{j=0}^{m_i-1}
 (I_v^*(c^{(i)}_{l,j})_{{\mathcal T}'_0}-\bar{h}v((c^{(i)}_{l.j})_{{\mathcal T}'_0}))
 (z^{(i)})^j T^l  \\
 \nu^{(i)}_{0,v}(T)
 &:=
 \sum_{l=0}^{r-1} \sum_{j=0}^{m_i-2}
  v((c^{(i)}_{l,j})_{{\mathcal T}'_0}) (z^{(i)})^j T^l,
\end{align*}
then we have
$I_v^*(\tilde{\nu}^{(i)}(T))=\nu^{(i)}_{0,hor}(T)+\bar{h}\nu^{(i)}_{0,v}(T)$
and $\nu^{(i)}_{0,hor}(T)$ is the pullback of $\tilde{\nu}^{(i)}(T)$
via the trivial projection
${\mathcal T}'_0[v]={\mathcal T}'_0\times\Spec\mathbb{C}[h]/(h^2)
\longrightarrow {\mathcal T}'_0\hookrightarrow {\mathcal T}_{\bmu,\blambda}$.
We consider the fiber product
${\mathcal C}_{{\mathcal T}'_0[v]}=
{\mathcal C}_{{\mathcal T}_0}\times_{{\mathcal T}'_0}
{\mathcal T}'_0[v]$
with respect to $I_v\colon {\mathcal T}'_0[v] \longrightarrow {\mathcal T}'_0$
and the trivial projection
${\mathcal C}_{{\mathcal T}'_0} \longrightarrow {\mathcal T}'_0$.
We denote the pullback of $z^{(i)}$
under the morphism
${\mathcal C}_{{\mathcal T}'[v]}
={\mathcal C}_{\mathcal T'}\times_{\mathcal T'}
({\mathcal T}'\times\Spec\mathbb{C}[h]/(h^2))
\longrightarrow {\mathcal C}_{\mathcal T'}$
by $\tilde{z}^{(i)}$.

For some \'etale surjective morphism
$\tilde{M}\longrightarrow
M^{\balpha}_{{\mathcal C},{\mathcal D}}(\tilde{\bnu},\bmu)$,
there is a universal family
$(\tilde{E},\tilde{\nabla},\{\tilde{N}^{(i)}\})$
on ${\mathcal C}_{\tilde{M}}$.
We put
$\tilde{M}'_0:= \tilde{M}\times_{{\mathcal T}_{\bmu,\blambda}}{\mathcal T}'_0$,
$\tilde{M}'_0[v]:=\tilde{M}\times_{{\mathcal T}_{\bmu,\blambda}}
{\mathcal T}'_0[v]$
and denote the restriction of
$(\tilde{E},\tilde{\nabla},\{\tilde{N}^{(i)}\})$
to ${\mathcal C}_{\tilde{M}'_0}$ by
$(\tilde{E}_{\tilde{M}'_0},\tilde{\nabla}_{\tilde{M}'_0},
\{\tilde{N}^{(i)}_{\tilde{M}'_0}\})$.
In the following definition, 
${\mathcal C}_{\tilde{M}'_0[v]}$ means the fiber product
${\mathcal C}_{{\mathcal T}'_0} \times_{{\mathcal T}'_0}
\tilde{M}'_0[v]$
with respect to the canonical morphism
${\mathcal C}_{{\mathcal T}'_0}\longrightarrow {\mathcal T}'_0$
and the composition
$\tilde{M}'_0[u] \longrightarrow {\mathcal T}'_0[u]
\xrightarrow{I_v} {\mathcal T}'_0$.
On the other hand, relative differentials in
$\Omega^1_{{\mathcal C}_{\tilde{M}'_0[v]}/\tilde{M}'_0 }$
are with respect to the composition
${\mathcal C}_{\tilde{M}'_0[v]}
\longrightarrow \tilde{M}'_0[v]
=\tilde{M}'_0\times\Spec\mathbb{C}[h]/(h^2)
\longrightarrow \tilde{M}'_0$
of the trivial projections.

\begin{definition} \label {def:horizontal lift for irregular case}
\rm
$\big({\mathcal E}^v_0,\nabla^v_0,\{{\mathcal N}^{(i)}_{0,v}\}\big)$
is a horizontal lift of
$\big(\tilde{E}_{\tilde{M}'_0},\tilde{\nabla}_{\tilde{M}'_0},
\{\tilde{N}^{(i)}_{\tilde{M}'_0}\}\big)$
with respect to $v$ if
\begin{itemize}
\item[(1)] ${\mathcal E}^v_0$ is an algebraic vector bundle on
${\mathcal C}_{\tilde{M}'_0[v]}$ of rank $r$,
\item[(2)]
$\nabla^v_0\colon {\mathcal E}^v_0 \longrightarrow
 {\mathcal E}^v_0\otimes
 \Omega^1_{{\mathcal C}_{\tilde{M}'_0[v]}/\tilde{M}'_0 }
 ({\mathcal D}_{\tilde{M}'_0[v]})$
 is a morphism of sheaves satisfying $\nabla^v(fa)=a\otimes df+f\nabla^v(a)$
 for $f\in{\mathcal O}^{hol}_{{\mathcal C}_{\tilde{M}'_0[v]}}$
 and $a\in {\mathcal E}^v$,
\item[(3)]
$\nabla^v_0$ is integrable in the sense that
the restriction of $\nabla^v_0$ to any open set
$U[v]\subset {\mathcal C}_{\tilde{M}'_0[v]}\setminus{\mathcal D}_{\tilde{M}'_0[v]}$
satisfying
${\mathcal E}^v |_{U[v]}\cong \left({\mathcal O}_{U[v]}\right)^{\oplus r}$
is expressed by
\[
 \left({\mathcal O}_{U[v]}\right)^{\oplus r}
 \ni \begin{pmatrix} f_1 \\ \vdots \\ f_r \end{pmatrix}
 \mapsto
 \begin{pmatrix} df_1 \\ \vdots \\ df_r \end{pmatrix}
 +
 \left( \tilde{A} d\tilde{z}
 +B d\overline{h}\right)
 \begin{pmatrix} f_1 \\ \vdots \\ f_r \end{pmatrix}
 \in \left( {\mathcal O}_{U[v]} \right)^{\oplus r}
 \otimes
 \Omega^1_{{\mathcal C}_{\tilde{M}'_0[v]}/\tilde{M}'_0 }
 ({\mathcal D}_{\tilde{M}'_0[v]})
\]
satisfying
$d \left( \tilde{A} d\tilde{z}
 +B d\overline{h}\right)
+ \left[ \left( \tilde{A} d\tilde{z}
 +B d\overline{h}\right),
 \left( \tilde{A} d\tilde{z}
 +B d\overline{h}\right) \right]
 =0$
in 
$\Omega^2_{{\mathcal C}_{\tilde{M}'_0[v]}/\tilde{M}'_0 }
 (2{\mathcal D}_{\tilde{M}'_0[v]})$,
\item[(4)]
${\mathcal N}^{(i)}_{0,v}\colon {\mathcal E}^v_0 |_{{\mathcal D}^{(i)}_{\tilde{M}'_0[v]}}
\longrightarrow  {\mathcal E}^v_0 |_{{\mathcal D}^{(i)}_{\tilde{M}'_0[v]}}$
is an endomorphism satisfying
$\varphi^{(i)}_{\bmu}({\mathcal N}^{(i)}_{0,v})=0$,
\item[(5)]
the relative connection
$\overline{\nabla^v_0}$  defined by the composition
\[
 \overline{\nabla^v_0}\colon
 {\mathcal E}^v_0 \xrightarrow{\nabla^v_0}
 {\mathcal E}^v_0\otimes \Omega^1_{{\mathcal C}_{\tilde{M}'_0[v]}/\tilde{M}'_0 }
 ({\mathcal D}_{\tilde{M}'_0[v]})
 \longrightarrow
 {\mathcal E}^v_0\otimes\Omega^1_{{\mathcal C}_{\tilde{M}'_0[v]}/\tilde{M}'_0[v]}
 ({\mathcal D}_{\tilde{M}'_0[v]})
\]
satisfies
\[
 \displaystyle (\nu^{(i)}_{0,hor}+\bar{h}\nu^{(i)}_{0,v})({\mathcal N}^{(i)}_{0,v})
 \;
 \frac{d\tilde{z}^{(i)}}{(\tilde{z}^{(i)})^{m_i}}
 =\overline{\nabla^v_0} \big|_{{\mathcal D}^{(i)}_{\tilde{M}'_0[v]}}
\]
for any $i$ and
\item[(6)]
$\big({\mathcal E}^v_0,\overline{\nabla^v_0},\{ {\mathcal N}^{(i)}_{0,v} \}\big)
\otimes {\mathcal O}_{\tilde{M}'_0[v]}/
\bar{h}{\mathcal O}_{\tilde{M}'_0[v]}
\cong
\big(\tilde{E}_{\tilde{M}'_0},\tilde{\nabla}_{\tilde{M}'_0},
\{ \tilde{N}^{(i)}_{\tilde{M}'_0}\}\big)$.
\end{itemize}
\end{definition}

The following proposition is essentially given in the proof of
\cite[Theorem 6.2]{Inaba-Saito} and we omit its proof here.
\begin{proposition} \label {prop:horizontal lift for irregular case}
There exists a unique horizontal lift
$\big({\mathcal E}^v_0,\nabla^v_0,\{{\mathcal N}^{(i)}_{0,v}\}\big)$
of
$\big(\tilde{E}_{\tilde{M}'_0},\tilde{\nabla}_{\tilde{M}'_0},
\{\tilde{N}^{(i)}_{\tilde{M}'_0}\}\big)$
with respect to $v$
\end{proposition}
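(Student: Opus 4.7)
The plan is to construct $({\mathcal E}^v_0, \nabla^v_0, \{{\mathcal N}^{(i)}_{0,v}\})$ by the standard two-piece approach to isomonodromic deformations. On the open set ${\mathcal C}_{\tilde{M}'_0[v]} \setminus {\mathcal D}_{\tilde{M}'_0[v]}$, the relative connection $\tilde{\nabla}_{\tilde{M}'_0}$ defines a local system, so pulling back horizontal sections along the trivial projection $\tilde{M}'_0[v] = \tilde{M}'_0 \times \Spec\mathbb{C}[h]/(h^2) \to \tilde{M}'_0$ gives a canonical flat integrable extension $\nabla^{v,\mathrm{out}}_0$ that agrees with $\tilde{\nabla}$ modulo $\bar{h}$ and whose $d\bar{h}$-component vanishes in any frame of horizontal sections.

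Second, I would build the local lift near each ${\mathcal D}^{(i)}_{\tilde{M}'_0[v]}$. Because $\epsilon = 0$ on ${\mathcal T}'_0$ and the exponents satisfy Assumption \ref{assumption-generic}, by Remark \ref{remark: unramified exponent} the restriction $\tilde{\nabla}_{\tilde{M}'_0}|_{m_i {\mathcal D}^{(i)}_{\tilde{M}'_0}}$ is diagonalizable with distinct unramified exponents $\tilde{\nu}^{(i)}(\mu^{(i)}_k)\tfrac{dz^{(i)}}{(z^{(i)})^{m_i}}$. I would lift ${\mathcal N}^{(i)}_{0,v}$ as an endomorphism with eigenvalues still equal to $\mu^{(i)}_1,\dots,\mu^{(i)}_r$ so that condition (4) holds, and determine a local integrable connection of the form
\[
\bigl(A^{(i)}_0(z^{(i)}) + \bar{h}\, \Xi^{(i)}(z^{(i)})\bigr)\frac{dz^{(i)}}{(z^{(i)})^{m_i}} + B^{(i)}(z^{(i)})\, d\bar{h},
\]
where $\Xi^{(i)}$ is a polynomial in $z^{(i)}$ whose restriction to ${\mathcal D}^{(i)}$ realizes the prescribed polar variation $\nu^{(i)}_{0,v}({\mathcal N}^{(i)}_{0,v})$ of condition (5), and $B^{(i)}$ is obtained by solving the $d\bar{h}$-component of the integrability equation order by order in $z^{(i)}$. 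The distinctness of the unramified exponents ensures that the adjoint action of the leading polar coefficient is invertible on the off-diagonal blocks, so this recursion determines $B^{(i)}$.

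Third, I would glue these local lifts to $\nabla^{v,\mathrm{out}}_0$ and establish uniqueness. On the annular overlaps $U^{(i)} \setminus {\mathcal D}^{(i)}$, the two candidate connections are horizontal lifts of the same $\tilde{\nabla}_{\tilde{M}'_0}$ and hence differ by an $\bar{h}$-multiple of a local endomorphism commuting with $\tilde{\nabla}$; absorbing these discrepancies by local gauge transformations $\mathrm{id} + \bar{h}\, u_\alpha$ reduces the gluing to the vanishing of a first \v{C}ech cohomology of a subcomplex of the deformation complex ${\mathcal F}^\bullet$ of Subsection \ref{subsection:tangent}, which follows from a duality argument parallel to Lemma \ref{lemma-non-degenerate}. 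For uniqueness up to isomorphism, any two lifts differ by a gauge transformation $\mathrm{id} + \bar{h}\eta$, and compatibility with $\tilde{\nabla}$ and every $\tilde{N}^{(i)}$ forces $\eta$ to be horizontal and to commute with the $\tilde{N}^{(i)}$; the $\balpha$-stability of each fiber then implies that $\eta$ is fiberwise scalar, and scalars act trivially on the triple, yielding the desired isomorphism.

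The main obstacle will be the second step: verifying that the recursively constructed $B^{(i)}$ assembles into a matrix of meromorphic sections on an honest analytic neighborhood of ${\mathcal D}^{(i)}_{\tilde{M}'_0[v]}$ rather than only a formal one, and that its monodromy around ${\mathcal D}^{(i)}$ is compatible with that of the outer connection $\nabla^{v,\mathrm{out}}_0$. This is the moduli-theoretic reformulation of the Jimbo--Miwa--Ueno integrability theorem \cite{Jimbo-Miwa-Ueno} carried out in this precise generality in \cite{Inaba-Saito}, and it makes essential use of the generic distinctness of the exponents $\nu^{(i)}(\mu^{(i)}_k)$.
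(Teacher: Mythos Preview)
The paper omits the proof of this proposition entirely, deferring to \cite[Theorem 6.2]{Inaba-Saito}. However, the closely related Lemma \ref{lem:horizontal lift h_1h_2 in irregular case} (the two-parameter version) is proved in full, and its argument is the template for what \cite{Inaba-Saito} does in the one-parameter case, so one can compare against that.

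Your outline is correct but more elaborate than necessary, and your ``main obstacle'' is an artifact of the extra complexity. In the paper's approach (see the proof of Lemma \ref{lem:horizontal lift h_1h_2 in irregular case}), after choosing a local frame on an affine $U^{(i)}\supset {\mathcal D}^{(i)}$ in which $A(z^{(i)})|_{3{\mathcal D}^{(i)}}$ is diagonal, one sets $B^{(i)}(z^{(i)}) = P(z^{(i)})\,\Diag_{\bigl(\int \nu^{(i)}_{0,v}(\mu_k)\,\tfrac{dz^{(i)}}{(z^{(i)})^{m_i}}\bigr)}\,P(z^{(i)})^{-1}$ explicitly. Because $\nu^{(i)}_{0,v}$ has degree $\leq m_i-2$ in $z^{(i)}$, the integrand has no residue, so $B^{(i)}$ is a matrix of single-valued meromorphic functions with pole order $\leq m_i-1$; there is no recursion, no convergence issue, and no monodromy to match. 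Integrability is then checked by a direct one-line computation.

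The gluing and uniqueness steps are also simpler than you propose. On any overlap away from ${\mathcal D}$, two local horizontal lifts with connection matrices $\tilde{A}_\beta d\tilde{z}+B_\beta d\bar{h}$ and $\tilde{A}_{\beta'}d\tilde{z}+B_{\beta'}d\bar{h}$ are related by the gauge $I_r+\bar{h}(B_{\beta'}-B'_\beta)$, and this gauge is \emph{uniquely} determined by the $d\bar{h}$-coefficients (see the proof of Proposition \ref{prop:existence-horizontal-lift}). Uniqueness of the local patching isomorphisms forces the cocycle condition, so no \v{C}ech cohomology vanishing is needed. Likewise, global uniqueness follows from this local determination; the $\balpha$-stability argument you give is valid but not used here.
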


For each vector field
$v\in H^0({\mathcal T}'_0,
T_{{\mathcal T}_{\bmu,\blambda,\epsilon=0}}|_{{\mathcal T}'_0})$,
the horizontal lift of
$\big(\tilde{E}_{\tilde{M}'_0},\tilde{\nabla}_{\tilde{M}'_0},
\{\tilde{N}^{(i)}_{\tilde{M}'_0}\}\big)$
with respect to $v$ induces a relative connection
$\big({\mathcal E}^v_0,\overline{\nabla^v_0},\{{\mathcal N}^{(i)}_{0,v}\}\big)$
which gives a morphism
$\tilde{M}'_0[v] \longrightarrow M^{\balpha}_{{\mathcal C},{\mathcal D}}
(\tilde{\bnu},\bmu)$
making the diagram
\[
 \begin{CD}
   \tilde{M}'_0[v] & \longrightarrow &
   M^{\balpha}_{{\mathcal C},{\mathcal D}}(\tilde{\bnu},\bmu) \\
   @VVV     @VVV \\
   {\mathcal T}'_0[v] & \xrightarrow{I_v} {\mathcal T}'_0 \hookrightarrow &
   {\mathcal T}_{\bmu,\blambda}
 \end{CD}
\]
commutative.
This morphism corresponds to a section of
$T_{M^{\balpha}_{{\mathcal C},{\mathcal D}}
(\tilde{\bnu},\bmu)_{\epsilon=0}}
\otimes{\mathcal O}_{\tilde{M}'_0}$
over $\tilde{M}'_0$
which descends to a vector field
$\Phi_0(v)\in 
H^0 \big(
\pi_{{\mathcal T}_{\bmu,\blambda,\epsilon=0}}^{-1}({\mathcal T}'_0),
T_{M^{\balpha}_{{\mathcal C},{\mathcal D}}
(\tilde{\bnu},\bmu)_{\epsilon=0}} \big|
_{ \pi_{{\mathcal T}_{\bmu,\blambda,\epsilon=0}}^{-1}({\mathcal T}'_0) } \big)$.
We can show that the correspondence
\[
 T_{{\mathcal T}_{\bmu,\blambda,\epsilon=0}}
 \ni v \mapsto \Phi_0(v)
 \in
 (\pi_{{\mathcal T}_{\bmu,\blambda,\epsilon=0}})_*
 T_{M^{\balpha}_{{\mathcal C},{\mathcal D}}
 (\tilde{\bnu},\bmu)_{\epsilon=0}}
\]
is an
${\mathcal O}_{{\mathcal T}_{\bmu,\blambda,\epsilon=0}}$-homomorphism.
We omit its proof because it is the same as that of Proposition 
\ref {prop:splitting is a homomorphism}
which is given later.
So $\Phi_0$ is equivalent to the morphism
\begin{equation} \label {equation:irregular generalized isomonodromic splitting}
 \Psi_0 \colon
 (\pi_{{\mathcal T}_{\bmu,\blambda,\epsilon=0}})^*
 T_{{\mathcal T}_{\bmu,\blambda,\epsilon=0}}
 \longrightarrow
 T_{M^{\balpha}_{{\mathcal C},{\mathcal D}}
 (\tilde{\bnu},\bmu)_{\epsilon=0}}.
\end{equation}

We devote the rest of this subsection
to the proof of  the integrability of the subbundle
$\im\Psi_0\subset
T_{M^{\balpha}_{{\mathcal C},{\mathcal D}}
(\tilde{\bnu},\bmu)_{\epsilon=0}}$.
The integrability of the irregular singular generalized isomonodromic deformation
in the zero genus case is proved by Jimbo, Miwa and Ueno in
\cite[Theorem 4.2]{Jimbo-Miwa-Ueno},
which is extended by Bremer and Sage in
\cite[Theorem 5.1]{Bremer-Sage-2}.
Although the integrability is almost a consequence of the Malgrange-Sibuya isomorphism
\cite[Theorem 4.5.1]{Babbitt-Varadarajan} in a general case
as in \cite{Boalch-2},
it will be worth giving a proof of the integrability of $\Psi_0$,
because the situation in an unfolded case is different.

For the proof of the integrability condition of $\Psi_0$,
we extend the definition of horizontal lift given
in Definition \ref{def:horizontal lift for irregular case}.
We consider  a morphism
\[
 u\colon
 {\mathcal T}'_0[u]:=
 {\mathcal T}'_0\times\Spec\mathbb{C}[h_1,h_2]/(h_1^2,h_2^2)
 \longrightarrow {\mathcal T}'_0\subset
 {\mathcal T}_{\bmu,\blambda,\epsilon=0}
\]
satisfying
$u\otimes\mathbb{C}[h_1,h_2]/(h_1,h_2)=\mathrm{id}_{{\mathcal T}'_0}$
and write
\[
 u^*\tilde{\nu}^{(i)}(T)
 =\nu_{hor}^{(i)}(T)+\nu_1^{(i)}(T)\bar{h}_1+\nu^{(i)}_2(T)\bar{h}_2
 +\nu^{(i)}_{1,2}(T)\bar{h}_1\bar{h}_2
\]
where $\nu_{hor}^{(i)}(T)$
is the pullback of
$\tilde{\nu}^{(i)}(T)$ by the composition
${\mathcal T}'_0\times\Spec\mathbb{C}[h_1,h_2]/(h_1^2,h_2^2)
\longrightarrow {\mathcal T}'_0\hookrightarrow
{\mathcal T}_{\bmu,\blambda}$
of the trivial projection and the inclusion
and $\nu^{(i)}_1(T),\nu^{(i)}_2(T),\nu^{(i)}_{1,2}(T)$
are pullbacks of polynomials in ${\mathcal O}_{{\mathcal D}^{(i)}_{{\mathcal T}'_0}}[T]$
via the trivial projection
${\mathcal T}'_0\times\Spec\mathbb{C}[h_1,h_2]/(h_1^2,h_2^2)
\longrightarrow {\mathcal T}'_0$.

We consider the fiber product
$\tilde{M}'_0[u]:=
\tilde{M}'_0\times_{{\mathcal T}'_0} {\mathcal T}'_0[u]$
with respect to the canonical morphism
$\tilde{M}'_0\longrightarrow {\mathcal T}'_0$
and
${\mathcal T}'_0[u]\xrightarrow{u}{\mathcal T}'_0$.
We can extend the notion of horizontal lift given in
Definition \ref {def:horizontal lift for irregular case}
to the morphism
$u\colon {\mathcal T}'_0\times\Spec\mathbb{C}[h_1,h_2]/(h_1^2,h_2^2)
\longrightarrow {\mathcal T}'_0$.

We say that a tuple
$\big({\mathcal E}^u_0,\nabla^u_0,
\{ {\mathcal N}^{(i)}_{0,u}\}\big)$
is a horizontal lift of 
$\big(\tilde{E}_{\tilde{M}'_0},\tilde{\nabla}_{\tilde{M}'_0},
\{\tilde{N}^{(i)}_{\tilde{M}'_0}\}\big)$
with respect to $u$ if
${\mathcal E}^u_0$ is a locally free sheaf on
${\mathcal C}_{\tilde{M}'_0[u]}$,
$\nabla^u_0\colon
 {\mathcal E}^u_0 \longrightarrow
 {\mathcal E}^u_0 \otimes
 \Omega^1_{{\mathcal C}_{\tilde{M}'_0[u]/\tilde{M}'_0}}({\mathcal D}_{\tilde{M}'_0[u]})$
is an integrable connection
and
${\mathcal N}^{(i)}_{0,u}\colon {\mathcal E}^u_0 \big|_{{\mathcal D}^{(i)}_{\tilde{M}'_0[u]}}
\longrightarrow  {\mathcal E}^u_0 \big|_{{\mathcal D}^{(i)}_{\tilde{M}'_0[u]}}$
is an endomorphism 
such that the conditions (3), (4), (5) and (6)
of Definition \ref {def:horizontal lift for irregular case}
hold after replacing $v$ by $u$.
Then we have the following:

\begin{lemma}  \label {lem:horizontal lift h_1h_2 in irregular case}
There exists a unique horizontal lift
$\big({\mathcal E}^u_0,\nabla^u_0,
\{ {\mathcal N}^{(i)}_{0,u}\}\big)$
of 
$\big(\tilde{E}_{\tilde{M}'_0},\tilde{\nabla}_{\tilde{M}'_0},
\{\tilde{N}^{(i)}_{\tilde{M}'_0}\}\big)$
with respect to $u$.
\end{lemma}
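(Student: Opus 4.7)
The plan is to reduce the statement to Proposition \ref{prop:horizontal lift for irregular case} by factoring the morphism $u$ through the intermediate Artinian ring $A_0 := \mathbb{C}[h_1,h_2]/(h_1^2,h_2^2,h_1h_2)$, and then handling the extension to $A := \mathbb{C}[h_1,h_2]/(h_1^2,h_2^2)$ by a cohomological argument in the spirit of Proposition \ref{prop-obstruction-class}. The key observation is the fibered-product description $A_0 \cong \mathbb{C}[h_1]/(h_1^2)\times_{\mathbb{C}}\mathbb{C}[h_2]/(h_2^2)$, which reduces the construction over $A_0$ to a gluing of two single-direction horizontal lifts.

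First I would restrict $u$ along $\bar{h}_2=0$ and along $\bar{h}_1=0$ to obtain two tangent-vector data $v_1, v_2 \in H^0({\mathcal T}'_0, T_{{\mathcal T}_{\bmu,\blambda,\epsilon=0}}|_{{\mathcal T}'_0})$. Proposition \ref{prop:horizontal lift for irregular case} produces unique horizontal lifts $({\mathcal E}^{v_i}_0,\nabla^{v_i}_0,\{{\mathcal N}^{(i)}_{0,v_i}\})$ on $\tilde{M}'_0\times\Spec\mathbb{C}[h_i]/(h_i^2)$; both restrict modulo $\bar{h}_i$ to the original datum $(\tilde{E}_{\tilde{M}'_0},\tilde{\nabla}_{\tilde{M}'_0},\{\tilde{N}^{(i)}_{\tilde{M}'_0}\})$. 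The fibered-product structure of $A_0$ then allows them to be glued into a horizontal lift over $\tilde{M}'_0\times\Spec A_0$, with uniqueness on $A_0$ inherited from the two single-direction uniqueness statements.

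Second I would extend this lift across the square-zero ideal $(\bar{h}_1\bar{h}_2)\subset A$. Working in local frames, the connection $1$-form reads $\omega=\omega_{\mathrm{rel}}+\omega_1\bar{h}_1+\omega_2\bar{h}_2+\omega_{12}\bar{h}_1\bar{h}_2$, where $\omega_{\mathrm{rel}}$ is the matrix of $\tilde{\nabla}_{\tilde{M}'_0}$ and $\omega_1,\omega_2$ come from the two horizontal lifts. The $\bar{h}_i^2$-components of $d\omega+\omega\wedge\omega$ vanish by integrability of each individual lift, so the entire obstruction to integrability on $A$ is concentrated in the $\bar{h}_1\bar{h}_2$-coefficient, which I would show defines a global class in $\mathbf{R}^2(p_{\tilde{M}'_0})_*(\tilde{\mathcal F}^{\bullet})$ in the notation of subsection \ref{subsection:symplectic form}. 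Matching the exponent condition (5) of Definition \ref{def:horizontal lift for irregular case} fixes the restriction of $\omega_{12}$ along ${\mathcal D}^{(i)}_{\tilde{M}'_0[u]}$ rigidly, reducing the question to solving a first-order deformation equation of the form already analyzed in Proposition \ref{prop:first deformation}. Uniqueness follows by noting that any two extensions differ, modulo $A_0$, by a $\bar{h}_1\bar{h}_2$-valued endomorphism whose image under $d^1$ must cancel the same obstruction term, so the difference descends to a class in $\ker(d^1)/\im(d^0)$ that is further rigidified by condition (5).

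The main obstacle will be proving that the $\bar{h}_1\bar{h}_2$-obstruction class vanishes. In essence this is the Frobenius integrability of the Jimbo-Miwa-Ueno deformation equations, reinterpreted moduli-theoretically. My approach would be to exhibit the obstruction as the commutator of the two formal horizontal lifts acting on the universal relative connection, and then to invoke the uniqueness part of Proposition \ref{prop:horizontal lift for irregular case} applied to the zero tangent vector $[\partial/\partial h_1,\partial/\partial h_2]=0$: this uniqueness forces the commutator to be a coboundary in the deformation complex, which is exactly the vanishing of the obstruction. The trace-part of the obstruction is handled separately by the same argument as in Lemma \ref{lemma-trace-obstruction}, using that the scalar exponents appearing in $\tilde{\nu}^{(i)}$ are pulled back from the base ${\mathcal T}_{\bmu,\blambda}$.
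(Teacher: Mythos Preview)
Your first step has a genuine gap. The fibered-product description $A_0 \cong \mathbb{C}[h_1]/(h_1^2)\times_{\mathbb{C}}\mathbb{C}[h_2]/(h_2^2)$ is correct for the ring and therefore glues flat modules and relative connections, but it does \emph{not} glue the integrability condition~(3). K\"ahler differentials do not respect this pushout: $\Omega^2_{A_0/\mathbb{C}}$ is one-dimensional, spanned by $d\bar h_1\wedge d\bar h_2$, whereas both $\Omega^2_{R_i/\mathbb{C}}$ vanish. If you glue the two single-direction lifts into a connection form $A\,d\tilde z + B_1\,d\bar h_1 + B_2\,d\bar h_2$, the $d\bar h_1\wedge d\bar h_2$-component of the curvature is $[B_1,B_2]$ (the cross partial derivatives vanish since each $B_i$ is pulled back from one factor), and there is no reason for this commutator to vanish in an arbitrary frame. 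So the glued object over $A_0$ is not in general a horizontal lift. Your second step inherits this problem and adds another: the proposed vanishing argument for the $\bar h_1\bar h_2$-obstruction --- invoking uniqueness in Proposition~\ref{prop:horizontal lift for irregular case} for the ``zero tangent vector $[\partial/\partial h_1,\partial/\partial h_2]=0$'' --- does not type-check, since $\partial/\partial h_i$ are formal parameters, not vector fields on ${\mathcal T}'_0$; and any precise version of this commutator reasoning is the content of Theorem~\ref{thm:integrability condition}, whose proof in the paper \emph{uses} the present lemma.

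The paper proceeds entirely differently and sidesteps both issues by an explicit construction. Near each ${\mathcal D}^{(i)}$ it chooses a frame diagonalising $\tilde\nabla|_{3{\mathcal D}^{(i)}}$ and writes the local lift down directly, with $B_1,B_2,B_{1,2}$ the simultaneously diagonal matrices $\Diag_{(\int \nu^{(i)}_\bullet(\mu_k)\,d\tilde z^{(i)}/(\tilde z^{(i)})^{m_i})}$. Because these commute, the mixed term $[B_1,B_2]$ vanishes for free, and $d\tilde\eta+[\tilde\eta,\tilde\eta]=0$ is verified by a short direct calculation. Local uniqueness is then proved by comparing any competing connection form term by term and gauging the differences away; the global lift is obtained by patching these local pieces. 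No obstruction-theoretic input is used.
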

 
\begin{proof}
We consider the restriction of $\tilde{\nabla}_{\tilde{M}'_0}$
to an affine open neighborhood
$U^{(i)}$ of ${\mathcal D}^{(i)}_{\epsilon=0}$
such that
$\tilde{E}_{\tilde{M}'_0}\big|_{U^{(i)}}\cong {\mathcal O}_{U^{(i)}}^{\oplus r}$.
It can be written
\[
 \tilde{\nabla}_{\tilde{M}'_0} \big|_{U^{(i)}} \colon
 {\mathcal O}_{U^{(i)}}^{\oplus r} \ni
 \begin{pmatrix} f_1 \\ \vdots \\ f_r \end{pmatrix}
 \mapsto
 \begin{pmatrix} df_1 \\ \vdots \\ df_r \end{pmatrix}
 +
 A(z^{(i)})\frac { dz^{(i)} } { (z^{(i)})^{m_i} }
  \begin{pmatrix} f_1 \\ \vdots \\ f_r \end{pmatrix}
 \in 
 {\mathcal O}_{U^{(i)}}^{\oplus r} \otimes 
 \Omega^1_{U^{(i)}/\tilde{M}'_0}({\mathcal D}^{(i)}_{\tilde{M}'_0}).
\]
Here we may assume that
\[
 A(z^{(i)}) \big|_{3{\mathcal D}^{(i)}_{\tilde{M}'_0}}
 =
 \Diag_{(\tilde{\nu}(\mu_k))} \big|_{3{\mathcal D}^{(i)}_{\tilde{M}'_0}}.
\]
We can take a lift
$A(\tilde{z}^{(i)})$ of $A(z^{(i)})$ as a matrix of algebraic functions
on $U^{(i)}_{\tilde{M}'_0[u]}$
satisfying
\begin{equation} \label {equation:constant in h_1,h_2}
 \frac{\partial A(\tilde{z}^{(i)})} {\partial \bar{h}_1}=
 \frac{\partial A(\tilde{z}^{(i)})} {\partial \bar{h}_2}=0.
\end{equation}
Indeed for an arbitrary lift
$\tilde{A}(\tilde{z}^{(i)})$ of $A(z^{(i)})$,
we can write
$d\tilde{A}=A_0 d\tilde{z}^{(i)} +A_1d\bar{h}_1+A_2d\bar{h}_2$
with respect to the identification
\[
 \Omega^1_{U^{(i)}_{\tilde{M}'_0[u]}/\tilde{M}'_0}
 =
 {\mathcal O}_{U^{(i)}_{\tilde{M}'_0[u]}} d\tilde{z}^{(i)}
 \oplus
 {\mathcal O}_{U^{(i)}_{\tilde{M}'_0[u]}} d\bar{h}_1
 \oplus
 {\mathcal O}_{U^{(i)}_{\tilde{M}'_0[u]}} d\bar{h}_2.
\]
Here relative differential forms  in
$\Omega^1_{U^{(i)}_{\tilde{M}'_0[u]}/\tilde{M}'_0}$
are with respect to the composition of the trivial projections
$U^{(i)}_{\tilde{M}'_0[u]} \longrightarrow \tilde{M}'_0[u]
=\tilde{M}'_0\times\Spec\mathbb{C}[h_1,h_2]/(h_1^2,h_2^2)
\longrightarrow \tilde{M}'_0$.
Then the replacement
\[
 A(\tilde{z}^{(i)}):=
 \tilde{A}-\bar{h}_1A_1-\bar{h}_2A_2
 +\bar{h}_1\bar{h}_2\frac{\partial A_1}{\partial \bar{h}_2}
\]
satisfies the condition (\ref{equation:constant in h_1,h_2}) because of the equalities
\[
 \frac{\partial A_1}{\partial \bar{h}_2}
 =
 \frac{\partial^2 \tilde{A}}{\partial \bar{h}_1\partial\bar{h}_2}
 =
 \frac{\partial A_2}{\partial \bar{h}_1}.
\]
We put
\begin{align*}
 &B_1(\tilde{z}^{(i)})
 :=
 \Diag_{(\int \nu^{(i)}_1(\mu_k)
 \frac{d\tilde{z}^{(i)}} {(\tilde{z}^{(i)})^{m_i}})}, \quad
 B_2(\tilde{z}^{(i)})
 :=
 \Diag_{(\int \nu^{(i)}_2(\mu_k)
 \frac{d\tilde{z}^{(i)}} {(\tilde{z}^{(i)})^{m_i}})}, \\
 &B_{1,2}(\tilde{z}^{(i)})
 :=
 \Diag_{(\int \nu^{(i)}_{1,2}(\mu_k) 
 \frac{d\tilde{z}^{(i)}} {(\tilde{z}^{(i)})^{m_i}})}.
\end{align*}
Note that $(\tilde{z}^{(i)})^{m_i-1}B_1(\tilde{z}^{(i)})$, $(\tilde{z}^{(i)})^{m_i-1}B_2(\tilde{z}^{(i)})$
and
$(\tilde{z}^{(i)})^{m_i-1}B_{1,2}(\tilde{z}^{(i)})$ are
matrices of polynomials in $\tilde{z}^{(i)}$,
because 
$\nu^{(i)}_1(\mu_k)
 \dfrac{d\tilde{z}^{(i)}} {(\tilde{z}^{(i)})^{m_i}}$,
$\nu^{(i)}_2(\mu_k)
 \dfrac{d\tilde{z}^{(i)}} {(\tilde{z}^{(i)})^{m_i}}$
and
$\nu^{(i)}_{1,2}(\mu_k)
\dfrac{d\tilde{z}^{(i)}} {(\tilde{z}^{(i)})^{m_i}}$
have no residue part.
If we define
\begin{align*}
 C_1(\tilde{z}^{(i)}) \frac{d\tilde{z}^{(i)}} {(\tilde{z}^{(i)})^{m_i}}
 &:=
 d B_1(\tilde{z}^{(i)})
 +
 \big[ A(\tilde{z}^{(i)}) , B_1(\tilde{z}^{(i)}) \big] \frac{d\tilde{z}^{(i)}} {(\tilde{z}^{(i)})^{m_i}} \\
 C_2(\tilde{z}^{(i)}) \frac{d\tilde{z}^{(i)}} {(\tilde{z}^{(i)})^{m_i}}
 &:=
 d B_2(\tilde{z}^{(i)})
 +
 \big[ A(\tilde{z}^{(i)}) , B_2(\tilde{z}^{(i)}) \big] \frac{d\tilde{z}^{(i)}} {(\tilde{z}^{(i)})^{m_i}},
\end{align*}
we have
$C_1(\tilde{z}^{(i)})\big|_{2{\mathcal D}^{(i)}_{\tilde{M}'_0[u]}}
=\Diag_{ ( \nu^{(i)}_1(\mu_k) ) }\big|_{2{\mathcal D}^{(i)}_{\tilde{M}'_0[u]}}$
and
$C_2(\tilde{z}^{(i)})\big|_{2{\mathcal D}^{(i)}_{\tilde{M}'_0[u]}}
=\Diag_{ ( \nu^{(i)}_2(\mu_k) ) }\big|_{2{\mathcal D}^{(i)}_{\tilde{M}'_0[u]}}$.
Since $B_1(\tilde{z}^{(i)})$, $B_2(\tilde{z}^{(i)})$, $dB_1(\tilde{z}^{(i)})$
and $dB_2(\tilde{z}^{(i)})$
commute to each other, we have
\begin{align*}
 \big[ C_1(\tilde{z}^{(i)}), B_2(\tilde{z}^{(i)}) \big] 
 \frac{d\tilde{z}^{(i)}} {(\tilde{z}^{(i)})^{m_i}}
 &=
 \left[ dB_1(\tilde{z}^{(i)})+ \big[ A(\tilde{z}^{(i)}), B_1(\tilde{z}^{(i)}) \big]
 \frac{d\tilde{z}^{(i)}} {(\tilde{z}^{(i)})^{m_i}} , B_2(\tilde{z}^{(i)}) \right] \\
 &=
 \big[\big[ A(\tilde{z}^{(i)}), B_1(\tilde{z}^{(i)}) \big] , B_2(\tilde{z}^{(i)})\big] 
 \frac{d\tilde{z}^{(i)}} {(\tilde{z}^{(i)})^{m_i}} \\
 &=
 \big[\big[ A(\tilde{z}^{(i)}), B_2(\tilde{z}^{(i)}) \big] , B_1(\tilde{z}^{(i)})\big] 
 \frac{d\tilde{z}^{(i)}} {(\tilde{z}^{(i)})^{m_i}} \\
 &=
 \big[ C_2(\tilde{z}^{(i)}), B_1(\tilde{z}^{(i)}) \big] 
 \frac{d\tilde{z}^{(i)}} {(\tilde{z}^{(i)})^{m_i}}.
\end{align*}
If we put
\[
 C(\tilde{z}^{(i)}):= \big[ C_1(\tilde{z}^{(i)}),B_2(\tilde{z}^{(i)}) \big]
 =
  \big[ C_2(\tilde{z}^{(i)}), B_1(\tilde{z}^{(i)}) \big],
\]
then we can see that $C(\tilde{z}^{(i)})$
is a matrix of algebraic functions on $U^{(i)}_{\tilde{M}'_0[u]}$
such that
$C(\tilde{z}^{(i)})\big|_{{\mathcal D}^{(i)}_{\tilde{M}'_0[u]}}=0$.
We can check the integrability of
\[
 \eta=
 (A+\bar{h}_1C_1+\bar{h}_2 C_2+\bar{h}_1 \bar{h}_2 C) 
 \frac {d\tilde{z}^{(i)}} {(\tilde{z}^{(i)})^{m_i}}
 + B_1 d\bar{h}_1 + B_2 d\bar{h}_2
\]
by the calculation
\begin{align*}
 d\eta+[\eta,\eta]
 &=
 (C_1+\bar{h}_2C)d\bar{h}_1\wedge \frac{d\tilde{z}^{(i)}} {(\tilde{z}^{(i)})^{m_i}}
 +
 (C_2+\bar{h}_1C)d\bar{h}_2\wedge \frac{d\tilde{z}^{(i)}} {(\tilde{z}^{(i)})^{m_i}}
 +dB_1\wedge d\bar{h}_1+dB_2\wedge d\bar{h}_2 \\
 &\quad
 +\big([A,B_1]+\bar{h}_2[C_2,B_1]\big) 
 \frac{d\tilde{z}^{(i)}} {(\tilde{z}^{(i)})^{m_i}}\wedge d\bar{h}_1
 +\big([A,B_2]+\bar{h}_1[C_1,B_2]\big) \frac{d\tilde{z}^{(i)}} {(\tilde{z}^{(i)})^{m_i}}\wedge d\bar{h}_2 \\
 &=
 \left( dB_1+(-C_1+[A,B_1])\frac{d\tilde{z}^{(i)}} {(\tilde{z}^{(i)})^{m_i}} \right) \wedge d\bar{h}_1
 +
 \left( dB_2+(-C_2+[A,B_2])\frac{d\tilde{z}^{(i)}} {(\tilde{z}^{(i)})^{m_i}} \right) \wedge d\bar{h}_2 \\
 & \quad +
 \bar{h}_2 ( -C+[C_2,B_1] )\frac{d\tilde{z}^{(i)}} {(\tilde{z}^{(i)})^{m_i}}  \wedge d\bar{h}_1
 +
 \bar{h}_1 ( -C+[C_1,B_2] )\frac{d\tilde{z}^{(i)}} {(\tilde{z}^{(i)})^{m_i}}  \wedge d\bar{h}_2 \\
 &=0.
\end{align*}
If we put
\[
 C_{1,2}(\tilde{z}^{(i)}) \frac{d\tilde{z}^{(i)}} {(\tilde{z}^{(i)})^{m_i}} 
 :=
 dB_{1,2}(\tilde{z}^{(i)})
 +\big[ A(\tilde{z}^{(i)}),B_{1,2}(\tilde{z}^{(i)}) \big]
 \frac{d\tilde{z}^{(i)}} {(\tilde{z}^{(i)})^{m_i}} ,
\]
then the connection matrix
\[
 \tilde{\eta}:=\eta+\bar{h}_1\bar{h}_2C_{1,2}\frac{d\tilde{z}^{(i)}} {(\tilde{z}^{(i)})^{m_i}}
 +\bar{h}_2 B_{1,2}(\tilde{z}^{(i)}) d\bar{h}_1
 +\bar{h}_1 B_{1,2}(\tilde{z}^{(i)}) d\bar{h}_2
\]
satisfies the integrability condition
\begin{align*}
 d\tilde{\eta}+[\tilde{\eta},\tilde{\eta}]
 &=
 d\eta+[\eta,\eta]
 +\bar{h}_2 C_{1,2} \: d\bar{h}_1\wedge \frac{d\tilde{z}^{(i)}} {(\tilde{z}^{(i)})^{m_i}}
 +\bar{h}_1 C_{1,2} \: d\bar{h}_2\wedge\frac{d\tilde{z}^{(i)}} {(\tilde{z}^{(i)})^{m_i}}  
 +\bar{h}_2 dB_{1,2}\wedge d\bar{h}_1  \\
 &\quad
 +\bar{h}_1 dB_{1,2} \wedge d\bar{h}_2 
 +\bar{h}_2 [A,B_{1,2}] \frac{d\tilde{z}^{(i)}} {(\tilde{z}^{(i)})^{m_i}} \wedge d\bar{h}_1
 +\bar{h}_1 [A,B_{1,2}] \frac{d\tilde{z}^{(i)}} {(\tilde{z}^{(i)})^{m_i}} \wedge d\bar{h}_2  \\
 &=
 \bar{h}_2\left( dB_{1,2}+(-C_{1,2}+[A,B_{1,2}])
 \frac{d\tilde{z}^{(i)}} {(\tilde{z}^{(i)})^{m_i}}\right) \wedge d\bar{h}_1 \\
 & \quad
 +\bar{h}_1\left( dB_{1,2}+(-C_{1,2}+[A,B_{1,2}]) 
 \frac{d\tilde{z}^{(i)}} {(\tilde{z}^{(i)})^{m_i}}\right) \wedge d\bar{h}_2 \\
 &=0.
\end{align*}
Then the connection
\[
 \nabla^u_{U^{(i)}}\colon
 {\mathcal O}_{U^{(i)}_{\tilde{M}'_0[u]}}^{\oplus r}
 \longrightarrow
 {\mathcal O}_{U^{(i)}_{\tilde{M}'_0[u]}}^{\oplus r}
 \otimes \Omega^1_{U^{(i)}_{\tilde{M}'_0[u]}/\tilde{M}'_0}({\mathcal D}^{(i)}_{\tilde{M}'_0[u]})
\]
given by the connection matrix
\begin{align*}
 \tilde{\eta}
 &=
 \left( A(\tilde{z}^{(i)})+\bar{h}_1C_1(\tilde{z}^{(i)})+\bar{h}_2C_2(\tilde{z}^{(i)})
 +\bar{h}_1\bar{h}_2 \big( C(\tilde{z}^{(i)})+C_{1,2}(\tilde{z}^{(i)}) \big) \right)
 \frac{d\tilde{z}^{(i)}} {(\tilde{z}^{(i)})^{m_i}} \\
 & \quad
 +\big(B_1(\tilde{z}^{(i)}) +\bar{h}_2 B_{1,2}(\tilde{z}^{(i)}) \big) d\bar{h}_1
 +\big(B_2(\tilde{z}^{(i)}) +\bar{h}_1 B_{1,2}(\tilde{z}^{(i)}) \big) d\bar{h}_2 
\end{align*}
becomes an integrable connection.
If we put
${\mathcal N}^{(i)}_{U^{(i)},u}:=\Diag_{(\mu_k)}$,
then 
$\Big( {\mathcal O}_{U^{(i)}_{\tilde{M}'_0[u]}}^{\oplus r}, 
\nabla^u_{U^{(i)}},{\mathcal N}^{(i)}_{U^{(i)},u} \Big)$
is a local horizontal lift of
$\big( \tilde{E}_{\tilde{M}'_0},\tilde{\nabla}_{\tilde{M}'_0},
\{\tilde{N}^{(i)}_{\tilde{M}'_0}\} \big)\big|_{U^{(i)}}$.

Assume that
$\Big( {\mathcal O}_{U^{(i)}_{\tilde{M}'_0[u]}}^{\oplus r}, \nabla',N' \Big)$
is another local horizontal lift given by a connection matrix
\begin{align*}
 &(A(\tilde{z}^{(i)})+\bar{h}_1C'_1(\tilde{z}^{(i)})
 +\bar{h}_2C'_2(\tilde{z}^{(i)})+\bar{h}_1\bar{h}_2C'_{1,2}(\tilde{z}^{(i)}))
 \frac{d\tilde{z}}{\tilde{z}^{m_i}} \\
 & \quad
 +B'_1(\tilde{z}^{(i)})d\bar{h}_1+B'_2(\tilde{z}^{(i)})d\bar{h}_2+B'_{1,2}(\tilde{z})\bar{h}_2d\bar{h}_1
 +B'_{2,1}(\tilde{z}^{(i)})\bar{h}_1d\bar{h}_2.
\end{align*}
We want to construct an isomorphism between
$\nabla^u_{U^{(i)}}$ and $\nabla'$.
Since $C'_1(\tilde{z}^{(i)})\big|_{{\mathcal D}^{(i)}_{\tilde{M}'_0}[u]}$,
$C'_2(\tilde{z}^{(i)})\big|_{{\mathcal D}^{(i)}_{\tilde{M}'_0}[u]}$
and  $C'_{1,2}(\tilde{z}^{(i)})\big|_{{\mathcal D}^{(i)}_{\tilde{M}'_0}[u]}$
are diagonal matrices by the assumption,
the integrability condition
\begin{align*}
 &-\left( (C'_1+\bar{h}_2C'_{1,2}) d\bar{h}_1 + (C'_2+\bar{h}_1C'_{1,2}) d\bar{h}_2 \right)
 \wedge \frac{d\tilde{z}^{(i)}} {(\tilde{z}^{(i)})^{m_i}}   \\
 &=
 \big( dB'_1(z^{(i)}) + \bar{h}_2 \, dB'_{1,2}(z^{(i)}) \big) \wedge d\bar{h}_1
 +\big( dB'_2(z^{(i)})+\bar{h}_1 \, dB'_{2,1}(z^{(i)}) \big) \wedge d\bar{h}_2 \\
 &\quad 
 +\Big( B'_{2,1}(z^{(i)})-B'_{1,2}(z^{(i)})+[B'_1(\tilde{z}^{(i)}),B'_2(\tilde{z}^{(i)})] \Big)
 d\bar{h}_1\wedge d\bar{h}_2 \\
 &\quad
 +
 \Big(
 \big[ A(\tilde{z}^{(i)}), B'_1(\tilde{z}^{(i)})+\bar{h}_2B'_{1,2}(z^{(i)}) \big] 
 +\bar{h}_2\big[ C'_2, B'_1(\tilde{z}^{(i)})\big] \Big)
 \frac{d\tilde{z}^{(i)}} {(\tilde{z}^{(i)})^{m_i}} \wedge d\bar{h}_1   \\
 & \quad +   
 \Big(
 \big[ A(\tilde{z}^{(i)}), B'_2(\tilde{z}^{(i)})+\bar{h}_1B'_{2,1}(z^{(i)}) \big] 
 +\bar{h}_1\big[ C'_1, B'_2(\tilde{z}^{(i)})\big] \Big)
 \frac{d\tilde{z}^{(i)}} {(\tilde{z}^{(i)})^{m_i}} \wedge d\bar{h}_2
\end{align*}
implies
$dB'_1(\tilde{z}^{(i)})\big|_{{\mathcal D}^{(i)}_{\tilde{M}'_0[u]}}
=\Diag_{\big(\nu^{(i)}_1(\mu_k)\frac{d\tilde{z}^{(i)}}{(\tilde{z}^{(i)})^{m_i}}\big)}
\Big|_{{\mathcal D}^{(i)}_{\tilde{M}'_0[u]}}$
and
$dB'_2(\tilde{z}^{(i)})\big|_{{\mathcal D}^{(i)}_{\tilde{M}'_0[u]}}
=\Diag_{\big(\nu^{(i)}_2(\mu_k)\frac{d\tilde{z}^{(i)}}{(\tilde{z}^{(i)})^{m_i}}\big)}
\Big|_{{\mathcal D}^{(i)}_{\tilde{M}'_0[u]}}$.
Then 
$B_1(\tilde{z}^{(i)})-B'_1(\tilde{z}^{(i)})$,
$B_2(\tilde{z}^{(i)})-B'_2(\tilde{z}^{(i)})$
are matrices of algebraic functions on $U^{(i)}[u]$
and applying the transform
$(I_r+\bar{h}_1 ( B_1(\tilde{z}^{(i)})-B'_1(\tilde{z}^{(i)}) )
+\bar{h}_2 ( B_2(\tilde{z}^{(i)})-B'_2(\tilde{z}^{(i)}) )$
to $\nabla'$,
we may assume that
$B'_1=B_1$, $B'_2=B_2$ and consequently,
$C'_1=dB_1+[A,B_1]\dfrac{d\tilde{z}^{(i)}}{(\tilde{z}^{(i)})^{m_i}}=C_1$
and
$C'_2=dB_2+[A,B_2]\dfrac{d\tilde{z}^{(i)}}{(\tilde{z}^{(i)})^{m_i}}=C_2$.
Since $[B_1,B_2]=0$, we have $B'_{1,2}=B'_{2,1}$
and
$C'_{1,2}=dB'_{1,2}+([A,B'_{1,2}]+[C_2,B_1])\dfrac{d\tilde{z}^{(i)}}{(\tilde{z}^{(i)})^{m_i}}$
implies that
$dB'_{1,2}\big|_{{\mathcal D}^{(i)}_{\tilde{M}'_0[u]}}
=\Diag_{\big(\nu^{(i)}_{1,2}(\mu_k)\frac{d\tilde{z}^{(i)}}{(\tilde{z}^{(i)})^{m_i}}\big)}$.
So we can see that $B_{1,2}-B'_{1,2}$ is a matrix of
regular functions on $U^{(i)}[u]$ and
the transform $I_r+\bar{h}_1\bar{h}_2(B_{1,2}-B'_{1,2})$ 
gives an isomorphism between
$\Big( {\mathcal O}_{U^{(i)}_{\tilde{M}'_0[u]}}^{\oplus r}, 
\nabla^u_{U^{(i)}},{\mathcal N}^{(i)}_{U^{(i)},u} \Big)$
and
$\Big( {\mathcal O}_{U^{(i)}_{\tilde{M}'_0[u]}}^{\oplus r}, \nabla',N' \Big)$.
We can see that such an isomorphism is unique
because it is determined by the coefficients of
$d\bar{h}_1$ and $d\bar{h}_2$.

If an affine open subset $U\subset {\mathcal C}_{\tilde{M}'_0}$
is disjoint from ${\mathcal D}_{\tilde{M}'_0}$,
then we can easily give a local horizontal lift
of  
$\big(\tilde{E}_{\tilde{M}'_0},\tilde{\nabla}_{\tilde{M}'_0},
\{\tilde{N}^{(i)}_{\tilde{M}'_0}\}\big)\big|_{U}$.
In that case $\{\tilde{N}^{(i)}_{\tilde{M}'_0}\}\big|_U$ is nothing.
Patching local horizontal lifts altogether,
we obtain a unique horizontal lift
$\big({\mathcal E}^u_0,\nabla^u_0,
\{ {\mathcal N}^{(i)}_{0,u}\}\big)$
of 
$\big(\tilde{E}_{\tilde{M}'_0},\tilde{\nabla}_{\tilde{M}'_0},
\{\tilde{N}^{(i)}_{\tilde{M}'_0}\}\big)$
with respect to $u$.
\end{proof}

\begin{theorem} \label {thm:integrability condition}
The subbundle
$\Psi_0((\pi_{{\mathcal T}_{\bmu,\blambda,\epsilon=0}})^*
T_{{\mathcal T}_{\bmu,\blambda,\epsilon=0}})
\subset
T_{M^{\balpha}_{{\mathcal C},{\mathcal D}}(\tilde{\bnu},\bmu)_{\epsilon=0}}$
determined by (\ref {equation:irregular generalized isomonodromic splitting})
satisfies the integrability condition
\[
  \left[ \Psi_0((\pi_{{\mathcal T}_{\bmu,\blambda,\epsilon=0}})^*
 T_{{\mathcal T}_{\bmu,\blambda,\epsilon=0}}) \, , \; 
 \Psi_0((\pi_{{\mathcal T}_{\bmu,\blambda,\epsilon=0}})^*
 T_{{\mathcal T}_{\bmu,\blambda,\epsilon=0}}) \right]
 \subset 
 \Psi_0((\pi_{{\mathcal T}_{\bmu,\blambda,\epsilon=0}})^*
 T_{{\mathcal T}_{\bmu,\blambda,\epsilon=0}}).
\]
\end{theorem}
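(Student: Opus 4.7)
The plan is to establish the pointwise identity $[\Psi_0(v_1), \Psi_0(v_2)] = \Psi_0([v_1, v_2])$ for any local sections $v_1, v_2$ of $T_{{\mathcal T}_{\bmu,\blambda,\epsilon=0}}$, from which the integrability of the subbundle $\im \Psi_0$ follows immediately. Since $\Psi_0$ is a splitting of $d\pi_{{\mathcal T}_{\bmu,\blambda,\epsilon=0}}$, both sides project to $[v_1, v_2]$, so their difference lies in $\ker d\pi_{{\mathcal T}_{\bmu,\blambda,\epsilon=0}}$; the proposed identity forces this difference to vanish, which in particular shows $[\Psi_0(v_1), \Psi_0(v_2)] \in \im \Psi_0$.

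To compute the bracket, I would use the second-order thickening framework already developed for Lemma \ref{lem:horizontal lift h_1h_2 in irregular case}. Given $v_1, v_2$, form two morphisms $u_{12}, u_{21} \colon {\mathcal T}'_0 \times \Spec \mathbb{C}[h_1, h_2]/(h_1^2, h_2^2) \to {\mathcal T}'_0$ so that on functions they act as $u_{12}^* f = f - \bar{h}_1 v_1(f) - \bar{h}_2 v_2(f) + \bar{h}_1 \bar{h}_2 v_1 v_2(f)$ and $u_{21}^* f = f - \bar{h}_1 v_1(f) - \bar{h}_2 v_2(f) + \bar{h}_1 \bar{h}_2 v_2 v_1(f)$. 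Each satisfies $u_{12} \otimes \mathbb{C}[h_1, h_2]/(h_1, h_2) = \mathrm{id}$, so Lemma \ref{lem:horizontal lift h_1h_2 in irregular case} produces a unique horizontal lift of the universal family with respect to each.

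The key step is to identify these two-parameter horizontal lifts with iterated one-parameter lifts. Concretely, one constructs a lift of $(\tilde E_{\tilde M'_0}, \tilde\nabla_{\tilde M'_0}, \{\tilde N^{(i)}_{\tilde M'_0}\})$ to ${\mathcal C}_{\tilde M'_0[u_{12}]}$ by first applying Proposition \ref{prop:horizontal lift for irregular case} along $v_1$ to thicken by $\bar h_1$, and then applying the one-parameter horizontal lift along the lift of $v_2$ to thicken by $\bar h_2$. By the uniqueness in Lemma \ref{lem:horizontal lift h_1h_2 in irregular case} this iterated lift coincides with the direct one. Translating to moduli, the induced morphism $\tilde M'_0 \times \Spec \mathbb{C}[h_1, h_2]/(h_1^2, h_2^2) \to M^{\balpha}_{{\mathcal C}, {\mathcal D}}(\tilde\bnu, \bmu)_{\epsilon=0}$ for $u_{12}$ is the composition $I_{\Psi_0(v_2)} \circ I_{\Psi_0(v_1)}$ of the first-order automorphisms corresponding to $\Psi_0(v_1)$ and $\Psi_0(v_2)$, and similarly for $u_{21}$ one gets $I_{\Psi_0(v_1)} \circ I_{\Psi_0(v_2)}$. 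The difference of these two compositions extracts the bracket $[\Psi_0(v_1), \Psi_0(v_2)]$ as its $\bar h_1 \bar h_2$ coefficient.

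On the other hand, $u_{12} - u_{21}$ corresponds to the pure $\bar h_1 \bar h_2$ direction along $[v_1, v_2]$, and its unique horizontal lift is, again by Lemma \ref{lem:horizontal lift h_1h_2 in irregular case}, precisely the one-parameter horizontal lift defining $\Psi_0([v_1, v_2])$ from Proposition \ref{prop:horizontal lift for irregular case}. Combining these identifications gives $[\Psi_0(v_1), \Psi_0(v_2)] = \Psi_0([v_1, v_2])$. The main obstacle I expect is to make the $\bar h_1 \bar h_2$-coefficient extraction rigorous at the level of the moduli space rather than just the universal family: one must check that the unique horizontal lift descends to a well-defined morphism to $M^{\balpha}_{{\mathcal C}, {\mathcal D}}(\tilde\bnu, \bmu)_{\epsilon=0}$, and that the scalar automorphism ambiguity of $(\tilde E, \tilde\nabla, \{\tilde N^{(i)}\})$, reflected in $\mathbf{H}^0({\mathcal F}^\bullet) = \mathbb{C}$ from Lemma \ref{lemma:trace-isomorphism}, does not obstruct the comparison of the morphisms induced by $u_{12}, u_{21}$ and $u_{12} - u_{21}$.
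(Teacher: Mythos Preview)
Your approach is essentially the paper's: both reduce to proving $[\Phi_0(v_1),\Phi_0(v_2)]=\Phi_0([v_1,v_2])$ via the uniqueness of the two-parameter horizontal lift from Lemma~\ref{lem:horizontal lift h_1h_2 in irregular case}. The one packaging difference is in how the bracket is extracted. The paper uses the group commutator rather than the difference of two compositions: it takes the horizontal lift with respect to $\pi_{{\mathcal T}'_0}\circ\tilde I_{v_2}$, pulls it back successively by $\tilde I_{\Phi_0(v_1)}$, $\tilde I_{\Phi_0(-v_2)}$, $\tilde I_{\Phi_0(-v_1)}$, and compares the result with the pullback of the one-parameter $[v_1,v_2]$-lift along the morphism $\rho$ sending $\bar h\mapsto\bar h_1\bar h_2$; uniqueness forces them to agree. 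This packaging is cleaner on two points. First, it avoids the informal ``$u_{12}-u_{21}$'' (a difference of ring homomorphisms is not a morphism; you would need to rephrase this as comparing the $\bar h_1\bar h_2$-coefficients of the two induced moduli maps). Second, pullback of an integrable connection is automatically integrable, whereas your iterated one-parameter construction must be checked to satisfy the $d\bar h_1\wedge d\bar h_2$ part of the curvature condition before Lemma~\ref{lem:horizontal lift h_1h_2 in irregular case} can be invoked. Finally, your worry about scalar automorphisms is not an obstruction: stability gives $\mathbf H^0({\mathcal F}^\bullet)=\mathbb C$, so the classifying morphisms to the coarse moduli space are unambiguous.
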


\begin{proof}
Take a Zariski open set
${\mathcal T}'_0\subset {\mathcal T}_{\bmu,\blambda,\epsilon=0}$
and vector fields
$v_1,v_2\in H^0({\mathcal T}'_0,T_{{\mathcal T}'_0})$.
We will prove the equality
\begin{equation}\label{equation:integrability condition}
 [\Phi_0(v_1),\Phi_0(v_2)]=\Phi_0([v_1,v_2])
\end{equation}
from which the theorem follows immediately.
Let ${\mathcal T}'_0\times\mathbb{C}[h_1,h_2]/(h_1^2,h_2^2)\xrightarrow{\tilde{I}_{v_1}}
{\mathcal T}'_0\times\Spec\mathbb{C}[h_1,h_2]/(h_1^2,h_2^2)$
be the morphism over $\Spec\mathbb{C}[h_1,h_2]/(h_1^2,h_2^2)$
corresponding to the ring homomorphism
\begin{align*}
 \tilde{I}_{v_1}^* \colon
 {\mathcal O}_{{\mathcal T}'_0}[h_1,h_2]/(h_1^2,h_2^2)
 \ni & f+f_1\bar{h}_1+f_2\bar{h}_2+f_{1,2}\bar{h}_1\bar{h}_2 \\
 & \mapsto
 f+(f_1+v_1(f))\bar{h}_1+f_2\bar{h}_2+ (f_{1,2}+v_1(f_2)) \bar{h}_1 \bar{h}_2
 \in {\mathcal O}_{{\mathcal T}'_0}[h_1,h_2]/(h_1^2,h_2^2)
\end{align*}
and let
${\mathcal T}'_0\times\mathbb{C}[h_1,h_2]/(h_1^2,h_2^2)\xrightarrow{\tilde{I}_{v_2}}
{\mathcal T}'_0\times\Spec\mathbb{C}[h_1,h_2]/(h_1^2,h_2^2)$
be the morphism corresponding to the ring homomorphism
\begin{align*}
 \tilde{I}_{v_2}^*\colon
 {\mathcal O}_{{\mathcal T}'_0}[h_1,h_2]/(h_1^2,h_2^2)
 \ni & f+f_1\bar{h}_1+f_2\bar{h}_2+f_{1,2}\bar{h}_1\bar{h}_2 \\
 &\mapsto
 f+f_1\bar{h}_1+(f_2+v_2(f))\bar{h}_2+(f_{1,2}+v_2(f_1))\bar{h}_1 \bar{h}_2
 \in {\mathcal O}_{{\mathcal T}'_0}[h_1,h_2]/(h_1^2,h_2^2).
\end{align*}
By the calculation
\begin{align*}
 &f+f_1\bar{h}_1+f_2\bar{h}_2+f_{1,2}\bar{h}_1\bar{h}_2
  \stackrel{ I_{v_2}^* }\mapsto
 f+f_1\bar{h}_1+(f_2+v_2(f))\bar{h}_2 +(f_{1,2}+v_2(f_1))\bar{h}_1\bar{h}_2 \\
 & \stackrel { I_{v_1}^* } \mapsto
 f+(f_1+v_1(f))\bar{h}_1+(f_2+v_2(f))\bar{h}_2
 +(f_{1,2}+v_2(f_1)+v_1(f_2)+v_1v_2(f))\bar{h}_1\bar{h}_2 \\
 & \stackrel { I_{-v_2}^* } \mapsto
 f+(f_1+v_1(f))\bar{h}_1+f_2\bar{h}_2
 +(f_{1,2}+v_1(f_2)+v_1v_2(f)-v_2v_1(f))\bar{h}_1\bar{h}_2 \\
 & \stackrel { I_{-v_1}^* } \mapsto
 f+f_1\bar{h}_1+f_2\bar{h}_2+(f_{1,2}+(v_1v_2-v_2v_1)(f))\bar{h}_1\bar{h}_2,
\end{align*}
 we can see that the composition
$\tilde{I}_{-v_1}^*\tilde{I}_{-v_2}^*\tilde{I}_{v_1}^*\tilde{I}_{v_2}^*$
is given by
\begin{align} 
 \tilde{I}_{-v_1}^*\tilde{I}_{-v_2}^*\tilde{I}_{v_1}^*\tilde{I}_{v_2}^* \colon &
 {\mathcal O}_{{\mathcal T}'_0}[h_1,h_2]/(h_1^2,h_2^2)
 \ni 
 f+f_1\bar{h}_1+f_2\bar{h}_2+f_{1,2}\bar{h}_1\bar{h}_2 \notag \\
 & \mapsto 
 f+f_1\bar{h}_1+f_2\bar{h}_2+(f_{1,2}+(v_1v_2-v_2v_1)(f))\bar{h}_1\bar{h}_2
 \in {\mathcal O}_{\mathcal T'}[h_1,h_2]/(h_1^2,h_2^2).
  \label {equation:composition giving [v_1,v_2]}
\end{align}

The vector field $\Phi_0(v_1)$ corresponds to a morphism
$\tilde{M}'_0\times \Spec\mathbb{C}[h_1]/(h_1^2)
\longrightarrow \tilde{M}'_0$.
This morphism together with the second projection
$\tilde{M}'_0\times \Spec\mathbb{C}[h_1]/(h_1^2)\longrightarrow 
\Spec\mathbb{C}[h_1]/(h_1^2)$
gives  a morphism
\begin{equation} \label {equation:automorphism corresponding to tangent vector}
 \tilde{M}'_0\times \Spec\mathbb{C}[h_1]/(h_1^2)
 \longrightarrow 
 \tilde{M}'_0\times \Spec\mathbb{C}[h_1]/(h_1^2)
\end{equation}
over $\Spec\mathbb{C}[h_1]/(h_1^2)$.
Let
\begin{equation} \label {equation:definition of I_{v_1}}
 \tilde{I}_{\Phi_0(v_1)}\colon 
 \tilde{M}'_0\times\Spec\mathbb{C}[h_1,h_2]/(h_1^2,h_2^2)
 \longrightarrow \tilde{M}'_0\times\Spec\mathbb{C}[h_1,h_2]/(h_1^2,h_2^2).
\end{equation}
be the base change of 
(\ref {equation:automorphism corresponding to tangent vector})
under the projection
$\Spec\mathbb{C}[h_1,h_2]/(h_1^2,h_2^2)\longrightarrow
\Spec\mathbb{C}[h_1]/(h_1^2)$.
Similarly we can define a morphism
\begin{equation} \label {equation:definition of I_{v_2}}
 \tilde{I}_{\Phi_0(v_2)}\colon 
 \tilde{M}'_0\times\Spec\mathbb{C}[h_1,h_2]/(h_1^2,h_2^2)
 \longrightarrow \tilde{M}'_0\times\Spec\mathbb{C}[h_1,h_2]/(h_1^2,h_2^2)
\end{equation}
from the morphism
$\tilde{M}'_0\times \Spec\mathbb{C}[h_2]/(h_2^2)
\longrightarrow \tilde{M}'_0$
corresponding to $\Phi_0(v_2)$.
We can see by a similar calculation to that of (\ref  {equation:composition giving [v_1,v_2]})
that the composition
$\tilde{I}_{\Phi_0(-v_1)}^*\tilde{I}_{\Phi_0(-v_2)}^*
\tilde{I}_{\Phi_0(v_1)}^*\tilde{I}_{\Phi_0(v_2)}^*$
corresponds to the ring homomorphism
\begin{align} 
 &\tilde{I}_{\Phi_0(-v_1)}^*\tilde{I}_{\Phi_0(-v_2)}^*
 \tilde{I}_{\Phi_0(v_1)}^*\tilde{I}_{\Phi_0(v_2)}^*
 \colon 
 {\mathcal O}_{\tilde{M}'_0}[h_1,h_2]/(h_1^2,h_2^2)
 \ni 
 f+f_1\bar{h}_1+f_2\bar{h}_2+f_{1,2}\bar{h}_1\bar{h}_2 
  \label {equation:composition giving [Psi(v_1),Psi(v_2)]} \\
 & \quad  \mapsto 
 f+f_1\bar{h}_1+f_2\bar{h}_2
 +(f_{1,2}+(\Phi_0(v_1)\Phi_0(v_2)-\Phi_0(v_2)\Phi_0(v_1))(f))\bar{h}_1\bar{h}_2
 \in {\mathcal O}_{\tilde{M}'_0}[h_1,h_2]/(h_1^2,h_2^2).
 \notag
\end{align}

Let
$\pi_{{\mathcal T}'_0}\colon 
{\mathcal T}'_0\times\Spec\mathbb{C}[h_1,h_2]/(h_1^2,h_2^2)
\longrightarrow {\mathcal T}'_0$
be the first projection.
By Lemma \ref {lem:horizontal lift h_1h_2 in irregular case},
there exists a unique horizontal lift
$\Big( {\mathcal E}_0^{\pi_{{\mathcal T}'_0}\circ \tilde{I}_{v_2}},
\nabla_0^{\pi_{{\mathcal T}'_0}\circ \tilde{I}_{v_2}},
\big\{ {\mathcal N}^{(i)}_{0,\pi_{{\mathcal T}'_0}\circ \tilde{I}_{v_2}} 
\big\} \Big)$
of 
$\big(\tilde{E}_{\tilde{M}'_0},\tilde{\nabla}_{\tilde{M}'_0},\{ N^{(i)}_{\tilde{M}'_0}\}\big)$
with respect to the composition
$\pi_{{\mathcal T}'_0}\circ \tilde{I}_{v_2}\colon
{\mathcal T}'_0\times\Spec\mathbb{C}[h_1,h_2]/(h_1^2,h_2^2)
\longrightarrow {\mathcal T}'_0$.
Then we can see that
\[
 (\mathrm{id}\times\tilde{I}_{\Phi_0(-v_1)})^*
 (\mathrm{id}\times\tilde{I}_{\Phi_0(-v_2)})^*
 (\mathrm{id}\times\tilde{I}_{\Phi_0(v_1)})^*
 \Big({\mathcal E}_0^{\pi_{{\mathcal T}'_0}\circ \tilde{I}_{v_2}},
 \nabla_0^{\pi_{{\mathcal T}'_0}\circ \tilde{I}_{v_2}},
 \big\{ {\mathcal N}^{(i)}_{0,\pi_{{\mathcal T}'_0}\circ \tilde{I}_{v_2}}
 \big\} \Big)
\]
is a horizontal lift of 
$\big(\tilde{E}_{\tilde{M}'_0},\tilde{\nabla}_{\tilde{M}'_0},\{ N^{(i)}_{\tilde{M}'_0}\}\big)$,
in the sense of Lemma \ref{lem:horizontal lift h_1h_2 in irregular case},
with respect to the composition
$\pi_{{\mathcal T}'_0} \circ 
\tilde{I}_{v_2}\circ \tilde{I}_{v_1}\circ \tilde{I}_{-v_2}\circ \tilde{I}_{-v_1}
\colon {\mathcal T}'_0\times\Spec\mathbb{C}[h_1,h_2]/(h_1^2,h_2^2)
\longrightarrow {\mathcal T}'_0$.
Let
\[
 \rho\colon {\mathcal T}'_0\times\Spec\mathbb{C}[h_1,h_2]/(h_1^2,h_2^2)
 \longrightarrow {\mathcal T}'_0\times\Spec\mathbb{C}[h]/(h^2)
\]
be the morphism whose corresponding ring homomorphism
$\rho^*\colon {\mathcal O}_{{\mathcal T}'_0}[h]/(h^2)
\longrightarrow {\mathcal O}_{{\mathcal T}'_0}[h_1,h_2]/(h_1^2,h_2^2)$
is given by
$\rho^*(f+gh)=f+g\bar{h}_1\bar{h}_2$
for $f,g\in{\mathcal O}_{{\mathcal T}'_0}$.
Then we have
\[
 \pi_{{\mathcal T}'_0} \circ 
 \tilde{I}_{v_2}\circ \tilde{I}_{v_1}\circ \tilde{I}_{-v_2}\circ \tilde{I}_{-v_1}
 =I_{[v_1,v_2]}\circ \rho,
\]
where
$I_{[v_1,v_2]}\colon {\mathcal T}'_0\times\Spec\mathbb{C}[h]/(h^2)
\longrightarrow {\mathcal T}'$
means the morphism corresponding to the commutator
vector field $[v_1,v_2]=v_1v_2-v_2v_1$.
If we denote by
$\big({\mathcal E}_0^{[v_1,v_2]},\nabla_0^{[v_1,v_2]},\{ {\mathcal N}^{(i)}_{0,[v_1,v_2]} \}\big)$
the horizontal lift of
$\big(\tilde{E}_{\tilde{M}'_0},\tilde{\nabla}_{\tilde{M}'_0},\{ N^{(i)}_{\tilde{M}'_0}\}\big)$,
in the sense of Proposition \ref {prop:horizontal lift for irregular case},
with respect to the  the commutator vector field
$[v_1,v_2]\in
H^0({\mathcal T}'_0,T_{{\mathcal T}'_0})$,
we can see that
$(\mathrm{id}\times \rho)^*\big({\mathcal E}_0^{[v_1,v_2]},
\nabla_0^{[v_1,v_2]},\{ {\mathcal N}^{(i)}_{0,[v_1,v_2]} \}\big)$
is also a horizontal lift of
$\big(\tilde{E}_{\tilde{M}'_0},\tilde{\nabla}_{\tilde{M}'_0},\{ N^{(i)}_{\tilde{M}'_0}\}\big)$,
in the sense of Lemma \ref {lem:horizontal lift h_1h_2 in irregular case},
with respect to
$I_{[v_1,v_2]}\circ\rho=
\pi_{{\mathcal T}'_0} \circ \tilde{I}_{v_2}\circ \tilde{I}_{v_1}\circ \tilde{I}_{-v_2}\circ \tilde{I}_{-v_1}
\colon {\mathcal T}'_0\times\Spec\mathbb{C}[h_1,h_2]/(h_1^2,h_2^2)
\longrightarrow {\mathcal T}'_0$.
By the uniqueness of the horizontal lift in Lemma \ref {lem:horizontal lift h_1h_2 in irregular case},
we have an isomorphism
\begin{align}
 (\mathrm{id}\times\tilde{I}_{\Phi_0(-v_1)})^*
 (\mathrm{id}\times\tilde{I}_{\Phi_0(-v_2)})^*
 (\mathrm{id}\times\tilde{I}_{\Phi_0(v_1)})^*
 \Big({\mathcal E}_0^{\pi_{{\mathcal T}'_0}\circ \tilde{I}_{v_2}},
 \nabla_0^{\pi_{{\mathcal T}'_0}\circ \tilde{I}_{v_2}},
 \{{\mathcal N}^{(i)}_{0,\pi_{{\mathcal T}'_0}\circ \tilde{I}_{v_2}}\} \Big) \quad & 
 \label {equation:horizontal lifts coincides}   \\
 \cong
 (\mathrm{id}\times \rho)^*
 \big( {\mathcal E}_0^{[v_1,v_2]},
 \nabla_0^{[v_1,v_2]},\{ {\mathcal N}^{(i)}_{0,[v_1,v_2]} \} \big). & \notag 
\end{align}
The flat family
$(\mathrm{id}\times\tilde{I}_{\Phi_0(-v_1)})^*
 (\mathrm{id}\times\tilde{I}_{\Phi_0(-v_2)})^*
 (\mathrm{id}\times\tilde{I}_{\Phi_0(v_1)})^*
 \Big({\mathcal E}_0^{\pi_{{\mathcal T}'_0}\circ \tilde{I}_{v_2}},
 \overline { \nabla_0^{\pi_{{\mathcal T}'_0}\circ \tilde{I}_{v_2}} },
 \{{\mathcal N}^{(i)}_{0,\pi_{{\mathcal T}'_0}\circ \tilde{I}_{v_2}}\}\Big)$
associated to (\ref {equation:horizontal lifts coincides})
corresponds to
the composition
\[
 \pi_{\tilde{M}'_0} \circ \tilde{I}_{\Phi_0(v_2)}\circ \tilde{I}_{\Phi_0(v_1)}\circ 
 \tilde{I}_{\Phi_0(-v_2)}\circ \tilde{I}_{\Phi_0(-v_1)}
 \colon \tilde{M}'_0\times\Spec\mathbb{C}[h_1,h_2]/(h_1^2,h_2^2) 
 \longrightarrow \tilde{M}'_0,
\]
where
$\pi_{\tilde{M}'_0}\colon \tilde{M}'_0\times\Spec\mathbb{C}[h_1,h_2]/(h_1^2,h_2^2)
\longrightarrow \tilde{M}'_0$
is the first projection.
The same associated flat family
$(\mathrm{id}\times \rho)^*
\big({\mathcal E}_0^{[v_1,v_2]},\overline{\nabla_0^{[v_1,v_2]}},
\{ {\mathcal N}^{(i)}_{0,[v_1,v_2]} \} \big)$
induced by (\ref {equation:horizontal lifts coincides})
corresponds to the composition
\[
 \pi_{\tilde{M}'_0}\circ\tilde{I}_{\Phi_0([v_1,v_2])}\circ (\mathrm{id}\times \rho)
 \colon \tilde{M}'_0\times\Spec\mathbb{C}[h_1,h_2]/(h_1^2,h_2^2)
 \longrightarrow \tilde{M}'_0.
\]
Thus we have
$\pi_{\tilde{M}'_0} \circ \tilde{I}_{\Phi_0(v_2)} \circ \tilde{I}_{\Phi_0(v_1)} \circ 
\tilde{I}_{\Phi_0(-v_2)} \circ \tilde{I}_{\Phi_0(-v_1)}
=
\pi_{\tilde{M}'_0} \circ \tilde{I}_{\Phi_0([v_1,v_2])}\circ (\mathrm{id}\times \rho)$. 
We can see by (\ref {equation:composition giving [Psi(v_1),Psi(v_2)]})
that the morphism
$\pi_{\tilde{M}'_0} \circ \tilde{I}_{\Phi_0(v_2)} \circ \tilde{I}_{\Phi_0(v_1)} \circ 
\tilde{I}_{\Phi_0(-v_2)} \circ \tilde{I}_{\Phi_0(-v_1)}$
is given by the ring homomorphism
\[
 {\mathcal O}_{\tilde{M}'_0} \ni f \mapsto
 f+(\Phi_0(v_1)\Phi_0(v_2)-\Phi_0(v_2)\Phi_0(v_1))\bar{h}_1\bar{h}_2
 \in {\mathcal O}_{\tilde{M}'_0}[h_1,h_2]/(h_1^2,h_2^2).
\]
On the other hand, the morphism
$\pi_{\tilde{M}'_0} \circ \tilde{I}_{\Phi_0([v_1,v_2])}\circ (\mathrm{id}\times \rho)$
is given by the ring homomorphism
\[
 {\mathcal O}_{\tilde{M}'_0} \ni f \mapsto
 f+\Phi_0(v_1v_2-v_2v_1)\bar{h}_1\bar{h}_2 \in {\mathcal O}_{\tilde{M}'_0}[h_1,h_2]/(h_1^2,h_2^2).
\]
Hence we have $\Phi_0(v_1)\Phi_0(v_2)-\Phi_0(v_2)\Phi_0(v_1)=\Phi_0(v_1v_2-v_2v_1)$,
which is nothing but the equation
(\ref{equation:integrability condition})
and the proposition is proved.
\end{proof}


\subsection{Global horizontal lift in the unfolded case
and the proof of Theorem \ref {thm:holomorphic-splitting}}
\label {subsection:unfolded global horizontal lift}

In this subsection, 
we give an analytic local lift of the unramified irregular singular generalized
isomonodromic deformation given in subsection
\ref {subsection:irregular singular generalized isomonodromic deformation}.
The key point is to construct a global horizontal lift
via patching local horizontal lifts given in Proposition \ref {prop:local horizontal lift}.
The consequent global horizontal lift given 
in Proposition \ref {prop:existence-horizontal-lift}
produces the proof of Theorem \ref {thm:holomorphic-splitting}.


Take a point
$x \in
M^{\balpha}_{{\mathcal C},{\mathcal D}}(\tilde{\bnu},\bmu)_{\epsilon=0}
\times_{\mathcal B}{\mathcal B}'$
which corresponds to a $(\bnu,\bmu)$-connection
$(E,\nabla,\{N^{(i)}\})$.
Recall that we are given an analytic open subset
$U_i\subset{\mathcal C}_{\mathcal B'}$ with a biholomorphic map
\begin{equation} \label {equation:local biholomorphic again}
 U_i\xrightarrow{\sim} \Delta_a\times\Delta_{\epsilon_0}\times\Delta_r^s
\end{equation}
given by (\ref {equation:local biholomorphic})
in subsection \ref {subsection:moduli setting}.
We take a loop $\tilde{\gamma}_x$ in $(U_i)_x \subset {\mathcal C}_x$
which is a boundary of a disk containing ${\mathcal D}^{(i)}_x$.
We consider the morphism
\[
 \nabla^{\dag}\colon
 {\mathcal End}(E) \ni u \mapsto \nabla\circ u -u\circ \nabla
 \in {\mathcal End}(E)\otimes\Omega^1_{{\mathcal C}_x}({\mathcal D}_x)
\]
and assume the following:
\begin{assumption} \label {assumption:irreducibility for universal family} \rm
\begin{itemize}
\item[(1)]
The monodromy of
$\nabla \colon E \longrightarrow 
E \otimes\Omega^1_{{\mathcal C}_x}({\mathcal D}_x)$
along $\tilde{\gamma}_x$
has the $r$ distinct eigenvalues and
\item[(2)]
$H^0((U_i)_x,\ker\nabla^{\dag}|_{(U_i)_x})=\mathbb{C}$.
\end{itemize}
\end{assumption}

There is an \'etale morphism
$\tilde{M} \longrightarrow M^{\balpha}_{{\mathcal C},{\mathcal D}}(\tilde{\bnu},\bmu)$
whose image contains $x$
such that
there is a universal family
$(\tilde{E},\tilde{\nabla},\{\tilde{N}^{(i)}\})$
on $({\mathcal C},{\mathcal D})_{\tilde{M}}$ over $\tilde{M}$.
We can take an analytic open neighborhood
$M^{\circ}$ of $x$ in
$M^{\balpha}_{{\mathcal C},{\mathcal D}}(\tilde{\bnu},\bmu)
\times_{\mathcal B}{\mathcal B}'$
with a factorization
$M^{\circ}\longrightarrow \tilde{M} \longrightarrow
M^{\balpha}_{{\mathcal C},{\mathcal D}}(\tilde{\bnu},\bmu)$.
We denote by
$\big(\tilde{E}^{hol}_{M^{\circ}},\tilde{\nabla}^{hol}_{M^{\circ}},
\{\tilde{N}^{(i),hol}_{M^{\circ}}\}\big)$
the pullback of
$(\tilde{E},\tilde{\nabla},\{\tilde{N}^{(i)}\})$
to (${\mathcal C},{\mathcal D})_{M^{\circ}}$.

In the following, we successively replace $M^{\circ}$ by
its shrink till Definition \ref {def:block of local horizontal lift}.
After shrinking $M^{\circ}$, we may assume that the morphism
induced by (\ref {equation:local biholomorphic again})
\[
 (U_i)_{M^{\circ}}\xrightarrow{\sim}
 \Delta_a \times M^{\circ}
\]
is an isomorphism.
We denote the image of $M^{\circ}$ under the morphism
$M^{\balpha}_{{\mathcal C},{\mathcal D}}(\tilde{\bnu},\bmu)
\times_{\mathcal B} {\mathcal B}'
\longrightarrow {\mathcal T}_{\bmu,\blambda}\times_{\mathcal B}{\mathcal B}'$
by ${\mathcal T}^{\circ}$,
which is an analytic open subset of
${\mathcal T}_{\bmu,\blambda}\times_{\mathcal B}{\mathcal B}'$.
Then the inclusion
${\mathcal T}^{\circ}\hookrightarrow
{\mathcal T}_{\bmu,\blambda}\times_{\mathcal B} {\mathcal B'}
\hookrightarrow {\mathcal T}_{\bmu,\blambda}$
corresponds to a tuple of polynomials
$\bnu=(\nu^{(i)}(T))_{1\leq i\leq n}$ given by
\[
 \nu^{(i)}(T)=\sum_{l=0}^{r-1} \sum_{j=0}^{m_i-1} c^{(i)}_{l,j} \: (z^{(i)})^j \; T^l
\]
with
$c^{(i)}_{l,j}\in H^0({\mathcal T}^{\circ},{\mathcal O}^{hol}_{\mathcal T^{\circ}})$
satisfying (a) and (b) of the definition of ${\mathcal T}_{\bmu,\blambda}$.

We apply the process in subsection \ref {subsection:local horizontal lift}
to the restricted relative connection
$\big(\tilde{E}^{hol}_{M^{\circ}},\tilde{\nabla}^{hol}_{M^{\circ}},
\{\tilde{N}^{(i),hol}_{M^{\circ}}\}\big)
\big|_{(U_i)_{M^{\circ}}}$.
Using Proposition \ref {prop:normalization of local connection via extension to P^1},
there is an isomorphism
$\theta^{(i)}\colon
\tilde{E}^{hol}_{M^{\circ}} \big| _{(U_i)_{M^{\circ}}}
\xrightarrow{\sim}
({\mathcal O}^{hol}_{(U_i)_{M^{\circ}}})^{\oplus r}$
after shrinking $M^{\circ}$ such that
the connection
$(\theta^{(i)}\otimes\mathrm{id})\circ
\tilde{\nabla}^{hol}_{M^{\circ}} \big|_{(U_i)_{M^{\circ}}}
\circ(\theta^{(i)})^{-1}$
is canonically extended to a global relative connection
\[
 \nabla^{(i),\mathbb{P}^1} _{M^{\circ}} \colon
 ({\mathcal O}^{hol}_{\mathbb{P}^1\times M^{\circ}})^{\oplus r}
 \longrightarrow
 ({\mathcal O}^{hol}_{\mathbb{P}^1\times M^{\circ}})^{\oplus r}
 \otimes
 \Omega^1_{\mathbb{P}^1\times M^{\circ}/M^{\circ}}
 ({\mathcal D}_{M^{\circ}} \cup ( \{ \infty \} \times M^{\circ}) )^{hol},
\]
where we are assuming the identification
$(U_i)_{M^{\circ}}=\Delta_a \times M^{\circ} 
\hookrightarrow \mathbb{P}^1\times M^{\circ}$.
Let
\[
 A^{(i)}(z^{(i)},\epsilon) \frac {dz^{(i)} } {(z^{(i)})^{m_i} -\epsilon^{m_i} }
 =
 \sum_{j=0}^{m_i-1} A^{(i)}_j(\epsilon)  (z^{(i)})^j  \frac {dz^{(i)} } {(z^{(i)})^{m_i} -\epsilon^{m_i} }
\]
be the connection matrix of $\nabla^{(i),\mathbb{P}^1} _{M^{\circ}}$.
By Assumption \ref  {assumption:irreducibility for universal family},
we can see, after shrinking $M^{\circ}$, that
\[
 \bigcap_{j=0}^{m_i-1} \ker \left( \mathrm{ad}(A^{(i)}_j(\epsilon)) \right)
 = {\mathcal O}_{M^{\circ}}^{hol}
\]
in the same way as (\ref  {equation:intersection of adjoint kernel})
in subsection \ref {subsection:local horizontal lift}.
As in the argument in subsection \ref {subsection:local horizontal lift}
producing  (\ref {equation:definition of Xi}),
we can take matrices
$\Xi^{(i)}_{l,j}(z^{(i)})$ of polynomials in $z^{(i)}$
of degree less than $m_i$
satisfying
\begin{equation} \label {equation:giving Xi}
 A^{(i)}(z^{(i)},\epsilon)=\sum_{l=0}^{r-1} \sum_{j=0}^{m_i-1}
 c^{(i)}_{l,j} \: \Xi^{(i)}_{l,j}(z^{(i)})
\end{equation}
and
\[
 (z^{(i)})^j \: \theta^{(i)}\circ \big( \tilde{N}^{(i),hol} \big)^l \circ (\theta^{(i)})^{-1}
 \big|_{{\mathcal D}^{(i)}_{M^{\circ}}}
 =
 \Xi^{(i)}_{l,j}(z^{(i)})  \big|_{{\mathcal D}^{(i)}_{M^{\circ}}}.
\]
Indeed there is a polynomial
\begin{equation} \label {equation:polynomial psi}
 \psi^{(i)}(T)
 =a^{(i)}_{r-1}(z^{(i)})T^{r-1}+\cdots+a^{(i)}_{1}(z^{(i)})T+a^{(i)}_{0}(z^{(i)})
 \in {\mathcal O}_{\mathcal V}^{hol}[z^{(i)}][T]
\end{equation}
in $T$ of degree less than $r$ with each
$a^{(i)}_{k}(z^{(i)})\in{\mathcal O}_{\mathcal V}[z^{(i)}]$
a polynomial in $z^{(i)}$ of degree less than $m_i$
and $\Xi^{(i)}_{l,j}(z^{(i)})$ is obtained from
$(z^{(i)})^j \psi^{(i)}(A(z^{(i)},\epsilon))^l$
by substituting $\epsilon^{m_i}$ in $(z^{(i)})^{m_i}$.

By Lemma \ref {lemma:existence of adjusting data},
we can take an adjusting data
$\big( R^{(i),(l)}_{j,l'} \big)$ 
for the connection 
$\nabla^{(i),\mathbb{P}^1} _{M^{\circ}}$
after shrinking $M^{\circ}$.
If we put
\begin{align} 
 \tilde{\Xi}^{(i)}_{l,j}(z^{(i)})
 &:= 
 \Xi^{(i)}_{l,j}(z^{(i)}) 
 -\sum_{q=0}^{m_i-1} \sum_{0\leq l'\leq m_i-1-q } 
 \big[ A^{(i)}_{q}(\epsilon)  ,  R^{(i),(l)}_{j,l'} \big] \: (z^{(i)})^{q+l'}
  \label {equation:adjusting data}  \\
 & \hspace{100pt}
 - \sum_{q=0}^{m_i-1} \sum_{m_i-q\leq l'\leq m_i-1} 
 \big[ A^{(i)}_{q}(\epsilon)  , R^{(i),(l)}_{j,l'} \big] \: 
 \epsilon^{m_i} (z^{(i)})^{q+l'-m_i}, 
 \notag  
\end{align}
then we have
\[
 \res_{z^{(i)}=\infty} \left( \tilde{\Xi}^{(i)}_{l,j}(z^{(i)})
 \frac { dz^{(i)} } { (z^{(i)})^{m_i}-\epsilon^{m_i} } \right) = 0
\]
and
\[
 \tilde{\Xi}^{(i)}_{l,j}(z^{(i)})   \big|_{{\mathcal D}^{(i)}_{M^{\circ}}}
 =
 \Xi^{(i)}_{l,j}(z^{(i)})   \big|_{{\mathcal D}^{(i)}_{M^{\circ}}}
 -\Big[ A^{(i)}(z^{(i)}), \sum_{l'=0}^{m_i-1}R^{(i),(l)}_{j,l'} (z^{(i)})^{l'}  \Big]
  \Big|_{{\mathcal D}^{(i)}_{M^{\circ}}}.
\]

We consider
the relative connection
\begin{equation} 
 \nabla_{\mathbb{P}^1\times M^{\circ}[\bar{h}],v^{(i)}_{l,j}} \colon
 ({\mathcal O}_{\mathbb{P}^1\times M^{\circ}[\bar{h}]}^{hol})^{\oplus r}
 \longrightarrow
 ({\mathcal O}_{\mathbb{P}^1\times M^{\circ}[\bar{h}]}^{hol})^{\oplus r}
 \otimes\Omega^1_{\mathbb{P}^1\times M^{\circ}[\bar{h}]/M^{\circ}[\bar{h}]}
 \left({\mathcal D}^{(i)}_{M^{\circ}[\bar{h}]} \cup
 (\infty\times M^{\circ}[\bar{h}]) \right)^{hol}
\end{equation}
determined by the connection matrix
\[
  \Big(A(z^{(i)},\epsilon)+ \bar{h} \tilde{\Xi}^{(i)}_{l,j}(z^{(i)}) \Big)
  \frac{dz^{(i)}} {(z^{(i)})^{m_i}-\epsilon^{m_i}},
\]
where we write $M^{\circ}[\bar{h}]:=M^{\circ}\times\Spec\mathbb{C}[h]/(h^2)$.
By Proposition \ref {prop:local horizontal lift},
we can take a horizontal lift
\begin{equation} \label {equation:block of local horizontal lift} 
 \nabla_{\mathbb{P}^1\times M^{\circ}[\bar{h}],v^{(i)}_{l,j}}^{flat}  \colon
 ({\mathcal O}_{\mathbb{P}^1\times M^{\circ}[\bar{h}]}^{hol})^{\oplus r}
  \longrightarrow
 ({\mathcal O}_{\mathbb{P}^1\times M^{\circ}[\bar{h}]}^{hol})^{\oplus r}
 \otimes
 (\iota_{M^{\circ}[\bar{h}]})_*
 \Omega^1_{(\mathbb{P}^1\times M^{\circ}
 \setminus\Gamma_{M^{\circ}})[\bar{h}] \big/M^{\circ} }
 \left( \infty\times M^{\circ}[\bar{h}] \right) ^{hol}
\end{equation}
of $\nabla_{\mathbb{P}^1\times M^{\circ}[\bar{h}],v^{(i)}_{l,j}}$
given by a connection matrix
\[
 \big(A(z^{(i)},\epsilon)+\bar{h} \tilde{\Xi}^{(i)}_{l,j}(z^{(i)}) \big)
 \frac{dz^{(i)}}{(z^{(i)})^{m_i}-\epsilon^{m_i}}
 + B^{(i)}_{l,j}(z^{(i)}) d\bar{h},
\]
where
$\iota_{M^{\circ}[\bar{h}]}
\colon
(\mathbb{P}^1\times M^{\circ}\setminus\Gamma_{M^{\circ}})[\bar{h}]
\hookrightarrow
\mathbb{P}^1\times M^{\circ}[\bar{h}]$
is the canonical inclusion.
By the construction,
the restriction of
$\nabla_{\mathbb{P}^1\times M^{\circ}[\bar{h}],v^{(i)}_{l,j}}^{flat}$
to
$\mathbb{P}^1\times M^{\circ}[\bar{h}]
\times_{\Delta{\epsilon_0}}\Spec\mathbb{C}[\epsilon]/(\epsilon^{m_i})$
coincides with the horizontal lift
giving the unramified irregular singular generalized isomonodromic deformation.

\begin{definition} \label {def:block of local horizontal lift}
We call the collection
$\Big(\nabla_{\mathbb{P}^1\times M^{\circ}[\bar{h}],v^{(i)}_{l,j}}^{flat}\Big)
^{1\leq i\leq n}_{0\leq l\leq r-1, 0\leq j\leq m_i-1}$
of integrable connections determined by 
$\big( \tilde{\Xi}^{(i)}_{l,j}(z^{(i)}) \big)$, 
$\big( B^{(i)}_{l,j}(z^{(i)}) \big) $
in (\ref {equation:block of local horizontal lift})
a block of local horizontal lifts.
\end{definition}

Take an analytic open subset
${\mathcal T'}\subset {\mathcal T}^{\circ}\subset
{\mathcal T}_{\bmu,\blambda}\times_{\mathcal B}{\mathcal B}'$
and a  $\Delta_{\epsilon_0}$-relative holomorphic vector field
$v\in H^0({\mathcal T'},T^{hol}_{{\mathcal T'}/\Delta_{\epsilon_0}})$
on ${\mathcal T'}$.
Then $v$ corresponds to
an analytic morphism
\[
 I_v\colon {\mathcal T'}\times\Spec\mathbb{C}[h]/(h^2)
 \longrightarrow {\mathcal T'}\hookrightarrow
 {\mathcal T}_{\bmu,\blambda}\times_{\mathcal B} {\mathcal B'}
\]
over $\Delta_{\epsilon_0}$
satisfying
$I_v|_{{\mathcal T'}\times\Spec\mathbb{C}[h]/(h)}=\mathrm{id}_{\mathcal T'}$.
We put ${\mathcal T'}[v]:={\mathcal T'}\times\Spec\mathbb{C}[h]/(h^2)$
which is regarded as an analytic space over ${\mathcal T'}$ via $I_v$
and consider the fiber product
\[
 \begin{CD}
  {\mathcal C}_{{\mathcal T'}[v]}:=
  {\mathcal C}_{\mathcal T'}\times_{\mathcal T'}
  ({\mathcal T'}\times\Spec\mathbb{C}[h]/(h^2))
  @>>> {\mathcal C}_{\mathcal T'} := {\mathcal C}\times_{\mathcal P}{\mathcal T'} \\
  @VVV   @VVV \\
  {\mathcal T'}\times\Spec\mathbb{C}[h]/(h^2) @>I_v>> {\mathcal T'}
 \end{CD}
\]
of ${\mathcal C}_{\mathcal T'}\longrightarrow {\mathcal T'}$
and
${\mathcal T'}\times\Spec\mathbb{C}[h]/(h^2) \xrightarrow{I_v}
{\mathcal T'}$.
The morphism $I_v$ corresponds to an analytic morphism
\[
 I_{v_{\mathcal B'}}\colon
 {\mathcal T'}\times\Spec\mathbb{C}[h]/(h^2)
 \longrightarrow {\mathcal B'}
\]
over $\Delta_{\epsilon_0}$
and a tuple of polynomials
\begin{equation} \label{equation:deformation of local exponents}
 \bnu_{hor}+\bar{h}\bnu_v=(\nu^{(i)}_{hor}(T)+\bar{h}\nu^{(i)}_v(T))
\end{equation} 
where
$\nu^{(i)}_{hor}(T)\in 
{\mathcal O}^{hol}_{{\mathcal D}^{(i)}_{\mathcal T'}} [h]/(h^2)\: [T]$
and 
$\nu^{(i)}_v(T) \in  
{\mathcal O}^{hol}_{{\mathcal D}^{(i)}_{\mathcal T'}} [T]$
are given by
\begin{align*}
 \nu^{(i)}_{hor}(T)
 &=
 \sum_{l=0}^{r-1} \sum_{j=0}^{m_i-1}
 (I_v^*c^{(i)}_{l,j}-\bar{h} v(c^{(i)}_{l,j}))
 (\tilde{z}^{(i)})^j T^l  \\
 \nu^{(i)}_v(T)
 &=
 \sum_{l=0}^{r-1} \sum_{ j=0 } ^{m_i-1 }
 v(c^{(i)}_{l,j})
 (\tilde{z}^{(i)})^j T^l. 
\end{align*}
Here $\tilde{z}^{(i)}_j$ is the pull-back of $z^{(i)}_j$
under the morphism
${\mathcal C}_{{\mathcal T'}[v]}\xrightarrow{\mathrm{id}\times I_v}
{\mathcal C}_{\mathcal T'}\longrightarrow {\mathcal C}_{\mathcal B'}$
and 
$\nu^{(i)}_{hor}(T)+\bar{h}\nu^{(i)}_v(T)\in
 {\mathcal O}^{hol}_{{\mathcal D}^{(i)}_{\mathcal T'}} [h]/(h^2) \: [T]$
should satisfy (a) in the definition of
${\mathcal T}_{\bmu,\blambda}$
in subsection \ref {subsection:moduli setting}.
For an analytic open subset
$U\subset {\mathcal C}_{\mathcal T'}$,
we denote by $U[v]$ the open subspace of
${\mathcal C}_{{\mathcal T}'[v]}$ whose underlying set of points
is $U$.

We consider 
the sheaf of ${\mathcal T'}$-relative differential forms
$\big(
\Omega^1_{ \left. \left({\mathcal C}_{\mathcal T'}
\setminus\Gamma_{\mathcal T'}\right)[v]\right/{\mathcal T'}}
\big)^{hol}$
with respect to the composite of the trivial projections
\[
 {\mathcal C}_{{\mathcal T'}[v]}
 ={\mathcal C}\times_{\mathcal P}{\mathcal T'}\times\Spec\mathbb{C}[h]/(h^2)
 \longrightarrow {\mathcal T'}\times\Spec\mathbb{C}[h]/(h^2)
 \longrightarrow{\mathcal T'}
 \]
which is different from the structure of
${\mathcal C}_{{\mathcal T'}[v]}$ over ${\mathcal T'}$
coming from the fiber product structure.
Note that
$\big( \Omega^1_{ \left.\left({\mathcal C}_{\mathcal T'}\setminus
\Gamma_{\mathcal T'}\right)[v] \right/ {\mathcal T'}}\big)^{hol}$
is locally generated by $d\tilde{z}$ and $d\bar{h}$,
where $\tilde{z}$ is the pullback of a uniformizing parameter $z$ of 
${\mathcal C}_{\mathcal T'}$
via the first projection
${\mathcal C}_{\mathcal T'}\times_{\mathcal T'} {\mathcal T'}[v]
\longrightarrow {\mathcal C}_{\mathcal T'}$.
Let 
\[
 \iota_{({\mathcal C}_{\mathcal T'}\setminus\Gamma_{\mathcal T'})[v]}\colon
 ({\mathcal C}_{\mathcal T'}\setminus\Gamma_{\mathcal T'})[v]\hookrightarrow
 {\mathcal C}_{{\mathcal T'}[v]}
\]
be the inclusion morphism.
We denote by
$\iota_{{\mathcal C}_{\mathcal T'}\setminus\Gamma_{\mathcal T'}}
\colon
{\mathcal C}_{\mathcal T'}\setminus \Gamma_{\mathcal T'}
\hookrightarrow {\mathcal C}_{\mathcal T'}$
its restriction to the underlying sets of points.

\begin{definition} 
We define the ${\mathcal O}_{{\mathcal C}_{{\mathcal T}'[v]}}^{hol}$-subsheaf
$\Omega^1_{{\mathcal C}_{{\mathcal T}',v}}$
of
$(\iota_{({\mathcal C}_{\mathcal T'}\setminus\Gamma_{\mathcal T'})[v]})_*
\big(\Omega^1_{ \left.\left({\mathcal C}_{\mathcal T'}\setminus
\Gamma_{\mathcal T'}\right)[v] \right/ {\mathcal T'}}\big)^{hol}$
by the condition that
$\Omega^1_{{\mathcal C}_{{\mathcal T}',v}}$
is locally generated by
$\dfrac{d\tilde{z}^{(i)}} {(\tilde{z}^{(i)})^{m_i}-\epsilon^{m_i}}$ and
$\big(\iota_{{\mathcal C}_{\mathcal T'}\setminus\Gamma_{\mathcal T'}}\big)_*
\big( {\mathcal O}_{{\mathcal C}_{\mathcal T'}
\setminus \Gamma_{\mathcal T'}}^{hol} \big)
d\bar{h}$
around points in $\Gamma^{(i)}_{{\mathcal T}'[v]}$
and locally generated by
$d\tilde{z}$ and $d\bar{h}$ around points in
$({\mathcal C}_{\mathcal T'}\setminus\Gamma_{\mathcal T'})[v]$
where $z$ is a local holomorphic coordinate of
${\mathcal C}_{{\mathcal T}'}\setminus\Gamma_{{\mathcal T}'}$.
We denote by $\Omega^2_{{\mathcal C}_{\mathcal T'},v}$
the canonical image of
$\Omega^1_{{\mathcal C}_{{\mathcal T}',v}} \wedge
\Omega^1_{{\mathcal C}_{{\mathcal T}',v}}$
in
$(\iota_{({\mathcal C}_{\mathcal T'}\setminus\Gamma_{\mathcal T'})[v]})_*
\big(\Omega^2_{ \left.\left({\mathcal C}_{\mathcal T'}\setminus
\Gamma_{\mathcal T'}\right)[v] \right/ {\mathcal T'}}\big)^{hol}$.
\end{definition}

We put
$M':=M^{\circ}\times_{{\mathcal T}^{\circ}}{\mathcal T}'$
and consider the analytic space
$M'[v]:=M'\times\Spec\mathbb{C}[h]/(h^2)$
with the structure morphisms 
\[
 M'[v]:= M' \times\Spec\mathbb{C}[h]/(h^2)  \longrightarrow
 {\mathcal T'}\times\Spec\mathbb{C}[h]/(h^2) \xrightarrow{I_v} {\mathcal T'}.
\]
We denote the base change of ${\mathcal C}\times_{\mathcal P}{\mathcal T'}$,
${\mathcal D}\times_{\mathcal P}{\mathcal T'}$ and
${\mathcal D}^{(i)}\times_{\mathcal P}{\mathcal T'}$
via $M'[v]\longrightarrow {\mathcal T'}$ by
${\mathcal C}_{M'[v]}$,
${\mathcal D}_{M'[v]}$ and
${\mathcal D}^{(i)}_{M'[v]}$,
respectively.
We denote the pullback of a local holomorphic coordinate
$z$ of ${\mathcal C}_{\mathcal T'}$ under the morphism
${\mathcal C}_{M'[v]}\longrightarrow {\mathcal C}_{\mathcal T'}$
by $\tilde{z}$.

Let us consider the analytic open subspace
$(U_i)_{M'[v]}\subset {\mathcal C}_{M'[v]}
={\mathcal C}_{\mathcal T'}\times_{\mathcal T'}(M'\times\Spec\mathbb{C}[h]/(h^2))$.
Using (\ref {equation:local biholomorphic})
in subsection \ref{subsection:moduli setting},
we have an analytic isomorphism
\[
 (U_i)_{M'[v]} \cong \Delta_a \times M'[v]
 =\Delta_a \times M'\times \Spec\mathbb{C}[h]/(h^2)
\]
whose structure morphism over ${\mathcal T}_{\bmu,\blambda}$ is given by
\[
 \Delta_a \times M'\times\Spec\mathbb{C}[h]/(h^2)
 \longrightarrow M'\times\Spec\mathbb{C}[h]/(h^2)
 \longrightarrow {\mathcal T}'\times\Spec\mathbb{C}[h]/(h^2)
 \xrightarrow{I_v} {\mathcal T}'\hookrightarrow {\mathcal T}_{\bmu,\blambda}.
\]


We remark that the elements in
$\Omega^1_{{\mathcal C}_{\mathcal T'},v}\otimes_{O_{{\mathcal T}'[v]}}
 {\mathcal O}_{M'[v]}
 \subset
 (\iota_{({\mathcal C}_{M'}\setminus\Gamma_{M'})[v]})_*
 \big(\Omega^1_{\left.\left({\mathcal C}_{M'}
 \setminus\Gamma_{M'}\right)[v]\right/M'}\big)^{hol}$
are relative differentials with respect to the morphism
\[
 {\mathcal C}_{M'[v]}=
 {\mathcal C}_{\mathcal T'}\times_{\mathcal T'}
 (M'\times\Spec\mathbb{C}[h]/(h^2))
 \longrightarrow M'\times\Spec\mathbb{C}[h]/(h^2)
 \longrightarrow M',
\]
where the arrows are the trivial projections.
The restriction of the above morphism to $(U_i)_{M'[v]}$
is just the trivial projection
$(U_i)_{M'[v]}\cong \Delta_a \times M' \times \Spec\mathbb{C}[h]/(h^2)
\longrightarrow M'$.
The corresponding inclusion
${\mathcal O}_{M'}^{hol} \hookrightarrow
{\mathcal O}_{(U_i)_{M'[v]}}^{hol}$
induces the ring homomorphism
\[
 {\mathcal O}_{M'}^{hol}[\tilde{z}^{(i)}]
 \longrightarrow
 {\mathcal O}^{hol}_{(U_i)_{M'[v]}}
\]
from the polynomial ring.
We denote the image of a matrix $A(z^{(i)})$
of polynomials with coefficients in ${\mathcal O}_{M'}^{hol}$
under this ring homomorphism
by $A(\tilde{z}^{(i)})$. 

We denote the restriction of
$\big(\tilde{E}^{hol}_{M^{\circ}},\tilde{\nabla}^{hol}_{M^{\circ}},
\{\tilde{N}^{(i),hol}_{M^{\circ}}\}\big)$
to ${\mathcal C}_{M'}$ by
$\big(\tilde{E}^{hol}_{M'},\tilde{\nabla}^{hol}_{M'},
\{\tilde{N}^{(i),hol}_{M'}\}\big)$.

\begin{definition} \label {def:global-horizontal-lift}
\rm
We say that a tuple
$\big({\mathcal E}^v,\nabla^v,\{{\mathcal N}^{(i)}_v\}\big)$
is a horizontal lift of
$\big(\tilde{E}^{hol}_{M'},\tilde{\nabla}^{hol}_{M'},
\{\tilde{N}^{(i),hol}_{M'}\}\big)$
with respect to
$v\in H^0({\mathcal T'},T^{hol}_{{\mathcal T'}/\Delta_{\epsilon_0}})$
and with respect to blocks of local horizontal lifts
$\big(\nabla_{\mathbb{P}^1\times M'[\bar{h}],v^{(i)}_{l,j}}^{flat}\big)$
if
\begin{itemize}
\item[(1)] ${\mathcal E}^v$ is a rank $r$ holomorphic vector bundle on
${\mathcal C}_{M'[v]}$,
\item[(2)] $\nabla^v\colon {\mathcal E}^v
\longrightarrow
{\mathcal E}^v\otimes_{{\mathcal O}_{{\mathcal C}_{{\mathcal T}'[v]}}^{hol}}
\Omega^1_{{\mathcal C}_{\mathcal T'},v}$
is a morphism of sheaves satisfying $\nabla^v(fa)=a\otimes df+f\nabla^v(a)$
for $f\in {\mathcal O}_{{\mathcal C}_{M'[v]}}^{hol}$ and $a\in{\mathcal E}^v$,
\item[(3)]
$\nabla^v$ is integrable in the sense that
the restriction of $\nabla^v$ to any open set
$U[v]\subset ({\mathcal C}_{M'}\setminus\Gamma_{M'})[v]$
which is expressed by
\[
 {\mathcal E}^v |_{U[v]}\cong \left({\mathcal O}^{hol}_{U[v]}\right)^{\oplus r}
 \ni \begin{pmatrix} f_1 \\ \vdots \\ f_r \end{pmatrix}
 \mapsto
 \begin{pmatrix} df_1 \\ \vdots \\ df_r \end{pmatrix}
 +
 \left( A \, d\tilde{z}
 +B \, d\overline{h}\right)
 \begin{pmatrix} f_1 \\ \vdots \\ f_r \end{pmatrix}
 \in \left( {\mathcal O}^{hol}_{U[v]} \right)^{\oplus r}
 \otimes_{{\mathcal O}_{{\mathcal C}_{{\mathcal T}'[v]}}}
 \Omega^1_{{\mathcal C}_{\mathcal T'},v}
\]
satisfies
\[
 d \left( A \, d\tilde{z}
 +B \, d\overline{h}\right) 
 +\left( A\, d\tilde{z} +B \, d\overline{h}\right)
 \wedge \left( A\, d\tilde{z} + B \, d\overline{h}\right)=0
 \]
in
${\mathcal End}\big( ({\mathcal O}^{hol}_{U[v]})^{\oplus r} \big)
\otimes_{{\mathcal O}_{{\mathcal C}_{{\mathcal T}'[v]}}}
\Omega^2_{{\mathcal C}_{\mathcal T'},v}$,
\item[(4)]
${\mathcal N}^{(i)}_v\colon {\mathcal E}^v |_{{\mathcal D}^{(i)}_{M'[v]}}
\longrightarrow  {\mathcal E}^v |_{{\mathcal D}^{(i)}_{M'[v]}}$
is an endomorphism satisfying
$\varphi^{(i)}_{\bmu}({\mathcal N}^{(i)}_v)=0$,
\item[(5)]
the relative connection
$\overline{\nabla^v}$  defined by the composition
\[
 \overline{\nabla^v}\colon
 {\mathcal E}^v \xrightarrow{\nabla^v}
 {\mathcal E}^v\otimes_{{\mathcal O}_{{\mathcal C}_{{\mathcal T}'[v]}}^{hol}}
 \Omega_{{\mathcal C}_{\mathcal T'},v}^1
 \longrightarrow
 {\mathcal E}^v\otimes\Omega^1_{{\mathcal C}_{M'[v]}/M'[v]}
 ({\mathcal D}_{M'[v]})^{hol}
\]
satisfies
\[
 \displaystyle (\nu^{(i)}_{hor}+\bar{h}\nu^{(i)}_v)({\mathcal N}^{(i)}_v)
 \;
 \frac{d\tilde{z}^{(i)}}{(\tilde{z}^{(i)})^{m_i}-\epsilon^{m_i}}
 =\overline{\nabla^v} \big|_{{\mathcal D}^{(i)}_{M'[v]}}
\]
for any $i$,
\item[(6)]
$\big({\mathcal E}^v,\overline{\nabla^v},\{ {\mathcal N}^{(i)}_v \}\big)
\otimes {\mathcal O}_{M'[v]}^{hol}/
\overline{h}{\mathcal O}_{M'[v]}^{hol}
\cong
\big(\tilde{E}^{hol}_{M'},\tilde{\nabla}^{hol}_{M'},\{ \tilde{N}^{(i),hol}_{M'}\}\big)$,
\item[(7)] there is an isomorphism
$\theta^{(i),v}\colon {\mathcal E}^v\big|_{(U_i)_{M'[v]}}\xrightarrow{\sim}
({\mathcal O}^{hol}_{(U_i)_{M'[v]}})^{\oplus r}$
which is a lift of the restriction
$\theta^{(i)}|_{(U_i)_{M'}}$ of the given isomorphism
$\theta^{(i)}\colon \tilde{E}|_{(U_i)_{M^{\circ}}}\xrightarrow{\sim}
({\mathcal O}^{hol}_{(U_i)_{M^{\circ}}})^{\oplus r}$
such that the consequent connection matrix of
$(\theta^{(i),v}\otimes\mathrm{id})\circ \nabla^v
\circ ( \theta^{(i),v} )^{-1}$
is given by
\[
 \Big( A^{(i)}(z^{(i)},\epsilon) + \bar{h} \sum_{l=0}^{r-1}\sum_{j=0}^{m_i-1}
  v(c^{(i)}_{l,j}) \: \tilde{\Xi}^{(i)}_{l,j}(z^{(i)}) \Big)
 \frac { dz^{(i)} } { (z^{(i)})^{m_i}-\epsilon^{m_i} }
 +\sum_{l=0}^{r-1}\sum_{j=0}^{m_i-2} v(c^{(i)}_{l,j}) B^{(i)}_{l,j}(z^{(i)}) \: d\bar{h}.
\]
\end{itemize}
\end{definition}


The following proposition on the existence of a global horizontal lift is a key process
in the construction of an unfolded generalized isomonodromic deformation.

\begin{proposition} \label {prop:existence-horizontal-lift}
For any $\Delta_{\epsilon_0}$-relative holomorphic vector field
$v\in H^0({\mathcal T'},
T^{hol}_{{\mathcal T}^{\circ}/\Delta_{\epsilon_0}})$,
there exists a unique horizontal lift
$\big({\mathcal E}^v,\nabla^v,\{N^{(i)}_v\}\big)$
of
$\big(\tilde{E}^{hol}_{M'},\tilde{\nabla}^{hol}_{M'},\{ \tilde{N}^{(i),hol}_{M'}\}\big)$
with respect to $v$
and with respect to the blocks of local horizontal lifts
$\Big(\nabla_{\mathbb{P}^1\times M'[\bar{h}],v^{(i)}_{l,j}}^{flat}\Big)$.
\end{proposition}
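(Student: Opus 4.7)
The plan is to construct $({\mathcal E}^v, \nabla^v, \{{\mathcal N}^{(i)}_v\})$ by choosing ${\mathcal E}^v$ to be the pullback of $\tilde{E}^{hol}_{M'}$ under the trivial projection ${\mathcal C}_{M'[v]} \to {\mathcal C}_{M'}$ and by defining $\nabla^v$ locally in two regimes, which are then patched together. On each analytic open $(U_i)_{M'[v]} \cong \Delta_a \times M'[v]$, condition (7) and the blocks of local horizontal lifts $\nabla^{flat}_{\mathbb{P}^1 \times M^{\circ}[\bar{h}], v^{(i)}_{l,j}}$ rigidly prescribe the connection matrix once a lift $\theta^{(i),v}$ of the trivialization $\theta^{(i)}$ is chosen, the latter being automatic after shrinking $M^\circ$. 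On the complement ${\mathcal C}_{M'} \setminus \bigsqcup_i (U_i)_{M'}$, which avoids both $\Gamma_{M'}$ and ${\mathcal D}_{M'}$, I would take an affine cover $\{V_\alpha\}$ with holomorphic trivializations of $\tilde{E}^{hol}_{M'}$ and define the lift on each $V_\alpha[v]$ by its relative part (determined by $\overline{\nabla^v}$) together with a trivial $B\, d\bar{h}$-component; since $A_\alpha$ is $\bar{h}$-independent there, integrability is automatic. The endomorphism ${\mathcal N}^{(i)}_v$ is then extracted from $\overline{\nabla^v}|_{{\mathcal D}^{(i)}_{M'[v]}}$ via the deformed polynomial $\nu^{(i)}_{hor} + \bar{h}\nu^{(i)}_v$ exactly as in the $(\bnu, \bmu)$-connection framework of section \ref{section:algebraic moduli construction}.

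The local integrability of $\nabla^v$ on $(U_i)_{M'[v]}$ reduces, modulo $\bar{h}^2 = 0$, to the matrix identity
\begin{equation*}
 \partial_{\tilde{z}^{(i)}} \Big( \sum_{l, j \le m_i - 2} v(c^{(i)}_{l,j}) B^{(i)}_{l,j} \Big)
 + \frac{1}{(\tilde{z}^{(i)})^{m_i} - \epsilon^{m_i}}
 \Big[ A^{(i)}, \sum_{l, j \le m_i - 2} v(c^{(i)}_{l,j}) B^{(i)}_{l,j} \Big]
 = \frac{\sum_{l, j \le m_i - 1} v(c^{(i)}_{l,j}) \tilde{\Xi}^{(i)}_{l,j}}{(\tilde{z}^{(i)})^{m_i} - \epsilon^{m_i}},
\end{equation*}
which follows by summing the corresponding identities for each individual block asserted by Proposition \ref{prop:local horizontal lift}, together with the crucial vanishing $v(c^{(i)}_{l, m_i - 1}) = 0$ for all $l$, valid because $\sum_{l} c^{(i)}_{l, m_i - 1} (\mu^{(i)}_k)^l = \lambda^{(i)}_k$ is by construction constant on ${\mathcal T}_{\bmu, \blambda}$ and hence killed by every vertical vector field $v$.

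For the global patching, the discrepancies between the $(U_i)_{M'[v]}$-lift and the $V_\alpha[v]$-lifts on overlaps define a \v{C}ech $1$-cocycle with values in the first-order deformation complex $\tilde{\mathcal F}^\bullet$ of subsection \ref{subsection:symplectic form}. I would argue that this cocycle is a coboundary via a moduli-theoretic interpretation: the tangent map $d\pi_{{\mathcal T}^\circ}$ of the smooth morphism $M^\circ \to {\mathcal T}^\circ$ is surjective by Proposition \ref{prop:smoothness of moduli}, so the vector field $v$ lifts (after shrinking $M^\circ$) to a tangent vector on $M^\circ$, which by Corollary \ref{cor:tangent-space} is precisely a class in $\mathbf{H}^1({\mathcal F}^\bullet)$ representing a first-order deformation of $(\tilde{E}^{hol}_{M'}, \tilde{\nabla}^{hol}_{M'}, \{\tilde{N}^{(i),hol}_{M'}\})$. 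The rigidity of condition (7) pins down exactly one such deformation as compatible with the prescribed blocks of local horizontal lifts, which gives both existence and uniqueness; the uniqueness can alternatively be deduced from the observation that the difference of two horizontal lifts is a $\tilde{\nabla}^{hol}_{M'}$-parallel endomorphism of $\tilde{E}^{hol}_{M'}$ vanishing on each $(U_i)_{M'[v]}$, which must be zero by $\balpha$-stability.

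The hardest step will be the compatibility verification on overlaps $V_\alpha \cap (U_i)_{M'}$ that cross a component of $\Gamma^{(i)}$: the matrix $B^{(i)}_{l,j}$ constructed in \eqref{equation:definition of B(z) } is only defined as a multivalued function on $(\mathbb{P}^1 \times M^\circ) \setminus \Gamma_{M^\circ}$, and its monodromy around each component of $\Gamma^{(i)}$, reflecting the monodromy of the fundamental solution $Y_\infty$ around $\infty$, must match the $\bar{h}$-independent patching data between the $V_\alpha$-frames. The matching is ultimately ensured by the shared-monodromy property of $\tilde{Y}_\infty$ and $Y_\infty$ that was used to define $B^{(i)}_{l,j}$ in the proof of Proposition \ref{prop:local horizontal lift}, but carrying out this check carefully, and translating it into the vanishing of the appropriate \v{C}ech class, will be the substantive technical content of the proof.
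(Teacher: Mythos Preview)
Your local constructions are essentially correct and match the paper: on $(U_i)_{M'[v]}$ the connection is prescribed by condition~(7) and the blocks, and away from the divisor one takes a lift $\tilde{A}'(\tilde{z})$ of the connection matrix with $\partial\tilde{A}'/\partial\bar{h}=0$ so that the resulting connection (with no $d\bar{h}$ term) is trivially flat. The integrability check on $(U_i)_{M'[v]}$ via linearity in $\bar{h}$ of the block identities from Proposition~\ref{prop:local horizontal lift} is also right, and your observation that $v(c^{(i)}_{l,m_i-1})=0$ is correct and useful.

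The gap is in the patching. Your \v{C}ech-cohomology route through the deformation complex $\tilde{\mathcal F}^\bullet$ is the wrong tool: that complex controls first-order deformations of the \emph{relative} $(\bnu,\bmu)$-connection, whereas what you must glue are the full integrable connections including their $B\,d\bar{h}$ components. The moduli-theoretic surjectivity of $d\pi_{{\mathcal T}^\circ}$ tells you only that \emph{some} deformation of the relative connection exists, not that it arises from an integrable lift with the prescribed local form~(7). The paper instead patches by a direct elementary calculation: choose the refinement $\{U_\beta\}$ so that $U_\beta\cap\Gamma^{(i)}_{M'}=\emptyset$ unless $U_\beta=(U_i)_{M'}$, so every overlap $U_{\beta}\cap U_{\beta'}$ with $\beta\neq\beta'$ avoids $\Gamma$. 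On such an overlap, take any lift $\tilde{P}_{\beta\beta'}(\tilde{z},\bar{h})$ of the relative gauge $P_{\beta\beta'}(z)$, push $\nabla^v_\beta$ through it to get a connection with $d\bar{h}$-coefficient $B'_\beta$, set $Q_{\beta\beta'}:=B_{\beta'}-B'_\beta$, and verify by a one-line computation using the two integrability identities that $(I_r+\bar{h}Q_{\beta\beta'})$ carries one local lift to the other. Uniqueness is immediate because the gauge is forced by the $d\bar{h}$-coefficients.

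Consequently, what you flag as the ``hardest step'' is a phantom problem. With the cover chosen as above, no overlap meets $\Gamma^{(i)}$, and $B^{(i)}_{l,j}$ is already single-valued on $(\mathbb{P}^1\times M')\setminus\Gamma_{M'}$ by its construction in~\eqref{equation:definition of B(z) } (precisely because $\tilde{Y}_\infty$ and $Y_\infty$ share the same monodromy around $\infty$). There is no monodromy to match and no \v{C}ech obstruction to kill; the whole patching is the explicit gauge computation just described.
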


\begin{proof}
We can take an analytic open covering
$\{ U_{\beta} \}$ of ${\mathcal C}_{M'}$
which is a refinement of
$\{ {\mathcal U}_{\alpha}\times_{\mathcal P'} M' \}$
such that $U_{\beta}$ is contractible and
$\tilde{E}_{M'}^{hol} \big|_{U_{\beta}} 
\cong ({\mathcal O}^{hol}_{U_{\beta}})^{\oplus r}$
for any $\beta$.
Moreover, we may assume that
$U_{\beta}\cap \Gamma^{(i)}_{M'} = \emptyset$
unless $U_{\beta}=(U_i)_{M'}$.
Recall that
$(\theta^{(i)}\otimes\mathrm{id})\circ
\tilde{\nabla}^{hol}_{M'} \big|_{(U_i)_{M'}}
\circ(\theta^{(i)})^{-1}$
is canonically  extended to 
a global connection
\[
 \nabla^{(i),\mathbb{P}^1}_{M^{\circ}}\big|_{\mathbb{P}^1\times M'}
 \colon
 ({\mathcal O}^{hol}_{\mathbb{P}^1\times M'})^{\oplus r}
 \longrightarrow
 ({\mathcal O}^{hol}_{\mathbb{P}^1\times M'})^{\oplus r}
 \otimes 
 \Omega^1_{\mathbb{P}^1\times M' / M' }
 ( {\mathcal D}_{M'} \cup ( \{\infty\} \times M' ) )^{hol}
\]
given by the connection matrix
\[
 A^{(i)}(z^{(i)},\epsilon) \frac{dz^{(i)}} {(z^{(i)})^{m_i}-\epsilon^{m_i}}.
\]
Here we use the identification
$(U_i)_{M'} = \Delta_a \times M' \hookrightarrow \mathbb{P}^1\times M'$.
As in Definition \ref {def:block of local horizontal lift},
there is a block
$\big(\nabla^{flat}_{\mathbb{P}^1\times M^{\circ}[\bar{h}],v^{(i)}_{l,j}}\big)$
of local horizontal lifts given by 
$\big(\tilde{\Xi}^{(i)}_{l,j}(z^{(i)})\big)$ and
$\big(B^{(i)}_{l,j}(z^{(i)})\big)$.
We put
\begin{align*}
 A^{(i)}_v(z^{(i)}) 
 &:=
 \sum_{l=0}^{r-2}\sum_{j=0}^{m_i-1}
 v(c^{(i)}_{l,j}) \: \tilde{\Xi}^{(i)}_{l,j}(z^{(i)})  \\
 B^{(i)}_v(z^{(i)})
 &:=
 \sum_{l=0}^{r-2}\sum_{j=0}^{m_i-1}
 v(c^{(i)}_{l,j})  B^{(i)}_{l,j}(z^{(i)})
\end{align*}
and 
denote by
$\iota_{M'[\bar{h}]} \colon
\big(\mathbb{P}^1\times M'\setminus\Gamma_{M'} \big)[\bar{h}]
\hookrightarrow \mathbb{P}^1\times M'[\bar{h}]$
the inclusion morphism.
Consider the connection
\begin{equation} 
 \nabla^{flat}_{\mathbb{P}^1\times M'[\bar{h}],v}
 \colon
 ({\mathcal O}_{ \mathbb{P}^1\times M'[\bar{h}] }^{hol})^{\oplus r}
 \longrightarrow
 ({\mathcal O}_{ \mathbb{P}^1\times M'[\bar{h}] }^{hol})^{\oplus r}
 \otimes 
 (\iota_{M'[\bar{h}]})_*
 \Omega^1_{(\mathbb{P}^1\times M'\setminus\Gamma_{M'})[\bar{h}]\big/M'}
 (\infty\times M')^{hol}
\end{equation}
determined by the connection matrix
\[
 \left(
 A^{(i)}(\tilde{z}^{(i)},\epsilon)+\bar{h}A_v(z^{(i)})\right) 
 \dfrac{d\tilde{z}^{(i)}} {(\tilde{z}^{(i)})^{m_i}-\epsilon^{m_i}}
 +B^{(i)}_v(z^{(i)}) d\bar{h}.
\]
Then we can see by the same calculation as in the proof of
Proposition \ref {prop:local horizontal lift} that
$\nabla^{flat}_{\mathbb{P}^1\times M'[\bar{h}],v}$
is an integrable connection.
We denote by ${\mathcal N}^{(i)}_v$
the substitution of $\epsilon^{m_i}$ for $(z^{(i)})^{m_i}$
in $\psi^{(i)} \big( A^{(i)}(\tilde{z}^{(i)},\epsilon)+\bar{h}A_v(z^{(i)}) \big)$,
where $\psi^{(i)}$ is given in (\ref {equation:polynomial psi}).
Then
$\big( {\mathcal O}_{ (U_i)_{M'[v]} }^{\oplus r},
 \nabla^{flat}_{\mathbb{P}^1\times M'[\bar{h}],v} \big|_{(U_i)_{M'[v]}},
\big\{ {\mathcal N}^{(i)}_v \big\} \big)$
gives a local horizontal lift of
$\big(\tilde{E}^{hol}_{M'},\tilde{\nabla}^{hol}_{M'},
\{ \tilde{N}^{(i),hol}_{M'}\}\big) \big|_{(U_i)_{M'}}$
with respect to $v$.

Assume that $U_{\beta}\cap {\mathcal D}^{(i)}_{M'}=\emptyset$
for any $i$.
Then the connection
$\tilde{\nabla}^{hol}_{M'}\big|_{U_{\beta}}$
is given by a connection matrix
$A(z) dz$,
for some local holomorphic coordinate $z$ of ${\mathcal C}_{\mathcal T'}$
over ${\mathcal T'}$.
We can take a matrix
$\tilde{A}(\tilde{z})$
with entries in ${\mathcal O}^{hol}_{U_{\beta}[v]}$
which is a lift of $A(z)$,
where $\tilde{z}$ is the pullback of $z$ under the morphism
${\mathcal C}_{\mathcal T'}[v]\xrightarrow{\mathrm{id}\times I_v}
{\mathcal C}_{\mathcal T'}$.
We can write
\[
 d\tilde{A}(\tilde{z})=
 C(\tilde{z}) d\tilde{z} + B(z) d\bar{h}. 
\]
If we put
$\tilde{A}'(\tilde{z}):=\tilde{A}(\tilde{z})-\bar{h} B(z)$,
then we have
$d \tilde{A}'(\tilde{z})\in
M_r({\mathcal O}^{hol}_{U_{\beta}[v]}) d\tilde{z}$
and
\begin{align*}
 \nabla^v_{\beta}
 & \colon  ({\mathcal O}^{hol}_{U_{\beta}[v]})^{\oplus r}
 \longrightarrow
 ({\mathcal O}^{hol}_{U_{\beta}[v]})^{\oplus r}
 \otimes \Omega^1_{{\mathcal C}_{\mathcal T'},v} \\
 &
 \begin{pmatrix} f_1 \\ \vdots \\ f_r \end{pmatrix}
 \mapsto
 \begin{pmatrix} df_1 \\ \vdots \\ df_r \end{pmatrix}
 +
 \tilde{A}'(\tilde{z}) d\tilde{z} \begin{pmatrix} f_1 \\ \vdots \\ f_r \end{pmatrix}
\end{align*}
becomes a flat connection.
So
$( ({\mathcal O}^{hol}_{U_{\beta}[v]})^{\oplus r} ,\nabla^v_{\beta} )$
becomes a local horizontal lift of
$\big(\tilde{E}^{hol}_{M'},\tilde{\nabla}^{hol}_{M'},
\{ \tilde{N}^{(i),hol}_{M'}\} \big) \big|_{U_{\beta}}$,
where $\{ \tilde{N}^{(i),hol}_{M'}\} \big|_{U_{\beta}}$
is nothing in this case.

From the above arguments, we obtain
a local horizontal lift 
$({\mathcal E}^v_{\beta}, \nabla^v_{\beta},
\{ {\mathcal N}^v_{\beta} \})$
of
$\big(\tilde{E}^{hol}_{M'},\tilde{\nabla}^{hol}_{M'},
\{ \tilde{N}^{(i),hol}_{M'}\}\big) \big|_{U_{\beta}}$
for each piece $U_{\beta}$ of the covering
${\mathcal C}_{M'}=\bigcup_{\beta} U_{\beta}$.
If $U_{\beta}\neq U_{\beta'}$,
then $\Gamma_{M'}\cap U_{\beta}\cap U_{\beta'}=\emptyset$
by the assumption.
Assume that  $\nabla^v_{\beta}$ is given by
\begin{align*}
 ({\mathcal O}^{hol}_{U_{\beta}[v]})^{\oplus r}
 &\xrightarrow{\sim}
 {\mathcal E}^v_{\beta}
 \xrightarrow{\nabla^v_{\beta}}
 {\mathcal E}^v_{\beta}\otimes \Omega^1_{{\mathcal C}_{\mathcal T'},v}
 \xrightarrow{\sim}
 ({\mathcal O}^{hol}_{U_{\beta}[v]})^{\oplus r}
 \otimes \Omega^1_{{\mathcal C}_{\mathcal T'},v} \\
 &
 \begin{pmatrix} f_1 \\ \vdots \\ f_r \end{pmatrix}
 \mapsto
 \begin{pmatrix} df_1 \\ \vdots \\ df_r \end{pmatrix}
 +
 (\tilde{A}_{\beta}(\tilde{z}) d\tilde{z} +B_{\beta}(z) d\bar{h} )
 \begin{pmatrix} f_1 \\ \vdots \\ f_r \end{pmatrix},
\end{align*}
where the integrability condition
\[
 -\frac{\partial \tilde{A}_{\beta}(\tilde{z})} {\partial\bar{h}} d\tilde{z}\wedge d\bar{h}
 + d B_{\beta}(z) \wedge d\bar{h}
 +(\tilde{A}_{\beta}(\tilde{z}) B_{\beta}(z)-B_{\beta}(z)\tilde{A}_{\beta}(\tilde{z}))
 d\tilde{z}\wedge d\bar{h}=0
\]
is satisfied and so for $\nabla^u_{\beta'}$.
There is an invertible matrix
$P_{\beta,\beta'}(z)$ of holomorphic functions on
$U_{\beta\beta'}=U_{\beta}\cap U_{\beta'}$
satisfying
\[
 P_{\beta,\beta'}(z)^{-1} d P_{\beta,\beta'}(z)
 +P_{\beta,\beta'}(z)^{-1}
 \tilde{A}_{\beta}(z) dz
 P_{\beta,\beta'}(z)
 = \tilde{A}_{\beta'}(z) dz
\]
coming from the isomorphism
$({\mathcal E}^v_{\beta},\overline{\nabla^v_{\beta}})\big|_{U_{\beta\beta'}}
\xrightarrow{\sim}
(\tilde{E}^{hol}_{M'},\tilde{\nabla}^{hol}_{M'})\big|_{U_{\beta\beta'}}
\xrightarrow{\sim}
({\mathcal E}^v_{\beta'},\overline{\nabla^v_{\beta'}})\big|_{U_{\beta\beta'}}$.
We can take a matrix
$\tilde{P}_{\beta\beta'}(\tilde{z},\bar{h})$ of
holomorphic functions on $U_{\beta\beta'}[\bar{h}]$
which is a lift of $P_{\beta\beta'}(z)$.
If we put
\begin{align*}
 \tilde{A}'_{\beta}(\tilde{z})d\tilde{z}+B'_{\beta}(z)d\bar{h} :=
 \tilde{P}_{\beta,\beta'}(\tilde{z},\bar{h})^{-1} d \tilde{P}_{\beta,\beta'}(\tilde{z},\bar{h})
 +\tilde{P}_{\beta,\beta'}(\tilde{z},\bar{h})^{-1}
 \big(\tilde{A}_{\beta}(\tilde{z}) d\tilde{z}
 +B_{\beta}(z) d\bar{h} \big)
 \tilde{P}_{\beta,\beta'}(\tilde{z},\bar{h}),
\end{align*}
then we can write
$\tilde{A}_{\beta'}(\tilde{z})=\tilde{A}'_{\beta}(\tilde{z})+\bar{h}C_{\beta}(z)$.
If we put
$Q_{\beta\beta'}(z):=B_{\beta'}(z)-B'_{\beta}(z)$,
then 
$Q_{\beta\beta'}(z)$ is holomorphic on 
$U_{\beta}\cap U_{\beta'}=
(U_{\beta}\cap U_{\beta'}) \setminus
(\Gamma_{M'}\cap U_{\beta}\cap U_{\beta'})$
and we have
\begin{align*}
 & (I_r+\bar{h}Q_{\beta\beta'}(z))^{-1} d (I_r+\bar{h} Q_{\beta\beta'}(z))
 +
 (I_r+\bar{h}Q_{\beta\beta'}(z))^{-1} 
 (\tilde{A}'_{\beta}(\tilde{z})d\tilde{z}+B'_{\beta}(z)d\bar{h} )
 (I_r+\bar{h}Q_{\beta\beta'}(z)) \\
 &=
 \bar{h} dQ_{\beta\beta'}+Q_{\beta\beta'}d\bar{h}
 +
 \tilde{A}_{\beta'}(\tilde{z})d\tilde{z}-\bar{h}C_{\beta}(z)d\tilde{z}
 +\bar{h} [\tilde{A}_{\beta'}(\tilde{z}),B_{\beta'}(z)-B'_{\beta}(z)]d\tilde{z}
 +B'_{\beta}(z) d\bar{h} \\
 &=
 \tilde{A}_{\beta'}(\tilde{z})d\tilde{z}-\bar{h}C_{\beta}(z)d\tilde{z}
 +\bar{h} \big( dB_{\beta'}(z)+ [\tilde{A}_{\beta'}(\tilde{z}),B_{\beta'}(z)]d\tilde{z} \big) \\
 &\quad
 -\bar{h} \big( dB'_{\beta}(z)+ [\tilde{A}_{\beta'}(\tilde{z}),B'_{\beta}(z)]d\tilde{z} \big)
 + \big( Q_{\beta\beta'}(z)+B'_{\beta}(z) \big) d\bar{h} \\
 &= 
 \tilde{A}_{\beta'}(\tilde{z})d\tilde{z}-\bar{h}C_{\beta}(z)d\tilde{z}
 +\bar{h}\left(\frac{\partial\tilde{A}_{\beta'}}{\partial\bar{h}}(\tilde{z})
 -\frac{\partial\tilde{A}'_{\beta}(\tilde{z})}{\partial\bar{h}}(\tilde{z})\right) d\tilde{z}
 +B_{\beta'}(z) d\bar{h} \\
 &=
 \tilde{A}_{\beta'}(\tilde{z})d\tilde{z}
 +B_{\beta'}(z) d\bar{h}
\end{align*}
Thus the composition of $P_{\beta,\beta'}(\tilde{z},\bar{h})$ with
$I_r+\bar{h}Q_{\beta\beta'}(z)$
gives an isomorphism between
$({\mathcal E}^v_{\beta},\nabla^v_{\beta})\big|_{U_{\beta\beta'}[v]}$
and
$({\mathcal E}^v_{\beta'},\nabla^v_{\beta'})\big|_{U_{\beta\beta'}[v]}$
whose restriction to
$U_{\beta\beta'}=U_{\beta\beta'}[v]\otimes\mathbb{C}[\bar{h}]/(\bar{h})$
is the identity.
By construction, we can see that this isomorphism is unique,
because it is essentially determined by the $d\bar{h}$-coefficients.
So we can patch 
$({\mathcal E}^v_{\beta}, \nabla^v_{\beta},
\{ {\mathcal N}^v_{\beta} \})$
together and obtain a global horizontal lift
$({\mathcal E}^v,\nabla^v,\{N^{(i)}_v\})$
of 
$\big(\tilde{E}^{hol}_{M'},
\tilde{\nabla}^{hol}_{M'},\{ \tilde{N}^{(i),hol}_{M'}\}\big)$
with respect to $v$ and with respect to
the blocks $\big(\nabla^{flat}_{\mathbb{P}^1\times M^{\circ}[\bar{h}],v^{(i)}_{l,j}}\big)$
of local horizontal lifts.
Since the local horizontal lift is unique up to a unique isomorphism,
we can see that a global  horizontal lift
$({\mathcal E}^v,\nabla^v,\{N^{(i)}_v\})$
is unique up to an isomorphism.
\end{proof}

For a vector field
$v\in H^0({\mathcal T'},
T^{hol}_{{\mathcal T}_{\bmu,\blambda}\times_{\mathcal B} {\mathcal B'}/\Delta_{\epsilon_0}} )$
over an analytic open subset
${\mathcal T'} \subset {\mathcal T}^{\circ} \subset
{\mathcal T}_{\bmu,\blambda}\times_{\mathcal B}{\mathcal B'}$,
we have by Proposition \ref  {prop:existence-horizontal-lift} a unique 
horizontal lift
$({\mathcal E}^v,\nabla^v,\{{\mathcal N}^{(i)}_v\})$
of the restriction
$\big(\tilde{E}^{hol}_{M'},\tilde{\nabla}^{hol}_{M'},
\{\tilde{N}^{(i),hol}_{M'}\}\big)$
of the universal family to
${\mathcal C}\times_{\mathcal H}M'$
with respect to $v$ and with respect to the blocks
$\big(\nabla^{flat}_{\mathbb{P}^1\times M^{\circ}[\bar{h}],v^{(i)}_{l,j}}\big)$
of local horizontal lifts.
Let
\[
 \overline{\nabla^v} \colon {\mathcal E}^v
 \xrightarrow{\nabla^v}
 {\mathcal E}^v \otimes \Omega^1_{{\mathcal C}_{\mathcal T'},v}
 \longrightarrow
 {\mathcal E}^v\otimes\Omega^1_{{\mathcal C}_{M'[v]}/M'[v]}({\mathcal D}_{M'[v]})
\]
be the relative connection induced by $\nabla^v$.
Then $({\mathcal E}^v,\overline{\nabla^v},\{ {\mathcal N}^{(i)}_v\})$
becomes a holomorphic flat family of $(\bnu,\bmu)$-connections on
${\mathcal C}_{M'[v]}$ over $M'[v]$,
which determines a morphism
$M'[v] \longrightarrow 
M^{\balpha}_{{\mathcal C},{\mathcal D}}(\tilde{\bnu},\bmu)
\times_{{\mathcal T}_{\bmu,\blambda}} {\mathcal T'}$
making the diagram
\begin{equation} \label {equation:commutative diagram of tangent morphism}
 \begin{CD}
  M'[v] @>>>
  M^{\balpha}_{{\mathcal C},{\mathcal D}}(\tilde{\bnu},\bmu)
  \times_{{\mathcal T}_{\bmu,\blambda}} {\mathcal T'}  \\
  @VVV    @VVV \\
  {\mathcal T}'[v]  @> I_v >> {\mathcal T}' 
 \end{CD}
\end{equation}
commutative.
This morphism corresponds to a vector field
$\Phi(v)
\in H^0\big( (\pi^{\circ})^{-1}({\mathcal T'}),
T^{hol}_{M^{\circ} /\Delta_{\epsilon_0} } \big|_{(\pi^{\circ})^{-1}({\mathcal T'})} \big)$,
where
$\pi^{\circ} \colon M^{\circ} \longrightarrow {\mathcal T}^{\circ}$
is the projection morphism.
We can see $d\pi^{\circ}(\Phi(v))=v$
by the commutative diagram (\ref {equation:commutative diagram of tangent morphism}),
where
$d\pi^{\circ} \colon
\pi^{\circ}_*T^{hol}_{M^{\circ}/\Delta_{\epsilon_0}}
\longrightarrow
T^{hol}_{{\mathcal T}^{\circ}/\Delta_{\epsilon_0}}$
is the differential of $\pi^{\circ}$.
Thus we have defined a map
\begin{equation} \label {equation: definition of pi_*D}
 \Phi \:
 \colon \: T^{hol}_{{\mathcal T}^{\circ}/\Delta_{\epsilon_0}} 
 \ni v \; \mapsto \; \Phi(v)
 \in
 (\pi^{\circ})_* T^{hol}_{M^{\circ}/\Delta_{\epsilon_0}}.
\end{equation}

In the rest of this subsection,
we will prove
that the correspondence (\ref {equation: definition of pi_*D})
defined above is an
${\mathcal O}^{hol}_{{\mathcal T}^{\circ}}$-homomorphism.
In order to prove it,
we extend the notion of horizontal lift. 


Let $\mathbb{C}[I]=\mathbb{C}\oplus I$ be a finite dimensional local algebra over
$\mathbb{C}$ with the maximal ideal $I$
satisfying $I^2=0$.
For a morphism
$u\colon
{\mathcal T}'\times\Spec\mathbb{C}[I]
\longrightarrow{\mathcal T'}$
over $\Delta_{\epsilon_0}$
satisfying
$u|_{{\mathcal T}'\times\Spec\mathbb{C}[I]/I}=\mathrm{id}_{\mathcal T'}$,
we write ${\mathcal T'}[u]:={\mathcal T}'\times\Spec\mathbb{C}[I]$
which is endowed with the structure morphism
$u\colon {\mathcal T'}[u] \longrightarrow {\mathcal T'}$.
We endow the fiber product
${\mathcal C}_{{\mathcal T'}[u]}
 :=
 {\mathcal C}\times_{\mathcal H}{\mathcal T'}\times\Spec\mathbb{C}[I]$
with the structure morphism
\[
 {\mathcal C}_{{\mathcal T'}[u]}
 =
 {\mathcal C}\times_{\mathcal H}{\mathcal T'}\times\Spec\mathbb{C}[I]
 \longrightarrow {\mathcal T'}\times\Spec\mathbb{C}[I]
 \xrightarrow{u}{\mathcal T'}.
\]
For an analytic open subset
$U\subset{\mathcal C}_{\mathcal T'}$,
we denote by $U[u]$ the open subspace of
${\mathcal C}_{{\mathcal T}'[u]}$
whose underlying set of points is $U$.

We consider 
the sheaf of differential forms
$\big(
\Omega^1_{\left.\left({\mathcal C}_{\mathcal T'}
\setminus\Gamma_{\mathcal T'}\right)[u]
\right/{\mathcal T'}} \big)^{hol}$
with respect to the composite of the trivial projections
\[
 {\mathcal C}_{{\mathcal T'}[u]}
 ={\mathcal C}\times_{\mathcal P}{\mathcal T'}\times\Spec\mathbb{C}[I]
 \longrightarrow {\mathcal T'}\times\Spec\mathbb{C}[I]
 \longrightarrow{\mathcal T'}
 \]
which is different from the structure of
${\mathcal C}_{{\mathcal T'}[u]}$ over ${\mathcal T'}$
coming from the fiber product structure.
We can consider the quotient sheaf
\[
 \big(\Omega^1_{ \left.\left({\mathcal C}_{\mathcal T'}
 \setminus\Gamma_{\mathcal T'}\right)[u]
 \right/ {\mathcal T'} } \big)^{hol} \big/
 \big(I{\mathcal O}_{ ({\mathcal C}_{\mathcal T'}
 \setminus\Gamma_{\mathcal T'})[u]} ^{hol} dI\big)
\]
and define a subsheaf $\Omega^1_{{\mathcal C}_{\mathcal T'},u}$ of
$\big(\iota_{({\mathcal C}_{\mathcal T'}
\setminus\Gamma_{\mathcal T'})[u]}\big)_*
\left(
\big(\Omega^1_{  \left.\left({\mathcal C}_{\mathcal T'}
\setminus\Gamma_{\mathcal T'}\right)[u]
\right/{\mathcal T'}}\big)^{hol}
\big/
(I{\mathcal O}_{ ({\mathcal C}_{\mathcal T'}
\setminus\Gamma_{\mathcal T'})[u] }^{hol} dI)
\right)$
locally generated by
\[
 \left\{ \frac{d\tilde{z}^{(i)}} {\big(\tilde{z}^{(i)}\big)^{m_i}-\epsilon^{m_i}} \right\}
 \cup
 \sum_{q=1}^{\kappa} 
 \big(\iota_{{\mathcal C}_{\mathcal T'}\setminus\Gamma_{\mathcal T'}}\big)_*
 \big( {\mathcal O}_{{\mathcal C}_{\mathcal T'}\setminus \Gamma_{\mathcal T'}}^{hol} \big)
 d\bar{h}_q
\]
around points $p\in (\Gamma^{(i)})_{{\mathcal T'}[u]}$
and locally generated by
$\{ d\tilde{z} \}\cup \{ d\bar{h}_j |\bar{h}_j \in I \}$
around points
$p\in \big( {\mathcal C}_{\mathcal T'}\setminus \Gamma_{\mathcal T'}\big) [u]$.
Here $\bar{h}_1,\ldots,\bar{h}_{\kappa}$
is a basis of $I$
and $z$ is a local holomorphic coordinate of
${\mathcal C}_{{\mathcal T'}}\setminus \Gamma_{{\mathcal T'}}$
over ${\mathcal T'}$.
We denote 
the image of
$\Omega^1_{{\mathcal C}_{\mathcal T'},u}\wedge \Omega^1_{{\mathcal C}_{\mathcal T'},u}$
in
$(\iota_{({\mathcal C}_{\mathcal T'}\setminus\Gamma_{\mathcal T'})[u]})_*
\left(
\big(\Omega^2_{  \left.\left({\mathcal C}_{\mathcal T'}
\setminus\Gamma_{\mathcal T'}\right)[u]
\right/{\mathcal T'}}\big)^{hol}
\big/
\big(I{\mathcal O}_{ ({\mathcal C}_{\mathcal T'}
\setminus\Gamma_{\mathcal T'})[u] }^{hol} dI \big)
\right)$
by $\Omega^2_{{\mathcal C}_{\mathcal T'},u}$.

For each $i=1,\ldots,n$,
we consider the sheaf of differential forms
$\Omega^1_{(U_i)_{M'[u]}/M'}$
with respect to 
\[
 (U_i)_{M'[u]}\hookrightarrow {\mathcal C}\times_{\mathcal P}
 (M'\times\Spec\mathbb{C}[I])
 \longrightarrow M'\times\Spec\mathbb{C}[I]
 \longrightarrow M',
\]
where the last two arrows are the trivial projections.
From the above projection, a ring homomorphism from the polynomial ring
\[
 {\mathcal O}^{hol}_{M'}[\tilde{z}^{(i)}]
 \longrightarrow
 {\mathcal O}^{hol}_{(U_i)_{M'[u]}}
\]
is induced.
We denote the image of a matrix $A(z^{(i)})$
of polynomials in $z^{(i)}$ with coefficients in ${\mathcal O}_{M'}^{hol}$
under this ring homomorphism by
$A(\tilde{z}^{(i)})$.

Note that we can write
\[
 u^*(\nu^{(i)}(T))
 =\nu^{(i)}_{hor}(T)+\sum_{q=1}^s \bar{h}_q \nu_{u,q}^{(i)}(T)
\]
with
\begin{align*}
 \nu^{(i)}_{hor}(T)
 &=
 \sum_{l=0}^{r-1} \sum_{j=0}^{m_i-1}
 c^{(i)}_{hor,l,j} (\tilde{z}^{(i)})^j T^l  \\
 \nu^{(i)}_{u,q}(T)
 &=
 \sum_{l=0}^{r-1} c^{(i)}_{u,q,l,j} (\tilde{z}^{(i)})^j T^l,
\end{align*}
where
$c^{(i)}_{hor,l,j}$ and $c^{(i)}_{u,q,l,j}$
are pullbacks of
$c^{(i)}_{l,j}, c^{(i)}_{u,q,l,j} \in {\mathcal O}_{M'}^{hol}$
under the composition of the trivial projections
$(U_i)_{M'[u]}\longrightarrow M'[u]\longrightarrow M'$.

\begin{definition}\label {def:horizontal-lift-general} \rm
Under the above notation,
we say that a tuple
$\big({\mathcal E}^u,\nabla^u,\{ {\mathcal N}^{(i)}_u \}\big)$
is a horizontal lift of
$\big(\tilde{E}_{M'},\tilde{\nabla}_{M'},\{\tilde{N}^{(i)}_{M'}\}\big)$
with respect to $u$ and with respect to
blocks of local horizontal lifts
$\Big(\nabla_{\mathbb{P}^1\times M'[\bar{h}],v^{(i)}_{l,j}}^{flat}\Big)$
if
\begin{itemize}
\item[(1)] ${\mathcal E}^u$ is a rank $r$ holomorphic vector bundle on
${\mathcal C}_{M'[u]}$,
\item[(2)] 
$\nabla^u\colon {\mathcal E}^u
\longrightarrow
{\mathcal E}^u\otimes_{{\mathcal O}_{{\mathcal C}_{{\mathcal T}'[u]}}^{hol}}
\Omega_{{\mathcal C}_{\mathcal T'},[u]}^1$
is a morphism of sheaves
satisfying $\nabla^u(fa)=a\otimes df+f\nabla^u(a)$
for $f\in {\mathcal O}_{{\mathcal C}_{M'[u]}}^{hol}$ and
$a\in {\mathcal E}^u$,
\item[(3)]
$\nabla^u$ is integrable in the sense that for each local expression
\[
 \begin{pmatrix} f_1 \\ \vdots \\ f_r \end{pmatrix}
 \mapsto
 \begin{pmatrix} df_1 \\ \vdots \\ df_r \end{pmatrix}
 +
 \left( A d\tilde{z} +\sum_{l=1}^{\kappa} B_l d\bar{h}_l \right)
 \begin{pmatrix} f_1 \\ \vdots \\ f_r \end{pmatrix}
\]
of $\nabla^u$ on ${\mathcal E}^u|_{U[u]}\cong {\mathcal O}_{U[u]}^{\oplus r}$
for an open subset 
$U[u]\subset ({\mathcal C}_{{M}'}\setminus \Gamma_{{M}'})[u]$,
the equality
\begin{align*}
 & d\Big(  A \, d\tilde{z} 
 +\sum_{l=1}^{\kappa} B_l \, d\bar{h}_l \Big) 
 + 
 \Big( A \, d\tilde{z} +\sum_{l=1}^{\kappa} B_l \, d\bar{h}_l \Big)
 \wedge
 \Big(  A \, d\tilde{z} +\sum_{l=1}^{\kappa} B_l \, d\bar{h}_l \Big)
 =0
\end{align*}
holds in $\Omega^2_{{\mathcal C}_{\mathcal T'},u}$,
where $\{ \bar{h}_1,\ldots,\bar{h}_{\kappa} \}$ is a basis of $I$ over $\mathbb{C}$.
\item[(4)]
${\mathcal N}^{(i)}_u\colon {\mathcal E}^u |_{{\mathcal D}^{(i)}_{M'[u]}}
\longrightarrow  {\mathcal E}^u |_{{\mathcal D}^{(i)}_{M'[u]}}$
is an endomorphism satisfying
$\varphi^{(i)}_{\bmu}({\mathcal N}^{(i)}_u)=0$,
\item[(5)]
the relative connection
$\overline{\nabla^u}$  defined by the composition
\[
 \overline{\nabla^u}\colon
 {\mathcal E}^u \xrightarrow{\nabla^u}
 {\mathcal E}^u\otimes\Omega_{{\mathcal C}_{\mathcal T'},u}^1
 \longrightarrow
 {\mathcal E}^u\otimes\Omega^1_{{\mathcal C}_{M'[u]}/M'[u]}
 ({\mathcal D}_{M'[u]})^{hol}
\]
satisfies
\[
 \displaystyle ( u^*\nu^{(i)})({\mathcal N}^{(i)}_u)
 \;
 \frac {d\tilde{z}^{(i)}} {\big(\tilde{z}^{(i)}\big)^{m_i}-\epsilon^{m_i}}
 =\overline{\nabla^u} \big|_{{\mathcal D}^{(i)}_{M'[u]}}
\]
for any $i$,
\item[(6)]
$\big({\mathcal E}^u,\overline{\nabla^u},\{ {\mathcal N}^{(i)}_u \}\big)
\otimes {\mathcal O}_{M'[u]}^{hol} / I {\mathcal O}_{M'[u]}^{hol}
\cong
\big(\tilde{E}^{hol}_{M'},\tilde{\nabla}^{hol}_{M'},\{ \tilde{N}^{(i),hol}_{M'}\}\big)$,
\item[(7)] there is an isomorphism
$\theta^{(i),u}\colon {\mathcal E}^u \big|_{(U_i)_{M'[u]}}\xrightarrow{\sim}
({\mathcal O}^{hol}_{(U_i)_{M'[u]}})^{\oplus r}$
which is a lift of  the given isomorphism
$\theta^{(i)}|_{(U_i)_{M'}}\colon 
\tilde{E}|_{(U_i)_{M'}}\xrightarrow{\sim}
({\mathcal O}^{hol}_{(U_i)_{M'}})^{\oplus r}$
such that the connection matrix of
$(\theta^{(i),u}\otimes\mathrm{id})\circ \nabla^u
\circ ( \theta^{(i),u} )^{-1}$
is given by
\[
 \Big( A^{(i)}(\tilde{z}^{(i)},\epsilon)
  + \sum_{q=1}^{\kappa} \bar{h}_q \sum_{l=0}^{r-1}\sum_{j=0}^{m_i-1}
  c^{(i)}_{u,q,l,j} \: \tilde{\Xi}^{(i)}_{l,j}(\tilde{z}^{(i)}) \Big)
 \frac {d\tilde{z}^{(i)}} {(\tilde{z}^{(i)})^{m_i}-\epsilon^{m_i}}  
 + \sum_{q=1}^{\kappa}  \sum_{l=0}^{r-1}\sum_{j=0}^{m_i-1}
  c^{(i)}_{u,q,l,j} B^{(i)}_{l,j}(\tilde{z}^{(i)}) d \bar{h}_q.
\]
\end{itemize}
\end{definition}

\begin{lemma} \label {lemma:horizontal lift general}
There exists a unique  horizontal lift
$\big({\mathcal E}^u,\nabla^u,\{{\mathcal N}^{(i)}_u\}\big)$ of
$\big(\tilde{E}_{M'},\tilde{\nabla}_{M'},\{\tilde{N}^{(i)}_{M'}\}\big)$
with respect to $u$
and with respect to blocks of local horizontal lifts
$\Big(\nabla_{\mathbb{P}^1\times M'[\bar{h}],v^{(i)}_{l,j}}^{flat}\Big)$.
\end{lemma}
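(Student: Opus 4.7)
The plan is to reduce the statement to Proposition~\ref{prop:existence-horizontal-lift} by exploiting the fact that $I^2 = 0$, which forces every construction to be linear in $I$. Fix a $\mathbb{C}$-basis $\bar h_1,\dots,\bar h_{\kappa}$ of $I$ and, for each $q$, let $v_q \in H^0({\mathcal T}', T^{hol}_{{\mathcal T}'/\Delta_{\epsilon_0}})$ be the vector field whose associated morphism ${\mathcal T}'\times\Spec\mathbb{C}[\bar h_q]/(\bar h_q^2) \to {\mathcal T}'$ is obtained by restricting $u$ along the closed immersion $\Spec\mathbb{C}[\bar h_q]/(\bar h_q^2) \hookrightarrow \Spec\mathbb{C}[I]$. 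Under this bookkeeping, the polynomial data in (\ref{equation:deformation of local exponents}) for $v_q$ is precisely $(c^{(i)}_{u,q,l,j})$, so the prescribed local connection matrices in condition~(7) of Definition~\ref{def:horizontal-lift-general} are exactly $\sum_q \bar h_q$ times the local matrices produced by the $v_q$-case of Proposition~\ref{prop:existence-horizontal-lift}.

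First, I would construct $\nabla^u$ locally. On each $(U_i)_{M'[u]}$, condition~(7) defines the connection uniquely on a trivialization lifting $\theta^{(i)}|_{(U_i)_{M'}}$; its integrability follows because, modulo $I^2=0$, the curvature of the sum $A^{(i)}\frac{d\tilde z^{(i)}}{(\tilde z^{(i)})^{m_i}-\epsilon^{m_i}} + \sum_q \bar h_q(\cdots)\, d\bar h_q$ decouples into the separate curvatures of the individual blocks $\nabla^{flat}_{\mathbb{P}^1\times M'[\bar h],v^{(i)}_{l,j}}$, each of which vanishes by Proposition~\ref{prop:local horizontal lift}. On an affine contractible open $U_\beta \subset {\mathcal C}_{M'}$ disjoint from ${\mathcal D}_{M'}$ (and hence from every $\Gamma^{(i)}$), I proceed as in the proof of Proposition~\ref{prop:existence-horizontal-lift}: pick a lift $\tilde A_\beta(\tilde z)$ of the connection matrix of $\tilde\nabla_{M'}|_{U_\beta}$, write $d\tilde A_\beta = C_\beta\, d\tilde z + \sum_q B_{\beta,q}\, d\bar h_q$, and replace it by $\tilde A'_\beta := \tilde A_\beta - \sum_q \bar h_q B_{\beta,q}$; then $d\tilde A'_\beta \in M_r({\mathcal O})\, d\tilde z$ modulo $I^2$, giving a flat local horizontal lift. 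Define ${\mathcal N}^{(i)}_u$ on each $(U_i)_{M'[u]}$ as in Proposition~\ref{prop:existence-horizontal-lift} by substituting $\epsilon^{m_i}$ for $(\tilde z^{(i)})^{m_i}$ inside $\psi^{(i)}$ applied to the connection matrix.

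Next, I would glue. For two overlapping opens $U_\beta, U_{\beta'}$, both expressible as above, the transition isomorphism $P_{\beta,\beta'}(z)$ between $(\tilde E_{M'},\tilde\nabla_{M'})|_{U_\beta}$ and $|_{U_{\beta'}}$ admits a lift $\tilde P_{\beta,\beta'}(\tilde z, \bar h_1,\dots,\bar h_\kappa)$; the same calculation as in Proposition~\ref{prop:existence-horizontal-lift} shows that there is a unique correction of the form $I_r + \sum_q \bar h_q Q^{(q)}_{\beta,\beta'}(z)$, with $Q^{(q)}_{\beta,\beta'}(z) = B_{\beta',q}(z) - B'_{\beta,q}(z)$, that matches the two local horizontal lifts over $U_{\beta\beta'}[u]$, reducing to the identity modulo $I$. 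Cocycle compatibility on triple overlaps and uniqueness both follow because any two such gluings differ by an $I$-valued endomorphism commuting with $\tilde\nabla_{M'}$, which is forced to vanish by the determination of the $d\bar h_q$-coefficients; this also handles the special overlaps with $(U_i)_{M'[u]}$ where the prescribed local form from condition~(7) plays the role of $\tilde A'_\beta$.

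The main obstacle is verifying that the prescribed local connection on $(U_i)_{M'[u]}$ in condition~(7) is actually integrable in the sense of $\Omega^2_{{\mathcal C}_{\mathcal T'},u}$, because a priori one might worry about mixed terms $d\bar h_p \wedge d\bar h_q$ arising from $d(B^{(i)}_{l,j}(\tilde z^{(i)}) d\bar h_q)$ and from the commutator of the $d\bar h_q$-components of different $(l',j',q')$ blocks. However, since the $B^{(i)}_{l,j}(z^{(i)})$ depend only on $(z^{(i)},\epsilon,w)$ and carry no $\bar h$-dependence of their own, $\partial B^{(i)}_{l,j}/\partial \bar h_p = 0$; and because the sum $\sum_{q,l,j} \bar h_q c^{(i)}_{u,q,l,j}(\cdots)$ is of first order in $I$, all brackets of two such terms lie in $I^2 = 0$. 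Thus the curvature collapses to $\sum_{q,l,j} \bar h_q c^{(i)}_{u,q,l,j}$ times the curvature of the individual block $\nabla^{flat}_{\mathbb{P}^1\times M'[\bar h],v^{(i)}_{l,j}}$, which is zero by Proposition~\ref{prop:local horizontal lift}. Uniqueness of $({\mathcal E}^u,\nabla^u,\{{\mathcal N}^{(i)}_u\})$ then follows by the same gluing-uniqueness argument used at the end of the proof of Proposition~\ref{prop:existence-horizontal-lift}, applied one basis direction $\bar h_q$ at a time.
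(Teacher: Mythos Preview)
Your proof is correct and follows essentially the same approach as the paper, which simply remarks that the argument is identical to that of Proposition~\ref{prop:existence-horizontal-lift}, writes down the same local connection matrix on $(U_i)_{M'[u]}$ with the sum over $q$, and patches. One small clarification: the cross-terms $[B_p,B_q]\,d\bar h_p\wedge d\bar h_q$ in the curvature do not vanish because ``$[B_p,B_q]\in I^2$'' (the $d\bar h_q$-coefficients carry no $\bar h$-factor), but rather because $d\bar h_p\wedge d\bar h_q=d(\bar h_p\,d\bar h_q)$ lies in the differential ideal generated by $I{\mathcal O}^{hol}\,dI$, which is precisely what is quotiented out in the definition of $\Omega^2_{{\mathcal C}_{\mathcal T'},u}$.
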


\begin{proof}
The proof of this lemma is the same as
that of Proposition  \ref {prop:existence-horizontal-lift}
and we omit the detail.

We take the same open covering $\{U_{\beta}\}$ as in the proof of
Proposition \ref {prop:existence-horizontal-lift}.
We consider the connection
$\nabla^{flat}_{\mathbb{P}^1\times M'[\bar{h}],u}$
on $({\mathcal O}^{hol}_{\mathbb{P}^1\times M'[u]})^{\oplus r}$
given by the connection matrix
\[
 \Big( A^{(i)}(\tilde{z}^{(i)},\epsilon)
  + \sum_{q=1}^{\kappa} \bar{h}_q \sum_{l=0}^{r-1}\sum_{j=0}^{m_i-1}
  c^{(i)}_{u,q,l,j} \: \tilde{\Xi}^{(i)}_{l,j}(\tilde{z}^{(i)}) \Big)
 \frac {d\tilde{z}^{(i)}} {(\tilde{z}^{(i)})^{m_i}-\epsilon^{m_i}}  
 + \sum_{q=1}^{\kappa}  \sum_{l=0}^{r-1}\sum_{j=0}^{m_i-1}
  c^{(i)}_{u,q,l,j} B^{(i)}_{l,j}(\tilde{z}^{(i)}) d \bar{h}_q
\]
with respect to $u$.
Let
${\mathcal N}^{(i)}_u$ be the endomorphism obtained  by substituting
$\epsilon^{m_i}$ for $(z^{(i)})^{m_i}$ in
\[
 \psi^{(i)}\Big( 
 A^{(i)}(\tilde{z}^{(i)},\epsilon)
 + \sum_{q=1}^{\kappa} \bar{h}_q \sum_{l=0}^{r-1}\sum_{j=0}^{m_i-1}
 c^{(i)}_{u,q,l,j} \: \tilde{\Xi}^{(i)}_{l,j}(\tilde{z}^{(i)}) \Big),
\]
where $\psi^{(i)}$ is given in (\ref {equation:polynomial psi}).
Then 
$\big( ({\mathcal O}^{hol}_{\mathbb{P}^1\times M'[u]})^{\oplus r},
\nabla^{flat}_{\mathbb{P}^1\times M'[\bar{h}],u}\big|_{(U_i)_{M'[u]}},
\{ {\mathcal N}^{(i)}_u\} \big)$
becomes a local horizontal lift.
Patching the local horizontal lifts altogether,
we obtain a unique horizontal lift
in the same way as Proposition \ref {prop:existence-horizontal-lift}.
\end{proof}

\begin{proposition} \label {prop:splitting is a homomorphism}
The morphism
\[
 T^{hol}_{{\mathcal T}^{\circ}/\Delta_{\epsilon_0}}
 \ni v\mapsto \Phi(v)
 \in (\pi^{\circ})_* T^{hol}_{M^{\circ}/\Delta_{\epsilon_0}}
\]
defined in (\ref {equation: definition of pi_*D})
is an
${\mathcal O}^{hol}_{{\mathcal T}^{\circ}}$-homomorphism.
\end{proposition}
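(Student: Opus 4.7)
The plan is to mimic the standard uniqueness argument for horizontal lifts, using Lemma \ref{lemma:horizontal lift general} as the main tool, by comparing horizontal lifts built from different morphisms $u\colon {\mathcal T}'[u]\longrightarrow {\mathcal T}'$ into small thickenings of ${\mathcal T}'$. Throughout, I fix an analytic open subset ${\mathcal T}'\subset {\mathcal T}^{\circ}$ on which local vector fields are defined, and I keep the fixed blocks of local horizontal lifts $\bigl(\nabla^{flat}_{\mathbb{P}^1\times M^{\circ}[\bar{h}],v^{(i)}_{l,j}}\bigr)$ throughout the whole argument.

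For additivity $\Phi(v_1+v_2)=\Phi(v_1)+\Phi(v_2)$: I introduce the local Artinian algebra $\mathbb{C}[I]=\mathbb{C}\oplus \mathbb{C}\bar{h}_1\oplus\mathbb{C}\bar{h}_2$ with $I^2=0$, and the morphism $u\colon {\mathcal T}'\times\Spec\mathbb{C}[I]\longrightarrow{\mathcal T}'$ corresponding to $u^*(g)=g+v_1(g)\bar{h}_1+v_2(g)\bar{h}_2$. By Lemma \ref{lemma:horizontal lift general} there exists a unique horizontal lift $({\mathcal E}^u,\nabla^u,\{{\mathcal N}^{(i)}_u\})$ with respect to $u$ and with respect to the fixed blocks. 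I then introduce three ring homomorphisms $\rho^*,\rho_1^*,\rho_2^*\colon \mathbb{C}[I]\longrightarrow \mathbb{C}[h]/(h^2)$ by $\rho^*(\bar{h}_i)=\bar{h}$, $\rho_i^*(\bar{h}_i)=\bar{h}$ and $\rho_i^*(\bar{h}_{3-i})=0$. Direct computation shows $u\circ(\mathrm{id}\times\rho_i^{\vee})=I_{v_i}$ and $u\circ(\mathrm{id}\times\rho^{\vee})=I_{v_1+v_2}$. The three pullbacks of $({\mathcal E}^u,\nabla^u,\{{\mathcal N}^{(i)}_u\})$ under $\mathrm{id}\times\rho_i^{\vee}$ and $\mathrm{id}\times\rho^{\vee}$ are then horizontal lifts with respect to $v_1$, $v_2$ and $v_1+v_2$ respectively; uniqueness in Proposition \ref{prop:existence-horizontal-lift} identifies them with $({\mathcal E}^{v_i},\nabla^{v_i},\{{\mathcal N}^{(i)}_{v_i}\})$ and $({\mathcal E}^{v_1+v_2},\nabla^{v_1+v_2},\{{\mathcal N}^{(i)}_{v_1+v_2}\})$. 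Passing to the classifying morphisms to $M^{\circ}$ and writing $\phi_u^*(g)=g+a_1(g)\bar{h}_1+a_2(g)\bar{h}_2$, pullback along $\rho_i^{\vee}$ gives $\Phi(v_i)=a_i$, while pullback along $\rho^{\vee}$ gives $\Phi(v_1+v_2)=a_1+a_2$.

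For ${\mathcal O}^{hol}_{{\mathcal T}^{\circ}}$-linearity $\Phi(fv)=f\Phi(v)$: For $f\in H^0({\mathcal T}',{\mathcal O}^{hol}_{{\mathcal T}^{\circ}})$, I consider the automorphism $\sigma_f\colon {\mathcal T}'[v]\longrightarrow {\mathcal T}'[v]$ over ${\mathcal T}'$ determined by $\sigma_f^*(\bar{h})=f\bar{h}$. A direct check shows $I_v\circ\sigma_f=I_{fv}$. Pulling back the horizontal lift $({\mathcal E}^v,\nabla^v,\{{\mathcal N}^{(i)}_v\})$ via $\mathrm{id}\times\sigma_f$ then yields a horizontal lift with respect to $fv$, so by the uniqueness part of Proposition \ref{prop:existence-horizontal-lift} it coincides with $({\mathcal E}^{fv},\nabla^{fv},\{{\mathcal N}^{(i)}_{fv}\})$. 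Translating back to the classifying morphisms, $\phi_{fv}=\phi_v\circ\sigma_f$ on $M'[v]$ with $f$ pulled back to $M'$ via $\pi^{\circ}$, so $\phi_{fv}^*(g)=\sigma_f^*(g+\Phi(v)(g)\bar{h})=g+(\pi^{\circ *}f)\Phi(v)(g)\bar{h}$, giving $\Phi(fv)=f\Phi(v)$.

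The main technical point I expect to have to spell out is the verification that the pullbacks of $({\mathcal E}^u,\nabla^u,\{{\mathcal N}^{(i)}_u\})$ (resp.\ of $({\mathcal E}^v,\nabla^v,\{{\mathcal N}^{(i)}_v\})$) under $\mathrm{id}\times\rho^{\vee}_{(\cdot)}$ (resp.\ $\mathrm{id}\times\sigma_f$) really do satisfy conditions (1)--(7) of Definition \ref{def:horizontal-lift-general} / Definition \ref{def:global-horizontal-lift}. Conditions (1)--(6) are preserved formally because pullback of differential forms is compatible with wedge products, exterior differentiation and the integrability condition. The crucial condition (7) reduces to a linearity statement: the explicit connection matrix in condition (7) of Definition \ref{def:horizontal-lift-general} is visibly linear in the coefficients $c^{(i)}_{u,q,l,j}$ of the decomposition of $u^*\nu^{(i)}(T)-\nu^{(i)}_{hor}(T)$, because the matrices $\tilde{\Xi}^{(i)}_{l,j}(\tilde{z}^{(i)})$ and $B^{(i)}_{l,j}(\tilde{z}^{(i)})$ come from the fixed block data and do not depend on $u$. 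Since the coefficients of $u^*\nu^{(i)}$ are mapped to those of $(u\circ(\mathrm{id}\times\rho^{\vee}_{(\cdot)}))^*\nu^{(i)}=I_{(\cdot)}^*\nu^{(i)}$ by the same linear rule, the pullback matches precisely the matrix required in condition (7) for the appropriate $v_i$, $v_1+v_2$ or $fv$, and the uniqueness step closes the argument.
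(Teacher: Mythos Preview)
Your proposal is correct and follows essentially the same route as the paper: the paper also uses the square-zero thickening $\mathbb{C}[h_1,h_2]/(h_1^2,h_1h_2,h_2^2)$ and Lemma \ref{lemma:horizontal lift general} for additivity, then restricts to the diagonal (your $\rho^\vee$) and to the coordinate slices (your $\rho_i^\vee$); for ${\mathcal O}$-linearity it uses exactly the same automorphism $\sigma_f$. Your explicit verification of condition~(7) via the linearity of the local connection matrix in the coefficients $c^{(i)}_{u,q,l,j}$ is a welcome addition that the paper leaves as ``by construction''.
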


\begin{proof}
Take an open subset
${\mathcal T'}\subset {\mathcal T}^{\circ}$
and holomorphic vector fields
$v_1,v_2\in H^0 \big({\mathcal T'},T^{hol}_{{\mathcal T}^{\circ}/\Delta_{\epsilon_0}}\big)$.
Let
\[
 u\colon{\mathcal T'}\times\Spec\mathbb{C}[h_1,h_2]/(h_1^2,h_2^2,h_1h_2)
\longrightarrow {\mathcal T'}
\]
be the morphism such that the restriction
$u|_{{\mathcal T'}\times\Spec\mathbb{C}[h_i]/(h_i^2)}$ corresponds to $v_i$
for $i=1,2$.
Applying Lemma \ref{lemma:horizontal lift general}
to $\mathbb{C}[I]=\mathbb{C}[h_1,h_2]/(h_1^2,h_1h_2,h_2^2)$,
we can take a horizontal lift
$\big({\mathcal E}^u,\nabla^u,\{{\mathcal N}^{(i)}_u\}\big)$ of
$\big(\tilde{E}^{hol}_{M'},\tilde{\nabla}^{hol}_{M'},\{\tilde{N}^{(i),hol}_{M'}\}\big)$
with respect to $u$
and with respect to the blocks
$\big(\nabla_{\mathbb{P}^1\times M'[\bar{h}],v^{(i)}_{l,j}}^{flat}\big)$
of local horizontal lifts.
We can see by construction that the restriction
$\big({\mathcal E}^u,\nabla^u,\{{\mathcal N}^{(i)}_u\}\big)
\big|_{M' \times \Spec\mathbb{C}[h_i]/ (h_i^2)}$
coincides with the horizontal lift
$\big({\mathcal E}^{v_i},\nabla^{v_i},\{{\mathcal N}^{(i)}_{v_i}\}\big)$
of
$\big(\tilde{E}^{hol}_{M'},\tilde{\nabla}^{hol}_{M'},\{\tilde{N}^{(i),hol}_{M'}\}\big)$
with respect to $v_i$.
So the morphism
\[
 M' \times \Spec\mathbb{C}[h_1,h_2]/ (h_1^2, h_1h_2, h_2^2)
 \longrightarrow 
 M^{\balpha}_{{\mathcal C},{\mathcal D}}(\tilde{\bnu},\bmu)
 \times_{\mathcal B} {\mathcal B'}
 \]
determined by the flat family 
$\big({\mathcal E}^u,\overline{\nabla^u},\{{\mathcal N}^{(i)}_u\}\big)$
coincides with the one given by the pair $(\Phi(v_1),\Phi(v_2))$
of vector fields,
where 
$\overline{\nabla^u}\colon{\mathcal E}^u\longrightarrow{\mathcal E}^u\otimes
\Omega^1_{{\mathcal C}_{{\mathcal T'}[u]}/{\mathcal T'}[u]}
({\mathcal D}_{{\mathcal T'}[u]})^{hol}$
is the relative connection induced by $\nabla^u$.
From the definition of the addition of vector fields, the restriction
$(\Phi(v_1),\Phi(v_2))|
_{M'\times\Spec\mathbb{C}[h_1,h_2]/(h_1-h_2,h_1^2)}$
to the diagonal coincides with $\Phi(v_1)+\Phi(v_2)$.
On the other hand, we can see by the construction that the restriction
$\big({\mathcal E}^u,\nabla^u,\{{\mathcal N}^{(i)}_u\}\big) 
\big|_{M'\times\Spec\mathbb{C}[h_1,h_2]/(h_1-h_2,h_1^2)}$
is a horizontal lift of 
$\big(\tilde{E}^{hol}_{M'},\tilde{\nabla}^{hol}_{M'},\{\tilde{N}^{(i),hol}_{M'}\}\big)$
with respect to $v_1+v_2$
and with respect to the blocks of local horizontal lifts
$\big(\nabla_{\mathbb{P}^1\times M'[\bar{h}],v^{(i)}_{l,j}}^{flat}\big)$
in the sense of
Proposition \ref  {prop:existence-horizontal-lift}.
So we have
$\Phi(v_1+v_2)=\Phi(v_1)+\Phi(v_2)$.

Take a holomorphic function $f\in H^0({\mathcal T'},{\mathcal O}^{hol}_{\mathcal T'})$
and a holomorphic vector field
$v\in H^0 \big({\mathcal T'},T^{hol}_{{\mathcal T}^{\circ}/\Delta_{\epsilon_0}}\big)$.
Let 
\[
 \sigma_f\colon {\mathcal T'}\times\Spec\mathbb{C}[h]/(h^2)
 \longrightarrow {\mathcal T'}\times\Spec\mathbb{C}[h]/(h^2)
\]
be the morphism corresponding to the ring homomorphism
${\mathcal O}_{\mathcal T'}^{hol}[h]/(h^2)\ni a+b\bar{h} \mapsto a+b f\bar{h}
\in {\mathcal O}_{\mathcal T'}^{hol}[t]/(h^2)$
and let
\[
 \mathrm{id}\times\sigma_f \colon
 M'\times_{\mathcal T'} {\mathcal T'}\times \Spec\mathbb{C}[h]/(h^2)
 \longrightarrow
 M'\times_{\mathcal T'} {\mathcal T'}\times \Spec\mathbb{C}[h]/(h^2)
\]
be its base change.
If
$\big({\mathcal E}^v,\nabla^v,\{{\mathcal N}^{(i)}_v\}\big)$
is a horizontal lift of
$\big(\tilde{E}^{hol}_{M'},\tilde{\nabla}^{hol}_{M'},\{\tilde{N}^{(i),hol}_{M'}\}\big)$
with respect to $v$
and with respect to the blocks of local horizontal lifts
$\big(\nabla_{\mathbb{P}^1\times M'[\bar{h}],v^{(i)}_{l,j}}^{flat}\big)$,
then we can see by the construction that the pull back
$(1\times\sigma_f)^*
\big({\mathcal E}^v,\nabla^v,\{ {\mathcal N}^{(i)}_v\}\big)$
is a horizontal lift of
$\big(\tilde{E}^{hol}_{M'},\tilde{\nabla}^{hol}_{M'},\{\tilde{N}^{(i),hol}_{M'}\}\big)$
with respect to $fv$
and with respect to the blocks of local horizontal lifts
$\big(\nabla_{\mathbb{P}^1\times M'[\bar{h}],v^{(i)}_{l,j}}^{flat}\big)$.
By the definition of ${\mathcal O}^{hol}_{\mathcal T'}$-module
structure on the tangent bundle, we can see that
the pull-back
$\big((\mathrm{id}\times\sigma_f)^*{\mathcal E}^v,
(\mathrm{id}\times\sigma_f)^*\overline{\nabla^v},
\{(1\times\sigma_f)^*{\mathcal N}^{(i)}_v \} \big)$
of the flat family
$\big({\mathcal E}^v,\overline{\nabla^v},\{ {\mathcal N}^{(i)}_v \}\big)$
corresponds to $f  \Phi(v)$.
So we have $\Phi(fv)=f \Phi(v)$.
Hence
we have proved that $\Phi$ is an ${\mathcal O}^{hol}_{\mathcal T^{\circ}}$-homomorphism.
\end{proof}

By the adjoint bijection
\begin{equation} \label {equation:adjoint bijection}
 \Hom_ { {\mathcal O}_{M^{\circ}}^{hol} }
 \left( (\pi^{\circ})^*
 T^{hol}_{{\mathcal T}^{\circ}/\Delta_{\epsilon_0}} ,
 T^{hol}_{M^{\circ} /\Delta_{\epsilon_0}} \right) 
 \cong
 \Hom_{{\mathcal O}_{{\mathcal T}^{\circ}}^{hol}}
 \left( T^{hol}_{{\mathcal T}^{\circ}/\Delta_{\epsilon_0}} ,
 (\pi^{\circ})_* T^{hol}_{M^{\circ} /\Delta_{\epsilon_0}} \right),
\end{equation}
the ${\mathcal O}_{{\mathcal T}^{\circ}}^{hol}$-homomorphism
$\Phi\colon
T^{hol}_{{\mathcal T}^{\circ}/\Delta_{\epsilon_0}}
\longrightarrow
(\pi^{\circ})_* T^{hol}_{M^{\circ}/\Delta_{\epsilon_0}}$
given in (\ref {equation: definition of pi_*D})
corresponds to an
${\mathcal O}^{hol}_{M^{\circ}}$-homomorphism
$\Psi \colon 
(\pi^{\circ})^* T^{hol}_{{\mathcal T}^{\circ}/\Delta_{\epsilon_0}} 
\longrightarrow
T^{hol}_{M^{\circ}/\Delta_{\epsilon_0}}$.
Since $\Phi$ satisfies $d\pi^{\circ}\circ\Phi(v)=v$
for vector fields $v\in T^{hol}_{{\mathcal T}^{\circ}/\Delta_{\epsilon_0}}$,
the homomorphism $\Psi$ is a splitting of the surjection
$T^{hol}_{M^{\circ}/\Delta_{\epsilon_0}}
\xrightarrow{d\pi^{\circ}}
(\pi^{\circ})^* T^{hol}_{{\mathcal T}^{\circ}/\Delta_{\epsilon_0}}$
canonically induced by the smooth morphism
$\pi^{\circ} \colon M^{\circ} \longrightarrow {\mathcal T}^{\circ}$.
Furthermore we can see
$\Psi\big|
_{M^{\balpha}_{{\mathcal C},{\mathcal D}}(\tilde{\bnu},\blambda)_{\epsilon=0}\cap M^{\circ}}
=
\Psi_0\big|
_{M^{\balpha}_{{\mathcal C},{\mathcal D}}(\tilde{\bnu},\blambda)_{\epsilon=0}\cap M^{\circ}}$
from its construction.
Thus we have proved
Theorem \ref {thm:holomorphic-splitting}.

\begin{example} \label {example:hypergeometric confluence} \rm
Let us consider the case of $g=0$, $r=2$,
$n=2$, $m_1=2$, $m_2=1$ and $a=\deg E=0$.
So $C=\mathbb{P}^1$,
$D^{(1)}=\{z^2-\epsilon^2=0\}$
and we may assume $D^{(2)}=\{\infty\}$.
We choose $z^{(1)}=z$ and $z^{(2)}=w=1/z$.
We take the exponent $\bnu$ so generic that
$\res_{z=\infty}\left(\nu^{(1)}(\mu_{k_1})\dfrac{dz}{z^2-\epsilon^2}\right)
+\res_{w=\infty}\left(\nu^{(2)}(\mu_{k_2})\dfrac{dw}{w}\right)
\notin\mathbb{Z}$
for any choice of $k_1,k_2\in\{1,2\}$.
Then the $(\bnu,\bmu)$ connections are irreducible and
correspond to the classical hypergeometric equations.
The moduli space
$M^{\alpha}_{\mathbb{P}^1,D}(\tilde{\bnu},\bmu)$
consists of a single point because of the rigidity
of the hypergeometric equations.
For a $(\bnu,\bmu)$-connection
$(E,\nabla,\{N^{(i)}\})\in M^{\alpha}_{\mathbb{P}^1,D}(\tilde{\bnu},\bmu)$,
we have
$E\cong {\mathcal O}_{\mathbb{P}^1}^{\oplus 2}$
and
$\nabla|_U$ is given by a connection matrix
\begin{equation} \label {equation:connection matrix of hypergeometric equation}
 \frac{A_0(\epsilon)+A_1(\epsilon)z}{z^2-\epsilon^2}dz.
\end{equation}
The above connection matrix is uniquely determined by
$(E,\nabla)$ up to a constant conjugate and
the matrices 
$\Xi^{(1)}_{l,j}(z)$ ($l=0,1$, $j=0,1$)
given in (\ref  {equation:giving Xi})  are systematically determined.
We write
\[
 \Xi^{(1)}_{l,j}(z)=C^{(1)}_{l,j,0}(\epsilon)+C^{(1)}_{l,j,1}(\epsilon)z.
\]
If we take an adjusting data
$\big( R^{(1)}_{l,j,0},R^{(1)}_{l,j,1} \big)$,
we have
$C^{(1)}_{l,j,1}(\epsilon)
=\big[ A_0,R^{(1)}_{l,j,1} \big] + \big[ A_1,R^{(1)}_{l,j,0} \big]$
and we define
\[
 \tilde{\Xi}^{(1)}_{l,j}(z)=C^{(1)}_{l,j,0}-\big[ A_0,R^{(1)}_{l,j,0} \big]
 -\epsilon^2 \big[ A_1,R^{(1)}_{l,j,1} \big].
\]
There is an ambiguity in the choice of adjusting data
$\big( R^{(1)}_{l,j,0},R^{(1)}_{l,j,1} \big)$.
If $\big( R'^{(1)}_{l,j,0},R'^{(1)}_{l,j,1} \big)$ is another one, then
$C^{(1)}_{l,j,1}=
\big[ A_0,R^{(1)}_{l,j1} \big] + \big[ A_1,R^{(1)}_{l,j,0} \big]
=\big[ A_0,R'^{(1)}_{l,j,1} \big] + \big[ A_1,R'^{(1)}_{l,j,0} \big]$.
Since we are choosing $A_0,A_1$ generic,
the full matrix ring is generated by $A_0,A_1,[A_0,A_1], I_2$.
Furthermore,
$\im\mathrm{ad}(A_0)\cap\im\mathrm{ad}(A_1)$
is generated by $[A_0,A_1]$.
Since
$\big[ A_0,R^{(1)}_{l,j,1}-R'^{(1)}_{l,j,1} \big]
=-\big[ A_1,R^{(1)}_{l,j,0}-R'^{(1)}_{l,j,0} \big]
\in \im \mathrm{ad}(A_0)\cap\im\mathrm{ad}(A_1)$,
we can write
$R^{(1)}_{l,j,0}-R'^{(1)}_{l,j,0}=aA_0+bA_1$ and
$R^{(1)}_{l,j,1}-R'^{(1)}_{l,j,1}=cA_0+aA_1$
for some functions $a,b,c$ defined on an open subset of the moduli space
$M^{\alpha}_{\mathbb{P}^1,D}(\tilde{\bnu},\bmu)$.
If we put
$\tilde{\Xi}'^{(1)}_{l,j}(z):=C^{(1)}_{l,j,0}
-\big[ A_0,R'^{(1)}_{l,j,0}\big]
-\epsilon^2\big[ A_1,R'^{(1)}_{l,j,1}\big]$,
then
$\tilde{\Xi}^{(1)}_{l,j}(z)-\tilde{\Xi}'^{(1)}_{l,j}(z)
= \big[ A_0,R^{(1)}_{l,j,0}-R'^{(1)}_{l,j,0} \big]
-\epsilon^2 \big[ A_1,R^{(1)}_{l,j,1}-R'^{(1)}_{l,j,1} \big]
=(b-\epsilon^2c) \big[ A_0,A_1 \big]$.
So we have
\[
 \big( I_2-\bar{h}(b-\epsilon^2c)A_1 \big)^{-1}
 \frac{ A_0+A_1z+\bar{h}\tilde{\Xi}^{(1)}_{l,j}(z)}{z^2-\epsilon^2}dz \;
 \big( I_2-\bar{h}(b-\epsilon^2c)A_1 \big)
 =
 \frac{ A_0+A_1z+\bar{h}\tilde{\Xi}'^{(1)}_{l,j}(z)}{z^2-\epsilon^2}dz
\]
which means that there is no essential ambiguity in
the relative connection given by the connection matrix
\[
 \frac{ A_0+A_1z+\bar{h}\tilde{\Xi}^{(1)}_{l,j}(z)}{z^2-\epsilon^2}dz.
\] 
up to a global automorphism.
However, there is an ambiguity in the choice of 
$B^{(1)}_{l,j}$ such that
the connection matrix
\[
 \frac{A_0(\epsilon)+A_1(\epsilon)z
 +\bar{h}\, \tilde{\Xi}^{(1)}_{l,j}(z)} {z^2-\epsilon^2}dz
 +B^{(1)}_{l,j}(z) d\bar{h}
\]
gives a horizontal lift.
Indeed, for a fundamental solution
$Y_{\infty}(z,\epsilon)$ of $\nabla$ near $\infty$,
there is an ambiguity in
$Y_{\infty}(z,\epsilon)+\bar{h} B^{(1)}_{l,j}(z) Y_{\infty}(z,\epsilon)$
by an action of
$(I_2+\bar{h}(c_0I_2+c_1\mathrm{Mon}_{\infty}))$ from the right
with $c_0\equiv 0, c_1\equiv 0 \pmod{\epsilon^2}$,
where $\mathrm{Mon}_{\infty}$ is the monodromy matrix
of $Y_{\infty}(z,\epsilon)$ along a loop around $\infty$.
If we write
$Y_{\infty}(z,\epsilon)+\bar{h} B^{(1)}_{l,j}(z) Y_{\infty}(z,\epsilon)
=(\tilde{y}_1,\tilde{y}_2)$
with $\tilde{y}_1,\tilde{y}_2$ two independent hypergeometric solutions,
then the ambiguity is essentially given by a replacement
of $(\tilde{y}_1,\tilde{y}_2)$ with
$((1+\bar{h}b_1)\tilde{y}_1,(1+\bar{h}b_2)\tilde{y}_2)$,
where $b_1\equiv 0, b_2\equiv 0\pmod{\epsilon^2}$.
Notice that we can in fact assume $c_0=0$
after a normalization via applying a global automorphism,
but there is still an ambiguity arising from $\bar{h}c_1$.
\end{example}



{\bf Acknowledgment.}
The author would like to thank Professor Takeshi Abe for
giving him opportunities to have discussions on the subject
in this paper.
He also would like to thank Professor Hironobu Kimura
and Professor Yoshishige Haraoka
for giving him valuable comments
when the author tried to start the study
of the subject in this paper.
The author would like to thank Professor Indranil Biswas
and Professor Balasubramanian Aswin
for the hospitality at the conference
``Quantum Fields, Geometry and Representation Theory''
held at Bangalore in 2018.
He also thanks Professor Takuro Mochizuki and Professor
Masa-Hiko Saito who had discussion with him 
related to this paper.
The author would like to thank Professors Hiroyuki Inou and
Masayuki Asaoka for telling him
elementary textbooks for an introduction to dynamical systems,
though the author have not followed most of them yet.
The author would like to thank Professor Indranil Biswas
for the hospitality at the conference ``Bundles -2019''
at Mumbai in 2019.
He also would like to thank Professor Jacques Hurtubise
for giving him valuable comments.

The author is partly supported by JSPS KAKENHI:
Grant Numbers 26400043, 17H06127 and 19K03422.



\begin{thebibliography}{99}

\bibitem{Arinkin-Lysenko}
D.~Arinkin and S.~Lysenko,
\emph{On the moduli of $\mathrm{SL}(2)$-bundles with connections
on $\mathbf{P}^1\setminus\{x_1,\ldots,x_4\}$},
Internat.\ Math.\ Res.\ Notices (1997), no.\ 19, 983--999.


\bibitem{Babbitt-Varadarajan}
D.~G.~Babbitt and V.~S.~Varadarajan,
\emph{Local moduli for meromorphic differential equations},
Ast\'erisque No.\ 169--170 (1989), 217pp.


\bibitem{Biquard-Boalch}
O.~Biquard and P.~Boalch,
\emph{Wild non-abelian Hodge theory on curves},
Compos.\ Math.\  140 (2004), no.\ 1, 179--204.


\bibitem{Biswas-Heu-Hurtubise-1}
I.~Biswas, V.~Heu and J.~Hurtubise,
\emph{Isomonodromic deformations of logarithmic connections and stability},
Math.\ Ann.\ 366 (2016), no.\ 1-2, 121--140.

\bibitem{Biswas-Heu-Hurtubise-2}
I.~Biswas, V.~Heu and J.~Hurtubise,
\emph{Isomonodromic deformations and very stable bundles of rank two},
Comm.\ Math.\ Phys.\ 356 (2017),
no.\ 2, 627--640.



\bibitem{Boalch-1}
P.~Boalch,
\emph{Symplectic manifolds and isomonodromic deformations},
Adv.\ Math.\ 163, (2001),  no.\ 2 137--205.


\bibitem{Boalch-2}
P.~Boalch,
\emph{Geometry and braiding of Stokes data; fission and wild character varieties}.
Ann.\ of Math.\ (2) 179
(2014), no.\ 1, 301--365.


\bibitem{Bremer-Sage-2}
C.~Bremer and D.~Sage,
\emph{Isomonodromic deformations of connections
with singularities of parahoric formal type},
Comm.\ Math. Phys.\ 313 (2012), no.\ 1, 175--208.


\bibitem{Fedorov}
R.~M.~Fedorov,
\emph{Algebraic and Hamiltonian approaches to isoStokes deformations},
Transform.\ Groups 11 (2006), no.\ 2, 137--160.


\bibitem{Glutsuk}
A.~A.~Glutsuk,
\emph{Stokes operators via limit monodromy of generic perturbation},
J.\ Dynam.\ Control Systems 5  (1999) no.\ 1, 101--135.


\bibitem{Heu-1}
V.~Heu, 
\emph{Stability of rank $2$ vector bundles along
isomonodromic deformations},
Math.\ Ann.\ 344 (2009), no.\ 2, 463--490.


\bibitem{Heu-2}
V.\ Heu,
\emph{Universal isomonodromic deformations of meromorphic rank $2$ connections on curves},
Ann.\ Inst.\ Fourier (Grenoble) 60 (2010),
no.\ 2, 515--549.




\bibitem{Hurtubise-1}
J.~Hurtubise,
\emph{On the geometry of isomonodromic deformations},
J.\ Geom.\ Phys.\ 58 (2008), 1394--1406.


\bibitem{Hurtubise-Lambert-Rousseau}
J.~Hurtubise, C.~Lambert and C.~Rousseau,
\emph{Complete system of analytic invariants for unfolded differential linear systems
with an irregular singularity of Poincar\'e rank $k$},
Mosc.\ Math.\ J.\ 14 (2014), no.\ 2,  309--338.


\bibitem{Hurtubise-Rousseau}
J.~Hurtubise and C.~Rousseau,
\emph{Moduli space for generic unfolded differential linear systems},
Adv.\ Math.\ 307 (2017), 1268--1323.


\bibitem{IIS-1} M.~Inaba,   K.~Iwasaki and M.-H.~Saito, 
\emph{Moduli of stable parabolic connections, Riemann-Hilbert correspondence  and
geometry of Painlev\'{e} equation of type $VI$. I},
Publ.\ Res.\ Inst.\ Math.\ Sci.\
42 (2006), no.\ 4, 987--1089.



\bibitem{Inaba-1} M.~Inaba,
\emph{Moduli of parabolic connections on curves and
the Riemann-Hilbert correspondence},
J.\ Algebraic Geom.\ 22 (2013), no.\ 3,  407--480.


\bibitem{Inaba-2} M.~Inaba,
\emph{Moduli space of irregular singular parabolic connections
of generic ramified type on a smooth projective curve},
arXiv:1606.02369.


\bibitem{Inaba-Saito}
M.~Inaba and M.-H.~Saito,
\emph{Moduli of unramified irregular singular parabolic connections on a smooth projective curve},
Kyoto J.\ Math.\  53  (2013),  no.\ 2, 433--482. 


\bibitem{Jimbo-1}
M.~Jimbo,
\emph{Monodromy problem and boundary condition for some Painlev\'e equations},
Publ.\ Res.\ Inst.\ Math.\ Sci.\ 18 (1982), no.\ 3, 1137--1161.



\bibitem{Jimbo-Miwa-Ueno}
M.~Jimbo, T.~Miwa and K.~Ueno,
\emph{Monodromy preserving deformation of linear ordinary differential equations
with rational coefficients.
I. General theory and $\tau$-function},
Phys. D 2 (1981), no.\ 2, 306--352.


\bibitem{Jimbo-Miwa-2}
M.~Jimbo and T.~Miwa,
\emph{Monodromy preserving deformation of linear ordinary differential equations
with rational coefficients. II.},
Phys.\ D 2 (1981), no.\ 3, 407--448.


\bibitem{Jimbo-Miwa-3}
M.~Jimbo and T.~Miwa,
\emph{Monodromy preserving deformation of linear ordinary differential equations
with rational coefficients. III.}
Phys.\ D4 (1981/82), no. 1, 26--46.


\bibitem{Kimura-Haraoka-Takano}
H.~Kimura, Y.~Haraoka and K.~Takano,
\emph{On confluences of the general hypergeometric systems},
Proc.\ Japan Acad.\ Ser.\ A Math.\ Sci.\ 69 (1993), no.\ 5, 99--104.


\bibitem{Kimura-Takano}
H.~Kimura and K.~Takano,
\emph{On confluence of general hypergeometric systems},
Tohoku Math.\ J.\ (2), 58  (2006), no.\ 1, 1--31. 


\bibitem{Kimura-Tseveennamjil}
H.~Kimura and D.~Tseveennamjil,
\emph{Confluence of general Schlesinger systems and twistor theory},
Hiroshima Math.\ J.\ 46 (2016), no.\ 3, 289--309.


\bibitem{Kirillov}
A.A.~Kirillov,
\emph{Lectures on the orbit method},
Graduate Studies in Mathematics, 64.\ 
American Mathematical Society, Providence, RI, 2004.


\bibitem{Klimes}
M.~Klime\v{s},
\emph{Stokes phenomenon and confluence in non-autonomous
Hamiltonian systems},
Qual.\ Theory Dyn.\ Syst.\ 17 (2018),
no.\ 3, 665--708.


\bibitem{Lambert-Rousseau-1}
C.~Lambert and C.~ Rousseau,
\emph{Complete system of analytic invariants for unfolded differential linear systems
with an irregular singularity of Poincar\'e rank $1$},
Mosc.\ Math.\ J.\ 12 (2012), no.\ 1, 77--138.


\bibitem{Lambert-Rousseau-2}
C.~Lambert and C.~ Rousseau,
\emph{Moduli space of unfolded differential linear systems with an irregular singularity of
Poincar\'e rank $1$},
Mosc.\ Math.\ J.\ 13 (2013), no.\ 3, 529--550.


\bibitem{Levinson}
N.~Levinson,
\emph{The asymptotic nature of solutions of linear systems of differential equations},
Duke Math.\ J.\ 15 (1948), 111--126.


\bibitem{Lisovyy-Nagoya-Roussillon}
O.~Lisovyy, H.~Nagoya and J.~Roussilon,
\emph{Irregular conformal blocks and connection formulae for
Painlev\'e V functions},
J.\ Math.\ Phys.\ 59 (2018), no.\ 9, 091409, 27pp.


\bibitem{Mazzocco}
M.~Mazzocco,
\emph{Confluences of the Painlev\'e equations, Cherednik algebras
and $q$-Askey scheme},
Nonlinearity 29 (2016), no.\ 9, 2565--2608.



\bibitem{Nakajima}
H.~Nakajima,
\emph{Hyper-K\"{a}hler structures on moduli spaces of parabolic Higgs bundles
on Riemann surfaces},
Moduli of vector bundles (Sanda, 1994; Kyoto, 1994),
199--208, Lecture Notes in Pure and Appl.\ Math.\ 179,
Dekker, 1996.


\bibitem{Nitsure}
N.~Nitsure,
\emph{Moduli of semistable logarithmic connections},
J.\ Amer.\ Math.\ Soc.\ 6 (1993), no.\ 3, 597--609.


\bibitem{Ramis}
J.~P.~Ramis,
\emph{Confluence et r\'esurgence},
J.\ Fac.\ Sci.\ Univ.\ Tokyo Sect.\ IA, Math.\ 36 (1989), no.\ 3, 703--716.


\bibitem{Put-Saito}
M.~van der Put and M.-H.~Saito,
\emph{Moduli spaces for linear differential equations and the Painlev\'e equations},
Ann.\ Inst.\ Fourier (Grenoble) 59 (2009), no.\ 7, 2611--2667.


\bibitem{Schafke}
R.~Sch\"{a}fke,
\emph{Confluence of several regular singular points into an irregular singular one},
J.\ Dynam.\ Control Systems 4 (1998), no.\ 3, 401--424.


\bibitem{Simpson-3}
C.~T.~Simpson,
\emph{Harmonic bundles on noncompact curves},
J.\ Amer.\ Math.\  Soc.\ 3 (1990), no.\ 3, 713--770.


\bibitem{Simpson-1}
C.~T.~Simpson,
\emph{Moduli of representations of the fundamental group of a smooth projective
variety. I},
Inst.\ Hautes \'Etudes Sci.\ Publ.\ Math.\ no.\ 79, (1994),  47--129.


\bibitem{Simpson-2}
C.~T.~Simpson,
\emph{Moduli of representations of the fundamental group of a smooth projective
variety. II},
Inst.\ Hautes \'Etudes Sci.\ Publ.\ Math.\ no.\ 80, (1994), 5--79 (1995).


\bibitem{Sternin-Shatalov}
B.~Yu.~Sternin and V.~E.~Shatalov,
\emph{On the confluence phenomenon for Fuchsian equations},
J.\ Dynam.\ Control Systems 3 (1997), no.\ 3, 433--448.



\bibitem{Wasow} W.~Wasow,
\emph{Asymptotic expansions for ordinary differential equations},
Pure and Applied Mathematics, Vol.\ XIV Interscience Publishers,
New York, 1965.

\bibitem{Wong}
M.~L.~Wong,
\emph{An interpretation of some Hitchin Hamiltonians
in terms of isomonodromic deformation},
J.\ Geom.\ Phys.\ 62 (2012), no.\ 6, 1397--1413.


\bibitem{Yamakawa}
D.~Yamakawa,
\emph{Geometry of multiplicative preprojective algebra},
Int.\ Math.\ Res.\ Pap.\ IMRP 2008, Art.\ ID rpn008, 77pp.



\end{thebibliography}
\end{document}